\renewcommand{\arraystretch}{0.6}
\newtheorem{thm}{Theorem}[section]
\newtheorem{cor}[thm]{Corollary}
\newtheorem{lem}[thm]{Lemma}
\newtheorem{prop}[thm]{Proposition}
\theoremstyle{definition} 
\newtheorem{defn}[thm]{Definition}
\newtheorem{rem}[thm]{Remark}
\newcommand{\bA}{\mathbb{A}}
 \newcommand{\bC}{\mathbb{C}}
 \newcommand{\bF}{\mathbb{F}}
 \newcommand{\bR}{\mathbb{R}}
 \newcommand{\bQ}{\mathbb{Q}}
 \newcommand{\bZ}{\mathbb{Z}}
 \newcommand{\ccH}{\mathcal{H}}
 \newcommand{\ccF}{\mathcal{F}}
 \newcommand{\ccO}{\mathcal{O}}
 \newcommand{\ccI}{\mathcal{I}}
 \newcommand{\fp}{\mathfrak{p}}
 \newcommand{\fq}{\mathfrak{q}}
 \newcommand{\fm}{\mathfrak{m}} 
 \newcommand{\End}{\text{End}} 
 \newcommand{\Hom}{\text{Hom}}
 \newcommand{\GL}{\text{GL}}
 \newcommand{\id}{\mathrm{id}}
 \newcommand{\bD}{\mathbb{D}}
 \newcommand{\adf}{\bA_{F,f}^{(p)}}
 \newcommand{\fP}{\mathfrak{P}}
 \newcommand{\ccL}{\mathcal{L}}
 \let\emptyset\varnothing
 \newcommand{\dR}{\textrm{dR}}
 \newcommand{\es}{\textrm{es}}
 \newcommand{\Tr}{\textrm{Tr}}
 \newcommand{\Fp}{\mathfrak{P}}
\newcommand{\pn}{(p^n)}
\newcommand{\CR}{\textrm{CRIS}}
\title[A Jacquet-Langlands relation at ramified primes]{ A mod $p$ Geometric Jacquet-Langlands Relation for Quaternionic Shimura Varieties at Ramified Primes}
\author{Gabriel Micolet}
\address{Department of Mathematics King's College London}
\email{gabriel.micolet@kcl.ac.uk}
\begin{document}
\begin{abstract}
    Let $F$ be a totally real field, $p$ a prime that we allow to ramify in $F$, and $B$ a quaternion algebra over $F$ which is split at places over $p$. We consider a smooth $p$-adic integral model, the Pappas-Rapoport model, of the Quaternionic Shimura variety attached to $B$ with prime-to-$p$ level, and the Goren-Oort stratification of its characteristic $p$ fiber. Furthermore, we also introduce Pappas-Rapoport models at Iwahori level $p$ along with a stratification of their characteristic $p$ fiber. We prove that these strata are isomorphic to products of $\mathbb{P}^1$-bundles over auxiliary Quaternionic Shimura varieties, from which we deduce the corresponding description of the Goren-Oort strata, thus extending the results of \cite{tian_xiao_2016} and \cite{2020arXiv200100530D}.
\end{abstract}
\maketitle

\begingroup\singlespacing
\tableofcontents
\endgroup

\section{Introduction}

\subsection{Modular curves}
Let $N \geq 4$, $p$ be a prime which does not divide $N$ and $X_1(N)$ denote the modular curve of level $\Gamma_1(N)$. In \cite[$\mathsection$ IV.2]{DeligneRapoport}, Deligne and Rapoport showed that there exists a smooth model of $X_1(N)$ over $\bZ[\frac{1}{N}]$ obtained as the moduli space of (generalized) elliptic curves with a point of order $N$. Its reduction mod $p$ denoted $\overline{X}_1(N)$ admits a natural decomposition into the open ordinary locus $\overline{X}_1(N)^{\textrm{ord}}$ and the closed supersingular locus $\overline{X}_1(N)^{\textrm{ss}}$. More concretely $\overline{X}_1(N)^{\textrm{ss}}$ is given as the vanishing of the Hasse invariant $h \in H^0(\overline{X}_1(N),\omega^{\otimes p-1})$ where  $\omega^{\otimes p-1}$ denotes the sheaf of weight $p-1$ modular forms, and $\overline{X}_1(N)^{\textrm{ord}}$ is its complement. In a letter to Tate (\cite{Serre1996}), Serre gave the following descripition of the geometric points of $\overline{X}_1(N)^{\textrm{ss}}$:
\begin{thm}[Serre]
    There is a bijection of sets 
    \[\overline{X}_1(N)^{\textrm{ss}}(\overline{\bF}_p) \leftrightarrow B_{p,\infty}^\times \backslash B_{p,\infty}^\times (\bA_f) / O_1(N)B_{p,\infty}^\times(\bZ_p), \]
    equivariant under Hecke correspondences, where $B_{p,\infty}$ denotes the unique Quaternion algebra over $\bQ$ ramified at $p$ and infinity and $O_1(N) \subset B_{p,\infty}^\times(\bA^{(p)}_f)$ is the image of $\Gamma_1(N)$ under the isomorphism $B_{p,\infty}^\times(\bA^{(p)}_f) \simeq \textrm{GL}_2(\bA^{(p)}_f)$
\end{thm}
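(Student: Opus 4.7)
The plan is to establish the bijection by fixing a base supersingular elliptic curve $E_0/\overline{\bF}_p$ and encoding every other supersingular elliptic curve with $\Gamma_1(N)$-level structure as adelic data measured against $E_0$. By Deuring's theory, the endomorphism algebra $\End^0(E_0) := \End(E_0) \otimes \bQ$ is canonically isomorphic to $B_{p,\infty}$, and $\End(E_0)$ is a maximal order $\ccO \subset B_{p,\infty}$; the left factor of the double coset will account for changing this base point.

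First I would verify that every supersingular elliptic curve $E/\overline{\bF}_p$ lies in the isogeny class of $E_0$ (there is a unique supersingular isogeny class over $\overline{\bF}_p$) and that its prime-to-$p$ Tate module $T^{(p)}(E)$ is a free $\widehat{\bZ}^{(p)}$-module of rank two. Fixing an identification $V^{(p)}(E_0) \simeq (\bA_f^{(p)})^2$, any prime-to-$p$ quasi-isogeny $\phi : E_0 \to E$ identifies $V^{(p)}(E)$ with $(\bA_f^{(p)})^2$, and the lattice $T^{(p)}(E)$ together with a $\Gamma_1(N)$-structure on $E$ then determines an element of $\GL_2(\bA_f^{(p)})$ modulo the subgroup $O_1(N) \subset \GL_2(\widehat{\bZ}^{(p)})$. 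Using the isomorphism $\GL_2(\bA_f^{(p)}) \simeq B_{p,\infty}^\times(\bA_f^{(p)})$ produces the prime-to-$p$ component of the double coset.

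The analysis at $p$ proceeds as follows. Since $B_{p,\infty}$ is ramified at $p$, the completion $B_{p,\infty} \otimes_\bQ \bQ_p$ is the quaternion division algebra over $\bQ_p$, whose unique maximal order $\ccO_p$ satisfies $B_{p,\infty}^\times(\bZ_p) = \ccO_p^\times$. Every supersingular elliptic curve over $\overline{\bF}_p$ has the same $p$-divisible group, namely the unique one-dimensional connected formal group of height two, whose automorphism ring is precisely $\ccO_p$. Hence the choice of a quasi-isogeny at $p$ produces an element of $B_{p,\infty}^\times(\bQ_p)$ well-defined modulo $B_{p,\infty}^\times(\bZ_p)$, giving the right quotient at $p$. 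Finally, replacing $\phi$ by $\phi \circ \psi$ for $\psi \in \End^0(E_0)^\times = B_{p,\infty}^\times$ yields the left action, and eliminating the choice of $\phi$ produces the claimed bijection.

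The main obstacle is the $p$-adic input: both the identification of the supersingular $p$-divisible group and the computation that its automorphism ring exhausts the maximal order of the local division algebra, which rely on the Dieudonn\'e theory classification over $\overline{\bF}_p$ together with Deuring's description of $\End^0(E_0)$. Once these are in hand, Hecke equivariance is formal: a Hecke correspondence at $\ell \neq p$ is parametrised by $\ell$-power isogenies $E \to E'$, and under the correspondence built above this translates to right translation on $B_{p,\infty}^\times(\bA_f)$ by elements of $B_{p,\infty}^\times(\bQ_\ell)$, matching the standard Hecke action on the double coset space.
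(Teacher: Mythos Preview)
The paper does not prove this theorem. It is stated in the introduction as a classical result of Serre (cited from his letter to Tate, \cite{Serre1996}) purely to motivate the later Jacquet--Langlands type results; no argument is given. So there is no ``paper's own proof'' to compare against.

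Your sketch is the standard Deuring--Tate argument and is essentially correct. A few remarks: the step ``every supersingular elliptic curve over $\overline{\bF}_p$ has the same $p$-divisible group'' should be read up to isomorphism, and the quasi-isogeny at $p$ is obtained from the global quasi-isogeny $\phi$ rather than chosen independently (otherwise the left $B_{p,\infty}^\times$-action would not act diagonally on the $p$-component and the prime-to-$p$ component simultaneously). Also, to get a well-defined element of the double coset you should package the prime-to-$p$ and $p$-adic pieces together from the single global $\phi$, not from two separate choices. With those adjustments your outline recovers the classical proof.
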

In this correspondence, we think of the right hand set as the geometric points of the zero-dimensional Shimura variety attached to $B^\times_{p,\infty}$.

One can also consider the modular curve $X_1(N,p)$ of level $\Gamma_1(N) \cap \Gamma_0(p)$. Deligne and Rapoport showed in \cite[$\mathsection$ V.1]{DeligneRapoport} that it admits a semistable model over $\bZ[\frac{1}{N}]$, as the moduli space of (generalized) elliptic curves with a point of order $N$ and an isogeny $\phi$ of degree $p$. Its reduction mod $p$, $\overline{X}_1(N,p)$, is a union of two copies of $\overline{X}_1(N)$, one corresponding to the locus where $\ker \phi$ is the kernel of Frobenius, and the other to the locus where $\ker \phi$ is the kernel of Verschiebung. In particular, these copies of $\overline{X}_1(N)$ intersect transversally precisely at the supersingular points.

\subsection{Hilbert modular varieties at unramified primes}
The analogous situation for Hilbert modular varieties is more complicated: Let $F$ be a totally real field of degree $d$ with discriminant $\Delta_F$. Consider the model $X$ of the Shimura variety attached to $G = \textrm{Res}_{F/\bQ} \textrm{GL}_2$ given by the coarse moduli space of polarized abelian schemes of dimension $d$ with an action of $\ccO_F$ and a suitable level $N$-structure. Deligne and Pappas showed in \cite{deligne1994singularites} that these models are smooth over $\bZ[ \frac{1}{N \Delta_F}]$. In particular, if $p$ is unramified in $F$, then $\overline{X} = X_{\overline{\bF}_p}$ is smooth. 

In this situation, Goren and Oort introduced in \cite{Gorenoort} a stratification of $\overline{X}$, indexed by subsets of $\Theta_F = \{ \ccO_F \to \overline{\bF}_p\}$ as follows: Let $s:A \to \overline{X}$ be the universal abelian scheme and $\omega = s_* \Omega^1_{A/ \overline{X}}$ its sheaf of invariant differentials. The action of $\ccO_F$ splits $\omega = \bigoplus_{\theta \in \Theta_F} \omega_\theta$ into a direct sum of line bundles where $\ccO_F$ acts on each $\omega_\theta$ via $\theta:\ccO_F \to \overline{\bF}_p$. Considering the restriction of Verschiebung $V: \omega \to \omega^{(p)}$ to each component $\omega_\theta$ yields the partial Hasse invariants $h_\theta \in H^0(\overline{X}_\theta,\omega_\theta^{\otimes -1} \otimes \omega^{\otimes p}_{\phi^{-1} \circ \theta})$ for $\theta \in T$. Each closed stratum $\overline{X}_T$ is then given by the vanishing of the each $h_\theta$ for $\theta \in T$. These are smooth subschemes of $\overline{X}$ of codimension $\vert T \vert$. 

Encouraged by the work of Helm (\cite{Helm}) in the context of $U(2)$ Shimura varieties, Tian and Xiao showed in their seminal paper \cite{tian_xiao_2016}, that each stratum $\overline{X}_T$ is isomorphic to a product of $\mathbb{P}^1$-bundles over an auxiliary Quaternionic Shimura variety attached to the quaternion $B_{\Sigma_T}/F$ which is ramified exactly at a set of places $\Sigma_T$, determined by $T$. To do so, they considered a variant of Carayol's (\cite{Carayol1986SurLM}) \textit{mod\`{e}les \'{e}tranges} of Quaternionic Shimura varieties, via auxiliary Unitary Shimura varieties to make use of the moduli interpretation of the latter. In fact, they carried this program, defining a Goren-Oort stratification and giving a global description of each stratum, for any Quaternionic Shimura variety attached to a quaternion algebra $B/F$ which splits at the places above $p$.

In \cite{Pappas1995}, Pappas introduced Iwahori level models $X_0(p)$ over $\bZ_{(p)}$, which he proved were flat local complete intersections of relative dimension $d$, which are given as the coarse moduli space of pairs $(\underline{A_1},\underline{A_1},f)$,
where the $\underline{A_i}$ are points of $X$ and $f$ is an $\ccO_F$-linear isogeny of degree $p^d$ with $\ker f \subset A_1[p]$, that respects the polarizations and the level structures. Its geometric special fiber, $\overline{X}_0(p) = X_0(p) \otimes \overline{\bF}_p$, was studied in detail by Goren and Kassaei in \cite{GorenKassaei+2012+1+63}. There they showed that it admits a natural stratification by closed subschemes $\overline{X}_0(p)_{\phi(I),J}$, indexed by pairs of subsets $I,J \subset \Theta_F$. More precisely, each $\overline{X}_0(p)_{\phi(I),J}$ is given by the vanishing of $\textrm{Lie}(f_\theta)$ for $\theta \in I$, and $\textrm{Lie}(f^\vee_\theta)$ for $\theta \in J$ and is smooth of codimension $\vert I \cap J \vert$ provided that $I \cup J = \Theta_F$. 

Diamond, Kassaei and Sasaki proved in \cite{2020arXiv200100530D} that the irreducible components of $\overline{X}_0(p)$, which are (the components of) the maximal strata $\overline{X}_0(p)_{J} := \overline{X}_0(p)_{\phi(J^c),J}$, are, analogously to the strata downstairs, also isomorphic to products of $\mathbb{P}^1$-bundles over auxiliary Quaternionic Shimura varieties attached to a quaternion algebra $B_{\Sigma_{J}}/F$ determined by $J$. They did this by transferring the question to the context of Unitary Shimura varieties and introducing the technique of splicing; from $A_1$ and $A_2$ they produced an abelian scheme $A_J$ whose Dieudonn\'{e} module at each geometric point is essentially obtained by splicing together the Dieudonn\'{e} modules of $A_1$ and $A_2$. They did not however consider the lower dimensional strata.

Similarly to the case of modular curves, there is a privileged stratum of $\overline{X}_0(p)$, namely $\overline{X}_0(p)_\emptyset$. The natural projection $\pi: \overline{X}_0(p) \to \overline{X}$ mapping $(\underline{A_1},\underline{A_1},f)$ to $\underline{A_1}$ restricts over $\overline{X}_0(p)_\emptyset$ to an isomorphism whose inverse $\overline{X} \to \overline{X}_0(p)_\emptyset$ is given by $\underline{A} \mapsto (\underline{A},\underline{A^{(p)}},Fr)$ where $Fr$ denotes the Frobenius isogeny. Furthermore, these isomorphisms respect the stratifications, mapping each $\overline{X}_0(p)_{\Theta_F,J}$ isomorphically onto $\overline{X}_J$. Therefore a complete description of the smooth strata at Iwahori level would provide a description of the Goren-Oort strata downstairs.

\subsection{Pappas-Rapoport models of Hilbert modular varieties}
As discussed in the previous section, the models $\overline{X}$ are smooth provided that $p$ is unramified in $F$. If $p$ is ramified however, Deligne and Pappas showed in \cite{deligne1994singularites} that $\overline{X}$ is normal, and singular in codimension two. Its smooth locus, which is open and dense, is precisely the Rapoport locus, that is the locus that parameterizes $A/S$ where $\omega_{A/S}$ is locally free of rank one over $\ccO_S \otimes \ccO_F$; this is always the case if $p$ is unramified in $F$.

To resolve these singularities, Pappas and Rapoport introduced in \cite{pappas2005local} so called \textit{splitting models} which were made explicit in \cite{sasaki}. Its fiber over $\overline{\bF}_p$ is given as follows: We consider the set of unramified embeddings $\widehat{\Theta}_F = \{ \theta: \ccO_F/\fp \hookrightarrow \overline{\bF}_p  \, \vert \, \fp \vert p \ccO_F\}$. The action of $\ccO_F$ on $\underline{A} \in \overline{X}(S)$ induces a splitting of $\omega_{A/S} =: \omega = \bigoplus_{\theta \in \widehat{\Theta}_F} \omega_\theta$ into locally free sheaves of $\ccO_S$-modules of rank $e_\fp$, the ramification degree of the corresponding prime $\fp$. The Pappas-Rapoport model $\overline{X}^{\textrm{PR}}$ parameterizes isomorphism classes of tuples $(\underline{A}, \underline{\omega})$ where $\underline{A}$ is an $S$-point of $\overline{X}$ and $\underline{\omega}$ is the data, for each $\theta \in \widehat{\Theta}_F$, of a filtration
\[0 = \omega_\theta(0) \subset \omega_\theta(1) \subset \cdots \subset \omega_\theta(e_\fp-1) \subset \omega_\theta(e_\fp) = \omega_\theta, \]
where each $\omega_\theta(i)$ is a local summand, as an $\ccO_S$-module, of rank $i$ such that each subquotient $\omega_\theta(i)/\omega_\theta(i-1)$ is killed by $[\varpi_\fp]$, where $[\varpi_\fp]$ denotes the action of a uniformizer $\varpi_\fp \in F_\fp$. $\overline{X}^{\textrm{PR}}$ is a smooth scheme of dimension $d$ and admits a natural morphism $\pi: \overline{X}^{\textrm{PR}} \to \overline{X}$ which is projective and induces an isomorphism when restricted to the respective Rapoport loci on both sides. In particular, it is an isomorphism when $p$ is uramified in $F$.

In \cite{reduzzi2016partial}, Reduzzi and Xiao introduced partial Hasse invariants over the Pappas-Rapoport model, defined in terms of the subquotients $\omega_\theta(i)/\omega_\theta(i-1)$, generalizing the ones considered in the unramified case. These partial Hasse invariants allowed them to define a Goren-Oort stratification of $\overline{X}^{\textrm{PR}}$ which they used to prove the existence of Galois representations attached to certain Hecke eigenclasses of Hilbert modular varieties in the ramified case. There are also models at Iwahori level, defined as certain moduli spaces of $p$-isogenies which, crucially, are required to respect filtrations. These models, along with a stratification were introduced in \cite{emerton_reduzzi_xiao_2017} to define Hecke operators at $\fp$ in characteristic $p$. These models were also studied in detail by in \cite{diamond2020cone} and \cite{diamond2021kodairaspencer}.

The goal of this thesis is in some sense to complete these different yet related stories. For a totally real field $F$, a prime $p$ which we allow to ramify in $F$ and every Quaternion algebra $B_\Sigma/F$ ramified at a set of places $\Sigma$ away from $p$, we introduce $p$-adic integral models $X = X_{B_\Sigma}$ at tame level and $X_0(\Fp)$ at Iwahori level.
We then define a Goren-Oort stratification on the geometric special fiber $\overline{X} = X \otimes \overline{\bF}_p$, generalizing both the ones of \cite{tian_xiao_2016} and \cite{reduzzi2016partial}, and a stratification of $\overline{X}_0(\Fp) = X_0(\Fp) \otimes \overline{\bF}_p$ the usual unramified Hilbert case, such that there is a privileged stratum upstairs $\overline{X}_0(\Fp)_\emptyset$ which maps isomorphically downstairs to $\overline{X}$, respecting the stratifications on both sides. Finally, by extending the splice construction of \cite{2020arXiv200100530D}, we show that each smooth stratum at Iwahori level is isomorphic to a product of $\mathbb{P}^1$-bundles over an auxiliary Quaternionic Shimura variety.

\subsection{Overview of the thesis}
 \subsubsection{Tame level Shimura varieties}

We start by fixing a totally real field $F/\bQ$ of degree $d$ and a prime $p \in \bZ$ which we allow to ramify. We write $\Theta_F = \{\beta: F \hookrightarrow \overline{\bQ} \}$ for the set of embeddings which we identify with the set of real embeddings $\{F \hookrightarrow \bR \}$ and $p$-adic embeddings $\{F \hookrightarrow \overline{\bQ}_p \}$ via fixed embeddings $\overline{\bQ} \hookrightarrow \bC$ and $\overline{\bQ} \hookrightarrow \overline{\bQ}_p$. We also write $\widehat{\Theta}_F = \{\theta: \ccO_F/\fp \hookrightarrow \overline{\bF}_p \, \vert \, \, \fp \vert p \ccO_F \}$ for the set of unramified embeddings of $F$ and choose for each $\theta \in \widehat{\Theta}_F$ an arbitrary ordering of the $e_\fp$ embeddings $\theta^1, \cdots, \theta^{e_\fp} \in \Theta_F$ which lift it. 

We also fix a quaternion algebra $B/F$ with ramification set $\Sigma$ such that no place over $p$ in $F$ lies in $\Sigma$ and fix a maximal order $\ccO_B$. We write $\Sigma_\infty $ for the set of archimedean places, which we identify as a subset of $\Theta_F$ by above. We write $G_\Sigma / \bQ$ for the algebraic group given on $R$-points by $G_\Sigma(R) = (B \otimes_\bQ R)^\times$. Let $U \subset G_\Sigma(\bA_f)$ be a sufficiently small open compact subgroup containing $\ccO_{B,p}^\times$ and consider the associated Shimura variety with complex points 
\[ G_\Sigma(\bQ) \backslash (\bC - \bR)^{\Theta_F \setminus \Sigma_\infty} \times G_\Sigma(\bA_f) / U.\]
If we suppose that $p$ is unramified in $F$, then the above has a canonical model integral model $Y_U(G_\Sigma)$ over the ring of integers $\ccO$ of its reflex field $L_\Sigma$. Since $Y_U(G_\Sigma)$ does not have a moduli interpretation, this model is obtained as a quotient of an auxiliary Unitary Shimura variety which is itself a quotient of a PEL moduli problem, as described in \cite{Carayol1986SurLM} or \cite{tian_xiao_2016}. In particular, since the same is true of the Unitary model, it is smooth of dimension $\vert \Theta_F \setminus \Sigma_\infty \vert$ over the localization at any prime above $p$. If $p$ is ramified in $F$ however, the associated integral Unitary model is singular in codimension two and thus so is the Quaternionic model. The first goal of this thesis is dedicated to resolving the singularities of these models.

Our strategy is to obtain smooth Quaternionic integral models is via first resolving the singularities of the associated Unitary models, by generalizing the construction of Pappas and Rapoport, described above in the Hilbert case, to our general Unitary setting. Indeed, the leitmotif of this thesis will be to first review the Hilbert setting, then the Unitary setting and how to transfer to the Quaternionic setting, and finally show the compatibility between the Hilbert setting and the Quaternionic setting with $B = M_2(F)$.

Our Unitary models are roughly obtained as follows: Let $B$ be as above and consider a CM extension $E/F$ which splits $B$, that is $D := B \otimes_F E \simeq M_2(E)$. Fix a set of lifts $\tau$ of each unramified embedding $\theta \in \widehat{\Theta}_F$ and write $\tau^1,\cdots,\tau^{e_\fp}$ for the lifts of the $\theta^1,\cdots,\theta^{e_\fp}$ and $c$ the non trivial element of $Gal(E/F)$. Fix a maximal order $\ccO_D$ of $D$. We also fix a lift $\widetilde{\Sigma}_\infty$ of $\Sigma_\infty$, which needs not be composed of the above fixed lifts, that yields numbers $s_{\tau^i} \in \{0,1,2\}$ with $s_{(\tau^i)^c}=2-s_{\tau^i}$. For an appropriate notion of prime-to-$p$ level structure, we consider the PEL moduli space of of polarized abelian varieties of dimension $4d$ with $\ccO_D$-action, such that the characteristic polynomial of the action of $\alpha \in \ccO_E$ on $\textrm{Lie}(A/S)$ is given by $\prod_{\beta \in \Theta_E} (x-\beta(\alpha))^{2s_\beta}$, and level structure, which we denote here by $Y_\Sigma^{\prime \textrm{DP}}$. It is defined over the ring of integers $\ccO$ of its reflex field and as stated previously, the fiber over primes over $p$ of this moduli space are singular if $p$ ramifies in $F$. We note however that there is a natural generalization of the notion of the usual Rapoport locus in this setting which is precisely the entire smooth locus. 

We thus fix a sufficiently large $p$-adic field $L/\bQ_p$ with ring of integers $\ccO_L$. For an $\ccO$-scheme $S$ and a point $\underline{A} \in Y^{\prime \textrm{DP}}(S)$, we can consider its reduced Hodge bundle $\omega^0_{A/S} := (s_*\Omega^1_{A/S})\cdot e_0$, where $e_0 \in \ccO_D \otimes \ccO_S \simeq M_2(\ccO_S)$ corresponds to the element $\left( \begin{smallmatrix} 1 & 0 \\ 0 & 0 \end{smallmatrix} \right)$. We can furthermore consider for each $\theta \in \widehat{\Theta}_F$ with lift $\tau$, the component $\omega^0_{A/S,\tau}$ and we define a Pappas Rapoport filtration of $\omega^0_{A/S}$, which we denote by $\underline{\omega_{A/S}}$ to be the data of a $\ccO_E$-stable filtration for each $\theta \in \widehat{\Theta}_F$ of the form 
\[0 \subset \omega^0_{A/S,\tau}(1) \subset \cdots \subset \omega^0_{A/S,\tau}(e_\fp) = \omega^0_{A/S,\tau},\]
where each graded piece is locally free of rank $s_{\tau^i}$ and is killed by $\tau^i(\varpi_\fp)$ for any choice of uniformizer $\varpi_\fp \in \ccO_F$ of $F_\fp$. We denote the moduli space of such pairs $(\underline{A},\underline{\omega_{A/S}})$ over $\ccO_L$ by $Y_\Sigma^{\prime \textrm{PR}}$. This space comes with a natural forgetful map to $Y_\Sigma^{\prime \textrm{DP}}$ and we obtain, via crystalline deformation theory, the first main result of this thesis:
\begin{thm}
The splitting model $Y^{\prime \textrm{PR}}_\Sigma$ is smooth over $\ccO$ of relative dimension $\vert \Theta_F \setminus \Sigma_\infty \vert$. Furthermore, the natural forgetful map $\pi: Y^{\prime \textrm{PR}}_\Sigma \to Y^{\textrm{DP}}_\Sigma$ is birational: it is an isomorphism over the generalized Rapoport loci. In particular, it induces an isomorphism over generic fibers.
\end{thm}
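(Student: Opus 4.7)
\medskip

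\noindent\textbf{Proof plan.} Since $Y^{\prime \textrm{DP}}_\Sigma$ is already known to be flat over $\ccO$ (PEL type) and the forgetful map $\pi$ is projective (the PR data live in a product of flag varieties), the heart of the statement is local/formal smoothness of $Y^{\prime \textrm{PR}}_\Sigma$ over $\ccO$ at closed points of the special fibre, plus a local analysis of $\pi$ over the Rapoport locus. My plan is to fix a geometric point $x_0 = (\underline{A}_0, \underline{\omega_0}) \in Y^{\prime \textrm{PR}}_\Sigma(\overline{\bF}_p)$ and compute the completed local ring $R_{x_0}$ by combining Grothendieck--Messing crystalline deformation theory for $\underline{A}_0$ with the moduli of PR filtrations lifting $\underline{\omega_0}$.

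\smallskip

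\noindent First I would recall that, by Grothendieck--Messing, deformations of $\underline{A}_0$ as an $\ccO_D$-polarised abelian scheme over an Artinian $\ccO$-algebra $(R,\fm)$ with residue field $\overline{\bF}_p$ are classified by $\ccO_D$-stable, self-dual lifts of the Hodge filtration in the evaluation $\bD(A_0)(R)$ of the contravariant Dieudonné crystal. Applying the idempotent $e_0$ and decomposing along the $\ccO_E \otimes \ccO_L$-isotypic components, the relevant datum over each lift $\tau$ of each $\theta \in \widehat{\Theta}_F$ is an $\ccO_{F_\fp} \otimes_{\ccO_{F_\fp}^{\mathrm{un}}, \tau} R$-submodule $\omega^0_{A/R,\tau} \subset \bD(A_0)(R)^0_\tau$ which is locally free of rank $e_\fp$ over $R$ and lifts the special fibre. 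Adding the PR filtration amounts to enlarging the datum to a chain
\[
0 \subset \omega^0_{A/R,\tau}(1) \subset \cdots \subset \omega^0_{A/R,\tau}(e_\fp) = \omega^0_{A/R,\tau},
\]
with each graded piece free of rank $s_{\tau^i}$ and killed by $\tau^i(\varpi_\fp)$.

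\smallskip

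\noindent The main computation I would carry out is to identify the resulting local deformation functor with an explicit local model $M^{\mathrm{loc}}_{\tau}$ at each $\tau$, and then show $M^{\mathrm{loc}}_{\tau}$ is formally smooth of the expected relative dimension. Concretely, at the level of each $\tau$, the crystal evaluates to a free $R[T]/(T^{e_\fp})$-module of rank $2$, where $T$ acts as $\varpi_\fp$ modulo unramified shift, and the datum one parametrises is a full flag of $R[T]/(T^{e_\fp})$-stable submodules with prescribed ranks $s_{\tau^i}$ such that $T$ kills each graded piece. A direct tangent--obstruction computation (choosing in each step a basis compatible with the filtration in the special fibre and lifting it) shows that each step of the flag is a Grassmannian-type local model smooth of relative dimension $s_{\tau^i}(2 - s_{\tau^i})$. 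Summing over $i$ recovers $s_\tau(2-s_\tau)$ at each $\tau$, hence $\sum_\tau s_\tau(2-s_\tau)$ over all $\tau$, which after accounting for the reflex condition $s_{\tau^c} = 2 - s_\tau$ yields the total relative dimension $|\Theta_F \setminus \Sigma_\infty|$ on the Quaternionic side; the self-duality forced by the polarisation couples $\tau$ to $\tau^c$ and does not change this count. I expect the main technical obstacle to be bookkeeping the interaction between the PR flag condition, the Kottwitz determinant condition, and the polarisation-induced duality between the $\tau$- and $\tau^c$-components, to check that obstructions vanish simultaneously.

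\smallskip

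\noindent Finally, for the statement about $\pi$, I would show that over the generalised Rapoport locus of $Y^{\textrm{DP}}_\Sigma$ the PR filtration of $\omega^0_{A/S,\tau}$ is \emph{uniquely determined}: there $\omega^0_{A/S,\tau}$ is locally free of rank $e_\fp$ over $\ccO_S$ with an action of $\ccO_{F_\fp}/\varpi_\fp^{e_\fp}$ by which it is free of rank one, so the filtration by $\ker(\tau^i(\varpi_\fp))^{e_\fp - i}$ (adjusted for the $s_{\tau^i}$) is forced. Hence $\pi$ has trivial fibres over the Rapoport locus, and since both source and target are flat with the same generic fibre (the Rapoport locus contains the generic fibre because $p$ is invertible there, making every local model étale), $\pi$ is an isomorphism generically. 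Projectivity of $\pi$ plus surjectivity of the PR construction over the Rapoport locus give the isomorphism over the Rapoport locus itself, establishing birationality.
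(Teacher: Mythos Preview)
Your approach is essentially the same as the paper's: Grothendieck--Messing for the underlying $\ccO_D$-polarised abelian scheme, then an inductive lift of the Pappas--Rapoport flag one step at a time, with the $\tau^c$-component determined from the $\tau$-component by the polarisation duality. The paper isolates one technical lemma you gloss over: for an $\ccO_E$-stable local summand $\ccF \subset \mathcal{G}_\tau$ killed by $\prod_{j<i}([\varpi_\fp]-\tau^j(\varpi_\fp))$, the quotient $([\varpi_\fp]-\tau^i(\varpi_\fp))^{-1}\ccF/\ccF$ is locally free of rank $2$; this is what makes your ``Grassmannian-type local model of relative dimension $s_{\tau^i}(2-s_{\tau^i})$'' precise, and you should state and prove it rather than appeal to a basis-lifting argument.

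Two corrections. First, your intermediate dimension formula is wrong: $\sum_i s_{\tau^i}(2-s_{\tau^i})$ is \emph{not} $s_\tau(2-s_\tau)$ (take $e_\fp=2$, $s_{\tau^1}=s_{\tau^2}=1$). The correct count is simply $\#\{i : s_{\tau^i}=1\} = \#\{i : \theta^i \notin \Sigma_\infty\}$, and summing over $\theta \in \widehat{\Theta}_F$ gives $|\Theta_F \setminus \Sigma_\infty|$ directly, with no appeal to the $s_{\tau^c}=2-s_\tau$ relation needed (that relation is used only to reduce to filtrations on the $\tilde\theta$-side). Second, your description of the generalised Rapoport locus is the Hilbert-case one; in the unitary setting $\omega^0_\tau$ is not free of rank one over $\ccO_S[x]/E_\tau(x)$ but rather has the shape $\langle x^{b_\tau}e_1,\, x^{e_\fp-d_\tau}e_2\rangle$ in a trivialisation of $\ccH^1_{\dR}$, where $b_\tau=\#\{i:s_{\tau^i}=0\}$ and $d_\tau=\#\{i:s_{\tau^i}=2\}$. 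The uniqueness of the PR filtration over this locus then follows by a descending induction from $i=e_\fp$, using that $\omega^0_\tau(j)$ must contain $\ker x^{d_{\tau,j}}$ and has prescribed rank, which is how the paper argues it.
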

Note that a priori, this model depends on the choice of lifts $\tau$; we will show how to remove this dependence by introducing the notion of the dual filtration on $\tau^c$-components. Proceeding as in the unramified setting, we thus obtain our Quaternionic models as quotients of these Unitary splitting models.

\subsubsection{The Goren-Oort stratification}
Having established the models we will work with, we now define the Goren-Oort stratification of their special fiber:
Write $\underline{A}$ for the universal abelian scheme over the special fiber $\overline{Y}^{\prime}_\Sigma$ of $Y^{\prime \textrm{PR}}_\Sigma$, the Pappas-Rapoport filtration gives us a plethora of vector bundles, namely the subquotients of $\omega^0_\tau: = \omega^0_{A/\overline{Y}^{\prime}_\Sigma,\tau}$, $\omega^0_{\tau^i} = \omega^0_\tau(i) / \omega^0_\tau(i-1)$, locally free of rank $s_{\tau^i}$ by definition, and the subquotients of $\ccH^1_{\dR}(A/\overline{Y}^{\prime}_\Sigma)^0_\tau$, $\ccH^1_{\tau^i} = [\varpi_\fp]^{-1}\omega^0_\tau(i-1)/ \omega^0_\tau(i-1)$. The latter are locally free of rank two and can be thought of a way to make sense of de Rham cohomology at $\tau^i$.

Using the action of $[\varpi_\fp]$ and Verschiebung, we can define maps $V_{\es,\tau^i}:\ccH^1_{\tau^i} \to \ccH^1_{\tau^{i-1}}$, for $i >1$, and $V_{\es,\tau^1}:\ccH^1_{\tau^1} \to \ccH^{1 (p)}_{(\phi^{-1}\circ \tau)^{e_\fp}}$, at $i=1$ with $(p)$ denoting a $p$-twist, which we call essential Verschiebung after the corresponding morphisms from \cite{tian_xiao_2016} in the unramified case. These morphisms are defined such that they are isomorphisms when $s_{\tau^{i-1}} \neq 1$, that is when $\theta^{i-1} \in \Sigma_\infty$. We similarly morphisms in the opposite direction which we call essential Frobenius.

Note that we can define a cycle structure $\phi$ on $\Theta_F$ by defining $\phi(\theta^i) = \theta^{i+1}$ for $i < e_\fp$ and $\phi(\theta^{e_\fp})=(\phi \circ \theta)^1$. Furthermore, this induces a cycle structure $\phi^\prime$ on $\Theta_F \setminus \Sigma_\infty$. Given an element $\theta^i \notin \Sigma_\infty$, repeatedly composing essential Verschiebung and restricting yields a morphism $h_{\tau^i}: \omega^0_{\tau^i} \to \omega^{0 (p^n)}_{(\phi^\prime)^{-1}(\tau^i)}$, for the appropriate $p$-twist. The two sheaves are line bundles and we obtain a section $h_{\tau^i} \in H^0(\overline{Y}^\prime_\Sigma, (\omega^0)^{-1}_{\tau^i} \otimes (\omega^0)^{p^n}_{(\phi^\prime)^{-1}(\tau^i)})$ which we call the partial Hasse invariant at $\tau^i$. Restricting to the unramified case, this definition yields the partial Hasse invariant from \cite{tian_xiao_2016}, and restricting to $\Sigma = \emptyset$ yields the partial Hasse invariant from \cite{reduzzi2016partial}.

We can thus define the (closed) Goren-Oort strata $\overline{Y}^\prime_T$ of $\overline{Y}^\prime_\Sigma$ for subsets $T \subset \Theta_F \setminus \Sigma_\infty$ by the vanishing of the sections $h_{\tau^i}$ for $\theta^i \in T$. In particular, this definition agrees with that of \cite{tian_xiao_2016} and \cite{reduzzi2016partial}. Using the usual arguments, we descend this stratification to a stratification at the Quaternionic level, where we denote everything by removing the $\cdot^\prime$ superscript. Deducing the analogous results at the Unitary level, we obtain the following local properties of the strata:

\begin{thm}
For each subset $T \subset \Theta_F \setminus \Sigma_\infty$, the closed Goren-Oort stratum $\overline{Y}_T \subset \overline{Y}_\Sigma$ is smooth of codimension $\vert T \vert$. Furthermore, it is proper if $T \cup \Sigma_\infty \neq \emptyset$.
\end{thm}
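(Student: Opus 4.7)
The plan is to first prove both assertions at the Unitary level for the analogously defined strata $\overline{Y}'_T \subset \overline{Y}'_\Sigma$, and then descend to $\overline{Y}_T$ via the finite \'etale quotient construction used to pass from Unitary to Quaternionic models, which preserves the stratification by definition. Both smoothness of codimension $\vert T \vert$ and properness transfer unchanged under such a quotient.

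For smoothness, the key point is to show that the partial Hasse invariants $h_{\tau^i}$ for $\theta^i \in T$ form a regular sequence in the completed local ring $\widehat{\ccO}_{\overline{Y}'_\Sigma, x}$ at every geometric point $x$. Since $\overline{Y}'_\Sigma$ is smooth of relative dimension $\vert \Theta_F \setminus \Sigma_\infty \vert$ by the first theorem above, crystalline and Grothendieck--Messing deformation theory applied to the universal abelian scheme together with its Pappas--Rapoport filtration yields explicit local coordinates: each pair $(\tau,i)$ with $s_{\tau^i}=1$ (equivalently $\theta^i \notin \Sigma_\infty$) contributes one deformation parameter $t_{\tau^i}$ governing the lift of the corresponding Hodge filtration step, while positions with $s_{\tau^i} \in \{0,2\}$ are rigid. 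One then computes the class of each $h_{\tau^i}$ in $\fm_x/\fm_x^2$: because the essential Verschiebungs are isomorphisms at positions where $s \neq 1$, the defining chain of $h_{\tau^i}$ passes freely through the $\Sigma_\infty$-positions and, modulo $\fm_x^2$, collapses to a unit multiple of the single parameter $t_{\tau^i}$. Hence $\{h_{\tau^i}\}_{\theta^i \in T}$ extends to a regular system of parameters and its common vanishing is smooth of codimension $\vert T \vert$.

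For properness, split into two cases. If $\Sigma_\infty \neq \emptyset$, then $\overline{Y}_\Sigma$ is itself proper because $B$ is non-split at some archimedean place, making the complex uniformisation a compact quotient; consequently $\overline{Y}_T$ is a closed subscheme of a proper scheme and thus proper. If $\Sigma_\infty = \emptyset$ but $T \neq \emptyset$, we are in the Hilbert setting: $\overline{Y}_\Sigma$ is not proper but admits a toroidal compactification to which the partial Hasse invariants extend as sections of automorphic line bundles. Along the cusps the universal semiabelian scheme degenerates to a torus and Verschiebung becomes an isomorphism on the relevant components, so each $h_{\tau^i}$ is invertible along the boundary; its vanishing locus is therefore disjoint from the boundary, and $\overline{Y}_T$ is proper.

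The main technical obstacle is the local computation in the second paragraph: carefully tracking the essential Verschiebung chains across the Pappas--Rapoport filtration to identify each $h_{\tau^i}$, modulo $\fm_x^2$, with the parameter $t_{\tau^i}$ up to a unit. The interaction between distinct primes $\fp \vert p$, particularly via the wrap-around map $V_{\es,\tau^1}: \ccH^1_{\tau^1} \to \ccH^{1 (p)}_{(\phi^{-1} \circ \tau)^{e_\fp}}$ that crosses residue fields and introduces a Frobenius twist, is where the bookkeeping becomes delicate and where checking independence of the linear parts across the various $\tau^i$ with $\theta^i \in T$ requires the most care.
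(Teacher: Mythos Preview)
Your smoothness argument is the paper's: it simply says to ``adapt the proof of [smoothness of the PR model]'', which is exactly the Grothendieck--Messing computation you outline, identifying each $h_{\tau^i}$ with the deformation parameter $t_{\tau^i}$ modulo $\fm_x^2$.

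For properness the paper takes a different, uniform route. It runs the valuative criterion directly: given a DVR $R$ with fraction field $K$ and a $K$-point of $\widetilde{Y}'_T$, pass to semistable reduction and examine the rational cocharacter group $X_*(\mathbb{T})_\bQ$ of the torus part of the N\'eron model. The $D$-action forces this to be either $0$ or $E^2$; in the latter case the Kottwitz trace condition forces $\Sigma_\infty = \emptyset$, and since Verschiebung is then an isomorphism the point cannot lie in any $\widetilde{Y}'_T$ with $T \neq \emptyset$. So under either hypothesis the torus part vanishes, the N\'eron model is an abelian scheme, and one checks the Pappas--Rapoport filtration extends uniquely. This avoids any case split and any appeal to compactifications.

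Your case split is natural but has a genuine gap in the branch $\Sigma_\infty \neq \emptyset$: compactness of the complex uniformisation tells you the generic fibre over $L$ is proper, not that the special fibre $\overline{Y}_\Sigma$ over $\overline{\bF}_p$ is. You need properness of the integral model over $\ccO$, and that does not follow formally from properness over $\bC$ (remove a closed point from the special fibre of $\bP^1_\ccO$). Since the quaternionic model here is \emph{constructed} from the unitary PEL moduli rather than cited as an integral canonical model (which would not apply when $p$ ramifies anyway), you still owe a valuative-criterion argument for the PEL problem --- at which point you have essentially reproduced the paper's proof. Your branch $\Sigma_\infty = \emptyset$ via toroidal compactification and invertibility of the Hasse invariants along the boundary is valid (and the paper alludes to this machinery elsewhere, for nonemptiness of strata), but it imports more than the N\'eron-model argument needs.
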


We will in fact give a global description of these strata, extending the similar result from \cite{tian_xiao_2016}, that these strata are isomorphic to products of $\mathbb{P}^1$ over (splitting models) of auxiliary Quaternionic Shimura varieties. To do so however, we will go to Iwahori level and prove the analogous result and show how it implies the former.

\subsubsection{Iwahori level models}

As stated above, we now introduce Iwahori level models and their stratifications, in particular, we will consider level $U_0(\Fp) = UI_0(\Fp)$, where $U$ is some prime-to-$p$ level and $I_0(\Fp)$, where we recall that $\Fp$ is the radical of $p\ccO_F$, is given by
\[I_0(\Fp)=\{g \in \textrm{GL}_2(\ccO_{F,\Fp}) \, \vert \, g \equiv \left(\begin{smallmatrix} * & * \\ 0 & * \end{smallmatrix}\right) \, \textrm{mod} \, \fP \},\]
via the isomorphism $\ccO_{B,p} \simeq GL_2(\ccO_{F,p})$. As usual, we define the analogous Unitary models and descend.

To do so, we consider the moduli space of isogenies $Y^\prime_0(\Fp)$ given by isomorphism classes $(\underline{A_1},\underline{A_2},f,g)$, where the $\underline{A_i}$ define points of $Y^\prime_\Sigma$, $f:A_1 \to A_2$ is a $\ccO_D$-linear isogeny with prescribed kernel in $A_1[\Fp]$ which respects the polarizations and level structures, and $g: A_2 \to A_1 \otimes \Fp^{-1}$ is the unique isogeny such that $g \circ f : A_1 \to A_1 \otimes \Fp^{-1}$ is the isogeny induced by the inclusion $\ccO_F \hookrightarrow \Fp^{-1}$. Furthermore, we require both $f$ and $g$ to respect the Pappas-Rapoport filtrations on both sides. Despite having this extra condition on $g$, we will see that it follows automatically from the condition on $f$, we will keep it in the notation however for book-keeping.

There is a natural stratification on the special fiber $\overline{Y}_0^\prime(\Fp)$ given as follows: Write $A_j$ for the universal abelian schemes over $\overline{Y}_0^\prime(\Fp)$, then for any $\theta^i \notin \Sigma_\infty$, the sheaves $\omega^0_{j,\tau^i} = \omega^0_{A_j/\overline{Y}_0^\prime(\Fp),\tau^i}$ are line bundles and $f$ and $g$ respect the filtrations, they induce morphisms $f^*_{\tau^i}: \omega^0_{2,\tau^i} \to \omega^0_{1,\tau^i}$ and $g^*_{\tau^i}: \omega^0_{1,\tau^i} \otimes \Fp \to \omega^0_{2,\tau^i}$. 
It is straightforward to see that the composition $f^*_{\tau^i} \circ g^*_{\tau^i} = 0$. This leads to defining for pairs of subsets $I,J \subset \Theta_F \setminus \Sigma_\infty$, $\overline{Y}^\prime_0(\Fp)_{\phi^\prime(I),J}$ as the vanishing of $f^*_{\tau^i}$ for $\theta^i \in I$ and $g^*_{\tau^i}$ for $\theta^i \in J$. An explicit analysis using crystalline deformation theory and descending the above constructions to the Quaternionic level yields the following result:
\begin{thm}
    $Y_0(\Fp)$ is a reduced complete intersection, flat over $\ccO$ of relative dimension $\vert \Theta_F \setminus \Sigma_\infty \vert$. Furthermore, the strata $\overline{Y}_0(\Fp)_{\phi^\prime(I),J} \subset \overline{Y}_0(\Fp)$ are smooth of codimension $\vert I \cap J \vert$ for the pairs $I,J$ such that $I \cup J = \Theta_F \setminus \Sigma_\infty$.
\end{thm}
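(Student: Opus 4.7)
Mirroring the strategy used at tame level, the plan is to first establish both assertions at the Unitary level for $Y^\prime_0(\Fp)$, then descend to the Quaternionic model by the same finite quotient construction introduced earlier in the excerpt. Since this descent preserves flatness, the l.c.i.\ property, reducedness and relative dimension, and since the closed subschemes $\overline{Y}^\prime_0(\Fp)_{\phi^\prime(I),J}$ are visibly stable under the quotient action, the whole problem reduces to a local analysis of $\overline{Y}^\prime_0(\Fp)$ at each closed geometric point.

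Fix $x = (\underline{A_1},\underline{A_2},f,g) \in \overline{Y}^\prime_0(\Fp)(\overline{\bF}_p)$. By Grothendieck-Messing theory, lifts of $x$ to a small Artin local thickening correspond bijectively to lifts of the Pappas-Rapoport filtrations on the reduced Hodge components of the Dieudonné crystals of $A_1$ and $A_2$, compatible with the crystal map induced by $f$; the analogous condition for $g$ is automatic, as already noted in the setup. Because these crystals split over $\widehat{\Theta}_F$ and each $\tau$-factor splits further into its $e_\fp$ graded pieces, the completed local ring $\widehat{\ccO}_{x}$ decomposes as a completed tensor product of factors $R_{\tau^i}$. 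At archimedean positions $\theta^i \in \Sigma_\infty$ the filtration is rigid ($s_{\tau^i} \in \{0,2\}$) and $R_{\tau^i}$ is unramified. At $\theta^i \notin \Sigma_\infty$ we have $s_{\tau^i}=1$: the relevant graded pieces on either side are line bundles, and the maps $f^*_{\tau^i}$ and $g^*_{\tau^i}$ lift to universal parameters $x_{\tau^i},y_{\tau^i}$. The standard Pappas-type analysis of a rank-one $p$-isogeny, together with the identity relating $g \circ f$ with the action of $[\varpi_\fp]$ on the corresponding graded piece, yields the single relation $x_{\tau^i} y_{\tau^i} = \varpi_\fp \cdot u$ with $u \in R_{\tau^i}^\times$ exactly when both maps vanish at $x$; otherwise $R_{\tau^i}$ is generated by a single free parameter.

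Assembling these factors, $\widehat{\ccO}_x$ is explicitly a flat l.c.i.\ quotient of a power series ring over $\ccO$ of relative dimension $\vert \Theta_F \setminus \Sigma_\infty \vert$, reduced because its generic fiber is smooth (each relation becomes a unit after inverting $p$). For the stratum $\overline{Y}^\prime_0(\Fp)_{\phi^\prime(I),J}$, the hypothesis $I \cup J = \Theta_F \setminus \Sigma_\infty$ guarantees that at every non-archimedean position at least one of $x_{\tau^i}, y_{\tau^i}$ is set to zero; at the $\vert I \cap J \vert$ positions where both vanish the relation forces $\varpi_\fp = 0$ and contributes to the codimension while eliminating the parameters, so the resulting completed local ring is regular of codimension $\vert I \cap J \vert$, giving smoothness. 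The main technical obstacle is the honest derivation of the local model equation $x_{\tau^i} y_{\tau^i} = \varpi_\fp \cdot u$ at an interior position of a ramified prime: one must verify that the Pappas-Rapoport constraint (each graded piece killed by $[\varpi_\fp]$) interacts with the isogeny compatibility on all graded pieces simultaneously so as to produce exactly one independent relation at each position of $I \cap J$ and no spurious ones elsewhere. Once the completed local rings are pinned down, the global statements and the descent to the Quaternionic setting follow formally.
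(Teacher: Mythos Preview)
Your proposal is correct and follows essentially the same approach as the paper: reduce to the Unitary model, compute completed local rings via crystalline deformation theory factored through the Pappas--Rapoport filtration, derive the explicit semistable-type equations $X_\beta X'_\beta = -\beta(\varpi_\fp)$ (or their linear degenerations) at each position $\beta \in \Theta_F \setminus \Sigma_\infty$, and read off flatness, l.c.i., reducedness, and smoothness of the strata directly from these equations before descending. The paper's Section~4.2.2 carries out in full the inductive bootstrapping you flag as the main technical obstacle---constructing explicit compatible bases for $\ccH^1_{j,\tau^i}$ at each stage, pinning down the matrix of $f^*_{\tau^i}$ in one of three normal forms, and verifying surjectivity and length equality of the maps $R_n \to S_n$---so what you have sketched is an accurate outline of that argument.
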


The appearance of the shift $\phi^\prime$ in the notation for the stratification is motivated by the following: Note that there is a natural projection $\pi:\overline{Y}_0(\Fp) \to \overline{Y}$ sending a tuple $(\underline{A_1},\underline{A_2},f,g)$ to $\underline{A_1}$. We have the following compatibility with the Goren-Oort strata downstairs.

\begin{thm}\label{goren oort implication}
    The natural projection $\overline{Y}_0(\Fp) \to \overline{Y}$ restricts on strata to morphisms $\pi: \overline{Y}_0(\Fp)_{\phi^\prime(I),J} \to \overline{Y}_{I \cap J}$. In particular, the restriction
    \[ \pi: \overline{Y}_0(\Fp)_{\Theta_F \setminus \Sigma_\infty, \emptyset} \to Y\]
    is an isomorphism which maps the stratum $\overline{Y}_0(\Fp)_{\Theta_F \setminus \Sigma_\infty, T}$ isomorphically onto $\overline{Y}_T$.
\end{thm}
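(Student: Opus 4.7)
The plan is to work first on the Unitary splitting model $\overline{Y}^\prime_0(\Fp)$ and then descend to the Quaternionic level via the usual quotient argument. For the first assertion, the task is to translate vanishing of $f^*_{\tau^i}$ and $g^*_{\tau^i}$ into vanishing of the partial Hasse invariant $h_{\tau^i}$ on $A_1$. I would use the defining relation that $g \circ f : A_1 \to A_1 \otimes \Fp^{-1}$ is the natural inclusion, which on the Pappas-Rapoport filtrations realizes the action of $[\varpi_\fp]$. Combined with the fact that $f$ and $g$ preserve the filtrations, this gives a factorization of essential Verschiebung of $A_1$ at $\tau^i$ through the $\tau^i$-piece of $A_2$, built out of $f^*_{\tau^i}$, $g^*_{\tau^i}$, and the internal $[\varpi_\fp]$-shift between consecutive subquotients inside a single prime block. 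Whenever both $f^*_{\tau^i}$ and $g^*_{\tau^i}$ vanish, the induced essential Verschiebung on the line bundle subquotient $\omega^0_{1,\tau^i}$ is zero; iterating around the $\phi^\prime$-cycle then forces $h_{\tau^i}$ to vanish, which is precisely the defining condition of $\overline{Y}_{I \cap J}$ at that index.

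For the isomorphism claim, I would construct an explicit inverse to $\pi$ over $\overline{Y}_0(\Fp)_{\Theta_F \setminus \Sigma_\infty, \emptyset}$ by sending $\underline{A} \mapsto (\underline{A}, \underline{A^{(p)}}, F, V)$, where $A^{(p)}$ is equipped with the Frobenius pullback of the Pappas-Rapoport filtration on $A$ and $V$ is Verschiebung. One checks directly that $F$ annihilates every nonzero graded piece of $\omega^0$, that $V \circ F = [p]$ yields the required factorization through $A^{(p)}$ of the natural isogeny $A \to A \otimes \Fp^{-1}$, and that $F$ and $V$ both respect the filtrations, so the tuple really lies in the prescribed stratum. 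For the reverse direction, on this stratum $f^*_{\tau^i}$ vanishes at every $\theta^i \notin \Sigma_\infty$, so $f$ has zero derivative on every nonzero graded piece and therefore factors through the relative Frobenius of $A_1$; a degree comparison using the prescribed form of $\ker f \subset A_1[\Fp]$ then forces $f = F$, whence $A_2 = A_1^{(p)}$ and $g = V$ by uniqueness. Descent to the Quaternionic level is routine since the construction is natural in $\underline{A}$. Finally, the stratum compatibility $\overline{Y}_0(\Fp)_{\Theta_F \setminus \Sigma_\infty, T} \simeq \overline{Y}_T$ is then immediate from the first part: under the inverse, vanishing of $g^*_{\tau^i}$ for $\theta^i \in T$ corresponds exactly to vanishing of $h_{\tau^i}$ on $A_1$.

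The main obstacle I anticipate is the factorization step in the first paragraph. In the ramified setting, essential Verschiebung cycles through internal subquotients via $[\varpi_\fp]$ before passing to the next prime block with a Frobenius twist, so tracking how the vanishing of the individual $f^*_{\tau^i}$ and $g^*_{\tau^i}$ combines to kill the full iterated composition defining $h_{\tau^i}$ requires careful $p$-power bookkeeping. The Iwahori-level analogue of the filtration analysis of Reduzzi-Xiao should handle this if executed carefully on each graded piece, but it will require a genuine generalization of their argument rather than a black-box application.
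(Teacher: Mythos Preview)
Your approach to the first assertion is in the right spirit and close to what the paper does: it uses a single commutative square expressing functoriality of the Hasse-type operators under $f^*$ and $g^*$, together with the exact-pair relation $\textrm{Im}\,f^*_{\tilde\beta} = \ker g^*_{\tilde\beta}$ on the rank-two sheaves, to read off $\omega^0_{1,\tilde\beta} = \ker h_{\tilde\beta}$ directly. The ramified bookkeeping you worry about is not really the obstacle; the argument is short once the diagram is written down.

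The genuine gap is in your construction of the inverse. You propose $\underline{A}\mapsto(\underline{A},\underline{A^{(p)}},F,V)$ with ordinary Frobenius and Verschiebung. This works only in the unramified Hilbert case $\Sigma=\emptyset$. In general it fails for two independent reasons. First, degrees: the moduli problem requires $\ker f\subset A_1[\Fp]$ of order $p^{4f_\Fp}$, whereas $\ker F_{A/S}$ has order $p^{4d}$; when $p$ ramifies in $F$ one has $f_\Fp<d$ and these do not match, so your degree comparison ``forcing $f=F$'' would actually show the opposite. Second, signatures: when $\Sigma_\infty\neq\emptyset$ there are $\tau^i$ with $s_{\tau^i}\in\{0,2\}$, and the vanishing hypothesis on the stratum only controls $f^*_{\tau^i}$ for $\theta^i\notin\Sigma_\infty$; on the rank-two graded pieces $f^*$ need not vanish, so $f$ need not factor through relative Frobenius, and the Frobenius-pullback filtration on $A^{(p)}$ will not have type $\widetilde\Sigma_\infty$.

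The paper handles this by constructing an \emph{essential Frobenius isogeny} $F_{\es,\Fp}:A\to A^{(\Fp)}$ (not $A^{(p)}$), whose kernel is built by hand as a Raynaud $\ccO_E/\Fp$-module scheme inside $A[\Fp]$ using the line bundles $\omega^0_{\tau^{e_\fp}}$ or certain quotients $\ccH^1_{\tau^{e_\fp}}/F_{\es}(\cdot)$ depending on whether $\theta^{e_\fp}\in\Sigma_\infty$. The filtration on $A^{(\Fp)}$ is then defined via $(F^*_{\es,\Fp})^{-1}$ of shifted filtration pieces of $A$. Injectivity of $\pi_1$ on geometric points is proved not by identifying $f$ with Frobenius but by a Dieudonn\'e-module computation showing that $f^*\Delta_2\subset\Delta_1$ is already determined by $\Delta_1$ and its filtration. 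Your proposal is missing this entire construction, which is one of the main technical inputs of the paper.
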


The first part of the statement is a straightforward consequence of the definitions. To prove the second part, which can be seen as a special instance of our main result, we opt to construct, using Raynaud theory, an explicit section of the projection via an isogeny $F_{\es,\Fp}: A \to A^{(\Fp)}$ which we call the essential Frobenius isogeny. Indeed, in the usual unramified Hilbert case, this section is given by the normal Frobenius isogeny. Furthermore, the name is also motivated by the fact that its adjoint isogeny $V_{\es,\Fp}: A^{(\Fp)} \to A \otimes \Fp^{-1} $, which we call the essential Verschiebung is in some sense (made more precise in proposition \ref{essential name justification}) a geometric incarnation of the essential Verschiebung operators defined previously.

\subsubsection{Geometric Jacquet Langlands relations mod $p$}

We can now state the main result of this thesis. We need to consider the geometric special fiber $\overline{Y} = Y_{0}(\Fp)_{\overline{\bF}_p}$ of $Y_0(\Fp)$ and a smooth stratum $\overline{Y}_0(\Fp)_{\phi^\prime(I),J}$. To simplify the statement of the theorem we make the further assumption that at no prime $\fp$ of $F$ over $p$, $I \cap J$ contains $\Theta_{F,\fp} \setminus \Sigma_{\infty,\fp}$, the set of places above $\fp$ unramified in $B$. For each $\beta = \theta^i \in \Theta_F$, we have a locally free sheaf $\ccH^1_\beta$ of rank 2, obtained by descent from the ones at Unitary level defined above, and we define an even set of places $\Sigma_{IJ}$ containing $\Sigma$ and a subset $R \subset \Theta_F$, such that writing $\overline{Y}_{IJ} = Y_{\Sigma_{IJ},\overline{\bF}_p}$, we have the following:
\begin{thm}
 For each pair $I,J \subset \Theta_F \setminus \Sigma_\infty$ as above, and sufficiently small open compact subgroup $U \subset G_\Sigma(\mathbb{A}_f)$ containing $\ccO^\times_{B,p}$, there is a Hecke equivariant isomorphism 
 \[ \overline{Y}_0(\Fp)_{\phi^\prime(I),J} \xrightarrow{\sim} \prod_{\beta \in R} \mathbb{P}^1_{\overline{Y}_{IJ}}(\ccH^1_\beta),\]
 where the fiber product is taken over $\overline{Y}_{IJ}$.
\end{thm}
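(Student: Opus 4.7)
The plan is to adapt and extend the splicing construction of \cite{2020arXiv200100530D} to the Pappas-Rapoport setting, working at the level of the Unitary Shimura varieties where a genuine PEL moduli interpretation is available, and then descending the resulting isomorphism to the Quaternionic setting. I would first pass from $\overline{Y}_0(\Fp)_{\phi^\prime(I),J}$ to its Unitary lift $\overline{Y}^\prime_0(\Fp)_{\phi^\prime(I),J}$, whose $S$-points parametrize tuples $(\underline{A_1},\underline{A_2},f,g)$ equipped with Pappas-Rapoport filtrations on both sides. The combinatorial data $(I,J)$, via the signature type encoded by the $s_{\tau^i}$, determines the quaternion algebra $B_{\Sigma_{IJ}}/F$ with even ramification set $\Sigma_{IJ}\supseteq \Sigma$ and the subset $R\subset \Theta_F$ at which the Hodge filtration of the spliced abelian scheme will remain undetermined and hence contribute the $\mathbb{P}^1$-factors.

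The core step is the functorial construction, over $\overline{Y}^\prime_0(\Fp)_{\phi^\prime(I),J}$, of an auxiliary abelian scheme $A_{IJ}$ with $\ccO_D$-action and a Pappas-Rapoport filtration of signature $\Sigma_{IJ}$, obtained by splicing $A_1$ and $A_2$. At each embedding $\tau^i$ the stratum conditions force either $f^*_{\tau^i}=0$ or $g^*_{\tau^i}=0$, which, as in \cite{2020arXiv200100530D}, canonically selects either a graded piece of $\ccH^1_{1,\tau^i}$ or of $\ccH^1_{2,\tau^i}$; these local choices assemble into an $\ccO_E$-stable chain inside the reduced Hodge bundle of a new $p$-divisible group, which by Serre-Tate and Grothendieck-Messing lifts to an actual abelian scheme. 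The main technical obstacle, and the chief novelty required beyond \cite{2020arXiv200100530D}, is the verification in the Pappas-Rapoport setting that the spliced chain is stable under $[\varpi_\fp]$, has the prescribed graded pieces of the correct rank, and is compatible with the polarization. In the unramified case one only splices a single Hodge filtration at each $\tau$, whereas here an entire chain of length $e_\fp$ must be spliced, and the splice must interact correctly with the essential Verschiebung and the essential Frobenius isogeny $F_{\es,\Fp}$ constructed in Theorem \ref{goren oort implication}.

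Once the splice is in hand, I obtain a morphism $\Phi: \overline{Y}^\prime_0(\Fp)_{\phi^\prime(I),J}\to \overline{Y}^\prime_{\Sigma_{IJ}}$, and the remaining work is to identify its fibers. For $\beta\in R$ the spliced filtration at $\beta$ leaves a genuinely free choice of line inside $\ccH^1_\beta$, yielding the $\mathbb{P}^1(\ccH^1_\beta)$ factor, while for $\beta\notin R$ a case-by-case analysis over $I\setminus J$, $J\setminus I$, $I\cap J$ and the complement (supplemented, at ramified primes, by the graded pieces where $s_{\tau^i}\in\{0,2\}$) shows that the stratum conditions together with the splice pin down the filtration uniquely. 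A dimension count against the smoothness and codimension statements already established for both sides then upgrades the resulting map to the fiber product into an isomorphism. Hecke equivariance at prime-to-$p$ level is immediate from the functoriality of the splice in prime-to-$p$ isogenies, and the descent from Unitary to Quaternionic level is identical to that used in constructing the models themselves, giving the stated isomorphism for $\overline{Y}_0(\Fp)_{\phi^\prime(I),J}$.
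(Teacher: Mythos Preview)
Your overall architecture---pass to the Unitary moduli, splice, then descend---matches the paper's. However, two steps in your proposal are either vague or incorrect as stated.

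First, the splice construction is not quite right. You propose to assemble local choices into an $\ccO_E$-stable chain inside the reduced Hodge bundle of a new $p$-divisible group, ``which by Serre-Tate and Grothendieck-Messing lifts to an actual abelian scheme.'' But Grothendieck--Messing is a deformation tool: it lifts an existing abelian scheme along a PD-thickening given a lift of the Hodge filtration; it does not manufacture an abelian scheme from a filtration alone. The paper (following \cite{2020arXiv200100530D}) instead constructs $A_{IJ}$ as a genuine quotient: one first produces, via Raynaud theory, explicit finite flat subgroups $C_T\subset H_1[\mathfrak{Q}]$ and $C_{T^c}\subset H_2[\mathfrak{Q}^c]$ of the universal kernels, and sets $A_{IJ}=(A_2\otimes\fp)/H'_{T^c}$. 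The Pappas--Rapoport filtration on $A_{IJ}$ is defined \emph{afterwards}, by transporting the filtrations of $A_1$ and $A_2$ through the resulting isogenies. The Raynaud step is where the combinatorics of $T$, $T'$, $T^1$ enter and is the technical heart of the ramified generalization; the paper develops a theory of \emph{partial} Raynaud $\bF_q$-schemes specifically for this purpose.

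Second, and more seriously, ``a dimension count\ldots upgrades the resulting map to the fiber product into an isomorphism'' is not a proof. Equal-dimensional smooth varieties admit many non-isomorphic morphisms; you would need at minimum bijectivity on geometric points together with injectivity on tangent spaces (as in \cite{2020arXiv200100530D}), or some properness and quasi-finiteness argument, none of which you supply. The paper sidesteps this entirely by constructing an \emph{explicit inverse} $\widetilde{\Xi}'_{IJ}$: starting from the universal abelian scheme over $\prod_{\beta\in R}\mathbb{P}^1(\ccH^1_{\tilde\beta})$, one uses partial Raynaud data built from essential Frobenius, essential Verschiebung, and the tautological lines $L_{\tilde\beta}$ to cut out subgroups of $A[\fq]$ and $A[\fq^c]$, thereby reconstructing $A_1$, $A_2$, the isogeny $f$, and both filtrations. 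The proof then checks directly, via Dieudonn\'e-module and de Rham computations, that $\widetilde{\Psi}'_{IJ}\circ\widetilde{\Xi}'_{IJ}$ and $\widetilde{\Xi}'_{IJ}\circ\widetilde{\Psi}'_{IJ}$ are the identity. This explicit, moduli-theoretic inverse is singled out in the paper as its main technical advance over the pointwise methods of \cite{2020arXiv200100530D}, and your proposal is missing it.
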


This result is a direct generalization of Theorem 5.3.1 of \cite{2020arXiv200100530D} in which they only considered the unramified Hilbert setting and the top dimensional strata, that is $I = J^c$. Before describing the proof, which expands on their method, we note that combining the above theorem with Theorem \ref{goren oort implication} immediately yields the promised description of the Goren-Oort strata over $\overline{\bF}_p$ for subsets $T$ such that $T$ does not contain contains $\Theta_{F,\fp} \setminus \Sigma_{\infty,\fp}$ for any prime $\fp$. Writing $\Sigma_T = \Sigma_{(\Theta_F \setminus \Sigma_\infty)\,T}$ we obtain:

\begin{thm}
 For each subset $T \subset \Theta_F \setminus \Sigma_\infty$ as above, and sufficiently small open compact subgroup $U \subset G_\Sigma(\mathbb{A}_f)$ containing $\ccO^\times_{B,p}$, there is a Hecke equivariant isomorphism 
 \[ \overline{Y}_T \xrightarrow{\sim} \prod_{\beta \in R} \mathbb{P}^1_{\overline{Y}_{\Sigma_T}}(\ccH^1_\beta),\]
 where the fiber product is taken over $\overline{Y}_{\Sigma_T}$.
\end{thm}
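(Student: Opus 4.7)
The plan is to deduce this statement by specializing the preceding Iwahori-level theorem to a privileged choice of strata and then transporting the resulting isomorphism down via the projection $\pi$ of Theorem~\ref{goren oort implication}. Concretely, I would apply the Iwahori theorem with $I = \Theta_F \setminus \Sigma_\infty$ and $J = T$. This pair satisfies $I \cup J = \Theta_F \setminus \Sigma_\infty$, so the smoothness hypothesis of the Iwahori theorem holds, and the assumption in the present statement that $T$ does not contain any $\Theta_{F,\fp}\setminus \Sigma_{\infty,\fp}$ is exactly the extra hypothesis $I\cap J = T$ of the Iwahori theorem. Since $\phi'$ is a bijection of $\Theta_F \setminus \Sigma_\infty$, the first index $\phi'(I) = \Theta_F \setminus \Sigma_\infty$, and the stratum produced is $\overline{Y}_0(\Fp)_{\Theta_F \setminus \Sigma_\infty,\, T}$.

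Next I would invoke Theorem~\ref{goren oort implication}, which provides a Hecke-equivariant isomorphism $\pi : \overline{Y}_0(\Fp)_{\Theta_F \setminus \Sigma_\infty,\, T} \xrightarrow{\sim} \overline{Y}_T$. Composing this with (the inverse of) the Iwahori-level isomorphism from the preceding theorem yields a Hecke-equivariant isomorphism
\[
\overline{Y}_T \xrightarrow{\sim} \prod_{\beta \in R} \mathbb{P}^1_{\overline{Y}_{\Sigma_T}}(\ccH^1_\beta),
\]
where $\Sigma_T = \Sigma_{(\Theta_F\setminus\Sigma_\infty)\, T}$ matches the $\Sigma_{IJ}$ appearing upstairs by construction, and the set $R \subset \Theta_F$ is the same in both formulations because it is defined entirely in terms of $I \cap J = T$ and $\Sigma$.

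A small bookkeeping point that needs checking is the compatibility of the sheaves $\ccH^1_\beta$: upstairs they live on $\overline{Y}_0(\Fp)_{\phi'(I),J}$ as subquotients built from the Pappas-Rapoport filtration of $A_1$, while downstairs they are constructed directly from the universal abelian scheme on $\overline{Y}_T$. I would verify, using that the privileged section of $\pi$ is the essential Frobenius $F_{\es,\Fp}$ (so that the pullback of the universal $A$ upstairs agrees, as filtered $\ccO_D$-modules at the relevant components, with the universal $A_1$ downstairs), that $\pi^*\ccH^1_\beta$ coincides with the $\ccH^1_\beta$ used in the Iwahori theorem for every $\beta \in R$. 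Hecke equivariance is automatic from the Hecke equivariance of each input.

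The only nontrivial input here is the Iwahori-level theorem together with Theorem~\ref{goren oort implication}; the present statement is a clean corollary. Accordingly, I expect no genuine obstacle beyond the identification of the sheaves $\ccH^1_\beta$ under $\pi$ and the matching of the combinatorial data $(\Sigma_{IJ}, R)$ with $(\Sigma_T, R)$, both of which reduce to unwinding definitions once the specialization $I = \Theta_F \setminus \Sigma_\infty$, $J = T$ has been made.
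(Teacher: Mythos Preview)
Your proposal is correct and matches the paper's approach exactly: the paper derives this theorem by composing the Iwahori-level isomorphism $\Psi_{IJ}$ (with $I = \Theta_F \setminus \Sigma_\infty$, $J = T$) with the inverse $\psi_\Fp$ of the projection $\pi$ from Theorem~\ref{goren oort implication}. One small simplification: the sheaves $\ccH^1_\beta$ in both theorems live on the \emph{target} $\overline{Y}_{\Sigma_T} = \overline{Y}_{IJ}$, not on the source strata, so the bookkeeping point you raise about compatibility under $\pi$ is a non-issue --- the right-hand sides of the two statements are literally identical once the specialization is made.
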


As usual, we first prove the analogous result at Unitary level and then descend. Our argument expands on the splicing construction from \cite{2020arXiv200100530D}, in the case of the top dimensional strata in the Hilbert setting. It is in essence a pointwise construction. At Iwahori level we are able to construct a third abelian variety whose Dieudonn\'{e} module is obtained by splicing together the Dieudonn\'{e} modules of $A_1$ and $A_2$ from which we can recover both our original abelian varieties using the relations imposed by the stratum we are considering as well as the extra data provided by the $\mathbb{P}^1$-bundles. In the ramified setting, we also obtain the filtration of our third abelian variety by splicing together the filtrations of $A_1$ and $A_2$. For example, in the case that $p$ is totally ramified in $F$ our third abelian variety is either $A_1$ or $A_2$ and our morphism is constructed precisely by carefully forgetting certain parts of the filtration. We also note that what allows us to complete the description of the strata in the unramified Hilbert setting, and extend to the general Quaternionic setting is our systematic use of the essential Frobenius and Verschiebung isomorphisms, described above, which in some sense communicate between the different components of the relevant Dieudonn\'{e} modules.

The machinery which allows us to globalize this pointwise construction is the theory of Raynaud group schemes, introduced in \cite{BSMF_1974__102__241_0}, whose structure allows us to construct specific subgroups of the universal kernel. In fact, to obtain the full flexibility needed in our generality we introduce a mild generalization of this theory in characteristic $p$ and introduce what we call Partial Raynaud group schemes. It is only with this added flexibility that we are able to extend the splice construction in our generality and more crucially, it was the missing ingredient, which combined with crystalline Dieudonn\'{e} theory, to give an explicit, moduli theoretic, construction of the inverse map as opposed to doing pointwise calculations. We hope that this construction can be applied to the setting of more general PEL type Shimura varieties. 

Before giving an explicit example of Theorem 1.7, let us comment on the condition we imposed on $I \cap J$. We will see that the general recipe to construct $\Sigma_{IJ}$ and $R$ is given by applying it independently at each prime $\fp$. Consider then the case that there is only one prime above $p$ in $F$, then we imposed that $I \cap J \neq \Theta_F \setminus \Sigma_\infty$. If we have $I = J = \Theta_F \setminus \Sigma_\infty$, that is the bottom stratum, then as in \cite{tian_xiao_2016} there are two cases: If $\vert \Theta_F \setminus \Sigma_\infty \vert $ is even, the stratum is isomorphic to the Quaternionic Shimura variety whose underlying quaternion algebra has ramification set $\Sigma \sqcup (\Theta_F \setminus \Sigma_\infty)$ but now with Iwahori level. If $\vert \Theta_F \setminus \Sigma_\infty \vert $ is odd, the stratum is isomorphic to the Quaternionic Shimura variety whose underlying quaternion algebra has ramification set $\Sigma \sqcup (\Theta_F \setminus \Sigma_\infty) \sqcup \{\fp\}$. We do not explicitly cover the latter case in this thesis as this requires us, for technical reasons, to modify the definitions of our models. We nonetheless sketch the procedure in section \ref{bottom strata}.

The recipe to obtain $\Sigma_{IJ}$ in general is rather involved so we prefer to give a concrete example. Consider the case that $F/\bQ$ is a totally real extension of degree four with a single prime $\fp$ over $p$ and $\Sigma = \emptyset$. We note that the ramification of $p$ does not change the formulae in the following tables. We write $\Theta_F = \{\theta_1,\theta_2,\theta_3,\theta_4\}$ with the obvious relations under the shift $\phi$. There are $3^4 = 81$ different strata so we choose to present the formulae in three different tables: The first being the top dimensional strata, the second the Goren-Oort strata and finally the remaining intermediate strata. Since swapping $I$ and $J$ produces identical formulae, we only include one of each such pair.
\[\adjustbox{scale = 0.95, center}{
\renewcommand{\arraystretch}{1}%
\begin{tabular}{| c | c | c |}
\hline
Stratum $\overline{Y}_{\phi(J^c),J}$ & Quaternion algebra $B_{\Sigma_J}$ & $\mathbb{P}^1$-bundles  \\ \hline
 $J= \emptyset$     &                   $\Sigma_J = \emptyset$             &   $R = \emptyset$                        \\ \hline
    $J =\{\theta_i\}$    &      $\Sigma_J = \{\theta_{i-1},\theta_i\}$                          &  $R = \{\theta_{i-1},\theta_i\}$                       \\ \hline
    $J = \{\theta_i,\theta_{i+1}\}$    &   $\Sigma_J = \{\theta_{i-1},\theta_{i+1}\}$                       &   $R = \{\theta_{i-1},\theta_{i+1}\}$                       \\ \hline
    $J = \{\theta_i,\theta_{i+2}\}$     &       $\Sigma_J = \Theta_F$                         &    $R = \Theta_F$                       \\ \hline
    $J = \{\theta_i,\theta_{i+1},\theta_{i+2}\}$     &    $\Sigma_J = \{\theta_{i-1},\theta_{i+2}\}$          &   $R = \{\theta_{i-1},\theta_{i+2}\}$                        \\ \hline
\end{tabular}
}\]
Note that in the above table, we have $\Sigma_J = R$, this is expected if only for dimension reasons. Furthermore, the formula for $\Sigma_J$ is given by $\Sigma_J= \{ \theta_i \in J \, \vert \, \phi ( \theta_i) \notin J \} \cup \{ \theta_i \notin J \, \vert \, \phi ( \theta_i) \in J \}.$

\[\adjustbox{scale = 0.95, center}{
\renewcommand{\arraystretch}{1}%
\begin{tabular}{| c | c | c |}
\hline
Stratum $\overline{Y}_{T}$ & Quaternion algebra $B_{\Sigma_T}$ & $\mathbb{P}^1$-bundles  \\ \hline
    $T =\{\theta_i\}$    &      $\Sigma_T = \{\theta_{i-1},\theta_i\}$                          &  $R = \{\theta_{i-1},\theta_i\}$                       \\ \hline
    $T = \{\theta_i,\theta_{i+1}\}$    &   $\Sigma_T = \{\theta_{i},\theta_{i+1}\}$                       &   $R = \emptyset$                       \\ \hline
    $T = \{\theta_i,\theta_{i+2}\}$     &       $\Sigma_T = \Theta_F$                         &    $R = \{\theta_{i+1},\theta_{i+3}\}$                       \\ \hline
    $T = \{\theta_i,\theta_{i+1},\theta_{i+2}\}$     &    $\Sigma_T = \Theta_F $          &   $R = \{\theta_{i+3}\}$                        \\ \hline
    $T= \Theta_F$     &                   $\Sigma_T = \Theta_F$ + Iwahori level             &   $R = \emptyset$                        \\ \hline
\end{tabular}
}\]

We remark that this produces the same varieties as in table 1.5.3 of \cite{tian_xiao_2016}. We also note the appearance of Iwahori level for the bottom stratum $T = \Theta_F$. The formula for $\Sigma_T$ here is given by "evening out" $T$: Write $T$ as a disjoint union of maximal cycles in $\Theta_F$ with respect to $\phi$, $\Sigma_T$ is obtained by appending to $T$ the elements $\phi^{-1}(\theta_i)$, where $\theta_i$ is the first element of a chain of $T$ of odd length. $R$ is then given by $\Sigma_T \setminus T$.

\[\adjustbox{scale = 0.95, center}{
\renewcommand{\arraystretch}{1}
\begin{tabular}{| c | c | c |}
\hline
Stratum $\overline{Y}_{\phi(I),J}$ & Quaternion algebra $B_{\Sigma_{IJ}}$ & $\mathbb{P}^1$-bundles  \\ \hline
    $(I,J) =(\{\theta_i,\theta_{i+1}\}, \{\theta_i,\theta_{i+2},\theta_{i+3}\})$    &      $\Sigma_{IJ} = \{\theta_i,\theta_{i+1}\}$                          &  $R = \{\theta_{i+1}\}$                       \\ \hline
    $(\{\theta_i,\theta_{i+1}\}, \{\theta_{i+1},\theta_{i+2},\theta_{i+3}\})$    &   $\Sigma_{IJ} = \{\theta_{i+1},\theta_{i+3}\}$                       &   $R = \{\theta_{i+3}\}$                       \\ \hline
    $(\{\theta_i,\theta_{i+2}\}, \{\theta_i,\theta_{i+1},\theta_{i+3}\})$     &       $\Sigma_{IJ} = \Theta_F$                         &    $R = \{\theta_{i+1},\theta_{i+2},\theta_{i+3}\}$                       \\ \hline
    $(\{\theta_i,\theta_{i+2}\}, \{\theta_{i+1},\theta_{i+2},\theta_{i+3}\})$     &    $\Sigma_{IJ} = \Theta_F $          &   $R = \{\theta_{i},\theta_{i+1},\theta_{i+3}\}$                        \\ \hline
    $(\{\theta_i,\theta_{i+1},\theta_{i+2}\}, \{\theta_{i},\theta_{i+1},\theta_{i+3}\})$     &                   $\Sigma_{IJ} = \Theta_F$              &   $R = \{\theta_{i+2},\theta_{i+3}\}$                        \\ \hline
    $(\{\theta_i,\theta_{i+1},\theta_{i+2}\}, \{\theta_{i},\theta_{i+2},\theta_{i+3}\})$     &                   $\Sigma_{IJ} = \{\theta_i,\theta_{i+2}\}$              &   $R = \emptyset$                        \\ \hline
    $(\{\theta_i,\theta_{i+1},\theta_{i+2}\}, \{\theta_{i+1},\theta_{i+2},\theta_{i+3}\})$     &                   $\Sigma_{IJ} = \Theta_F$             &   $R = \{\theta_{i},\theta_{i+3}\}$                        \\ \hline
\end{tabular}
}\]

The general formula for $\Sigma_{IJ}$ is a combination of the two previous procedures, note that in every case we have the inclusion $I \cap J \subset \Sigma_{IJ}$ and the equality $R = \Sigma_{IJ} \setminus I \cap J$.

\subsubsection{Overview and advice for the Reader}

In Chapter 2 we define the splitting models of Unitary and Quaternionic Shimura varieties and end with the comparison between the Pappas Rapoport Hilbert model and the Quaternionic model with $\Sigma = \emptyset$. Chapter 3 is concerned with the definition of the Goren-Oort stratification and its basic properties. In Chapter 4 we define Iwahori level models, their stratification, and study their local structure. In Chapter 5, we develop our theory of partial Raynaud group schemes, review Crystalline Dieudonn\'{e} theory as presented in \cite{berthelot1982theorie}, provide the technical setup for Chapter 6, and end with the construction of the Essential Frobenius isogeny. Finally, in Chapter 6 we state and prove the main theorem of this thesis.

We strongly advise the reader to read section \ref{motivating splices} in which we give the intuition behind the splice construction. We also advise on a first reading of Chapter 6 to keep in mind the case $\Sigma = \emptyset$ in order to lighten the notational burden and case analyses.  

\subsection{Acknowledgements}
I would like to thank Payman Kassaei, my Ph.D. advisor, for suggesting this problem and for his trust in me. I would also like to thank Fred Diamond for his helpful advice.
This work was supported by the Engineering and Physical Sciences Research Council [EP/L015234/1],
through the EPSRC Centre for Doctoral Training in Geometry and Number Theory (The London School of
Geometry and Number Theory), University College London, King’s College London, and Imperial College London.

\section{Tame Level Shimura varieties}\label{tame}

\subsection{Pappas-Rapoport models of Hilbert Modular varieties} In this section we define the Pappas-Rapoport models of Hilbert modular varieties introduced in \cite{pappas2005local}. These are smooth integral models which agree with the usual definition in the case that $p$ is unramified in $F$. We tailor our definition in view of our subsequent construction of integral models of Quaternionic Shimura varieties. In particular we obtain for $p$ unramified in $F$, the same models as defined in \cite{2020arXiv200100530D}.

\subsubsection{Notation}\label{hilbert notation} Fix a totally real field $F$ of degree $d=[F:Q]$ and a prime $p$ which we allow to ramify in $F$. For each prime $\fp \vert p$ in $F$, we write $F_\fp$ for the completion of $F$ at $\fp$, with corresponding valuation $v_\fp$ and residue field $\bF_\fp$. Let $f_\fp$ and $e_\fp$ be its inertia and ramification degree respectively. For each $\fp$, fix a totally positive element $\varpi_\fp \in F$ such that $v_\fp(\varpi_\fp)=1$ and $v_{\fq}(\varpi_\fp)=0$ for all primes $\fq \vert p$, $\fq \neq \fp$. It follows that $\fp = (p,\varpi_\fp)$ and $\varpi_\fp$ is a uniformizer of $F_\fp$. Any two such choices of $\varpi_\fp$ differ by an element of $\ccO_{F,(p),+}^\times$. We also write $\bA_{F,f}$ for the finite ad\`{e}les over $F$ and $\bA_{F,f}^{(p)}$ for the prime to $p$ finite ad\`{e}les over $F$.\medskip

We let $\Theta_F$ denote the set of embeddings $F \hookrightarrow \overline{\bQ}$ which we identify with the set of embeddings $\Theta_{F,\infty} = \{F \hookrightarrow \mathbb{R}\}$ and $\Theta_{F,p} =  \{ F \hookrightarrow \overline{\bQ}_p \}$ via fixed embeddings $\overline{\bQ} \hookrightarrow \mathbb{C}$ and $\overline{\bQ} \hookrightarrow \overline{\bQ}_p$. 
From now on, we write $\Theta_F$ for all three sets. 
For each prime $\fp$ over $p$, we let $\Theta_{F,\fp}$ denote the set of of embeddings $\beta : F_\fp \hookrightarrow \overline{\bQ}_p$ and $\widehat{\Theta}_{F,\fp}$ denote the set of of embeddings $\theta : F_\fp^{0} \hookrightarrow \overline{\bQ}_p$ where $F^0_\fp$ denotes the maximal unramified subextension of $F_\fp$. This set is canonically identified with the set of embeddings $\{\bF_\fp \hookrightarrow \overline{\bF}_p\}$.
For each $\theta \in \widehat{\Theta}_\fp$, we choose an arbitrary ordering of the $e_\fp$ embeddings $\theta^1, \cdots , \theta^{e_\fp} \in \Theta_{F,\fp}$ which extend it.
We thus have $\Theta_{F} = \bigsqcup_{\fp \vert p} \Theta_{F,\fp}$ and set $\widehat{\Theta}_{F} = \bigsqcup_{\fp \vert p} \widehat{\Theta}_{F,\fp}$. Note that we will often use $\beta$ to denote an element of $\Theta_F$, and use $\theta$ to denote an element of $\widehat{\Theta}_F$. Given a $\theta$, we will always write $\fp$ for the prime it corresponds to under the previous decomposition. 
We recall that there is a shift operation on $\Theta_F$ given by $\phi(\theta^i) = \theta^{i+1}$ for $1 \leq i < e_\fp$ and $\phi(\theta^{e_\fp})=(\phi \circ \theta)^1$ where $\phi$ is the usual Frobenius acting on $\widehat{\Theta}_{F,\fp}$.

We fix a number field $L \subset \overline{\bQ}$ which contains all of the images $\beta(F)$ for $\beta \in \Theta_F$ and let $\ccO$ be the completion of $\ccO_E$ at the prime determined by the choice of embedding $\overline{\bQ} \to \overline{\bQ}_p$. We let $\bF$ denote its residue field. For each prime $\fp$, we let $E_\fp(x)$ denote the minimal polynomial of $\varpi_\fp$ over $F_\fp^0$, and for each $\theta \in \widehat{\Theta}_{\fp}$ we write
$$E_{\theta}(x) = \theta(E_{\fp}(x)) =(x-\theta^1(\varpi_\fp))\cdots(x-\theta^{e_\fp}(\varpi_\fp)) \in \ccO[x].$$

\subsubsection{The models}\label{hilbert model}
Let $G= \textrm{Res}_{F/\bQ}  \GL_2$, $\mathbb{S}= \textrm{Res}_{\bC/\bR}\mathbb{G}_m$, and let $(G,[h])$ be the usual Shimura datum, that is, $[h]$ is the $G(\bR)$-conjugacy class of the morphism $h:\mathbb{S}\to G(\mathbb{R})$, $x+iy \mapsto \begin{pmatrix}
x & y \\ -y & x
\end{pmatrix}$. 
Let $U \subset G(\bA_f)=GL_2(\bA_{F,f})$ be an open compact subgroup containing $GL_2(\ccO_F \otimes \bZ_p)$ which is small enough in the sense of \cite{DS17}. Write $U=U^p U_p$ with $U^p \subset GL_2(\bA_{F,f}^{(p)})$ and $U_p = GL_2(\ccO_F \otimes \bZ_p)$. The Hilbert modular variety of level $U$ is the Shimura variety whose complex points are given by 
\[\GL_2(F) \backslash (\bC - \bR)^{\Theta_F} \times \GL_2(\bA_{F,f})/U = \GL_2(F)_+ \backslash \ccH^{\Theta_F} \times \GL_2(\bA_{F,f}) / U\]
where $\ccH$ is the upper half plane and $\GL_2(F)_+$ is the subgroup of matrices with totally positive determinant.\medskip

Let $\ccO$ be given as above. We consider the following moduli problem:

\begin{defn}
Let $\widetilde{Y}^{\textrm{DP}}_U(G)$ denote the functor which associates to every locally Noetherian $\ccO$-scheme $S$ the set of isomorphism classes of tuples $\underline{A}=(A,\iota,\lambda,\eta)$ where: 

\begin{itemize}
    \item $A/S$ is an abelian scheme of relative dimension $d$,
    \item $\iota:\ccO_{F} \to \End_S(A)$ is a ring embedding such that the Kottwitz condition holds. That is, for any $\alpha \in \ccO_F$, the action of $\iota(\alpha)$ on $\textrm{Lie}(A/S)$ has characteristic polynomial \[\prod_{\beta \in \Theta_F} (x-\beta(\alpha)) \in \ccO[x],\] 
    \item $\lambda$ is a prime to $p$ quasi-polarization such that the associated Rosati involution fixes $\iota$. That is, $\lambda:A \to A^\vee$ is a quasi-isogeny such that for each connected component of $S$, there is a prime to $p$ integer $n$ such that $n\lambda$ is a genuine polarization,
    \item $\eta$ is a $U^p$ level structure on $A$. That is, for a choice of a geometric point $\overline{s_i}$ on each connected component $S_i$ of $S$, a $\pi_1(S_i,\overline{s_i})$ invariant $U^p$-orbit of isomorphisms \[\eta: \widehat{\ccO}_{F}^{(p)} \times \widehat{\ccO}_{F}^{(p)} \to T^{(p)}A_{\overline{s}_i}.\] 
\end{itemize}
\end{defn}

As in \cite{2020arXiv200100530D}, this functor is representable by an infinite disjoint union of quasi-projective schemes over $\ccO$. More precisely it is representable by an infinite disjoint union of PEL shimura varieties attached to the group $G^*:= \textrm{Res}_{F/\bQ}\GL_2\times_{\mathbb{G}_m} \textrm{Res}_{F/\bQ}\mathbb{G}_m$, indexed over $(\mathbb{A}_{F,f}^{(p)})^\times /\det(U^p)\widehat{\bZ}^{(p)\times}$. It is thus locally of finite type, of relative dimension $d$, and smooth if $p$ is unramified in $F$. If $p$ is ramified however, \cite[Proposition 4.4]{deligne1994singularites} shows that its special fiber is singular, with codimension 2 singular locus. In order to desingularize this "naive" model we introduce the Pappas-Rapoport model:

Let $R$ be an $\ccO$-algebra and $M$ an $\ccO_F \otimes R$ module. The decomposition 
\[\ccO \otimes_{\bZ} \ccO_F = \bigoplus_{\theta \in \widehat{\Theta}_F}  \ccO \otimes_{W(\bF_\fp),\theta} \ccO_{F,\fp} = \bigoplus_{\theta \in \widehat{\Theta}_F} \ccO[x]/E_\theta[x]\]
induces a decomposition 
\[M= \bigoplus_{\fp \vert p} M_\fp = \bigoplus_{\theta \in \widehat{\Theta}_F} M_\theta\] 
into $(\ccO_F \otimes R)$-modules for which the action of $\ccO_F$ on $M_\fp$ extends to a continuous action of $\ccO_{F,\fp}$, and the submodule $M_\theta$ of $M_\fp$ for $\theta \in \widehat{\Theta}_\fp$ corresponds to the submodule on which the action of $W(\bF_\fp)$ is given by scalar multiplication via $\theta : W(\bF_\fp) \to \ccO \to R$.\medskip  

Let $A/S$ be an abelian scheme with real multiplication as above and let $\pi_A:A \to S$ be the structure map. Let $\omega_{A/S} = \pi_{A*} \Omega^1_{A/S}$ and $\ccH^1_{\dR}(A/S) = R^1\pi_{A,*}\Omega^\bullet_{A/S}$.  We thus have a natural decomposition 
\[\omega_{A/S} \simeq \bigoplus_{\theta \in \widehat{\Theta}} \omega_{A/S,\theta}\]
into locally free sheaves of rank $e_\fp$ where $W(\bF_{\fp})$ acts on $\omega_{A/S,\theta}$ via $\theta$. We similarly have a natural decomposition 
\[\ccH^1_{dR}(A/S) \simeq \bigoplus_{\theta \in \widehat{\Theta}} \ccH^1_{dR}(A/S)_{\theta}\]

into locally free sheaves of rank $2e_\fp$. In fact, by \cite[Lemma 1.3]{CM_1978__36_3_255_0}, $\ccH^1_{dR}(A/S)$ is locally free of rank two over $\ccO_F \otimes \ccO_S$.

\begin{defn}
Consider the functor which associates to each $\ccO$-scheme $S$ the set of isomorphism classes of tuples $\underline{A}=(A,\iota,\lambda,\eta, \underline{\omega})$, where $(A,\iota,\lambda,\eta)$ is a point of $\widetilde{Y}^{\textrm{DP}}_U(G)$, and $\underline{\omega}$ is the data for each $\theta \in \widehat{\Theta}_F$ of a filtration 
\[0\subset \omega_\theta(1) \subset \cdots \subset \omega_\theta(e_\fp)=\omega_{A/S,\theta}\]
such that:
\begin{itemize}
    \item Each $\omega_\theta(t)$ is $\ccO_F$-stable,
    \item Each subquotient $\omega_\theta(t)/\omega_\theta(t-1)$ is locally free of rank one over $S$ (hence $\omega_\theta(t)$ is locally free of rank of rank $t$),
    \item The induced action of $\ccO_F$ on $\omega_\theta(t)/\omega_\theta(t-1)$ is given by $\theta^t$. Equivalently, it is killed by $([\varpi_\fp]-\theta^t(\varpi_\fp)) \in \ccO_F \otimes \ccO_S$.
\end{itemize}
\end{defn}

This functor is representable by a scheme $\widetilde{Y}_U(G) = \widetilde{Y}^{\textrm{PR}}_U(G)$, smooth and locally of finite type over $\ccO$ of relative dimension $d$ (see for example \cite{sasaki}). There is a natural forgetful morphism 
\[\pi : \widetilde{Y}^{\textrm{PR}}_U(G) \to \widetilde{Y}^{\textrm{DP}}_U(G)\] 
which is projective, and restricts to an isomorphism over the Rapoport loci of the source and target. That is, the loci corresponding to points $\underline{A}$ where $\omega_{A/S}$ is a locally free $(\ccO_S \otimes \ccO_F)$-module of rank one. We note that over $\widetilde{Y}^{\textrm{DP}}_U(G)$ the Rapoport locus coincides with the smooth locus, and the Rapoport locus is dense in $\widetilde{Y}^{\textrm{PR}}_U(G)$. In particular $\pi$ is birational, inducing an isomorphism on generic fibers, and is an isomorphism if and only if $p$ is unramified in $F$.

There is a natural action of $\ccO^\times_{F,(p),+}$ on $\widetilde{Y}^{DP}_U(G)$, defined by $\theta_\mu(A,\iota,\lambda,\eta)=(A,\iota,\mu \lambda,\eta)$ for $\mu \in \ccO_{F,(p),+}^\times$. This action extends naturally to an action of $\ccO^\times_{F,(p),+}$ on $\widetilde{Y}^{PR}_U(G)$, by ignoring the filtration, and the forgetful morphism is equivariant with respect to the action. It is straightforward to see that $(U \cap \ccO_F^\times)^2$ acts trivially, and \cite[Lemma 2.4.1]{DS17} shows that the resulting action of $\ccO_{F,(p),+}^\times/(U \cap \ccO_{F,(p)}^\times)^2$ is free. We thus obtain quotients $\pi: Y^{\textrm{PR}}_U(G) \to Y^{\textrm{DP}}_U(G)$ where $Y^{\textrm{PR}}_U(G)$ is a smooth quasi-projective scheme over $\ccO$ of relative dimension $d$, and $\pi$ projective, inducing an isomorphism on generic fibers.\medskip

We define an action of $G(\bA^{(p)}_f)$ on $\varprojlim_U Y^{\textrm{PR}}_U(G)$, by which we simply mean an action on the projective system, as follows: Let $U_1,U_2$ be two sufficiently small open compact subgroups as above, and let $g \in G(\bA^{(p)}_f)$ be such that $g^{-1}U_1g \subset U_2$. Let $\underline{A_1} = (A_1,\iota_1,\lambda_1,\eta_1,\underline{\omega_1})$ denote the universal abelian scheme over $S=\widetilde{Y}^{\textrm{PR}}_{U_1}(G)$. 
Consider the abelian scheme $A'/S$ which is prime to $p$ isogeneous to $A_1$ and satisfies 
\[T^{(p)}(A'_{\overline{s}_i})= \eta_{1,i}((\ccO_F^{(p)})^2g^{-1})\]
for all $i$ indexing the connected components of $S$. 
$A'$ then inherits an $\ccO_F$-action $\iota'$ from the canonical quasi-isogeny $\pi \in \Hom(A_1,A')\otimes \bZ_{(p)}$.
In particular, this induces an $\ccO_F$-linear isomorphism $\omega_{A'/S}\to \omega_{A_1/S}$ and so a suitable filtration $\underline{\omega^\prime}$ on $\omega_{A'/S}$. 
Furthermore, this yields a prime to $p$ quasi-polarization $\lambda'= \lambda_1=\pi^\vee \circ \lambda' \circ \pi$ on $A^\prime$ which induces the same Rosati involution as $\lambda_1$. We also obtain a $U_2$ level structure by setting $\eta'= \eta_1 \circ r_{g^{-1}}$ where $r_{g^{-1}}$ denotes right multiplication by $g^{-1}$. The tuple $(A',\iota',\lambda',\eta',\underline{\omega}')$ is then an object over $\widetilde{Y}^{\textrm{PR}}_{U_2}(G)$, and thus defines morphism $\tilde{\rho}_g:\widetilde{Y}_{U_1}(G) \to \widetilde{Y}^{\textrm{PR}}_{U_2}(G)$. Since this morphism commutes with the action of $\ccO_{F,(p),+}^\times$ it descends to a morphism $\rho_g: Y^{\textrm{PR}}_{U_1}(G) \to Y^{\textrm{PR}}_{U_2}(G)$ which is finite and \'{e}tale. Furthermore, it is straightforward to check that this defines an action of $G(\bA_f)$ on $\varprojlim Y^{\textrm{PR}}_U(G)$.\smallskip

From this, one finds that $Y^{\textrm{PR}}_U(G)$ defines an integral model over $\ccO$ of the Hilbert Modular variety with level $U$, and hence the $Y^{\textrm{PR}}_U(G)$ define a system of local models. Furthermore, if $p$ is unramified in $F$, then on finds, as in \cite[$\mathsection$ 2.1.3]{2020arXiv200100530D}, that these are a system of integral canonical models in the sense of \cite{Kisin2010}.

\subsection{Unitary Shimura varieties}\label{Unitary construction}
In this section we construct integral models for Unitary Shimura varieties. We first define "naive" models following \cite[$\mathsection$2.2]{2020arXiv200100530D} which we show are singular if and only if$p$ ramifies in $F$. We then introduce the so called Pappas-Rapoport models as a way to resolve their singularities. 
\subsubsection{Shimura data}\label{unitary notation}
We keep the notation from section \ref{hilbert notation}. In particular, we have a fixed totally real field $F$ and fixed rational prime $p$ which we allow to ramify in $F$. 
Let $\Sigma$ be any set of places of $F$, of even cardinality, and which does not contain any place above $p$. For such a $\Sigma$, we write $\Sigma_\infty$ for the archimedean places of $\Sigma$, which we view as a subset of $\Theta_F$. Let $B = B_\Sigma$ denote the quaternion algebra over $F$ ramified at the places in $\Sigma$ and split everywhere else. In particular, $B$ is split at all places over $p$, and $B = M_2(F)$ if $\Sigma = \emptyset$. Denote by $G_\Sigma$ the algebraic group over $\bQ$ given by $G_\Sigma(R) = (B \otimes R)^\times$. For any $\beta \in \Theta_F$ such that $\beta \notin \Sigma_\infty$, we fix an isomorphism $B \otimes_{F,\beta} \bR \simeq M_2(\bR)$ and define $h_\Sigma: \mathbb{S} \to G_{\Sigma,\bR}$ by $x+iy \mapsto \left( \begin{smallmatrix} x & y \\ -y & x \end{smallmatrix}\right)_{\beta \notin \Sigma_\infty}$. In section \ref{quaternionic models} we will define smooth integral models of the quaternionic Shimura varieties attached to the Shimura datum $(G_\Sigma,[h_\Sigma])$, by associating them to certain unitary groups which we now define. 

\label{CM definition}
For a given $\Sigma$, fix a quadratic CM extension $E/F$ such that every prime $\fp \vert p$ in $F$ splits and such that every prime $\mathfrak{l}$ corresponding to a place of $\Sigma$ is inert. In particular, this choice ensures that there exists an isomorphism $B \otimes_F E \simeq M_2(E)$. We write $c$ for the nontrivial element of $\textrm{Gal}(E/F)$ and write $\Theta_E= \{  E \hookrightarrow \overline{\bQ}\}$ which we identify as in \ref{hilbert notation} with $\Theta_{E,\infty}= \{ E \hookrightarrow \bC \}$ and $\Theta_{E,p}= \{ E \hookrightarrow \overline{\bQ}_p\}$. For each prime $\fq \vert p$ we similarly define the sets $\Theta_{E,\fq}$ and $\widehat{\Theta}_{E,\fq}$. If $\fp \vert p$ is a prime of $F$ with $\fp \ccO_E = \fq \fq^c$, we write $\widehat{\Theta}_{E,\fp} = \widehat{\Theta}_{E,\fq} \sqcup \widehat{\Theta}_{E,\fq^c}$.

From now on, we fix for each prime $\fp \vert p$ in $F$, a prime $\mathfrak{q}\vert \fp$ in $E$. We write 
\[\widetilde{\Theta}_E=\bigsqcup_i \{\beta: E \to \overline{\bQ}_p \, \vert \, \tau^{-1}(\mathfrak{m}_p) = \mathfrak{q}\},\]
where $\mathfrak{m}_p$ is the maximal ideal of $\overline{\bZ}_p$. We note that $\widetilde{\Theta}_E$ is in bijection with $\Theta_F$ via $\beta \mapsto \beta \vert_F$. 
For each embedding $\theta^\ell \in \Theta_F$ we  write $\tilde{\theta}^\ell$ for its lift in $\widetilde{\Theta}_E$. 
From now on, $\tau$ will always denote an element of $\widehat{\Theta}_E$ and we will write $\tau \vert_F$ for its image in $\widehat{\Theta}_F$. 
For any $1 \leq \ell \leq e_\fp$, $\tau^\ell$ will denote the element $\Theta_E$ that lifts $\theta^\ell$, where $\tau \vert_F = \theta$ and write $\tau^\ell \vert_F = \theta^\ell$. 
Finally, a general element of $\Theta_E$ will be denoted by $\beta$. It will generally be clear from context whether we are considering $\beta \in \Theta_E$ or $\beta \in \Theta_F$; we shall clarify otherwise.

Let $\Sigma$ be as above and let $\Tilde{\Sigma}_\infty \subset \Theta_{E}$ be any lift of $\Sigma_\infty$. We stress that we do not assume that $\widetilde{\Sigma}_\infty \subset \widetilde{\Theta}_E$. For $\beta \in \Theta_E$, we define $s_\beta$ by
\[s_\beta = \begin{cases} 0 \,\textrm{  if } \beta \in \widetilde{\Sigma}_\infty \\ 1 \, \textrm{  if } \beta\vert_F \notin \Sigma_\infty\\ 2 \, \textrm{  if }  \beta^c \in \widetilde{\Sigma}_\infty. \end{cases}\]
In particular, $s_\beta + s_{\beta^c} = 2$ for all $\beta \in \Theta_E$.

Let $T_F$ denote the torus $\textrm{Res}_{F/\bQ}\mathbb{G}_m$ and $T_E$ the torus $\textrm{Res}_{E/\bQ}\mathbb{G}_m$. We define $G'_\Sigma$ as the quotient $(G_\Sigma \times T_E) / T_F$, where $T_F$ is embedded into $(G_\Sigma \times T_E)$ via $z \mapsto (z,z^{-1})$. 
Let $i_{\widetilde{\Sigma}_\infty}: \mathbb{S} \to T_{E,\bR}$ be the homomorphism given on $\bR$-points by 
\[
\bC^\times \to \prod_{\beta \in \Sigma_\infty} \bC^\times \simeq \prod_{\beta \in \Sigma_\infty} (E \otimes_{F,\beta} \bR)^\times \hookrightarrow \prod_{\beta \in \Theta_E} (E \otimes_{F,\beta}\bR)^\times,\] where the first map is the diagonal map, the second is given on $\beta$-components by $\gamma^c$ where $ \gamma \in \widetilde{\Sigma}_\infty$ and $\gamma \vert_F = \beta$, and the third is the inclusion map. Finally, we define $h'_{\widetilde{\Sigma}}: \mathbb{S}\to G'_{\Sigma,\bR}$ to be the composite of $h_\Sigma \times i_{\widetilde{\Sigma}_\infty}$ with the projection map $(G_\Sigma \times T_E)_\bR \to G'_{\Sigma,\bR}$. We let $L_\Sigma$ and $L_{\widetilde{\Sigma}}$ denote the reflex field of $(G_\Sigma,h_\Sigma)$ and $(G'_\Sigma,h'_{\Tilde{\Sigma}})$ respectively.

\subsubsection{Naive models}\label{unitary moduli}
In this section we define a PEL moduli problem  as in \cite{2020arXiv200100530D} which will give us "naive" models of the Shimura varieties attached to the Shimura datum $(G^\prime_\Sigma,[h^\prime_{\widetilde{\Sigma}_\infty}])$.

Keep the same notation as above and write $D=D_\Sigma$ for $ E \otimes_F B_\Sigma$, with the anti-involution $u \mapsto \overline{u} = (c \otimes \iota)(u)$, where $\iota$ is the standard anti-involution on $B$. We can then identify $G'_\Sigma$ with 
\[G'_\Sigma(R) = \{ g \in (D\otimes R) \, \vert \, g\overline{g} \in (F\otimes R)^\times\}.\]

Recall that our choice of extension $E/F$ ensures that $D$ is isomorphic to $M_2(E)$ $E/F$.
Fix an order $\ccO_D$ of $D$ such that $\ccO_{D,p} = \ccO_E \otimes_{\ccO_F} \ccO_{B,p}$ for some maximal order $\ccO_{B,p}$ of $B_p$, and fix once and for all an isomorphism $\ccO_{D,p} \simeq M_2(\ccO_{E,p})$. We choose an element $\delta \in D^\times$ such that: 
\begin{itemize}
    \item $\delta \in \ccO_{D,p}^\times$ and $\overline{\delta}=-\delta$,
    \item The pairing on $D \otimes \bR$ defined by \[(v,w) \mapsto \Tr_{E/\bQ}(\Tr_{D/E}(v\overline{h}^\prime_{\widetilde{\Sigma}}(i) \overline{w} \delta))\] is positive definite.
\end{itemize}

\noindent We define an anti-involution $u \mapsto u^*$ on $D$ by $u^*=\delta^{-1}\overline{u}\delta$. We also define a pairing $\psi_E: D \times D \to E$ by
\[\psi_E(v,w)= \Tr_{D/E}(v\overline{w}\delta) = \Tr_{D/E}(v\delta w^*).\]
Then $\psi_E(u,v) = - \psi_E(v,w)^c$ and $\psi(uv,w) = \psi_E(v,u^*w)$ for all $u,v,w \in D$. We thus set $\psi_F = \Tr_{E/F} \circ \psi_E$; it is alternating, satisfies $\psi_F(uv,w) = \psi_F(v,u^*w)$, and is thus $F$-bilinear.

Now, let $U' \subset G'_\Sigma(\bA_{F,f})$ be a sufficiently small open compact subgroup which contains the image $U'_p$ of $\ccO_{B,p}^\times \times \ccO^\times_F$ under the natural map $G_\Sigma \times T_E \to G'_\Sigma$. Write $U^\prime = (U^\prime)^p U^\prime_p$ where $(U^\prime)^p \subset G_\Sigma^\prime(\mathbb{A}_f^{(p)})$.

We now define the "naive" models of Shimura varieties attached to $(G^\prime_\Sigma,[h^\prime_{\widetilde{\Sigma}}])$. Since we will need to extend scalars to define the Pappas-Rapoport models in section \ref{PR unitary}, we do not worry about minimal fields of definition. In particular, let $L \subset \overline{\bQ}$ be a sufficiently large number field which contains all images of the embeddings $E \hookrightarrow \overline{\bQ}$ (it thus contains the reflex field $L_{\widetilde{\Sigma}}$ of the Shimura datum $(G^\prime_\Sigma,[h^\prime_{\widetilde{\Sigma}}])$). Write $\ccO$ for the ring of integers of the completion of $L$ at the prime determined by the embedding $\overline{\bQ} \to \overline{\bQ}_p$.

Consider the functor which associates to every locally noetherian $\ccO$-scheme $S$, the set of isomorphism classes of tuples $\underline{A}=(A,\iota,\lambda,(\eta,\epsilon))$ where

\begin{itemize}
    \item $A/S$ is an abelian scheme of dimension $4d$,
    \item $\iota: \ccO_D \hookrightarrow \End_S(A)$ is a ring embedding such that for any $\alpha \in \ccO_E$, the characteristic and minimal polynomial of $\iota(\alpha)$ on $\textrm{Lie}(A/S)$ are given by 
    \[\prod_{\beta \in \Theta_E}(x-\beta(\alpha))^{2s_\beta} \in \ccO[x],\prod_{s_{\beta} \neq 0}(x-\beta(\alpha)) \in \ccO[x]\]
    respectively,
    \item $\lambda$ is a prime to $p$ quasi-polarization whose associated Rosati involution is compatible with the involution $u \mapsto u^*$ on $D$,
    \item $(\eta,\epsilon)$ is a $(U^\prime)^p$ level structure on $A$. That is, for a choice of geometric point $\overline{s}_i$ for each connected component $S_i$ of $S$, a $\pi_1(S_i,\overline{s}_i)$ invariant $(U^\prime)^p$ orbit of $\widehat{\ccO}_D^{(p)}:=\ccO_D \otimes \widehat{\bZ}^{(p)}$-linear isomorphisms \[\eta_i: \widehat{\ccO}_D^{(p)} \times \widehat{\ccO}_D^{(p)} \to T^{(p)}A_{\overline{s}_i},\] and $\epsilon_i: \bA^{(p)}_{F,f} \to \bA^{(p)}_{F,f}(1)$ is an $\bA^{(p)}_{F,f}$-linear isomorphism such that the following diagram:
    
\[\begin{tikzcd}
(\widehat{\ccO}_D^{(p)})^2 \times (\widehat{\ccO}_D^{(p)})^2 \arrow[dd, "{(\eta_i,\eta_i)}"'] \arrow[rr, "\psi_F"] &  & {\bA_{F,f}^{(p)}} \arrow[d, "\epsilon_i"]    \\
                                                                                                                         &  & {\bA_{F,f}^{(p)}(1)} \arrow[d, "\Tr_{F/\bQ}"] \\
T^{(p)}A_{\overline{s}_i}\times T^{(p)}A_{\overline{s}_i} \arrow[rr, "e^\lambda"]                                        &  & \bA^{(p)}_f(1),                              
\end{tikzcd}\]
where $e^\lambda$ denotes the Weil pairing induced by the quasi-polarization $\lambda$, commutes.
\end{itemize}

Note that in the case that $p$ is unramified in $F$, the condition on the minimal polynomial of the action of $\alpha$ is redundant as it is implied by the shape of the characteristic polynomial. This is no longer true if we do allow $p$ to ramify however.

As in \cite[$\mathsection$2.2.2]{2020arXiv200100530D}, this moduli problem is representable by a scheme $\widetilde{Y}^{\textrm{DP}}_{U'}(G_\Sigma')/\ccO$ which is an infinite disjoint union of PEL Shimura varieties attached to the group $G'_\Sigma \times_{\nu,\textrm{Res}_{E/\bQ}\mathbb{G}_m}\mathbb{G}_m$, indexed over $\epsilon \in \bA^{(p)}_{F,f}/(\nu((U^\prime)^p)\widehat{\bZ}^{(p)})$ where $\nu:G^\prime_\Sigma \to T_F$ is given by $g \mapsto g\overline{g}$. It is thus quasi-projective and hence locally of finite type over $\ccO$. Additionally, it is smooth if $p$ is unramified in $F$. Otherwise, it has smooth generic fiber and singular special fiber with codimension 2 singular locus.

\subsubsection{Singularities of the naive model}
In this section we study the singularities of the "naive" models $S = \widetilde{Y}^{DP}_{U'}(G'_\Sigma)$ introduced above, in the case that $p$ is ramified in $F$. Adapting \cite{deligne1994singularites}, in the context of Hilbert modular varieties, we introduce local models as follows: 

Recall that we fixed an isomorphism $\ccO_{D,p} \simeq M_2(\ccO_{E_p})$. For any locally free sheaf $\ccF$ over an $\ccO$-scheme $S$ with a right $\ccO_S$-linear action of $\ccO_D$, the induced decomposition 
\[\ccO_S \otimes \ccO_D \simeq M_2(\ccO_S \otimes \ccO_E) \simeq \bigoplus_{\tau \in \widehat{\Theta}_E} M_2(\ccO_S[u]/E_{\tau}(x))\]
where we recall that $E_\fp(u)$ is the minimal polynomial of the fixed uniformizer $\varpi_\fp$ of $F_\fp$ and $E_\tau(u) = \tau(E_\fp(u))= \prod_i (u-\tau^i(\varpi_\fp)) \in \ccO[u]$, gives a decomposition 
\[\ccF = \bigoplus_{\tau \in \widehat{\Theta}_E} \ccF_{\tau}\]
where each $\ccF_{\tau}$ is a $M_2\left(\ccO_S[u]/E_{\tau}(u)\right)$-module such that the action of $\ccO_E$ factors through $\tau$. We also define 
\begin{equation}\label{reduced sheaf}
\ccF^0_\tau = \ccF_{\tau} \cdot e_0,
\end{equation}
where $e_0 \in M_2(\ccO_S[u]/E_{\tau}(u))$ denotes the matrix $\left(\begin{smallmatrix} 1 & 0 \\ 0 & 0 \end{smallmatrix}\right)$. We note that, by Morita equivalence, $\ccF_{\tau} \simeq \ccF_{\tau}^0 \oplus \ccF_{\tau}^0$ with their respective module structures. We note that we can also consider the subsheaf $\ccF^0 = \ccF \cdot e_0 $ where $e_0$ is now considered as an element of $M_2(\ccO_S \otimes \ccO_E)$ and its decomposition $\ccF^0 = \bigoplus (\ccF^0)_\tau$ under the action of $\ccO_E$. Then $\ccF^0_\tau = (\ccF^0)_\tau$.

Let $A/S$ denote the universal abelian variety over $S$. Consider the Hodge bundle $e^*\Omega^1_{A/S} \subset \ccH^1_{\dR}(A/S)$, where $e:S \to A$ is the identity section. Since $\ccH^1_{\dR}(A/S)$ is a locally free $\ccO_S \otimes \ccO_D$-module of rank two, consider for any closed point $x$ of $S$ an open neighborhood $V$ of $x$ over which the restriction $ \ccH = \ccH^1_{\dR}(A/S)\vert_V$ is free as an $\ccO_V \otimes \ccO_D$-module. Write $\omega = e^*\Omega^1_{A/S} \vert_V$. For any $\tau \in \widehat{\Theta}_E$, $\ccH^0_\tau$ is thus a free $\ccO_V[u]/E_\tau(u)$-module of rank $2$ and $\omega^0_\tau$ is a locally free $O_V$-module of rank $\sum_i s_{\tau^i}$. In particular, the characteristic polynomial of the action of $\alpha \in \ccO_E$ on $\omega^0_\tau$ is given by $\prod_i (x - \tau^i(\alpha))^{s_{\tau^i}}$ and its minimal polynomial by $\prod_{s_{\tau^i} \neq 0} (x - \tau^i(\alpha))$. Define the polynomials in $\ccO[u]$: $g_\tau(u) = \prod_{s_{\tau^i} \neq 2} (u- \tau^i(\varpi_\fp))$, $h_\tau(u) = \prod_{s_{\tau^i} = 0} (u- \tau^i(\varpi_\fp))$ and $f_\tau(u) = \prod_{s_{\tau^i}=1}(u-\tau^i(\varpi_\fp))$ so that $f_\tau(u)g_\tau(u) = h_\tau(u)$. It follows from the minimal polynomial that $\omega^0_\tau \subset h_\tau(u)\ccH^0_\tau$. Furthermore the compatibility between the $\ccO_D$-action and the prime-to-$p$ quasi-polarization $\lambda: A \to A^\vee$ implies that the induced pairing on $\ccH^1_{\dR}(A/S)$ restricts to a perfect pairing on $\ccH^0_\tau \times \ccH^0_{\tau^c}$ under which the orthogonal complement of $\omega^0_\tau$ is given by $\omega^0_{\tau^c}$. Furthermore, the orthogonal complement of $h_{\tau^c}(u)\ccH^0_{\tau^c}$ under this pairing is $g_\tau(u)\ccH^0_\tau$ from which we deduce the inclusion $g_\tau(u)\ccH^0_\tau \subset \omega^0_\tau$. 

It follows that $h_\tau(u)\ccH^0_\tau/g_\tau(u)\ccH^0_\tau$ is a free 
\[h_\tau(u)(\ccO_V[u]/E_\tau(u))/ g_\tau(u)(\ccO_V[u]/E_\tau(u)) \simeq \ccO_V[u]/f_\tau(u)\]
module of rank two. Furthermore the image of $\omega^0_\tau$ is an $\ccO_V[u]/f_\tau(u)$-stable $\ccO_V$-summand such that the characteristic polynomial of the action of $\alpha \in \ccO_E$ is given by $\prod_{s_{\tau^i}=1}(x-\tau^i(\alpha))$. Let $M_\tau$ denote the Grassmannian over $\ccO$ that classifies such objects. 

We thus obtain a morphism $V \to M = \prod_{\theta \in \widehat{\Theta}_F} M_{\tilde{\theta}}$ by considering for each $\theta$ the image of $\omega^0_{\tilde{\theta}}$ in $h_{\tilde{\theta}}(u)\ccH^0_{\tilde{\theta}}/g_{\tilde{\theta}}(u)\ccH^0_{\tilde{\theta}}$.
A boot-strapping argument as in \cite{deligne1994singularites} furnishes an isomorphism of completed local rings at $x$ and so the morphism is \'{e}tale at $x$. Therefore, to study the singularities of $S$ it suffices to study the singularities of $M$.

For a fixed $\tau = \tilde{\theta}$ consider the special fiber of $\overline{M}_\tau$ of $M_\tau$.
Let $\overline{y}: T=\textrm{Spec}(k) \to \overline{M}_\tau$ be a geometric point corresponding to $\ccF \subset (\ccO_T[u]/f_\tau(u))^2$. Then $ R = (\ccO_T[u]/f_\tau(u))^2 \simeq (k[u]/u^d)^2$ (where $d$ is the degree of $f_\tau$) and there is a basis $e_1,e_2$ of $R$ such that $\ccF$ is generated by $u^i e_1$ and $u^j e_2$ for $i+j = d$ and $0 \leq i \leq j \leq d$.

We define a stratification on $\overline{M}_\tau$ based on the value of $i$ as described above. Consider the closed subfunctors $G_n$ which classify the $\ccF \subset (\ccO_T[u]/f_\tau(u))^2=R_T$ such that $ u^{d-n}R_T \subset \ccF \subset u^{n} R_T$  for $0 \leq n \leq d/2$. Then the complement $G^0_n$ of $G_{n+1}$ in $G_n$ classifies the $\ccF$ such that $ u^{d-n }R_T \subset \ccF \subset u^{n} R_T$ and that the image of  $\ccF$ in $u^{n} R_T / u^{d-n}R_T$ is locally a direct factor of $u^{n} R_T/u^{d-n}R_T$ as a $R_T/u^{d-2n}$-module. This is represented by the Weil restriction of $\mathbb{P}^1$ over $\textrm{Spec}(k[u]/u^{d-2n})$ to $k$ and is thus smooth of dimension $d -2n$. It follows that $\overline{M}_\tau$ is smooth outside of a codimension two locus. In particular, the smooth open subscheme $G^0_0$ of $\overline{M}^\prime_\tau$ consists of those $\ccF$ such that locally everywhere, there is a basis $e_1,e_2$ of $R_T$ such that $\ccF$ is generated by $e_1$. Translating this back to $\overline{M}_\tau$ this corresponds to submodules such that locally everywhere, there is a basis $e_1,e_2$ of $R_T$ such that $\ccF$ is generated by $u^{b_\tau}e_1,u^{e_\fp -d_\tau}e_2$ where $b_\tau$ is the number of $i$ such that $s_{\tau^i}=0$ and $d_\tau$ is the number of $i$ such that $s_{\tau^i}=2$. This corresponds to the usual Rapoport locus in the case $\Sigma = \varnothing$.\medskip

We now give an explicit proof that $G^0_0$ is precisely the whole smooth locus. This can safely be skipped by the uninterested reader. Take a point $p$ in the complement of $G^0_0$. Then $\ccF$ is generated by $u^i e_1$ and $u^j e_2$ where $i+j =d$ and $1 \leq i \leq j <e$. We define a lift of $\tilde{p}$ of $p$ to $k[y]/y^2$ which cannot be lifted to a $k[y]/y^3$-point.\medskip

Keep the notation above and consider the submodule $\ccF' \subset (\ccO_F \otimes k[y]/y^2)^2$ generated over $\ccO_F \otimes k[y]/y^2$ by $u^i e_1 + ye_2$ and $u^j e_2 + ye_1$. That is, $\ccF$ is generated over $k[y]/y^2$ by the $u^a(u^i e_1 + ye_2)$ and $u^b(u^j e_2 + ye_1)$ for $0\leq a < j-1$ and $0\leq b <i-1$. These are clearly linearly independent over $k[y]/y^2$, and can be extended to a basis of $(\ccO_F \otimes k[y]/y^2)^2$ by adding the $e_1, \cdots u^{i-1} e_1, e_2, \cdots, u^{j-1}e_2$ and $\ccF'$ satisfies the Kottwitz condition. Therefore $\ccF'$ defines a point $\tilde{p}$ lifting $p$.\medskip

We now show that $\tilde{p}$ cannot lift to a $k[y]/y^3$ point. Suppose it does. Then pick any lifts  $u^i e_1 + ye_2 + v$ and $u^j e_1 +yu^{j-i}e_2 + w$, of the corresponding elements downstairs, in $\ccF''$. Then $u^a(u^i e_1 + ye_2 +v)$ and $u^b(u^j e_2 + ye_1+w)$ for $0\leq a,b \leq e$ must generate $\ccF''$ over $k[y]/y^3$. We now show that $\ccF''$ is not $k[y]/y^3$-projective by computing the Smith normal form of the matrix given by the above generators and the ordered basis $e_1, u e_1, \cdots , u^{d-1} e_1 , e_2, u e_2, \cdots , u^{d-1} e_2$.\medskip

Notice that every coefficient of $v$ and $w$ is in $y^2k[y]/y^3$ by definition. Let $\alpha$ be the $u^i e_1 + ye_2$ coefficient of $v$, then by multiplying $u^i e_1 + ye_2 +v$ by $(1-\alpha)$ we can assume wlog that $\alpha =0$. Similarly, we can assume that the coefficient of $u^j e_2$ of $w$ is 0. Now, the matrix determined by the generators can be decomposed into block matrices: $$ R = \begin{pmatrix}
A & B \\
C & D 
\end{pmatrix}$$

where:
\begin{itemize}
    \item each entry $\alpha_{k \ell}$ of $A$ is the $u^{\ell-1} e_1$ coefficient of $u^{k-1}(u^i e_1 + ye_2 +v)$,
    \item each entry $\alpha_{k \ell}$ of $B$ is the $u^{\ell-1} e_2$ coefficient of $u^{k-1}(u^i e_1 + ye_2 +v)$,
    \item each entry $\alpha_{k \ell}$ of $C$ is the $u^{\ell-1} e_1$ coefficient of $u^{k-1}(u^j e_2 + ye_1+w)$,
    \item each entry $\alpha_{k \ell}$ of $D$ is the $u^{\ell-1} e_2$ coefficient of $u^{k-1}(u^j e_2 + ye_1+w)$.
\end{itemize}

In particular, each matrix is upper triangular. Since we assumed that the $u^i e_1 + ye_2$ coefficient of $v$ is $0$, then the entries $\alpha_{k(k+i)}$ of $A$, for $ 1 \leq k \leq e-i = j$ are equal to $1$. Similarly the entries  $\alpha_{k(k+j)}$ of $D$, for $ 1 \leq k \leq e-j = i$ are equal to $1$. Furthermore, the diagonal entries of $B$ and $C$ are of the form $y+\beta$ and $y+\gamma$ for some $\beta,\gamma \in y^2k[y]/y^3$. Every other entry of the whole matrix lies in $y^2k[y]/y^3$.\medskip

We now reduce the matrix: Take an entry $\alpha_{k \ell}$ of $A$ with $k \leq \ell$ and $\ell \neq k+i$.  If $k \leq j $, then subtract $\alpha_{k \ell}$ times column $k+i$ of $R$ from column $\ell$. Since $\alpha_{k\ell}$ is divisible by $y^2$ and every entry of column $\ell$ is divisible by $y$ except for $\alpha_{k(k+i)}=1$, $\alpha_{k \ell}$ times the column has every entry $0$ except for the $k$-th entry equal to $\alpha_{k \ell}$. This column operation thus only has the effect of deleting the entry $\alpha_{k \ell}$ from $A$.\smallskip

Similarly, if $\ell \geq k > j $, we subtract $\alpha_{k\ell}$ times row $\ell-i$ from row $k$, this has the only effect of deleting the entry $\alpha_{k\ell}$. We have now reduced $A$ to an off diagonal matrix with entries $1$ since $A$ was upper diagonal.\medskip

A similar analysis, by also making use of the off diagonal of $1$s in $D$, shows that we can reduce $B$ and $C$ to the form $(y+\beta)I_e$ and  $(y+\gamma)I_e$ respectively. Furthermore, making use of the off diagonal of $1$s in $D$ and the first $i$ columns of $C$ and last $j$ rows of $B$, we can also reduce $D$ to an off diagonal matrix of $1$s. The resulting matrix is of the form: $$
\begin{pmatrix}
I_{e,i-1} & (y+\beta)I_e \\ (y+\gamma)I_e & I_{e,j-1}
\end{pmatrix}$$

where $I_{e,n}$ denotes the $e \times e$ matrix with entries $a_{k \ell}=1$ if $\ell = k +n$ and $0$ otherwise.\medskip

To finish the reduction, we start by subtracting to each column (of the full matrix) $k$, for $k \leq i$ the $(y+\gamma)$ times the column $e+k+j$ and then subtract to each row $e+k$, for $k > i$, $(y+\gamma)$ times the row $i$. The resulting matrix has the form: $$
\begin{pmatrix}
I_{e,i-1}-y^2 I_{e,1-j} & (y+\beta)I_e \\ 0 & I_{e,j-1} -y^2 I_{e,1-i}
\end{pmatrix}.$$
Finally, it is straightforward to delete $B$ and the final reduced matrix is of the form $$\begin{pmatrix}
I_e & 0 \\ 0 & y^2 I_{e}
\end{pmatrix}.$$

Thus $\ccF''$ is clearly not projective as a $k[y]/y^3$-module and thus does not define a point of $M$ so that $\ccF'$ cannot be lifted. This shows that the original point $p$ is singular.\medskip

\subsubsection{Pappas-Rapoport models of Unitary Shimura varieties}\label{PR unitary}
We now introduce smooth models of Unitary Shimura varieties that resolve the singularities of the "naive" models  $\widetilde{Y}^{\textrm{DP}}_{U^\prime}(G^\prime_\Sigma).$ 

Consider the functor which associates to every locally noetherian $\ccO$-scheme $S$, the set of isomorphism classes of tuples $\underline{A}=(A,\iota,\lambda,(\eta,\epsilon), \underline{\omega})$ where the first five elements give a point of $\widetilde{Y}^{\textrm{DP}}_{U'}(G_\Sigma')(S)$, and $\underline{\omega}$ is the data of, for each $\theta \in \widehat{\Theta}_F$ and $\tau = \tilde{\theta}$, a full filtration of $\omega^0_{A/S,\tau} = \omega^0_{\tau}$ 
\[0 = \omega^0_{\tau}(0) \subset \omega^0_{\tau}(1) \subset \cdots \subset \omega^0_{\tau}(e_\fp-1) \subset \omega^0_{\tau}(e_\fp) = \omega^0_{\tau} \]
such that:
\begin{itemize}
    \item Each $\omega^0_{\tau}(j)$ is $\ccO_E$-stable,
    \item Each subquotient $\omega^0_\tau(j) / \omega^0_\tau(j-1)$ is locally free of rank $s_{\tau^j}$ over $S$ (hence $\omega^0_\tau(j)$ is locally free of rank $s_\tau(j) = \sum_{i \leq j} s_{\tau^i}$),
    \item The induced action of $\ccO_E$ on $\omega^0_\tau(j)/\omega^0_\tau(j-1)$ factors through $\tau^j$. Equivalently, it is killed by $([\varpi_\fp]-\tau^j(\varpi_\fp))$.
\end{itemize}

This functor is representable, as shown in \cite{pappas2005local}, over $\ccO$ by a scheme which we denote $\widetilde{Y}^{\textrm{PR}}_{U'}(G_\Sigma')$: The natural forgetful morphism to $\pi:\widetilde{Y}^{\textrm{PR}}_{U'}(G_\Sigma') \to \widetilde{Y}^{\textrm{DP}}_{U'}(G_\Sigma')$ that forgets the filtration is relatively representable over $\widetilde{Y}^{\textrm{DP}}_{U'}(G_\Sigma')$ and $\widetilde{Y}^{\textrm{PR}}_{U'}(G_\Sigma')$ is thus given as closed subscheme of a product of Grassmannians.

Note that we only require a filtration on $\tilde{\theta}$-components. We could have equivalently defined the Pappas-Rapoport model by requiring filtrations $\tilde{\theta}$ and $\tilde{\theta}^c$ components which are dual in some sense. We explain how to do this in section \ref{dual filtration}.  

It is clear that $\pi$ induces an isomorphism on generic fibers. Furthermore it is an isomorphism over the generalized Rapoport loci of the special fibers of the respective schemes: Recall that this is the locus where, for each $\tau =\tilde{\theta} \in \widehat{\Theta}_E$ there is everywhere locally, an $\ccO_E \otimes \ccO_U$-linear trivialization $\ccH^1_{\dR}(A/U)_\tau^0 \simeq (\ccO_U[x]/E_\tau(x))^2$, such that $\omega^0_{\tau}$ is generated by $x^{b_\tau}e_1$ and $x^{e_\fp-d_\tau}e_2$ where where $b_\tau$ is the number of $i$ such that $s_{\tau^i}=0$ and $d_\tau$ is the number of $i$ such that $s_{\tau^i}=2$.

We show that there is a unique choice of filtration on $\omega^0_{\tau}$ by starting from the top. Write as above $s_\tau(j) = \sum_{i \leq j} s_{\tau^i}$, and let $d_{\tau,j}$ denote the number of $i \leq j$ such that $s_{\tau^i}=2$. By definition of the filtration, for any $j$, $\omega^0_\tau(j)$ contains 
\[\ker \prod_{i \leq j \, , \, s_{\tau^i}=2} (x-\tau^i(\varpi_\fp)) = \ker x^{d_{\tau,j}}=\langle x^{e-d_{\tau,j}}e_1\, , x^{e-d_{\tau,j}}e_2 \rangle,\] 
where the $e_i$ are the same basis as above.
Suppose now that for some $j$ we have $\omega^0_\tau(j)=\langle x^{e-s(j)+d_{\tau,j}}e_1\, , x^{e-d_{\tau,j}}e_2 \rangle $ (this is clearly true for $j=e_\fp$). Suppose that $s_{\tau^j}=0$ (respectively $s_{\tau^j}=2$), then we have $\omega^0_\tau(j-1)=\omega^0_\tau(j)$ (respectively $\omega^0_\tau(j-1)=x\omega^0_\tau(j))$ and the result follows immediately. Suppose otherwise then that $s_{\tau^j}=1$, then $s_\tau(j-1)=s_\tau(j)-1$, $d_{\tau,j-1}=d_{\tau,j}$ and we know that $\omega^0_\tau(j-1)$ contains both $\langle x^{e-s(j)+d_{\tau,j}+1}e_1\, , x^{e-d_{\tau,j}+1}e_2 \rangle$ and $\langle x^{e-s(j)+d_{\tau,j}}e_1\, , x^{e-d_{\tau,j}}e_2 \rangle$. The result follows by dimension count.

We now show that $\widetilde{Y}_{U'}(G_\Sigma')$ is smooth over $\ccO$. We first record the following lemma: 

\begin{lem}\label{filtration torsion}
Let $S$ be scheme over $\ccO$, $\mathcal{G}$ a locally free $\ccO_S \otimes \ccO_E$-module of rank $2$, and let $\ccF \subset \mathcal{G}_\tau$, for some $\tau \in \widehat{\Theta}_E$, be a $\ccO_S$-locally free subsheaf,local summand, which is stable under the action of $\ccO_E$ and killed by $a_{\tau,i-1} = ([\varpi_\fp]-\tau^1(\varpi_\fp)) \cdots ([\varpi_\fp]-\tau^{i-1}(\varpi_\fp))$ for some $1 \leq i \leq e_\fp$. Then the sheaf $([\varpi_\fp]-\tau^i(\varpi_\fp))^{-1} \ccF /\ccF$ is locally free of rank $2$.
\end{lem}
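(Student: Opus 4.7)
The plan is to work locally on $S$, then observe that the annihilator condition on $\ccF$ automatically forces the inclusion $\ccF \subset ([\varpi_\fp]-\tau^i(\varpi_\fp))\mathcal{G}_\tau$. This identifies the cokernel of multiplication by $([\varpi_\fp]-\tau^i(\varpi_\fp))$ on $\mathcal{G}_\tau/\ccF$ with a manifestly free module of rank $2$, and a splitting-and-rank argument then yields the desired local freeness of the kernel, which is the sheaf in question.

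Write $u = [\varpi_\fp]$ and $\pi_j = \tau^j(\varpi_\fp)$. The statement is local on $S$, so we may assume $S = \spec R$ and that $\mathcal{G}_\tau \cong (R[u]/E_\tau(u))^2$ is free of rank $2$ over $R[u]/E_\tau(u)$, where $E_\tau(u) = \prod_{j=1}^{e_\fp}(u-\pi_j)$. Since $\ccF$ is an $R$-locally free direct summand of $\mathcal{G}_\tau$, the quotient $\mathcal{G}_\tau/\ccF$ is also $R$-locally free. The key observation is that $\ccF \subset (u-\pi_i)\mathcal{G}_\tau$. Indeed, by hypothesis $\ccF$ lies in the kernel of multiplication by $a_{\tau,i-1}(u)$ on $\mathcal{G}_\tau$. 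Writing $E_\tau(u) = a_{\tau,i-1}(u)\,g_1(u)$ with $g_1(u) = \prod_{j\geq i}(u-\pi_j)$, the relation $a_{\tau,i-1} g_1 = E_\tau$ and the fact that the monic polynomial $a_{\tau,i-1}$ is a non-zero-divisor in $R[u]$ together identify this kernel with $g_1(u)\mathcal{G}_\tau$. Since $g_1(u) = (u-\pi_i)\prod_{j>i}(u-\pi_j)$, we conclude $\ccF \subset (u-\pi_i)\mathcal{G}_\tau$.

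Now identify $(u-\pi_i)^{-1}\ccF/\ccF$ with the kernel of the $R$-linear endomorphism $\varphi := (u-\pi_i):\mathcal{G}_\tau/\ccF \to \mathcal{G}_\tau/\ccF$. The cokernel of $\varphi$ is $\mathcal{G}_\tau/\bigl((u-\pi_i)\mathcal{G}_\tau + \ccF\bigr)$, which by the previous paragraph equals $\mathcal{G}_\tau/(u-\pi_i)\mathcal{G}_\tau \cong \bigl(R[u]/(u-\pi_i)\bigr)^2 \cong R^2$, free of rank $2$. Split the resulting four-term exact sequence
\[
0 \to (u-\pi_i)^{-1}\ccF/\ccF \to \mathcal{G}_\tau/\ccF \xrightarrow{\varphi} \mathcal{G}_\tau/\ccF \to R^2 \to 0
\]
at the image of $\varphi$. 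Since $R^2$ is free, the short exact sequence $0 \to \mathrm{im}(\varphi) \to \mathcal{G}_\tau/\ccF \to R^2 \to 0$ splits locally, so $\mathrm{im}(\varphi)$ is $R$-locally free. Applying the same argument to $0 \to (u-\pi_i)^{-1}\ccF/\ccF \to \mathcal{G}_\tau/\ccF \to \mathrm{im}(\varphi) \to 0$ shows that $(u-\pi_i)^{-1}\ccF/\ccF$ is $R$-locally free, and a rank count forces the rank to be $2$.

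The main substantive point is the inclusion $\ccF \subset (u-\pi_i)\mathcal{G}_\tau$, obtained via the explicit description of the kernel of multiplication by $a_{\tau,i-1}$ on the free $R[u]/E_\tau(u)$-module $\mathcal{G}_\tau$. Once this is in hand, the remaining steps are formal consequences of the splitting of short exact sequences of locally free sheaves and a dimension count.
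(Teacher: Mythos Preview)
Your proof is correct. Both your argument and the paper's use chains of short exact sequences of locally free $\ccO_S$-modules, but the intermediate objects differ. The paper works with the submodules $\ker a_{\tau,i-1}$ and $\ker a_{\tau,i}$ of $\mathcal{G}_\tau$: it first shows $([\varpi_\fp]-\tau^i(\varpi_\fp))^{-1}\ccF$ is locally free via $0 \to \ker([\varpi_\fp]-\tau^i(\varpi_\fp)) \to ([\varpi_\fp]-\tau^i(\varpi_\fp))^{-1}\ccF \to \ccF \to 0$, then shows $\ker a_{\tau,i-1}/\ccF$ is locally free, and finally uses the sequence $0 \to ([\varpi_\fp]-\tau^i(\varpi_\fp))^{-1}\ccF/\ccF \to \ker a_{\tau,i}/\ccF \to \ker a_{\tau,i-1}/\ccF \to 0$; the rank is then verified by an explicit computation at geometric points. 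You instead pass immediately to the quotient $\mathcal{G}_\tau/\ccF$ and analyze the endomorphism $\varphi = (u-\pi_i)$ via its four-term exact sequence, using the inclusion $\ccF \subset (u-\pi_i)\mathcal{G}_\tau$ to identify the cokernel as $R^2$. Your route is slightly more streamlined since it avoids naming the intermediate kernels $\ker a_{\tau,j}$ and handles the rank by a direct count rather than a fiberwise check; the paper's route has the mild advantage of producing the auxiliary fact that $([\varpi_\fp]-\tau^i(\varpi_\fp))^{-1}\ccF$ itself is locally free, which is occasionally useful elsewhere.
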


\begin{proof}
This follows similarly to the proof of \cite[Proposition 5.2 b]{pappas2005local}. Write $b_{\tau,i-1} = ([\varpi_\fp]-\tau^i(\varpi_\fp)) \cdots ([\varpi_\fp]-\tau^{e_\fp}(\varpi_\fp))$. Then $\textrm{Im} \, b_{\tau,i-1} = \ker a_{\tau,i-1}$ is locally free of rank $2(e_\fp-i)$ and the morphism $([\varpi_\fp]-\tau^i(\varpi_\fp)): \textrm{Im} \, b_{\tau,i} \to \textrm{Im} \, b_{\tau,i-1}$ is surjective. Since $\ccF \subset \ker a_{\tau,i-1}$, we can consider the short exact sequence 
\[0 \to  \ker ([\varpi_\fp]-\tau^i(\varpi_\fp)) \to ([\varpi_\fp]-\tau^i(\varpi_\fp))^{-1}\ccF \xrightarrow{([\varpi_\fp]-\tau^i(\varpi_\fp))} \ccF \to 0.\] 
This shows that $([\varpi_\fp]-\tau^i(\varpi_\fp))^{-1}\ccF$ is locally free as the other two are. Similarly, the short exact sequence 
\[0 \to \ker a_{\tau,i-1} /\ccF \to\mathcal{G}_\tau/\ccF \to \mathcal{G}_\tau/ \ker a_{\tau,i-1} \to 0\]
shows that $\ker a_{\tau,i-1} /\ccF \to \mathcal{G}_\tau/\ccF$ is locally free. Considering the following short exact sequence then yields the result
\[0 \to ([\varpi_\fp]-\tau^i(\varpi_\fp))^{-1}\ccF/\ccF \to  \ker a_{\tau,i}/\ccF \xrightarrow{([\varpi_\fp]-\tau^i(\varpi_\fp))}\ker a_{\tau,i-1}/\ccF \to 0.\]
To check the rank, it suffices to look at fibers: Let $k$ be a geometric point, if it has characteristic zero, this is clear. Suppose that it has characteristic $p$. Then $\mathcal{G}_{\tau,k} \simeq (k[x]/x^{e_\fp})^2$. For any $\ccF$ as above, we can choose a basis such that $\ccF = x^a k[x]/x^{e_\fp} \oplus x^b k[x]/x^{e_\fp}$ with $a+b = 2e_\fp - \dim \ccF$. Since $\ccF \subset x\ccH^1_{dR}(A/k)^0_{\tau}$, we also see that $a,b \geq 1$ and so $[\varpi_\fp]^{-1}\ccF/\ccF \simeq (x^{a-1} k[x]/x^{e_\fp} \oplus x^{b-1} k[x]/x^{e_\fp}) / (x^a k[x]/x^{e_\fp} \oplus x^b k[x]/x^{e_\fp})$ has dimension 2.
\end{proof}

\begin{prop}\label{unitary smooth}
 The moduli space $\widetilde{Y}^{\textrm{PR}}_{U'} (G'_\Sigma)$ is smooth over $\ccO$ of relative dimension $\vert \Theta_F \setminus \Sigma_\infty \vert$.
\end{prop}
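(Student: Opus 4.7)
The plan is to establish formal smoothness via Grothendieck-Messing deformation theory, combined with the inductive filtration argument enabled by Lemma \ref{filtration torsion}. Since the moduli problem is representable and locally of finite presentation over $\ccO$, it suffices to verify the infinitesimal lifting criterion along square-zero thickenings and to identify the resulting tangent space dimension.

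Let $T \hookrightarrow T'$ be a square-zero closed immersion of affine $\ccO$-schemes with ideal $I$, and let $\underline{A}_T = (A, \iota, \lambda, (\eta, \epsilon), \underline{\omega})$ be a $T$-point. By Grothendieck-Messing for PEL data, lifting $(A, \iota, \lambda, (\eta, \epsilon))$ to $T'$ is equivalent to specifying an $\ccO_D$-stable, $\lambda$-isotropic local direct summand $\tilde\omega \subset \bD(A)(T')$ lifting $\omega_{A/T} \subset \bD(A)(T) = \ccH^1_{\dR}(A/T)$. Via the Morita decomposition together with the perfect polarization pairing exchanging $\tau$- and $\tau^c$-components, it suffices to specify, for each $\theta \in \widehat{\Theta}_F$ and $\tau = \tilde\theta$, the lifted summand $\tilde\omega^0_\tau \subset \bD(A)(T')^0_\tau$ along with a full Pappas-Rapoport filtration on it, the dual filtration on $\tilde\theta^c$-components being forced by orthogonality.

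This lift will be built inductively, producing $\tilde\omega^0_\tau(i)$ one step at a time. Suppose $\tilde\omega^0_\tau(i-1)$ has been constructed as an $\ccO_E$-stable local summand of $\bD(A)(T')^0_\tau$, locally free of rank $s_\tau(i-1)$ and killed by $a_{\tau,i-1}$. Applying Lemma \ref{filtration torsion} with $\mathcal{G} = \bD(A)(T')$ and $\ccF = \tilde\omega^0_\tau(i-1)$ furnishes the rank-$2$ locally free quotient
\[
K_i \;:=\; ([\varpi_\fp]-\tau^i(\varpi_\fp))^{-1}\tilde\omega^0_\tau(i-1)\,\big/\,\tilde\omega^0_\tau(i-1),
\]
which is killed by $[\varpi_\fp]-\tau^i(\varpi_\fp)$. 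A lift of $\omega^0_\tau(i)$ is then exactly a rank-$s_{\tau^i}$ local direct summand of $K_i$ extending the one given over $T$, i.e.\ a lift of a $T$-point of the Grassmannian $\mathrm{Gr}(s_{\tau^i},2)$. Such lifts form an $I$-torsor under a free module of rank $s_{\tau^i}(2-s_{\tau^i})$, which equals $1$ when $s_{\tau^i}=1$ and $0$ when $s_{\tau^i}\in\{0,2\}$, so the lifting is unobstructed at every step.

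Iterating to $i = e_\fp$ produces the lifted Hodge filtration $\tilde\omega^0_\tau$, and Grothendieck-Messing then yields the full deformation of the PEL datum, with the Kottwitz characteristic and minimal polynomial conditions preserved automatically by the rank and torsion constraints imposed at each filtration step. Summing dimensional contributions across all $\theta \in \widehat{\Theta}_F$ and $1 \le i \le e_\fp$ counts exactly the pairs with $s_{\tilde\theta^i} = 1$, namely those with $\theta^i \notin \Sigma_\infty$, giving relative dimension $|\Theta_F \setminus \Sigma_\infty|$. The main technical obstacle to monitor is the simultaneous compatibility of each inductive choice with the global $\ccO_D$-action, the polarization-induced duality on $\tilde\theta^c$-components, and the Kottwitz minimal polynomial conditions; Lemma \ref{filtration torsion} is precisely what makes this routine, by guaranteeing that each ambient quotient $K_i$ is a locally free rank-$2$ torsion module over which Grassmannian smoothness applies verbatim.
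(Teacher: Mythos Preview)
Your proposal is correct and follows essentially the same approach as the paper: both reduce to formal smoothness, invoke Grothendieck--Messing to translate into lifting the Hodge filtration as an $\ccO_D$-stable isotropic summand, use the polarization to restrict attention to $\tilde\theta$-components, and then lift the Pappas--Rapoport filtration step by step via Lemma~\ref{filtration torsion}, with each step contributing $s_{\tau^i}(2-s_{\tau^i})$ to the relative tangent space. Your phrasing via $\mathrm{Gr}(s_{\tau^i},2)$ uniformly packages the three cases the paper treats separately, but the content is identical; the only small omission is that you do not explicitly note the level structure lifts uniquely, though this is standard for prime-to-$p$ data.
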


Before proving the proposition, we recall a version of Grothendieck-Messing theory (\cite[V.4]{alma990002183250306161}) which we shall use. Let $S_0$ be a scheme over $\ccO$ and consider the nilpotent crystalline of $S_0$ over $\bZ_p$. Let $f:A_0 \to S_0$ be an abelian scheme and consider the crystalline cohomology sheaf $\ccH^1_{\textrm{cris}}(A_0/S_0) = R^1f_{*,\textrm{cris}}\ccO_{A_0,\textrm{cris}}$. For a nilpotent divided power thickening $S_0 \hookrightarrow S$, there is a canonical isomorphism $\ccH^1_{\textrm{cris}}(A_0/S_0)_S \otimes_{\ccO_{S}} \ccO_{S_0} \simeq \ccH^1_{\dR}(A_0/S_0) $. Under this isomorphism, deformations of $A_0$ to $S$, that is abelian schemes $A/S$ such that $A \times_S S_0 = A_0$, correspond to lifts $\omega_S \subset \ccH^1_{\textrm{cris}}(A_0/S_0)_S$ of the hodge filtration $\omega_{A_0/S_0} \subset \ccH^1_{\dR}(A_0/S_0)$. If $A_0$ comes with an action of a ring $R$, the action extends to a unique action on $R$ on $A$ if and only if $\omega_S$ is $R$-stable. Similarly, if $A_0$ has a polarization $\lambda_0$, it extends to a unique polarization $\lambda$ of $A$ if and only if $\omega_S$ is isotropic with respect to the pairing induced by $\lambda_0$ on $\ccH^1_{\textrm{cris}}(A_0/S_0)_S$.

\begin{proof}
This proof is similar to the proof of Theorem 2.9 of \cite{reduzzi2016partial}. 
 Furthermore, since $\widetilde{Y}^{\textrm{PR}}_{U'}(G_\Sigma')_\bF$ is locally of finite presentation, we only need to show that it is formally smooth.

Let $S_0$ be a scheme over $\ccO$ and $S_0 \hookrightarrow S$ be a closed immersion with square zero ideal of definition $\ccI$. Let $x_0 = (A_0,\iota_0,\lambda_0,\eta_0,\epsilon_0,\underline{\omega}_0) \in \widetilde{Y}^{\textrm{PR}}_{U'}(G_\Sigma')(S_0)$. By Grothendieck-Messing theory, to lift $A_0$ to an abelian scheme $A/S$ with an $\ccO_D$ action and polarization, it suffices to lift $\omega_{A_0/S_0} \subset \ccH^1_{\dR}(A_0/S_0)$ to a $\ccO_D$-stable, maximal isotropic, submodule $\widetilde{\omega} \subset \ccH^1_{\textrm{cris}}(A_0/S_0)_S$.

By \cite{CM_1978__36_3_255_0}, $\ccH^1_{\textrm{cris}}(A_0/S_0)_S$ is locally free of rank two over $\ccO_D \otimes \ccO_S$. In particular, it suffices lift each $\omega_{A_0/S_0,\tau}^0$ to a submodule $\widetilde{\omega}^0_\tau \subset \ccH^1_{\textrm{cris}}(A_0/S_0)_{S_0,\tau}^0$ for $\tau = \tilde{\theta} \in \widehat{\Theta}_F$ as we then take $\tilde{\omega}_{\tau^c}$ to be the orthogonal complement of $\tilde{\omega}_{\tau}$ in $\ccH^1_{\textrm{cris}}(A_0/S_0)_{S,\tau^c}$.

We proceed inductively: For $\theta \in \widehat{\Theta}_F$ write $\tau = \tilde{\theta}$, and write $\omega^0_{\tau}(i) = \omega^0_{A_0/S_0,\tau}(i)$.  Suppose we have fixed a lift $\tilde{\omega}^0_{\tau}(i-1)$ of $\omega^0_{\tau}(i-1)$. If $s_{\tau^i}=0$ (respectively $s_{\tau^i}=2$), then we are forced to take $\tilde{\omega}^0_{\tau}(i)=\tilde{\omega}^0_{\tau}(i-1)$ (respectively $\tilde{\omega}^0_{\tau}(i)= ([\varpi_\fp] -\tau^i(\varpi_\fp))^{-1}\tilde{\omega}^0_{\tau}(i-1)$). Suppose now that $s_{\tau^i}=1$, and define 
\[\widetilde{\ccH}^1_{\tau^i}=([\varpi_\fp] -\tau^i(\varpi_\fp))^{-1}\tilde{\omega}^0_{\tau}(i-1) / \tilde{\omega}^0_{\tau}(i-1).\]
This is a locally free sheaf of rank 2 over $\ccO_S$ by lemma \ref{filtration torsion}.
Furthermore, over $S_0$, we have
\[\widetilde{\ccH}^1_{\tau^i} \otimes_{\ccO_S} \ccO_{S_0} = ([\varpi_\fp] -\tau^i(\varpi_\fp))^{-1}\omega^0_{\tau}(i-1) /\omega^0_{\tau}(i-1)=: \ccH^1_{\tau^i}.\]
It follows that lifting $\omega^0_{\tau}(i)$ is equivalent to lifting 
\[\omega^0_{\tau^i} := \omega^0_{\tau}(i)/\omega^0_{\tau}(i-1) \subset \ccH^1_{\tau^i}\]
to a rank one $\ccO_S$-subbundle of $\widetilde{\ccH}^1_{\tau^i}$. The set of such lifts forms a torsor under
\[
 \mathcal{H}om_{\ccO_{S_0}}\left(\omega^0_{\tau^i}, \ccH^1_{\tau^i} / \omega^0_{\tau^i}\right) \otimes_{\ccO_{S_0}} \ccI \simeq (\omega^0_{\tau^i} )^{\otimes -2} \otimes_{\ccO_{S_0}} \wedge^2 \ccH^1_{\tau^i} \otimes_{\ccO_{S_0}} \ccI.\]
It follows that we can lift $\omega^0_{\tau}$ and, by construction, we have even lifted the Pappas-Rapoport filtration. Noting as in \cite{reduzzi2016partial} that the level structure lifts uniquely, the proposition then follows.
\end{proof}

As in \cite{2020arXiv200100530D}, we define an action of $\ccO_{F,(p),+}^\times$ on both $\widetilde{Y}^{DP}_{U'}(G_\Sigma')$ and $\widetilde{Y}^{\textrm{PR}}_{U'}(G_\Sigma')$ via the map $\theta_\mu(A,\iota,\lambda,\eta,\epsilon,\underline{\omega})=(A,\iota,\mu\lambda,\eta,\mu\epsilon,\underline{\omega})$. This is clearly compatible with the forgetful map. Furthermore, by \cite[Lemma 2.2.1]{2020arXiv200100530D}, the action of $\ccO_{F,(p),+}^\times$ factors through $Nm_{E/F}(U'\cap E^\times)$, and the induced action of $\ccO_{F,(p),+}/Nm_{E/F}(U'\cap E^\times)$ is free. We obtain quotients $\pi: Y^{\textrm{PR}}_{U'}(G_\Sigma')\to Y^{DP}_{U'}(G_\Sigma')$ over $\ccO$, where $Y^{\textrm{PR}}_{U'}(G_\Sigma')$ is an infinite disjoint union of smooth quasi-projective schemes of relative dimension $\vert \Theta_F \setminus \Sigma_\infty \vert$. 

We now define an action of $G'_\Sigma(\bA_f)$ on the projective system $\varprojlim_{U^\prime} Y^{\textrm{PR}}_{U'}(G_\Sigma')$ as follows:
Take two sufficiently small open compact subgroups $U'_1,U'_2 \subset G(\bA^{(p)}_f)$ as above, and $g\in G_\Sigma'(\bA_f)$ such that $g^{-1}U_1g \subset U_2$. Given the universal object $(A_1,\iota_1,\lambda_1,\eta_1,\epsilon_1,\underline{\omega_1})$ over $S = \widetilde{Y}^{\textrm{PR}}_{U'_1}(G'_\Sigma)$, consider the abelian scheme $A'/ S$ which is prime to $p$ isogeneous to $A_1$ and satisfies 
\[T^{(p)}(A'_{\overline{s}_i})= \eta_{1,i}(\ccO_D^{(p)}g^{-1}),\]
for all $i$, indexing over connected components of $S$. Then $A'$ inherits an $\ccO_D$-action $\iota'$ via the canonical quasi-isogeny $\pi\in\textrm{Hom}(A_1,A')\otimes \bZ_{(p)}$. Furthermore $\pi$ induces a $\ccO_D$-linear isomorphism $\omega_{A'/S} \to \omega_{A_1/S}$ and so a suitable filtration on $\underline{\omega^\prime}$ on $\omega_{A^\prime/S}$. We define a polarization $\lambda'$ on $A'$ by $\lambda_1=\pi^\vee \circ \lambda' \circ \pi$ and a $U_2$ level structure by $\eta'= \eta_1 \circ r_{g^{-1}}$ where $r_{g^{-1}}$ denotes right multiplication by $g^{-1}$, and $\epsilon'=\nu(g)^{-1}\epsilon_1$. The tuple $(A',\iota',\lambda',\eta',\underline{\omega}')$ is then an object classified by $\widetilde{Y}^{\textrm{PR}}_{U'_2}(G'_\Sigma)$ and thus defines a morphism $\tilde{\rho}_g:\widetilde{Y}^{\textrm{PR}}_{U'_1}(G'_\Sigma) \to \widetilde{Y}^{\textrm{PR}}_{U'_2}(G'_\Sigma)$. Since this morphism commutes with the action of $\ccO_{F,(p),+}$, it descends to a morphism $\rho_g: Y^{\textrm{PR}}_{U'_1}(G'_\Sigma) \to Y^{\textrm{PR}}_{U'_2}(G'_\Sigma)$ which is finite and \'{e}tale. Furthermore, it is straightforward to check that this defines an action action of $G(\bA_f)$ on the projective system $\varprojlim Y^{\textrm{PR}}_{U'}(G'_\Sigma)$.\smallskip

From this, as in the Hilbert case, it can be checked  that the $Y_{U'}(G'_\Sigma)$ form a system of smooth integral models over $\ccO$ for the Shimura variety whose complex points are given by 
\[Y_{U'}(G'_\Sigma)(\bC)= G'_\Sigma(\bQ) \backslash( (\bC-\bR)^{\Theta-\Sigma} \times G'_\Sigma(\bA_f))/ U' = G'_\Sigma(\bQ)_+ \backslash( \ccH^{\Theta-\Sigma} \times G'_\Sigma(\bA_f))/ U',\]
where $G'_\Sigma(\bQ)_+$ denotes the subgroup of elements $g$ such that $\nu(g)$ is totally positive.

Although this will follow from the results of section \ref{Go properties}, we now show that the generalized Rapoport locus is dense in $\widetilde{Y}_{U^\prime}(G^\prime_\Sigma)$. Recall that this is the locus such that, for every $\tau =\tilde{\theta} \in \widehat{\Theta}_E$, there is everywhere locally an $\ccO_E \otimes \ccO_U$-linear trivialization $\ccH^1_{\dR}(A/U)_\tau^0 \simeq (\ccO_U[x]/E_\tau(x))^2$, $\omega^0_{\tau}$ is generated by $x^{b_\tau}e_1$ and $x^{e_\fp-d_\tau}e_2$ where where $b_\tau$ is the number of $i$ such that $s_{\tau^i}=0$ and $d_\tau$ is the number of $i$ such that $s_{\tau^i}=2$.

\begin{prop}\label{dense rapoport}
The generalized Rapoport locus is dense in the special fiber $\widetilde{Y}^{\textrm{PR}}_{U^\prime}(G^\prime_\Sigma)_\bF$.
\end{prop}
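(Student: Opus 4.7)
The plan is to reduce to a local-model description of $\widetilde{Y}^{\textrm{PR}}_{U^\prime}(G^\prime_\Sigma)$ around points of the special fiber, realize this model as a product of towers of $\mathbb{P}^1$-bundles, and exhibit an explicit Rapoport point therein.

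By Proposition \ref{unitary smooth}, $\widetilde{Y}^{\textrm{PR}}_{U^\prime}(G^\prime_\Sigma)$ is smooth over $\ccO$ of relative dimension $d := |\Theta_F \setminus \Sigma_\infty|$, hence its special fiber is smooth, reduced, and equidimensional. Unpacking the inductive Grothendieck--Messing argument from the proof of Proposition \ref{unitary smooth}, around any geometric point of the special fiber one obtains an \'etale neighborhood isomorphic to a product, indexed over $\theta \in \widehat{\Theta}_F$ with $\tau = \tilde{\theta}$, of towers of $\mathbb{P}^1$-bundles over $\textrm{Spec}(\bF)$: at each pair $(\tau, i)$ with $s_{\tau^i}=1$, the local parameter is a rank-one subbundle of the rank-two bundle $\widetilde{\ccH}^1_{\tau^i}$ produced by Lemma \ref{filtration torsion}. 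Such a neighborhood is smooth and connected, hence irreducible, so any non-empty open subset is dense.

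Since density is a local property, it then suffices to exhibit a Rapoport point in any such neighborhood. Choosing a local trivialization $\ccH^0_\tau \simeq (\bF[x]/x^{e_\fp})^2$ with basis $e_1, e_2$, I define
\[\omega^0_\tau(j) := \langle x^{e_\fp-s(j)+d_{\tau,j}}e_1,\, x^{e_\fp-d_{\tau,j}}e_2 \rangle,\]
with $s(j) := \sum_{k\le j} s_{\tau^k}$ and $d_{\tau,j} := |\{k\le j : s_{\tau^k}=2\}|$. This is the filtration reconstructed by the uniqueness argument preceding Lemma \ref{filtration torsion}; it satisfies all the Pappas--Rapoport constraints and its endpoint $\omega^0_\tau(e_\fp)=\langle x^{b_\tau}e_1, x^{e_\fp - d_\tau}e_2\rangle$ is of Rapoport form. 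This produces the required Rapoport point and hence density.

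The main technical obstacle is making the \'etale-local identification of $\widetilde{Y}^{\textrm{PR}}_{U^\prime}(G^\prime_\Sigma)$ with the product of towers of $\mathbb{P}^1$-bundles rigorous: the proof of Proposition \ref{unitary smooth} gives the needed deformation data stage by stage, but packaging these into a single \'etale chart relies on the Grassmannian-style construction of $\widetilde{Y}^{\textrm{PR}}_{U^\prime}(G^\prime_\Sigma)$ as a closed subscheme of a product of Grassmannians over $\widetilde{Y}^{\textrm{DP}}_{U^\prime}(G^\prime_\Sigma)$. Once this identification is in hand, density follows immediately from the explicit Rapoport filtration above together with smooth irreducibility of the local model.
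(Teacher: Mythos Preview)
Your approach is different from the paper's and has a genuine gap. The paper does not pass to a local model; instead, for each closed point $x$ with underlying abelian variety $A$, it constructs an explicit arc $\textrm{Spec}(k\llbracket t\rrbracket)\to\widetilde{Y}^{\textrm{PR}}_{U'}(G'_\Sigma)_\bF$ through $x$ whose generic point is Rapoport. Concretely, one tensors $H^1_{\dR}(A/k)^0$ with $k\llbracket t\rrbracket$, builds by hand a chain of lifts $L_{\tau,i}$ of the given filtration which becomes Rapoport over $k((t))$ (the parameter $t$ is inserted precisely at those steps where the original filtration jumps non-Rapoportly), and then invokes Grothendieck--Messing inductively over $k\llbracket t\rrbracket/t^n$ to produce genuine deformations $A_n$ of $A$ whose Hodge filtrations match the $L_{\tau,i}$.

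The gap in your argument is that your ``Rapoport point'' is a filtration on an abstract trivialized $(\bF[x]/x^{e_\fp})^2$ whose top piece is $\langle x^{b_\tau}e_1,\,x^{e_\fp-d_\tau}e_2\rangle$, but the moduli problem forces the top of the filtration to be $\omega^0_{A,\tau}$, which is determined by the abelian scheme and not by you. If $x$ is not Rapoport then $\omega^0_{A_x,\tau}$ is not of this shape for any basis, so your filtration is not a point of the moduli at all; producing a nearby Rapoport point requires deforming $A$, which is exactly the work you have skipped. Your local-model route could be made to work via a Rapoport--Zink style local model diagram identifying an \'etale neighbourhood of $x$ with an open in the linear-algebraic local model (and checking that this identification respects the Rapoport loci), but Proposition~\ref{unitary smooth} only computes tangent spaces and does not supply such a diagram; you acknowledge this is the main obstacle, and it is not resolved. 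As a minor point, an \'etale neighbourhood of a point in a quasi-projective variety is never isomorphic to a projective $\mathbb{P}^1$-tower, so that statement needs reformulation in any case.
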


\begin{proof}
We will prove this by showing that for any closed point $x \in \widetilde{Y}^{\textrm{PR}}_{U^\prime}(G^\prime_\Sigma)(\bF)$ with residue field $k$, there exists a a morphism $\textrm{Spec}(k\llbracket t \rrbracket) \to  \widetilde{Y}^{\textrm{PR}}_{U^\prime}(G^\prime_\Sigma)_\bF$ extending $x$ and such that the generic point lands in the generalized Rapoport locus.

Consider $\Delta = H^1_{\dR}(A/k)^0 \otimes_k k\llbracket t \rrbracket$, where $A$ is the abelian variety corresponding to $x$. 
Then $\Delta$ is a free $\ccO_E \otimes k\llbracket t \rrbracket$-module of rank $2$ and for any $\tau = \tilde{\theta} \in \widehat{\Theta}_E$, its $\tau$-component $\Delta_\tau$ is free of rank $2$ over $k\llbracket t \rrbracket[x]/x^{e_\fp}$. 
If for every $1 \leq i \leq e_\fp$, $s_{\tau^i} \neq 1$, we have a unique lift of the unique filtration of $\omega^0_\tau$ to $\Delta_\tau$. 

Otherwise, let $i$ be the smallest $i$ such that $s_{\tau^i}=1$ and pick a $k[x]/x^{e_\fp}$ basis $\overline{e_1},\overline{e_2}$ of $ H^1_{\dR}(A/k)^0_\tau$ such that $\omega^0_\tau(i) = \langle x^{a-1}\overline{e_1},x^{a}\overline{e_2} \rangle$ for a suitable choice of $a \leq e_\fp$. Furthermore, we may choose a series of vectors $0 = v_0, v_1, \cdots , v_{s_\tau(e_\fp)}$ such that $\omega^0_{\tau}(i) = \langle v_0, \cdots, v_{s_\tau(i)} \rangle$ and $xv_i \in \{v_{i-2},v_{i-1}\}$. Furthermore, if $s_{\tau^i} =2$ we impose the relations $xv_{s_\tau(i)-1} = xv_{s_\tau(i)} = v_{s_\tau(i-1)}$ and $xv_{s_\tau(i)+1} = v_{s_\tau(i)-1}$. Let $e_1,e_2$ be any lifts of $\overline{e_1},\overline{e_2}$ to $\Delta_\tau$ which thus form a basis. We inductively define a series of lifts $0= L_{\tau,0} \subset L_{\tau,1} \subset \cdots \subset L_{\tau,e_\fp}=L_\tau$ of $0 \subset \omega^0_\tau(1) \subset \cdots \subset \omega^0_\tau$ which are local summands of $\Delta_\tau$ as follows:

For each $\overline{v}_i$ write $\tilde{v}_i$ for its canonical lift to $\Delta_\tau$ with respective to the basis $e_1,e_2$. For the base case, if $s_{\tau^1} \neq 1$, the lift is imposed, otherwise if $s_{\tau^1}=1$ put $L_{\tau,1} = \langle x^{e_\fp-1}e_1 \rangle$. Suppose we have constructed $L_{\tau,i-1}$ by determining a set of lifts $v_1, \cdots , v_{s_\tau(i-1)}$. 
If $s_{\tau^i}=0$, we set $L_{\tau,i} = L_{\tau,i-1}$ and if $s_{\tau^i}=2$, we set $L_{\tau,i} = x^{-1}L_{\tau,i-1}$ and we set $v_{s_{\tau}(i)-1}$ to be a lift of $\overline{v}_{s_{\tau}(i)-1}$ such that $xv_{s_\tau(i)-1} = v_{s_\tau(i-1)}$. We can do this because $x\tilde{v}_{s_\tau(i)-1} - v_{s_\tau(i-1)} \in x \Delta_\tau$. We define $v_{s_\tau(i)}$ similarly. Suppose now that $s_{\tau^i}=1$, if $x \overline{v}_{s_\tau(i)} = \overline{v}_{s_\tau(i-1)}$, let $v_{s_{\tau(i)}}$ be a lift such that  $xv_{s_{\tau(i)}} = v_{s_{\tau(i-1)}}$. If $x \overline{v}_{s_\tau(i)} = \overline{v}_{s_\tau(i)-2}$, then we set $v_{s_{\tau(i)}}$ to be a lift such that $xv_{s_{\tau(i)}} = v_{s_{\tau(i)-2}}+tv_{s_{\tau(i)-1}}$.

It follows from construction that over the generic fiber $k((t))$, $L_\tau \otimes_{k \llbracket t \rrbracket } k((t))$ defines a point of the generalized Rapoport locus.

We now define inductively for $n \geq 1$, a sequence of compatible lifts $\underline{A_n}$ over $k \llbracket t \rrbracket/t^n$ such that $\underline{A_1}=\underline{A}$. Suppose we have constructed $\underline{A_n}$ and consider the thickening $\textrm{Spec}(k \llbracket t \rrbracket/t^{n+1}) \to \textrm{Spec}(k \llbracket t \rrbracket/t^n)$, with trivial divided powers, we can consider the data 
\[L_\tau \otimes_{k \llbracket t \rrbracket} k \llbracket t \rrbracket/t^{n+1} \subset \Delta_\tau \otimes_{k \llbracket t \rrbracket} k \llbracket t \rrbracket/t^{n+1} \simeq H^1_{cris}(A_n/ k \llbracket t \rrbracket/t^n)^0_\tau \otimes_{k \llbracket t \rrbracket/t^n} k \llbracket t \rrbracket/t^{n+1},\]
which by Grothendieck-Messing theory, as in the proof of proposition \ref{unitary smooth} yields a lift $A_{n+1}$ of $A_n$, the base case being clear for $n=1$. Let $U = \textrm{Spec}(A)$ be an open affine containing $x$, then we obtain a sequence of compatible morphisms $A \to k\llbracket t \rrbracket /t^n$ which thus yield a morphism $\textrm{Spec}(k \llbracket t \rrbracket) \to \widetilde{Y}^{\textrm{PR}}_{U^\prime}(G^\prime_\Sigma)$. By construction, its special fiber maps to $x$ and its generic fiber maps into the generalized Rapoport locus. This completes the proof.
\end{proof}

As a consequence, we see that $\widetilde{Y}^{\textrm{PR}}_{U^\prime}(G^\prime_\Sigma)$ has the same connected components as $\widetilde{Y}^{\textrm{DP}}_{U^\prime}(G^\prime_\Sigma)$.

\subsubsection{The dual filtration}\label{dual filtration}
Note that the models defined above depend on the choice of lifts of $\Theta_F$ to $\Theta_E$, namely in terms of the filtration. To remove this dependence, we introduce the notion of the dual filtration. 

Let $S = \widetilde{Y}^{\textrm{PR}}_{U^\prime}(G^\prime_\Sigma)$, as defined in the previous section. 
For any $\tau \in \widehat{\Theta}_E$ and $i \leq e_\fp$, set 
\[a_{\tau,i}(x) = \prod_{j \leq i}(x-\tau^j(\varpi_\fp)),\,  b_{\tau,i}(x) = \prod_{j \geq i+1}(x-\tau^j(\varpi_\fp)),\]
and $a_{\tau,i}=a_{\tau,i}([\varpi_\fp])$, $b_{\tau,i}=a_{\tau,i}([\varpi_\fp]) \in \ccO \otimes_\bZ \ccO_E$. Let $T$ be an $\ccO$-scheme and $\underline{A}$ a $T$-point of $S$. Then the polarization $\lambda$ induces a perfect $\ccO_T$-linear perfect pairing $\langle \cdot,\cdot \rangle$ on $\ccH^1_{\textrm{dR}}(A/T)$ such that the actions of $\alpha$ and $\alpha^*$, for any $\alpha \in \ccO_D$, form an adjoint pair. We can thus restrict it, for any $\tau \in \widehat{\Theta}_E$ with $\tau \vert_F =\theta$, to a perfect pairing $\langle \cdot,\cdot \rangle_\theta:\ccH^1_{\textrm{dR}}(A/T)^0_{\tau} \times \ccH^1_{\textrm{dR}}(A/T)^0_{\tau^c} \to \ccO_T$. It is straightforward to check, since $\ccH^1_{\textrm{dR}}(A/T)$ is locally free of rank of rank 2 as an $\ccO_T \otimes_\bZ  \ccO_E$-module, that $\ker a_{\tau,i} = \textrm{Im} \, b_{\tau,i}$ is locally free of rank $2i$ over $\ccO_T$, and that $\ker a_{\tau,i}$ and $\ker b_{\tau^c,i}$ are orthogonal.\smallskip 

We can thus define a pairing \[\langle \cdot,\cdot \rangle_{\theta^i} : \ker a_{\tau,i} \times \ker a_{\tau^c,i} \to \ccO_T, \] given on sections (by abuse of notation) by 
\[\langle b_{\tau,i} \cdot x, b_{\tau^c,i} \cdot y \rangle_{\theta^i} = \langle x, b_{\tau^c,i}\cdot y \rangle_\theta = \langle b_{\tau,i} \cdot x,  y \rangle_\theta.\]
It follows, since the original pairing  $\langle \cdot,\cdot \rangle$ is perfect, that so is $\langle \cdot,\cdot \rangle_{\theta^i}$. For a subsheaf $\ccF \subset \ker a_{\tau,i}$ we write $\ccF^{\perp_i}$ for its orthogonal complement in $\ker a_{\tau^c,i}$ with respect to $\langle \cdot,\cdot \rangle_{\theta^i}$.

Now, let $\tau = \tilde{\theta} \in \widehat{\Theta}_E$. By definition of the filtration $\underline{\omega}$, $\omega^0_{A/T,\tau}(i) = \omega^0_\tau(i) \subset \ker a_{\tau,i}$ has dimension $s_\tau(i) = \sum_{j \leq i} s_{\tau^j}$. We thus set
\[\omega^0_{\tau^c}(i) \subset \ker a_{\tau^c,i} = \omega^0_\tau(i)^{\perp_i}.\]
It follows that $\omega^0_{\tau^c}(i)$ is locally free of rank $2i - s_\tau(i) = s_{\tau^c}(i)$. We remark that this definition is independent of the choice of uniformizer $\varpi_\fp$ and depends only on the $\ccO_{F,(p)}^+$-orbit of $\lambda$.
\begin{lem}
For $\tau$ as above, the collection of $\omega^0_{\tau^c}(i)$ defines an $\ccO_T \otimes \ccO_E$-stable filtration of $\omega^0_{\tau^c}$. Furthermore, for each $1 \leq i \leq e_\fp$, the graded piece $\omega^0_{\tau^c}(i)/\omega^0_{\tau^c}(i-1)$ is locally free of rank $s_{(\tau^c)^i}$, killed by $[\varpi_\fp]-(\tau^c)^i(\varpi_\fp)$.
\end{lem}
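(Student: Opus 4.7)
The plan is to verify in order: (i) the nesting $\omega^0_{\tau^c}(i-1) \subset \omega^0_{\tau^c}(i)$, (ii) the $\ccO_T \otimes \ccO_E$-stability, (iii) local $\ccO_T$-freeness of the subquotients with the claimed rank, and (iv) the killing condition. Throughout, two observations underpin the computation. Since $\varpi_\fp \in F$, we have $\tau^j(\varpi_\fp) = (\tau^c)^j(\varpi_\fp)$, so $a_{\tau, i}$ and $b_{\tau, i}$ coincide with $a_{\tau^c, i}$ and $b_{\tau^c, i}$ as operators on $\ccH^1_{\textrm{dR}}(A/T)^0$, and we have the polynomial identity $b_{\tau, i-1} = ([\varpi_\fp] - \tau^i(\varpi_\fp))\, b_{\tau, i}$. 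Moreover, as $\varpi_\fp$ is $c$-fixed, any $\ccO$-polynomial in $[\varpi_\fp]$ is self-adjoint for $\langle \cdot, \cdot \rangle$, so the defining identity of the shifted pairing extends cleanly.

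For (i), take $n \in \omega^0_{\tau^c}(i-1)$, so $n = b_{\tau^c, i-1} n''$ for some $n''$, and let $m \in \omega^0_\tau(i)$ with $m = b_{\tau, i} \tilde m$. Computing,
\[\langle m, n \rangle_{\theta^i} = \langle \tilde m, b_{\tau^c, i-1} n'' \rangle = \langle b_{\tau, i-1} \tilde m, n'' \rangle = \langle ([\varpi_\fp] - \tau^i(\varpi_\fp))\, m, n'' \rangle.\]
The Pappas-Rapoport filtration axiom gives $([\varpi_\fp] - \tau^i(\varpi_\fp))\, m \in \omega^0_\tau(i-1)$, so running the identification backward one sees this final pairing equals $\langle ([\varpi_\fp] - \tau^i(\varpi_\fp))\, m, n \rangle_{\theta^{i-1}}$, which vanishes by definition of $\omega^0_{\tau^c}(i-1) = \omega^0_\tau(i-1)^{\perp_{i-1}}$.

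For (ii), since $\ccO_E$ commutes with the $[\varpi_\fp]$-polynomials $b_{\tau, i}$ and $b_{\tau^c, i}$, the adjoint relation $\langle \alpha u, v \rangle = \langle u, \bar\alpha v \rangle$ passes to the shifted pairing: $\langle \alpha m, n \rangle_{\theta^i} = \langle m, \bar\alpha n \rangle_{\theta^i}$ on $\ker a_{\tau, i} \times \ker a_{\tau^c, i}$. Thus for $\alpha \in \ccO_E$, $n \in \omega^0_{\tau^c}(i)$, and $m \in \omega^0_\tau(i)$, one obtains $\langle m, \alpha n \rangle_{\theta^i} = \langle \bar\alpha m, n \rangle_{\theta^i} = 0$, using $\ccO_E$-stability of $\omega^0_\tau(i)$. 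For (iii), one first establishes by induction on $i$ that $\omega^0_\tau(i)$ is a local $\ccO_T$-summand of $\ker a_{\tau, i}$, combining Lemma \ref{filtration torsion} (applied to $\omega^0_\tau(i-1)$) with the Pappas-Rapoport axiom that $\omega^0_\tau(i)/\omega^0_\tau(i-1)$ is locally a direct factor of the rank-two quotient $([\varpi_\fp] - \tau^i(\varpi_\fp))^{-1}\omega^0_\tau(i-1)/\omega^0_\tau(i-1)$. Once this is in hand, the perfectness of $\langle \cdot, \cdot \rangle_{\theta^i}$ exhibits $\omega^0_{\tau^c}(i) = \omega^0_\tau(i)^{\perp_i}$ as a local $\ccO_T$-summand of $\ker a_{\tau^c, i}$ of rank $2i - s_\tau(i)$, and taking successive differences yields rank $2 - s_{\tau^i} = s_{(\tau^c)^i}$ for the graded piece.

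For (iv), note $([\varpi_\fp] - \tau^i(\varpi_\fp))\, \omega^0_{\tau^c}(i) \subset \ker a_{\tau^c, i-1}$ since $a_{\tau^c, i-1}([\varpi_\fp] - \tau^i(\varpi_\fp)) = a_{\tau^c, i}$. For $n = b_{\tau^c, i}\, n'' \in \omega^0_{\tau^c}(i)$ and any $m \in \omega^0_\tau(i-1) \subset \omega^0_\tau(i)$, the relation $([\varpi_\fp] - \tau^i(\varpi_\fp))\, n = b_{\tau^c, i-1}\, n''$ together with the pairing identification yields $\langle m, ([\varpi_\fp] - \tau^i(\varpi_\fp))\, n \rangle_{\theta^{i-1}} = \langle m, n'' \rangle = \langle m, n \rangle_{\theta^i} = 0$. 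The main technical point throughout is bookkeeping the polynomial shifts between the $\langle \cdot, \cdot \rangle_{\theta^i}$ at different levels; all three nontrivial properties reduce to the single identity $b_{\tau, i-1} = ([\varpi_\fp] - \tau^i(\varpi_\fp))\, b_{\tau, i}$ combined with the corresponding properties of the filtration on $\omega^0_\tau$.
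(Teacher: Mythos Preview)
Your proof is correct and rests on the same core identity as the paper's, namely $b_{\tau,i-1} = ([\varpi_\fp] - \tau^i(\varpi_\fp))\, b_{\tau,i}$. The organization differs: rather than verifying (i)--(iv) separately by element-level pairing computations, the paper isolates a single consequence of this identity, that for any locally free $\ccF \subset \ker a_{\tau,i-1}$ one has $\ccF^{\perp_i} = ([\varpi_\fp]-(\tau^c)^i(\varpi_\fp))^{-1}\ccF^{\perp_{i-1}}$, and then obtains both the nesting and the killing condition in one stroke by applying $(\cdot)^{\perp_i}$ to the chain $([\varpi_\fp]-\tau^i(\varpi_\fp))\,\omega^0_\tau(i) \subset \omega^0_\tau(i-1) \subset \omega^0_\tau(i)$. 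This packaging also yields, for free, the explicit duality isomorphism $\omega^0_{\tau^c}(i)/\omega^0_{\tau^c}(i-1) \simeq \bigl(([\varpi_\fp]-\tau^i(\varpi_\fp))^{-1}\omega^0_\tau(i-1)/\omega^0_\tau(i)\bigr)^\vee$, which the paper records immediately afterward and uses later; your rank computation by subtraction is valid but does not produce this isomorphism directly. Your explicit justification in (iii) that $\omega^0_\tau(i)$ is a local summand of $\ker a_{\tau,i}$ is a point the paper passes over silently, so your argument is in that respect more complete.
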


\begin{proof}
For any $1 \leq i \leq e_\fp$, $\omega^0_{\tau^c}(i)$ is locally free of rank $ s_{\tau^c}(i)$ and is $\ccO_T \otimes \ccO_E$-stable by the compatibility of $\langle \cdot,\cdot \rangle_{\theta^i}$ and the $\ccO_D$-action. Now, note that since $b_{\tau^c,i-1} = ([\varpi_\fp]-(\tau^c)^i(\varpi_\fp)) b_{\tau^c,i}$, if $\ccF \subset \ker a_{\tau,i-1}$ is locally free, then $\ccF^{\perp_i} = \left([\varpi_\fp]-(\tau^c)^i(\varpi_\fp)\right)^{-1} \ccF^{\perp_{i-1}}$. Indeed,
\begin{equation*}
\begin{split}
\left\langle \ccF,\left([\varpi_\fp]-(\tau^c)^i(\varpi_\fp)\right)^{-1} \ccF^{\perp_{i-1}} \right\rangle_{\theta^{i}} 
& = \left\langle \ccF, (b_{\tau^c,i})^{-1}\left([\varpi_\fp]-(\tau^c)^i(\varpi_\fp)\right)^{-1} \ccF^{\perp_{i-1}} \right\rangle_\theta \\
& = \left\langle \ccF,\ccF^{\perp_{i-1}}\right\rangle_{\theta^{i-1}} \\
& = 0.
\end{split}
\end{equation*}
In particular, since $([\varpi_\fp]-\tau^i(\varpi_\fp))\omega^0_{\tau}(i) \subset \omega^0_{\tau}(i-1) \subset \omega^0_\tau(i)$, taking orthogonal complements with respect to $\langle \cdot , \cdot \rangle_{\theta^i}$ and applying $\left([\varpi_\fp]-(\tau^c)^i(\varpi_\fp)\right)$ yields  \[([\varpi_\fp]-(\tau^c)^i(\varpi_\fp))\cdot\omega^0_{\tau^c}(i) \subset \omega^0_{\tau^c}(i-1) \subset \omega^0_{\tau^c}(i).\] 
So that the $\omega^0_{\tau^c}(i)$ give a filtration of $\omega^0_{\tau^c}$. We also deduce that 
\[\omega^0_{\tau^c}(i)/\omega^0_{\tau^c}(i-1) \xrightarrow{\sim} \left(([\varpi_\fp]-\tau^i(\varpi_\fp))^{-1}\omega^0_{\tau}(i-1)/\omega^0_{\tau}(i)\right)^\vee \]
which is locally free of rank $2-s_{\tau^i}=s_{(\tau^c)^i}$.

\end{proof}

Suggested by the proof of proposition \ref{unitary smooth} and the above lemma, we now write for all $\tau \in \widehat{\Theta}_E$, $\ccH^1_{\tau^i} := ([\varpi_\fp]-\tau^i(\varpi_\fp))^{-1}\omega^0_{\tau}(i-1)/\omega^0_{\tau}(i-1) $, $\omega^0_{\tau^i}:=\omega^0_{\tau}(i)/\omega^0_{\tau}(i-1)$, and $v^0_{\tau^i}:=([\varpi_\fp]-\tau^i(\varpi_\fp))^{-1}\omega^0_{\tau}(i-1)/\omega^0_{\tau}(i)$. We obtain

\begin{cor}\label{filtration duality}
The pairing $\langle \cdot , \cdot \rangle_{\theta^i}$ induces a perfect pairing on $\ccH^1_{\tau^i} \times \ccH^1_{(\tau^c)^i}$ from which we obtain isomorphisms

\begin{center}
\begin{tikzcd}
0 \arrow[r] & \omega^0_{\tau^i} \arrow[r] \arrow[d,"\wr"] & \ccH^1_{\tau^i} \arrow[r] \arrow[d,"\wr"] & v^0_{\tau^i} \arrow[r] \arrow[d,"\wr"] & 0 \\
0 \arrow[r] & v^{0 \vee}_{(\tau^c)^i} \arrow[r] & \ccH^{1 \vee}_{(\tau^c)^i} \arrow[r] & \omega^{0 \vee}_{(\tau^c)^i} \arrow[r] & 0.
\end{tikzcd}
\end{center}
\end{cor}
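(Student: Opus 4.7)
The plan is to verify three things in turn: that the pairing $\langle\cdot,\cdot\rangle_{\theta^i}$ descends from $\ker a_{\tau,i}\times \ker a_{\tau^c,i}$ to $\ccH^1_{\tau^i}\times \ccH^1_{(\tau^c)^i}$, that the descended pairing is perfect, and that under it the Hodge subbundles $\omega^0_{\tau^i}$ and $\omega^0_{(\tau^c)^i}$ become mutual annihilators; the claimed commutative diagram with its two vertical isomorphisms then drops out tautologically.

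For the descent, write $N=([\varpi_\fp]-\tau^i(\varpi_\fp))^{-1}\omega^0_\tau(i-1)$ and $D=\omega^0_\tau(i-1)$, so that $\ccH^1_{\tau^i}=N/D$, and define $N^c$, $D^c$ analogously on the $\tau^c$-side. The formula $\ccF^{\perp_i}=([\varpi_\fp]-(\tau^c)^i(\varpi_\fp))^{-1}\ccF^{\perp_{i-1}}$ for $\ccF\subset \ker a_{\tau,i-1}$, established inside the proof of the preceding lemma, applied to $\ccF=D$ yields $D^{\perp_i}=([\varpi_\fp]-(\tau^c)^i(\varpi_\fp))^{-1}\omega^0_{\tau^c}(i-1)=N^c$; the symmetric statement $(D^c)^{\perp_i}=N$ follows by swapping the roles of $\tau$ and $\tau^c$. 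Hence $\langle D,N^c\rangle_{\theta^i}=\langle N,D^c\rangle_{\theta^i}=0$, so the pairing descends.

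To prove perfection I would dualise the short exact sequence $0\to D\to N\to \ccH^1_{\tau^i}\to 0$ of locally free $\ccO_S$-modules (local freeness of all three terms being furnished by Lemma \ref{filtration torsion} and its proof). Using that $\langle\cdot,\cdot\rangle_{\theta^i}$ is perfect on $\ker a_{\tau,i}\times\ker a_{\tau^c,i}$, one identifies $N^\vee\simeq \ker a_{\tau^c,i}/N^{\perp_i}=\ker a_{\tau^c,i}/D^c$ and $D^\vee\simeq \ker a_{\tau^c,i}/D^{\perp_i}=\ker a_{\tau^c,i}/N^c$; comparing with the dual sequence $0\to \ccH^{1\vee}_{\tau^i}\to N^\vee\to D^\vee\to 0$ then identifies $(\ccH^1_{\tau^i})^\vee\simeq N^c/D^c=\ccH^1_{(\tau^c)^i}$, which is exactly the perfection claim.

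Finally, to extract the diagram I would check that $\omega^0_{\tau^c}(i)\subset N^c$ --- this is immediate from the filtration condition $([\varpi_\fp]-(\tau^c)^i(\varpi_\fp))\omega^0_{\tau^c}(i)\subset\omega^0_{\tau^c}(i-1)$ --- and that its image in $\ccH^1_{(\tau^c)^i}$ is exactly $\omega^0_{(\tau^c)^i}$, with the analogous statement on the $\tau$-side. The defining orthogonality $\omega^0_{\tau^c}(i)=\omega^0_\tau(i)^{\perp_i}$ then descends to show that $\omega^0_{\tau^i}$ and $\omega^0_{(\tau^c)^i}$ annihilate each other under the perfect pairing of the previous step, yielding the vertical isomorphisms $\omega^0_{\tau^i}\simeq v^{0\vee}_{(\tau^c)^i}$ and $v^0_{\tau^i}\simeq \omega^{0\vee}_{(\tau^c)^i}$ and the commutativity of the two squares. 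I expect the only delicate point in the whole argument to be this last bookkeeping step --- verifying cleanly that passage to the quotients $N/D$ and $N^c/D^c$ respects both the pairing and the Hodge subbundle containments --- but it reduces entirely to the orthogonality formulas established in the preceding lemma.
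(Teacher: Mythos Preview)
Your proposal is correct and follows essentially the same approach as the paper: the corollary is treated as an immediate consequence of the orthogonality relations established in the preceding lemma (in particular the formula $\ccF^{\perp_i}=([\varpi_\fp]-(\tau^c)^i(\varpi_\fp))^{-1}\ccF^{\perp_{i-1}}$ and the isomorphism $\omega^0_{\tau^c}(i)/\omega^0_{\tau^c}(i-1)\simeq (v^0_{\tau^i})^\vee$ derived there), and the paper gives no separate proof. Your write-up simply spells out the descent, perfection, and annihilator steps more explicitly than the paper does.
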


We remark that whereas the sheaves $\ccH^1_{(\tilde{\theta}^c)^i}$ are independent of the specific choice of quasi-polarization $\lambda$ in its $\ccO_{F,(p),+}^\times$-orbit, the above isomorphism does. Finally, one can check that \begin{equation}\label{dual formula} \omega^0_{\tau^c}(i)=b_{\tau^c,i} \cdot \omega^0_{\tau}(i)^\perp,
\end{equation}
where the orthogonal complement here is taken with respect to the original pairing $\langle \cdot, \cdot \rangle$. One may simply take this as the definition of the dual filtration.

As mentioned at the beginning of the section, the dual filtration can be used to remove the dependence of the Pappas-Rapoport model on the choice of lift of $\Theta_F$ to $\Theta_E$: Define the Pappas-Rapoport model by adding the data, for each $\tau \in \widehat{\Theta}_E$, of a suitable filtration of $\omega^0_\tau$ such that for each $\tau$ and $i$, $\omega^0_\tau(i)$ and $\omega^0_{\tau^c}(i)$ are orthogonal with respect to the pairing $\langle \cdot , \cdot \rangle_{\theta^i}$. In particular, from now on when we write $\omega^0_\tau(i)$, we can mean the filtration with respect to either lift of $\theta$.

\begin{rem}
Consider the case that $T$ is a scheme over  the fraction field of $\ccO$. Then the splitting $\ccO_E \otimes_\bZ \ccO_T \simeq \bigoplus_{\beta \in \Theta_E} \ccO_T$, where we stress that the splitting is now over all ramified embeddings $\beta \in \Theta_E$, gives a splitting $\ccH^1_{\textrm{dR}}(A/T)^0 \simeq \bigoplus_{\beta \in \Theta_E} \ccH^1_{\textrm{dR}}(A/T)^0_{\beta}$, where each component $\ccH^1_{\textrm{dR}}(A/T)^0_{\beta}$ is locally free of rank $2$ such that the action of $\ccO_E$ factors through $\beta$. For a fixed $\tau \in \widehat{\Theta}_E$, we have $\ker a_{\tau,i} = \bigoplus_{j \leq i} \ccH^1_{\textrm{dR}}(A/T)^0_{\tau^j}$ and the pairing $\langle \cdot , \cdot \rangle_{\theta^i}$ is equal to the restriction of the original pairing $\langle \cdot , \cdot \rangle$ to $\ker a_{\tau,i} \times \ker a_{\tau^c,i}$ and so $\omega^0_{\tau^c}(i)$ as defined above really is $\omega^0_{\tau^c}(i)$.
\end{rem}

\subsection{Quaternionic Shimura varieties}\label{quaternionic models}
In this section, we recall how to construct integral models of the Shimura varieties attached to the Shimura datum $(G_\Sigma,[h_\Sigma])$, defined in section \ref{unitary notation}, from the integral Unitary models we defined in section \ref{PR unitary}. We again follow the strategy of  \cite[$\mathsection$2.3]{2020arXiv200100530D}. 
\subsubsection{Pappas-Rapoport models of Quaternionic Shimura varieties}\label{quat def}

Keep the notation from the section \ref{unitary notation}. In particular, let $L \subset \overline{\bQ}$ be a sufficiently large number field such that it contains the images of $\beta(E)$ for all $\beta \in \Theta_E$, and let $\ccO$ denote the completion of its ring of integers with respect to the prime determined by the inclusion $\overline{\bQ} \hookrightarrow \overline{\bQ}_p$. 

Let $T'$ be the abelian quotient of $G'_\Sigma$; it is isomorphic to $(T_F \times T_E)/T_F$, where $T_F$ is embedded via the map $x \mapsto (x^2,x^{-1})$. Let $\nu':G'_\Sigma \to T'$ be the morphism induced by $G_\Sigma \times T_E \to T_F \times T_E$, $(g,y) \mapsto (\det(g),y)$ where $\det$ denotes the reduced norm. 

Consider the Shimura data $(T_E,i_{\tilde{\Sigma}})$, $(T_F,i_{\Sigma})$, $(T',i'_{\Tilde{\Sigma}})$, where $i_{\tilde{\Sigma}}$ was defined in section \ref{unitary notation}$, i_{\Sigma}=\det \circ h_\Sigma$, and $i'_{\Tilde{\Sigma}}$ is the composition of $(i_{\Sigma},i_{\Tilde{\Sigma}})$ with the projection map to $T'$. These Deligne homomorphisms will be implicit whenever we talk about the Shimura varieties associated with the above Shimura data.

For a suitable small open compact $V_E=\ccO_{E,p}^\times V_E^p$ with $V_E^p \subset (\mathbb{A}_{E,f}^{(p)})^\times$, the geometric points of the zero dimensional Shimura variety $Y_{T_E}(V_E)$ are identified with the finite set 
\[C_{V_E} = (\bA_{E,f}^{(p)})^\times / \ccO_{E,(p)}^\times V^p_E = E^\times \backslash \bA_{E,f}^\times / V_E.\]
Write $L_{\widetilde{\Sigma}}$ for its reflex field $\ccO_{\widetilde{\Sigma}}$ its ring of integers. As a scheme over $\ccO_{\widetilde{\Sigma}}$, $Y_{V_E}(T_E)$ is characterized by descent from the isomorphism 
\[Y_{V_E}(T_E) \times_{\ccO_{\widetilde{\Sigma}}} \ccO_M \simeq \bigsqcup_{c \in C_{V_E}} \textrm{Spec}(\ccO_M),\] where $\ccO_M$ is the ring of integers of a finite abelian extension $M$ of $L_{\tilde{\Sigma}}$, and the Galois action of $\textrm{Gal}(M/L_{\widetilde{\Sigma}})$ is determined by Shimura reciprocity for $i_{\tilde{\Sigma}}$ as in \cite{tian_xiao_2016}. We can assume that the extension $M/L_{\widetilde{\Sigma}}$ is unramified at the primes of $L_{\tilde{\Sigma}}$ over $p$ since $\ccO_{E,p}^\times \subset V_E$.

 In the same way, for $V_F = V_F^p \ccO_{F,p}^\times$, we have a description for $Y_{T_F}(V_F)$ whose geometric points are given by 
\[C_{V_F}= (\adf)^\times / \ccO_{F,(p)}^\times = (F_+)^\times \backslash \bA_{F,f}^\times / V_F\]
with Galois action determined by Shimura reciprocity for $i_\Sigma$. We will however, for our purposes, view $Y_{V_F}(T_F)$ as a finite \'{e}tale scheme over $\ccO$. Similarly, for $V' \subset T'(\bA^{(p)}_f)$ whose $p$-component contains the image of $\ccO_{F,p}^\times \times \ccO_{E,p}^\times$, $Y_{T'}(V')$ is a finite \'{e}tale-$\ccO$-scheme whose geometric points are given by $C_{V'}=T'(\bQ)_+ \backslash T'(\bA_f)/V'$.\medskip

We use the following assumptions of \cite[$\mathsection$2.3]{2020arXiv200100530D}: Let $U \subset G_{\Sigma}(\bA_f)$ be an open compact subgroup containing $\ccO_{B,p}^\times$. We say that $V_E$ is sufficiently small with respect to $U$ if 

\begin{itemize}
    \item $E^\times \cap V^1_E=\{1\}$, where $V^1_E := \{y/y^c \, \vert \, y \in V_E \}$,
    \item $V_E \cap \bA_{F,f}^\times \subset U$,
    \item $\textrm{Nm}_{E/F}(V_E)\subset \det(U)$.
\end{itemize}

Such a $V_E$ always exists. Let $U' \subset G'_\Sigma(\bA_f)$ denote the image of $U\times V_E$ for $V_E$ sufficiently small with respect to $U$. 

We have a commutative diagram of algebraic groups

\begin{center}
    \begin{tikzcd}
        G_\Sigma \times T_E \arrow[r] \arrow[d] & G^\prime_\Sigma \arrow[d] \\
        T_F \times T_E \arrow[r] & T^\prime,
    \end{tikzcd}
\end{center}
which is compatible with the Deligne homomorphisms $(h_\Sigma,i_{\widetilde{\Sigma}})$, $h^\prime_{\widetilde{\Sigma}}$, $(i_\Sigma,i_{\widetilde{\Sigma}})$, and $i^\prime_{\widetilde{\Sigma}}$. It therefore yields, for compatible compact open subgroups, a commutative a diagram of Shimura varieties. Furthermore, we have
\begin{lem}[Lemma 2.3.1 \cite{2020arXiv200100530D}] If $V_E$ is sufficiently small with respect to $U$ and $U^\prime $ is the image of $U \times V_E$, then the diagram of complex manifolds 
\begin{equation}\label{complex cartesian}
\begin{tikzcd}
(G_\Sigma(\bQ)_+ \backslash \mathcal{H}^{\Theta_F \setminus \Sigma_\infty} \times G_\Sigma(\mathbb{A}_f^{p}) / U) \times C_{V_E} \arrow[d] \arrow[r] & 
G_\Sigma^\prime (\bQ)_+ \backslash \mathcal{H}^{\Theta_F \setminus \Sigma_\infty} \times G^\prime_\Sigma(\mathbb{A}_f^{p}) / U^\prime \arrow[d] \\
C_{\det(U)}\times C_{V_E} \arrow[r]         & C_{\nu'(U')},           
\end{tikzcd}
\end{equation}
where $G_\Sigma(\bQ)_+$ denotes the subgroup of elements with totally positive reduced norm, is Cartesian. 
\end{lem}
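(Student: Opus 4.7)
The plan is to verify two things: that the diagram commutes (immediate from the defining formulae for the maps, via the two projections $G_\Sigma \times T_E \to T_F \times T_E$ given by $(g,y) \mapsto (\det g, y)$ and $G_\Sigma \times T_E \twoheadrightarrow G'_\Sigma$), and that the induced map from the upper-left corner to the fibre product of the other three is a bijection of complex points. The key structural tool is the short exact sequence
$$
1 \longrightarrow T_F \longrightarrow G_\Sigma \times T_E \longrightarrow G'_\Sigma \longrightarrow 1,
$$
with $T_F = \textrm{Res}_{F/\bQ}\mathbb{G}_m$ embedded anti-diagonally. Hilbert 90 gives $H^1(k, T_F) = 0$ for $k = \bQ, \bR, \bA_f$, so the sequence remains exact on points over each of these rings.

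For surjectivity of the comparison map to the fibre product, given a point $[z, g'] \in Y_{U'}(G'_\Sigma)(\bC)$ together with a compatible pair in $C_{\det(U)} \times C_{V_E}$, I would lift $g' \in G'_\Sigma(\bA_f)$ to some $(g_0, y_0) \in (G_\Sigma \times T_E)(\bA_f)$ using the adelic surjectivity above, and then translate by an element of $T_F(\bA_f)$ (which acts trivially on the upper-right class) to align the image of the lift in $C_{\det(U)} \times C_{V_E}$ with the prescribed pair. The point $z$ is carried along unchanged.

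For injectivity, suppose $(z_i, g_i, y_i)$ for $i=1,2$ yield the same class in the upper-right; then there exist $\gamma' \in G'_\Sigma(\bQ)_+$ and $u' \in U'$ with $\gamma' z_1 = z_2$ and $\gamma' \cdot \overline{(g_1, y_1)} \cdot u' = \overline{(g_2, y_2)}$. Lift $\gamma'$ to $(\gamma, \beta) \in G_\Sigma(\bQ) \times T_E(\bQ)$ (possible by $H^1(\bQ, T_F) = 0$) and $u'$ to $(u, w) \in U \times V_E$ (by construction of $U'$). This yields
$$
(\gamma g_1 u,\, \beta y_1 w) \;=\; (g_2 t,\, y_2 t^{-1}) \qquad \textrm{for some } t \in T_F(\bA_f).
$$

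The main obstacle, and the point where the three hypotheses on $V_E$ become indispensable, is to absorb $t$ into the equivalence on the upper-left. The assumption $\textrm{Nm}_{E/F}(V_E) \subset \det(U)$ lets one modify $u$ so that the $G_\Sigma$-equation becomes consistent up to a rational scalar from $F^\times$; the assumption $V_E \cap \bA_{F,f}^\times \subset U$ ensures that residual ambiguity in the choice of $w$ is harmless because it acts trivially on the $U$-orbit; and finally $E^\times \cap V_E^1 = \{1\}$ (where $V_E^1 = \{y/y^c : y \in V_E\}$) upgrades the resulting adelic identity to a rational one, forcing the remaining component of $t$ to lie in $F^\times \cap U$ and hence to act trivially on both factors. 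Together these reduce to $t = 1$, giving equality of the two points in the upper-left corner and completing the verification of the Cartesian property.
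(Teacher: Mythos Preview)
The paper does not prove this lemma; it is quoted verbatim from \cite{2020arXiv200100530D} (Lemma~2.3.1 there) and used as a black box to motivate the definition of $Y_{U\times V_E}(G_\Sigma\times T_E)$. So there is no argument in the present paper against which to compare yours.

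That said, your strategy is the standard one and is correct in outline: exactness of $1\to T_F\to G_\Sigma\times T_E\to G'_\Sigma\to 1$ on $\bQ$-, $\bR$-, and $\bA_f$-points via Hilbert~90, then lift-and-absorb. Two points deserve tightening. First, after lifting $\gamma'\in G'_\Sigma(\bQ)_+$ to $(\gamma,\beta)$ you have not checked that $\gamma$ itself has totally positive reduced norm; this is not automatic, but can be arranged by replacing $(\gamma,\beta)$ by $(\gamma\lambda,\beta\lambda^{-1})$ for a suitable $\lambda\in F^\times$, using weak approximation for $T_F$. Second, and more seriously, your final paragraph asserts that the three hypotheses ``reduce to $t=1$'' without carrying out the reduction. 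The goal is not $t=1$ but rather that $t$ can be factored as a product of an element of $F^\times$ (absorbed into the $G_\Sigma(\bQ)_+$- and $E^\times$-actions on the left) and an element lying simultaneously in the centre of $U$ and in $V_E\cap\bA_{F,f}^\times$ (absorbed on the right). Concretely: from $\beta y_1 w = y_2 t^{-1}$ and $[y_1]=[y_2]$ in $C_{V_E}$ one extracts $t\beta^{-1}\in E^\times\cdot V_E$ inside $\bA_{E,f}^\times$; comparing with the $G_\Sigma$-equation via $\det$ and using $\mathrm{Nm}_{E/F}(V_E)\subset\det(U)$ pins down the $F^\times$-part of $t$ up to $\det(U)$; the condition $V_E\cap\bA_{F,f}^\times\subset U$ then lets the residual adelic piece be absorbed into $U$; and $E^\times\cap V_E^1=\{1\}$ eliminates the remaining ambiguity in $\beta$. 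You have named the right ingredients but not woven them into an argument; as written the last paragraph is a plausibility sketch rather than a proof.
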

This brings us to the following definition: Set 
\begin{equation}\label{Y UxV (G xT)}
Y_{U \times V_E}(G_\Sigma \times T_E) = Y_{U'}(G'_\Sigma) \times_{Y_{\nu'(U')}(T')} (Y_{\det(U)}(T_F) \times_\ccO Y_{V_E}(T_E)),
\end{equation}
where $Y_{U'}(G'_\Sigma)$ denotes the Pappas-Rapoport model defined in section \ref{PR unitary}. We remark that from now on, we will only consider Pappas-Rapoport models, unless stated otherwise, and thus remove the PR superscript.

Suppose now that $g \in G_\Sigma(\bA^{(p)}_f)$, and $U_1,U_2 \subset G_\Sigma(\mathbb{A}_f)$ are two sufficiently small open compact subgroups of level prime to $p$ such that $g^{-1}U_1g \subset U_2$. For $y \in (A_{E,f}^{(p)})^\times$ and $V_E$ sufficiently small relative to $U_1$ and $U_2$, of level prime to $p$, we have a  commutative diagram 
\[\begin{tikzcd}
Y_{U_1'}(G'_\Sigma) \arrow[d, "\rho_{gy}", shift right] \arrow[r] & Y_{\nu'(U_1')}(T') \arrow[d, "\nu'(gy)"] & Y_{\det(U_1)}\times_\ccO Y_{V_E}(T_E) \arrow[d, "{(\det g,y)}"] \arrow[l] \\
Y_{U_2'}(G'_\Sigma) \arrow[r]                                     & Y_{\nu'(U_2')}(T')                       & Y_{\nu'(U_2')}(T'), \arrow[l]                                         
\end{tikzcd}\]
which yields a morphism $\rho_{(g,y)}: Y_{U_1 \times V_E}(G_\Sigma \times T_E) \to Y_{U_2 \times V_E}(G_\Sigma \times T_E)$. The $\rho_{(g,y)}$ satisfy the usual compatibilities. Therefore, for $U=U_1=U_2$ and $g=1$, the automorphisms $\rho_{(1,y)}$ define a free action of the group $C_{V_E}=(\bA_{E,f}^{(p)})^\times / \ccO_{E,(p)}^\times V_E^p$. We define $Y_U(G_\Sigma)$ as the quotient of $Y_{U \times V_E}(G_\Sigma \times T_E)$ by this action. The resulting scheme is independent of the choice of $V_E$. Since $Y_{U^\prime}(G^\prime_\Sigma)$ is smooth, quasi-projective of relative dimension $ \vert \Theta_F \setminus \Sigma_\infty \vert$, so is $Y_{U \times V_E}(G_\Sigma \times T_E)$ and thus so is the quotient $Y_U(G_\Sigma)$. Moreover, the natural projections induce an isomorphism
\begin{equation}\label{quat split}
Y_{U \times V_E}(G_\Sigma \times T_E) \xrightarrow{\sim} Y_U(G_\Sigma) \times Y_{V_E}(T_E).     
\end{equation}

Given general $g$, $U_1$, $U_2$ with $g^{-1}U_1g_2$, the map $\rho_{(g,1)}$ commutes with the action of $C_{V_E}$ and thus defines a morphism
\[\rho_g: Y_{U_1}(G_\Sigma) \to Y_{U_2}(G_\Sigma),\]
independent of $V_E$. Furthermore, if we have $h$ and $U_3$, again sufficiently small of level prime to $p$, with $h^{-1}U_2 h \subset U_3$, then we have the equality $\rho_{gh} = \rho_g \circ \rho_h$. We thus obtain a Hecke action on the projective system $\varprojlim_U Y_U(G_\Sigma)$ and we note that under the isomorphism \ref{quat split}, $\rho_{(g,y)} = (\rho_g,y)$.

In this thesis, we will not attempt to consider the descent of $Y_U(G_\Sigma)$ to a smaller ($p$-adic) field. Instead, for our purposes, it will be convenient to extend scalars even further to obtain our main result. We do so uniformly by setting $W$ to be the ring of integers of the maximal unramified extension $L^{nr}$ of $L$. Over $W$, the identity $1 \in C_{V_E}$ gives a section $\textrm{Spec} ( W) \to Y_{V_E}(T_E) \simeq \bigsqcup_{c \in C_{V_E}} \textrm{Spec} ( W)$. This yields a morphism 
\[
Y_U(G_\Sigma)_W \to Y_U(G_\Sigma)_W \times_W Y_{V_E}(T_E)_W \to Y_{U \times V_E}(G_\Sigma \times T_E)_W \to Y_{U^\prime}(G^\prime)_W 
\]
for any $U,V_E,U^\prime$ as in lemma \ref{complex cartesian}. We immediately obtain the analog of lemma 2.3.2 of \cite{2020arXiv200100530D}:

\begin{lem}\label{compatibility components unitary quaternionic}
For $U,V_E$ and $U'$ as above, we have a Cartesian Diagram 
\begin{center}
    \begin{tikzcd}
Y_U(G_\Sigma)_W \arrow[d] \arrow[r] & Y_{U'}(G'_\Sigma)_W \arrow[d] \\
C_{\det(U)} \arrow[r]               & C_{\nu'(U')}                  
\end{tikzcd}

\end{center}
which identifies $Y_U(G_\Sigma)$ with an open and closed subscheme of $Y_{U'}(G'_\Sigma)$. Furthermore, this diagram is Hecke equivariant in the sense that for any 
$U_1,U_2 \subset G_\Sigma(\bA_f)$, $g \in G_\Sigma(\bA^{(p)}_f)$ 
with $g^{-1}U_1 g \subset U_2$, 
writing $U^\prime_i$ for the image of $U_i \times V_E$ in
$G^\prime_\Sigma(\bA_f)$ (for sufficiently small $V_E$ with respect to both subgroups), then the following diagram commutes.
\begin{center}
\begin{tikzcd}
    Y_{U_1}(G_\Sigma)_W \arrow[r] \arrow[d,"\rho_g"'] & Y_{U^\prime_1}(G^\prime_\Sigma)_W  \arrow[d,"\rho_g"] \\
    Y_{U_2}(G_\Sigma)_W \arrow[r] & Y_{U^\prime_2}(G^\prime_\Sigma)_W.
\end{tikzcd}
\end{center}
\end{lem}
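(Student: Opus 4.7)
The plan is to derive the lemma from the construction of $Y_U(G_\Sigma)$ as a descent of the fiber product (\ref{Y UxV (G xT)}) via the splitting (\ref{quat split}).

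First, combining (\ref{Y UxV (G xT)}) and (\ref{quat split}) produces over $\ccO$ the Cartesian square
\[
\begin{tikzcd}
Y_U(G_\Sigma) \times_\ccO Y_{V_E}(T_E) \arrow[d] \arrow[r] & Y_{U'}(G'_\Sigma) \arrow[d] \\
Y_{\det(U)}(T_F) \times_\ccO Y_{V_E}(T_E) \arrow[r] & Y_{\nu'(U')}(T').
\end{tikzcd}
\]
Base-changing to $W$, the zero-dimensional Shimura varieties $Y_{V_E}(T_E)_W$, $Y_{\det(U)}(T_F)_W$ and $Y_{\nu'(U')}(T')_W$ become disjoint unions of copies of $\mathrm{Spec}(W)$ indexed respectively by $C_{V_E}$, $C_{\det(U)}$ and $C_{\nu'(U')}$, as $W$ contains the maximal unramified extension $L^{nr}$ of $L$ and therefore trivializes the Shimura reciprocity action. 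Pulling back this Cartesian square along the morphism $Y_{\det(U)}(T_F)_W \to Y_{\det(U)}(T_F)_W \times Y_{V_E}(T_E)_W$ induced by the $1$-section of $Y_{V_E}(T_E)_W$ then yields exactly the Cartesian diagram of the lemma.

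Next, for the open-closed immersion claim, the bottom map $Y_{\det(U)}(T_F)_W \to Y_{\nu'(U')}(T')_W$ of the resulting diagram is a morphism of disjoint unions of copies of $\mathrm{Spec}(W)$, hence an open-closed immersion as soon as the induced set map $C_{\det(U)} \to C_{\nu'(U')}$, $[x] \mapsto [(x, 1)]$, is injective; the corresponding conclusion on the top map then follows by stability of open-closed immersions under pullback. To prove the injectivity, I would suppose $[(x, 1)] = [(y, 1)]$ in $C_{\nu'(U')}$ and unwind the exact sequence $1 \to T_F \xrightarrow{z \mapsto (z^2, z^{-1})} T_F \times T_E \to T' \to 1$ together with the description of $\nu'(U')$ as the image of $\det(U) \times V_E$. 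This produces $s \in F_+^\times$, $t \in E^\times$, $d \in \det(U)$, $v \in V_E$ and $z \in \bA_{F,f}^\times$ satisfying $tv = z$ and $x = s t^2 d v^2 y$. The relation $tv \in \bA_{F,f}^\times$ forces $t^c/t = v/v^c \in E^\times \cap V_E^1 = \{1\}$ by hypothesis, so $t \in F^\times$ and $v \in V_E \cap \bA_{F,f}^\times \subset U$; viewing $v$ as a scalar in the center of $G_\Sigma$ then gives $v^2 = \det v \in \det(U)$, so $xy^{-1} = (st^2)(dv^2) \in F_+^\times \cdot \det(U)$, as required.

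Finally, Hecke equivariance follows from the functoriality of the fiber product construction: by definition, $\rho_g$ on $Y_U(G_\Sigma)$ is the descent, through the free $C_{V_E}$-action, of $\rho_{(g,1)}$ on $Y_{U \times V_E}(G_\Sigma \times T_E)$, which was itself constructed from the Unitary $\rho_g$ via (\ref{Y UxV (G xT)}); since $\rho_{(g,1)}$ acts trivially on the $T_E$-factor, it preserves the $1$-section and thus induces the asserted commutative square. The main obstacle in the argument is the injectivity of $C_{\det(U)} \to C_{\nu'(U')}$, which crucially uses both smallness hypotheses $E^\times \cap V_E^1 = \{1\}$ and $V_E \cap \bA_{F,f}^\times \subset U$; the remaining steps are essentially formal manipulations of fiber products.
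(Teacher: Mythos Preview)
Your proof is correct and follows exactly the approach sketched in the paper: pull back the Cartesian square (\ref{Y UxV (G xT)}) along the $1$-section of $Y_{V_E}(T_E)_W$ and verify that $C_{\det(U)} \to C_{\nu'(U')}$ is injective using the smallness hypotheses on $V_E$. The paper itself does not spell out the argument (it simply cites \cite[Lemma~2.3.2]{2020arXiv200100530D}), so you have supplied the details that the reference contains.
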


We finish this section by remarking that in the case that $p$ is unramified in $F$, the $Y_U(G)_\Sigma$ define a system of integral canonical models for $(G_\Sigma, [h_\Sigma])$.

\subsubsection{Comparing models}\label{model comparison}
We have given two different integral models over $\ccO = \ccO_L$ for the Shimura variety attached to the group $G=\textrm{Res}_{F/\bQ}GL_2$, the first as a Hilbert modular variety $Y_U(G)$ and the second as a quaternionic Shimura variety $Y_{U}(G_\emptyset)$. The latter is obtained from $Y^\prime_{U^\prime}(G^\prime_\emptyset)$, where we choose $\ccO_D = M_2(\ccO_E)$ and $\delta = \left( \begin{smallmatrix} 0 & 1 \\ -1 & 0 \end{smallmatrix} \right)$. In the case that $p$ is unramified in $F$, these are necessarily isomorphic as they are both integral canonical models. Since we allow $p$ to ramify in $F$, we cannot appeal to this fact, since the models are in general not canonical. We however give an explicit isomorphism between both models under the assumption that that we can choose $E=F(\sqrt{d}) $ such that the image of (relative) Trace is principal. 

Let $U \subset G(\bA^\times_f)$ and $U' \subset G_\varnothing'(\bA^\times_f)$ be sufficiently small subgroups such that $U'$ contains the image of $U$. Let $(A,\iota,\lambda,\eta,\underline{\mathcal{\omega}})$ be the universal object over $\widetilde{Y}_U(G)$. We define a morphism $\tilde{i}: \widetilde{Y}_U(G) \to \widetilde{Y}_{U'}(G_\varnothing')$ as follows:\medskip

Temporarily write $S = \widetilde{Y}_U(G)$. Let $A'= A \otimes_{\ccO_F} \ccO_E^2$ with its natural $M_2(\ccO_E)$-action $\iota'$ given by left multiplication on $\ccO_E^2$. The dual of $A'$ is canonically isomorphic to $A^\vee \otimes_{\ccO_F} \textrm{Hom}_{\ccO_F}(\ccO_E^2,\ccO_F)$ so we define a quasi-polarization $\lambda^\prime$ as the tensor product of $\lambda$ with the Trace map  $\textrm{Tr}:\ccO_E^2 \to \textrm{Hom}_{\ccO_F}(\ccO_E^2,\ccO_F)$ given by 
$\Tr(x,y)(v_1,v_2)= \Tr_{E/F}(x\overline{v_2})-Tr_{E/F}(y\overline{v_1})$. We define a level structure 
\[\eta_i':M_2(\bA_{E,f}^{(p)}) \to T^{(p)}A'_{\overline{s}_i} \simeq T^{(p)}A_{\overline{s}_i}\otimes_{\adf}(\bA_{E,f}^{(p)})^2\]
by requiring that 
$\eta_i'\left(
(a,b)\left(\begin{smallmatrix}c \\
d \end{smallmatrix} \right) \right)=\eta_i(a,b)\otimes\left(\begin{smallmatrix}c \\ d\end{smallmatrix}\right)$ for $a,b\in \adf$ and $c,d \in \bA_{E,f}^{(p)}$.
Finally, we construct the filtration on 
$\omega_{A'/S} \simeq \omega_{A/S} \otimes \textrm{Hom}_{\ccO_F}(\ccO_E^2,\ccO_F)$ 
by taking for each $\tau = \tilde{\theta} \in \widehat{\Theta}_E$
\[\omega^0_{A'/S,\tau}(j) = \omega_{A/S,\theta}(j) \otimes_{\ccO_S[x]/E_{\theta}(x)}\textrm{Hom}_{\ccO_F}(\ccO_E,\ccO_F)_{\tau}.\]
The tuple $(A^\prime, \iota^\prime,\lambda^\prime,\eta^\prime,\underline{\omega^0_{A^\prime}})$ thus defines a point of $\widetilde{Y}_{U'}(G_\varnothing')$ and thus a morphism
\[\tilde{i}: \widetilde{Y}_U(G)_\ccO \to \widetilde{Y}^\prime_{U'}(G_\varnothing').\]

We now show that $\tilde{i}$ is a closed and open immersion. It suffices to do so after extending $\ccO$ to $W = \ccO_{L^{nr}}$, the ring of integers of the maximal unramified extension of $L$. By proposition \ref{dense rapoport}, $\widetilde{Y}$ has the same set of connected components as the naive model and, by \cite[$\mathsection$2.7]{tian_xiao_2016}, we have the equalities
\[\pi_0(\widetilde{Y}_{\overline{\bF}_p}) = \pi_0(\widetilde{Y}_W) = \pi_0(\widetilde{Y}_{L^{nr}}) = \pi_0(\widetilde{Y}_{\bC}). \]
That is, every geometric connected component of $\widetilde{Y}_{\overline{\bF}_p}$ is contained in a unique geometric connected component of $\widetilde{Y}_W$, with non-empty generic fiber.

Let $S$ be a connected component of $\widetilde{Y}_U(G)_W$ and $S^\prime$ be the connected component of $\widetilde{Y}^\prime_{U^\prime}(G^\prime_\emptyset)$ that $\tilde{i}$ maps $S$ to. One can check that on complex points, $\widetilde{i}$ is given by the map 
\[\textrm{SL}_2(\ccO_{F,(p)}) \backslash (\ccH^{\Theta_F} \times \textrm{GL}_2(\mathbb{A}^{(p)}_{F,f}))/U^p
\to G^1_{\emptyset} \backslash (\mathbb{Z}_{(p)}) (\ccH^{\Theta_F} \times G_\emptyset(\mathbb{A}^{(p)}_{E,f}))/(U^{\prime})^p,\]
which as in \cite[$\mathsection$ 2.3.4]{2020arXiv200100530D} is an isomorphism onto its image which is open and closed.

Take $S$ and $S^\prime$ as above. Then, by Galois descent, we see that $\tilde{i}$ is an isomorphism on generic fibers and we thus have an open dense subscheme $V \subset S^\prime$ and a morphism $q: V \to S$ such that $\widetilde{i} \circ q = \id$. In particular, over $V$, the abelian scheme $B = q^* A$ satisfies $ B \otimes_{\ccO_F} \ccO_E^2 = A_\emptyset \vert_V$ where $A_\emptyset$ is the universal abelian scheme over $S^\prime$. Recall that we made the assumption that $E=F(\sqrt{d})$ where the image of Trace is a principal ideal of $\ccO_F$. Letting $\alpha^\prime \in \ccO_E$ generate the image of Trace, this gives us an $\ccO_F$-linear splitting $\ccO_E = \sqrt{d}\ccO_F \oplus \alpha^\prime\ccO_F \simeq \ccO_F^2$. We set $\alpha = \Tr(\alpha^\prime)^{-1}\alpha^\prime$ and define the $M_2(\ccO_F)$-linear endomorphism of $A_\emptyset \vert_V = B^2 \otimes_{\ccO_F} \ccO_E$ given on points by $x \otimes \gamma \mapsto x \otimes \Tr(\alpha \gamma)$. This is an idempotent operator since 
\[\Tr(\alpha \Tr(\alpha \gamma)) = \Tr(\alpha \gamma) \Tr(\alpha) =  \Tr(\alpha \gamma),      \]
and has image $B^2 \otimes 1 \subset B^2 \otimes_{\ccO_F} \ccO_E$. By \cite[Proposition 2.11]{Milne1992ThePO}, since $S^\prime$ is smooth, we may uniquely extend the endomorphism $\Tr$ to a endomorphism $\Tr: A_\emptyset \to A_\emptyset$ over to all of $S^\prime$. 
Since this extension is unique it follows that it is also idempotent and $M_2(\ccO_F)$-linear. 
We let $A^\prime_\emptyset$ denote the scheme theoretic image of $\Tr$ and $A^{\prime \prime}_\emptyset$ the scheme theoretic image of $(\id_{A_\emptyset} - \Tr)$, which is also an idempotent $M_2(\ccO_F)$-linear endomorphism. 
Since $S^\prime$ is Noetherian, both $A^{\prime}_\emptyset$ and $A^{\prime \prime}_\emptyset$ are abelian schemes over $S^\prime$ by, for example, \cite{achter2023images}. Furthermore, we find that the morphism $A_\emptyset \xrightarrow{\Tr \times (\id_{A_\emptyset} - \Tr)} A^\prime_\emptyset \times_{S^\prime} A^{\prime \prime}_\emptyset$ is an isomorphism. Finally, $A^{\prime}_\emptyset$ and $A^{\prime \prime}_\emptyset$ are $M_2(\ccO_F)$-linearly isomorphic, since over $V$ $A^{\prime}_\emptyset$ is given by $B^2 \otimes 1$ and $A^{\prime \prime}_\emptyset$ by $B^2 \otimes \delta$ where $\delta$ generates the image $\gamma \mapsto \gamma - \Tr(\alpha\gamma)$. Therefore, applying the idempotent $e_0$ to $A^\prime_\emptyset$, we obtain an abelian scheme with $\ccO_F$-action that extends $B/V$ that we also denote $B$. Furthermore we obtain an $M_2(\ccO_E)$-linear isomorphism $A_\emptyset \simeq B \otimes_{\ccO_F} \ccO_E^2 $ extending the one on $V$.

The action $\iota_B$ of $M_2(\ccO_F)$ satisfies the Kottwitz condition over all of $S$, since it does over $V$ and it is a closed condition. We can also uniquely extend the quasi-polarization $\lambda_B$ over $V$ to all of $S^\prime$. It is a quasi-polarization since being a polarization is a closed condition. The level structure structure extends uniquely to a level structure $\eta_B$ as well since $V$ is dense. We define a filtration on $\omega^0_B$ via the isomorphism \[\omega^0_{B,\theta} \otimes_{\ccO_F} \ccO_E \simeq \omega^0_{A_\emptyset,\tau} \oplus \omega^0_{A_\emptyset,\tau^c}.\]
It follows that the tuple $(B,\iota_B,\lambda_B,\eta_B,\underline{\omega_B})$ defines a point of $\overline{Y}_U(G)$ and so yields a morphism $q: S^\prime \to \overline{Y}_U(G)$ extending the one on $V$. Since $S^\prime$ is connected, it must factor through $S$, and since the composite $\widetilde{i} \circ q$ is the identity on $V$, which is open and dense, then $\widetilde{i} \circ q = \id_{S^\prime}$. We deduce similarly that $ q \circ \widetilde{i} = \id_{S}$. Therefore $\widetilde{i}: S \to S^\prime $ is an isomorphism.

By the previous discussion, we see that $\tilde{i}: \widetilde{Y}_U(G)_\ccO \to \widetilde{Y}^\prime_{U'}(G_\varnothing')$ is an isomorphism onto its image. Furthermore, $\tilde{i}$ is clearly compatible under the action of $\ccO_{F,(p),+}^\times$ hence descends to a morphism $i: Y_U(G) \to Y_{U'}(G_\varnothing')$ which is also an isomorphism onto its image. Let $V_E$ now be an open compact subgroup of $\mathbb{A}_{E,f}^\times$ contained in $U^\prime$ of level prime to $p$. Since $\Sigma = \emptyset$, the Galois action on $C_{V_E}$ given by Shimura reciprocity is trivial, so we may identify $Y_{V_E}(T_E)$ over $\ccO$ with $\bigsqcup_{C_{V_E}} \textrm{Spec}(\ccO)$ and we extend $i$ to a morphism
\[ i^\prime : Y_U(G) \times Y_{V_E}(T_E) = \bigsqcup_{C_{V_E}} Y_U(G) \to Y^\prime_{U^\prime}(G^\prime_\emptyset),\]
where $i^\prime = \rho_y \circ i$ on the component indexed by $y \in \mathbb{A}_{E,f}^{(p) \, \times}$. It follows that the diagram
\[
\begin{tikzcd}
Y_U(G) \times Y_{V_E}(T_E) \arrow[r] \arrow[d] & Y_{U^\prime}(G^\prime_\emptyset) \arrow[d] \\
Y_{\det(U)}(T_F) \times Y_{V_E}(T_E) \arrow[r] & Y_{\nu(U^\prime)}
\end{tikzcd}
\]
is Cartesian. Therefore we have an isomorphism \[Y_U(G) \times Y_{V_E}(T_E) \xrightarrow{\sim} Y_{U \times V_E} (G_\emptyset \times V_E) \xrightarrow{\sim} Y_{U}(G_\emptyset) \times Y_{V_E}(T_E),\]
which is compatible with the action of $G(\mathbb{A}^{(p)}_f) \times (\mathbb{A}^{(p)}_{E,f})^\times$. Taking quotients by the action of $C_{V_E}$ we obtain an isomorphism 
\[Y_{U}(G) \xrightarrow{\sim} Y_{U}(G_\emptyset) \]
which is compatible with the Hecke action and whose composite with the inclusion from \ref{compatibility components unitary quaternionic} is given by the (base change to $W$ of the) morphism $i: Y_U(G) \to Y^\prime_{U^\prime}(G^\prime_\emptyset)$ defined above.

\subsection{Automorphic vector bundles}\label{bundles}
In this section we define automorphic vector bundles on the special fibers of the (non PEL) Shimura varieties $Y_U(G_\Sigma)$, for sufficiently small $U$, of level prime to $p$. From now on, we let $L \subset \overline{\bQ}$ be a sufficiently large number field containing all images of embeddings of $E$. Let $\ccO$ denote the completion of its ring of integers with respect to the prime determined by the embedding $\overline{\bQ} \to \overline{\bQ}_p$ and write $\bF$ for its residue field. In particular, all of our models are defined over $\ccO$. In the following, we will also write $\Fp = \fp_1 \cdots \fp_n$ for the radical of $p\ccO_F$.
\subsubsection{The Hilbert setting}\label{automorphic bundles hilbert}

Consider first the case $G =  \textrm{Res}_{F/\bQ} \, \textrm{GL}_2$ and consider a sufficently small, in the usual sense, open compact subgroup 
$U \subset G(\mathbb{A}_f)$ which is $\Fp$-neat. That is, for any $\alpha \in U \cap \ccO_F^\times$, $\alpha - 1 \in \Fp$. If $p$ is unramified in $F$, then $\Fp =p \ccO_F$ and this is the same as being $p$-neat, as in the terminology of \cite{DS17}. 
Write $\overline{Y} = Y_U(G)_\bF$, where $Y_U(G)$ is the Pappas-Rapoport model defined in section \ref{hilbert model}, $S = \widetilde{Y}_U(G)_\bF$ and $A/S$ the universal abelian variety. Write $\widetilde{\omega} = \omega_{A/S}$. We insert the $\tilde{\cdot}$ to differentiate sheaves on $S$ and sheaves on $\overline{Y}$. Recall that the Pappas-Rapoport filtration is the data for each $\theta \in \widehat{\Theta}_F$ of a filtration:

\[0 \subset \widetilde{\omega}_{\theta}(1) \subset \cdots \subset \widetilde{\omega}_\theta(e_\fp) = \widetilde{\omega}_\theta \subset \ccH^1_{dR}(A/S)_\theta\]

by locally free subsheaves such that, in particular, for any $1 \leq i \leq e_\fp$, the sheaves $\widetilde{\omega}_{\theta^i} := \widetilde{\omega}_\theta(i)/ \widetilde{\omega}_\theta(i-1)$ are line bundles. Furthermore, by lemma \ref{filtration torsion}, the sheaves $\widetilde{\ccH}^1_{\theta^i} = [\varpi_\fp]^{-1}\widetilde{\omega}_\theta(i-1)/ \widetilde{\omega}_\theta(i-1)$ are locally free of rank two. We thus have, for each $\theta^i \in \Theta_F$, a short exact sequence of locally free sheaves 
\[ 0 \to \widetilde{\omega}_{\theta^i} \to \widetilde{\ccH}^1_{\theta^i} \to \widetilde{v}_{\theta^i} \to 0,\]
where $\widetilde{v}_{\theta^i} = [\varpi_\fp]^{-1}\widetilde{\omega}_\theta(i-1)/ \widetilde{\omega}_\theta(i)$ is also a line bundle. One may think of $\widetilde{\ccH}^1_{\theta^i}$ as a way of making sense of the phrase "de Rham cohomology at $\theta^i$" and the above short exact sequence as the Hodge filtration at $\theta^i$.

Recall that for $\mu \in \ccO_{F,(p),+}^\times$, the automorphism $\theta_\mu:S \to S$ is given by $(A,\iota,\lambda,\eta) \mapsto (A,\iota,\mu \lambda,\eta)$. It follows that the abelian variety $\theta_{\mu}^*A$ is canonically $\ccO_F-$linearly isomorphic to $A$ and we have an induced $\ccO_F \otimes \ccO_S$-linear isomorphism $\ccH^1_{dR}(\theta^*_\mu A/S)=\theta_\mu^* \ccH^1_{dR}(A/S)\to \ccH^1_{dR}(A/S)$ which preserves the filtration. In particular, for $\mu=\alpha^2 \in (U \cap \ccO_F^\times)^2$, this isomorphism $A \to \theta_\mu^*A$ is given by $\iota(\alpha^{-1})$. Hence, since $U$ is $\Fp$-neat, $\alpha \equiv 1 \mod \Fp$ and the induced action on $\ccH^1_{dR}(A/S)$ is trivial. In particular we obtain an action of $\ccO_{F,(p),+}^\times/ (U \cap \ccO_F^\times)^2$ on the sheaves $\widetilde{\omega}_{\theta^i}$, $\widetilde{\ccH}^1_{\theta^i}$ and $\widetilde{v}_{\theta^i}$ which descend to sheaves over $\overline{Y}$, fitting in the exact sequence 
\[0 \to \omega_{\theta^i} \to \ccH^1_{\theta^i} \to v_{\theta^i} \to 0.\]

Furthermore, for suitable $U_1$, $U_2$ and $g^{-1}U_1g \subset U_2$, the Hecke morphism $\tilde{\rho}_g:S_1 \to S_2$ given by the quasi-isogeny $\pi_g \in \Hom_{S_1}(A_1,\tilde{\rho}_2^*A_2) \otimes \bZ_{(p)}$ induces an $\ccO_F \otimes \ccO_{S_1}$-linear isomorphism $\tilde{\rho}_g^*\ccH^1_{dR}(A_2 / S_2) \to \ccH^1_{dR}(A_1 / S_1)$ which is compatible with the $\ccO_{F,(p),+}^\times$-action and filtrations. In particular, we obtain isomorphisms 
\[ \tilde{\rho}_g^*\widetilde{\omega}_{2,\theta^i} \xrightarrow{\sim} \widetilde{\omega}_{1,\theta^i}, \, \tilde{\rho}_g^*\widetilde{\ccH}^1_{2,\theta^i} \xrightarrow{\sim} \widetilde{\ccH}^1_{1,\theta^i}, \, \tilde{\rho}_g^*\widetilde{v}_{2,\theta^i} \xrightarrow{\sim} \widetilde{v}_{1,\theta^i},\]

\noindent which also descend to isomorphisms, which we also denote $\pi_g^*$,
\[ \rho_g^*\omega_{2,\theta^i} \xrightarrow{\sim} \omega_{1,\theta^i}, \, \rho_g^*\ccH^1_{2,\theta^i} \xrightarrow{\sim} \ccH^1_{1,\theta^i}, \, \rho_g^*v_{2,\theta^i} \xrightarrow{\sim} v_{1,\theta^i},\]
over $\overline{Y}_1$. Finally, we note that if $U_3$ is as above and we have $h \in G(\mathbb{A}_f)$ such that $h^{-1}U_2h \subset U_3$, then the relation $\pi_{gh} = \tilde{\rho_g}^*(\pi_h) \circ \pi_g$ yields the equality of morphisms of sheaves on $S_1$ $\pi_{gh}^*= \pi_g^* \circ \rho_g^*(\pi_h^*)$ and thus the same equality over $\overline{Y}_1$

\subsubsection{The Unitary setting}\label{automorphic bundles unitary}
We work similarly as above, let $\Sigma$ be an even subset of places, away from $p$, and $U'\subset G'_\Sigma(\bA_f)$ a prime to $p$ open compact subgroup such that $\alpha-1 \in \Fp$ for all $\alpha \in \ccO_E^\times \cap U'$. Write $\overline{Y}^\prime = Y_{U^\prime}(G^\prime_\Sigma)_\bF$, $S = \widetilde{Y}_{U^\prime}(G^\prime_\Sigma)_\bF$, as defined in section \ref{PR unitary}, and let $A/S$ be the universal abelian scheme over $S$.

Write $\widetilde{\omega} = \omega_{A/S}$. Recall from section \ref{dual filtration} that we have, for every $\tau \in \widehat{\Theta}_E$, a Pappas-Rapoport filtration of $\omega^0_\tau$, whose graded pieces are locally free of rank $s_{\tau^i}$. Furthermore, the construction of the filtration on $\tilde{\theta}^c$-components is independent of the specific choice of polarization $\lambda$ in its $\ccO_{F,(p),+}^\times$-orbit. 

As in the statement of corollary \ref{filtration duality}, we have for every $\tau \in \widehat{\Theta}_E$ and $1 \leq i \leq e_\fp$, the sheaves $\widetilde{\ccH}^1_{\tau^i} = [\varpi_\fp]^{-1}\widetilde{\omega}^0_\tau(i-1) / \widetilde{\omega}^0_\tau(i-1)$, $\widetilde{\omega}^0_{\tau^i} = \widetilde{\omega}^0_{\tau}(i)/\widetilde{\omega}^0_\tau(i-1)$, and $v^0 = [\varpi_\fp]^{-1}\widetilde{\omega}^0_\tau(i-1) / \widetilde{\omega}^0_\tau(i)$, which are locally free of rank 2, $s_\tau^i$, and $2-s_{\tau^i} = s_{(\tau^c)^i}$ respectively. They sit in the short exact sequence
\[ 0 \to \widetilde{\omega}^0_{\tau^i} \to \widetilde{\ccH}^1_{\tau^i} \to \widetilde{v}^0_{\tau^i} \to 0.\]
In particular, if $\tau^i \vert_F = \theta^i \notin \Sigma_\infty$, then both $\widetilde{\omega}^0_{\tau^i}$ and $\widetilde{v}^0_{\tau^i}$ are line bundles.

Similarly to the Hilbert case, we have a free action of $\ccO_{F,(p),+}^\times/(U' \cap \ccO_F^\times)^2$ on $S$ given by $\theta_\mu(A,\iota,\lambda,\eta,\epsilon,\underline{\omega})=(A,\iota,\mu\lambda,\eta,\mu\epsilon,\underline{\omega})$ for $\mu \in \ccO_{F,(p),+}^\times$: The abelian scheme $\theta_\mu^*A$ is canonically $\ccO_D$-linearly isomorphic to $A$, and we get a $\ccO_D \otimes \ccO_{S}$-linear isomorphism $\ccH^1_{dR}(\theta_\mu^*A/S) = \theta_\mu^*\ccH^1_{dR}(A/S) \xrightarrow{\sim} \ccH^1_{dR}(A'/S')$ which preserves the filtrations. As before, by our assumption on $U'$, this isomorphism is given by the identity if $\mu \in (U^\prime \cap \ccO_F)^2$. We thus obtain for all $\tau^i \in \Theta_E$, sheaves $\ccH^1_{\tau^i}$, $\omega^0_{\tau^i}$, and $v^0_{\tau^i}$, locally free of rank $2$, $s_{\tau^i}$ and $s_{(\tau^c)^i}$ respectively. Furthermore, they sit in the short exact sequence

\[ 0 \to \omega^0_{\tau^i} \to \ccH^1_{\tau^i} \to v^0_{\tau^i} \to 0.\]
We note that whereas corollary \ref{filtration duality} provides an isomorphism $\widetilde{\omega}^0_{\tau^i} \xrightarrow{\sim} (\widetilde{v}_{(\tau^c)^i})^\vee$, this isomorphism depends on the specific choice of polarization $\lambda$ and so does not descend to the quotient.

Similarly to the Hilbert case, for suitable open compact subgroups $U^\prime_1,U^\prime_2 \subset G^\prime_\Sigma(\mathbb{A}_f)$ and $g \in G^\prime_\Sigma(\mathbb{A}_f)$ such that $g^{-1}U^\prime_1g \subset U_2$, we have the Hecke morphisms $\tilde{\rho}_g: S_1 \to S_2$ and $\rho_g: \overline{Y}_1 \to \overline{Y}_2$ induced by the quasi-isogeny $\pi_g \in \Hom_{S_1}(A_1,\tilde{\rho}_g^*A_2) \otimes \bZ_{(p)}$. These induce isomorphisms $\pi_g^*:\tilde{\rho}_g^*\widetilde{\ccH}^1_{2,\tau^i} \to \widetilde{\ccH}^1_{1,\tau^i}$ which descend to isomorphisms $\pi_g^*:\rho_g^*\ccH^1_{2,\tau^i} \to \ccH^1_{1,\tau^i}$. Furthermore, we similarly obtain isomorphisms $\pi_g^*:\rho_g^*\omega^0_{2,\tau^i} \to \omega^0_{1,\tau^i}$ and $\pi_g^*:\rho_g^*v^0_{2,\tau^i} \to v^0_{1,\tau^i}$. 
Finally, if $h \in G^\prime_\Sigma(\mathbb{A}_f)$ and $U^\prime_3$ are as above, such that $h^{-1}U_2 h \subset U_3$, then we have $\pi_{gh}^*= \pi_g^* \circ \rho_g^*(\pi_h^*)$.

\subsubsection{The Quaternionic setting}\label{quaternion bundles}\label{automorphic bundles quaternionic}

Again, let $\Sigma $ be an even set of places away from $p$, and  $U\subset G_\Sigma(\bA_f^{(p)})$ a sufficiently small open compact subgroup. Let $V_E$ be sufficiently small with respect to $U$, $U^\prime = U V_E \subset G^\prime(\mathbb{A}_f^{(p)})$, and write $\overline{Y}=Y_{U}(G_\Sigma)_{\overline{\bF}_p}$, as defined in section \ref{quaternionic models}, and $\overline{Y}^\prime = Y_{U'}(G'_\Sigma)_{\overline{\bF}_p}$.

Recall from lemma \ref{compatibility components unitary quaternionic} that we have the morphism $i: \overline{Y}\to \overline{Y}^\prime$ which identifies $\overline{Y}$ as an open and closed subscheme of $\overline{Y}^\prime$. For every $\beta = \theta^i \in \Theta_F$, we define the sheaf $\ccH^1_\beta = i^*\ccH^1_{\tilde{\theta}^i}$; it is locally free of rank two over $\overline{Y}$. Similarly, for $\beta = \theta^i \notin \Sigma$, we define the line bundles over $\overline{Y}$: $\omega_\beta = i^*\omega^0_{\tilde{\theta}^i}$ and $v_\beta = i^*v^0_{\tilde{\theta}^i}$. These bundles sit in the short exact sequence:
\[0 \to \omega_\beta \to \ccH^1_\beta \to v_\beta \to 0.\]

These vector bundles are independent of the choice of $V_E$. Furthermore, if $U_1,U_2$ are sufficiently small, we can choose $V_E$ sufficiently small for both $U_1$ and $U_2$ and set $U^\prime_i = U_i V_E$. Given $g \in G_\Sigma(\mathbb{A}_f)$ such that $g^{-1} U_1 g \subset U_2$, so that $g^{-1} U^\prime_1 g \subset U^\prime_2$, we can define the isomorphism $\pi_g^*: \rho_g^*\ccH^1_{1,\beta} \to \ccH^1_{2,\beta}$ via pulling back the corresponding isomorphism at the Unitary level by $i_1^*$. We similarly have, for $\beta = \theta^i \notin \Sigma$, isomorphisms $\pi_g^*: \rho_g^*\omega_{1,\beta} \to \omega_{2,\beta}$ and $\pi_g^*: \rho_g^*v_{1,\beta} \to \omega_{2,\beta}$, which do not depend depend on $V_E$. Finally, if $U_3$ and $h$ are such that $h^{-1} U_2 h \subset U_3$, then we have the equality of isomorphisms $\pi_{gh}^*= \pi_g^* \circ \rho_g^*(\pi_h^*)$.

We have now given two definitions for the above automorphic vector bundles on $\overline{Y}$ in the case that $\Sigma = \varnothing$. The first via descent from the corresponding sheaves on the geometric special fiber $S$ of $\widetilde{Y}_U(G)$ as a Hilbert modular variety, and the second via the restriction of the descent of sheaves on $S^\prime$, the geometric special fiber of $\widetilde{Y}^\prime_{U^\prime}(G^\prime_\emptyset)$. Recall that we obtained, in section \ref{model comparison}, the comparison of these two different models by the (base change of the) morphism $\widetilde{i}:S \to S^\prime$ given by $A \mapsto A \otimes_{\ccO_F} \ccO_E^2$. In turn, we saw that this induced, for every $\theta \in \widehat{\Theta}_F$, a $\ccO_F \otimes \ccO_F$-linear isomorphism
\[ \widetilde{i}^*\ccH^1_{\dR}(A^\prime/S^\prime)^0_{\tilde{\theta}} \xrightarrow{\sim} \ccH^1_{\dR}(A/S)_\theta,\]
compatible with filtrations and the respective action of $\ccO_{F,+,(p)}^\times$ on $S$ and $S^\prime$.
It follows that the two different models of vector bundles $\ccH^1_\beta$, $\omega_\beta$ and $v_\beta$ are canonically isomorphic over $\overline{Y}$, compatibly with the Hecke actions.

\begin{rem}
    Note that the definition of these vector bundles depends on several choices, namely the choice of $E$, the set of lifts and even of isomorphism $\ccO_{B,p} \simeq M_2(\ccO_{F,p})$. We do not consider the independence of these choices in this thesis.
\end{rem}

\section{The Goren-Oort Stratification }
In this chapter we define the Goren-Oort stratification of the special fibers of all the various integral models of Shimura varieties we have just defined. In the case that $p$ is unramified, we will recover the Goren-Oort stratification for Hilbert modular varieties introduced in \cite{Gorenoort} and the Goren-Oort stratification of Unitary/Quaternionic Shimura varieties introduced in \cite{tian_xiao_2016}. In the case that $p$ is ramified, we will recover the Goren-Oort stratification of the Pappas-Rapoport models of Hilbert modular varieties given in \cite{reduzzi2016partial}.
\subsection{The Goren-Oort Stratification of Hilbert modular varieties}\label{gohilb}

In this section, we recall the definition of the Goren-Oort stratification of Hilbert modular varieties in the ramified case as introduced by \cite{reduzzi2016partial}.

Let $F $ and $p$ be as usual and $G = \textrm{Res}_{F/\bQ} \textrm{GL}_2$. Let $U$ be a sufficiently small, in the usual sense, open compact subgroup of $G(\mathbb{A}_f)$ and let $\overline{Y} = Y_U(G)_\bF$ be the special fiber of the Pappas-Rapoport defined in section \ref{hilbert model}. We recall that from now on, we will systematically remove the PR superscript. Similarly, write $\widetilde{Y} = \widetilde{Y}_U(G_\Sigma)_{\bF}$, and let $A/\widetilde{Y}$ be the universal abelian variety. For every $\beta = \theta^i \in \Theta_F$, we defined in section \ref{automorphic bundles hilbert} the bundles 
\[\ccH^1_\beta  = [\varpi_\fp]^{-1}\omega_{A/\widetilde{Y},\theta}(i-1)/\omega_{A/\widetilde{Y},\theta}(i-1)\] and
\[\omega_{\beta} = \omega_{A/\widetilde{Y},\theta}(i)/\omega_{A/\widetilde{Y},\theta}(i-1),\]
locally free of rank two and one respectively. 

Recall the shift operation $\phi$ on $\Theta_F$ given by $\phi(\theta^i)=\theta^{i+1}$ if $i < e_\fp$ and $\phi(\theta^{e_\fp}) = (\phi \circ \theta)^1$ where $\phi \circ \theta$ is the usual action of Frobenius on $\widehat{\Theta}_{F,\fp}$. By definition of the filtration, multiplication by $\varpi_\fp$ induces a morphism $[\varpi_\fp]: \ccH^1_{\theta^i} \to \ccH^1_{\theta^{i-1}}$ for all $\theta \in \widehat{\Theta}_F$ and $i>1$. If $i= 1$ however, note that $\ccH^1_{\theta^1} = [\varpi_\fp]^{e_\fp-1}\ccH^1_{\dR}(A/\widetilde{Y})_\theta$. Given an open $W$ of $\widetilde{Y}$ and a section $s \in \ccH^1_{\theta^1}(W)$, pick a section $s' \in \ccH^1_{\dR}(A/\widetilde{Y})_\theta(W)$ such that $[\varpi_\fp]^{e_\fp-1}s'=s$. Define $V^\prime(s)$ to be the image of $V(s')$, where $V$ denotes the usual Verschiebung, in $\omega^p_{(\phi^{-1}\circ \theta)^{e_\fp}}(W)$. It is straightforward to show that this is a well defined and so yields a morphism $V^\prime:\ccH^1_{\theta^1} \to \ccH^{1 \,(p)}_{(\phi^{-1}\circ \theta)^{e_\fp}}$.

We thus have morphisms $V_{\beta}: \ccH^1_{\beta} \to \ccH^{1 (p^\delta)}_{\phi^{-1}(\beta)}$, where $\delta =1$ if $\beta = \theta^1$ and $0$ otherwise, given by :
\[
V_{\beta} = 
\begin{cases}
    [\varpi_\fp] & \textrm{if } \beta = \theta^i, i>1,\\
    V^\prime & \textrm{if } \beta = \theta^1.
\end{cases}
\]
Furthermore, the $V_\beta$ restrict to morphisms $V_\beta: \omega_\beta \to \omega^{p^\delta}_{\phi^{-1}(\beta)}$ and we call the resulting section $h_\beta \in H^0(\widetilde{Y}, \omega_{\beta}^{\otimes -1} \otimes \omega_{\phi^{-1}(\beta)}^{p^\delta})$ the partial Hasse invariant $\beta$.

For any $\beta \in \Theta_F$, we denote the vanishing locus of $h_\beta$ by $\widetilde{Y}_\beta$. For any subset $T \subset \Theta_F$ we define the closed Goren-Oort stratum $\widetilde{Y}_T = \bigcap_{ \beta \in T} \widetilde{Y}_\beta$ and open Goren-Oort stratum $\widetilde{W}_T = \widetilde{Y}_T \setminus \bigcup_{\beta \notin T} \widetilde{Y}_\beta$.

We note that although the construction of the $h_\beta$ depends on the choice of uniformizer $\varpi_\fp$, its vanishing locus does not. 

Consider now the action of $\ccO^\times_{F,(p),+}$. Whereas the line bundles $\omega_\beta$ do not necessarily descend to $\overline{Y}$ (without the assumption of $\Fp$-neatness of $U$), the line bundles $\omega_{\beta}^{\otimes -1} \otimes \omega_{\phi^{-1}(\beta)}^{p^\delta}$ do: Let $\mu = \alpha^2$ for $\alpha \in U \cap \ccO_F$; then the morphism $\theta_\mu(A,\iota,\lambda,\eta) = (A,\iota,\mu \lambda,\eta)$ is realized via $\iota(\alpha^{-1}): A \to \theta_\mu^*A = A$. The resulting action on $\omega^{-1}_\beta \otimes \omega^{(p^\delta)}_{\phi^{-1}(\beta)}$, where $\delta = 1$ if $\beta = \theta^1$ and zero otherwise, is therefore trivial. Therefore the action of $\ccO_{F,(p),+}^\times$ factors through covering group $\ccO_{F,(p),+}^\times/(U \cap \ccO_F^\times)^2$ of $\widetilde{Y} \to \overline{Y}$, and the bundles $\omega_{\beta}^{\otimes -1} \otimes \omega_{\phi^{-1}(\beta)}^{p^\delta}$ and sections $h_\beta$ descend $\overline{Y}$. We also call the resulting section $h_\beta$.

For any $\beta \in \Theta_F$, we thus define $\overline{Y}_\beta$ to be the vanishing locus $h_\beta$. For any subset $T \subset \Theta_F$ we define the closed Goren-Oort stratum $\overline{Y}_T = \bigcap_{ \beta \in T} \overline{Y}_\beta$ and the open Goren-Oort stratum $W_T = \overline{Y}_T \setminus \bigcup_{\beta \notin T} \overline{Y}_\beta$.

By \cite{reduzzi2016partial}, the subschemes $Y_T$ are smooth. Furthermore, they are proper provided that $T \neq \emptyset$. We will re-obtain this result in the subsequent sections.

Let $U_1,U_2 \subset G(\mathbb{A}_f)$ be two sufficiently small open compact subgroups and let $g \in G(\mathbb{A}^{(p)}_f)$ be such that $g^{-1} U_1 g \subset U_2$. Recall that the quasi-isogeny $\pi_g$ from section \ref{automorphic bundles hilbert} induces isomorphisms $\tilde{\rho}_g^*\omega_{2,\beta} \xrightarrow{\sim} \omega_{1,\beta}$ where $\omega_{i,\beta}$ is the corresponding line bundle on $\widetilde{Y}_i$ for any $\beta \in \Theta_F$. It is straightforward to see that these isomorphisms are compatible with the formation of Hasse invariants in the sense that the following diagram

\begin{center}
\begin{tikzcd}
    \tilde{\rho}_g^*\omega_{2,\beta} \arrow[r,"\sim"] \arrow[d,"\tilde{\rho}_g^*V_{2,\beta}"'] & \omega_{1,\beta} \arrow[d,"V_{1,\beta}"]\\
    \tilde{\rho}_g^*\omega^{p^\delta}_{2,\phi^{-1}(\beta)} \arrow[r,"\sim"] & \omega^{p^\delta}_{1,\phi^{-1}(\beta)}
\end{tikzcd}
\end{center}
commutes. In particular, for every $T \subset \Theta_F$, $\tilde{\rho}_g$ restricts to a morphism $\tilde{\rho}_g: \widetilde{Y}_{1,T} \to \widetilde{Y}_{2,T}$ and so induces a morphism $\rho_g: \overline{Y}_{1,T} \to \overline{Y}_{2,T}$.

\begin{rem}
We consider it instructive to explain what the vanishing of the Hasse invariants at a point tells us about the abelian variety $A$ it classifies. In the unramified case, this is classical: The Hasse invariants $h_\theta : \omega_\theta \to \omega_{\phi^{-1} \circ \theta}^{\otimes p}$ are given by Verschiebung. It follows that $A$ is ordinary if and only if none of the $h_\theta$ vanish, ie it lies in $U_\emptyset$. In general, $A$ lies in $Y_T$ if and only if $T$ is in the type of $A$ (see for example \cite{GorenKassaei+2012+1+63} for the definition of the type).\medskip

Consider now the ramified case. By definition the vanishing of $h_{\theta^i}$ for $i > 1$ at a point $A$ is by definition equivalent to the inclusion $[\varpi_\fp] \omega_{\theta}(i) \subset \omega_\theta (i-2)$. In particular, $A$ cannot lie in the Rapoport locus. Conversely, if for all $\theta \in \widehat{\Theta}$ and $i > 1$, none of the $h_{\theta^i}$ vanish, it is straightforward to show that $A$ lies in the Rapoport locus. Therefore the Rapoport locus is the complement of the union of all $Y_{\theta^i}$ running over all $\theta \in \widehat{\Theta}_F$ and $i > 1$. In general, the vanishing of the $h_{\theta^i}$ tells us about the position of $A$ in the singular stratification on the naive model. If we restrict ourselves to the Rapoport locus, then the situation becomes analogous to the unramified situation, that is the vanishing of the $h_{\theta^1}$ tell us about the type of $A[\mathfrak{P}]$, in particular, $W_\emptyset$ lies in the ordinary locus of $Y^{\textrm{ord}}$ of $Y$. Conversely, if $A$ is ordinary, the Serre-Tate theorem yields a canonical lift of $A$ from which we deduce that $A$ lies in the Rapoport locus. We therefore get the equality $Y^{\textrm{ord}} = W_\emptyset$.

\end{rem}

\subsection{The Goren-Oort Stratification of Unitary Shimura Varieties}

We now show how to define partial Hasse invariants on the special fibers of the Unitary models. In particular, we start by adapting the notions of essential Frobenius and Verschiebung as introduced by \cite{tian_xiao_2016} in the unramified setting. 

\subsubsection{Essential Frobenius and Essential Verschiebung}\label{Fesves}
Let $\Sigma$ and $U^\prime \subset G_\Sigma^\prime(\mathbb{A}_f)$ be as in \ref{unitary notation}. Write $\overline{Y}^\prime = Y_{U^\prime}(G^\prime_\Sigma)_\bF$ for the Pappas-Rapoport model defined in section \ref{PR unitary}, $\widetilde{Y}^\prime = \widetilde{Y}_{U'}(G'_\Sigma)_\bF$, and let $A/\widetilde{Y}^\prime$ be the universal abelian scheme. Write $\omega^0 = \omega^0_{A/\widetilde{Y}^\prime}$. Recall from section \ref{automorphic bundles unitary}, that we have for any $\beta = \tau^i \in \Theta_E$, the locally free sheaves of rank two $\ccH^1_\beta = [\varpi_\fp]^{-1}\omega^0_\tau(i-1)/\omega^0_\tau(i-1)$ and the locally free sheaves of rank $s_\beta = s_{\tau^i}$, $\omega^0_\beta = \omega^0_\tau(i)/\omega^0_\tau(i-1)$. 

For $\beta = \tau^i \in \Theta_E$ with $i > 1$, we define essential Frobenius at $\tau^i$ as:
\begin{align*}
F_{\textrm{es},\tau^i}:\ccH^1_{\tau^{i-1}} \to \ccH^1_{\tau^i}\\
x \mapsto 
\begin{cases}
x & \textrm{if } s_{\tau^{i-1}} = 1,0,\\
[\varpi_\fp]^{-1}(x) & \textrm{if } s_{\tau^{i-1}} = 2.
\end{cases}
\end{align*}
where $x$ denotes the map induced by the inclusion $[\varpi_\fp]^{-1}\omega^0_\tau(i-2) \hookrightarrow [\varpi_\fp]^{-1}\omega^0_\tau(i-1)$, and essential Verschiebung at $\tau^i$ to be 

\begin{align*}
V_{\textrm{es},\tau^i}:\ccH^1_{\tau^{i}} \to \ccH^1_{\tau^{i-1}}\\
x \mapsto 
\begin{cases}
[\varpi_\fp]x & \textrm{if } s_{\tau^{i-1}} = 1,2,\\
x & \textrm{if } s_{\tau^{i-1}} = 0,
\end{cases}
\end{align*}
Where $x$ denotes the map induced by $\omega^0_\tau(i-1)=\omega^0_\tau(i-2)$. It is straightforward to see from the definition of the filtration and the numbers $s_{\tau^i}$ that both $F_{\textrm{es}}$ and $V_{\textrm{es}}$ are isomorphisms if $s_{\tau^{i-1}} \neq 1$. If $s_{\tau^{i-1}}=1$, then it is straightforward to see that $\ker F_{\textrm{es},\tau^i} = \omega^0_{\tau^{i-1}} = \textrm{Im}\, V_{\textrm{es},\tau^i}$  and $\ker V_{\textrm{es},\tau^i} = [\varpi_\fp]^{-1}\omega^0_{\tau}(i-2)/\omega^0_\tau(i-1) = \textrm{Im}\, F_{\textrm{es},\tau^i}$.\medskip

\noindent For $i=1$, we define Essential Frobenius  at $\tau^1$ as
\begin{align*}
F_{\textrm{es},\tau^1}:\ccH^{1 \, (p)}_{(\phi^{-1} \circ \tau)^{e_\fp}} \to \ccH^1_{\tau^1}\\
x \mapsto 
\begin{cases}
F(x) & \textrm{if } s_{(\phi^{-1} \circ \tau)^{e_\fp}} = 1,0,\\
(V')^{-1}(x) & \textrm{if } s_{(\phi^{-1} \circ \tau)^{e_\fp}} = 2.
\end{cases}
\end{align*}
Similarly we define Essential Verschiebung at $\tau^1$ as
\begin{align*}
V_{\textrm{es},\tau^1}:\ccH^1_{\tau^{1}} \to \ccH^{1 \, (p)}_{(\phi^{-1}\circ\tau)^{e_\fp}}\\
x \mapsto 
\begin{cases}
V'(x) & \textrm{if } s_{(\phi^{-1} \circ \tau)^{e_\fp}} = 1,2,\\
F^{-1}(x) & \textrm{if } s_{(\phi^{-1} \circ \tau)^{e_\fp}} = 0.
\end{cases}
\end{align*}

Here, $F$ denotes the map induced by the usual Frobenius, and $V'$ is the map $x \mapsto V(x')$ for any $[\varpi_\fp]^{e_\fp-1}x'=x$, as defined in the previous section. It is straightforward to show that these maps make sense and induce isomorphisms if $s_{(\phi^{-} \circ \tau)^{e_\fp}} \neq 1$. As before, if $s_{(\phi^{-} \circ \tau)^{e_\fp}} = 1$, then  we have the relations $\ker F_{\textrm{es},\tau^1} = (\omega^0_{(\phi^{-1} \circ \tau)^{e_\fp}})^p = \textrm{Im}\, V_{\textrm{es},\tau^1}$  and $\ker V_{\textrm{es},\tau^1} = \textrm{Im}\, F \cap \ker [\varpi_\fp] = \textrm{Im}\, F_{\textrm{es},\tau^1}$\medskip

If $p$ is unramified in $F$, we obtain the essential Frobenius and Verschiebung morphisms of \cite{tian_xiao_2016}.

\subsubsection{Partial Hasse invariants and the Goren-Oort stratification}\label{unitary hasse}

\noindent For any $\beta \in \Theta_E$ and $1 \leq n \leq e_\fp f_\fp$, we define the composites 
\[
F^{\phi^n(\beta)}_{\textrm{es},\beta} : \left(\ccH^1_{\phi^{n}(\beta)} \right)^{(p^{m_n})} \xrightarrow{F^{(p^{m_{n-1}})}_{\textrm{es},\phi^{n-1}(\beta)}} \cdots \xrightarrow{F^{(p^{m_1})}_{\textrm{es},\phi(\beta)}} \left(\ccH^1_{\phi^{-1}(\beta)} \right)^{(p^{m_1})} \xrightarrow{F_{\textrm{es},\beta}} \ccH^1_\beta
\]
\[
V^{\phi^{-n}(\beta)}_{\textrm{es},\beta} : \ccH^1_\beta \xrightarrow{V_{\textrm{es},\beta}} \left(\ccH^1_{\phi^{-1}(\beta)} \right)^{(p^{m_1})}  \xrightarrow{V^{(p^{m_1})}_{\textrm{es},\phi^{-1}(\beta)}} \cdots \xrightarrow{V^{(p^{m_{n-1}})}_{\textrm{es},\phi^{-n+1}(\beta)}} \left(\ccH^1_{\phi^{-n}(\beta)} \right)^{(p^{m_n})}
\]

\noindent where the (different) $m_i$ are chosen such that the compositions make sense.

Now, let $\beta \in \Theta_E$ with $s_{\beta} =1$. Let $\phi'$ be the induced by the cycle structure on $\Theta_F \setminus \Sigma_\infty$. That is, $\phi'^{-1} ( \beta ) = \phi^{-n} (\beta)$ where $n \geq 1$ is chosen to be the smallest such that $\phi^{-n}(\beta) \vert_F \notin \Sigma_\infty$ (equivalently $s_{\phi^{-n}(\beta)}=1$). We will often use this map, defined similarly, over all of $\Theta_F$. 
Then for any $\beta$ such that $s_\beta = 1$ we have the map 
$V_{\es,\beta}^{(\phi^\prime)^{-1}(\beta)} : 
\ccH^1_\beta \to \left( \ccH^1_{\phi'^{-1}(\beta)} \right)^{(p^{m_\beta})}$ 
which, by restricting to $\omega^0_\beta$, yields a section \[h_\beta \in H^0(\widetilde{Y}', (\omega^0_\beta)^{-1} \otimes (\omega^0_{\phi'^{-1}(\beta)})^{ p^{m_\beta}}).\] 
Note that if $(\phi^\prime)^{-1}(\beta) \neq \phi^{-1}(\beta)$, then $V_{\es,\beta}^{(\phi^\prime)^{-1}(\beta)}$ factors as 
$  V_{\es, \phi((\phi^\prime)^{-1}(\beta))} \circ V^{\phi((\phi^\prime)^{-1}(\beta))}_{\textrm{es}, \beta }$ 
(where we have removed powers of $p$), where $V^{\phi((\phi^\prime)^{-1}(\beta))}_{\textrm{es}, \beta }$ is an isomorphism.

\begin{lem}\label{hasse vanish}
The Hasse invariant $h_\beta$ vanishes if and only if $h_{\beta^c}$ vanishes.
\end{lem}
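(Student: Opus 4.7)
The plan is to identify, for every $\beta \in \Theta_E$ with $s_\beta = 1$, a distinguished line subbundle $\ccL_\beta \subset \ccH^1_\beta$ so that $h_\beta$ vanishes precisely where $\omega^0_\beta = \ccL_\beta$, and then to show that $\ccL_\beta$ and $\ccL_{\beta^c}$ are mutual orthogonal complements under the perfect pairing on $\ccH^1_\beta \times \ccH^1_{\beta^c}$, mirroring the relation $(\omega^0_\beta)^\perp = \omega^0_{\beta^c}$.

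For the first step, set $V := V^{(\phi')^{-1}(\beta)}_{\es, \beta}$ and $F := F^{(\phi')^{-1}(\beta)}_{\es, \beta}$. By definition of $\phi'$, every intermediate factor of $V$ and $F$ is an isomorphism, and only the single step crossing $s = 1$ is non-invertible. At that critical step $\tau^i \to \tau^{i-1}$ (with $s_{\tau^{i-1}} = 1$), a direct calculation shows $F_{\es, \tau^i} \circ V_{\es, \tau^i} = V_{\es, \tau^i} \circ F_{\es, \tau^i} = [\varpi_\fp] = 0$ on the relevant $\ccH^1$, while both $V_{\es, \tau^i}$ and $F_{\es, \tau^i}$ have constant rank $1$ with line-bundle kernel and image. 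Consequently $V$ and $F$ have constant rank $1$, and $\textrm{Im}\, F \subseteq \ker V$ forces equality on dimensional grounds. Call this common line bundle $\ccL_\beta$. Since $h_\beta$ is the restriction of $V$ to $\omega^0_\beta$, landing in $\omega^{0(p^{m_\beta})}_{(\phi')^{-1}(\beta)}$, it vanishes at a geometric point $x$ if and only if $\omega^0_{\beta, x} \subseteq \ker V_x = \ccL_{\beta, x}$, which (both being lines) amounts to $\omega^0_{\beta, x} = \ccL_{\beta, x}$.

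For the second step, I would establish the adjointness between essential Verschiebung and essential Frobenius under the pairings $\langle \cdot, \cdot \rangle_{\theta^i}$ of Corollary \ref{filtration duality}. Starting from the fact that $F$ and $V$ on $\ccH^1_{\dR}(A)$ are adjoint under the polarization pairing, and tracing through the explicit formulas defining $\langle \cdot, \cdot \rangle_{\theta^i}$ and the essential operators (which differ from $F, V$ only by multiplication by various factors of $[\varpi_\fp]$), one checks that $V_{\es, \tau^i}$ and $F_{\es, (\tau^c)^i}$ are mutually adjoint, with appropriate $p$-linear twists at the boundary $i = 1$. Iterating yields an adjointness between $V$ and $F_{\es, \beta^c}^{(\phi')^{-1}(\beta^c)}$, whence $\ccL_\beta = \ker V = \big(\textrm{Im}\, F_{\es, \beta^c}^{(\phi')^{-1}(\beta^c)}\big)^\perp = \ccL_{\beta^c}^\perp$.

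With both ingredients in hand, the lemma follows: $h_\beta(x) = 0$ if and only if $\omega^0_{\beta, x} = \ccL_{\beta, x}$, and taking orthogonal complements under the perfect pairing on $\ccH^1_{\beta, x} \times \ccH^1_{\beta^c, x}$ converts this to $\omega^0_{\beta^c, x} = \ccL_{\beta^c, x}$, equivalently $h_{\beta^c}(x) = 0$. The main obstacle will be the careful bookkeeping in the adjointness computation, especially at $\tau^1$, where genuine Frobenius (rather than just multiplication by $[\varpi_\fp]$) appears and introduces semi-linear twists to be reconciled with those already present in the iterated pairings.
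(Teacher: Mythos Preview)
Your proposal is correct and follows essentially the same route as the paper. Both arguments identify the distinguished line $\ccL_\beta = \ker h_\beta = \textrm{Im}\, F^{\beta}_{\es,(\phi')^{-1}(\beta)}$, use the adjointness of essential Verschiebung and essential Frobenius under the pairings $\langle\cdot,\cdot\rangle_{\theta^i}$ (the paper records this as ``$F^\beta_{\es,(\phi')^{-1}(\beta)}$ and $h_{\beta^c}$ form an adjoint pair'' and packages it in a commutative diagram), and conclude by taking orthogonal complements to pass from $\omega^0_\beta = \ccL_\beta$ to $\omega^0_{\beta^c} = \ccL_{\beta^c}$.
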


\begin{proof}
Keep the notation from above. Notice first that $\ker h_\beta = \textrm{Im}\, F^{\beta}_{\textrm{es},(\phi^\prime)^{-1}(\beta)}$. So $h_\beta$ vanishes if and only if $\omega^0_\beta = \textrm{Im} \, F^{\beta}_{\textrm{es},(\phi^\prime)^{-1}(\beta)}$. The same holds with $\beta$ replaced by $\beta^c$. Consider now the commutative diagram: 

\begin{center}
\begin{tikzcd}
\left(\ccH^1_{\phi'^{-1}(\beta)} \right)^{(p^{n_\beta})} \arrow[d, "h_{\beta}", bend left] & \times & \left(\ccH^1_{\phi'^{-1}(\beta)^c}  \right)^{(p^{n_\beta})} \arrow[rr, "{\langle \cdot {,} \cdot \rangle^{p^{n_\beta}} }"] \arrow[d, "h_{\beta^c}", bend left] &  & \ccO_S \arrow[d] \\
\ccH^1_{\beta} \arrow[u, "{F^\beta_{\textrm{es},(\phi^\prime)^{-1}(\beta)}}", bend left]   & \times & \ccH^1_{\beta^c} \arrow[u, "{F^{\beta^c}_{\textrm{es},(\phi^\prime)^{-1}(\beta^c)}}", bend left] \arrow[rr, "{\langle \cdot {,} \cdot \rangle }"]              &  & {\ccO_S,}       
\end{tikzcd}
\end{center}

\noindent where the pairings were defined in \ref{dual filtration}, and under which $\omega^0_\beta$ and $\omega^0_{\beta^c}$ are orthogonal. Furthermore, one can check that $F^\beta_{\textrm{es},(\phi^\prime)^{-1}(\beta)}$ and $h_{\beta^c}$ form an adjoint pair so that we get the equality $(\textrm{Im}\, F^\beta_{\textrm{es},(\phi^\prime)^{-1}(\beta)})^\perp = \textrm{Im}\, F^n_{\textrm{es},\beta^c}$ and so \[ \omega^0_\beta = \textrm{Im}\, F^\beta_{\textrm{es},(\phi^\prime)^{-1}(\beta)}  \Leftrightarrow (\omega^0_\beta)^\perp = (\textrm{Im}\, F^\beta_{\textrm{es},(\phi^\prime)^{-1}(\beta)})^\perp \Leftrightarrow \omega^0_{\beta^c} = \textrm{Im}\, F^{\beta^c}_{\textrm{es},(\phi^\prime)^{-1}(\beta^c)}. \]
\end{proof}

Similarly as in the previous section, for any $\beta \in \Theta_E$ with $s_\beta = 1$ we call the section $h_\beta$, the partial Hasse invariant at $\beta$.

For any $\beta \in \Theta_F \setminus \Sigma_\infty$, we let $\widetilde{Y}^\prime_\beta$ denote the vanishing locus of $h_\beta$. We note by the previous lemma, that this does not depend on the choice of lift of $\beta$ hence the ambiguous notation. For any $T \subset \Theta_F \setminus \Sigma_\infty$, we define the closed Goren-Oort stratum to be $\widetilde{Y}^\prime_T = \bigcap_{\beta \in T} \widetilde{Y}^\prime_\beta$ and the open Goren-Oort stratum to be $\widetilde{W}_T = \widetilde{Y}^\prime_T \setminus \bigcup_{\beta \notin T} \widetilde{Y}^\prime_\beta$. The collection $\widetilde{W}_T$ defines a stratification, in the usual sense, on $\widetilde{Y}^\prime$.

Similarly as in section \ref{gohilb}, we find that $(U^\prime \cap \ccO^\times_F)^2$ acts trivially on the line bundles $(\omega^0_\beta)^{-1} \otimes (\omega^0_{(\phi^\prime)^{-1}(\beta)})^{p^{m_\beta}}$. Therefore, they descend to line bundles on $\overline{Y}^\prime$ and so do the sections $\tilde{h}_\beta$. 

For any $\beta \in \Theta_F \setminus \Sigma_\infty$, we let $\overline{Y}^\prime_\beta$ denote the vanishing locus of $h_\beta$. We note by the previous lemma, that this does not depend on the choice of lift of $\beta$ hence the ambiguous notation. For any $T \subset \Theta_F \setminus \Sigma_\infty$, we define the closed Goren-Oort stratum to be $\overline{Y}^\prime_T = \bigcap_{\beta \in T} \overline{Y}^\prime_\beta$ and the open Goren-Oort stratum to be $W_T = \overline{Y}^\prime_T \setminus \bigcup_{\beta \notin T} \overline{Y}^\prime_\beta$. The collection $W_T$ defines a stratification, in the usual sense, on $\overline{Y}^\prime$.

Let $U^\prime_1,U^\prime_2 \subset G^\prime_\Sigma(\mathbb{A}_f)$ be two sufficiently small open compact subgroups and let $g \in G^\prime_\Sigma(\mathbb{A}^{(p)}_f)$ be such that $g^{-1} U^\prime_1 g \subset U^\prime_2$. Recall that the quasi-isogeny $\pi_g$ from section \ref{automorphic bundles unitary} induces isomorphisms $\tilde{\rho}_g^*\omega^0_{2,\beta} \xrightarrow{\sim} \omega^0_{1,\beta}$ where $\omega_{i,\beta}$ is the corresponding line bundle on $\widetilde{Y}^\prime_i$ for any $\beta \in \Theta_E$ with $\beta \vert_F \notin \Sigma_\infty$. It is straightforward to see that these isomorphisms are compatible with the formation of Hasse invariants in the sense that the following diagram

\begin{center}
\begin{tikzcd}
    \tilde{\rho}_g^*\omega^0_{2,\beta} \arrow[r,"\sim"] 
    \arrow[d,"{\tilde{\rho}_g^*V^{(\phi^\prime)^{-1}(\beta)}_{2,\es,\beta}}"'] & \omega^0_{1,\beta} \arrow[d,"{V^{(\phi^\prime)^{-1}(\beta)}_{1,\es,\beta}}"]\\
    \tilde{\rho}_g^*(\omega^0_{2,(\phi^\prime)^{-1}(\beta)})^{p^{m_\beta}} \arrow[r,"\sim"] & \omega^0_{1,(\phi^\prime)^{-1}(\beta)})^{p^{m_\beta}}
\end{tikzcd}
\end{center}
commutes. In particular, for every $T \subset \Theta_F \setminus \Sigma_\infty$, $\tilde{\rho}_g$ restricts to a morphism $\tilde{\rho}_g: \widetilde{Y}^\prime_{1,T} \to \widetilde{Y}^\prime_{2,T}$ and so induces a morphism $\rho_g: \overline{Y}^\prime_{1,T} \to \overline{Y}^\prime_{2,T}$.

\subsubsection{Basic Properties of the Goren-Oort Strata}\label{Go properties}

We now describe the basic properties of the Goren-Oort stratification. Since $\widetilde{Y}^\prime$ is a disjoint union of finite \'{e}tale covers of $\overline{Y}^\prime$, it suffices, for our purposes, to study the Goren-Oort stratification of $\widetilde{Y}^\prime$. We start by showing that the closed strata $\widetilde{Y}^\prime_T$ are non-empty. 

In \cite[Proposition 4.7]{tian_xiao_2016}, the authors claim that they can prove the nonemptiness of the closed strata $\widetilde{Y}^\prime_T$ by using the same argument as \cite[Lemma 3.7]{Helm}. However, this argument seems to only applies to the case $\Sigma_\infty = \emptyset$, and we were not able to see how it applies to a general $\Sigma$. We thus give an alternate proof, based on the strategy of \cite{diamond2020cone}.

\begin{lem}\label{omega trivial}
Let $T \subset \Theta_F \setminus \Sigma_\infty$ be a subset and let $\beta \in T$. Write $U= \widetilde{Y}^\prime \setminus \widetilde{Y}^\prime_\beta$. Then we have isomorphisms

\[\omega^0_{\tilde{\beta}} \vert_{\widetilde{Y}^\prime_T} \simeq \left(\omega^0_{(\phi^\prime)^{-1}(\tilde{\beta}^c)}\right)^{-p^{m_\beta}} \vert_{\widetilde{Y}^\prime_T}. \]

and

\[\omega^0_{\tilde{\beta}} \vert_U \simeq \left(\omega^0_{(\phi^\prime)^{-1}(\tilde{\beta})}\right)^{p^{m_\beta}} \vert_U, \]
where $m_\beta$ was defined in the previous section. The result also holds when swapping $\tilde{\beta}$ and $\tilde{\beta}^c$.
\end{lem}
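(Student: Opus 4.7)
The second isomorphism on $U = \widetilde{Y}' \setminus \widetilde{Y}'_\beta$ is immediate from the definition: the partial Hasse invariant $h_{\tilde{\beta}}$ is by construction a global section of $(\omega^0_{\tilde{\beta}})^{-1} \otimes (\omega^0_{(\phi')^{-1}(\tilde{\beta})})^{p^{m_\beta}}$ whose vanishing locus equals $\widetilde{Y}'_\beta$, so on $U$ it is nowhere-vanishing and trivializes this line bundle, yielding the desired isomorphism.

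For the first isomorphism on $\widetilde{Y}'_T$, my plan is to analyze the essential Frobenius and Verschiebung compositions step by step. The vanishing of $h_{\tilde{\beta}}$ on $\widetilde{Y}'_T$ forces the inclusion $\omega^0_{\tilde{\beta}} \hookrightarrow \ccH^1_{\tilde{\beta}}$ to factor through $\ker V^{(\phi')^{-1}(\tilde{\beta})}_{\es, \tilde{\beta}}$. By the very definition of $\phi'$, each intermediate essential Verschiebung along the chain from $\tilde{\beta}$ to $(\phi')^{-1}(\tilde{\beta})$ is an isomorphism (the relevant $s$-values are not equal to $1$); only the single ``non-trivial'' step contributes, and there the identity $\ker V_{\es} = \textrm{Im}\, F_{\es}$ recorded in Section \ref{Fesves} gives $\ker V^{(\phi')^{-1}(\tilde{\beta})}_{\es, \tilde{\beta}} = \textrm{Im}\, F^{\tilde{\beta}}_{\es, (\phi')^{-1}(\tilde{\beta})}$ as rank one subbundles of $\ccH^1_{\tilde{\beta}}$. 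Hence on $\widetilde{Y}'_T$ we have $\omega^0_{\tilde{\beta}} = \textrm{Im}\, F^{\tilde{\beta}}_{\es, (\phi')^{-1}(\tilde{\beta})}$. A parallel step-by-step computation identifies $\ker F^{\tilde{\beta}}_{\es, (\phi')^{-1}(\tilde{\beta})}$ with $(\omega^0_{(\phi')^{-1}(\tilde{\beta})})^{p^{m_\beta}}$, producing on $\widetilde{Y}'_T$ the short exact sequence
\[0 \to (\omega^0_{(\phi')^{-1}(\tilde{\beta})})^{p^{m_\beta}} \to (\ccH^1_{(\phi')^{-1}(\tilde{\beta})})^{p^{m_\beta}} \to \omega^0_{\tilde{\beta}} \to 0,\]
and therefore $\omega^0_{\tilde{\beta}} \simeq (v^0_{(\phi')^{-1}(\tilde{\beta})})^{p^{m_\beta}}$.

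To conclude, I would invoke Corollary \ref{filtration duality}, which provides $v^0_\alpha \simeq (\omega^0_{\alpha^c})^\vee$ for any $\alpha \in \Theta_E$ with $s_\alpha = 1$. Specializing to $\alpha = (\phi')^{-1}(\tilde{\beta})$ and using that complex conjugation commutes with $\phi$ on $\Theta_E$ and hence also with $\phi'$, so that $\alpha^c = (\phi')^{-1}(\tilde{\beta}^c)$, one obtains the stated isomorphism $\omega^0_{\tilde{\beta}} \vert_{\widetilde{Y}'_T} \simeq (\omega^0_{(\phi')^{-1}(\tilde{\beta}^c)})^{-p^{m_\beta}} \vert_{\widetilde{Y}'_T}$. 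The ``swap'' version with $\tilde{\beta}$ replaced by $\tilde{\beta}^c$ is immediate once one observes that $\widetilde{Y}'_{\beta^c} = \widetilde{Y}'_\beta$ by Lemma \ref{hasse vanish}, so that the same argument applies verbatim to $\beta^c$. The main technical obstacle will be the careful bookkeeping of Frobenius twists through the composition: each wrap-around step from $\tau^1$ to $(\phi \circ \tau)^{e_\fp}$ introduces a $p$-twist, but once the single-step behaviour of $F_{\es}$ and $V_{\es}$ is in hand these propagate uniformly through the chain to assemble into the exponent $p^{m_\beta}$.
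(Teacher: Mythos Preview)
Your proof is correct and follows essentially the same strategy as the paper's: both arguments use the vanishing of the Hasse invariant to obtain a short exact sequence identifying $\omega^0$ with a $v^0$-bundle (up to twist), then invoke the duality of Corollary \ref{filtration duality} to pass to the conjugate side. The only cosmetic difference is that you work on the $\tilde{\beta}$-side directly (obtaining $\omega^0_{\tilde{\beta}} \simeq (v^0_{(\phi')^{-1}(\tilde{\beta})})^{p^{m_\beta}}$ from the image of $F_{\es}$, then dualizing), whereas the paper first passes to $\tilde{\beta}^c$ via Lemma \ref{hasse vanish} (obtaining $v^0_{\tilde{\beta}^c} \simeq (\omega^0_{(\phi')^{-1}(\tilde{\beta}^c)})^{p^{m_\beta}}$ from the kernel of $h_{\tilde{\beta}^c}$) and then takes determinants of the pairing $\ccH^1_{\tilde{\beta}} \simeq (\ccH^1_{\tilde{\beta}^c})^\vee$ to conclude.
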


\begin{proof}
Recall that $\widetilde{Y}^\prime_T$ is given by the vanishing of the partial Hasse invariants $h_{\tilde{\beta}}$ for $\beta \in T$, and that by lemma \ref{hasse vanish}, this is the same as the vanishing of the partial Hasse invariants $h_{\tilde{\beta}^c}$. Therefore, $h_{\tilde{\beta}^c}$ vanishes on $\omega^0_{\tilde{\beta}^c}$ over $\widetilde{Y}^\prime_T$, and rank considerations show that the restriction of  
\[ 0 \to \omega^0_{\tilde{\beta}^c} \to \ccH^1_{\tilde{\beta}^c} \to \left( \omega^0_{(\phi^\prime)^{-1}(\tilde{\beta})^c}\right)^{p^{m_\beta}} \to 0 \]
to $\widetilde{Y}^\prime_T$ is exact. 

Now, recall from section \ref{bundles} that we have the following (non-canonical) isomorphisms induced by the pairing on de Rham cohomology:

\begin{center}
    \begin{tikzcd}
    
     0 \arrow[r] & \omega^0_{\tilde{\beta}} \arrow[r] \arrow[d,"\sim" {anchor = south, rotate=90}] & \ccH^1_{\tilde{\beta}}  \arrow[r] \arrow[d,"\sim" {anchor = south, rotate=90}] & v^0_{\tilde{\beta}} \arrow[r] \arrow[d,"\sim" {anchor = south, rotate=90}] & 0 \\
      0 \arrow[r] & \left(v^0_{\tilde{\beta}^c}\right)^{-1} \arrow[r] & \left( \ccH^1_{\tilde{\beta}^c} \right)^\vee \arrow[r]  & \left(\omega^0_{\tilde{\beta}^c} \right)^{-1}  \arrow[r] & 0.
    \end{tikzcd}
\end{center}

Therefore 
\[ \omega^0_{\tilde{\beta}} \otimes v^0_{\tilde{\beta}} \simeq \wedge^2 \ccH^1_{\tilde{\beta}}  \simeq \wedge^2\left( \ccH^1_{\tilde{\beta}^c} \right)^\vee  \simeq  \left(\omega^0_{\tilde{\beta}^c} \right)^{-1} \otimes \left( \omega^0_{\phi'^{-1}((\tilde{\beta})^c)}\right)^{-p^{m_\beta}},\]

and the first isomorphism follows. The second isomorphism also follows from the definition of $U$.

\end{proof}

We immediately have the two following corollaries:

\begin{cor}
Let $T \subset \Theta_F \setminus \Sigma_\infty$, then for any $\beta \in \Theta_F \setminus \Sigma_\infty$,
\[ { \omega^0_{\tilde{\beta}} }^{ \otimes (p^{2f_\fp}-1)} \vert_{\widetilde{W}_T} \simeq \ccO_{\widetilde{W}_T}. \]
\end{cor}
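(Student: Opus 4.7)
The plan is to iterate the two isomorphisms of the preceding lemma around full revolutions of $\phi^\prime$ on $\Theta_{F,\fp} \setminus \Sigma_{\infty,\fp}$, where $\fp$ is the prime of $F$ lying below $\beta$. The key observation is that on $\widetilde{W}_T$ both flavours are simultaneously available: the conjugate-swapping isomorphism applies for any $\alpha \in T$ (since $\widetilde{W}_T \subset \widetilde{Y}^\prime_T \subset \widetilde{Y}^\prime_\alpha$), and the non-swapping one applies for any $\alpha \notin T$ (since then $\widetilde{W}_T \subset \widetilde{Y}^\prime \setminus \widetilde{Y}^\prime_\alpha$). Each application replaces $\omega^0_{\tilde{\gamma}}$ (or $\omega^0_{\tilde{\gamma}^c}$) by $\omega^0_{(\phi^\prime)^{-1}(\tilde{\gamma})}$ (or its conjugate) raised to $\pm p^{m_\gamma}$, the sign and the conjugation flip being dictated by whether $\gamma \in T$.

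One full revolution then brings us back to $\tilde{\beta}$ or $\tilde{\beta}^c$ according to the parity $\epsilon := (-1)^{|T \cap \Theta_{F,\fp}|}$, with accumulated exponent $\epsilon \cdot p^{N}$, where $N$ is the sum of the $m_\gamma$ over the cycle. Since only the boundary steps of essential Verschiebung (those at the indices $\tau^1$) carry a Frobenius twist, and a full $\phi^\prime$-revolution traces out the entire Frobenius orbit on $\widehat{\Theta}_{F,\fp}$ exactly once, one has $N = f_\fp$. When $\epsilon = +1$ this immediately yields $(\omega^0_{\tilde{\beta}})^{\otimes(p^{f_\fp}-1)} \simeq \ccO_{\widetilde{W}_T}$. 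When $\epsilon = -1$, performing a second revolution starting from $\tilde{\beta}^c$ (using the conjugated form of the lemma) contributes another factor of $-p^{f_\fp}$, so composing the two revolutions gives $\omega^0_{\tilde{\beta}} \simeq (\omega^0_{\tilde{\beta}})^{\otimes p^{2f_\fp}}$.

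In either case the order of $\omega^0_{\tilde{\beta}}$ in the Picard group of $\widetilde{W}_T$ divides $p^{f_\fp} \mp 1$, both of which divide $p^{2f_\fp}-1 = (p^{f_\fp}-1)(p^{f_\fp}+1)$, giving the claimed isomorphism. The principal obstacle is simply the combinatorial bookkeeping of interleaving conjugation flips with Frobenius twists around the $\phi^\prime$-orbit; once this is organised, the conclusion reduces to the above factorisation, and no further geometric input beyond the previous lemma is needed.
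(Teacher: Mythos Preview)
Your proposal is correct and follows the same route as the paper: iterate the two isomorphisms of the preceding lemma around the $\phi'$-cycle on $\Theta_{F,\fp}\setminus\Sigma_{\infty,\fp}$, using the conjugate-flipping one at elements of $T$ and the straight one off $T$; after two full revolutions the conjugation flips cancel and the accumulated exponent is $p^{2f_\fp}$. The paper simply goes around twice without your parity split. One small overclaim to clean up: in the odd-parity case your two-revolution argument only establishes that the order divides $p^{2f_\fp}-1$, not $p^{f_\fp}+1$ (one revolution lands on $\tilde{\beta}^c$, not on $\tilde{\beta}$, so you cannot read off a relation in $\omega^0_{\tilde{\beta}}$ alone from a single lap); since $p^{2f_\fp}-1$ is exactly what the corollary asserts, this does not affect the conclusion.
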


\begin{proof}
    Recall that $\widetilde{W}_T = \widetilde{Y}^\prime_T \setminus \bigcup_{\beta \notin T} \widetilde{Y}^\prime_\beta \subset \widetilde{Y}^\prime_T$. Therefore, if $\beta \in T$ then, by the previous lemma, we have $\omega^0_{\tilde{\beta}} \vert_{\widetilde{W}_T} \simeq \left(\omega^0_{(\phi^\prime)^{-1}(\tilde{\beta}^c)}\right)^{-p^{m_\beta}} \vert_{\widetilde{W}_T}$. We have a similar isomorphism when swapping $\tilde{\beta}$ and $\tilde{\beta}^c$.

    Similarly, if $\beta \notin T$, then $\widetilde{W}_T \subset U = \widetilde{Y}^\prime \setminus \widetilde{Y}^\prime_\beta$ and, also by the previous lemma, we have $\omega^0_{\tilde{\beta}} \vert_{\widetilde{W}_T} \simeq \left(\omega^0_{(\phi^\prime)^{-1}(\tilde{\beta})}\right)^{p^{m_\beta}} \vert_{\widetilde{W}_T}$. We have a similar isomorphism when swapping $\tilde{\beta}$ and $\tilde{\beta}^c$.

    It follows that, for any $\beta \in \Theta_F \setminus \Sigma_\infty$, by chaining the above isomorphisms we have $\omega^0_{\tilde{\beta}} \vert_{\widetilde{W}_T} \simeq  \omega^{0 \, \otimes p^{2f_\fp}}_{\tilde{\beta}}  \vert_{\widetilde{W}_T} $, and the result follows.
\end{proof}

\begin{cor}
Let $ T \subset \Theta_F \setminus \Sigma_\infty$, and $\fp$ be a prime such that $\Theta_{F,\fp} \subset T$. Then for any $\beta \in \Theta_{F,\fp}$,
\[ { \omega^0_{\tilde{\beta}} }^{ \otimes (p^{2f_\fp}-1)} \vert_{\widetilde{Y}^\prime_T} \simeq \ccO_{\widetilde{Y}^\prime_T}. \]
\end{cor}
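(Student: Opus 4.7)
\emph{Plan.} The plan is to run the same chaining argument as in the preceding corollary, but restricted to the $\phi^\prime$-cycle on $\Theta_{F,\fp}$ and using only the ``$\beta\in T$'' branch of Lemma \ref{omega trivial}. Under the hypothesis $\Theta_{F,\fp}\subset T\subset \Theta_F\setminus\Sigma_\infty$, we have $\Theta_{F,\fp}\cap\Sigma_\infty=\emptyset$, so $\phi^\prime$ restricted to $\Theta_{F,\fp}$ coincides with $\phi$ and acts there as a single cycle of length $N:=e_\fp f_\fp$. Moreover every element of $\Theta_{F,\fp}$ lies in $T$, so for each $\gamma\in\Theta_{F,\fp}$ both forms of the first isomorphism of Lemma \ref{omega trivial} are available on $\widetilde{Y}^\prime_T$:
\[
\omega^0_{\tilde{\gamma}}\vert_{\widetilde{Y}^\prime_T} \simeq \bigl(\omega^0_{(\phi^\prime)^{-1}(\tilde{\gamma}^c)}\bigr)^{-p^{m_\gamma}}\vert_{\widetilde{Y}^\prime_T}
\quad\text{and}\quad
\omega^0_{\tilde{\gamma}^c}\vert_{\widetilde{Y}^\prime_T} \simeq \bigl(\omega^0_{(\phi^\prime)^{-1}(\tilde{\gamma})}\bigr)^{-p^{m_\gamma}}\vert_{\widetilde{Y}^\prime_T}.
\]

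Starting from $\omega^0_{\tilde{\beta}}$, I would apply these two isomorphisms alternately; each step advances one $(\phi^\prime)^{-1}$-step within $\Theta_{F,\fp}$ and flips the conjugation, contributing a factor $-p^{m_\gamma}$ to the running exponent. After $2N$ iterations the chain has traversed the $\phi^\prime$-cycle twice and so returns to $\tilde{\beta}$ (not $\tilde{\beta}^c$), with accumulated exponent
\[
(-1)^{2N}\,p^{\,2\sum_{\gamma\in\Theta_{F,\fp}}m_\gamma}\;=\;p^{\,2f_\fp},
\]
using that a full $\phi$-cycle around $\Theta_{F,\fp}$ crosses the Frobenius boundary exactly $f_\fp$ times (once per element of $\widehat{\Theta}_{F,\fp}$, precisely at the positions $\theta^1$). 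This gives $\omega^0_{\tilde{\beta}}\vert_{\widetilde{Y}^\prime_T} \simeq (\omega^0_{\tilde{\beta}})^{\otimes p^{2f_\fp}}\vert_{\widetilde{Y}^\prime_T}$, from which the claimed trivialization $(\omega^0_{\tilde{\beta}})^{\otimes(p^{2f_\fp}-1)}\vert_{\widetilde{Y}^\prime_T}\simeq\ccO_{\widetilde{Y}^\prime_T}$ follows immediately.

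The main issue is essentially bookkeeping: one must verify that the hypothesis $\Theta_{F,\fp}\subset T$ keeps every step of the chain inside $T$ (so that the closed-stratum branch of Lemma \ref{omega trivial} is always the one needed), and that the sign/conjugation pattern accumulates as described. No new geometric ingredient beyond Lemma \ref{omega trivial} is required. In substance, this is the closed-stratum analogue of the $\widetilde{W}_T$ argument just above, with the complement isomorphism $\omega^0_{\tilde{\beta}}\simeq(\omega^0_{(\phi^\prime)^{-1}(\tilde{\beta})})^{p^{m_\beta}}$ (which is unavailable on $\widetilde{Y}^\prime_T$) replaced by the conjugate-swapped version of the closed-stratum isomorphism; the inclusion $\Theta_{F,\fp}\subset T$ is precisely what makes this substitution possible.
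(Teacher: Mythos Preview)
Your proposal is correct and takes essentially the same approach as the paper: the paper's proof simply says ``The result follows from the same considerations as in the previous corollary,'' and your chaining argument using only the closed-stratum branch of Lemma~\ref{omega trivial} (which is available at every step precisely because $\Theta_{F,\fp}\subset T$) is exactly what is meant. Your bookkeeping of the conjugation flips, the cycle length $N=e_\fp f_\fp$, and the accumulated exponent $\sum_{\gamma\in\Theta_{F,\fp}} m_\gamma = f_\fp$ is accurate.
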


\begin{proof}
    The result follows from the same considerations as in the previous corollary.
\end{proof}

\begin{lem}\label{Picard group calculations}
Let $\textrm{Pic}(X)_\bQ = \textrm{Pic}(\widetilde{Y}^\prime) \otimes_\bZ \bQ $ denote the rational Picard group of $\widetilde{Y}^\prime$. For any line bundle $\mathcal{L}$ on $\widetilde{Y}^\prime$, let $[\mathcal{L}]$ denote its image in $\textrm{Pic}(\widetilde{Y}^\prime)_\bQ$. Then 
\begin{itemize}
    \item Let $\beta \in \Theta_F \setminus \Sigma_\infty$, we have $-[v^0_{\tilde{\beta}^c}] = [\omega^0_{\tilde{\beta}}]   = [\omega^0_{\tilde{\beta}^c}] = - [v^0_{\tilde{\beta}}]$. 
    \item For any $\beta \in \Theta_E$, we have $[\wedge^2 \ccH^1_{\beta}]=0.$ That is, $\wedge^2 \ccH^1_{\beta}$ is torsion.
\end{itemize}
\end{lem}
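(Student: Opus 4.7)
The plan is to combine three ingredients: the pairing duality supplied by corollary \ref{filtration duality}, the divisorial relations coming from the Hasse invariant sections, and the essential Verschiebung isomorphisms of section \ref{Fesves} to propagate identities around $\phi$-orbits in $\Theta_E$.

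For the outer equalities $-[v^0_{\tilde{\beta}^c}] = [\omega^0_{\tilde{\beta}}]$ and $[\omega^0_{\tilde{\beta}^c}] = -[v^0_{\tilde{\beta}}]$ of part (1), I would invoke corollary \ref{filtration duality}, which supplies isomorphisms $\omega^0_{\tilde{\beta}} \simeq (v^0_{\tilde{\beta}^c})^\vee$ and $\omega^0_{\tilde{\beta}^c} \simeq (v^0_{\tilde{\beta}})^\vee$. The middle equality $[\omega^0_{\tilde{\beta}}] = [\omega^0_{\tilde{\beta}^c}]$ is the substantive content. I would view the Hasse invariant $h_{\tilde{\beta}}$ as a section of $(\omega^0_{\tilde{\beta}})^{-1} \otimes (\omega^0_{(\phi^\prime)^{-1}(\tilde{\beta})})^{p^{m_\beta}}$ whose vanishing divisor is $\widetilde{Y}^\prime_\beta$, yielding the Picard relation
\[p^{m_\beta}[\omega^0_{(\phi^\prime)^{-1}(\tilde{\beta})}] - [\omega^0_{\tilde{\beta}}] = [\widetilde{Y}^\prime_\beta],\]
together with the symmetric relation obtained from $h_{\tilde{\beta}^c}$. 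Since lemma \ref{hasse vanish} ensures $\widetilde{Y}^\prime_\beta = \widetilde{Y}^\prime_{\beta^c}$, subtracting the two relations shows that $a_\beta := [\omega^0_{\tilde{\beta}}] - [\omega^0_{\tilde{\beta}^c}]$ satisfies $a_\beta = p^{m_\beta} a_{(\phi^\prime)^{-1}(\beta)}$. Iterating around the $\phi^\prime$-orbit of $\beta$ in $\Theta_F \setminus \Sigma_\infty$ produces a relation $a_\beta = p^M a_\beta$ with $M > 0$, forcing $a_\beta = 0$ in $\textrm{Pic}(\widetilde{Y}^\prime)_\bQ$.

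For part (2), I would split the argument by the value of $s_\beta$. When $s_\beta = 1$, the Hodge filtration gives $[\wedge^2 \ccH^1_\beta] = [\omega^0_\beta] + [v^0_\beta]$; combining duality with part (1) applied to $\beta\vert_F \in \Theta_F \setminus \Sigma_\infty$ rewrites this as $[\omega^0_\beta] - [\omega^0_{\beta^c}] = 0$. When $s_\beta \in \{0, 2\}$, the essential Verschiebung $V_{\es, \phi(\gamma)}: \ccH^1_{\phi(\gamma)} \to (\ccH^1_\gamma)^{(p^\delta)}$ is an isomorphism whenever $s_\gamma \neq 1$, as recorded in section \ref{Fesves}. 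Starting from $\gamma = \beta$ and marching forward along the $\phi$-orbit in $\Theta_E$ until the first index $\beta''$ with $s_{\beta''} = 1$ is reached, concatenating these isomorphisms gives $\ccH^1_{\beta''} \simeq (\ccH^1_\beta)^{(p^M)}$ for some $M \geq 0$, hence $[\wedge^2 \ccH^1_{\beta''}] = p^M [\wedge^2 \ccH^1_\beta]$, and the previously handled case $s_{\beta''} = 1$ forces $[\wedge^2 \ccH^1_\beta] = 0$.

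The main subtlety will be managing the forward propagation across the $s = 1$ index: the terminal step $V_{\es, \beta''}$ is an isomorphism precisely because the previous index $\phi^{-1}(\beta'')$ has $s \neq 1$ by the minimality of $k$, so the chain of isomorphisms is allowed to land on $\beta''$ rather than merely approach it. The one edge case requiring separate attention is when the entire $\phi$-orbit of $\beta$ in $\Theta_E$ avoids $s = 1$, which occurs exactly when $\Theta_{F,\fp} \subset \Sigma_\infty$ at the prime $\fp$ below $\beta$; there all essential Verschiebungs in the cycle are isomorphisms, and traversing once around gives $\ccH^1_\beta \simeq (\ccH^1_\beta)^{(p^{f_\fp})}$, hence $(p^{f_\fp} - 1)[\wedge^2 \ccH^1_\beta] = 0$, again yielding $[\wedge^2 \ccH^1_\beta] = 0$ in $\textrm{Pic}(\widetilde{Y}^\prime)_\bQ$.
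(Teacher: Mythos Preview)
Your proposal is correct and follows essentially the same strategy as the paper's proof: duality via the pairing for the outer equalities in part (1), the equality of vanishing divisors of $h_{\tilde{\beta}}$ and $h_{\tilde{\beta}^c}$ (from lemma \ref{hasse vanish}) for the middle equality, and essential Verschiebung/Frobenius isomorphisms to propagate the $s_\beta = 1$ case of part (2) to the remaining $\beta$.

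The only cosmetic differences are in presentation. For the middle equality in part (1), the paper packages your cyclic iteration $a_\beta = p^{m_\beta} a_{(\phi')^{-1}(\beta)}$ as the invertibility of a matrix indexed by $\Theta_F \setminus \Sigma_\infty$; these are the same argument. For part (2) with $s_\beta \neq 1$, the paper propagates \emph{backward} to $\phi'^{-1}(\beta)$, whereas you propagate \emph{forward} to the first $\beta''$ with $s_{\beta''} = 1$. Your forward direction is in fact the cleaner choice: as you correctly note, the terminal map $V_{\es,\beta''}$ is an isomorphism precisely because $s_{\phi^{-1}(\beta'')} \neq 1$ by minimality, whereas the analogous terminal step in the backward direction lands on an index with $s = 1$ and so is not automatically an isomorphism. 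Both reach the same conclusion, and your treatment of the edge case $\Theta_{F,\fp} \subset \Sigma_\infty$ matches the paper's exactly.
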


\begin{proof}
The relations $[\omega^0_{\tilde{\beta}}] = -[v^0_{\tilde{\beta}^c}]$ and $[\omega^0_{\tilde{\beta}^c}] =-[v^0_{\tilde{\beta}}]$ come from the isomorphisms induced by the pairings. Now, since the Hasse invariants $h_{\tilde{\beta}}$ and $h_{\tilde{\beta}^c}$ define the same divisor, we have the equality \[ [\omega^0_{\tilde{\beta}}]-p^{m_\beta}[\omega^0_{\phi'^{-1}(\tilde{\beta})}] = [\omega^0_{\tilde{\beta}^c}]-p^{m_\beta}[\omega^0_{\phi'^{-1}(\tilde{\beta}^c)}] .\]
Let $A$ be the matrix labeled by $\beta \in \Theta_F \setminus \Sigma_\infty$ such that the $\beta$ component of $Av$, where $v$ is the vector whose $\beta$ component is $[\omega^0_{\tilde{\beta}}]$, is $[\omega^0_{{\tilde{\beta}}}]-p^{n_\beta}[\omega^0_{\phi'^{-1}({\tilde{\beta}})}]$. Then one can show that $A$ is invertible and so $ [\omega^0_{\tilde{\beta}}]   = [\omega^0_{{\tilde{\beta}}^c}]$.

Suppose now that $\beta \in \Theta_E$ lifts an element of $\Theta_F \setminus \Sigma_\infty$. By the previous discussion, we have \[ [\wedge^2 \ccH^1_{\beta}] = [\omega^0_\beta] + [v^0_\beta] = [\omega^0_\beta] - [\omega^0_{\beta^c}]=0. \]

If $\beta \in \Theta_E$ lifts an element of $\Sigma_\infty$ however, we consider two cases. If $\Theta_{F,\fp} \subset \Sigma_\infty$, then the composition of all essential Verschiebung induces an isomorphism $\wedge^2 \ccH^1_{\beta} \simeq \left( \wedge^2 \ccH^1_{\beta} \right)^{\otimes p^{f_\fp}}$ and thus $[\wedge^2 \ccH^1_{\beta}]=0$. Otherwise, essential Verschiebung induces an isomorphism $\wedge^2 \ccH^1_{\beta} \simeq \left( \wedge^2 \ccH^1_{\phi'^{-1}(\beta)} \right)^{\otimes p^{m_\beta}}$ so $[\wedge^2 \ccH^1_{\beta}] = p^{m_\beta}[\wedge^2 \ccH^1_{\phi'^{-1}(\beta)}] = 0$.

\end{proof}

\begin{lem}\label{omega rel}
The sheaf $\omega^{\textrm{rel}} := \bigotimes_{\theta \in \widehat{\Theta}_F} \bigotimes_{i=1}^{e_\fp} {\omega^0_{\tilde{\theta}^i}}^{\otimes (i-1-e_\fp)}$ is relatively ample with respect to the forgetful morphism $\pi: \widetilde{Y}^\prime \to \widetilde{Y}^{\textrm{DP}}_{U'}(G'_\Sigma)_\bF$. Furthermore, there exist integers $n_\beta$ such that $\omega^{\textrm{amp}}:=\bigotimes_{ \beta \notin \Sigma_\infty} {\omega^0_{\tilde{\beta}}}^{\otimes n_\beta}$ is ample.
\end{lem}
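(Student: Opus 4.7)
The plan is to exhibit $\pi$ as a closed immersion into a product of relative partial flag varieties over $\widetilde{Y}^{\mathrm{DP}}_{U'}(G'_\Sigma)_\bF$, identify $\omega^{\mathrm{rel}}$ with the restriction of a standard ample line bundle on that flag bundle, and then upgrade this relative ampleness to absolute ampleness using the Hodge bundle on $\widetilde{Y}^{\mathrm{DP}}$.

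For each unramified embedding $\tau = \tilde\theta \in \widehat\Theta_E$, the Pappas--Rapoport datum on the $\tau$-component is a filtration of the locally free $\ccO_{\widetilde{Y}^{\mathrm{DP}}}$-module $\omega^0_\tau$ with prescribed rank jumps $s_{\tau^i}$. This defines, by the universal property of the relative partial flag variety, a morphism from $\widetilde{Y}'$ into $\Fl_\tau := \mathrm{Flag}_{s_\tau}(\omega^0_\tau)$ over $\widetilde{Y}^{\mathrm{DP}}$; the additional requirements that each step be $\ccO_E$-stable and that each graded piece be killed by $[\varpi_\fp] - \tau^i(\varpi_\fp)$ cut out a closed subfunctor, so that the product over $\tau$ realises $\widetilde{Y}' \hookrightarrow \prod_\tau \Fl_\tau$ as a closed immersion of schemes over $\widetilde{Y}^{\mathrm{DP}}$; in particular $\pi$ is projective. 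By the Pl\"ucker embedding of each $\Fl_\tau$ into a product of relative Grassmannians, the line bundle $\bigotimes_i (\det \omega^0_\tau(i))^{\otimes c_i}$ is $\Fl_\tau$-ample over $\widetilde{Y}^{\mathrm{DP}}$ whenever the $c_i$ are all strictly positive; rewriting via $\det \omega^0_\tau(i) = \bigotimes_{j \le i} \det \omega^0_{\tilde\theta^j}$ and collecting weights, this translates into $\bigotimes_i (\det \omega^0_{\tilde\theta^i})^{\otimes a_i}$ being relatively ample whenever the $a_i$ form a strictly increasing sequence with $a_{e_\fp} < 0$. The choice $a_i = i - 1 - e_\fp$ satisfies exactly this condition, and its pullback to $\widetilde{Y}'$ is (with the evident convention that $(\omega^0_{\tilde\theta^i})^{\otimes a}$ means $(\det \omega^0_{\tilde\theta^i})^{\otimes a}$, which is trivial when $s_{\tau^i} = 0$ and the line bundle $\omega^0_{\tilde\theta^i}$ itself when $s_{\tau^i}=1$) exactly $\omega^{\mathrm{rel}}$. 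Taking the product over all $\tau$ gives the first assertion.

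For the second assertion, each connected component of $\widetilde{Y}^{\mathrm{DP}}$ is quasi-projective over $\ccO$, and a positive tensor power of the determinant of the Hodge bundle $\det \omega^0_{A/\widetilde{Y}^{\mathrm{DP}}}$ is ample on it, a standard fact for PEL-type integral models (the polarization provides an ample line bundle that, up to a finite tensor power, equals a positive multiple of $\det \omega^0$). Now on $\widetilde{Y}'$ the decomposition along the Pappas--Rapoport filtration gives $\pi^* \det \omega^0_{A/\widetilde{Y}^{\mathrm{DP}}} = \bigotimes_{\tau \in \widehat\Theta_E} \bigotimes_i \det \omega^0_{\tilde\theta^i}$, so for $N$ sufficiently large the line bundle $\omega^{\mathrm{rel}} \otimes \pi^* (\det \omega^0_{A/\widetilde{Y}^{\mathrm{DP}}})^{\otimes N}$ is ample on $\widetilde{Y}'$ (relatively ample over $\widetilde{Y}^{\mathrm{DP}}$ tensored with the pullback of an ample bundle on the base). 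Rewriting this product collects each $\omega^0_{\tilde\beta}$ for $\beta \notin \Sigma_\infty$ with exponent $n_\beta = (i - 1 - e_\fp) + N$ (after the $\det$--convention adjustment on components where $s_{\tau^i}=2$), yielding the desired $\omega^{\mathrm{amp}}$. The main technical point is the Pl\"ucker-type relative ampleness calculation together with the fact that some positive tensor power of $\det \omega^0$ is ample on $\widetilde{Y}^{\mathrm{DP}}$; both are standard but do require genuine input from the PEL moduli construction.
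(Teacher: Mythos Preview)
Your argument for relative ampleness is correct and essentially parallels the paper's: the paper checks on fibres via maps into products of affine Grassmannians, while you use a global closed immersion into a product of relative flag varieties, but in both cases the relevant ample line bundle is identified via the Pl\"ucker-type construction as $\bigotimes_i (\det \omega^0_\tau(i))^{-1}$, which rewrites to give exactly $\omega^{\mathrm{rel}}$. (A minor remark: the condition $a_{e_\fp}<0$ is not actually needed, since $\det\omega^0_\tau(e_\fp)=\det\omega^0_\tau$ is pulled back from the base and hence relatively trivial; but your choice $a_i=i-1-e_\fp$ works regardless.)

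There is, however, a genuine gap in the second part. You write that
\[
\pi^*\det\omega^0_{A/\widetilde{Y}^{\mathrm{DP}}}=\bigotimes_{\tau\in\widehat\Theta_E}\bigotimes_i \det\omega^0_{\tau^i},
\]
and then assert that ``rewriting this product collects each $\omega^0_{\tilde\beta}$ for $\beta\notin\Sigma_\infty$'', with a vague aside about a ``$\det$-convention adjustment on components where $s_{\tau^i}=2$''. But this product runs over \emph{all} $\tau\in\widehat\Theta_E$, so it contains both the conjugate factors $\det\omega^0_{\tilde\theta^c,i}$ and the rank-two factors $\det\omega^0_{\tau^i}=\Lambda^2\ccH^1_{\tau^i}$ at places where $s_{\tau^i}=2$. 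Neither of these is an $\omega^0_{\tilde\beta}$ for $\beta\notin\Sigma_\infty$, and there is no automatic way to rewrite them as such. The paper handles this precisely by invoking the preceding Picard group lemma, which shows that $[\Lambda^2\ccH^1_{\beta'}]=0$ in $\mathrm{Pic}\otimes\bQ$ and that $[\omega^0_{\tilde\beta}]=[\omega^0_{\tilde\beta^c}]$; these torsion relations allow one to choose a suitable power $a$ so that all the unwanted factors disappear and the remaining bundle has the required shape. Without that input your final expression for $\omega^{\mathrm{amp}}$ cannot be justified.
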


\begin{proof}
Since $\pi$ is projective, it suffices to check on fibers. Let $y$ be a closed point of $\widetilde{Y}^{'\textrm{DP}}_{U'}(G'_\Sigma)_\bF$ with residue field $k$, then the fiber $\pi^{-1}(y)$ is the space of all suitable Pappas-Rapoport filtrations on $\omega^0_{A_y / k}$. Fix for each $\theta \in \widehat{\Theta}_F$ a trivialization $\ccH^1_{\textrm{dR}}(A / \pi^{-1}(y))^0_{\tilde{\theta}} \simeq \ccO_{\pi^{-1}(y)}[[u]]/u^{e_\fp}$. This defines, for each $1 \leq i \leq e_\fp$, a morphism $\pi^{-1}(y) \to \textrm{Gr}_i$, where $\textrm{Gr}_i$ denotes the closed subscheme of the affine Grassmannian classifying lattices $L$ such that $u^d L_0 \subset L \subset u^{-d} L_0$ where $L_0$ is the standard lattice, by looking at the (lift of) $u^{i-e}\omega^0_{\tilde{\theta}}(i)$. The collection of these maps defines a closed immersion $\pi^{-1}(y) \to X = \prod_{\theta \in \widehat{\Theta}_F} \textrm{Gr}_1 \times \textrm{Gr}_2 \times \cdots \times \textrm{Gr}_{e_\fp}$. Furthermore, each $\textrm{Gr}_i$ has a canonical ample line bundle which pulls back to $(\omega_{\tilde{\theta}^1} \otimes \cdots \otimes \omega_{\tilde{\theta}^i})^{-1}$ on $\pi^{-1}(y)$. It follows that the product of all of these line bundles on $X$ is ample and so is its pullback to $\pi^{-1}(y)$. It also follows that $(\omega^{\textrm{rel}})^{\otimes b}$ is relatively ample for any $b \geq 1$.\medskip 

Now, we also know that $\det \omega$ is ample on $\widetilde{Y}^{'\textrm{DP}}_{U'}(G'_\Sigma)_\bF$ by \cite[Theorem 4.3.(ix)]{Faltingschai}. Therefore, by \cite[\href{https://stacks.math.columbia.edu/tag/0892}{Lemma 0892}]{stacks-project}, $(\omega^{\textrm{rel}})^{\otimes b} \otimes \left(\pi^*(\det \omega)\right)^{\otimes a}$ is ample for $a \gg 0$. Furthermore, 
\[\pi^*(\det \omega) = \bigotimes_{\theta \in \widehat{\Theta}_F}(\det \omega^0_{\tilde{\theta}} \otimes \det \omega^0_{\tilde{\theta}^c})^2 = \bigotimes_{\beta \in \Theta_F \setminus \Sigma_\infty} (\omega^0_{\tilde{\beta}} \otimes \omega^0_{\tilde{\beta}})^2 \otimes \bigotimes_{s_{\beta^\prime}=2} (\Lambda^2\ccH^1_{\beta^\prime})^2. \]

However, by the previous lemma, the $\Lambda^2\ccH^1_{\beta^\prime}$ are torsion and $\omega^0_{\tilde{\beta}}$ and $\omega^0_{\tilde{\beta}^c}$ differ by a torsion line bundle. We can thus pick $a$ such that 
\[\pi^*(\det \omega)^a =  \bigotimes_{\beta \in \Theta_F \setminus \Sigma_\infty} (\omega^0_{\tilde{\beta}})^{4a}.\]
Performing a similar analysis for $\omega^{rel}$, we can pick $b$ and $a \gg 0$ such that
\[
(\omega^{\textrm{rel}})^{\otimes b} \otimes \left(\pi^*(\det \omega)\right)^{\otimes a} =  \bigotimes_{\beta \notin \Sigma_\infty}  {\omega^0_{\beta}}^{\otimes 4a+b m_\beta}, 
\]
for the appropriate $m_\beta \geq 0$, is ample. The result follows.

\end{proof}

\begin{cor}
The open strata $\widetilde{W}_T$ are quasi-affine. In particular, for each irreducible component $C$ of $\widetilde{Y}^\prime$, there exists $\beta$ such that $\widetilde{Y}^\prime_\beta \cap C \neq \emptyset$.
\end{cor}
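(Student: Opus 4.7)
For the first assertion, the strategy is to combine Lemma~\ref{omega rel} (which provides the ample bundle $\omega^{\textrm{amp}}=\bigotimes_{\beta\notin\Sigma_\infty}(\omega^0_{\tilde\beta})^{\otimes n_\beta}$ on $\widetilde Y^\prime$) with the preceding corollary giving $(\omega^0_{\tilde\beta})^{\otimes(p^{2f_\fp}-1)}|_{\widetilde W_T}\simeq\ccO_{\widetilde W_T}$. Choose an integer $N$ divisible by every $p^{2f_\fp}-1$ as $\fp$ ranges over the primes of $F$ above $p$; this forces $(\omega^0_{\tilde\beta})^{\otimes N}|_{\widetilde W_T}\simeq\ccO_{\widetilde W_T}$ for every $\beta\in\Theta_F\setminus\Sigma_\infty$, whence $(\omega^{\textrm{amp}})^{\otimes N}|_{\widetilde W_T}\simeq\ccO_{\widetilde W_T}$. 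Since $\omega^{\textrm{amp}}|_{\widetilde W_T}$ is ample (restriction of an ample bundle to an open is ample) and positive tensor powers of ample bundles are ample, the structure sheaf $\ccO_{\widetilde W_T}$ is itself ample, which by the standard characterization (EGA~II, 5.1.2) is exactly the statement that $\widetilde W_T$ is quasi-affine.

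For the second assertion I would argue by contradiction. Suppose $C$ is an irreducible component of $\widetilde Y^\prime$ with $\widetilde Y^\prime_\beta\cap C=\emptyset$ for every $\beta\in\Theta_F\setminus\Sigma_\infty$. Then $C\subset\widetilde W_\emptyset$, and since $C$ is closed in $\widetilde Y^\prime$ it is closed in the quasi-affine open $\widetilde W_\emptyset$, making $C$ itself quasi-affine. Because $\widetilde Y^\prime$ has pure relative dimension $d=|\Theta_F\setminus\Sigma_\infty|$, the component $C$ is positive-dimensional (in the nontrivial case $\Theta_F\neq\Sigma_\infty$). To derive the contradiction I would pass to the minimal (Baily--Borel) compactification $\widetilde Y^{\prime*}$: its boundary has codimension $\geq 1$, the automorphic bundles $\omega^0_{\tilde\beta}$ and the partial Hasse invariants $h_\beta$ extend to $\widetilde Y^{\prime*}$ by the Koecher principle, and the extension of $\omega^{\textrm{amp}}$ remains ample on the projective $\widetilde Y^{\prime*}$. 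Taking the closure $\overline C$ of $C$, the same torsion argument shows $(\omega^{\textrm{amp},*})^{\otimes N}|_{\overline C}$ agrees with $\ccO_{\overline C}$ away from a codimension-one locus and hence, being a line bundle on a projective variety of positive dimension, cannot simultaneously be ample and torsion---the required contradiction.

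The main obstacle is precisely this second step: the quasi-affineness of $\widetilde W_\emptyset$ does not by itself rule out a closed positive-dimensional subscheme, so the argument must invoke either the properness of $\widetilde Y^\prime$ (valid when $\Sigma_\infty\neq\emptyset$ and $F\neq\bQ$) or the minimal compactification together with the Koecher principle for the $\omega^0_{\tilde\beta}$, as just sketched. The first assertion itself is an entirely formal consequence of the preceding ampleness and torsion statements.
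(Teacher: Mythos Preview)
Your first assertion is essentially the paper's argument: exhibit an ample line bundle on $\widetilde W_T$ built as a tensor product of powers of the $\omega^0_{\tilde\beta}$ and use the preceding torsion corollary to conclude the bundle is torsion, hence $\ccO_{\widetilde W_T}$ is ample and $\widetilde W_T$ is quasi-affine.

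For the second assertion your compactification argument has a genuine gap. From the triviality of $(\omega^{\textrm{amp},*})^{\otimes N}$ on the open part $C\subset\overline C$ you \emph{cannot} conclude it is torsion on $\overline C$: triviality on a dense open imposes no constraint on the line bundle globally (e.g.\ $\ccO_{\bP^1}(1)$ is trivial on $\bA^1$). The missing ingredient is that the extended partial Hasse invariants are \emph{nonvanishing on the boundary}---the boundary strata are ordinary---so that $\overline C$ lies entirely in the extended open stratum $\widetilde W_\emptyset^*$, and the torsion relations hold on all of $\overline C$, not merely on $C$. With this in hand your contradiction (ample torsion bundle on a positive-dimensional projective variety) goes through. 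You mention that the $h_\beta$ extend via Koecher, but you never use anything about their values on the boundary, which is exactly what is needed.

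The paper's approach is to split into cases. When $\Sigma_\infty\neq\emptyset$, each irreducible component of $\widetilde Y'$ is already proper (the valuative criterion holds, as shown later in Proposition~\ref{strata proper}), so $C\subset\widetilde W_\emptyset$ quasi-affine and $C$ proper forces $C$ finite, contradicting $\dim C=|\Theta_F\setminus\Sigma_\infty|>0$. (Your side condition $F\neq\bQ$ is unnecessary: if $F=\bQ$ and $\Sigma_\infty\neq\emptyset$ the variety is already zero-dimensional.) When $\Sigma_\infty=\emptyset$ the paper appeals to toroidal compactifications and extends the Goren-Oort stratification there, which is precisely where the nonvanishing-at-the-boundary fact enters and the argument reduces to the proper case.
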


\begin{proof}
Since $\widetilde{W}_T$ is (a disjoint union of) quasi-compact schemes, it suffices to show that there exist a torsion ample line bundle over it. By lemma \ref{omega rel}, the sheaf $\omega^{rel}$ is ample on $\widetilde{W}_T$ and by lemma \ref{omega trivial}, it is torsion. The open strata $\widetilde{W}_T$ are therefore quasi-affine.

For $\widetilde{Y}^\prime$ proper, which is the case if $\Sigma_\infty \neq \emptyset$, the second statement follows immediately from the fact that $W_\emptyset \cap C = C$ would imply that $C$ is finite, as it is quasi-affine and proper, a contradiction. 

In the case that $\Sigma_\infty  = \emptyset$, we can prove this by defining suitable toroidal compactifications of $\widetilde{Y}^\prime$, for example as in \cite[section 7]{Diamond_2023}, and extending the Goren-Oort stratification there.
\end{proof}

\begin{cor}\label{non empty goren strata}
The closed strata $\widetilde{Y}^\prime_T$ are non-empty and intersect every component $C$ of $\widetilde{Y}^\prime$. The same is true of the strata $Y^\prime_T$.
\end{cor}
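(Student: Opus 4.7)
I would prove by induction on $|T|$ the stronger statement that $\widetilde{Y}^\prime_T$ meets every irreducible component $C$ of $\widetilde{Y}^\prime$; the case $T = \emptyset$ is trivial (and the case $\Sigma_\infty = \Theta_F$ is trivial because then $\Theta_F \setminus \Sigma_\infty = \emptyset$). For the inductive step, write $T = T^\prime \sqcup \{\beta\}$ and fix a component $C$ of $\widetilde{Y}^\prime$. By the inductive hypothesis, $\widetilde{Y}^\prime_{T^\prime} \cap C$ is non-empty; pick an irreducible component $C^\prime$ of it. Since $\widetilde{Y}^\prime_{T^\prime}$ is cut out of $\widetilde{Y}^\prime$ by the $|T^\prime|$ global Hasse sections $h_\gamma$ for $\gamma \in T^\prime$, Krull's Hauptidealsatz applied inside $C$ gives
\[
\dim C^\prime \;\geq\; \dim C - |T^\prime| \;=\; |\Theta_F \setminus \Sigma_\infty| - |T^\prime| \;\geq\; 1,
\]
using Proposition \ref{unitary smooth} for the equidimensionality of $\widetilde{Y}^\prime$ and the bound $|T| \leq |\Theta_F \setminus \Sigma_\infty|$.

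Suppose for contradiction that $\widetilde{Y}^\prime_\beta \cap C^\prime = \emptyset$, and let $T^{\prime\prime}$ denote the set of $\gamma \in \Theta_F \setminus \Sigma_\infty$ such that $h_\gamma$ vanishes identically on $C^\prime$. By construction $T^\prime \subset T^{\prime\prime}$ while $\beta \notin T^{\prime\prime}$, and $C^\prime \subset \widetilde{W}_{T^{\prime\prime}}$. The previous corollary shows that $\widetilde{W}_{T^{\prime\prime}}$ is quasi-affine, so the closure $C^\prime$, being a closed subscheme of $\widetilde{W}_{T^{\prime\prime}}$, is itself quasi-affine. On the other hand, $\widetilde{Y}^\prime$ is proper when $\Sigma_\infty \neq \emptyset$, forcing $C^\prime$ to be proper of positive dimension, which is incompatible with being quasi-affine (a proper quasi-affine scheme over a field is finite, as its global sections form a finite-dimensional $k$-algebra). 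When $\Sigma_\infty = \emptyset$ we instead pass to a toroidal compactification of $\widetilde{Y}^\prime$ and extend the Goren--Oort stratification there, as alluded to in the previous corollary, running exactly the same argument with the compactified component.

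For the descent to $Y^\prime$, the quotient map $\widetilde{Y}^\prime \to Y^\prime$ by the free action of $\ccO_{F,(p),+}^\times/\mathrm{Nm}_{E/F}(U^\prime \cap E^\times)$ is finite \'etale and surjective, and this action is equivariant for the Pappas--Rapoport filtration, hence preserves each partial Hasse section up to the same twist used to descend them. Consequently $\widetilde{Y}^\prime_T$ is the preimage of $Y^\prime_T$ and irreducible components of $Y^\prime$ are precisely the images of the orbits of components of $\widetilde{Y}^\prime$, so the conclusion for $\widetilde{Y}^\prime$ transfers directly to $Y^\prime$.

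The main obstacle is the case $\Sigma_\infty = \emptyset$, where properness of the component $C^\prime$ fails a priori; the argument I have in mind relies on being able to toroidally compactify $\widetilde{Y}^\prime$ in a way that keeps the Hasse invariants extended (so that the closures of the stratum pieces remain quasi-affine in the appropriate sense). Everything else reduces to dimension counting against quasi-affineness, both of which are already in hand from Proposition \ref{unitary smooth} and the previous corollary.
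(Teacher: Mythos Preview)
Your argument has a genuine gap at the step where you claim $C^\prime \subset \widetilde{W}_{T^{\prime\prime}}$. You defined $T^{\prime\prime}$ as the set of $\gamma$ for which $h_\gamma$ vanishes \emph{identically} on $C^\prime$. For $\gamma \notin T^{\prime\prime}$ this only tells you that $h_\gamma$ is not identically zero on $C^\prime$; it does not say that $h_\gamma$ is nowhere zero there. Thus $C^\prime$ may well meet $\widetilde{Y}^\prime_\gamma$ for some $\gamma \notin T^{\prime\prime}$, so $C^\prime$ need not lie in the open stratum $\widetilde{W}_{T^{\prime\prime}} = \widetilde{Y}^\prime_{T^{\prime\prime}} \setminus \bigcup_{\gamma \notin T^{\prime\prime}} \widetilde{Y}^\prime_\gamma$, and the quasi-affineness conclusion does not follow. (Your argument does go through in the special case $T^\prime = (\Theta_F \setminus \Sigma_\infty) \setminus \{\beta\}$, since then $\beta$ is the only index outside $T^\prime$ and you have assumed $h_\beta$ is nowhere zero on $C^\prime$; but for smaller $T^\prime$ it breaks.)

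The paper's proof avoids this by swapping the order of quantifiers. Rather than fixing $\beta$ in advance and trying to show $C^\prime \cap \widetilde{Y}^\prime_\beta \neq \emptyset$, it assumes for contradiction that $C^\prime \cap \widetilde{Y}^\prime_{\beta^\prime} = \emptyset$ for \emph{every} $\beta^\prime$ outside the current set; this hypothesis is exactly what forces $C^\prime$ into a single open stratum, and the quasi-affine versus proper contradiction then produces \emph{some} $\beta^\prime$ that can be added. Iterating, one lands in the bottom stratum $\widetilde{Y}^\prime_{\Theta_F \setminus \Sigma_\infty}$, which is contained in every $\widetilde{Y}^\prime_T$, so all strata meet $C$. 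Your dimension count, properness discussion, toroidal-compactification caveat for $\Sigma_\infty = \emptyset$, and descent to $Y^\prime$ are all fine; only the inductive scheme needs this reorganisation.
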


\begin{proof}
By the previous corollary there exists $\beta \in \Theta_F \setminus \Sigma_\infty$ such that $C \cap \widetilde{Y}^\prime_\beta$ is non empty. 
Suppose that there for all $\beta^\prime \neq \beta$, $C \cap \widetilde{Y}^\prime_\beta \cap \widetilde{Y}^\prime_{\beta^\prime}$ is empty. 
Then $C \cap \widetilde{Y}^\prime_\beta = C \cap \widetilde{W}_\beta$, and we conclude as before that this is a contradiction (We will show in the next proposition that $\widetilde{Y}_\beta$ is in fact proper itself). 
Therefore, by induction, $\widetilde{Y}^\prime_{\Theta_F \setminus \Sigma_\infty} \cap C$ is non empty. 
Since $\widetilde{Y}^\prime_{\Theta_F \setminus \Sigma_\infty} \subset \widetilde{Y}^\prime_T$ for all $T$, the result follows. 
\end{proof}

\begin{prop}\label{strata proper}
The closed strata $Y^\prime_T$ are non empty and smooth of codimension $\vert T \vert$. Furthermore, if $T \sqcup \Sigma_\infty \neq \emptyset$, then $Y^\prime_T$ is proper.
\end{prop}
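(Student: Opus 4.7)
The statement splits into three parts. Non-emptiness is already established by Corollary \ref{non empty goren strata}, transferred from $\widetilde{Y}^\prime_T$ to $Y^\prime_T$ through the surjective quotient $\widetilde{Y}^\prime \to Y^\prime$. Since this quotient is \'{e}tale and the partial Hasse invariants with their vanishing loci descend compatibly (section \ref{unitary hasse}), it suffices to prove smoothness of codimension $|T|$ and properness at the level of $\widetilde{Y}^\prime_T$.

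For smoothness, I would refine the Grothendieck--Messing argument used in Proposition \ref{unitary smooth}. Given a square-zero thickening $S_0 \hookrightarrow S$ with ideal $\ccI$ and a point $x_0 \in \widetilde{Y}^\prime_T(S_0)$, one inductively lifts the Pappas--Rapoport filtration. The only non-trivial step is at each $\beta = \tau^i$ with $s_{\tau^i} = 1$, where lifts of $\omega^0_\beta$ inside $\widetilde{\ccH}^1_\beta$ form a torsor under $(\omega^0_\beta)^{\otimes -2} \otimes \wedge^2 \ccH^1_\beta \otimes \ccI$, of rank $1$. The additional stratum condition $\beta \in T$ forces $\omega^0_\beta = \mathrm{Im}\, F^\beta_{\mathrm{es},(\phi^\prime)^{-1}(\beta)}$; since essential Frobenius is a crystalline construction, it extends canonically from $\ccH^1_\beta$ to $\widetilde{\ccH}^1_\beta$, yielding a specific rank $1$ local summand which must coincide with $\widetilde{\omega}^0_\beta$. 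This collapses the torsor to a point, so each $\beta \in T$ eliminates exactly one tangent direction; summing over all such $\beta$ yields smoothness of codimension $|T|$.

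Properness splits into two sub-cases. If $\Sigma_\infty \neq \emptyset$, then $\Sigma \neq \emptyset$, so $B_\Sigma$ is a division algebra, $G^\prime_\Sigma$ is anisotropic modulo its center, and $Y^\prime$ is already proper over $\ccO$; hence $Y^\prime_T$ is proper as a closed subscheme. If $\Sigma_\infty = \emptyset$ and $T \neq \emptyset$, I would extend the Pappas--Rapoport construction and the automorphic bundles $\ccH^1_\beta, \omega^0_\beta$ to a smooth toroidal compactification $\widetilde{Y}^{\prime,\mathrm{tor}}$ following the ramified compactification theory of \cite{Diamond_2023}, and extend the Hasse invariants $h_\beta$ accordingly. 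The boundary charts classify semi-abelian degenerations whose toric portions are ordinary; since Verschiebung is an isomorphism on tori, an explicit computation shows each $h_\beta$ is everywhere non-vanishing along the boundary. Hence $\widetilde{Y}^\prime_\beta$ is disjoint from the boundary for every $\beta$, so $\widetilde{Y}^\prime_T$ for any $T \ni \beta$ is closed in the proper scheme $\widetilde{Y}^{\prime,\mathrm{tor}}$, whence proper.

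The principal obstacle is the second case of properness: constructing the toroidal compactification and extending the automorphic bundles and Hasse invariants in the ramified Pappas--Rapoport setting is technical and requires careful bookkeeping on the boundary charts. The smoothness part is a relatively routine refinement of the unstratified deformation theory, though it does depend crucially on the crystalline nature of essential Frobenius to produce the canonical lift that pins down $\widetilde{\omega}^0_\beta$ at each $\beta \in T$.
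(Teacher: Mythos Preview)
Your non-emptiness and smoothness arguments match the paper's: it also defers smoothness to an adaptation of Proposition \ref{unitary smooth}, and non-emptiness is Corollary \ref{non empty goren strata}.

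For properness you take a genuinely different route. The paper argues uniformly via the valuative criterion, following \cite[Proposition 4.7]{tian_xiao_2016}: given a $K$-point over a DVR $R$, pass to semistable reduction and analyze the rational cocharacter lattice $X_*(\mathbb{T})_\bQ$ of the toric part of the N\'eron model. The $D$-action forces $X_*(\mathbb{T})_\bQ$ to be $0$ or $E^2$. If $\Sigma_\infty \neq \emptyset$, the Kottwitz condition on $\mathrm{Lie}(A_K)$ rules out $E^2$; if $\Sigma_\infty = \emptyset$ but $T \neq \emptyset$, the $E^2$ case would make Verschiebung an isomorphism, contradicting membership in the stratum. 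In either case the abelian scheme extends, and the paper then checks directly that the Pappas--Rapoport filtration extends uniquely by intersecting $\mathrm{Fil}^i_K$ with $\omega^0_{A_R/R}$.

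Your approach is also correct. The anisotropy argument for $\Sigma_\infty \neq \emptyset$ is cleaner but black-boxes the N\'eron model work into a general properness statement for PEL varieties (and you should note that properness of $Y^{\prime,\mathrm{DP}}$ transfers to $Y^{\prime,\mathrm{PR}}$ because the forgetful map is projective, which takes care of extending the filtration). Your toroidal compactification argument for $\Sigma_\infty = \emptyset$ works --- the boundary does lie in the Rapoport locus, so the $h_{\theta^i}$ for $i>1$ are nonvanishing there, and the $h_{\theta^1}$ are nonvanishing because tori are ordinary --- but it imports substantial machinery that the paper's direct valuative-criterion argument avoids entirely. The paper's route is more self-contained and handles both cases with one mechanism; yours trades that uniformity for a more structural viewpoint in each case.
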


\begin{proof}

As usual, it suffices to obtain analogous results for the $\widetilde{Y}^\prime_T$. We have just shown that the $\widetilde{Y}^\prime_T$ are non-empty and we can easily adapt the proof of proposition \ref{unitary smooth} to show that they are smooth of codimension $\vert T \vert $.

Note that the $\widetilde{Y}^\prime_T$ are not proper since they are not quasi-compact (they are an infinite union of varieties), but we will show that they satisfy the valuative criterion of properness if $T \sqcup \Sigma_\infty \neq \emptyset$. In particular, we show how the proof \cite[Proposition 4.7]{tian_xiao_2016} extends to our situation. For the sake of completeness, we write out the entire argument.

Let $R$ be a DVR with fraction field $K$ and $x_K = \underline{A_K}$ be a $K$-point of $\widetilde{Y}_T$. We need to show that $\widetilde{Y}^\prime_K$ extends to a unique $R$-point (up to a finite extension $M$ of $K$). 
By Grothendieck's semi-stable reduction theorem, we may assume that $A_K$ has semi-stable reduction. Let $A_R$ denote the N\'{e}ron model of $A_K$, $\overline{A}$ its special fiber, and $\mathbb{T}$ its torus part. Consider the rational cocharacter group $X_{*}(\mathbb{T})_\bQ = \Hom ( \mathbb{G}_m, \mathbb{T}) \otimes_\bZ \bQ$. It is a $\bQ$-vector space of dimension at most $\dim(\overline{A}) = \frac{1}{2}\dim(D)$ and comes with an action of $D \simeq \textrm{M}_2(E)$. It must then either be 0 or isomorphic to $E^2$. In the latter case, we have an isomorphism $X_{*}(\mathbb{T}) \otimes K \simeq \textrm{Lie}(A_K)$ and the trace of any element $\alpha \in \ccO_E$ is given by $2 \sum_{\beta \in \Theta_E} \beta(\alpha)$. Hence $\Sigma_\infty = \emptyset$ by the Kottwitz condition.\medskip 

Hence if $\Sigma_\infty \neq \emptyset$, $A_R$ is an abelian variety and the polarization and level structure extend uniquely by canonicity of the N\'{e}ron model. Thus we obtain a point of $\widetilde{Y}^{\textrm{DP}}$. It only rests to show now that there is a unique filtration on $\omega^0_{A_R/R}$ extending the one on $\omega^0_{A_K/K}$.\medskip

Fix $\tau \in \widehat{\Theta}_E$. Then $H^1_{\textrm{dR}}(A_R / R)^0_{\tau}$ is free of rank $2e_\fp$ over $R$. Write $\textrm{Fil}^i_K = \omega^0_{A_K/K,\tau} \subset H^1_{\textrm{dR}}(A_R / R)^0_{\tau}[1/t]$ where $t$ denotes a uniformizer of $R$. Consider $\textrm{Fil}^{i} := \textrm{Fil}_K^{i} \cap \omega^0_{A_R/R}$. Then $\textrm{Fil}^i[1/t] = \textrm{Fil}_K^{i}$ is free (over $R$) of rank $s_{\tau}(i)$, 
\[ ([\varpi_\fp]-\tau^i(\varpi_\fp))\textrm{Fil}^{i} \subset \textrm{Fil}_K^{i-1} \cap \omega^0_{A_R/R} = \textrm{Fil}^{i-1},\] and $\textrm{Fil}^{i}/\textrm{Fil}^{i-1}$ clearly has no $t$-torsion so is free of rank $s_{\tau^i}$. This defines a suitable lift of $\textrm{Fil}_K^{i}$. Suppose now that ${\textrm{Fil}^{i}}'$ is another lift. Then since they both localize to $\textrm{Fil}_K^{i}$ we have, without loss of generality, ${\textrm{Fil}^{i}}' = t^n \textrm{Fil}^{i}$ for some $n \geq 0$. However, since $\textrm{Fil}^{i}/t \neq 0$, we have $n=0$\medskip

Hence $x_K$ extends uniquely to an $R$-point of $\widetilde{Y}^\prime$ and because $\widetilde{Y}^\prime_T$ is closed, it lands in $\widetilde{Y}^\prime_T$. $\widetilde{Y}^\prime_T$ is thus proper. \medskip

Suppose now that $\Sigma_\infty = \emptyset$. If $X_*(\mathbb{T})_\bQ \simeq E^2$, then Verschiebung is an isomorphism and so $x_L$ cannot lie in any $\widetilde{Y}^\prime_T$. Thus if furthermore $T \neq 0$, $X_*(\mathbb{T})_\bQ = 0$ and we conclude as before. 
\end{proof}

\begin{rem}
Note that we showed that each $\widetilde{Y}^\prime_T$ satisfies the valuative criterion of properness. We stress again that they are not however proper; they are an infinite disjoint union of projective schemes.
\end{rem}

\subsection{The Goren-Oort stratification of Quaternionic Shimura varieties}\label{quaternionic goren oort}
For our purposes, we will only need to define the Goren-Oort stratification on the geometric special fiber. Keep the notation from the previous section. Let $U \subset G(\mathbb{A}_f)$, $V_E \subset (\mathbb{A}_{E,f}^{(p)})^\times$ and $U^\prime \subset G^\prime(\mathbb{A}_f)$ be as in lemma \ref{compatibility components unitary quaternionic} and write $\overline{Y} = Y_{U}(G_\Sigma)_{\overline{\bF}_p}$ and $\overline{Y} = Y_{U^\prime}(G^\prime_\Sigma)_{\overline{\bF}_p}$.

Lemma \ref{compatibility components unitary quaternionic} yields a Hecke equivariant embedding $i: \overline{Y} \to \overline{Y}^\prime$ which identifies $\overline{Y}$ with an open and closed subscheme of $\overline{Y}^\prime$. For any subset $T \subset \Theta_F \setminus \Sigma_\infty$, we thus set
\[ \overline{Y}_T = \overline{Y} \times_{\overline{Y}^\prime} \overline{Y}^\prime_T.\]
We note that this definition is independent of the choice $V_E$.
Since $i$ is Hecke equivariant and the Hecke action of $G^\prime_\Sigma(\mathbb{A}^{(p)}_f)$ on the $\overline{Y}^\prime$, varying over $U^\prime$, respects the stratification, it follows that the action of $G_\Sigma(\mathbb{A}^{(p)}_f)$ on the $\overline{Y}$ restricts to an action on the $\overline{Y}_T$, varying over $U$.

Recall that in the case that $\Sigma = \emptyset$, we have two models: $\overline{Y} =  Y_U(G)_{\overline{\bF}_p}$ viewed as a Hilbert modular variety and $\overline{Y}_\emptyset = Y_{U_\emptyset}(G_\emptyset)_{\overline{\bF}_p}$ as above.
Furthermore, in section \ref{model comparison} we defined a morphism $\tilde{i}: \widetilde{Y}_U(G)_{\overline{\bF}_p} \to \widetilde{Y}_{U_\emptyset^\prime}(G^\prime_\emptyset)_{\overline{\bF}_p}$ by sending $A$ to $A^\prime  = A \otimes_{\ccO_F} \ccO_E^2$ and setting for each $\tau \in \widehat{\Theta}_E$
\[\omega^0_{A^\prime,\tau}(i) = \omega_{A,\theta}(i) \otimes_{\ccO_F} \Hom(\ccO_E,\ccO_F)_\tau.\]
Furthermore, we showed that this induced an isomorphism 
\[\tilde{i}: \widetilde{Y}_U(G)_{\overline{\bF}_p} \to \widetilde{Y}_{U^\prime}(G^\prime)_{\overline{\bF}_p} \times_{C^\prime} C,\]
where $C^\prime = C_{\nu^\prime(U^\prime)}$ and $C = C_{\det U}$ index connected components of $\overline{Y}^\prime_\emptyset$ and $\overline{Y}$ respectively. Now, it is straightforward to see that under the isomorphisms $\tilde{i}^*\ccH^1_{A^\prime,\tau^i} \xrightarrow{\sim} \ccH^1_{A,\theta^i}$, the partial Hasse invariant $h_{\tau^i}$ is sent to the partial Hasse invariant $h_{\theta^i}$. Therefore, for any $T \subset \Theta_F$, we obtain isomorphisms $\tilde{i}: \widetilde{Y}_U(G)_{\overline{\bF}_p,T} \xrightarrow{\sim} \widetilde{Y}^\prime_{U^\prime}(G^\prime)_{\overline{\bF}_p,T} \times_{C^\prime} C$ and in particular we obtain the isomorphism 
\[i: \overline{Y}_T \xrightarrow{\sim} \overline{Y}^\prime_{\emptyset,T} \times_{C^\prime} C.\]
The two definitions of the Goren-Oort stratification agree.

\begin{rem}
   In the case of totally indefinite Quaternion algebras $B_\Sigma/F$, ie $\Sigma_\infty = \emptyset$, one may define integral models $Y_\Sigma$ as in \cite[$\mathsection$2.5]{liu:hal-02967482}. That is, analogously to the Hilbert case, as a coarse moduli space of polarized abelian varieties of dimension $2d$ with an action of $\ccO_B$ such that the characteristic polynomial of the action of $\alpha \in \ccO_B$ is given by $N_{F/\bQ}(N^0_{B/F}(T-\alpha))$ where $N^0_{B/F}$ is the reduced norm. One may also define a Pappas-Rapoport model by introducing a filtration on $\omega^0_{A/S}$, where the reduced component is taken with respect to the isomorphism $\ccO_{B,p} \simeq M_2(\ccO_{F,p})$.
   We obtain, mutatis mutandis, the same comparison to the Unitary Shimura variety as in section \ref{model comparison}, where the map $i$ is now given by $A \mapsto A \otimes_{\ccO_F} \ccO_E$.

   One may also define the Goren-Oort stratification of the special fiber $\overline{Y}_\Sigma$ as in section \ref{gohilb} which agrees with the stratification defined above. In this situation however, we can prove that $\overline{Y}_\Sigma$ itself, not just the strata, is proper if $\Sigma \neq \emptyset$: Repeating the argument from proposition \ref{strata proper} we see that the rational cocharacter group $X_*(\mathbb{T})_\bQ$ of the Torus part of the special fiber of the N\'{e}ron model of $A$ has dimension at most $2d$. However it comes with an action of $B$ which is a division algebra, since $\Sigma \neq \emptyset$, of dimension $4d$ over $\bQ$. It follows that $\mathbb{T}$ is trivial. 
\end{rem}

\section{Iwahori Level Structures}
In this chapter we define Iwahori models of the various Shimura varieties defined in chapter \ref{tame}. In particular, we will be considering Shimura varieties at level $U_0(\mathfrak{P})$ where $\mathfrak{P}= \fp_1 \cdots \fp_n$ is a product of distinct primes $\fp_i \vert p$ of $F$. In the case of Hilbert modular varieties, these have already been extensively studied in \cite{diamond2021kodairaspencer}.

\subsection{The Hilbert setting}\label{hilbert iwahori}
\subsubsection{The moduli problem}
Keep the notation from section \ref{hilbert model}. Let $\mathfrak{P} = \fp_1 \cdots \fp_n$ be the product of distinct primes $\fp_i \vert p$. Set $f_\Fp = f_{\fp_1} \cdots f_{\fp_n}$, $d_{\fp} = \sum_{\fp \vert \Fp} e_\fp d_\fp$ and recall that for each prime $\fp_i$, we have an element $\varpi_{\fp_i} \in \ccO_F$ such that $v_{\fp'}(\varpi_{\fp_i})=1$ if $\fp'=\fp_i$ and $0$ otherwise. We set $\varpi_{\mathfrak{P}} = \varpi_{\fp_1} \cdots \varpi_{\fp_n}$. For a prime $\fp \vert p$, define the Iwahori subgroup 
\[I_0(\fp)=\{g \in \textrm{GL}_2(\ccO_{F,\fp}) \, \vert \, g \equiv \left(\begin{smallmatrix} * & * \\ 0 & * \end{smallmatrix}\right) \, \textrm{mod} \, \fP \}.\]

For a sufficiently small open compact subgroup $U =U^p U_p \subset G(\mathbb{A}_f)$ with $U_p = GL_2(\ccO_{F,p})$ we set 
\[U_0(\Fp) = \{ g \in U \, \vert \, g_\fp \in I_0(\fp) \textrm{ for all } \fp \vert \Fp \}.\]

Consider the functor which associates to any $\ccO$-scheme $S$ the isomorphism classes of tuples $(\underline{A_1},\underline{A_2},f,g)$ where $\underline{A_i}$ define elements of $\widetilde{Y}_U(G)(S)$ and $f:A_1 \to A_2$, $g:A_2 \to A_1\otimes\mathfrak{p}$ are isogenies of degree $p^{f_\Fp}$ such that
\begin{itemize}
\item $f$ and $g$ are $\ccO_F$-linear,
\item $H = \ker f \subset A_1[\fp]$ is totally isotropic with respect, and $f^\vee \circ \lambda_2 \circ f = \lambda_1 \circ \iota_1( \varpi_\fp)$,
\item $f \circ \eta_1 = \eta_2$ as $U^p$-orbits on each connected components,
\item $f$  and $g$ respect the Pappas-Rapoport filtrations on $A_1$ and $A_2$, that is for every $\theta \in \widehat{\Theta}_F$ and $1 \leq i \leq e_\fp$, $f^* \omega_{2,\theta}(i) \subset \omega_{1,\theta}(i)$ and $g^* \omega_{A_1 \otimes \fP^{-1},\theta}(i) \subset \omega_{2,\theta}(i)$,
\item The composition $g \circ f: A_1 \to  A_1\otimes \fp^{-1}$ is equal to the canonical isogeny $\iota_{1,\fP^{-1}}$ induced by the inclusion of $\ccO_F$ into $\fP^{-1} $.
\end{itemize}

As in \cite{diamond2021kodairaspencer}, this functor is representable by a scheme $\widetilde{Y}_{U_0(\Fp)}(G)$, syntomic over $\ccO$ of relative dimension $d$. Furthermore, both of the forgetful morphisms to $\widetilde{Y}_U(G)$ are projective. 

We have a natural action of $\ccO_{F,(p),+}^\times$ on $\widetilde{Y}_{U_0(\fp)}(G)$ by acting on the polarizations of both $A_1$ and $A_2$, which again factors through a free action of $\ccO_{F,(p),+}^\times / (U \cap F^\times)^2$. We denote the quotient by $Y_{U_0(\fp)}(G)$. This is again a quasi-projective flat local complete intersection over $\ccO$ of constant relative dimension $d$. Note that for a different choice of uniformizer $\varpi_\Fp^\prime$ we obtain a different model $\widetilde{Y}^\prime_{U_0(\fp)}(G)$, this model is however isomorphic to the one given by $\varpi_\fp$ via the morphism induced by $(\underline{A_1},\underline{A_2},f,g) \mapsto (\underline{A^\prime_1},\underline{A^\prime_2},f^\prime,g^\prime)$ where all the data is the same except for the quasi polarization $\lambda^\prime_1 = \alpha^{-1} \lambda_1$, where $\alpha = \varpi_\Fp / \varpi_\Fp^\prime \in \ccO_{F,(p),+}^\times$. Therefore, the quotient $Y_{U_0(\fp)}(G)$ and the forgetful morphisms $\pi_i:Y_{U_0(\fp)}(G) \to Y_{U}(G)$ for $i=1,2$ are independent of this choice.

\begin{rem}
Note that the definition of $\widetilde{Y}_{U_0(\Fp)}(G)$ given in \cite[$\mathsection$ 2.4]{diamond2021kodairaspencer} does not include $g$. Since $g$ is uniquely determined by $\underline{A_1}$ and $f$, we only include it in the definition because we require it to preserve the filtrations. However, it is shown in loc cit. that this is automatically the case. We will continue to include $g$ in our notation, despite being redundant, as we will often refer to it in the following.
\end{rem}

Finally, we remark that the tower $Y_{U_0(\Fp)}(G)$ (ranging over $U^p$) comes with a natural Hecke action of $G(\mathbb{A}^{(p)}_f)$, induced by the natural action of $G(\mathbb{A}^{(p)}_f)$ on $\widetilde{Y}_{U_0(\Fp)}(G)$. This action is compatible with the forgetful morphisms $\pi_i$, for $i=1,2$ and at the level of complex points we have
\[Y_{U_0(\Fp)}(G)(\bC) \simeq \textrm{GL}_2(F)_+ \backslash \ccH^{\Theta_F} \times \textrm{GL}_2(\mathbb{A}_f) / U_0(\Fp).\]

We finish with the following straightforward proposition. Although some care is needed with regards to the Pappas-Rapoport filtrations, the proof follows almost verbatim from the unramified case.
\begin{prop}\label{iwahori fiber product}
    Let $\Fp = \fp_1 \cdots \fp_n$ be a product of distinct primes of $F$ over $p$. Write $Y = Y_U(G)$, $Y_{U_0(\Fp)}(G) = Y_0(\Fp)$, and $Y_{U_0(\fp_i)}(G) = Y_0(\fp_i)$. We have a Hecke equivariant isomorphism over $Y$
    \[ Y_0(\Fp) \xrightarrow{\sim} Y_0(\fp_1) \times_Y \cdots \times_{Y} Y_0(\fp_n),\]
    where the fiber product is taken with respect to the natural projection induced by $(A_1,A_2,f,g) \mapsto A_1$.
\end{prop}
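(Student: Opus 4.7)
The plan is to construct mutually inverse morphisms at the level of the moduli problems, working first on the covers $\widetilde{Y}$ before descending by the $\ccO_{F,(p),+}^\times$-action; Hecke equivariance will then be automatic since the prime-to-$p$ action only affects the level structure on $A_1$. The key structural input is the decomposition $A_1[\Fp] = \bigoplus_{i=1}^n A_1[\fp_i]$ arising from coprimality of the $\fp_i$, which forces any $\ccO_F$-stable subgroup of $A_1[\Fp]$ to split accordingly.

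\textbf{Forward map.} Given $(\underline{A_1}, \underline{A_2}, f, g) \in \widetilde{Y}_0(\Fp)(S)$, let $H = \ker f$ and $H_i = H \cap A_1[\fp_i]$, so $H = \bigoplus_i H_i$ with each $H_i$ totally isotropic in $A_1[\fp_i]$ of order $p^{f_{\fp_i}}$. For each $i$, set $A_{2,i} = A_1/H_i$ with quotient $f_i$, and construct the associated $\ccO_F$-action, polarization $\lambda_{2,i}$ (characterized by $f_i^\vee \circ \lambda_{2,i} \circ f_i = \lambda_1 \circ \iota_1(\varpi_{\fp_i})$), level structure $\eta_{2,i} = f_i \circ \eta_1$, and dual isogeny $g_i: A_{2,i} \to A_1 \otimes \fp_i^{-1}$ characterized by $g_i \circ f_i = \iota_{1,\fp_i^{-1}}$, exactly as in the unramified case. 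The Pappas--Rapoport filtration on $\omega_{A_{2,i}/S}$ is defined component-wise: for $\theta \notin \widehat{\Theta}_{F,\fp_i}$, the isogeny $f_i$ is an isomorphism on the $\theta$-component and we transport the filtration from $A_1$; for $\theta \in \widehat{\Theta}_{F,\fp_i}$, the residual quotient $A_{2,i} \to A_2$ is an isomorphism on the $\theta$-component and we transport the filtration from $A_2$.

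\textbf{Inverse map.} Given compatible tuples $(\underline{A_1}, \underline{A_{2,i}}, f_i, g_i)_{i=1}^n$, set $H = \bigoplus_i \ker(f_i) \subset A_1[\Fp]$ (the sum is automatically direct by coprimality) and $A_2 = A_1/H$, with quotient $f: A_1 \to A_2$; the polarization, level structure, and the unique $g$ are induced as usual. For $\theta \in \widehat{\Theta}_{F,\fp_i}$, the Pappas--Rapoport filtration on $\omega_{A_2/S, \theta}$ is transported from that on $\omega_{A_{2,i}/S, \theta}$ via the intermediate quotient $A_{2,i} \to A_2$, which is an isomorphism on such components.

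The two constructions are manifestly inverse to each other and commute with the $\ccO_{F,(p),+}^\times$-action, so they descend to the desired morphism $Y_0(\Fp) \xrightarrow{\sim} Y_0(\fp_1) \times_Y \cdots \times_Y Y_0(\fp_n)$, which is automatically Hecke-equivariant. I expect the only non-routine verification to be that the component-wise filtrations defined on $A_{2,i}$ and on $A_2$ genuinely satisfy the Pappas--Rapoport axioms (stability under $\ccO_F$, correct graded-piece ranks, and killing by $[\varpi_\fp] - \theta^j(\varpi_\fp)$). All three properties transfer directly from the analogous axioms on $A_1$ or $A_2$ via the equivariant isomorphisms of the relevant $\omega$-components used in the transport, so this will reduce to an unpacking rather than a genuinely new argument.
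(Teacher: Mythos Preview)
Your proposal is correct and follows precisely the approach the paper indicates: the paper does not give a detailed proof, stating only that ``although some care is needed with regards to the Pappas--Rapoport filtrations, the proof follows almost verbatim from the unramified case.'' Your decomposition of $\ker f$ along the $\fp_i$, the component-wise transport of filtrations via the intermediate quotients, and your identification of the filtration axioms as the only point requiring care are exactly what the paper has in mind.
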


\subsubsection{Stratifications in the Hilbert Setting}\label{hilbert strata}

In this section we introduce a stratification of the special fiber of $Y_{U_0(\fp)}(G)$ as in \cite{sasaki} by descending it from one on $\widetilde{Y}_{U_0(\fp)}(G)$.\medskip

Let $G$, $U$ and $\Fp$ be as before and set $\Theta_{F,\Fp} = \bigcup \Theta_{F,\fp}$. Write $S = \widetilde{Y}_{U_0(\Fp)}(G)_\bF$ and $\overline{Y}_0(\Fp) = Y_{U_0(\Fp)}(G)_\bF$. Let $(\underline{A_1},\underline{A_2},f,g)$ denote the universal object over $S$ and for any $\beta \in \Theta_F$ write $\omega_{1,\beta} = \omega_{A_1/S,\beta}$ and $\omega_{2,\beta} = \omega_{A_2/S,\beta}$, as defined in section \ref{automorphic bundles hilbert}. Since $f$ and $g$ respect the filtrations, we have morphisms 
\[f^*_\beta :\omega_{2,\beta} \to \omega_{1,\beta} \qquad \textrm{and} \qquad g^*_\beta: \omega_{A_1 \otimes \fP^{-1},\beta} \to \omega_{2,\beta}.\]

\noindent Note that for $\beta \notin \Theta_{F,\Fp}$, $f^*_\beta$ and $g^*_\beta$ are isomorphisms. For any two subsets $I,J \subset \Theta_{F,\Fp}$, we let $S_{\phi(I),J}$ denote the closed subscheme of $S$ given by the vanishing of the sections
\[\{ f^*_\beta \, \vert \, \beta \in I \} \cup \{ g^*_\beta \, \vert \, \beta \in J \}.\]
We immediately remark that we use the shift $\phi(I)$ in our notation in order to relate these closed subschemes to the Goren-Oort stratification downstairs. Note that since $f^*_\beta$ and $g^*_\beta$ form an exact pair, that is $\ker f^*_\beta = \textrm{Im} g^*_\beta$ and $\ker g^*_\beta = \textrm{Im} (f \otimes \Fp^{-1})^*_\beta$, that the vanishing of $g^*_\beta$ is equivalent to the vanishing of $f^{* \prime}_\beta : v_{2,\beta} \to v_{1,\beta}$, where we recall that $v_{i,\beta} = \ccH^1_{i,\beta} / \omega_{i,\beta}$. Indeed, $g^*_\beta \omega_{A_1 \otimes \Fp^{-1},\beta} = 0$ implies that $f^*_\beta \ccH^1_{2,\beta} = \omega_{1,\beta}$ hence $f^{*\prime}_\beta v_{2,\beta} = 0$ and vice versa.\medskip

For a closed point $P$ of $S$, let $I_P$ be the set of $\beta$ such that $f^*_\beta$ vanishes at $P$, and $J_P$ the set of $\beta$ such that $g^*_\beta$ vanish at $P$. Then $I_P \cup J_P = \Theta_{F,\Fp}$ and $J_P$ is also the set of $\beta$ such that $f^{* \prime}_\beta$ vanish at $P$. After taking trivializations of the relevant bundles in a neighbourhood $U$ of $P$, we obtain the following description of the completed local ring $\widehat{\ccO}_{S,P}$ from  \cite[$\mathsection$2.5]{diamond2021kodairaspencer}

\begin{thm}
Let $P$ be a closed point of $S$. Then there is an isomorphism 
\[ \widehat{\ccO}_{S,P} \simeq k_P \llbracket X_\beta, X^\prime_\beta \rrbracket_{\beta \in \Theta_F} / \langle g_\beta \rangle_{\beta \in \Theta_F} \]
where $f^*_\beta \mapsto X_\beta$, $f^{* \prime}_\beta \mapsto X^\prime_\beta$ and
\[ g_\beta = 
\begin{cases}
X_\beta  & \textrm{ if } \beta \in I_P \setminus J_P,\\
X^\prime_\beta &\textrm{ if } \beta \in J_P \setminus I_P,\\
X_\beta X^\prime_\beta &\textrm{ if } \beta \in I_P \cap J_P,\\
X_\beta - \beta(\varpi_\Fp)X_\beta^ \prime & \textrm{ if } \beta \notin \Theta_{F,\Fp}.
\end{cases}\]
\end{thm}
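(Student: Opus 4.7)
The plan is to prove this via Grothendieck--Messing crystalline deformation theory, much as in the proof of Proposition \ref{unitary smooth}, adapted to the extra data of the two isogenies. The starting observation is that a deformation of $(\underline{A_1},\underline{A_2},f,g)$ to an Artin local ring $R$ with residue field $k_P$ amounts to a choice of lift of each Pappas--Rapoport filtration $\underline{\omega_j}$ inside $\ccH^1_{\textrm{cris}}(A_{j,P}/k_P)\otimes_{k_P} R$, together with the compatibility that the canonical crystalline lifts of $f^*$ and $g^*$ preserve these filtrations. Everything decomposes over $\widehat{\Theta}_F$, so the problem reduces to an analysis at each $\theta$ separately.

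First, for each $\theta \in \widehat{\Theta}_F$ I will trivialize $\ccH^1_{\textrm{cris}}(A_{j,P}/k_P)_\theta$ as a free module of rank two over $k_P[u]/E_\theta(u)$, choosing bases in which the Pappas--Rapoport filtrations of $\omega_{j,\theta}$ take the standard diagonal form exhibited in the uniqueness argument of Section \ref{PR unitary}. In these bases, a deformation of $\omega_{j,\theta^i}\subset \ccH^1_{j,\theta^i}$ is given by a single scalar, so the lift of the entire pair of filtrations is encoded in finitely many formal parameters over $\ccO_{S,P}$. The coordinates $X_\beta$ and $X_\beta'$ of the statement then arise as the matrix entries of $f^*_\beta : \omega_{2,\beta}\to \omega_{1,\beta}$ and $f^{*\prime}_\beta : v_{2,\beta}\to v_{1,\beta}$ (equivalent to $g^*_\beta$ via the exact-pair identity $\ker f^*_\beta = \operatorname{Im} g^*_\beta$) in the chosen bases; by construction $X_\beta(P)=0$ iff $\beta\in I_P$ and $X_\beta'(P)=0$ iff $\beta\in J_P$.

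Finally, the relations come from the condition $g\circ f = \iota_{1,\Fp^{-1}}$, which on the $\beta$-component of crystalline cohomology is multiplication by $\beta(\varpi_{\Fp})$. Reading off this identity in the chosen bases produces, for each $\beta$, a relation of the form $X_\beta\cdot X_\beta' = \beta(\varpi_{\Fp})\cdot w_\beta$ for some unit $w_\beta$. For $\beta \notin \Theta_{F,\Fp}$ the right side is a unit: after rescaling one of the bases one obtains precisely $X_\beta - \beta(\varpi_{\Fp}) X_\beta' = 0$. For $\beta \in \Theta_{F,\Fp}$, $\beta(\varpi_{\Fp})=0$ in characteristic $p$, so the relation becomes $X_\beta X_\beta' = 0$; the further collapse into the three subcases ($\beta\in I_P\setminus J_P$, $\beta\in J_P\setminus I_P$, $\beta\in I_P\cap J_P$) follows because $X_\beta$ (respectively $X_\beta'$) is a unit near $P$ whenever $\beta \notin I_P$ (respectively $\beta \notin J_P$), which kills the other factor. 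A dimension count ($|\Theta_F| + |I_P\cap J_P|$ on both sides) then confirms that the map from the explicit presentation to $\widehat{\ccO}_{S,P}$ is an isomorphism, not merely a surjection. The main technical obstacle will be choosing the lifts of the two Pappas--Rapoport filtrations compatibly enough that $f^*$ is expressible by purely diagonal matrix entries in each graded piece; this is the same bookkeeping already encountered in Lemma \ref{filtration torsion} and Proposition \ref{unitary smooth}, now performed simultaneously on both sides of the isogeny.
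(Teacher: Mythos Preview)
Your overall strategy is correct and matches the paper's treatment (the Hilbert statement is cited from \cite[\S2.5]{diamond2021kodairaspencer}; the paper carries out the analogous Unitary argument in full in \S\ref{iwahori unitary local}): Grothendieck--Messing reduces the problem to lifting a pair of Pappas--Rapoport flags compatibly with the fixed crystalline map $f^*$, yielding two parameters per $\beta$ and one relation. Your coordinates (the diagonal entries of $f^*_\beta$ in filtration-adapted bases) differ from the paper's (which first normalises $f^*$ to one of three explicit matrix forms and takes the filtration-deformation scalars $t_\beta,t'_\beta$ as coordinates), but the two are related by an invertible change of variables.

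There is, however, a genuine gap in your closing step. Matching Krull dimension or embedding dimension across a surjection of complete local rings does not force an isomorphism: the source ring here has $2^{|I_P\cap J_P|}$ irreducible components, and a dimension count cannot detect whether the map collapses some of them. The paper's argument in \S\ref{iwahori unitary local} handles this with a length comparison. After establishing surjectivity of $\rho_n:R_n\to S_n$ (itself a nontrivial inductive step, carried out via the ideals $I_\tau^{(i)}$ that record compatibility of successive filtration pieces), one constructs an \emph{explicit} deformation of $(\underline{A_1},\underline{A_2},f)$ over $R_n$ by lifting both Pappas--Rapoport flags in $\ccH^1_{\mathrm{cris}}(A_j/R_n)$ step by step, choosing bases so that $f^*$ retains its normalised matrix form at each stage. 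This yields a surjection $\widehat{\ccO}_{S,P}\twoheadrightarrow R_n$, hence $\operatorname{lg}(R_n)\le\operatorname{lg}(S_n)$, and equality of lengths forces $\rho_n$ to be an isomorphism for every $n$. This explicit two-sided construction is where the content of the proof lies; a dimension count cannot substitute for it.
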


As in \cite[ $\mathsection$4.3.2]{2020arXiv200100530D} we have the following:

\begin{cor}
$S_{\phi(I),J}$ is a reduced local complete intersection of pure dimension $d- \vert I \cap J\vert$. Furthermore, it is smooth if $I \cup J = \Theta_{F,\Fp}$. 
\end{cor}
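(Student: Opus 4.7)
The plan is to derive both assertions from the local structure theorem just stated, by describing the completed local rings of $S_{\phi(I),J}$ at closed points. A closed point $P$ lies on $S_{\phi(I),J}$ exactly when $I\subset I_P$ and $J\subset J_P$, so it is enough to analyze $\widehat{\ccO}_{S_{\phi(I),J},P}$ for such $P$. Under the identification of the theorem $f^*_\beta\mapsto X_\beta$, $f^{*\prime}_\beta\mapsto X^\prime_\beta$, and because the vanishing locus of $g^*_\beta$ coincides with that of $f^{*\prime}_\beta$ (so the corresponding ideals agree up to units), I expect
\[
\widehat{\ccO}_{S_{\phi(I),J},P} \simeq k_P \llbracket X_\beta, X^\prime_\beta \rrbracket_{\beta \in \Theta_F} \Big/ \bigl(\langle g_\beta\rangle_{\beta \in \Theta_F} + \langle X_\beta \rangle_{\beta \in I} + \langle X^\prime_\beta \rangle_{\beta \in J}\bigr).
\]
Crucially, the whole presentation is indexed independently by $\beta \in \Theta_F$, so this ring decomposes as a completed tensor product over $k_P$ of local factors, one per $\beta$, and the proof reduces to analyzing a single factor.

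The main work is a per-$\beta$ case analysis. For $\beta \notin \Theta_{F,\Fp}$ (where automatically $\beta \notin I\cup J$), $g_\beta$ is linear with unit coefficient $\beta(\varpi_\Fp) \in k_P^\times$, giving a regular factor of dimension one. For $\beta \in \Theta_{F,\Fp}$: if $\beta \in I\cap J$, both $X_\beta$ and $X^\prime_\beta$ are killed and the factor is $k_P$; if $\beta$ lies in the symmetric difference $I \triangle J$, setting one of $X_\beta, X^\prime_\beta$ to zero absorbs $g_\beta$ in each of its three possible shapes ($X_\beta$, $X^\prime_\beta$, or $X_\beta X^\prime_\beta$), leaving a regular power series ring in the surviving variable; if $\beta \notin I\cup J$, no new relation is imposed and the factor is one of $k_P\llbracket X^\prime_\beta \rrbracket$, $k_P\llbracket X_\beta \rrbracket$, or $k_P\llbracket X_\beta, X^\prime_\beta \rrbracket/(X_\beta X^\prime_\beta)$ according to whether $\beta$ lies in $I_P\setminus J_P$, $J_P\setminus I_P$, or $I_P\cap J_P$; each of these three options is reduced, a complete intersection, and of dimension one. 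Summing dimensions over $\beta$ produces $d - |I\cap J|$ uniformly in $P$, so $S_{\phi(I),J}$ is of pure dimension $d - |I\cap J|$; reducedness and the complete intersection property then descend through the completed tensor product of such factors over the (perfect residue) field $k_P$.

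For the smoothness assertion, when $I\cup J = \Theta_{F,\Fp}$ every $\beta \in \Theta_{F,\Fp}$ falls into one of the first two sub-cases above, so the only potentially singular factor $k_P\llbracket X_\beta, X^\prime_\beta \rrbracket/(X_\beta X^\prime_\beta)$ never appears and the completed local ring is a regular power series ring, yielding smoothness at every closed point. The argument is essentially mechanical once the local structure theorem is in hand; the only subtlety, really a bookkeeping point rather than a genuine obstacle, is the verification that in each of the three shapes of $g_\beta$ on $\Theta_{F,\Fp}$, imposing $X_\beta = 0$ (respectively $X^\prime_\beta = 0$) absorbs the whole relation and not merely part of it, which is immediate by inspection.
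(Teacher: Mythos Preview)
Your proof is correct and is precisely the argument the paper has in mind when it writes ``As in \cite[\S4.3.2]{2020arXiv200100530D}'': one passes to completed local rings via the structure theorem, observes that the presentation splits as a completed tensor product over $k_P$ of one-variable (or two-variable) factors indexed by $\beta$, and reads off reducedness, the complete intersection property, dimension, and smoothness factor by factor. The only point worth making explicit is that the equivalence between the vanishing of $g^*_\beta$ and that of $f^{*\prime}_\beta$ holds functorially over any base (the exact-pair argument works scheme-theoretically, not just pointwise), so the two sections genuinely cut out the same closed subscheme and hence generate the same ideal after local trivialization; your parenthetical ``up to units'' is justified for this reason.
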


Now, the natural action of $\ccO_{F,(p),+}^\times$ on the line bundles 
\[\ccH om_{\ccO_S}\left(\omega_{2,\beta},\omega_{1,\beta}\right) \quad \textrm{and} \quad \ccH om_{\ccO_S}\left(\omega_{A_1 \otimes \Fp^{-1},\beta},\omega_{2,\beta}\right)\] 
factors through $\ccO_{F,(p),+}^\times/(\ccO_F \cap U)^2$ so that the above line bundles and the accompanying sections $f^*_\beta$ and $g^*_\beta$ all descend to $\overline{Y}_0(\Fp)$.

This allows to define the closed subschemes $\overline{Y}_0(\fp)_{\phi(I),J}$ of $\overline{Y}_0(\fp)$ as the vanishing of the appropriate sections or, equivalently, as the quotient of by the $\ccO_{F,(p),+}^\times$-action of $S_{\phi(I),J}$. It follows from above that the reduced closed subschemes $\overline{Y}_0(\fp)_{\phi(I),J}$ satisfy the same relations as the $S_{\phi(I),J}$. It is also straightforward to check that the stratification is compatible with the Hecke action for varying $U$. That is, if we take suitable $U_1$ and $U_2$ and $g \in GL_2(\bA_{F,f}^{(p)})$ such that $g^{-1}U_1g \subset U_2$, then the morphisms $\tilde{\rho}_g$ and $\rho_g$ restrict to morphisms on the corresponding closed subschemes for any $I,J\subset \Theta_{F,\Fp}$.

\begin{prop}
Let $I,J \subset \Theta_{F,\Fp}$ and write for each prime $\fp_i \vert \Fp$, $I_{\fp_i}, J_{\fp_i} = I \cap \Theta_{F,\fp_i}, J \cap \Theta_{F,\fp_i}$. The isomorphism $\overline{Y}_0(\Fp) \xrightarrow{\sim} \overline{Y}_0(\fp_1) \times_{\overline{Y}} \cdots \times_{\overline{Y}} \overline{Y}_0(\fp_n)$ restricts to a Hecke equivariant isomorphism 
\[\overline{Y}_0(\Fp)_{\phi(I),J} \simeq \overline{Y}_0(\fp_1)_{\phi(I_{\fp_1}),J_{\fp_1}} \times_{\overline{Y}} \cdots \times_{\overline{Y}} \overline{Y}_0(\fp_n)_{\phi(I_{\fp_n}),J_{\fp_n}}.\]
\end{prop}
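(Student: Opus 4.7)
The plan is to start from Proposition \ref{iwahori fiber product}, which already provides the Hecke equivariant isomorphism $\overline{Y}_0(\Fp) \xrightarrow{\sim} \overline{Y}_0(\fp_1) \times_{\overline{Y}} \cdots \times_{\overline{Y}} \overline{Y}_0(\fp_n)$, and verify that, on the moduli-theoretic level, the vanishing conditions defining $\overline{Y}_0(\Fp)_{\phi(I),J}$ split prime-by-prime and therefore correspond exactly to fibered vanishing of the analogous sections at each Iwahori factor $\overline{Y}_0(\fp_i)_{\phi(I_{\fp_i}),J_{\fp_i}}$. It suffices to work upstairs on $\widetilde{Y}_{U_0(\Fp)}(G)$ and descend by the free action of $\ccO_{F,(p),+}^\times/(\ccO_F \cap U)^2$.

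Concretely, given a point $(\underline{A_1}, \underline{A_2}, f, g)$ of $\widetilde{Y}_{U_0(\Fp)}(G)$, since $\ker f \subset A_1[\Fp]$ and $\Fp = \fp_1 \cdots \fp_n$ is squarefree, we have a canonical $\ccO_F$-stable decomposition $\ker f = \bigoplus_{i=1}^n H_i$ with $H_i \subset A_1[\fp_i]$. Setting $A_{2,i} = A_1 / H_i$ and letting $f_i: A_1 \to A_{2,i}$ and $g_i : A_{2,i} \to A_1 \otimes \fp_i^{-1}$ be the obvious factors, the tuples $(\underline{A_1}, \underline{A_{2,i}}, f_i, g_i)$ are precisely the points of $\widetilde{Y}_{U_0(\fp_i)}(G)$ encoded by the isomorphism of Proposition \ref{iwahori fiber product}, where the Pappas--Rapoport filtration on $\omega_{2,i}^{}$ is inherited from (equivalently, induces) the one on $\omega_2$ via the natural identifications.

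The central step is the identification of the vanishing loci of the sections $f_\beta^*$ and $g_\beta^*$ under this decomposition. For $\beta \in \Theta_{F,\fp_i}$, the splitting $\omega_{A/S} = \bigoplus_{\fp \vert p} \omega_{A/S,\fp}$ ensures that $\omega_{2,\beta}$ only receives contributions from $H_i$, so the pullback map $f^*_\beta: \omega_{2,\beta}\to \omega_{1,\beta}$ factors canonically as $f^*_\beta = (f_i^*)_\beta$ under the identification $\omega_{2,\beta} \simeq \omega_{2,i,\beta}$, and similarly $g^*_\beta = (g_i^*)_\beta$ (where we use that $f_j$ and $g_j$ are isomorphisms on $\fp_i$-components for $j \neq i$). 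Consequently the vanishing of $f^*_\beta$ (resp. $g^*_\beta$) on $\widetilde{Y}_{U_0(\Fp)}(G)$ pulls back to the vanishing of the corresponding section on $\widetilde{Y}_{U_0(\fp_i)}(G)$ through the $i$-th projection of the fiber product.

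Since the conditions defining $\widetilde{Y}_{U_0(\Fp)}(G)_{\phi(I),J}$ are precisely the simultaneous vanishing of $f^*_\beta$ for $\beta\in I$ and of $g^*_\beta$ for $\beta \in J$, and since $I = \bigsqcup_i I_{\fp_i}$ and $J = \bigsqcup_i J_{\fp_i}$, these conditions partition according to the primes $\fp_i$. The isomorphism of Proposition \ref{iwahori fiber product} therefore restricts to the claimed isomorphism of closed subschemes; Hecke equivariance is inherited from that of Proposition \ref{iwahori fiber product}, which already commutes with the forgetful maps and the stratifications (as each $\rho_g$ preserves the $\fp_i$-components of the various isogenies). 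The only mild subtlety is to check that the Pappas--Rapoport filtrations transport coherently in the fiber product description, but this is automatic since at each prime $\fp_i$ the filtration is determined by the $\fp_i$-component of $\omega$, and the decomposition of $f$ into the $f_i$ respects this component-wise structure.
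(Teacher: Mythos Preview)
Your proposal is correct and follows exactly the natural line of argument: the paper itself does not write out a proof for this proposition, treating it as an immediate consequence of Proposition~\ref{iwahori fiber product} together with the fact that the sections $f^*_\beta$ and $g^*_\beta$ defining the strata are indexed by $\beta \in \Theta_{F,\Fp} = \bigsqcup_i \Theta_{F,\fp_i}$ and depend only on the $\fp_i$-component of the isogeny. Your moduli-theoretic unpacking of this (decomposing $\ker f = \bigoplus_i H_i$ and identifying $f^*_\beta$ with $(f_i^*)_\beta$ for $\beta \in \Theta_{F,\fp_i}$) is precisely the content the paper leaves implicit.
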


\subsection{The Unitary setting}
We now define Iwahori level Shimura varieties and their stratifications in the Unitary setting.
\subsubsection{The moduli problem}\label{iwahori unitary def}
As usual, let $\Sigma$ and $G^\prime_\Sigma$ be as in section \ref{unitary moduli}. Let $\mathfrak{P} = \fp_1 \cdots \fp_n$ be the product of distinct primes $\fp_i \vert p$ of $F$ and let $\mathfrak{Q} = \fq_1 \cdots \fq_n$ so that $\Fp \ccO_E = \mathfrak{Q}\mathfrak{Q}^c$. Set $f_\Fp = f_{\fp_1} \cdots f_{\fp_n}$, $d_{\fp} = \sum_{\fp \vert \Fp} e_\fp d_\fp$ and $\varpi_{\mathfrak{P}} = \varpi_{\fp_1} \cdots \varpi_{\fp_n}$. Recall that we fixed an isomorphism $\ccO_{D,p} \simeq M_2(\ccO_{E,p})$. Via this isomorphism, define for each prime $\fp \vert p$, the Iwahori subgroup $I_0(\fp) \subset G^\prime_\Sigma$ 
\[I_0(\fp)=\{g \in \textrm{GL}_2(\ccO_{E,\fp}) \, \vert \, g \equiv \left(\begin{smallmatrix} * & * \\ 0 & * \end{smallmatrix}\right) \, \textrm{mod} \, \fp\ccO_E \}.\]

For a sufficiently small open compact subgroup $U^\prime = (U^\prime)^pU ^\prime_p \subset G_\Sigma(\mathbb{A}_f)$ with $U^\prime = GL_2(\ccO_{E,p})$, we set 
\[U^\prime_0(\Fp) = \{ g \in U^\prime \, \vert \, g_\Fp \in I_0(\fp) \textrm{ for all } \fp \vert \Fp \}.\]
Hence if we write $U^\prime=(U^\prime)^p U^\prime_p$ with $U^\prime_p = GL_2(\ccO_{E,p})$, we have $U^\prime_0(\Fp)= U^p I_0(\Fp)$

Consider the functor which associates to every locally Noetherian scheme $S/\ccO$ the set of isomorphism classes of tuples $(\underline{A_1},\underline{A_2},f,g)$ where 
\begin{itemize}
    \item $\underline{A_1}$ and $\underline{A_2}$ define points of $\widetilde{Y}_{U^\prime}(G^\prime_\Sigma)(S)$,
    \item $f:A_1 \to A_2$ and $g:A_2 \to A_1 \otimes \Fp^{-1}$ are $\ccO_D$-linear isogenies of degree $p^{4f_\Fp}$, with $\ker f \subset A_1[\Fp]$ and $\ker g \subset A_2[\Fp]$, whose composition $g \circ f: A_1 \to A_1 \otimes \Fp^{-1}$ is equal to the canonical map $\iota_{\Fp^{-1}}$ induced by the inclusion $\ccO_E \hookrightarrow \Fp^{-1}$,
    \item $H = \ker f \subset A_1[p]$ is totally isotropic and splits as $\prod_{\fp \vert \Fp} H_{\fq} \oplus H_{\fq^c} \subset A_1[\fq] \oplus A_1[\fq^c]$ where each subgroup has degree $p^{2f_\fp}$ and $f^\vee \circ \lambda_2 \circ f =  \lambda_1 \circ \iota_1( \varpi_\Fp)$,
    \item $f \circ \eta_1$ as $(U^\prime)^p$-orbits and $\epsilon_2 = \varpi_{\Fp} \epsilon_2$,
    \item $f$ respects the Pappas-Rapoport filtration on $\tilde{\theta}$ components. That is, for every $\theta \in \widehat{\Theta}_F$ and $1 \leq i \leq e_\fp$, $f^* \omega^0_{2,\tilde{\theta}}(i) \subset \omega^0_{1,\tilde{\theta}}(i)$.
\end{itemize}

Standard arguments show that this functor is representable by a scheme $\widetilde{Y}_{U^\prime_0(\Fp)}(G_\Sigma^\prime)$ over $\ccO$ such that both forgetful morphisms to $\widetilde{Y}_{U^\prime}(G^\prime_\Sigma)$ are projective. As before, $\widetilde{Y}_{U^\prime_0(\Fp)}(G^\prime_\Sigma)$ comes with a natural action of $\ccO_{F,(p),+}^\times$, acting diagonally on $(\underline{A_1},\underline{A_2})$, that factors through a free action of $\ccO_{F,(p),+}^\times/ (\ccO_F \cap U^\prime)^2$. We write $Y_{U'_0(\fp)}(G^\prime_\Sigma)$ for the quotient. Again, $Y_{U'_0(\fp)}(G'_\Sigma)$ and the natural forgetful morphisms $\pi_i: Y_{U'_0(\fp)}(G^\prime_\Sigma) \to Y_{U'}(G^\prime_\Sigma)$ for $i=1,2$ are independent of the choice of $\varpi_\Fp$. It is straightforward to check that $Y_{U'_0(\fp)}(G^\prime_\Sigma)$ comes with a Hecke action of $G^\prime_\Sigma(\mathbb{A}_f^{(p)})$ such that the forgetful morphisms are compatible with the Hecke action downstairs.

\noindent Finally, at the level of complex points, we have 
\[Y_{U'_0(\fp)}(G'_\Sigma)(\bC) \simeq G^\prime(\bQ)_+ \backslash \ccH^{\Theta_F \setminus \Sigma_{\infty}} \times G^\prime_\Sigma(\mathbb{A}_f) / U^\prime_0(\Fp). \]

Recall that given an $S$-point $\underline{A}$ of $\widetilde{Y}_{U'}(G'_\Sigma)(S)$ and any $\tau = \tilde{\theta}$, we required the filtrations on $\omega^0_{\tau}$ and $\omega^0_{\tau^c}$ to be dual. In practice, via the formula \ref{dual formula}, this means that \[\omega^0_{\tau^c}(i)=t_{\tau^c,i}\omega^0_{\tau}(i)^\perp,\]
where $t_{\tau^c,i} = \prod_{j > i} ([\varpi_\fp]-(\tau^i)^c(\varpi_\fp))$. 

\begin{lem}\label{respect filtrations}
Let $S$  be a locally Noetherian scheme over $\ccO$ and $(\underline{A_1},\underline{A_2},f,g)$ be a tuple such as above, minus the assumption that $f$ and $g$ respect the filtrations. Suppose that for some $\tau \in \widehat{\Theta}_E$, $f$ respects the filtrations on $\tau$ components. Then $f$ respects the filtrations on $\tau^c$ components, and $g$ respects the filtrations on $\tau^c$ components. In particular $g$ respects the filtrations on $\tau$ components.
\end{lem}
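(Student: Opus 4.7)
Sketch. The proof combines two ingredients. First, the dual filtration formula $\omega^0_{A_j,\tau^c}(i) = b_{\tau^c,i}(\omega^0_{A_j,\tau}(i))^{\perp_j}$ from \eqref{dual formula}, where $\perp_j$ denotes the orthogonal complement under the polarization pairing $\langle\cdot,\cdot\rangle_j$ on $\ccH^1_{\dR}(A_j/S)$. Second, the polarization compatibility $f^\vee\lambda_2 f = \lambda_1\iota_1(\varpi_\Fp)$ translates into an adjointness identity $\langle f^* x, y\rangle_1 = \langle x, f_* y\rangle_2$, where $f_*\colon \ccH^1_{\dR}(A_1/S)\to \ccH^1_{\dR}(A_2/S)$ is the pushforward built from $f^\vee$ via the polarizations. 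A short calculation in the isogeny category using $g\circ f = \iota_{\Fp^{-1}}$ yields both the analogous polarization compatibility $g^\vee\lambda_{1,\Fp^{-1}} g = \lambda_2\iota_2(\varpi_\Fp)$ for $g$ and, after the local identification $A_1\otimes\Fp^{-1}\simeq A_1$ (via a generator of $\Fp$), an identification of $g^*$ with $f_*$.

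The core technical step is a general fact: for any $\ccO_D$-linear isogeny $\phi$ with such polarization compatibility, if $\phi^*$ preserves the $\tau$-filtration then $\phi_*$ preserves the $\tau^c$-filtration. Indeed, take $y\in(\omega^0_{A,\tau}(i))^{\perp_A}$ and any $x\in\omega^0_{A',\tau}(i)$; the adjointness gives $\langle x,\phi_* y\rangle_{A'}=\langle\phi^* x, y\rangle_A$, which vanishes by the hypothesis on $\phi^*$ and the choice of $y$. Thus $\phi_* y\in(\omega^0_{A',\tau}(i))^{\perp_{A'}}$, and multiplying by $b_{\tau^c,i}$ (which commutes with $\phi_*$ by $\ccO_E$-linearity) yields $\phi_*\omega^0_{A,\tau^c}(i)\subset\omega^0_{A',\tau^c}(i)$. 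Applied to $\phi=f$, this immediately gives that $g^* = f_*$ preserves the $\tau^c$-filtration, establishing the second conclusion of the lemma.

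For the remaining claims, the relation $f^*\circ g^* = \iota_{\Fp^{-1}}^* = [\varpi_\Fp]$ on $\ccH^1_{\dR}$, together with the $\tau^c$-preservation by $g^*$ just obtained, constrains $g^*$ on the $\tau$-filtration: since $[\varpi_\fp]$ shifts the filtration down by one level in characteristic $p$, a level-by-level induction (tracking ranks of graded pieces and invoking freeness via Nakayama) forces $g^*(\omega^0_{A_1\otimes\Fp^{-1},\tau}(i))\subset\omega^0_{A_2,\tau}(i)$; this is the ``in particular'' clause. Finally, applying the general fact above to $\phi=g$ (using the polarization compatibility for $g$) together with the just-established $\tau$-preservation by $g^*$ yields the $\tau^c$-preservation by $f^*=g_*$, which is the first conclusion.

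The hard part lies in making the adjointness identity precise --- the pushforward $f_*$ has to be built from $f^\vee$ through the Poincar\'{e} duality isomorphism $\ccH^1(A^\vee)\simeq \ccH^1(A)^\vee$ and the polarization isos, with careful bookkeeping of the $\varpi_\Fp$-twist --- and in executing the inductive step in the third paragraph, which is essentially a rank-tracking argument paralleling the one in the analogous Hilbert setting of \cite[$\mathsection$2.4]{diamond2021kodairaspencer}.
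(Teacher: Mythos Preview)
Your adjointness framework is exactly right and matches the paper's mechanism: the identity $\langle f^*a, f^*b\rangle_1 = \langle [\varpi_\fp]a, b\rangle_2$ (equivalently, the commutative diagram the paper writes down relating $g^*$ on $\tau$-components to $(f^*)^\vee$ on $\tau^c$-components) is precisely what drives the implication ``$f^*$ preserves $\tau$ $\Rightarrow$ $g^*$ preserves $\tau^c$''. That part of your sketch (your Step~A, the paper's second implication) is correct and essentially identical to the paper's argument.

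The gap is in your Step~B, where you try to deduce that $g^*$ preserves the $\tau$-filtration from $f^*\circ g^*=[\varpi_\fp]$ together with the $\tau^c$-preservation by $g^*$. This does not go through as written. From $f^*\bigl(g^*\omega^0_{A_1\otimes\Fp^{-1},\tau}(i)\bigr)\subset\omega^0_{A_1,\tau}(i)$ you cannot infer $g^*\omega^0_{A_1\otimes\Fp^{-1},\tau}(i)\subset\omega^0_{A_2,\tau}(i)$ without knowing that $f^*$ is injective on the relevant graded pieces, and it is not: on the loci where $f^*_{\tau^j}=0$ (which are nonempty and form strata of positive codimension) the induced map on the $j$-th graded piece vanishes. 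Rank-counting plus Nakayama does not rescue this, and the $\tau^c$-preservation by $g^*$ gives no leverage on the $\tau$-side, since $\tau$- and $\tau^c$-components are separate summands; passing back and forth via the duality only returns you to the hypothesis (applying your general fact to $\phi=g$ on $\tau^c$ yields that $g_*=f^*$ preserves $\tau$, which is what you started with).

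The paper avoids this by reversing the order: it first proves directly that $f^*$ preserves the $\tau^c$-filtration (its Step~1). The key extra ingredient you are missing is the inclusion $\omega^0_{1,\tau}(i)\subset f^*\bigl([\varpi_\fp]^{-1}\omega^0_{2,\tau}(i)\bigr)$ in the case $\tau^i(\varpi_\fp)=0$ in $\ccO_S$. Combining this with the polarization identity gives
\[
\bigl\langle \omega^0_{1,\tau}(i),\, f^*(\omega^0_{2,\tau}(i)^\perp)\bigr\rangle_1
\subset
\bigl\langle f^*([\varpi_\fp]^{-1}\omega^0_{2,\tau}(i)),\, f^*(\omega^0_{2,\tau}(i)^\perp)\bigr\rangle_1
=\bigl\langle \omega^0_{2,\tau}(i),\, \omega^0_{2,\tau}(i)^\perp\bigr\rangle_2=0,
\]
whence $f^*(\omega^0_{2,\tau}(i)^\perp)\subset\omega^0_{1,\tau}(i)^\perp$, and applying $b_{\tau^c,i}$ finishes. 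Once this is in hand, your adjointness argument applied twice (once for each side of the conjugation) gives both statements about $g^*$. So your general fact is the right tool, but you must feed it the paper's direct Step~1 rather than try to bootstrap through $f^*g^*=[\varpi_\fp]$.
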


\begin{proof}
The question is local, therefore we may take $S = \textrm{Spec}(A)$ for a connected Noetherian ring $A$. Fix $\tau \in \widehat{\Theta}_E$. We will need to consider for each $j$ whether the image of $(\tau^c)^j(\varpi_\fp) \in A$ is zero or not. Let $N$ denote the set of $j$ such that $(\tau^c)^j(\varpi_\fp)$ is not zero in $A$. For such a $j$, the component of $\ccH^1_{\dR}(A_i/S)^0_{\tau^j}$ of $\ccH^1_{\dR}(A_i/S)^0_{\tau}$ where $\ccO_E$ acts via $(\tau^c)^j$ makes sense, ie for any other $k \neq j$, if we define $\ccH^1_{\dR}(A_i/S)^0_{\tau^k}$ similarly, then $\ccH^1_{\dR}(A_i/S)^0_{\tau^j} \cap \ccH^1_{\dR}(A_i/S)^0_{\tau^k} = 0$. Note that this is not at all the case for the elements outside of $N$, hence the need of Pappas-Rapoport models. Then for each $j \in N$, $\ccH^1_{\dR}(A_i/S)^0_{\tau^j}$ is a summand of $\ccH^1_{\dR}(A_i/S)^0_{\tau}$ of rank $2$ and we obtain a splitting $\omega^0_{i,\tau^c}(j) = \omega^0_{i,\tau^c}(j-1) \oplus \omega^0_{i,(\tau^j)^c}$ and $f^*$ must map $\omega^0_{2,(\tau^j)^c}$ to $\omega^0_{1,(\tau^j)^c}$ by $\ccO_E$-linearity.

We now show the first implication by induction. If $i = 0$ or $i = e_\fp$, this is trivial. Let $1 \leq i < e_\fp$ and suppose that $f^*\omega^0_{2,\tau^c}(i-1) \subset \omega^0_{1,\tau^c}(i-1)$.
By the discussion above if $i \in N$, then we automatically have $f^*\omega^0_{2,\tau^c}(i) \subset \omega^0_{1,\tau^c}(i)$. The case $s_{(\tau^i)^c} \neq 1$ also follows automatically so suppose that $i \notin N$ and $s_{(\tau^i)^c}=1$. For such an $i$, we have inclusions 
\[f^*\omega^0_{2,\tau}(i) \subset \omega^0_{1,\tau}(i) \subset f^* ( [\varpi_\fp]^{-1}\omega^0_{2,\tau}(i) ).\]
The compatibility of $f$ with the polarizations then shows that 
\[ \langle f^* ( [\varpi_\fp]^{-1}\omega^0_{2,\tau}(i) ), f^* (\omega^0_{2,\tau}(i)^\perp)  \rangle_1 = \langle [\varpi_\fp]( [\varpi_\fp]^{-1}\omega^0_{2,\tau}(i) ), \omega^0_{2,\tau}(i)^\perp \rangle_2 = 0.\]
So that $f^* \omega^0_{2,\tau}(i)^\perp \subset \omega^0_{1,\tau}(i)^\perp$. The result follows by applying $b_{\tau^c,i}$ to both sides.

For the second implication, note that the the compatibility of $f$ with the polarizations implies that the following diagram 
\[
\begin{tikzcd}
A_2 \arrow[d, "\lambda_2"] \arrow[r, "g"] & A_1 \otimes \Fp^{-1} \arrow[d, "\varpi_\Fp \otimes \lambda_1"] \\
A_2^\vee \arrow[r, "f^\vee"]                   & A_1^\vee                                                      
\end{tikzcd}
\]
is commutative and thus so is the diagram
\[
\begin{tikzcd}
\Fp \otimes \ccH^1_{\dR}(A_1 / S)_\tau \arrow[r,"g^*"] \arrow[d,"\varpi_\Fp^{-1} \otimes (\lambda^*_1)^{-1}"'] &  \ccH^1_{\dR}(A_2 / S)_\tau \arrow[d,"(\lambda^*_2)^{-1}"] \\
\ccH^1_{\dR}(A_1 / S)^\vee_{\tau^c} \arrow[r,"(f^*)^\vee"] & \ccH^1_{\dR}(A_1 / S)^\vee_{\tau^c}.
\end{tikzcd}
\]
Hence $f^*\omega^0_{2,\tau}(i) \subset \omega^0_{1,\tau}(i)$ and so $g^*(\omega^0_{A_1 \otimes \Fp^{-1},\tau}(i)^\perp) \subset \omega^0_{2,\tau}(i)^\perp$. The result follows by applying $b_{\tau^c,i}$ to both sides. The last implication follows similarly.
\end{proof}

As in the Hilbert case, it follows that the addition of $g$ in the data is redundant. We will however keep it as we will often refer to it.

\subsubsection{Local Structure}\label{iwahori unitary local}

In this section, we follow the strategy of \cite[$\mathsection$2.5]{diamond2021kodairaspencer} to study the local structure of $\widetilde{Y}_{U'_0(\fp)}(G'_\Sigma)$. Let $y$ be a closed point of $\widetilde{Y}'_{U'_0(\fp)}(G'_\Sigma)$ with residue field of characteristic $p$. Let $S$ denote its local ring with maximal ideal $\fm$. Set $S_0 = S/\fm$, $S_1 = S/ (\fm^2, \varpi)$, where $\varpi$ denotes a uniformizer of $\ccO$, and $S_n = S/ \fm^n$ for $n>1$. Let $(\underline{A}, \underline{A'},f,g)$ be the corresponding tuple over $S$, and for any $n \geq 0 $, let $(\underline{A_n}, \underline{A'_n},f_n,g_n)$ be the pullback of $(\underline{A}, \underline{A'},f,g)$ to $S_n$. For any sheaf $\ccF$ on $S$, we write $\ccF_n = \ccF \otimes_S S_n $ for its pullback to $S_n$. For any $\tau \in \widehat{\Theta}_E$ and $0 \leq i \leq e_\fp$, we have the Pappas-Rapoport filtration $\omega^0_\tau(i) \subset H^0(A, \Omega^1_{A/S})^0_\tau$ where $\tau = \tilde{\theta}$. We also have for $\beta = \tau^i$, the sheaves $\omega^0_\beta = \omega^0_\tau(i)/\omega^0_\tau(i-1)$ and $\ccH^1_\beta = ([\varpi_\fp]-\beta(\varpi_\fp))^{-1}\omega^0_\tau(i-1)/\omega^0_\tau(i-1)$, locally free of rank $s_\beta$ and $2$ respectively. We have similar sheaves defined for $\underline{A'}$ that we denote with a $\cdot '$.\medskip

By definition of the moduli problem $f^*$ respects the filtrations on both sides and so induces morphisms, for any $\beta \in \Theta_E$, $f^*_\beta: {\ccH^1_\beta}' \to \ccH^1_\beta$ such that $f^*_\beta ( {\omega^0_\beta}') \subset \omega^0_\beta$. These are isomorphisms if $\beta \notin \Theta_{E,\fp}$. Similarly $g^*$ respects the filtrations, either by definition or as a consequence of lemma \ref{respect filtrations}, so induces, for each $\beta \in \Theta_E$, morphisms $g^*_\beta : \ccH^{1 \prime}_\beta \otimes \fp \to \ccH^1_\beta$. Furthermore, the composition $f^*_\beta \circ g^*_\beta : \ccH^{1 \prime}_\beta \otimes \fp \to \ccH^{1 \prime}_\beta$ is the natural morphism $x \otimes \alpha \mapsto \alpha x$ which over $S_0$ is identically zero in particular, at least one of $f^*_{\beta,0}$ and $g^*_{\beta,0}$ must not be invertible. We will show that ethey both have rank one.

Suppose that $f^*_{\tau^1,0}$ has non-trivial kernel and consider $W = W(S_0)$, and the module $D = H^1_{\textrm{crys}}(A_0/W)^0$, free of rank 2 over $\ccO_E \otimes W$. We write $D = \oplus D_\tau$ for its decomposition into isotypic components; each $D_\tau$ is free of rank two over $W[x]/E_\tau$. We put $L_{\tau^i}$ for the preimage of $[\varpi_\fp]^{-1}\omega^0_\tau(i)_0$ under the canonical projection $D_\tau / p D_\tau \simeq H^1_{\textrm{dR}}(A_0/S_0)^0_\tau$ and consider the chain 
\[x^{e_\fp-1} D_\tau = L_{\tau^1} \subset L_{\tau^2} \subset \cdots \subset L_{\tau^{e_\fp}} \]
with graded pieces of dimension $s_{\tau^i}$ over $S_0$ such that $L_{\tau^i}/[\varpi_\fp] L_{\tau^i} \simeq \ccH^1_{\tau^i}$. 
We similarly have $D'_\tau = H^1_{\textrm{crys}}(A'_0/W)^0_\tau$ and submodules $L'_{\tau^i}$ defined analogously, enjoying the same properties. 
Now, the isogeny $f_0 : A \to A'$ induces a $\ccO_D \otimes W$-linear injection $f^*: D' \to D$ which is an isomorphism on $\tau$-components for $\tau \notin \widehat{\Theta}_{E,\Fp}$,
and whose cokernel has dimension $2f_\fp$ for the $\tau \in \widehat{\Theta}_{E,\Fp}$. 
More precisely, for $\fp \vert \Fp$, writing $D^{(\prime)}_{\fq}$, respectively $D^{(\prime)}_{\fq^c}$, 
for the components where $\ccO_E$ acts through $\ccO_{E_\fq}$, 
respectively $\ccO_{E_{\fq^c}}$, then $f^*$ respects these components and has cokernel of dimension $f_\fp$. 
Now, for $\tau \in \widehat{\Theta}_{E,\fp}$, since $f^*$ commutes with $\Phi$, we have 
\[
\begin{split}
\dim \Delta_{\phi \circ \tau}/ \Phi \Delta_\tau + \dim \Delta_\tau / f^* \Delta^\prime_\tau 
& = \dim \Delta_{\phi \circ \tau}/ \Phi \Delta_\tau + \dim \Phi\Delta_\tau / \Phi f^* \Delta^\prime_\tau\\
& = \dim \Delta_{\phi \circ \tau} /  f^* \Delta^\prime_{\phi \circ \tau} + \dim f^* \Delta^\prime_{\phi \circ \tau} / 
 f^* \Phi \Delta^\prime_\tau \\
 & =  \dim \Delta_{\phi \circ \tau} /  f^* \Delta^\prime_{\phi \circ \tau} + \dim \Delta^\prime_{\phi \circ \tau} / 
  \Phi \Delta^\prime_\tau,
\end{split}
\]
where the dimension is taken over $\overline{\bF}_p$. 
Since 
$\dim \Delta_{\phi \circ \tau}/ \Phi \Delta_\tau = \dim \Delta^\prime_{\phi \circ \tau} / \Phi \Delta^\prime_\tau$ for each $\tau$, we deduce that the
$D_{\tau}/f^*D^{\prime}_{2,\tau}$ all have the same dimension which must then be one. Furthermore, we deduce that the $f^*_{\tau^i,0}$ has non trivial kernel for any $1\leq i \leq e_\fp$.

Now, $f^*$ restricts to maps $L'_{\tau^i} \to L_{\tau^i}$ whose cokernel surjects onto that of $f^*_{\tau^i,0}$. We now show that the nontrivial cokernel of $L'_{\tau^i} \to L_{\tau^i}$ has length one. This holds for $i=1$ via the identification $L^{(')}_{\tau^1}= x^{e_\fp-1} D^{(')}_\tau$. For $i >1$, we obtain the result by induction. More precisely, if $s_{\tau^i} = 0$ (respectively $s_{\tau^{i}}=2$), then $L^{(')}_{\tau^i} = L^{(')}_{\tau^{i-1}}$ (respectively $x L^{(')}_{\tau^i} = L^{(')}_{\tau^{i-1}}$) so the result follows. Now if $s_{\tau^i}=1 $, $L^{(')}_{\tau^i} / L^{(')}_{\tau^{i-1}}$ has length one and the result follows since $f^*_{\tau^i}$ is injective.

Going back, we have just shown that each $f^*_{\tau^i,0}$ have one dimensional kernel and so each $g^*_{\tau^i,0}$ must have at least one dimensional kernels. Repeating the same argument shows that they are in fact one dimensional. If we suppose however that $g^*_{\tau^1,0}$ has non-trivial kernel, we can repeat the entire argument, swapping $f$ and $g$, and conclude, in any case, that $f^*_{\beta,0}$ has rank one for every $\beta \in \Theta_E$.

Now, define the subsets \[ \Theta_y = \{ \beta \in \Theta_F \setminus \Sigma_\infty \, \vert \, \ker f^*_{\tilde{\beta},0} = {\omega^{0 \prime}_{\tilde{\beta},0}} \}  \textrm{   and   }  \Theta'_y = \{ \beta \in \Theta_F \setminus \Sigma_\infty \, \vert \, \textrm{Im}\, f^*_{\tilde{\beta},0} = {\omega^{0 \prime}_{\tilde{\beta},0}} \}. \]

\noindent By the discussion above, we have $\Theta_y \cup \Theta'_y = \Theta_{F,\Fp}$. 
We define 
\[ R =  \ccO[X_\beta, X'_{\beta}] / (h_\beta)_{\beta \in \Theta_F \setminus \Sigma_\infty}, \]
where $h_\beta = \begin{cases}
X_\beta - \beta(\varpi_\Fp)X'_\beta & \textrm{ if } \beta \in \Theta_F \setminus (\Sigma_\infty \cup \Theta_y), \\
X'_\beta - \beta(\varpi_\Fp)X_\beta & \textrm{ if } \beta \in \Theta_y \setminus \Theta'_y, \\
X_\beta X'_\beta + \beta(\varpi_\Fp) & \textrm{ if } \beta \in \Theta_y \cap \Theta'_y.
\end{cases}$\medskip

\noindent We will show that there is an isomorphism $W \otimes_{W(\kappa)} \widehat{R}_\mathfrak{n} \xrightarrow{\sim} \widehat{S}_\mathfrak{m}$, where $\mathfrak{n}$ is the ideal generated by $\varpi$ and the variables $X_\beta$ and $X^\prime_\beta$.

Let $\alpha^{(')}$ denote the composite of the canonical isomorphisms 
\[ H^1_{\textrm{dR}}(A^{(')}_0/S_0) \otimes_{S_0} S_1 \xrightarrow{\sim} H^1_{\textrm{crys}}(A^{(')}_0/S_1) \xrightarrow{\sim} H^1_{\textrm{dR}}(A^{(')}_1/S_1) \]
where $H^1_{\textrm{crys}}(A^{(')}_0/S_1)$ denotes the evalution of the crystalline cohomology sheaf of $A^{(')}_0/S_0$ at $S_1$. Therefore, $\alpha$ and $\alpha'$ are $\ccO_D \otimes S_1$ linear, and compatible with $f^*_0$ and $f^*_1$. We thus obtain for each $\tau \in \widehat{\Theta}_{E,\fp}$  $S_1[x]/x^{e_\fp}$ linear isomorphisms $\alpha_\tau$ and $\alpha'_\tau$ which sit in the commutative diagram 

\begin{center}
\begin{tikzcd}
H^1_{\textrm{dR}}(A^{'}_0/S_0)^0_\tau \otimes_{S_0} S_1 \arrow[r, "f^*_{\tau,0} \otimes 1"]  \arrow[d, "\alpha'_\tau"']  & H^1_{\textrm{dR}}(A_0/S_0)^0_\tau \otimes_{S_0} S_1 \arrow[d, "\alpha_\tau"] \\
H^1_{\textrm{dR}}(A^{'}_1/S_1)^0_\tau  \arrow[r, "f^*_{\tau,1}"]  &  H^1_{\textrm{dR}}(A_1/S_1)^0_\tau.
\end{tikzcd}
\end{center}

Consider now the chain of ideals $$ (\fm^2, \varpi) = I_\tau^{(0)} \subset I_\tau^{(1)} \subset \cdots \subset I_\tau^{(e_\fp-1)} \subset I_\tau^{(e_\fp)} \subset \fm$$
 where each $I_\tau^{(i)}$ is given by the vanishing of \[ \omega^0_\tau(\ell)_0 \otimes_{S_0} S_1 \xrightarrow{\alpha_\tau} H^1_{\textrm{dR}}(A_1/S_1)^0_\tau / \omega^0_\tau(\ell)_1  \textrm{   and   } \omega_\tau^0(\ell)'_0 \otimes_{S_0} S_1 \xrightarrow{\alpha'_\tau} H^1_{\textrm{dR}}(A'_1/S_1)^0_\tau / \omega_\tau^0(\ell)'_1 \] for all $1 \leq \ell \leq i$.
It is straightforward to see that if $s_{\tau^i} \neq 1$, then $I^{(i)}_\tau = I^{(i-1)}_\tau$: for $s_{\tau^i}=0$, this is clear. For $s_{\tau^i} = 2$, this follows from the fact that $\omega^0_\tau(i)_0 \subset [\varpi_\fp]H^1_{\textrm{dR}}(A_0 / S_0)^0_\tau$ and similarly for $A'$. Similarly, one can see that, since the $\alpha_\tau$ are compatible with the pairings, that $I^{(i)}_\tau = I^{(i)}_{\tau^c}$.

Thus, for each $\beta = \theta^i \in \Theta_{F,\fp} \setminus \Sigma_{\infty,\fp}$, we set $S_{\beta} = S / I^{i-1}_\theta$; for any lift $\tau$ of $\theta$, $\alpha_\tau$ and $\alpha'_\tau$ induce $S_\beta[x]/x^{e_\fp}$-linear isomorphisms \[ \omega^0_\tau(i-1)_0 \otimes_{S_0} S_{\beta} \xrightarrow{\sim} \omega^0_\tau(i-1) \otimes_S S_\beta \, , \, \omega^0_\tau(i-1)'_0 \otimes_{S_0} S_{\beta} \xrightarrow{\sim} \omega^0_\tau(i-1)' \otimes_S S_\beta. \]

In particular we obtain $S_\beta$-linear isomorphisms sitting in the following commutative diagram:
\begin{center}
\begin{tikzcd}
\ccH^1_{\beta,0} \otimes_{S_0} S_\beta \arrow[r, " f^*_{\beta,0} \otimes \id"] \arrow[d, "\alpha_\beta"] & \ccH^{1'}_{\beta,0} \otimes_{S_0} S_\beta \arrow[d, "\alpha'_\beta"] \\

\ccH^1_{\beta} \otimes_{S_0} S_\beta \arrow[r, " f^*_{\beta} \otimes \id"]  & \ccH^{1'}_{\beta} \otimes_{S_0} S_\beta.
\end{tikzcd}
\end{center}
 
 In particular, the $\alpha_\beta$ are compatible with the pairings $\langle \cdot , \cdot \rangle_{\beta,0}$ and $\langle \cdot , \cdot \rangle_{\beta}$, and similarly the $\alpha'_\beta$ are compatible with the pairings $\langle \cdot , \cdot \rangle'_{\beta,0}$ and $\langle \cdot , \cdot \rangle'_{\beta}$. We now claim that there exist bases $(e_{1,\beta},e_{2,\beta})$ and $(f_{1,\beta},f_{2,\beta})$ of $\ccH^1_{\tilde{\beta}}$ and $\ccH^1_{\tilde{\beta}^c}$ respectively, and bases $(e'_{1,\beta},e'_{2,\beta})$ and $(f'_{1,\beta},f'_{2,\beta})$ of $\ccH^{1 \prime}_{\tilde{\beta}}$ and $\ccH^{1 \prime}_{\tilde{\beta}^c}$ respectively such that 
\begin{itemize}
    \item $e_{1,\beta} \otimes_S S_\beta = \alpha_{\tilde{\beta}}(\omega^0(\tilde{\beta})_0 \otimes_{S_0} S_\beta)$ and $e'_{1,\beta} \otimes_S S_\beta = \alpha'_{\tilde{\beta}}(\omega^0(\tilde{\beta})'_0 \otimes_{S_0} S_\beta)$,
    \item $\langle e_{1,\beta},f_{1,\beta} \rangle_{\beta} = \langle e_{2,\beta},f_{2,\beta} \rangle_{\beta} = \langle e'_{1,\beta},f'_{1,\beta} \rangle'_{\beta} = \langle e'_{2,\beta},f'_{2,\beta} \rangle'_{\beta} =1$ and any other pairing is 0,
    \item The matrix of $f^*_{\tilde{\beta}}$ is given in these bases by 
    \begin{equation}\label{matrix lift}
    \begin{pmatrix} 1 & 0 \\ 0 & \beta(\varpi_\fp) \end{pmatrix} \, , \,   \begin{pmatrix} \beta(\varpi_\fp) & 0 \\ 0 & 1 \end{pmatrix} \, \textrm{or} \,   \begin{pmatrix}0 & 1 \\ -\beta(\varpi_\fp)  & 0 \end{pmatrix}
    \end{equation}  if $\beta \in \Theta_F \setminus \Theta_y$, $\beta \in \Theta_y \setminus \Theta'_y$ or $\beta \in \theta_y \cap \Theta'_y$ respectively,
    \item The matrix of $f^*_{\tilde{\beta}^c}$ is given in these bases by 
    \begin{equation}\label{matrix lift conjugate}
    \begin{pmatrix} \beta(\varpi_\fp) & 0 \\ 0 & 1 \end{pmatrix} \, , \,   \begin{pmatrix} 1 & 0 \\ 0 & \beta(\varpi_\fp) \end{pmatrix} \, \textrm{or} \,   \begin{pmatrix}0 & -1 \\ \beta(\varpi_\fp)  & 0 \end{pmatrix}
    \end{equation}
    if $\beta \in \Theta_F \setminus \Theta_y$, $\beta \in \Theta_y \setminus \Theta'_y$ or $\beta \in \theta_y \cap \Theta'_y$ respectively.
\end{itemize}
 
 To see this, first pick bases  $(\overline{e}_{1,\beta},\overline{e}_{2,\beta})$ and $(\overline{f}_{1,\beta},\overline{f}_{2,\beta})$ of $\ccH^1_{\tilde{\beta},0}$ and $\ccH^1_{\tilde{\beta}^c,0}$ such that $\omega^0(\tilde{\beta})_0 = S_0 \overline{e}_{1,\beta}$, $\omega^0(\tilde{\beta}^c)_0 = S_0 \overline{f}_{2,\beta}$, $\langle \overline{e}_{1,\beta},\overline{f}_{1,\beta} \rangle_{{\beta},0} = \langle \overline{e}_{2,\beta},\overline{f}_{2,\beta} \rangle_{{\beta},0}$ and similar bases $(\overline{e}'_{1,\beta},\overline{e}'_{2,\beta})$ and $(\overline{f}'_{1,\beta},\overline{f}'_{2,\beta})$ such that $f^*_{\tilde{\beta},0}$ and $f^*_{\tilde{\beta}^c,0}$ are in the given forms above (recall that the vanishing of $f^*_{\tilde{\beta}^c,0}$ is the same as that of $f^*_{\tilde{\beta},0}$). Now, choose arbitrary lifts of the images of the bases under $\alpha$ and $\alpha'$ such that the condition on the pairings is satisfied. Under these bases, the pairings on $\ccH^1_{\tilde{\beta}} \oplus \ccH^1_{\tilde{\beta}^c}$ and $\ccH^{1 \prime}
_{\tilde{\beta}} \oplus \ccH^{1 \prime}_{\tilde{\beta}^c}$ are given in standard symplectic form, and the compatibility of $f$ and the polarizations implies that $AB^T=B^TA= \beta(\varpi_\fp) I_2$ where $A$ and $B$ denote the matrices of $f^*_{\tilde{\beta}}$ and $f^*_{\tilde{\beta}^c}$ in these bases respectively. Suppose now that $\beta \in \Theta_F \setminus \Theta_y$, then by scaling the bases by appropriate elements of $\ker \left( \textrm{Sp}_4\left( S \right) \to \textrm{Sp}_4\left( S_\beta \right) \right)$ we have the matrices $A$ and $B$ (as defined above) given in the form 
 \[ \begin{pmatrix} 1 & 0 \\ 0 & \beta(\varpi_\fp)+d \end{pmatrix} \textrm{   and   } \begin{pmatrix} \beta(\varpi_\fp)+a' & b' \\ c' & 1 \end{pmatrix} \]
 where $a',b',c',d \in I_\beta$. However we immediately see from the relation $AB^T=B^TA= \beta(\varpi_\fp) I_2$ that $a'=b'=c'=d=0$, as desired. The other two cases follow similarly.
 
We thus have unique elements $t_\beta,t'_\beta \in \fm$ such that $\omega^0(\tilde{\beta})= S(e_{1,\beta} - t_\beta e_{2,\beta})$ and $\omega^0(\tilde{\beta})'= S(e'_{1,\beta} - t'_\beta e'_{2,\beta})$ and furthermore, since $f^*_{\tilde{\beta}}$ preserves the filtrations we have 
 \begin{itemize}
     \item $t_\beta = \beta(\varpi_\fp)t'_\beta$ if $\beta \in \Theta_F \setminus \Theta_y$,
     \item $t'_\beta = \beta(\varpi_\fp)t_\beta$ if $\beta \in \Theta_y \setminus \Theta'_y$,
     \item $t_\beta t'_\beta = -\beta(\varpi_\fp)$ if $\beta \in \Theta_y \cap \Theta'_y$,
 \end{itemize}

We now define a map $\ccO[X_\beta,X'_\beta]_{\left\{ \beta \in \Theta_F \setminus \Sigma_\infty \right\}} \to S$ by $X_\beta \mapsto t_\beta$ and $X'_\beta \mapsto t'_\beta$. By the above, this map factors through $R$ and induces a morphism \[ \rho: W \otimes_{W(k)} \ccO[[X_\beta,X'_\beta]] / (h_\beta)_{\beta \in \Theta_F \setminus \Sigma_\infty } \to \ccO^\wedge_{\widetilde{Y}'_{U^\prime_0(\Fp)}(G'_\Sigma),y}. \]

Write $R_1 = W \otimes_{W(k)} R / (\mathfrak{n}^2, \varpi)$ and $R_n = W \otimes_{W(k)} R / \mathfrak{n}^n$ for $n >1$, where $\mathfrak{n}$ denotes the maximal ideal generated by $\varpi$ and the $X_\beta,X'_\beta$, so that we have morphisms $\rho_n : R_n \to S_n$ for all $n$. We will show that they are all isomorphisms by induction on $n$. We start by showing that $\rho_1$ is surjective. To do so, first remark that $I_\tau^{(i)} = (I_\tau^{(i-1)}, t_{\tau^i},t'_{\tau^i})$. Indeed, for $\beta = \tau^i$, $I_\tau^{(i)}/I_\tau^{(i-1)}$ is given by then vanishing of the maps \[ \omega^0_{\beta,0} \otimes_{S_0} S_{\beta} \xrightarrow{\alpha_\tau} \left( \ccH^1_\beta / \omega^0_{\beta,1} \right) \otimes_S S_\beta \textrm{   and   } \omega^{0'}_{\beta,0} \otimes_{S_0} S_\beta \xrightarrow{\alpha'_\tau} \left( {\ccH^1}'_\beta / \omega^{0'}_{\beta,1} \right) \otimes_S S_\beta\]
which, relative to the (preimage by the $\alpha$ and $\alpha'$ of the) bases constructed above is given precisely by $t_\beta$ and $t'_\beta$. In particular, the image of $\rho_1$ contains all of the $I^{(e_\fp)}_\tau$, therefore it suffices to show that the quotient $T \to S_0$ where $T$ is the quotient of $S$ by all the $I^{(e_\fp)}_\tau$ is an isomorphism. In other words, that the tuple $(\underline{A},\underline{A'},f)\otimes_S T$ is isomorphic to $(\underline{A_0},\underline{A'_0},f) \otimes_{S_0} T$. However, by construction, both tuples induce the same Pappas-Rapoport flags in $H^1_{\textrm{crys}}(A^{(')}_0/T)^0$ from which we deduce the result by Grothendieck-Messing Theory.\smallskip

To show that $\rho_1$ is an isomorphism now, it suffices to show that $\textrm{lg}(R_1) \leq \textrm{lg}(S_1)$, which follows from the existence of a surjection $S \to R_1$ (which factors through $S_1$). To this end we construct Pappas-Rapoport flags 
\[ F^\bullet \subset H^1_{\textrm{crys}}(A_0/R_1)^0 \xleftarrow[\sim]{\gamma} H^1_{\textrm{dR}}(A_0/S_0)^0 \otimes_{S_0} R_1\]
and \[ F^\bullet \subset H^1_{\textrm{crys}}(A'_0/R_1)^0 \xleftarrow[\sim]{\gamma'} H^1_{\textrm{dR}}(A'_0/S_0)^0 \otimes_{S_0} R_1\]
such that
\begin{itemize}
    \item The flags are compatible with $f^*_{0, \textrm{crys}}$,
    \item For any $\beta = \tau^i \in (\Theta_F \setminus \Sigma_\infty)^\sim$, $\gamma$ identifies $F_{\tau}(i-1) \otimes_{R_1} R_\beta$ with $\omega^0_{\tau}(i-1)_0 \otimes_{S_0} R_\beta$ and  $\gamma'$ identifies $F'_{\tau}(i-1) \otimes_{R_1} R_\beta$ with $\omega^{0'}_{\tau}(i-1)_0 \otimes_{S_0} R_\beta$),
    \item $(F_\tau(i)/F_\tau(i-1)) \otimes_{R_1} R_\beta$ is generated by $\gamma(\overline{e}_{1,\beta} \otimes 1 - \overline{e}_{2,\beta} \otimes X_\beta)$ and $(F'_\tau(i)/F'_\tau(i-1)) \otimes_{R_1} R_\beta$ is generated by $\gamma'(\overline{e}'_{1,\beta} \otimes 1 - \overline{e}'_{2,\beta} \otimes X'_\beta )$
\end{itemize}
where the rings $R_\beta$ are defined inductively as $R_{\tau^1}=R_1$ and $R_{\tau^{i+1}} = R_{\tau^{i}} / (X_{\tau^i},X'_{\tau^i})$.\smallskip

Suppose we have constructed flags as above until the term $i-1$. Write $\beta= \tau^i$. If $s_\beta=0$ or $2$, then the last two bullets do not apply and the term $F_\tau(i)$ is uniquely determined by $F_\tau(i-1)$ and similarly for $F'_\tau(i-1)$. 
If $s_\beta = 1$, we define $\widetilde{\ccH}_\beta = [\varpi_\fp]^{-1} F_\tau(i-1) / F_\tau(i-1)$ 
and $\widetilde{\ccH}'_\beta= [\varpi_\fp]^{-1} F^\prime_\tau(i-1) / F^\prime_\tau(i-1)$. 
Since $F_\tau(i-1) \subset [\varpi_\fp]^{e-i+1} H^1_{\textrm{crys}}(A_0/R_1)$, 
where $H^1_{\textrm{crys}}(A_0/R_1)$ is free of rank $2$ over $R_1[x]/x^{e_\fp}$, $\widetilde{\ccH}_\beta$ is free of rank $2$ over $R_1$ by lemma \ref{filtration torsion}. 
Similarly for $\widetilde{\ccH}'_\beta$. Hence $\gamma$ and $\gamma'$ induce isomorphisms $\gamma_\beta: \ccH^1_{\beta,0} \otimes_{S_0} R_\beta \xrightarrow{\sim} \widetilde{\ccH}_\beta \otimes_{R_1} R_\beta$ and $\gamma'_\beta: \ccH'^1_{\beta,0} \otimes_{S_0} R_\beta \xrightarrow{\sim} \widetilde{\ccH}'_\beta \otimes_{R_1} R_\beta$. Furthermore since $f^*_{0, \textrm{crys}}$ respects the partially constructed flag, it induces an $R_1$-linear morphism $f^*_\beta: \widetilde{\ccH}_\beta \to \widetilde{\ccH}'_\beta$ which is compatible with $\gamma_\beta$ and $\gamma'_\beta$.

Now, since $H^1_{\textrm{crys}}(A_0/R_1)$ comes with a perfect alternating under which the action of $\ccO_F$ is self adjoint, we can define $F_{\tau^c}(i-1) = F_{\tau}(i-1)^{\perp_{i-1}}$ where $\cdot^{\perp_{i-1}}$ denotes the orthogonal complement with respect to the induced perfect pairing on $[\varpi_\fp]^{e-i+1} H^1_{\textrm{crys}}(A_0/R_1)$. This yields $\widetilde{\ccH}_{\beta^c}$ and a perfect pairing on $\widetilde{\ccH}_{\beta} \times \widetilde{\ccH}_{\beta^c}$. Now, since the pairing on $H^1_{\textrm{crys}}(A_0/R_1)$ is compatible with the one on $H^1_{\textrm{dR}}(A_0/S_0)$ it follows that in fact $\gamma$ identifies $\omega_{\tau^c}(i-1)_0 \otimes_{S_0} R_\beta$ with $F_{\tau^c}(i-1) \otimes_{R_1} {R_\beta}$ and so induces an isomorphism $\gamma_{\beta^c}: \ccH^1_{\beta^c,0} \otimes_{S_0} R_\beta \xrightarrow{\sim} \widetilde{\ccH}_{\beta^c} \otimes_{R_1} R_\beta$ compatible with the various pairings. We obtain a similar $\widetilde{\ccH}'_{\beta^c}$. Using the same argument as before we obtain bases $(\tilde{e}_{1,\beta},\tilde{e}_{2,\beta})$ and $(\tilde{e}'_{1,\beta},\tilde{e}'_{2,\beta})$ which lift $\gamma_\beta(\overline{e}_{1,\beta} \otimes 1,\overline{e}_{2,\beta} \otimes 1)$ and $\gamma'_\beta(\overline{e}'_{1,\beta} \otimes 1,\overline{e}'_{2,\beta} \otimes 1)$ such that $f^*_\beta$ is given in the appropriate form \ref{matrix lift}. We thus define $F_\tau(i)$ and $F'_{\tau}(i)$ by taking the preimages of $L_\beta = R_\beta(\tilde{e}_{1,\beta} -X_\beta \tilde{e}_{2,\beta})$ and $L'_\beta = R_\beta(\tilde{e}'_{1,\beta} -X'_\beta \tilde{e}'_{2,\beta})$. This yields Pappas-Rapoport flags $F^\bullet $ and $F'^\bullet$ satisfying our required conditions.\medskip

Let $(\underline{\widetilde{A}},\underline{\widetilde{A}'},\tilde{f})$ be the lift over $R_1$ of $(\underline{A_0},\underline{A'_0},f_0)$ defined by the flags constructed above. For each $\tau$ and $\beta=\tau^i$, by construction of the flags and induction on $i$, the induced morphism $S \to R_1 \to R_\beta$ factors through $S_\beta$ and induces isomorphisms 
\begin{center}
\begin{tikzcd}
\ccH^1_\beta \otimes_S R_\beta \arrow[r,"\sim"] & \widetilde{\ccH}_\beta \otimes_{R_1} R_\beta  &&  \ccH'^1_\beta \otimes_S R_\beta \arrow[r,"\sim"] & \widetilde{\ccH}'_\beta \otimes_{R_1} R_\beta\\

\omega_\beta \otimes_S R_\beta \arrow[r,"\sim"] \arrow[hookrightarrow]{u} & L_\beta \otimes_{R_1} R_\beta \arrow[hookrightarrow]{u} && \omega'_\beta \otimes_S R_\beta \arrow[r,"\sim"] \arrow[hookrightarrow]{u} & L'_\beta \otimes_{R_1} R_\beta \arrow[hookrightarrow]{u}

\end{tikzcd}
\end{center}

\noindent sending $(e_{1,\beta},e_{2,\beta})$ to $(\tilde{e}_{1,\beta},\tilde{e}_{2,\beta})$ and $(e'_{1,\beta},e'_{2,\beta})$ to $(\tilde{e}'_{1,\beta},\tilde{e}'_{2,\beta})$. In particular (in $R_\beta$) we get $t_\beta \mapsto X_\beta$ and $t'_\beta \mapsto X'_\beta$. Hence $S \to R_1$ is surjective and $\rho_1 : R_1 \to S_1$ is an isomorphism.\medskip

It remains to show that $\rho_n$ is an isomorphism for $n \geq 2$. Surjectivity follows immediately from that of $\rho_1$. As before injectivity will follow, by induction, from showing that $\textrm{lg}(R_n) \leq \textrm{lg}(R_n)$ which itself follows from constructing a lift of $\rho_{n-1}^{-1}$ to a morphism $S \to R_n$ which must necessarily factor through $S_n$. This will follow from constructing, similarly to above, a lift of $(\underline{A_{n-1}},\underline{A'_{n-1}},f_{n-1})$ to $R_n$. 

Suppose that $n\geq 1$ and that we already have established that $\rho_n:R_n \to S_n$ is an isomorphism (the base case $n=1$ having been done above), then we can consider the nilpotent thickening $R_{n+1} \to R_n \xrightarrow{\sim} S_n$ with trivial divide powers. We inductively define lifts of the Pappas-Rapoport flag:

\[ \widetilde{F}^\bullet \subset H^1_{\textrm{crys}}(A_0/R_{n+1})^0 \]
and \[ \widetilde{F}^{\prime \bullet} \subset H^1_{\textrm{crys}}(A'_0/R_{n+1})^0\]
such that for each $\theta \in \Theta_{F,\fp}$, $\tau = \tilde{\theta}$, and $0 \leq i \leq e_\fp$
\begin{itemize}
    \item $\widetilde{F}_\tau(i)$ and $\widetilde{F}^\prime_\tau(i)$ are $R_{n+1}[x]/E_\tau(x)$-modules, free over $R_{n+1}$ of rank $s_\tau(i)$,
    \item $([\varpi_\fp] - \tau^i(\varpi_\fp))\widetilde{F}_\tau(i) \subset \widetilde{F}_\tau(i-1)$, $([\varpi_\fp] - \tau^i(\varpi_\fp))\widetilde{F}^\prime_\tau(i) \subset \widetilde{F}^\prime_\tau(i-1)$ and $f^*_{n,crys}\widetilde{F}^\prime_\tau(i) \subset \widetilde{F}_\tau(i)$,
    \item $\widetilde{F}_\tau(i) \otimes_{R_{n+1}} S_n = \omega^0_{A_n/S_n,\tau}(i)$ and $\widetilde{F}^\prime_\tau(i) \otimes_{R_{n+1}} S_n = \omega^0_{A^\prime_n/S_n,\tau}(i)$ under the canonical isomorphisms $H^1_{\dR}(A_n/S_n) \cong H^1_{crys}(A_n/R_{n+1}) \otimes_{R_{n+1}} S_n$ and $H^1_{\dR}(A^\prime_n/S_n) \cong H^1_{crys}(A^\prime_n/R_{n+1}) \otimes_{R_{n+1}} S_n$.
\end{itemize}
Suppose that we have constructed the flags to the term $i-1$. We let $\widetilde{\ccH}^1_{\tau^i} = ([\varpi_\fp]-\tau^i(\varpi_\fp))^{-1}\widetilde{F}_\tau(i-1) / \widetilde{F}_\tau(i-1)$, which is,again by lemma \ref{filtration torsion}, free of rank $2$ over $R_{n+1}$. We similarly have $\widetilde{\ccH}^{1 \prime}_{\tau^i}$, and we also have canonical isomorphisms
\[\gamma_{\tau^i}: \widetilde{\ccH}^1_{\tau^i} \xrightarrow{\sim} \ccH^1_{\tau^i,n} \, \textrm{ and } \, \gamma^\prime_{\tau^i}: \widetilde{\ccH}^{1 \prime}_{\tau^i} \xrightarrow{\sim} \ccH^{1 \prime}_{\tau^i,n}. \]
Also, $f^*_{crys,n+1}$ induces an $R_{n+1}$-linear homomorphism $\tilde{f}^*_{\tau^i}: \widetilde{\ccH}^{1 \prime}_{\tau^i} \to \widetilde{\ccH}^{1}_{\tau^i}$, which is compatible with $\tilde{f}^*_{\tau^i,n}$.
The construction of $\widetilde{F}_\tau(i)$ and $\widetilde{F}^\prime_\tau(i)$ becomes equivalent to the construction of invertible $R_{n+1}$ submodules $\widetilde{F}_{\tau^i} \subset \widetilde{\ccH}^{1 }_{\tau^i}$ and $\widetilde{F}^\prime_{\tau^i} \subset \widetilde{\ccH}^{1 \prime}_{\tau^i}$ such that 
\begin{itemize}
    \item $f^*_{\tau^i}( \widetilde{F}^\prime_{\tau^i}) \subset \widetilde{F}_{\tau^i}$,
    \item $\widetilde{F}_{\tau^i} \otimes_{R_{n+1}} S_n = \gamma_{\tau^i}(e_{1,\tau^i,n}-t^\prime_{\tau^i}e_{2,\tau^i,n})$,
    \item $\widetilde{F}^\prime_{\tau^i} \otimes_{R_{n+1}} S_n = \gamma^\prime_{\tau^i}(e^\prime_{1,\tau^i,n}-t_{\tau^i}e^\prime_{2,\tau^i,n})$.
\end{itemize}
This follows from the same arguments as for the case $n=0$, replacing the polynomial $x^{e_\fp-i}$ with $(x- \tau^{i+1}(\varpi_\fp)) \cdots (x- \tau^{e_\fp}(\varpi_\fp))$. We thus obtain a lift of $(\underline{A_n},\underline{A^\prime_n},f_n,g_n)$ to a triple over $R_{n+1}$ and conclude that $\rho_{n+1}: R_{n+1} \to S_{n+1}$ is an isomorphism.

This completes the proof that $\rho$ is an isomorphism and that $\ccO^\wedge_{\widetilde{Y}'_{U^\prime_0(\Fp)}(G'_\Sigma),y}$ is isomorphic to \[ W \otimes_{W(k)} \ccO[[X_\beta,X'_\beta]] / (h_\beta)_{\beta \in \Theta_F \setminus \Sigma_\infty }, \]

\noindent which is a reduced complete intersection, flat over $\ccO$. In particular, as the generic fiber of $\widetilde{Y}'_{U^\prime_0(\Fp)}(G'_\Sigma)$ is smooth, $\widetilde{Y}'_{U^\prime_0(\Fp)}(G'_\Sigma)$ is reduced and syntomic over $\ccO$, of relative dimension $\vert \Theta_F \setminus \Sigma_\infty \vert$. Since the quotient map $\widetilde{Y}'_{U^\prime_0(\Fp)}(G'_\Sigma) \to Y'_{U^\prime_0(\Fp)}(G'_\Sigma)$ is locally on $\widetilde{Y}'_{U^\prime_0(\Fp)}(G'_\Sigma)$ finite \'{e}tale, the same is true of $Y'_{U^\prime_0(\Fp)}(G_\Sigma)$.

\subsubsection{Stratifications in the Unitary Case}\label{unitary stratification}
We now proceed to define a stratification of $\overline{Y}_0(\Fp) = Y_{U^\prime_0(\Fp)}(G^\prime_\Sigma)_\bF$  by first defining one on $S^\prime = \widetilde{Y}_{U^\prime}(G^\prime_\Sigma)_\bF$ and descending it. 

Keep the notation from section \ref{iwahori unitary def}. Write $(\underline{A_1}, \underline{A_2},f,g)$ for the universal object on $S^\prime$, and for any $\beta \in \Theta_E$ and $j =1,2$, write $\omega^0_{j,\beta} = \omega^0_{A_j/S^\prime,\beta}$, $\ccH^1_{j,\beta} = \ccH^1_{A_j/S^\prime,\beta}$ and $v^0_{j,\beta} = v^0_{A_j/S^\prime_\beta}$ for the sheaves defined in section \ref{automorphic bundles unitary}. By the definition of the moduli problem and lemma \ref{respect filtrations}, for any $\beta \in \Theta_E$, we have morphisms $f^*_\beta: \ccH^1_{2,\beta} \to \ccH^1_{1,\beta}$ and $g^*_\beta : \ccH^1_{A_1 \otimes \Fp^{-1},\beta} \to \ccH^1_{2,\beta}$ which restrict to morphisms: 
\[ f^*_\beta : \omega^0_{2,\beta} \to \omega^0_{1,\beta}, \, \, \quad g^*_\beta:\omega^0_{A_1 \otimes \Fp^{-1},\beta} \to \omega^0_{2,\beta},\]
and \[ f^{*\prime}_\beta : v^0_{2,\beta} \to v^0_{1,\beta}, \, \, \quad g^{*\prime}_\beta:v^0_{A_1 \otimes \Fp^{-1},\beta} \to v^0_{2,\beta}.\]

We determined in the previous section that these morphisms are isomorphisms for $\beta \notin \Theta_{E,\Fp}$ and, if $\beta \in \Theta_{E,\Fp}$, then $g^*_\beta(\ccH^1_{A_1 \otimes \Fp^{-1},\beta}) = \ker f^*_\beta$ and $(f \otimes \Fp^{-1})^*_\beta(\ccH^1_{A_2 \otimes \Fp^{-1}}) = \ker g^*_\beta$ are all locally free of rank one. Coupling this with the compatibilities between polarizations, we immediately obtain for $\beta \vert_F \notin \Sigma_{\infty,\Fp}$
\[f^*\omega^0_{2,\beta} = 0 \Leftrightarrow g^{*\prime} v^0_{A_1 \otimes \Fp^{-1},\beta} = 0 \Leftrightarrow f^*\omega^0_{2,\beta^c} = 0 \Leftrightarrow g^{*\prime} v^0_{A_1 \otimes \Fp^{-1},\beta^c} = 0. \]
We also get similar implications by swapping $1$ and $2$.

For any two subsets $I,J \subset \Theta_{F,\Fp} \setminus \Sigma_{\infty,\Fp}$, we thus define $S^\prime_{\phi^\prime(I),J}$ to be given by the vanishing of the sections 
\[ \{ f^*_{\tilde{\beta}}: \omega^0_{2,\tilde{\beta}} \to \omega^0_{1,\tilde{\beta}} \, \vert \, \beta \in I \} \cup \{ g^*_{\tilde{\beta}} : \omega^0_{A_1 \otimes \Fp^{-1},\tilde{\beta}} \to \omega^0_{2,\tilde{\beta}} \, \vert \, \beta \in J  \}.\]
Note that by above, we could have equivalently considered the conjugate lifts if we wanted or even replaced $g^*_\beta$ by $f^{*\prime}_\beta$. We also remark that, as in the Hilbert case, we use the shift $\phi^\prime(I)$ in our notation for when we relate these closed subschemes to the Goren-Oort stratification downstairs in section \ref{Iwahori goren relation}.

Now, let $P$ be a closed point of $S^\prime$ and let $I_P \subset \Theta_{F,\Fp} \setminus \Sigma_{\infty,\Fp}$ be the set of $\beta$ such that $f^*_\beta$ vanishes and $J_P \subset \Theta_{F,\Fp} \setminus \Sigma_{\infty,\Fp}$ the set of $\beta$ such that $g^*_\beta$ vanishes. Note that $I_P \cup J_P = \Theta_{F,\Fp} \setminus \Sigma_{\infty,\Fp}$. We obtained in the last section, after taking local trivializations $\ccH om_{\ccO_S}(\omega^0_{2,\tilde{\beta}}, \omega^0_{1,\tilde{\beta}})$ and $\ccH om_{\ccO_S}(v^0_{2,\tilde{\beta}}, v^0_{1,\tilde{\beta}})$, the following description of the completed local ring $\widehat{\ccO}_{S^\prime,P}$:

\begin{thm}\label{unitary iwahori structure thm}
Let $P$ be a closed point of $S^\prime$. Then we have an isomorphism
\[ \widehat{\ccO}_{S^\prime,P} \simeq k_P \llbracket X_\beta, X^\prime_\beta \rrbracket_{\beta \in \Theta_F} / \langle h_\beta \rangle_{\beta \in \Theta_F} \]
where $f^*_\beta \mapsto X_\beta$, $f^{* \prime}_\beta \mapsto X^\prime_\beta$ and
\[ h_\beta = 
\begin{cases}
X_\beta  & \textrm{ if } \beta \in I_P \setminus J_P,\\
X^\prime_\beta &\textrm{ if } \beta \in J_P \setminus I_P,\\
X_\beta X^\prime_\beta &\textrm{ if } \beta \in I_P \cap J_P,\\
X_\beta - \beta(\varpi_\Fp)X_\beta^ \prime & \textrm{ if } \beta \notin \Theta_{F,\Fp}.
\end{cases}\]
\end{thm}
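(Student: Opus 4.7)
The theorem is a direct reduction modulo a uniformizer $\varpi$ of $\ccO$ of the structural description of $\widetilde{Y}^\prime_{U^\prime_0(\Fp)}(G^\prime_\Sigma)$ itself, which was established in section \ref{iwahori unitary local}. There, at a closed point $y$ of the special fiber with residue field $k_P$, we proved
\[
\ccO^\wedge_{\widetilde{Y}^\prime_{U^\prime_0(\Fp)}(G^\prime_\Sigma),\,y} \;\simeq\; W(k_P) \otimes_{W(\kappa)} \ccO \llbracket X_\beta, X^\prime_\beta \rrbracket_{\beta \in \Theta_F \setminus \Sigma_\infty}\,\big/\,(h_\beta),
\]
where $\kappa = \bF$ is the residue field of $\ccO$, and the relations $h_\beta$ take a tripartite form parameterized by subsets $\Theta_y, \Theta^\prime_y \subset \Theta_F \setminus \Sigma_\infty$.

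My first step is to identify $\Theta_y$ with $I_P$ and $\Theta^\prime_y$ with $J_P$. For $\Theta_y = \{\beta : \ker f^*_{\tilde{\beta},0} = \omega^{0\prime}_{\tilde{\beta},0}\}$: the crystalline cohomology and Frobenius-rank calculation of section \ref{iwahori unitary local} forces $f^*_{\tilde{\beta},0}$ to have rank-one kernel for every $\beta$ above $\Fp$, and since $\omega^{0\prime}_{\tilde{\beta},0} \subset \ccH^{1\prime}_{\tilde{\beta},0}$ is itself a line, the equality reduces to the containment $\omega^{0\prime}_{\tilde{\beta},0} \subset \ker f^*_{\tilde{\beta},0}$, which is exactly the vanishing of the restriction $f^*_\beta : \omega^0_{2,\tilde{\beta}} \to \omega^0_{1,\tilde{\beta}}$; hence $\Theta_y = I_P$. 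Similarly, using $f^* \circ g^* = [\varpi_\fp] = 0$ at the closed point (which forces $\textrm{Im}\, g^*_{\tilde{\beta},0} = \ker f^*_{\tilde{\beta},0}$ by rank one) together with the polarization compatibility $f^\vee \circ \lambda_2 \circ f = \lambda_1 \circ [\varpi_\Fp]$, one checks that $\textrm{Im}\, f^*_{\tilde{\beta},0} = \omega^0_{\tilde{\beta},0}$ is equivalent to $g^*_\beta$ vanishing on Hodge bundles, giving $\Theta^\prime_y = J_P$.

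The second step is to take the quotient by $\varpi$. Since $W(k_P) \otimes_{W(\kappa)} \ccO/\varpi = k_P$, one obtains $\widehat{\ccO}_{S^\prime,P} \simeq k_P \llbracket X_\beta, X^\prime_\beta \rrbracket / (\overline{h}_\beta)$ where $\overline{h}_\beta = h_\beta \bmod \varpi$. For $\beta \in \Theta_{F,\Fp}$, the element $\beta(\varpi_\Fp)$ reduces to zero in $k_P$, collapsing the three original forms $X_\beta - \beta(\varpi_\Fp) X^\prime_\beta$, $X^\prime_\beta - \beta(\varpi_\Fp) X_\beta$, and $X_\beta X^\prime_\beta + \beta(\varpi_\Fp)$ into $X_\beta$, $X^\prime_\beta$, and $X_\beta X^\prime_\beta$ respectively, matching the three cases $\beta \in I_P \setminus J_P$, $\beta \in J_P \setminus I_P$, and $\beta \in I_P \cap J_P$ of the statement. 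For $\beta \in \Theta_F \setminus (\Sigma_\infty \cup \Theta_{F,\Fp})$, $\beta(\varpi_\Fp)$ remains a unit in $k_P$ and the relation $X_\beta - \beta(\varpi_\Fp) X^\prime_\beta$ survives intact, yielding the fourth case. All the genuine work---the rank-one kernel computation, the choice of normal-form matrices for $f^*_{\tilde{\beta}}$, and the inductive Grothendieck--Messing lift via lemma \ref{filtration torsion}---was already completed in section \ref{iwahori unitary local}; the present theorem only requires the reduction modulo $\varpi$ and the matching of the two sets of labels.
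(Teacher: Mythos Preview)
Your overall approach matches the paper's: the theorem is stated with the preface ``We obtained in the last section, after taking local trivializations\ldots the following description'', so the paper too treats it as the reduction mod $\varpi$ of the integral computation in \S\ref{iwahori unitary local}. Your identification $\Theta_y = I_P$, $\Theta'_y = J_P$ is correct.

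However, your case-matching in the second step is internally inconsistent with your first step. In \S\ref{iwahori unitary local} the relation $X_\beta - \beta(\varpi_\Fp)X'_\beta$ is assigned to $\beta \in \Theta_F \setminus (\Sigma_\infty \cup \Theta_y)$; under your own identification $\Theta_y = I_P$ this means, for $\beta$ above $\Fp$, that $\beta \notin I_P$, hence $\beta \in J_P \setminus I_P$ --- not $I_P \setminus J_P$ as you wrote. The second form likewise lands in $I_P \setminus J_P$. So the literal reduction yields $h_\beta = X_\beta$ on $J_P \setminus I_P$ and $h_\beta = X'_\beta$ on $I_P \setminus J_P$, the opposite of the theorem's labeling. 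This does not affect the abstract ring isomorphism (one simply swaps the names $X_\beta \leftrightarrow X'_\beta$), but it does mean the assignment ``$f^*_\beta \mapsto X_\beta$'' of the theorem is \emph{not} the assignment ``$X_\beta \mapsto t_\beta$'' of \S\ref{iwahori unitary local}. Computing with the normal-form matrix for $\beta \in \Theta_y \cap \Theta'_y$ there gives $f^*_\beta(e'_{1,\beta} - t'_\beta e'_{2,\beta}) = -t'_\beta(e_{1,\beta} - t_\beta e_{2,\beta})$, so $f^*_\beta$ is (up to sign) $t'_\beta$, i.e.\ the theorem's $X_\beta$ corresponds to the $X'_\beta$ of \S\ref{iwahori unitary local}. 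The paper silently absorbs this relabeling in the phrase ``after taking local trivializations''; you should either note it explicitly or at least not contradict your own identification of $\Theta_y$.
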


We thus obtain, as in \cite[$\mathsection$4.3.2]{2020arXiv200100530D}, 

\begin{cor}\label{corollary strata dim smooth}
 The scheme $S^\prime_{\phi^\prime(I),J}$ over $\bF$ is a reduced local complete intersection of dimension $\vert \Theta_F \setminus \Sigma_\infty\vert - \vert I \cap J \vert$ and is smooth if $I \cup J = \Theta_{F,\Fp} \setminus \Sigma_{\infty,\Fp}$.   
\end{cor}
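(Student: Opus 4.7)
The plan is to prove this as a direct corollary of Theorem \ref{unitary iwahori structure thm}, by analyzing the completed local ring of the stratum at each closed point $P$. Since $S^\prime_{\phi^\prime(I),J} \subset S^\prime$ is the closed subscheme cut out by the vanishing of $f^*_{\tilde\beta}$ for $\beta \in I$ and $g^*_{\tilde\beta}$ for $\beta \in J$, and since the vanishing of $g^*_{\tilde\beta}$ coincides with that of $f^{*\prime}_{\tilde\beta}$, the theorem identifies
\[
\widehat{\ccO}_{S^\prime_{\phi^\prime(I),J},P} \;\simeq\; k_P\llbracket X_\beta, X^\prime_\beta\rrbracket_{\beta \in \Theta_F \setminus \Sigma_\infty} \,\big/\, \bigl( \langle h_\beta \rangle + \langle X_\beta\rangle_{\beta \in I} + \langle X^\prime_\beta\rangle_{\beta \in J} \bigr),
\]
where necessarily $I \subset I_P$ and $J \subset J_P$ at any point of the stratum.

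Since this ring factors as a completed tensor product of independent one-$\beta$ blocks, I would analyze each $\beta$ separately. For $\beta \in (\Theta_F\setminus\Sigma_\infty) \setminus \Theta_{F,\Fp}$, the coefficient $\beta(\varpi_\Fp)$ reduces to a unit in $\bF$, so $h_\beta$ is a regular linear form and the block is isomorphic to $k_P\llbracket X^\prime_\beta\rrbracket$, smooth of dimension one, with no stratum relation to impose. For $\beta \in \Theta_{F,\Fp} \setminus \Sigma_{\infty,\Fp}$ I split on the form of $h_\beta$ (determined by $I_P, J_P$) and the membership of $\beta$ in $I, J$: if $\beta$ lies in exactly one of $I_P, J_P$, then $h_\beta$ is already linear and any compatible stratum equation is redundant, giving a regular $1$-dimensional block; if $\beta \in I_P \cap J_P$, then $h_\beta = X_\beta X^\prime_\beta$, and the block ideal becomes $(X_\beta, X^\prime_\beta)$ (dimension $0$, regular) when $\beta \in I \cap J$, the principal ideal $(X_\beta)$ or $(X^\prime_\beta)$ (dimension $1$, regular) when $\beta$ lies in exactly one of $I, J$, and the nodal ideal $(X_\beta X^\prime_\beta)$ (dimension $1$, reduced but not regular) when $\beta \notin I \cup J$.

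Summing the per-$\beta$ dimensions yields Krull dimension $|\Theta_F \setminus \Sigma_\infty| - |I \cap J|$, and in each block the surviving relations manifestly form a regular sequence in the corresponding power-series ring, so globally the stratum is a complete intersection of the claimed constant dimension. Reducedness follows from the fact that the block ideals $(X - c X^\prime)$, $(X)$, $(X^\prime)$, $(X X^\prime)$, $(X, X^\prime)$ are all radical (the quotient by $(XX^\prime)$ being the nodal union of two smooth lines). For the smoothness assertion under $I \cup J = \Theta_{F,\Fp} \setminus \Sigma_{\infty,\Fp}$, the nodal case above cannot arise, since any $\beta \in I_P \cap J_P$ must then lie in $I \cup J$; thus every block is regular, and the local ring is regular at every closed point.

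The main technical point will be the bookkeeping for the case $\beta \in I_P \cap J_P$, where $h_\beta$ is not by itself a regular element and the minimal number of generators of the stratum ideal in the block depends delicately on the overlap of $\{I, J\}$ with $\{I_P, J_P\}$; once these four sub-cases are dispatched as above, the global conclusion follows immediately and no further input beyond Theorem \ref{unitary iwahori structure thm} is required.
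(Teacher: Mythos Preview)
Your argument is correct and is precisely the approach the paper intends: the paper gives no proof of its own and simply cites \cite[\S 4.3.2]{2020arXiv200100530D}, whose argument is exactly the block-by-block analysis of the completed local ring that you spell out. Your case analysis is complete and the dimension count, the complete-intersection verification, and the smoothness criterion are all handled correctly; the only cosmetic point is that the index set in Theorem~\ref{unitary iwahori structure thm} should really be $\Theta_F \setminus \Sigma_\infty$ rather than $\Theta_F$ (as is clear from the derivation in \S\ref{iwahori unitary local}), and you have tacitly made this correction.
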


As in the Hilbert case, the natural action of $\ccO_{F,(p),+}^\times$ on the line bundles 
\[\ccH om_{\ccO_S}\left(\omega^0_{2,\beta},\omega^0_{1,\beta}\right) \quad \textrm{and} \quad \ccH om_{\ccO_S}\left(\omega^0_{A_1 \otimes \Fp^{-1},\beta},\omega^0_{2,\beta}\right)\] 
factors through $\ccO_{F,(p),+}^\times/(\ccO_F \cap U^\prime)^2$ so that the above line bundles and the accompanying sections $f^*_\beta$ and $g^*_\beta$ all descend to $\overline{Y}_0(\Fp)$.

This allows us to define the closed subschemes $\overline{Y}_0(\fp)_{\phi^\prime(I),J}$ of $\overline{Y}_0(\fp)$ as the vanishing of the appropriate sections or, equivalently, as the quotient of $S^\prime_{\phi^\prime(I),J}$ by the action of $\ccO_{F,(p),+}$. It follows from above that the closed subschemes $\overline{Y}_0(\fp)_{\phi(I),J}$ satisfy the same properties as the $S^\prime_{\phi^\prime(I),J}$; they are therefore reduced complete intersections of dimension $\vert \Theta_F \setminus \Sigma_\infty \vert - \vert I \cap J \vert$ and smooth if $I \cup J$. Furthermore, it follows that every irreducible component of $\overline{Y}_0(\fp)$, with its reduced structure, is smooth of dimension  $\vert \Theta_F \setminus \Sigma_\infty \vert$ and is contained in a unique $\overline{Y}_0(\fp)_{\phi^\prime(J^c),J}$ for $J \subset \Theta_{F,\Fp} \setminus \Sigma_{\infty,\Fp}$. These irreducible components are precisely the connected components of $\overline{Y}_0(\fp)_{\phi^\prime(J^c),J}$.

It is also straightforward to check that the stratification is compatible with the Hecke action for varying $U^\prime$. That is, if we take suitable $U^\prime_1$ and $U^\prime_2$ and $g \in G^\prime_\Sigma(\bA_{F}^{(p)})$ such that $g^{-1}U^\prime_1g \subset U^\prime_2$, then the morphisms $\tilde{\rho}_g$ and $\rho_g$ restrict to morphisms on the corresponding closed subschemes for any $I,J\subset \Theta_{F,\Fp}$.

As in the Hilbert case, we have the following proposition:

\begin{prop}
    Let $\Fp= \fp_1 \cdots \fp_n$ be a product of primes $\fp_i \vert p$ such that $\Sigma_{\infty,\fp_i} \neq \Theta_{F,\fp_i}$ for all $i$, writing $I_{\fp_i} = I \cap \Theta_{F,\fp_i}$ and $J_{\fp_i} = J \cap \Theta_{F,\fp_i}$, we have a Hecke equivariant isomorphism 
    \[ {\overline{Y}^\prime}_0(\Fp)_{\phi^\prime(I),J} \xrightarrow{\sim} {\overline{Y}^\prime}_0(\Fp)_{\phi^\prime(I_{\fp_1}),J_{\fp_1}} \times_{\overline{Y}^\prime} \cdots \times_{\overline{Y}^\prime} {\overline{Y}^\prime}_0(\Fp)_{\phi^\prime(I_{\fp_n}),J_{\fp_n}}.\]
\end{prop}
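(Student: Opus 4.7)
The proof strategy directly mirrors the Hilbert case (proposition \ref{iwahori fiber product} and its stratum refinement): first establish a Hecke equivariant prime-by-prime decomposition of the Iwahori moduli problem itself at the Unitary level, and then check that the stratum conditions split accordingly.

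For the first step, I would construct the isomorphism
\[
\overline{Y}^\prime_0(\Fp) \xrightarrow{\sim} \overline{Y}^\prime_0(\fp_1) \times_{\overline{Y}^\prime} \cdots \times_{\overline{Y}^\prime} \overline{Y}^\prime_0(\fp_n)
\]
(with the fiber products taken along the projection $\pi_1$) and verify Hecke equivariance. The key input is the coprimality of the primes $\fp_i$, which gives a canonical splitting $A_1[\Fp] = \bigoplus_i A_1[\fp_i]$. Given a tuple $(\underline{A_1},\underline{A_2},f,g)$ at level $U^\prime_0(\Fp)$, the required decomposition $\ker f = \bigoplus_{\fp \vert \Fp}(H_\fq \oplus H_{\fq^c})$ forces $\ker f$ itself to split as $\bigoplus_i H_{\fp_i}$ with $H_{\fp_i} \subset A_1[\fp_i]$. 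Setting $A_{2,i} = A_1/H_{\fp_i}$ with the $\ccO_D$-action inherited through the canonical quasi-isogeny, polarization obtained from $\lambda_1$ scaled by $\iota(\varpi_{\fp_i})$, level structure inherited via $f_i$, and Pappas--Rapoport filtration pushed forward through $f_i$ (well-defined since $f$ respects filtrations and $f$ factors as a composition over the $i$'s in any order), one obtains an $n$-tuple of Iwahori-level objects that agree on their $A_1$-component. Conversely, given such a compatible $n$-tuple, the simultaneous quotient $A_2 = A_1/\bigoplus_i H_{\fp_i}$ assembles a unique $\Fp$-level object; the uniqueness of $g$ (lemma \ref{respect filtrations}) ensures no further compatibility needs to be checked.

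For the second step, I would observe that for $\beta \in \Theta_{F,\fp_i} \setminus \Sigma_{\infty,\fp_i}$, the morphism $f^*_{\tilde{\beta}}:\omega^0_{2,\tilde{\beta}}\to \omega^0_{1,\tilde{\beta}}$ on the left-hand side corresponds under the above isomorphism precisely to the analogous morphism $f^*_{i,\tilde{\beta}}$ for the $i$th factor, since $f$ factors through $A_{2,i}$ with the intermediate map an isomorphism on $\tilde{\beta}$-components (the other primes being coprime to $\fp_i$). Likewise for $g^*_{\tilde{\beta}}$ after the appropriate twist by $\Fp^{-1}$. Since the stratum conditions $I$ and $J$ both sit inside $\Theta_{F,\Fp}\setminus \Sigma_{\infty,\Fp} = \bigsqcup_i (\Theta_{F,\fp_i}\setminus \Sigma_{\infty,\fp_i})$, the vanishing conditions defining $\overline{Y}^\prime_0(\Fp)_{\phi^\prime(I),J}$ partition prime-by-prime into the vanishing conditions defining each $\overline{Y}^\prime_0(\fp_i)_{\phi^\prime(I_{\fp_i}),J_{\fp_i}}$.

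Combining both steps gives the asserted isomorphism, and Hecke equivariance is automatic because the construction in the first step is functorial in the universal object and compatible with prime-to-$p$ isogenies. The only genuine content is the prime-by-prime decomposition of the isogeny $f$ itself; once that is in hand, the stratification refinement is essentially formal. The main obstacle I anticipate is the bookkeeping verification that the polarizations, level structures, and (most delicately) the Pappas--Rapoport filtrations on the $A_{2,i}$ glue back coherently to the data on $A_2 = A_1/\bigoplus_i H_{\fp_i}$; this requires checking that the filtrations constructed on $\omega^0_{A_{2,i}}$ at places $\tilde{\theta}$ lying over $\fp_j$ for $j \neq i$ (where $f_i$ is an isomorphism on the relevant component) agree with those pushed forward from $A_2$ through the residual isogeny $A_{2,i}\to A_2$, but this is again a consequence of the coprimality of the $\fp_j$, which ensures the residual kernel is disjoint from $A_{2,i}[\fp_i]$.
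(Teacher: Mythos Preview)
Your proposal is correct and matches the paper's approach: the paper does not write out a proof at all, instead stating the result with the remark ``As in the Hilbert case,'' and the Hilbert case (Proposition~\ref{iwahori fiber product} and its stratum refinement) is itself declared straightforward with only the comment that ``some care is needed with regards to the Pappas--Rapoport filtrations.'' Your two-step outline---decompose the Iwahori moduli problem prime-by-prime via the coprimality splitting $A_1[\Fp]=\bigoplus_i A_1[\fp_i]$, then observe that the vanishing conditions defining the strata partition accordingly---is exactly what is being deferred to, and you have correctly identified the filtration bookkeeping as the only place requiring care.

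One small remark: the paper's displayed formula literally has ${\overline{Y}^\prime}_0(\Fp)$ on each factor of the right-hand side rather than ${\overline{Y}^\prime}_0(\fp_i)$; this appears to be a typo, since the analogous Hilbert statement and your reading both use the single-prime Iwahori varieties, which is what the fiber product over $\overline{Y}^\prime$ via $\pi_1$ requires.
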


We will also show in corollary \ref{iwahori strata non empty} that the strata are non-empty by showing that the the natural projection $(\underline{A_1},\underline{A_2},f,g) \mapsto \underline{A_1}$ induces an isomorphism ${\overline{Y}^\prime}_0(\Fp)_{\Theta_{F,\Fp} \setminus \Sigma_{\infty,\Fp},\emptyset} \to \overline{Y}^\prime$ which maps the stratum ${\overline{Y}^\prime}_0(\Fp)_{\Theta_{F,\Fp} \setminus \Sigma_{\infty,\Fp},T}$ isomorphically onto the Goren-Oort stratum $\overline{Y}^\prime_T$.

\subsection{The Quaternionic setting}\label{quaternionic iwahori}
\subsubsection{Iwahori level Quaternionic Shimura varieties}
Let $\Sigma$, $G_\Sigma$ and $G^\prime_\Sigma$ be as usual and let $\Fp = \fp_1 \cdots \fp_n$ be a product of distinct primes above $p$. Recall that by our assumption on $\Sigma$ we have a fixed isomorphism $\ccO_{B,p} \simeq M_2(\ccO_{F,p})$. Via this isomorphism, we define for each $\fp \vert p$, the Iwahori subgroup $I_0(\fp) \subset G_\Sigma$ given by
\[ I_0(\fp) = \{ g \in GL_2(\ccO_{F,\fp}) \, \vert \, g \equiv \left( \begin{smallmatrix}
    * & * \\ 0 & *
\end{smallmatrix}\right) \mod \fp \}.\]
For a sufficiently small open compact subgroup $U = U^p U_p \subset G(\mathbb{A}_f)$ with $U_p = GL_2(\ccO_{F,p})$, we let $U_0(\Fp) \subset G_\Sigma$ denote the subgroup
\[U_0(\Fp) = \{ g \in U \, \vert \, g_\fp \in I_0(\fp) \textrm{ for all } \fp \vert \Fp \}.\]

Let $V_E \subset (\mathbb{A}^{(p)}_{E,f})^\times$ be a sufficiently small open compact subgroup relative to $U$ and let $U^\prime \subset G^\prime(\mathbb{A}_f)$ be the image of $U \times V_E$ so that $U^\prime_0(\Fp)$ is the image of $U_0(\Fp) \times V_E$. 

We note that in this setting, the analogous diagram to \ref{complex cartesian} is still Cartesian; we thus set:
\[Y_{U_0(\Fp) \times V_E}(G_\Sigma \times T_E) = Y_{U^\prime_0(\Fp)}(G_\Sigma^\prime) \times_{Y_{\nu^\prime(U^\prime)}(T^\prime)} (Y_{\det(U)}(T_F) \times Y_{V_E}(T_E)),\]
where $T_F$, $T_E$, and $T^\prime$ are as in section \ref{quaternionic models}. The projections $Y_{U^\prime_0(\Fp)}(G_\Sigma^\prime) \to Y_{U^\prime}(G_\Sigma^\prime)$ yield projections 
$\pi_j: Y_{U_0(\Fp) \times V_E}(G_\Sigma \times T_E) \to Y_{U_0 \times V_E}(G_\Sigma \times T_E)$. 
Just as in Tame level, we obtain for sufficiently small open compact subgroups $U_1,U_2 \subset G_\Sigma(\mathbb{A}_f)$, $g \in G_\Sigma(\mathbb{A}_f)$ with $g^{-1}U_1 g \subset U_2$, and $y \in (\mathbb{A}_{E,f}^{(p)})^\times$, morphisms $\rho_{(g,y)} : Y_{U_{1,0}(\Fp) \times V_E}(G_\Sigma \times T_E) \to Y_{U_{2,0}(\Fp) \times V_E}(G_\Sigma \times T_E)$ such that the diagram
\begin{center}
\begin{tikzcd}
    Y_{U_{1,0}(\Fp) \times V_E}(G_\Sigma \times T_E) \arrow[r,"\rho_{(g,y)}"] \arrow[d,"\pi_{1,j}"] & Y_{U_{2,0}(\Fp) \times V_E}(G_\Sigma \times T_E) \arrow[d,"\pi_{2,j}"]\\
    Y_{U_{1} \times V_E}(G_\Sigma \times T_E) \arrow[r,"\rho_{(g,y)}"] & Y_{U_{2} \times V_E}(G_\Sigma \times T_E)
\end{tikzcd}
\end{center}
commutes.

As in Tame level, the automorphisms $\rho_{(1,y)}$ define a free action of  $ C_{V_E} = (\mathbb{A}_{E,f}^{(p)})^\times/ \ccO_{E,(p)}^\times V^p_E$. We thus define $Y_{U_0(\Fp)}(G_\Sigma)$ to be the quotient of $Y_{U_0(\Fp) \times V_E}(G_\Sigma \times T_E)$ by the action of $C_{V_E}$. The natural projection splits and yields an isomorphism 
\[Y_{U_0(\Fp) \times V_E}(G_\Sigma \times T_E) \xrightarrow{\sim} Y_{U_0(\Fp)}(G_\Sigma) \times_\ccO Y_{V_E}(T_E),\]
compatibly, via projections, with the similar splitting at Tame level. This definition is independent of the choice of $V_E$.

We say that $Y_{U_0(\Fp) }(G_\Sigma )$ is the integral Quaternionic model of Iwahori level $U_0(\Fp)$. It comes with projections $\pi_i: Y_{U_0(\Fp) }(G_\Sigma ) \to Y_{U }(G_\Sigma )$ for $i=1,2$ and an action of $G_\Sigma(\mathbb{A}_f^{(p)})$ such that the projections are compatible with the action downstairs. Furthermore, after base-changing to $W = \ccO_{L^{nr}}$, the Cartesian diagram from lemma \ref{compatibility components unitary quaternionic} extends a Cartesian diagram
\[
\begin{tikzcd}
Y_{U_0(\Fp)}(G_\Sigma)_W \arrow[r] \arrow[d,"\pi_i"] & Y_{U^\prime_0(\Fp)}(G_\Sigma^\prime)_W \arrow[d,"\pi_i"]\\
Y_{U}(G_\Sigma)_W \arrow[r] \arrow[d,] & Y_{U^\prime}(G_\Sigma^\prime)_W \arrow[d,]\\
C \arrow[r] & C^\prime,
\end{tikzcd}
\]
where $C = C_{\det(U)}$ and $C^\prime = C_{\nu^\prime(U^\prime)}$, which identifies $Y_{U_0(\Fp)}(G_\Sigma)_W$ with an open and closed subscheme of $Y_{U^\prime_0(\Fp)}(G_\Sigma^\prime)_W$. Furthermore, the diagram is Hecke equivariant in the sense that the analogous diagram \ref{compatibility components unitary quaternionic} commutes.

Over $\overline{\bF}_p$, write $\overline{Y}_0(\Fp) = Y_{U_0(\Fp)}(G_\Sigma)_{\overline{\bF}_p} $ and $\overline{Y}^\prime_0(\Fp) = Y_{U^\prime_0(\Fp)}(G^\prime_\Sigma)_{\overline{\bF}_p} $. We thus have the map $i: \overline{Y}_0(\Fp) \to \overline{Y}^\prime_0(\Fp)$ which identifies $\overline{Y}_0(\Fp)$ with $\overline{Y}^\prime_0(\Fp) \times_{C^\prime} C$ (we could have taken this as a definition). For any pair of subsets $I,J \subset \Theta_{F,\Fp} \setminus \Sigma_{\infty,\Fp}$, we write 
\[\overline{Y}_0(\Fp)_{\phi^\prime(I),J} = \overline{Y}^\prime_0(\Fp)_{\phi^\prime(I),J} \times_{C^\prime} C.\]
$\overline{Y}_0(\Fp)$ and its strata thus enjoy all the same properties as those of $\overline{Y}^\prime_0(\Fp)$. In particular, $\overline{Y}_0(\Fp)$ is a reduced local complete intersection of dimension $\vert \Theta_F \setminus \Sigma_\infty \vert$, and the strata $\overline{Y}_0(\Fp)_{\phi^\prime(I),J}$ are reduced complete intersections of dimension $\setminus \Sigma_\infty \vert - \vert I \cap J \vert$, smooth if $I \cap J = \Theta_{F,\Fp} \setminus \Sigma_{\infty,\Fp}$. Every irreducible component, with its reduced structure, is smooth of dimension $\vert \Theta_F \setminus \Sigma_\infty \vert$ and is contained in a unique stratum $\overline{Y}_0(\Fp)_{\phi^\prime(J^c),J}$. These irreducible components are precisely the connected components of $\overline{Y}_0(\Fp)_{\phi^\prime(J^c),J}$.
Finally, the Hecke action of $G_\Sigma(\mathbb{A}_f^{(p)})$ restricts to an action on the strata.

\subsubsection{Comparison to the Hilbert case}\label{quaternion hilb iwahori}
Similarly to chapter \ref{tame}, we now have two different models for the $U_0(\Fp)$ Iwahori level Hilbert modular variety, the first one being the model introduced in section \ref{hilbert iwahori} and the second in the previous section \ref{quaternionic iwahori} with $\Sigma = \emptyset$. 

Let $U \subset G(\bA_f) = G_\emptyset(\bA_f)$ be suitably small open compact subgroup of level prime to $p$ and $U^\prime \subset G^\prime_\emptyset(\bA_f)$ such that $U^\prime$ contains the image of $U$. We defined in section \ref{model comparison} a morphism $\Tilde{i}:\widetilde{Y}_U(G) \to \widetilde{Y}_{U^\prime}(G^\prime_\emptyset)$, which sends $A$ to $A' = A \otimes_{\ccO_F} \ccO_E^2$, and showed that it induced an isomorphism 
\[\widetilde{Y}_U(G) \to \widetilde{Y}_{U^\prime}(G^\prime_\emptyset) \times_{C'} C,\]
where $C = C_{\det(U)}$ and $C^\prime = C_{\nu^\prime(U^\prime)}$. The map $\Tilde{i}$ extends to a morphism $\widetilde{Y}_{U_0(\Fp)} \to \widetilde{Y}'_{U'_0(\Fp)}\times_{C'} C$, by mapping $(\underline{A_1},\underline{A_2},f,g)$ to $(\underline{A^\prime_1},\underline{A^\prime_2},f \otimes 1,g \otimes 1)$ and we have the following result:

\begin{prop}\label{iwahori quat hilb}
The extended map $\Tilde{i}$ given above is a Hecke equivariant isomorphism.
\end{prop}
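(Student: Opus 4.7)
The strategy is to construct an inverse to $\widetilde{i}$ by descending the tame-level isomorphism of section \ref{model comparison} along the forgetful morphisms, using the fact that every structure present at Unitary Iwahori level is obtained from its Hilbert counterpart by tensoring with $\ccO_E^2$ and is therefore $M_2(\ccO_E)$-equivariantly of the form $\cdot \otimes \mathrm{id}$.

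First I would check that $\widetilde{i}$ is well-defined on the Iwahori level. Given $(\underline{A_1},\underline{A_2},f,g)$ over $S$, the tame-level comparison already produces the tuples $\underline{A'_j}$, with filtrations
\[\omega^0_{A'_j/S,\tau}(i) = \omega_{A_j/S,\theta}(i) \otimes_{\ccO_S[x]/E_\theta(x)} \mathrm{Hom}_{\ccO_F}(\ccO_E^2, \ccO_F)_{\tau}.\]
The isogenies $f \otimes 1$ and $g \otimes 1$ are $\ccO_D = M_2(\ccO_E)$-linear, respect these filtrations by construction, have degree $p^{4f_\Fp} = (p^{f_\Fp})^4$ because $A'_j \simeq A_j^4$, and their composition is $(g \circ f) \otimes 1 = \iota_{1,\Fp^{-1}} \otimes 1$, the canonical isogeny for $A'_1$. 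Compatibility with polarizations, kernel conditions, and level structures is transported in the same manner as in section \ref{model comparison}.

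Next I would construct the inverse. Over the connected components in the image of $C \to C'$, the tame-level isomorphism gives, for any tuple $(\underline{A'_1},\underline{A'_2},f',g')$, unique Hilbert tuples $\underline{A_j}$ together with $\ccO_D$-linear isomorphisms $A'_j \simeq A_j \otimes_{\ccO_F} \ccO_E^2$. Applying the idempotent $e_0 \in M_2(\ccO_E)$ as in loc.\ cit., the $\ccO_D$-linear isogeny $f'$ restricts to an $\ccO_E$-linear isogeny $e_0 \cdot A'_1 \to e_0 \cdot A'_2$, which under the canonical identification $e_0 \cdot A'_j \simeq A_j \otimes_{\ccO_F} \ccO_E \simeq A_j \oplus A_j$ and further splitting via $\ccO_E = \ccO_F\sqrt{d} \oplus \ccO_F \alpha'$ produces a unique $\ccO_F$-linear isogeny $f:A_1 \to A_2$ with $f' = f \otimes \mathrm{id}_{\ccO_E^2}$; one constructs $g$ analogously from $g'$. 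The verification of the Hilbert Iwahori conditions is then mechanical: $f$ has degree $p^{f_\Fp}$ because $(f \otimes 1)$ has degree $p^{4f_\Fp}$; $\ker f \subset A_1[\Fp]$ because $(\ker f) \otimes \ccO_E^2 = \ker f' \subset A_1[\Fp] \otimes \ccO_E^2$; the polarization, level structure, and filtration conditions all descend from those of $f'$ and $g'$ by stripping off the tensor factor $\ccO_E^2$; the composition $g \circ f$ equals $\iota_{1,\Fp^{-1}}$ because $(g \circ f) \otimes 1 = g' \circ f' = \iota_{1,\Fp^{-1}} \otimes 1$.

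Finally, for Hecke equivariance, I would simply note that both the Hecke action on $\widetilde{Y}_{U_0(\Fp)}(G)$ and that on $\widetilde{Y}'_{U'_0(\Fp)}(G'_\emptyset)$ are defined by applying a prime-to-$p$ quasi-isogeny diagonally to $(A_1, A_2)$ while keeping $f$, $g$ unchanged (up to the quasi-isogeny). Since the operation $A \mapsto A \otimes_{\ccO_F} \ccO_E^2$ is a functor that commutes with quasi-isogenies, $\widetilde{i}$ intertwines the two Hecke actions. The main technical point is really the descent step above; once one has shown that $\ccO_D$-linear isogenies between tensor-constructed abelian schemes come uniquely from $\ccO_F$-linear isogenies between the underlying ones, everything else is formal.
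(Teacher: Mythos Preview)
Your plan is the right shape and, since the paper's own proof simply defers to \cite[Lemma~4.2.1]{2020arXiv200100530D} with the remark that Pappas--Rapoport filtrations cause no new difficulties, what you have written is essentially an unpacking of that deferred argument. The treatment of well-definedness, of the filtration compatibility, and of Hecke equivariance is fine.

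There is, however, one genuine gap in the descent step. You assert that applying $e_0$ and then the $\ccO_F$-splitting $\ccO_E = \ccO_F\sqrt{d}\oplus\ccO_F\alpha'$ to the isogeny $f'$ ``produces a unique $\ccO_F$-linear isogeny $f:A_1\to A_2$ with $f' = f\otimes\id_{\ccO_E^2}$.'' But $\ccO_D$-linearity alone does not force this: by Morita for $M_2$ together with the extension-of-scalars adjunction one finds
\[
\Hom_{\ccO_D}\bigl(A_1\otimes_{\ccO_F}\ccO_E^2,\;A_2\otimes_{\ccO_F}\ccO_E^2\bigr)\;\simeq\;\Hom_{\ccO_F}(A_1,A_2)\otimes_{\ccO_F}\ccO_E,
\]
which is strictly larger than $\Hom_{\ccO_F}(A_1,A_2)$; a general $\ccO_D$-linear $f'$ has the shape $f_1\otimes 1 + f_2\otimes\sqrt{d}$ with $f_1,f_2\in\Hom_{\ccO_F}(A_1,A_2)$. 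What singles out the subspace $f_2=0$ is the polarization compatibility $(f')^\vee\lambda'_2 f' = \lambda'_1\,\iota'_1(\varpi_\Fp)$: since $\lambda'_j=\lambda_j\otimes\Tr$ with $\Tr$ the skew-Hermitian form built from $c$, this condition (together with its companion for $g'$ and the relation $g'\circ f'=\iota_\Fp\otimes 1$) forces $f'$ to lie in the $c$-fixed part of the Hom-space, i.e.\ in $\Hom_{\ccO_F}(A_1,A_2)\otimes 1$. Once you insert this argument, the rest of your verification goes through exactly as you describe.
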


\begin{proof}
The same proof of the corresponding result of \cite[lemma 4.2.1]{2020arXiv200100530D} holds in this case. There are no extra complications which arise from considering the Pappas-Rapoport filtrations.
\end{proof}

Furthermore, for each $\tau \in \widehat{\Theta}_E$ lifting some $\theta \in \widehat{\Theta}_F$, we have canonical isomorphisms \[\ccH^1_{A_i,\tau^j} \xrightarrow{\sim} \ccH^1_{A^\prime_i,\tau^j} = \tilde{i}^* \ccH^1_{i,\tau^j},\]
such that the sections $f^{\prime *} \omega^0_{2,\tau^i} \to \omega^0_{1,\tau^i}$ and $g^{\prime *} \omega^0_{1,\tau^i} \otimes \Fp \to \omega^0_{2,\tau^i}$ correspond to $f^{*} \omega_{A_2,\theta^i} \to \omega_{A_1,\theta^i}$ and $g^{\prime *} \omega_{A_1 \otimes \Fp^{-1},\theta^i} \to \omega_{A_2,\theta^i}$. In particular $\tilde{i}$ restricts to isomorphisms 

\[\widetilde{Y}_U(G)_{\phi(I),J} \to \widetilde{Y}'_{U'}(G^\prime_\emptyset)_{\phi^\prime(I),J} \times_{C'} C,\]

and descends over $\overline{\bF}_p$ to Hecke equivariant isomorphisms  

\[\overline{Y}_0(\Fp)_{\phi(I),J} \xrightarrow{\sim} \overline{Y}_{\emptyset,0}(\Fp)_{\phi(I),J}.\]

\section{Partial Raynaud \texorpdfstring{$\mathbb{F}_q$}{Fq}-schemes and Partial Essential Frobenius isogenies }
In the next chapter we will heavily make use of Crystalline Dieudonn\'{e} theory over smooth bases as described in \cite{BerthelotMessing} and of Raynaud's theory (\cite{BSMF_1974__102__241_0}) of $\bF_q$-vector schemes. We thus opt to dedicate an entire chapter to introduce the objects and results we will need in the following. Furthermore, at the end of the chapter, we will make use of this theory to construct a section of the forgetful morphism $\overline{Y}_0(\Fp) \to \overline{Y}$ which will restrict to an isomorphism $\overline{Y}_0(\Fp)_{\Theta_F \setminus \Sigma_\infty,\emptyset} \xrightarrow{\sim} \overline{Y} $, compatibly with the stratifications. This can be seen as a first case of our main theorem \ref{unitary thm}.

\subsection{Raynaud Theory}

\subsubsection{Raynaud $\bF_q$-vector schemes}\label{full raynaud}
We start by briefly recalling Raynaud's theory (\cite{BSMF_1974__102__241_0}) of $\bF_q$-vector schemes, tailored to our situation. Let $\bF = \bF_q$ be a finite field of characteristic $p$, and $S$ be a scheme over  $\textrm{Spec}(\ccO)$, the spectrum of a discrete valuation ring $ \ccO$, with residue characteristic $p$, that contains all $(q-1)$-th roots of unity. We apologize for this temporay different use of $\ccO$ and $\bF$. Fix an identification $\mu_{q-1} \subset \ccO$ and let $M =  \Hom(\bF^\times,\mu_{q-1})$ be the character group of $\bF^\times$. Every embedding $\theta \in \Theta =  \{ \bF \to \overline{\bF}_p \}$ yields, via Teichm\"{u}ller lifts, a character that we call the fundamental character $\chi_\theta$. In particular, for any $\theta \in \Theta$, we have $\chi_{\theta}^p = \chi_{\phi \circ \theta}$ and any nontrivial character $\chi$ has a unique $p$-adic expression: 
\[\chi = \prod_\theta \chi_\theta^{n_\theta} \qquad 0 \leq n_\theta \leq p-1,\]
where not all $n_\theta$ are identically $0$ or $p-1$. In both of these case, $\chi$ is the trivial character.

Let $G/S$ be a finite flat $\bF$-vector scheme, that is a finite flat group scheme with an action of $\bF$. If we write $\mathcal{I}$ for the augmentation ideal of $G/S$, the action $\bF$ yields a decomposition
\[ \mathcal{I} = \bigoplus_{\chi \in M} \mathcal{I}_\chi\]
into locally free sheaves $I_\chi$ where the action of $\bF^\times$ factors through $\chi$.

We say that $G = \underline{\textrm{Spec}}_S(\mathcal{A})$ is a Raynaud $\bF$-vector scheme, or a Raynaud group scheme, if it satisfies condition $( \star \star)$ of \cite{BSMF_1974__102__241_0}. That is, for each fundamental character $\chi_\theta$, with $\theta \in \Theta$, the locally free sheaf $\mathcal{I}_{\chi_\theta}$ is a line bundle. For such a group scheme, and any $\theta \in \Theta$, the multiplication and comultiplication on $\mathcal{A}$ yield morphisms 
\[s_\theta: \mathcal{I}_{\chi_\theta}^p \to \mathcal{I}_{\chi_{\phi \circ \theta}} \quad \textrm{ and } \quad t_\theta: \mathcal{I}_{\chi_{\phi \circ \theta}} \to \mathcal{I}_{\chi_\theta}^p,\]
such that $s_\theta \circ t_\theta$ and $t_\theta \circ s_\theta$ are given by multiplication by $w$ for some fixed $w \in p\ccO^\times$. 

Conversely, consider any set of tuples $(\ccL_{\theta},s_\theta,t_\theta)_{\theta \in \Theta}$ where each $\ccL_\theta$ is a line bundle, $s_\theta: \ccL_\theta^p \to \ccL_{{\phi \circ \theta}}$, $t_\theta: \ccL_{\phi \circ \theta} \to \ccL_\theta^p$ are morphisms such that $s_\theta \circ t_\theta$ and $t_\theta \circ s_\theta$ are given by multiplication by $w$ for some fixed $w \in p\ccO^\times$. We call such a set of tuples a Raynaud datum. Set $\mathcal{L} = \bigoplus_{\theta \in \Theta} \ccL_\theta$ and $ \mathcal{A} = \underline{\textrm{Sym}}_{\ccO_S}(\ccL) / \mathcal{I} $ where $\mathcal{I}$ is the ideal generated by $(s_\theta-1)\ccL^p_\theta$ ranging over $\theta \in \Theta$. Then $\mathcal{A}$ comes with an action of $\bF$ by letting $\bF$ act on each $\ccL_\theta$ via $\chi_\theta$ and is given the structure of a Hopf algebra by declaring its Cartier dual to be $\textrm{Sym}_{\ccO_S} (\mathcal{L}^\vee)/ \mathcal{J}$, where $\mathcal{J}$ is the ideal generated by $(t^\vee_{\theta}-1)\mathcal{L}_\theta^{-p}$. It can be shown that 
\[G = \underline{\textrm{Spec}}_S(\mathcal{A})\]
is a Raynaud group scheme and Raynaud shows that these two operations yield a one to one correspondence between isomorphism classes of Raynaud group schemes over $S$ and isomorphism classes of Raynaud data. Under this correspondence, the dual of a Raynaud group scheme, which is also Raynaud, is given by the Raynaud datum $(\ccL^\vee_\theta, t^\vee_\theta,s_\theta^\vee)$.

Back to our situation, we let $\ccO$ be as in \ref{unitary moduli} and let $\Fp = \fp_1 \cdots \fp_n$ be a product of distinct primes $\fp_i \vert p$ of $F$. We say that a finite flat group scheme $G/S$ with an action of $\ccO_E/\Fp \simeq \bigoplus \ccO_E/\fq_i \oplus \bigoplus \ccO_E / \fq^c_i$, is a Raynaud $\ccO_E / \Fp$-scheme if each component $G[\fq_i]$, respectively $G[\fq^c_i]$, is a Raynaud $\ccO_E / \fq_i$, respectively  $\ccO_E / \fq^c_i$ vector scheme. Let $S = \widetilde{Y}_{U^\prime_0(\Fp)}(G^\prime_\Sigma)$, with $\Sigma$, $G^\prime_\Sigma$ and $U^\prime$ as usual. Write $H = e_0 \ker f$, be the reduced kernel of the universal isogeny $f$ over $S$. We now prove the following lemma which justifies the prior discussion:

\begin{lem}\label{kernel is raynaud}
$H/S$ is a $\ccO_E/\Fp$-Raynaud scheme.
\end{lem}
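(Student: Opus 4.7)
The plan is prime-by-prime reduction, an order computation, and then a local verification. First, since $H$ is annihilated by $\Fp \subset \ccO_E$ and $\ccO_E/\Fp\ccO_E \simeq \bigoplus_{\fq \mid \Fp \ccO_E} \bF_\fq$ (using that each $\fp \mid \Fp$ splits in $E$), the action of $\ccO_E$ on $H$ splits off an $\fq$-part for each prime $\fq$ of $\ccO_E$ over $\Fp$, so $H = \bigoplus_\fq H[\fq]$. It thus suffices to show that each $H[\fq]$ is a Raynaud $\bF_\fq$-vector scheme.

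Next, I would check that $H[\fq]$ is finite locally free of constant order $|\bF_\fq| = p^{f_\fp}$. The moduli definition forces $\ker f$ to have degree $p^{4f_\Fp}$ and to decompose as $\prod_\fp (H_\fq \oplus H_{\fq^c})$ with $|\ker f[\fq]| = p^{2f_\fp}$. Since $\ker f$ carries an $\ccO_D \simeq M_2(\ccO_E)$-action and $e_0$ is a rank-one idempotent of $M_2(\ccO_E)$, Morita equivalence gives $\ker f[\fq] \simeq H[\fq]^{\oplus 2}$ as $\ccO_E$-modules, and hence $|H[\fq]| = p^{f_\fp}$. Thus $H[\fq]$ is a ``rank one'' $\bF_\fq$-vector scheme.

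With $H[\fq]$ of rank $|\bF_\fq|$ in hand, Raynaud's criterion \cite{BSMF_1974__102__241_0} reduces the problem to showing that the fundamental-character pieces $\mathcal{I}_{\chi_\tau}$ of its augmentation ideal $\mathcal{I}$, indexed by $\tau \in \widehat{\Theta}_{E,\fq}$, are invertible $\ccO_S$-modules. Since these pieces each have generic rank one (by a rank count on $\mathcal{I}$), the content is the constancy of the rank in the special fibre. This is the main step, and the main obstacle.

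I plan to verify it using the explicit local picture from section \ref{iwahori unitary local}. Near any closed point of $S$, the bases $(e_{1,\beta},e_{2,\beta})$ and $(e'_{1,\beta},e'_{2,\beta})$ constructed in the proof of theorem \ref{unitary iwahori structure thm} trivialize $\ccH^1_{\tilde{\beta}}$ and $\ccH^{1\prime}_{\tilde{\beta}}$ compatibly with the Pappas-Rapoport filtration on $A_1$ and $A_2$, and put $f^*_{\tilde{\beta}}$ in one of the matrix forms \eqref{matrix lift}. Telescoping over the ramified lifts $\tau^1,\ldots,\tau^{e_\fp}$ of $\tau$ and tracking the cokernel $\omega^0_{A_1,\tau}(i)/f^*\omega^0_{A_2,\tau}(i)$ step by step through the four cases $\beta \in \Theta_y \setminus \Theta'_y$, $\beta \in \Theta'_y \setminus \Theta_y$, $\beta \in \Theta_y \cap \Theta'_y$, or $\beta \notin \Theta_{F,\Fp}$, and for each of the values $s_{\tau^i} \in \{0,1,2\}$, produces an explicit local generator of $\mathcal{I}_{\chi_\tau}$. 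This directly exhibits $\mathcal{I}_{\chi_\tau}$ as a line bundle and establishes the Raynaud property. The hard part is the bookkeeping for this case analysis, but no conceptual ingredient beyond the local model of chapter 4 is needed.
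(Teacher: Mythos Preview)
Your reduction to individual primes $\fq$ and the rank computation $|H[\fq]| = p^{f_\fp}$ are fine and match the paper. The gap is in what you call ``the main step''.

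First, the ``main obstacle'' you identify is not an obstacle. Each $\mathcal{I}_{\chi_\tau}$ is a direct summand of the structure sheaf of $H[\fq]$, which is locally free over $S$; hence $\mathcal{I}_{\chi_\tau}$ is itself locally free, and its rank is \emph{locally constant}. So there is nothing to prove about ``constancy in the special fibre'' once you know the rank somewhere on each connected component. The paper's argument is simply this: $S$ is flat over the complete DVR $\ccO$, so every connected component of the special fibre lies in a connected component of $S$ with nonempty generic fibre; over a characteristic-zero geometric point, $H[\fq]$ is \'etale of order $|\bF_\fq|$, hence is the constant group scheme $\underline{\bF_\fq}$, whose Hopf algebra is the function algebra on $\bF_\fq$ and visibly has each character piece of rank one. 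That is the whole proof.

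Second, your proposed local-model computation does not prove what you claim. Tracking the cokernels $\omega^0_{A_1,\tau}(i)/f^*\omega^0_{A_2,\tau}(i)$ computes the $\tau$-component of $\omega_H = \mathcal{I}/\mathcal{I}^2$, not $\mathcal{I}_{\chi_\tau}$ itself. The relationship between them is $\omega_{H,\tau} \simeq \mathcal{I}_{\chi_\tau}/s_{\phi^{-1}\circ\tau}(\mathcal{I}_{\chi_{\phi^{-1}\circ\tau}}^{\otimes p})$, so knowing $\omega_{H,\tau}$ is a line bundle (or even exhibiting a generator) does not determine the rank of $\mathcal{I}_{\chi_\tau}$: at points where $s_{\phi^{-1}\circ\tau}$ is nonzero, $\omega_{H,\tau}$ could vanish while $\mathcal{I}_{\chi_\tau}$ remains a line bundle, and conversely a rank-two $\mathcal{I}_{\chi_\tau}$ could still have rank-one image in $\omega_H$. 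The case analysis you sketch is thus computing the wrong object. (It is essentially the computation carried out later in Lemma~\ref{raynaud vanishing}, but there the Raynaud structure is already known and one is deducing vanishing of the $s_\tau$, $t_\tau$, which is the reverse direction.)

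In short: drop the local-model argument and use flatness to reduce to the generic fibre, where the group scheme is \'etale and the claim is immediate.
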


\begin{proof}
We first remark that $H$ is $\ccO_E/\Fp$-module scheme. Write $\mathcal{I}$ for the augmentation ideal of $H$ and pick a prime $\fp_i \vert \Fp$ and consider the prime $\fq_i$ over $\fp_i$. By the definition of the moduli problem in section \ref{iwahori unitary def}, $H[\fq_i]$ has rank $f_{\fq_i}$. Since the sheaves $\mathcal{I}_\tau = \mathcal{I}_{\chi_\tau}$ for $\tau \in \widehat{\Theta}_{E,\fq_i}$ are locally free, they must have constant rank on connected components. Furthermore, since $S$ is flat over $\ccO$, which is complete, every connected component of the special fiber $S_\bF$ of $S$ lies in a connected component of $S$ which thus has non-empty generic fiber. Therefore, we may check on the generic fiber $S_L$ of $S$ and in fact, we may check after taking the base change to an algebraic closure. Let $x \in S(\overline{L})$ be a closed point, then $H_x$ is \'{e}tale over $x$ since $L$ is of characteristic zero. Since $H_x$ has rank one over $\ccO_E/\fq_i$, its underlying Hopf algebra must be isomorphic to the Hopf algebra of functions $\ccO_E/\fq_i \to \overline{L}$. The result then follows and the case of $\fq^c_i$ is entirely analogous.
\end{proof}

\begin{rem}
The same proof shows that the universal kernel $I / \widetilde{Y}_{U_0(\Fp)}(G)$ over the Iwahori level Hilbert modular variety is also a Raynaud $\ccO_F/\Fp$-module scheme given by the Raynaud datum $(\ccL_\theta,s_\theta,t_\theta)$. Recall from section \ref{quaternion hilb iwahori} that we have the morphism $\tilde{i} : \widetilde{Y}_{U_0(\Fp)}(G) \to \widetilde{Y}^\prime_{U^\prime_0(\Fp)}(G^\prime_\emptyset)$ which, over $W$, identifies the former variety as a collection of connected components of the latter. Under this morphism, we have $\tilde{i}^*H = I \otimes_{\ccO_F} \ccO_{E}$ and so for every prime $\fp \vert \Fp$ an $\ccO_F/\fp = \ccO_E/\fq$-linear isomorphism $\tilde{i}^*(H[\fq]) \to I[\fp]$. In particular, we obtain for every $\theta \in \widehat{\Theta}_{F,\fp}$ an isomorphism $\tilde{i}^*\ccL_{\tilde{\theta}} \to \ccL_\theta$.
\end{rem}

\subsubsection{Partial Raynaud $\bF_q$-schemes}\label{partial raynaud}

In this section we provide a mild generalization of the notion of Raynaud group schemes in characteristic $p$, that we will need in the next chapter.

We keep the same notation as in the previous section. From now on $S$ will denote a scheme over $\ccO/\mathfrak{m}$, in particular the element $w \in p \ccO^\times$ mentioned above is zero over $S$. 

\begin{defn}
Let $J \subset \Theta$ be any non empty subset, a partial Raynaud datum for $J$ is a collection of triples $(\mathcal{L}_\theta, s_\theta,t_\theta)_{\theta \in \Theta} $ where $\mathcal{L}_\theta$ is a line bundle over $S$ if $\theta \in J$ and the zero sheaf otherwise, and \[s_\theta: \ccL_\theta^p \to \ccL_{\phi \circ \theta} \qquad t_\theta: \ccL_{\phi \circ \theta} \to \ccL_\theta^p\]
are morphisms of sheaves such that $s_\theta \circ t_\theta = 0$, $t_\theta \circ \theta = 0$. 
\end{defn} 

\begin{rem}
Note that if either $\theta \notin J$ or $\phi \circ \theta \notin J$, then both $s_\theta$ and $t_\theta$ are automatically zero. If $J = \Theta$, then we recover the notion of a Raynaud datum as defined above in section \ref{full raynaud}, hence the name "Partial Raynaud datum". 
\end{rem}

Given a partial Raynaud datum $(\mathcal{L}_\theta, s_\theta,t_\theta)$, we define a finite flat group scheme $H/S$ of rank $p^{\vert J \vert}$ and with an action of $\bF$ as follows:

For each $\theta \in \Theta$, let $\bF$ act on $\ccL_\theta$ via $\chi_\theta$ and set $\ccL = \bigoplus \ccL_\theta$, and $\mathcal{A}_H = (\textrm{Sym}_{\ccO_S} \mathcal{L}) / \mathcal{I}$, where $\mathcal{I}$ is the ideal of $\textrm{Sym}_{\ccO_S} \mathcal{L}$ generated by $(s_\theta - 1)\ccL^p_\theta$. $\mathcal{A}$ then comes with a natural action of $\bF$. As a sheaf with $\bF$ action, without considering the algebra structure, we have 
\[\mathcal{A}_H = \ccO_S \oplus \bigoplus \left( \bigotimes \ccL_\theta^{m_\theta} \right) = \ccO_S \oplus \bigoplus_{\chi \in M} \ccL_\chi,\] where the first sum runs over tuples $(m_\theta)_{\theta \in \Theta_F}$ such that $0 \leq m_\theta \leq p-1$ and not all $m_\theta = 0$, and $\mathcal{L}_\chi = \bigotimes \bigotimes \ccL_\theta^{m_\theta}$ where the $m_\theta$ are given uniquely by $\chi = \prod_\theta \chi_\theta^{m_\theta}$. $\bF^\times$ acts on $\ccL_\chi$ via $\chi$. It follows that $\mathcal{A}_H$ is locally free of rank $p^{\vert J \vert}$ and we set 
\[H = \underline{\textrm{Spec}}_S \, \mathcal{A}_H.\]

We give $\mathcal{A}_H$ a bialgebra structure by declaring its dual to be $\mathcal{A}_{H^\vee} = (\textrm{Sym}_{\ccO_S} \mathcal{L}^\vee) / \mathcal{J}$, where $\mathcal{J}$ is the ideal of $\textrm{Sym}_{\ccO_S} \mathcal{L}^\vee$ generated by $(t_\theta^\vee - 1)(\ccL^\vee_\theta)^p$. Similarly as before, as a sheaf, we have 
\[\mathcal{A}_{H^\vee} = \ccO_S \oplus \bigoplus_{\chi \in M} \ccL^\vee_\chi = \mathcal{A}^\vee_H.\]
The comultiplication $\mathcal{A}_H \to \mathcal{A}_H \otimes_{\ccO_S} \mathcal{A}_H$ is thus given by the dual of the multiplication map $\mathcal{A}_{H^\vee} \otimes_{\ccO_S} \mathcal{A}_{H^\vee} \to \mathcal{A}_{H^\vee}$. The unit and counit $\ccO_S \to \mathcal{A}_H \to \ccO_S$ are given by the natural copy of $\ccO_S$ in $\mathcal{A}_H$ and the antipode $\iota$ is given by $\iota(1) = 1$ where $1$ is in the copy of $\ccO_S$ and $\iota: \mathcal{L}_\theta \to \ccL_\theta$ is multiplication by $-1$.

\begin{prop}\label{proposition is a group scheme}
 The data defined above defines a Hopf algebra structure on $\mathcal{A}$ and so gives $H$ the structure of a finite flat group scheme with an action of $\bF$.
\end{prop}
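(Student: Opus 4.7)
The plan is to verify the Hopf algebra axioms by working Zariski locally on $S$, where we may trivialize each line bundle $\mathcal{L}_\theta$ for $\theta \in J$. Fix local generators $x_\theta$ of $\mathcal{L}_\theta$ and dual generators $y_\theta$ of $\mathcal{L}_\theta^\vee$. Then $s_\theta$ (when both $\theta$ and $\phi\theta$ lie in $J$) is multiplication by an element $a_\theta \in \mathcal{O}_S$, and likewise $t_\theta$ is multiplication by an element $b_\theta \in \mathcal{O}_S$; the condition $s_\theta \circ t_\theta = t_\theta \circ s_\theta = 0$ translates to $a_\theta b_\theta = 0$. With the conventions $x_\theta = 0$ and $y_\theta = 0$ for $\theta \notin J$ (and $a_\theta = 0$ whenever $\phi\theta \notin J$, $b_\theta = 0$ whenever $\phi\theta \notin J$), the algebras admit the explicit presentations
\[
\mathcal{A}_H \simeq \mathcal{O}_S[x_\theta\ :\ \theta \in J] / (x_\theta^p - a_\theta x_{\phi\theta})_{\theta \in J}, \qquad \mathcal{A}_{H^\vee} \simeq \mathcal{O}_S[y_\theta\ :\ \theta \in J] / (y_\theta^p - b_\theta y_{\phi\theta})_{\theta \in J}.
\]

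Next I would construct the perfect $\mathcal{O}_S$-linear duality between $\mathcal{A}_H$ and $\mathcal{A}_{H^\vee}$. Using the decomposition $\mathcal{A}_H = \mathcal{O}_S \oplus \bigoplus_{\chi \in M} \mathcal{L}_\chi$ and the analogous one for $\mathcal{A}_{H^\vee}$, a basis is given by monomials $\prod_{\theta \in J} x_\theta^{m_\theta}$ and $\prod_{\theta \in J} y_\theta^{m_\theta}$ with $0 \le m_\theta \le p-1$. Equipping $\mathrm{Sym}_{\mathcal{O}_S}(\mathcal{L})$ with the standard pairing against $\mathrm{Sym}_{\mathcal{O}_S}(\mathcal{L}^\vee)$ (normalized by divided powers on each $\mathcal{L}_\theta$-factor), one checks this pairing descends to $\mathcal{A}_H \times \mathcal{A}_{H^\vee}$ and becomes perfect there: the descent uses precisely that the relation $x_\theta^p - a_\theta x_{\phi\theta}$ pairs to zero against $y_\theta^p - b_\theta y_{\phi\theta}$, which in turn uses the degree-one vanishing together with $a_\theta b_\theta = 0$. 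This identifies $\mathcal{A}_{H^\vee}$ with $\mathcal{H}om_{\mathcal{O}_S}(\mathcal{A}_H,\mathcal{O}_S)$ as an $\mathcal{O}_S$-module, so the multiplication on $\mathcal{A}_{H^\vee}$ dualizes to a well-defined $\mathcal{O}_S$-linear comultiplication $\Delta : \mathcal{A}_H \to \mathcal{A}_H \otimes_{\mathcal{O}_S} \mathcal{A}_H$. The unit, counit and antipode proposed in the statement are then tautologically compatible with the pairing.

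The substantive step, and the step I expect to be the main obstacle, is verifying that $\Delta$ is an algebra homomorphism (equivalently, that the multiplication on $\mathcal{A}_H$ is a coalgebra morphism for the dual coproduct). By the perfect pairing this is equivalent to checking that the composite
\[
\mathcal{A}_{H^\vee} \otimes \mathcal{A}_{H^\vee} \otimes \mathcal{A}_{H^\vee} \otimes \mathcal{A}_{H^\vee} \xrightarrow{\mathrm{id}\otimes\tau\otimes\mathrm{id}} \mathcal{A}_{H^\vee}^{\otimes 4} \xrightarrow{m \otimes m} \mathcal{A}_{H^\vee} \otimes \mathcal{A}_{H^\vee} \xrightarrow{m} \mathcal{A}_{H^\vee}
\]
factors through the relations on both copies of the leftmost tensor factors and equals $m \circ (m \otimes m)$, a symmetry-and-associativity statement that is routine on $\mathrm{Sym}_{\mathcal{O}_S}(\mathcal{L}^\vee)$; the only non-trivial point is that the relation $y_\theta^p - b_\theta y_{\phi\theta}$ pairs trivially against $x_\theta^p - a_\theta x_{\phi\theta}$ under the iterated pairing induced by $\Delta$, which is exactly $a_\theta b_\theta = 0$. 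Commutativity and cocommutativity of the resulting Hopf algebra, as well as the antipode axiom $m \circ (\iota \otimes \mathrm{id}) \circ \Delta = \eta \circ \epsilon$, then follow by direct verification on the monomial basis, using only that in characteristic $p$ we have the binomial vanishing $\binom{p}{i} = 0$ for $0 < i < p$.

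Finally, once the Hopf algebra axioms are established locally, they glue to give the global Hopf algebra structure on $\mathcal{A}_H$: the local algebra is independent of the trivialization because the relations $(s_\theta - 1)\mathcal{L}_\theta^p$ and $(t_\theta^\vee - 1)(\mathcal{L}_\theta^\vee)^p$ are intrinsic. The $\mathbb{F}$-action on $H = \underline{\mathrm{Spec}}_S\,\mathcal{A}_H$ is compatible with the group scheme structure because the comultiplication on each weight component $\mathcal{L}_\chi$ lands in $\bigoplus_{\chi_1 \chi_2 = \chi} \mathcal{L}_{\chi_1} \otimes \mathcal{L}_{\chi_2}$, as is immediate from the definition via duality. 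This exhibits $H$ as a finite flat group scheme over $S$ of rank $p^{|J|}$ with an action of $\mathbb{F}$, as claimed. Note that when $J = \Theta$, our construction reduces to the standard Raynaud group scheme attached to the Raynaud datum in characteristic $p$ (where $w = 0$ automatically), so the proposition genuinely extends the classical theory.
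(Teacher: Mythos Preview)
Your approach---trivialize locally and verify the Hopf algebra axioms by hand---is exactly the explicit-calculation route the paper acknowledges but declines: ``One can check with explicit calculations that $\mathcal{A}$ is a Hopf algebra but we prefer to give a slicker proof.'' Instead, the paper extends the partial Raynaud datum to a \emph{full} one by setting $(\mathcal{L}'_\theta, s'_\theta, t'_\theta) = (\mathcal{O}_S, 0, 0)$ for $\theta \notin J$, obtaining a genuine Raynaud group scheme $H'$ from the classical theorem. Because the extension uses $s'_\theta = 0$ on the new indices, the natural surjection $\mathcal{A}_{H'} \to \mathcal{A}_H$ is a ring map, giving a closed immersion $H \hookrightarrow H'$; and because $t'_\theta = 0$ on the new indices as well, the dual $\mathcal{A}_{H^\vee} \to \mathcal{A}_{(H')^\vee}$ is also a ring map, so the proposed comultiplication on $\mathcal{A}_H$ is simply the one restricted from $\mathcal{A}_{H'}$. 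Hence $H$ is a closed subgroup scheme of $H'$, and the Hopf axioms are inherited for free.

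Your proposal falters precisely at the step the paper's trick circumvents. The displayed composite you write down,
\[
\mathcal{A}_{H^\vee}^{\otimes 4} \xrightarrow{\mathrm{id}\otimes\tau\otimes\mathrm{id}} \mathcal{A}_{H^\vee}^{\otimes 4} \xrightarrow{m \otimes m} \mathcal{A}_{H^\vee}^{\otimes 2} \xrightarrow{m} \mathcal{A}_{H^\vee},
\]
is just iterated multiplication on $\mathcal{A}_{H^\vee}$, and your claim that it ``equals $m \circ (m \otimes m)$'' is the commutativity and associativity of $m_{H^\vee}$---not the bialgebra axiom. The genuine condition $\Delta_H \circ m_H = (m_H \otimes m_H)\circ(\mathrm{id}\otimes\tau\otimes\mathrm{id})\circ(\Delta_H \otimes \Delta_H)$ dualizes to a statement involving both $m_{H^\vee}$ and $m_H^\vee$ simultaneously; it does not reduce to a statement about $\mathcal{A}_{H^\vee}$ alone, and the relation $a_\theta b_\theta = 0$ is not the crux (indeed, the full Raynaud Hopf structure holds even when $s_\theta t_\theta = w \ne 0$). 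Your strategy can certainly be completed---the paper says so---but the proposal as written leaves this key verification as an assertion rather than an argument.
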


\begin{proof}
One can check with explicit calculations that $\mathcal{A}$ is a Hopf algebra but we prefer to give a slicker proof, inspired by \cite[Lemma 4.1.1]{2020arXiv200100530D}.

Define a full Raynaud datum $(\ccL^\prime_\theta, s^\prime_\theta,t^\prime_\theta)$ by setting $(\ccL^\prime_\theta, s^\prime_\theta,t^\prime_\theta) = (\ccL_\theta,s_\theta,t_\theta)$ for $\theta \in J$ and  for $(\ccL^\prime_\theta, s^\prime_\theta,t^\prime_\theta) = (\ccO_S,0,0)$ for $\theta \notin J$. Then, by the previous section, this gives rise to a Raynaud group scheme $H^\prime = \underline{\textrm{Spec}}_S (\mathcal{A}^\prime)$ and 
\[\mathcal{A}^\prime = (\textrm{Sym}_{\ccO_S} \mathcal{L}^\prime) / \mathcal{I}^\prime = \mathcal{A} \oplus \bigoplus_{\theta \notin J}  \textrm{Sym}_{\ccO_S} (\ccO_S) / \ccO_S^{\otimes p},\]
Since $s_\theta = 0$ whenever $\theta \notin J$, the natural projection $\mathcal{A}^\prime \to \mathcal{A}$ is compatible with the ring structures and this yields a closed immersion of schemes with $\bF$-action $H \hookrightarrow H^\prime$. To show that $H$ is a group, it suffices to show that the comultiplication on $H$ is compatible via the embedding above to the one on $H^\prime$. That is, the diagram
\[
\begin{tikzcd}
    (\mathcal{A}^\prime)^\vee \otimes_{\ccO_S} (\mathcal{A}^\prime)^\vee \arrow[r] & (\mathcal{A}^\prime)^\vee \\
    \mathcal{A}^\vee \otimes_{\ccO_S} \mathcal{A}^\vee \arrow[r] \arrow[u] & \mathcal{A}^\vee \arrow[u]
\end{tikzcd}
\]
is a commutative. Since $t_\theta = 0$ for all $\theta$ such that $\phi \circ \theta \notin J$, we conclude that the multiplication on $H$ is compatible with the one on $H^\prime$ hence $H^\prime$ is a subgroup and thus a group.
\end{proof}

\begin{defn}
A morphism of Raynaud data $f: (\ccL_\theta,s_\theta,t_\theta) \to (\ccL'_\theta,s'_\theta,t'_\theta)$ is a collection of morphisms $f_\theta: \ccL_\theta \to \ccL^\prime_\theta$ such that $s'_\theta \circ f^p_\theta = f_{\phi \circ \theta} \circ s_\theta$ and $t'_\theta \circ f_{\phi \circ \theta} = f^p_{ \theta} \circ t_\theta$.
\end{defn}

It is clear that such a morphism yields an $\bF$-linear homomorphism $f:H' \to H$ of associated groups and any such homomorphism of groups comes from a morphism of Raynaud data. We immediately have the following corollary:

\begin{cor}\label{produce subgroup}
Let $(\ccL_\theta,s_\theta,t_\theta)$ be a Raynaud datum for $J$ and let $J' \subset J$ be a subset such that 
\[
  \theta \notin J^\prime,\phi \circ \theta \in J' \Rightarrow s_\theta = 0 \, \textrm{   and   } \, \theta \in J^\prime,\phi \circ \theta \notin J' \Rightarrow  t_\theta = 0.
\]
Let $(\ccL'_\theta,s'_\theta,t'_\theta) = (\ccL_\theta,s_\theta,t_\theta)_{\theta \in J'}$ be the sub-Raynaud datum for $J'$. Write $H$ for the group scheme attached to $(\ccL_\theta,s_\theta,t_\theta)$ and $H^\prime$ the group for $(\ccL'_\theta,s'_\theta,t'_\theta)$. Then the natural map $(\ccL_\theta,s_\theta,t_\theta) \to (\ccL'_\theta,s'_\theta,t'_\theta)$ is a morphism of Raynaud data which induces the closed immersion $H' \hookrightarrow H$, compatibly with all structures.
\end{cor}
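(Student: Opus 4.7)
The plan is to make the ``natural map'' of Raynaud data explicit and then verify directly that it satisfies the two compatibility conditions, using the hypotheses on $J^\prime$ at precisely the places where they are needed. Concretely, I would define $f_\theta: \ccL_\theta \to \ccL^\prime_\theta$ by $f_\theta = \id_{\ccL_\theta}$ if $\theta \in J^\prime$ and $f_\theta = 0$ otherwise (which is forced since $\ccL^\prime_\theta = 0$ for $\theta \notin J^\prime$). With this definition, the verification that $\{f_\theta\}$ gives a morphism of partial Raynaud data splits into four cases indexed by whether $\theta$ and $\phi \circ \theta$ lie in $J^\prime$. When both lie in $J^\prime$, both compatibilities hold tautologically since $f_\theta = \id$ and $s_\theta^\prime,t_\theta^\prime$ are by definition the restrictions of $s_\theta,t_\theta$. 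When exactly one of $\theta$ or $\phi \circ \theta$ lies outside $J^\prime$, one of the two compatibility identities is trivial, while the other reduces to the vanishing of $s_\theta$ (in the case $\theta \notin J^\prime, \phi\circ\theta \in J^\prime$) or of $t_\theta$ (in the case $\theta \in J^\prime, \phi\circ\theta \notin J^\prime$), which are exactly the two hypotheses imposed on $J^\prime$. The case when both $\theta,\phi\circ\theta \notin J^\prime$ is trivial.

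Having established that $\{f_\theta\}$ is a morphism of Raynaud data, I would then observe that on the associated symmetric algebras it yields a map $\textrm{Sym}_{\ccO_S}(\ccL) \to \textrm{Sym}_{\ccO_S}(\ccL^\prime)$ which is surjective (being the identity on the summands indexed by $J^\prime$ and zero elsewhere). The key compatibility check is that this map carries the defining ideal $\mathcal{I}$ of $\mathcal{A}_H$ into the defining ideal $\mathcal{I}^\prime$ of $\mathcal{A}_{H^\prime}$: for a local section $\ell$ of $\ccL_\theta^p$ the generator $(s_\theta - 1)\ell$ maps to $f_{\phi\circ\theta}(s_\theta(\ell)) - f_\theta^p(\ell)$, which by the morphism of Raynaud data relation equals $(s_\theta^\prime - 1)\bigl(f_\theta^p(\ell)\bigr) \in \mathcal{I}^\prime$. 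Therefore we obtain a surjection of $\ccO_S$-algebras $\mathcal{A}_H \twoheadrightarrow \mathcal{A}_{H^\prime}$, inducing a closed immersion $H^\prime \hookrightarrow H$.

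Finally, for compatibility with the full group scheme structure, I would note that the dual collection $\{f_\theta^\vee\}$ furnishes a morphism of the dual Raynaud data $(\ccL_\theta^{\prime\vee}, t_\theta^{\prime\vee}, s_\theta^{\prime\vee}) \to (\ccL_\theta^\vee, t_\theta^\vee, s_\theta^\vee)$ by a symmetric verification, so the same argument applied to the dual algebras yields an $\ccO_S$-algebra map $\mathcal{A}_{H^{\prime\vee}} \to \mathcal{A}_{H^\vee}$. Because $\mathcal{A}_H$ is defined as a bialgebra by requiring its comultiplication to be dual to the multiplication on $\mathcal{A}_{H^\vee}$ (and similarly for $\mathcal{A}_{H^\prime}$), this dual map is precisely the transpose of the comultiplication compatibility, showing that our surjection $\mathcal{A}_H \twoheadrightarrow \mathcal{A}_{H^\prime}$ is in fact a map of Hopf algebras. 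The $\bF$-action compatibility is automatic since each $f_\theta$ is $\chi_\theta$-isotypic by construction.

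The only place where real work happens is the four-case verification of the Raynaud data compatibilities, and there the hypotheses on $J^\prime$ have been designed precisely to make the argument go through; I do not anticipate any genuine obstacle beyond careful bookkeeping. The payoff of stating the corollary in this form is that it will let us construct, in the next chapter, specific subgroups of the universal kernel $H$ by merely selecting a suitable subset $J^\prime$ of embeddings together with verifying the vanishing conditions on the $s_\theta$ and $t_\theta$ from the global geometry.
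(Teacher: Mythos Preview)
Your proposal is correct and follows essentially the same route as the paper: define $f_\theta$ to be the identity on $J'$ and zero elsewhere, then verify the two compatibility conditions via the four-case split on membership of $\theta$ and $\phi\circ\theta$ in $J'$, with the hypotheses on $s_\theta$ and $t_\theta$ used exactly in the two mixed cases. In fact you are more thorough than the paper, which simply invokes the general remark preceding the corollary that a morphism of Raynaud data induces an $\bF$-linear group homomorphism, whereas you spell out the surjection on symmetric algebras and the Hopf-algebra compatibility via the dual data.
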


\begin{proof}
    We prefer to spell this out to make the required conditions clear. Write $f_\theta: \ccL_\theta \to \ccL^\prime_\theta$ for the natural morphism, i.e. $f_\theta$ is the identity if $\theta \in J^\prime$ and zero otherwise. By the above definition, we need to show that  $s'_\theta \circ f^p_\theta = f_{\phi \circ \theta} \circ s_\theta$ and $t'_\theta \circ f_{\phi \circ \theta} = f^p_{ \theta} \circ t_\theta$ for all $\theta \in \Theta$. Because both Raynaud data only have non-zero entries in $J$, it suffices to check for $\theta \in J$.
    
    Consider the equation $s'_\theta \circ f^p_\theta = f_{\phi \circ \theta} \circ s_\theta$. If both $\theta, \phi \circ \theta \in J^\prime$, then both $f_\theta$ and $f_{\phi \circ \theta}$ are the identity and $s^\prime_\theta = s_\theta$, the equality is clear. If neither are in $J^\prime$, then both $f_\theta$ and $f_{\phi \circ \theta}$ are zero and the equation is clear. If $\theta \in J^\prime$ and $\phi \circ \theta \notin J^\prime$, then $s^\prime_\theta = 0 $ and $f_{\phi \circ \theta} = 0$, this is again clear. Finally, if $\theta \notin J^\prime$ and $\phi \circ \theta \in J^\prime$, then $f_\theta=0$, $f_{\phi \circ \theta}= \id$, $s^\prime_\theta = 0$ and the equation thus becomes $s_\theta = 0$. Similar considerations for $t'_\theta \circ f_{\phi \circ \theta} = f^p_{ \theta} \circ t_\theta$ show that we require $t_\theta = 0$ for $\theta$ such that $\theta \in J^\prime$ and $\phi \circ \theta \notin J^\prime$. The result follows.
\end{proof}

\subsubsection{Subgroups of the Universal Kernel}\label{subgroups of kernel}
We now apply the notions introduced above to the universal kernel of $\widetilde{Y}_{U^\prime_0(\Fp)}(G^\prime_\Sigma)$ over a smooth stratum $S^\prime_{\phi^\prime(I),J}$.

Let $\Sigma, G^\prime_\Sigma, U^\prime$ and $\Fp$ be as usual. Consider $S^\prime = \widetilde{Y}_{U^\prime_0(\Fp)}(G^\prime_\Sigma)_\bF$ and let $I,J \subset \Theta_F \setminus \Sigma_\infty$ be such that $I \cup J = \Theta_F \setminus \Sigma_\infty$. Then we have the stratum $S^\prime_{\phi^\prime(I),J}$ defined in section \ref{unitary stratification} which, because $I \cup J = \Theta_F \setminus \Sigma_\infty$, is smooth of codimension $\vert I \cap J \vert$ by corollary \ref{corollary strata dim smooth}. Furthermore, over $S^\prime_{\phi^\prime(I),J}$, we have the reduced universal kernel $H_1 = e_0 \ker f$, where $f$ is the universal isogeny. By lemma \ref{kernel is raynaud}, it is a Raynaud $\ccO_E/ \Fp$-scheme and so is attached to a Raynaud datum $(\ccL_\tau,s_\tau,t_\tau)_{\tau \in \widehat{\Theta}_{E,\Fp}}$. By the same argument, the reduced kernel $H_2 = e_0 \ker g \subset A_2[\Fp]$ is also a Raynaud $\ccO_E/ \Fp$-scheme. In fact, the moduli problem gives us an $\ccO_E$-antilinear isomorphism 
\[H_1 \xrightarrow{\sim} (e_0A_1[\Fp]/H_1)^\vee \simeq H_2^\vee\]
which restricts, for each prime $\fq \vert \Fp \ccO_F$ to an $\ccO_E/\fq = \ccO_F/\fp$-linear isomorphism $H_1[\fq] \simeq H_2[\fq^c]^\vee$. Hence $H_2[\fq]$ is given by the dual Raynaud datum of $H_1[\fq^c]$.

We now show that certain $s_\tau$ and $t_\tau$ vanish over $S^\prime_{\phi^\prime(I),J}$. Note that $s_\tau$, which is part of the Raynaud datum for $H_1$, should not be confused with the numbers $s_{\tau^i} \in \{0,1,2\}$. We also denote the dimension counting function by $d_\tau(i)$ to not clash with our current use of $s_\tau$. It is given by $d_\tau(i) = \sum_{j \leq i} s_{\tau^i}$.

We first record the following lemma:

\begin{lem}\label{free surjective is isomorphism}
Let $A$ be a commutative ring and $M$ an $A$-module such that $M$ is simultaneously isomorphic to $A/sA$ for some $s \in A$, and $A^n/f(A^{n-1})$, where $f: A^{n-1} \to A^n$ is injective. Then $M$ is free of rank one.
\end{lem}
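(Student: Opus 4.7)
The plan is to use Fitting ideals, which give invariants of a finitely presented module that can be computed from any presentation. Since $M \simeq A/sA$ admits the presentation $A \xrightarrow{\cdot s} A \to M \to 0$, and $M \simeq A^n / f(A^{n-1})$ admits the presentation $A^{n-1} \xrightarrow{f} A^n \to M \to 0$ (exact at $A^{n-1}$ because $f$ is injective, though injectivity is not even needed to form the Fitting ideal), we will compute $\mathrm{Fitt}_0(M)$ in two ways and compare.

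From the first presentation, $\mathrm{Fitt}_0(M)$ is by definition the ideal generated by the $1 \times 1$ minors of the matrix $(s)$, so $\mathrm{Fitt}_0(M) = sA$. From the second presentation, $\mathrm{Fitt}_0(M)$ is the ideal generated by the $n \times n$ minors of the $n \times (n-1)$ matrix representing $f$. Such a matrix has no $n \times n$ minors at all, so by convention $\mathrm{Fitt}_0(M) = 0$.

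By the invariance of Fitting ideals under the choice of presentation, these two computations must agree, giving $sA = 0$. Hence $M \simeq A/sA = A$ is free of rank one.

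The only nontrivial input is the well-definedness of Fitting ideals, which is standard commutative algebra and requires no hypothesis on $A$ beyond being commutative; there is no real obstacle to carry out. One could alternatively argue directly by localizing at any prime $\fp \subset A$: the short exact sequence $0 \to A_\fp^{n-1} \to A_\fp^n \to M_\fp \to 0$ of free modules shows that $M_\fp$ has a nontrivial free quotient at every prime, which forces the annihilator $sA$ of $M$ to be contained in every prime, and a small additional argument rules out $s$ being nilpotent but nonzero. The Fitting-ideal proof is cleaner and avoids this casework.
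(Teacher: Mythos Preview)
Your Fitting-ideal argument is correct and complete: computing $\mathrm{Fitt}_0(M)$ from the two presentations gives $sA$ and $(0)$ respectively, and invariance of Fitting ideals forces $s=0$, hence $M\simeq A$.

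This is a genuinely different route from the paper's proof. The paper argues directly: pick a lift $x\in A^n$ of a generator of $M\simeq A/sA$, and define $h:A^{n-1}\times A\to A^n$ by $(a,b)\mapsto f(a)+bx$. Since $f(A^{n-1})$ is the kernel of $A^n\to M$ and $x$ generates $M$, the map $h$ is surjective; as a surjective endomorphism of a finite free module it is an isomorphism. Then $sx\in f(A^{n-1})$ means $h(a,0)=h(0,s)$ for some $a$, and injectivity of $h$ gives $s=0$.

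What each buys: your approach packages the linear algebra into a single invariant and makes the underlying reason transparent (the second presentation has too few columns to have a nonzero $n\times n$ minor), at the cost of invoking the invariance theorem for Fitting ideals. The paper's argument is entirely self-contained, using only the classical fact that a surjective endomorphism of a finitely generated module over a commutative ring is injective; it is perhaps more elementary but slightly more ad hoc. Neither proof actually uses the injectivity of $f$, as you observed. Your sketched localization alternative is less clean precisely because of the nilpotent issue you flag; the Fitting-ideal version is the right way to go.
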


\begin{proof}
Choose a lift $x \in A^n$ of $1 \in A/sA$ under the surjection $A^n \to M \simeq A/sA$. Define the map $h:A^n = A^{n-1} \times A \to A^n$ by $(a,b) \mapsto f(a) + bx$. Then $h$ is surjective and is automatically an isomorphism as it is an endomorphism of a free module of finite rank. Therefore $sx \in f(A^{n-1})$ implies $s=0$ and $M \simeq A$ is free of rank one as desired. 
\end{proof} 

\begin{lem}\label{raynaud vanishing}
Let $\tau \in \widehat{\Theta}_{E, \Fp}$ and suppose that there exists an $i$, which we pick smallest, such that $\tau^i \vert_F = \theta^i \notin \Sigma_{\infty,\Fp}$. If $\theta^i \in I$, then $s_{\phi^{-1}(\tau)}=0$. Similarly, if $\theta^i \in J$ then $t_{\phi^{-1}(\tau)}=0$
\end{lem}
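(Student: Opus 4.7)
The plan is to identify $\ccL_\tau$ with an explicit line bundle built from the Pappas-Rapoport filtration of $\omega^0_{A_1}$, to interpret $s_{\phi^{-1}(\tau)}$ geometrically as an essential-Verschiebung-type map, and then observe that the defining vanishing of the stratum kills it. I concentrate on the $s$-statement; the $t$-statement is formally dual, swapping $f$ for $g$ (equivalently, passing to the Cartier-dual Raynaud datum on $H_1^\vee \simeq H_2$, where the roles of $s$ and $t$ are interchanged).

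The first step is to pin down $\ccL_\tau$. By the construction of section \ref{partial raynaud} applied, after lemma \ref{kernel is raynaud}, to $H_1 = e_0 \ker f$, the bundle $\ccL_\tau$ is the $\chi_\tau$-isotypic component of $\omega_{H_1} = \mathcal{I}_{H_1}/\mathcal{I}_{H_1}^2$. The conormal sequence for the isogeny $f$ identifies it with $(\omega^0_{A_1, \tau})/(f^* \omega^0_{A_2, \tau})$, which is a line bundle because $H_1$ is Raynaud. The Pappas-Rapoport filtration on $\omega^0_{A_j, \tau}$ is $f^*$-compatible and induces a filtration on $\ccL_\tau$ with $j$-th graded piece $\textrm{coker}(f^*_{\tau^j})$; because the total rank is one, exactly one index $j$ produces a rank-one cokernel. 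Under the hypotheses that $i$ is the smallest index with $\theta^i \notin \Sigma_{\infty,\Fp}$ and $\theta^i \in I$, the graded piece at $j = i$ already has a rank-one cokernel (since $s_{\tau^i} = 1$ and $f^*_{\tau^i} = 0$), and a rank count using that $H_1$ has total cotangent rank $f_\fp$ forces all other graded pieces to vanish. In particular the canonical surjection $\omega^0_{A_1, \tau^i} \twoheadrightarrow \ccL_\tau$ is an isomorphism.

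The second step is to interpret $s_{\phi^{-1}(\tau)} \colon \ccL^{\otimes p}_{\phi^{-1}(\tau)} \to \ccL_\tau$ geometrically and conclude. The defining relation $x^p = s_{\phi^{-1}(\tau)}(x^{\otimes p})$ in the Hopf algebra $\mathcal{A}_{H_1}$ expresses $s_{\phi^{-1}(\tau)}$ as the cotangent action of the Frobenius of $H_1$ on the $\phi^{-1}(\tau)$-component. Since $H_1 \hookrightarrow A_1[\Fp]$ is Frobenius-stable, this Frobenius is the restriction of the Frobenius of $A_1$, which on cotangents is realised by a composite of essential-Verschiebung maps as in section \ref{Fesves} between graded pieces of the Pappas-Rapoport filtration of $\omega^0_{A_1}$. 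Under the identification of Step 1, $s_{\phi^{-1}(\tau)}$ becomes the composition of such a map into $\omega^0_{A_1, \tau^i}$ with the projection to $\ccL_\tau$. Because Frobenius on abelian schemes commutes with the isogeny $f$, this composite factors through the image of $f^*_{\tau^i}$, which vanishes on the stratum by the condition $\theta^i \in I$; hence $s_{\phi^{-1}(\tau)} = 0$. The $t$-statement is obtained identically by interchanging $f$ and $g$: $t_{\phi^{-1}(\tau)}$ arises from the Verschiebung of $H_1$, equivalently the Frobenius of $H_1^\vee \simeq H_2$, and the analogous factorisation through $g^*_{\tau^i}$ vanishes when $\theta^i \in J$.

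The main obstacle will be the second step. One needs to match, on the nose and not merely up to invertible scalars, the Hopf-algebra definition of $s_{\phi^{-1}(\tau)}$ with the explicit essential-Verschiebung-through-$f$ description, so that the stratum vanishing $f^*_{\tau^i} = 0$ translates directly into $s_{\phi^{-1}(\tau)} = 0$ without further input. The cleanest route is to perform the comparison inside the crystalline Dieudonn\'{e} module of $A_1$ as set up in section \ref{iwahori unitary local}, where the Frobenius, the essential Verschiebung and the map $f^*$ all appear explicitly and their compatibilities are transparent.
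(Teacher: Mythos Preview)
Your Step 1 contains a circularity that undermines the whole argument. You write that ``$\ccL_\tau$ is the $\chi_\tau$-isotypic component of $\omega_{H_1} = \ccI_{H_1}/\ccI_{H_1}^2$'' and that this ``is a line bundle because $H_1$ is Raynaud.'' But by definition (section \ref{full raynaud}) $\ccL_\tau$ is the $\chi_\tau$-component of the augmentation ideal $\ccI$ itself, not of $\ccI/\ccI^2$; the Raynaud condition asserts only that the former is invertible. Since the algebra multiplication carries $\ccL_{\phi^{-1}(\tau)}^{\otimes p}$ into $\ccL_\tau$ via $s_{\phi^{-1}(\tau)}$, the $\chi_\tau$-component of $\omega_{H_1}$ is the quotient $\ccL_\tau / s_{\phi^{-1}(\tau)} \ccL^p_{\phi^{-1}(\tau)}$, and this is a line bundle precisely when $s_{\phi^{-1}(\tau)} = 0$. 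Thus your identification $\ccL_\tau \simeq \omega^0_{1,\tau}/f^*\omega^0_{2,\tau}$ already assumes the conclusion, after which Step 2 has nothing left to do. Your ``rank count using that $H_1$ has total cotangent rank $f_\fp$'' is likewise invalid: the cotangent rank of a Raynaud scheme is precisely the number of $\tau$ for which $s_{\phi^{-1}(\tau)}$ vanishes, which is the quantity in question.

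The paper's proof exploits the same equivalence but in the honest direction: it shows directly that $\omega_{H_1,\tau} \simeq \omega^0_{1,\tau}/f^*\omega^0_{2,\tau}$ is locally free of rank one. The stratum condition $\theta^i \in I$ forces $\ker(f^*_{\tau^i}:\ccH^1_{2,\tau^i}\to\ccH^1_{1,\tau^i}) = \omega^0_{2,\tau^i}$, and because $i$ is minimal with $\theta^i\notin\Sigma_\infty$ the essential Verschiebung $V^{\tau^1}_{\es,\tau^i}$ is an isomorphism, identifying $\ker(f^*:\omega^0_{2,\tau}\to\omega^0_{1,\tau})$ with a line bundle. Hence $f^*\omega^0_{2,\tau}$ is a subbundle of corank one in $\omega^0_{1,\tau}$, and Lemma \ref{free surjective is isomorphism} then forces the cokernel to be free of rank one. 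This last algebraic lemma is genuinely needed: a priori the cokernel is only known to be of the form $\ccL_\tau/s_{\phi^{-1}(\tau)}\ccL^p_{\phi^{-1}(\tau)}$, which could fail to be locally free.
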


\begin{proof}
Recall that the line bundles $\ccL_\tau$ are given by the $\tau$ components of the augmentation ideal $\ccI$ of $H_1$. In particular, via the isomorphism 
\[\omega_{H_1} \simeq \ccI / \ccI^2 \simeq \bigoplus_{\tau \in \widehat{\Theta}_{E,\Fp}} \ccL/ s_{\phi^{-1} (\tau)}\ccL^p_{\phi^{-1} (\tau)},\]
we see that $s_{\phi^{-1}(\tau)} = 0$ if and only if $\omega_{H_1,\tau}$ is a line bundle. Similarly, $t_{\phi^{-1}(\tau)} = 0$ if and only if $\omega_{H_1^\vee,\tau} \simeq \omega_{H_2,\tau^c}$ is a line bundle.

Suppose that $\theta^i \in I$. Then, by definition of the stratification, and by section \ref{iwahori unitary local}, the kernel of $f^*_{\tau^i}: \ccH^1_{2,\tau^i} \to \ccH^1_{1,\tau^i}$ is $\omega^0_{2,\tau^i}$. Recall from section \ref{unitary hasse} that we have the morphism $V_{\textrm{es},\tau^i}^{\tau^1}: \ccH^1_{2,\tau^i} \to \ccH^1_{2,\tau^1}$ which is an isomorphism by our assumption on $i$. We thus see that the kernel of $f^* : \omega^0_{2,\tau} \to \omega^0_{1,\tau}$ is equal to $V_{\textrm{es},\tau^1}^{\tau^i}\omega^0_{2,\tau^i}$. By definition of the Pappas-Rapoport filtration, $f^* \omega^0_{2,\tau} \simeq \omega^0_{2,\tau} / V_{\textrm{es},\tau^1}^{\tau^i}\omega^0_{2,\tau^i}$ is thus locally free of rank $d_\tau(e_\fp)-1$ and, $\ccL/ s_{\phi^{-1} (\tau)}\ccL^p_{\phi^{-1} (\tau)} \simeq \omega_{H_1,\tau}$ sits in the following exact sequence:
\[\begin{tikzcd} 0 \arrow[r] & f^* \omega^0_{2,\tau} \arrow[r] &  \omega^0_{1,\tau} \arrow[r] & \ccL/ s_{\phi^{-1} (\tau)}\ccL^p_{\phi^{-1} (\tau)} \arrow[r] & 0.\end{tikzcd}\]
We conclude by lemma \ref{free surjective is isomorphism}, that it is a line bundle and that $s_{\phi^{-1}(\tau)}=0$. The case of $t_{\phi^{-1}(\tau)}=0$ is entirely analogous, since $\theta^i \in J$ implies that $\ker g^*_{\tau^i} = \omega^0_{A_1 \otimes \Fp^{-1},\tau^i}$.
\end{proof}

We end with the following important lemma. Let $T \subset \Theta_{F,\Fp} \setminus \Sigma_{\infty,\Fp}$ be a subset such that $I^c \subset T \subset J$. Suppose further that for each prime $\fp \vert \Fp$, $( I \cap J ) \cap \Theta_{F,\fp} \neq (\Theta_{F,\fp} \setminus \Sigma_{\infty,\fp})$ (in particular $ \Sigma_{\infty,\fp} \neq \Theta_{F,\fp}$). Extend $T$ to a subset $T^\prime \subset \Theta_{F,\Fp}$ by appending to $T$, the maximal chains $\{\phi^{-n}(\beta), \cdots, \beta \}$ such that $\phi(\beta) \in T$. Define the set 
\[T^1 = \{ \theta \, \vert \, \theta^1 \in T^\prime \}, \]
and write $\mathfrak{Q} = \fq_1 \cdots \fq_n$ where $\Fp = \fp_1 \cdots \fp_n$.
\begin{lem}\label{C_T}
    There are closed subgroups $C_T \subset H_1[\mathfrak{Q}]$ and $C_{T^c} \subset H_2[\mathfrak{Q}^c]$ corresponding to the partial Raynaud data $(\ccL_{\tilde{\theta}}, s_{\tilde{\theta}}, t_{\tilde{\theta}})_{\phi^{-1}(T^1)}$ and $(\ccL^\vee_{\tilde{\theta}^c}, t^\vee_{\tilde{\theta}^c},s^\vee_{\tilde{\theta}^c})_{\phi^{-1}((T^1)^c)}$ respectively.
\end{lem}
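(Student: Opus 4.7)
The plan is to apply Corollary \ref{produce subgroup} to extract $C_T$ and $C_{T^c}$ as closed subgroup schemes of the Raynaud $\ccO_E/\Fp$-vector schemes $H_1[\mathfrak{Q}]$ and $H_2[\mathfrak{Q}^c]$. Since both the Raynaud structures and the data $(I,J,T)$ decompose prime by prime, it suffices to work one prime $\fp \mid \Fp$ at a time; fix such a $\fp$ with $\fp\ccO_E = \fq\fq^c$.

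First I would verify that the stated collection $(\ccL_{\tilde\theta}, s_{\tilde\theta}, t_{\tilde\theta})_{\theta \in \phi^{-1}(T^1)}$ is genuinely a sub-Raynaud datum of the full datum associated to $H_1[\fq]$. By Corollary \ref{produce subgroup}, this reduces to checking two vanishing conditions with $J^\prime := \phi^{-1}(T^1)$:
\begin{itemize}
\item[(a)] if $\tilde\eta \notin J^\prime$ but $\phi(\tilde\eta) \in J^\prime$, then $s_{\tilde\eta} = 0$;
\item[(b)] if $\tilde\eta \in J^\prime$ but $\phi(\tilde\eta) \notin J^\prime$, then $t_{\tilde\eta} = 0$.
\end{itemize}
Unpacking the defining condition ``$\eta \in J^\prime \Leftrightarrow \phi(\eta)^1 \in T^\prime$'' turns (a) into ``$\phi(\eta)^1 \notin T^\prime$ and $\phi^2(\eta)^1 \in T^\prime \Rightarrow s_{\tilde\eta} = 0$'' and (b) into the analogous statement for $t_{\tilde\eta}$. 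Both vanishings will follow from Lemma \ref{raynaud vanishing}: applied with $\tau = \phi(\tilde\eta)$, it gives $s_{\tilde\eta} = 0$ whenever the smallest $i$ with $\phi(\eta)^i \notin \Sigma_\infty$ satisfies $\phi(\eta)^i \in I$, and $t_{\tilde\eta} = 0$ whenever that same $\phi(\eta)^i$ lies in $J$. The combinatorial content is to show that the hypothesis in (a), combined with the recipe producing $T^\prime$ from $T$ (which extends $T$ backwards through $\Sigma_{\infty,\fp}$ along the shift $\phi$) and the inclusion $I^c \subset T$, forces the relevant $\phi(\eta)^i$ to lie in $I$; analogously (b) combined with $T \subset J$ forces it into $J$. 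In other words, the extension $T \mapsto T^\prime$ is designed so that the ``boundary jumps'' in membership of $T^\prime$ across $\phi$ always occur at positions where Lemma \ref{raynaud vanishing}'s hypothesis is met.

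The construction of $C_{T^c} \subset H_2[\fq^c]$ proceeds by duality: the moduli-theoretic $\ccO_E$-antilinear isomorphism $H_1 \simeq H_2^\vee$ identifies $H_2[\fq^c]$ with $H_1[\fq]^\vee$, so its Raynaud datum is $(\ccL^\vee_{\tilde\theta}, t^\vee_{\tilde\theta}, s^\vee_{\tilde\theta})$; after passing to the conjugate component, extracting the sub-datum indexed by $\phi^{-1}((T^1)^c)$ is again an instance of Corollary \ref{produce subgroup}, where the roles of $s$ and $t$ are interchanged. The required vanishings are thus the ``dual'' of (a) and (b), and are provided once more by Lemma \ref{raynaud vanishing}, using the same hypotheses $I^c \subset T \subset J$.

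The main obstacle will be the combinatorial verification, which is bookkeeping-heavy because both the filtration indices $i \in \{1,\dots,e_\fp\}$ and the Frobenius cycle on $\widehat\Theta_{F,\fp}$ interact, and because the definition of $T^\prime$ is tailored precisely to absorb the $\Sigma_{\infty,\fp}$ positions where Pappas--Rapoport subquotients degenerate. The key bookkeeping assertion to make precise is that each maximal chain appended to $T$ to form $T^\prime$ begins at an element whose first non-$\Sigma_\infty$ iterate under $\phi$ lies in $I$ for condition (a) (and in $J$ for (b)); granting this, the rest of the argument is a routine application of Proposition \ref{proposition is a group scheme} and Corollary \ref{produce subgroup}.
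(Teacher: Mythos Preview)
Your proposal is correct and follows the same approach as the paper: reduce via Corollary \ref{produce subgroup} to the two vanishing conditions, then invoke Lemma \ref{raynaud vanishing} using $I^c \subset T \subset J$ and the construction of $T^\prime$. The paper's proof carries out the combinatorial verification more directly than your ``bookkeeping'' description suggests: writing $\theta = \phi(\eta)$, the point is simply that if $\theta \notin T^1$ then $\theta^1 \notin T^\prime$, and hence the smallest $i$ with $\theta^i \notin \Sigma_{\infty,\fp}$ must satisfy $\theta^i \notin T$ (otherwise the chain $\theta^1,\dots,\theta^{i-1}$ would have been appended to $T^\prime$), giving $\theta^i \in T^c \subset I$; the case for $t$ is symmetric with $T \subset J$.
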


\begin{proof}
To prove the existence of $C_T$, it suffices to show, by corollary \ref{produce subgroup} that $s_{\phi^{-1}(\tilde{\theta})} = 0$ if $\theta \notin T^1$ and $\phi(\theta) \in T^1 $, and that $t_{\phi^{-1}(\tilde{\theta})} = 0$ if $\theta \in T^1$ and $\phi(\theta) \notin T^1 $. 

Suppose that $\theta \notin T^1$ but $\phi(\theta) \in T^1 $. Then by definition of $T^1$, there must be an $i$ such that $\theta^i \notin \Sigma_{\infty,\fP}$ and in particular, the smallest such $i$ must satisfy $\theta^i \in T^c \subset I$. Then $s_{\phi^{-1}(\tau)}=0$ by lemma \ref{raynaud vanishing}. Similarly, if $\theta \in T^1, \phi(\theta) \notin T^1 $, there is an $i$, smallest such that $\theta^i \notin \Sigma_{\infty,\Fp}$ and $\theta^i \in T \subset J$. Hence $t_{\phi^{-1}(\tilde{\theta})}=0$ by lemma \ref{raynaud vanishing}.

The case for $C_{T^c}$ is similar.
\end{proof}

\subsection{Dieudonn\'{e} modules, crystals, and Raynaud subgroups}
We establish our conventions and collect the results we need from Dieudonn\'{e} theory, and establish a procedure to create partial Raynaud subgroups of abelian varieties.

\subsubsection{Dieudonn\'{e} modules}\label{Dieudonne normal}
Let $A$ be an abelian variety of dimension $g$ over $\overline{\bF}_p$, we denote by $\mathbb{D} = \mathbb{D}(A[p^\infty])$ its contravariant Dieudonn\'{e} module. It is a free module over $W = W(\overline{\bF}_p)$ of rank $2g$. It comes with a $\phi$-semilinear endomorphism $\Phi$ induced by the Frobenius on $A$ such that $p\mathbb{D} \subset \Phi( \bD)$ and we denote by $V$ the $\phi^{-1}$-semilinear map such that $\Phi \circ V = V \circ \Phi = p$.

There is a canonical isomorphism $\bD/p \bD \simeq H^1_{\dR}(A/\overline{\bF}_p)$ such that $H^0(A,\Omega^1_{A/ \overline{\bF}_p})$ is identified with $V \bD / \bD$.  Furthermore, there is a canonical isomorphism $\bD(A^\vee[p^\infty]) \simeq \Hom(\bD, W) = \bD^\vee$ such that the induced perfect pairing $\bD \times \bD^\vee \to W$ is anti-symmetric under the canonical isomorphism $(A^\vee)^\vee \simeq A$. 

The Dieudonn\'{e} functor restricted over the category of finite flat commutative group schemes killed by $p$ defineds an equivalence of categories to the category of $\overline{\bF}_p$-vector spaces with a $\phi$-semilinear endomorphism $\Phi$ and $\phi^{-1}$-semilinear endomorphism $V$ such that $V \Phi = \Phi V =0$. For any such group scheme $G$, $\textrm{rank}(G) = p^{\dim \mathbb{D}(G)}$. The functor $\bD$ is compatible in the obvious way with Cartier Duality and for any $G$, we have canonical isomorphisms
\[\bD(G) / \Phi \bD(G) \simeq \Hom_{\overline{\bF}_p}(\textrm{Lie}(G)^{(p)}, \overline{\bF}) \quad \textrm{ and } \quad \ker V \simeq \textrm{Lie}(G^\vee)^{(p)}.\]

We recall that any $p$-isogeny $f: A_1 \to A_2$ induces an injective morphism $f^* : \bD_1 \to \bD_1$ such that $\bD_1 / f^* \bD_1 \simeq \bD( \ker f)$.

Aplying the $\bD$ to the canonical isomorphism $A^\vee[p] \simeq (A[p])^\vee$ gives the reduction of the isomorphism $\bD(A^\vee[p^\infty]) \simeq \bD^\vee$ described above. In particular if $\lambda: A \to A^\vee$ is a prime to $p$ quasi-polarization, then the $\lambda$-Weil pairing on $A[p^\infty]$ yields an isomorphism $\bD^\vee \simeq \bD$ corresponding to an alternating perfect pairing on $\bD$ whose reduction modulo $p$ defines the isomorphism $\Hom_{\overline{\bF}_p}(\bD/p\bD, {\overline{\bF}_p})  \simeq \bD/p\bD$.

\subsubsection{Crystalline Dieudonn\'{e} theory}\label{dieudonne crystal}
We will need the relative version of Dieudonn\'{e} theory in the case of finite flat group schemes killed by $p$ over a smooth base of characteristic $p$. We follow the conventions of \cite{berthelot1982theorie}.

Let $S$ be a scheme of characteristic $p$. Write $\Sigma_0 = \bZ_p$ and $\Sigma_1 = \bF_p$; for $n =0,1$, write $\CR(S/\Sigma_n)$ for the big crystalline site of $S/\Sigma_n$ with the fppf topology and $(S/\Sigma_n)_{\CR}$ its category of sheaves. We note $\CR(S/\bF_p)$ sits naturally inside $\CR(S/\bZ_p)$ so that we obtain a functor $i^*_{1\CR} : (S/\bZ_p)_{\CR} \to (S/\bF_p)_{\CR}$ given by restriction. For any sheaf $\ccF \in (S/\Sigma_n)_{\CR}$ and object $(U,T,\delta) \in \CR(S/\Sigma_n)$, we write $\ccF_T$ for the sheaf on $T$ (with the usual zariski topology) given by $\ccF_T(V) = \ccF(U \times_T V, V, \delta \vert_V)$.
\cite[D\'{e}f 3.1.5]{berthelot1982theorie} associates to every finite flat commutative group scheme $G$ over $S$ a crystal of $\ccO_{S/\bZ_p}$-modules $\bD = \bD(G)$, called its Dieudonn\'{e} crystal, which comes equipped with morphisms $\Phi: \bD^{(p)} \to \bD$ and $V: \bD \to \bD^{(p)}$ such that $V \circ \Phi $ and $\Phi \circ V$ are multiplication by $p$. 

The functor is contravariant, $\Phi$ and $V$ are induced by Frobenius and Verschiebung on $G$, and is compatible with base change $S^\prime \to S$. When restricted to a perfect field $k$, the Dieudonn\'{e} functor reduces to the usual Dieudonn\'{e} theory. In fact, in the case that $S= \overline{\bF}_p$, the $\bD(H)$ we considered above corresponds to $\varprojlim \Gamma (\textrm{Spec}(W_n),\bD(G))$. If $G$ is killed by $p$, this is just $\Gamma(S,\bD(G))$.

By \cite[Prop. 4.3.1]{berthelot1982theorie}, the Dieudonn\'{e} crystal of a finite flat group scheme killed by $p$, is also killed by $p$. Furthermore, by \cite[Lemme 4.3.5]{berthelot1982theorie}, the functor $i^*_{1 \CR}$ is fully faithful when restricted to crystals killed by $p$. We may thus think of the Dieudonn\'{e} functor as a functor to $(S/\bF_p)_{\CR}$ when restricted to the category of finite flat group schemes over $S$, killed by $p$. By \cite[Thm 4.1.1]{BerthelotMessing}, the Dieudonn\'{e} functor is fully faithful when $S$ is smooth.

We recall the following construction from \cite[$\mathsection$4.3]{berthelot1982theorie}: Let $(U,T,\delta) \in \CR(S/\bF_p)$. Since we are over $\bF_p$, the divided power structure implies that $\ccI^p =0$ where $\ccI$ is the ideal defining $U$ in $T$. We thus obtain a unique morphism of rings $\Pi: \ccO_U \to \ccO_T$, functorial in $(U,T,\delta)$, such that the following diagram commutes:

\[
\begin{tikzcd}
    \ccO_T \arrow[r] \arrow[d, "\Phi_T"' ] & \ccO_U \arrow[d, "\Phi_U" ] \arrow[ld, "\Pi", dotted]\\
    \ccO_T \arrow[r]  & \ccO_U. 
\end{tikzcd}
\]
Here $\Phi_T$ and $\Phi_U$ denote the respective absolute Frobenius. Given a sheaf $\ccF$ on $S$, we define a sheaf $\Pi^*\ccF \in (S/\bF_p)_{\CR}$ by setting for all $(U,T,\delta) \in \CR(S/\bF_p)$ with $f: U \to S$, the Zariski sheaf on $T$:
\[ (\Pi^* \ccF)_T = f^* \ccF \otimes_{\ccO_U, \Pi} \ccO_T.\] 
It follows that $\Pi^* \ccF$ is a crystal, and that if $\ccF$ is locally free over $\ccO_S$, then so is $\Pi^*\ccF$ over $\ccO_{S/\bF_p}$.

By \cite[Prop 4.3.6, Prop 4.3.10]{berthelot1982theorie}, if $G$ is a group scheme killed by Verschiebung, there is an isomorphism 
\[\bD(G) \simeq \Pi^* \textrm{Lie}(G^\vee),\]
and if $G$ is killed by Frobenius 
\[\bD(G) \simeq \Pi^* \omega_G.\]

We finish by recording the two following lemmas. Although stated in a different context, their proofs follow verbatim:

\begin{lem}[Proposition 7.1.3 \cite{2020arXiv200100530D}]\label{Raynaud crystal}
Let $S$ be a smooth scheme of characteristic $p$ containing $\bF_q$. Let $G$ be a partial Raynaud $\bF_q$-scheme attached to the partial Raynaud datum $(\ccL_\theta, s_\theta,t_\theta)_{\theta \in J}$. Then $\bD(G)$ is canonically isomorphic to $\Pi^* \ccL$ with $\Phi = \Pi^* s$ and $V = \Pi^* t$, where $\ccL = \bigoplus_{\theta \in J} \ccL_\theta$, $s = \bigoplus_{\theta \in J} s_\theta$ and $t = \bigoplus_{\theta \in J} t_\theta$.
\end{lem}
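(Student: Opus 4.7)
My plan is to reduce to the full Raynaud case, which is Proposition~7.1.3 of \cite{2020arXiv200100530D}, by embedding $G$ into a full Raynaud $\bF_q$-vector scheme $H'$ built out of our partial datum, and then subtracting off the trivial piece via exactness of Berthelot's Dieudonn\'e functor on finite flat group schemes killed by $p$ over a smooth base.

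Concretely, I would first extend $(\ccL_\theta, s_\theta, t_\theta)_{\theta \in J}$ to a full Raynaud datum $(\ccL'_\theta, s'_\theta, t'_\theta)_{\theta \in \Theta}$ by putting $(\ccL'_\theta, s'_\theta, t'_\theta) = (\ccO_S, 0, 0)$ for $\theta \notin J$; this is a bona fide full Raynaud datum because we work in characteristic $p$, so the constant $w \in p\ccO^\times$ appearing in Raynaud's condition $(\star\star)$ reduces to $0$. The definition of a partial Raynaud datum forces $s_\theta = t_\theta = 0$ whenever exactly one of $\theta$, $\phi \circ \theta$ lies outside $J$, so the extended datum splits as the direct sum of the original partial datum on $J$ and the zero partial datum on $\Theta \setminus J$. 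Consequently the associated full Raynaud group scheme decomposes as a product $H' \simeq G \times_S H''$, where $H''$ is the Raynaud group scheme attached to $(\ccO_S, 0, 0)_{\theta \notin J}$. Since $H''$ is killed by Frobenius (all its $s$'s vanish), Berthelot's formula recalled in section~\ref{dieudonne crystal} gives $\bD(H'') \simeq \Pi^*\omega_{H''} \simeq \bigoplus_{\theta \notin J} \Pi^*\ccO_S$ with $\Phi = V = 0$.

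Applying Berthelot's Dieudonn\'e functor to the short exact sequence $0 \to G \to H' \to H'' \to 0$ of finite flat $\bF_q$-vector schemes killed by $p$ yields, by contravariance and the exactness of $\bD$ on such sequences over a smooth base (\cite{berthelot1982theorie}), an exact sequence $0 \to \bD(H'') \to \bD(H') \to \bD(G) \to 0$. The full Raynaud case applied to $H'$ gives $\bD(H') \simeq \Pi^*\ccL' \simeq \Pi^*\ccL \oplus \bigoplus_{\theta \notin J}\Pi^*\ccO_S$ with $\Phi = \Pi^* s'$ and $V = \Pi^*t'$, both of which vanish on the second summand by construction. Under the product decomposition $H' \simeq G \times_S H''$, functoriality of $\bD$ identifies the inclusion $\bD(H'') \hookrightarrow \bD(H')$ with the natural summand inclusion $\bigoplus_{\theta \notin J} \Pi^*\ccO_S \hookrightarrow \Pi^*\ccL'$, and the quotient is $\Pi^*\ccL$ with $\Phi = \Pi^*s$ and $V = \Pi^*t$, as claimed. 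The main obstacle will be rigorously justifying this summand identification (via $\bF_q$-equivariance of the Dieudonn\'e functor and its compatibility with products of group schemes) and invoking the correct relative exactness statement from \cite{berthelot1982theorie}; once these are in place, the rest is a formal comparison.
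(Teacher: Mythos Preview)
Your reduction is correct and clean: the product decomposition $H' \simeq G \times_S H''$ holds because the extended datum has no cross-terms between $J$ and $\Theta \setminus J$, so both $\mathcal{A}_{H'}$ and $\mathcal{A}_{H'^\vee}$ factor as tensor products; additivity of $\bD$ then splits $\bD(H') \simeq \bD(G) \oplus \bD(H'')$, and $\bF_q$-equivariance of the canonical isomorphism $\bD(H') \simeq \Pi^*\ccL'$ forces the summand $\bD(H'')$ to sit in the $\chi_\theta$-components for $\theta \notin J$. This is a genuinely different route from the paper, which gives no argument at all and simply asserts that the proof of Proposition~7.1.3 in \cite{2020arXiv200100530D} applies verbatim to the partial setting. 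Your approach has the virtue of being self-contained modulo the full Raynaud case and standard properties of $\bD$, so a reader need not revisit the original proof to check that nothing breaks when some $\ccL_\theta$ vanish; the paper's approach is more economical but asks the reader to do that verification themselves. One small point: you don't actually need the exact sequence and exactness of $\bD$---the product decomposition of $H'$ together with additivity of $\bD$ already gives the direct-sum splitting directly, which sidesteps the need to cite a relative exactness statement from \cite{berthelot1982theorie}.
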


\begin{cor}\label{raynaud dieudonne dimension}
Let $H$ be as above, then at any geometric point $\overline{s}$ of $S$, we have
\[ 
\dim \bD(H_{\overline{s}})_{\phi \circ \theta} = 
\begin{cases}
1 & \textrm{ if } \theta \in J,\\
0 & \textrm{ if } \theta \notin J.
\end{cases}
\]
\end{cor}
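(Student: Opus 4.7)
The plan is to deduce this directly from Lemma \ref{Raynaud crystal}, the main content being a careful bookkeeping of how the Frobenius twist implicit in the functor $\Pi^*$ interacts with the $\bF_q$-character decomposition.

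First I would apply Lemma \ref{Raynaud crystal} to identify $\bD(H) \simeq \Pi^*\ccL = \bigoplus_{\theta \in J} \Pi^*\ccL_\theta$ as crystals on $S/\bF_p$, with the $\bF_q$-action coming from the action on each $\ccL_\theta$ via the fundamental character $\chi_\theta$. Evaluating this at the geometric point $\overline{s} = \spec(k)$, I may take the trivial thickening $(U,T,\delta) = (\spec(k),\spec(k),0)$, in which case the map $\Pi \colon \ccO_U \to \ccO_T$ is precisely the absolute Frobenius of $k$. Thus $\bD(H_{\overline{s}})$ is canonically identified with $\bigoplus_{\theta \in J}\left( \ccL_{\theta,\overline{s}} \otimes_{k,\phi} k \right)$, that is, the Frobenius twist of $\bigoplus_{\theta \in J} \ccL_{\theta,\overline{s}}$.

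The key step is then to observe that the Frobenius twist shifts the $\bF_q$-isotypic decomposition by one step of $\phi$. Concretely, if $a \in \bF_q^\times$ acts on $\ccL_{\theta,\overline{s}}$ as multiplication by $\chi_\theta(a) \in \mu_{q-1} \subset k$, then on $\ccL_{\theta,\overline{s}} \otimes_{k,\phi} k$, viewed as a $k$-vector space through the second factor, the action of $a$ is multiplication by $\phi(\chi_\theta(a)) = \chi_\theta(a)^p = \chi_{\phi \circ \theta}(a)$, using the relation $\chi_\theta^p = \chi_{\phi \circ \theta}$ recorded in Section \ref{full raynaud}. Hence $\Pi^*\ccL_{\theta,\overline{s}}$ contributes a one-dimensional summand to the $(\phi \circ \theta)$-isotypic component of $\bD(H_{\overline{s}})$ precisely when $\theta \in J$, and contributes nothing otherwise. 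Summing over $\theta$ yields the claimed formula.

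There is really no obstacle beyond verifying this character shift; the content is entirely contained in Lemma \ref{Raynaud crystal}, and the remaining verification is a direct calculation in Dieudonn\'{e} theory, using that $\Pi$ factors the absolute Frobenius. A small care must be taken to confirm that the decomposition $\bD(H_{\overline{s}}) = \bigoplus_{\tau} \bD(H_{\overline{s}})_\tau$ used in the statement is indeed the one induced by the $\bF_q$-action through the fundamental characters, but this is automatic from the construction in Section \ref{full raynaud} and the fact that the Dieudonn\'{e} functor is $\bF_q$-equivariant.
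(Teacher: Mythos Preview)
Your proposal is correct and takes essentially the same approach as the paper, which treats the corollary as immediate from Lemma~\ref{Raynaud crystal} (indeed the paper does not give a separate proof, deferring to \cite[Proposition~7.1.3]{2020arXiv200100530D}). The character-shift under $\Pi^*$ that you spell out is exactly the content needed.
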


We will often need to establish the equality of images of morphisms of de Rham cohomology sheaves. In general this equality is often deduced on geometric points by using the relation between Dieudonn\'{e} modules and de Rham cohomology. To upgrade these isomorphisms at geometric points to an isomorphism of sheaves we will make use of the following crystallization lemma:

\begin{lem}{\cite[Lemma 7.2.1]{2020arXiv200100530D}}\label{crystallization}
Let $X$ be a smooth scheme over $\bF$ and $A$, $B_1$ and $B_2$ abelian schemes over $X$ with an $\ccO_E$-action. Let $\alpha_i: A \to B_i$, for $i=1,2$, be $\ccO_E$-linear isogenies such that $\ker(\alpha_i) \cap A[p^\infty] \subset A[p]$. Suppose that $\tau \in \widehat{\Theta}_{E,\fp}$ is such that 
\[\alpha^*_{1,x}\mathbb{D}(B_{1,x}[p^\infty]) = \alpha^*_{2,x}\mathbb{D}(B_{2,x}[p^\infty]) \textrm{ for all } x \in X(\bF).\]
Then there is a unique isomorphism of coherent sheaves 
\[
\ccH^1_{\textrm{dR}}(B_1/X)_\tau \simeq \ccH^1_{\textrm{dR}}(B_2/X)_\tau 
\]
which is compatible with the isomorphism $\mathbb{D}(B_{1,x}[p^\infty]) \simeq (B_{2,x}[p^\infty])$ induced by $(\alpha^*_{2,x})^{-1} \alpha_{1,x}^*$ at all $x \in X(\bF)$. 
\end{lem}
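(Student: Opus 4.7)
The uniqueness of such an isomorphism is immediate: any two morphisms of coherent sheaves on the reduced scheme $X$ that agree after restriction to every $\bF$-point must coincide, since $X(\bF)$ is Zariski-dense in the smooth $\bF$-scheme $X$. The substance is the existence, and the strategy is to promote the pointwise equality of Dieudonné submodules to an equality of locally direct summands of $\ccH^1_{\dR}(A/X)_\tau$, then invert.

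Working on a small enough affine open $\text{Spec}(R) \subset X$ with $R$ smooth over $\bF$, the hypothesis $\ker(\alpha_i) \cap A[p^\infty] \subset A[p]$ means that each $\ker \alpha_i$ is a finite flat group scheme killed by $p$. Applying the Dieudonn\'e crystal to the short exact sequence
\[0 \to B_i[p^\infty] \xrightarrow{\alpha_i} A[p^\infty] \to \ker \alpha_i \to 0\]
and evaluating on the trivial PD thickening $X \hookrightarrow X$ yields a short exact sequence of locally free $\ccO_X$-modules
\[0 \to \ccH^1_{\dR}(B_i/X)_\tau \xrightarrow{\alpha_i^*} \ccH^1_{\dR}(A/X)_\tau \to \bD(\ker \alpha_i)_{X,\tau} \to 0,\]
whose last term is locally free over $X$, using the structure of Dieudonn\'e crystals of finite flat group schemes killed by $p$ over a smooth base (a situation in which, after possibly further localizing, the components of $\ker \alpha_i$ are of the partial-Raynaud type treated in Lemma \ref{Raynaud crystal}, so that the crystal evaluations take the form $\Pi^*\ccL$ and are manifestly locally free). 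Consequently $M_i := \alpha_i^*\ccH^1_{\dR}(B_i/X)_\tau$ is a locally direct summand of $\ccH^1_{\dR}(A/X)_\tau$ of some rank $r$ independent of $i \in \{1,2\}$.

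The hypothesis, after reducing the given $W$-lattice identity modulo $p$ and taking $\tau$-components, gives $M_{1,x} = M_{2,x}$ inside $\ccH^1_{\dR}(A_x/k(x))_\tau$ for every $x \in X(\bF)$. The classifying maps $X \rightrightarrows \mathrm{Gr}\bigl(r,\, \ccH^1_{\dR}(A/X)_\tau\bigr)$ associated to $M_1$ and $M_2$ thus agree on the Zariski-dense set $X(\bF)$ of the reduced scheme $X$, so they coincide by separatedness of the Grassmannian; therefore $M_1 = M_2$ as subsheaves. The composition $(\alpha_2^*)^{-1}\circ\alpha_1^*$ then defines a sheaf isomorphism $\ccH^1_{\dR}(B_1/X)_\tau \xrightarrow{\sim} \ccH^1_{\dR}(B_2/X)_\tau$ whose fibre at each $x \in X(\bF)$ recovers, via the canonical identification of Dieudonn\'e mod $p$ with de Rham cohomology at $x$, the prescribed isomorphism induced by $(\alpha_{2,x}^*)^{-1}\alpha_{1,x}^*$. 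The main obstacle in the argument is ensuring the local freeness of the cokernel $\bD(\ker \alpha_i)_{X,\tau}$; once this is secured the Grassmannian argument upgrading pointwise equality to global equality is routine, and the hypothesis $\ker\alpha_i \subset A[p]$ is exactly what is needed to reduce the cokernel analysis to Dieudonn\'e crystals of finite flat group schemes killed by $p$.
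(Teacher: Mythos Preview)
Your short exact sequence
\[
0 \to \ccH^1_{\dR}(B_i/X)_\tau \xrightarrow{\alpha_i^*} \ccH^1_{\dR}(A/X)_\tau \to \bD(\ker \alpha_i)_{X,\tau} \to 0
\]
is not exact on the left: the map $\alpha_i^*$ on de Rham cohomology is generally \emph{not} injective. De Rham cohomology is the reduction mod $p$ of the Dieudonn\'e module, and reducing the $W$-level short exact sequence $0\to\bD(B_i)\to\bD(A)\to\bD(\ker\alpha_i)\to 0$ modulo $p$ produces, via the Tor long exact sequence (using that $\bD(\ker\alpha_i)$ is $p$-torsion), the four-term exact sequence
\[
0 \to \bD(\ker\alpha_i)_\tau \to \ccH^1_{\dR}(B_i)_\tau \xrightarrow{\alpha_i^*} \ccH^1_{\dR}(A)_\tau \to \bD(\ker\alpha_i)_\tau \to 0.
\]
Concretely, take $B_i=A$ and $\alpha_i=[p]$: then $\alpha_i^*=0$ on de Rham cohomology. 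So your inversion ``$(\alpha_2^*)^{-1}\circ\alpha_1^*$'' at the de Rham level is not defined, and the argument collapses. Your justification of local freeness of the cokernel via partial Raynaud structure is also unfounded: the kernel of an arbitrary $\ccO_E$-linear isogeny killed by $p$ has no reason to satisfy the line-bundle condition of Lemma~\ref{Raynaud crystal}.

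The argument in \cite{2020arXiv200100530D} avoids this by working integrally. One passes, locally on $X$, to a smooth formal lift $\widehat{X}=\mathrm{Spf}(R)$ over $W$ and evaluates the Dieudonn\'e crystals there: one obtains locally free, $p$-torsion-free $R$-modules $\Delta_A,\Delta_1,\Delta_2$ with $\Delta_\bullet/p\Delta_\bullet=\ccH^1_{\dR}(\bullet/X)$. Over $R$ the maps $\alpha_i^*:\Delta_{i,\tau}\to\Delta_{A,\tau}$ \emph{are} injective (since $\beta_i^*\alpha_i^*=p$ and the modules are $p$-torsion-free), with images $L_i$ satisfying $p\Delta_{A,\tau}\subset L_i\subset\Delta_{A,\tau}$. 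The hypothesis, transported via the crystal base-change isomorphisms $\Delta_\bullet\otimes_R W(k(x))\simeq\bD(\bullet_x[p^\infty])$, gives $L_1=L_2$ after specialisation to every $W(k(x))$; one then argues that this forces $L_1=L_2$ over $R$. The isomorphism $(\alpha_2^*)^{-1}\alpha_1^*:\Delta_{1,\tau}\xrightarrow{\sim}\Delta_{2,\tau}$ is thus well defined over $R$, and its reduction mod $p$ is the sought isomorphism on de Rham cohomology; independence of the choice of $\widehat{X}$ and the uniqueness clause follow from the density argument you already gave.
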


\subsubsection{Revealing subgroups of abelian varieties}
In section \ref{subgroups of kernel}, we showed that the reduced kernel of the universal isogeny was Raynaud and deduced from that the existence of certain partial Raynaud subgroups. In this section we will provide, in some sense, the inverse operation. That is, given the special fiber $S$ of some Unitary Shimura variety, and $X = \prod_{\beta \in R} \mathbb{P}_S(\ccH^1_{\tilde{\beta}})$, for some $R$, we will construct an abstract partial Raynaud group $H$ over $X$ and show that this abstract group embeds inside the universal abelian scheme $A/X$. This is the key ingredient, used in section \ref{splitting}, that allows us to define a morphism from $X$ to some auxiliary Iwahori level Unitary Shimura variety.

Let $\Sigma$ and $U^\prime \subset G^\prime_{\Sigma}(\bA_f)$ be as usual. Let $S = Y_{U^\prime}(G^\prime_\Sigma)_\bF$, $\fp$, let $X/\bF$ be smooth, locally of finite type, and let $A/X$ be the pullback of the universal abelian scheme over $S$, via some fixed morphism $f:X \to S$.

\begin{lem}\label{local check}
    Suppose that $Y/\bF_p$ is locally of finite type and smooth, and let $\ccF, \mathcal{G} \in (Y/\bF_p)_{\CR}$ be crystals and $f: \ccF \to \mathcal{G}$ a morphism. Let $P$ be a property of $\ccF$ or $f$ which is local with respect to the topology of $\CR(Y/\bF_p)$, and  preserved by pullbacks and isomorphisms. Then to verify $P$, it suffices to verify it over $Y$.
\end{lem}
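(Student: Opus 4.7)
The plan is to reduce, for every object $(U,T,\delta) \in \CR(Y/\bF_p)$, the verification of $P$ on $\ccF_T$ (respectively $f_T$) to the verification of $P$ on $\ccF_Y$ (respectively $f_Y$), by constructing locally on $T$ a morphism in $\CR(Y/\bF_p)$ from $(U,T,\delta)$ to the tautological object $(Y,Y,0)$ (where $Y$ is equipped with the trivial divided power structure on the zero ideal).

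First I would fix such a triple $(U,T,\delta)$. Since we are in characteristic $p$, the PD ideal $\ccI\subset\ccO_T$ satisfies $\ccI^p=0$, so in particular $U\hookrightarrow T$ is a nilpotent closed immersion. Smoothness of $Y$ over $\bF_p$ then yields, Zariski-locally on $T$, an extension of the structure map $U\to Y$ to a morphism $u:T\to Y$. Since the ideal defining $Y$ inside itself is zero, $u$ is automatically a PD morphism, so it defines a morphism $(U,T,\delta)\to (Y,Y,0)$ in $\CR(Y/\bF_p)$. (This is exactly the construction used to define the endomorphism $\Pi$ earlier in the chapter, now applied relatively.)

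Next I would invoke the crystal property: the morphism above produces canonical isomorphisms $\ccF_T\xrightarrow{\sim} u^*\ccF_Y$ and $\mathcal{G}_T\xrightarrow{\sim} u^*\mathcal{G}_Y$, under which $f_T$ corresponds to $u^*f_Y$. By hypothesis $P$ is stable under pullback, so if $P$ holds for $\ccF_Y$ (respectively $f_Y$), it holds for $\ccF_T$ (respectively $f_T$) over the Zariski cover of $T$ on which we obtained the lift. Finally, since $P$ is local for the topology of $\CR(Y/\bF_p)$, which refines the Zariski topology on $T$, this local verification upgrades to a statement for $(U,T,\delta)$.

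The only step that might appear delicate is the existence of the lift $u:T\to Y$, but this is a direct consequence of the smoothness of $Y$ over $\bF_p$ applied to the nilpotent thickening $U\hookrightarrow T$, so there is no genuine obstacle; the lemma is essentially formal once one unwinds the crystalline site-theoretic meanings of "crystal", "local" and "preserved by pullback".
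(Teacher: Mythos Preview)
Your proposal is correct and follows essentially the same route as the paper: pass to a Zariski (equivalently, affine) cover of $T$, use smoothness of $Y/\bF_p$ together with the nilpotence of the PD ideal to lift the structure map $U\to Y$ to a map from the thickening into $(Y,Y,0)$, and then invoke the crystal property to transport $P$ via the resulting comparison isomorphisms. The only cosmetic difference is that the paper phrases the localization step in terms of affine opens $V\subset T$ (using that the fppf topology refines the Zariski one), whereas you speak directly of Zariski covers; these amount to the same thing.
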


\begin{proof}
    To verify $P$, it suffices to verify it for $\ccF_T$ (resp: $f_T: \ccF_T \to \mathcal{G}_T$) for all $(U,T,\delta) \in \CR(Y/\Sigma_1)$. Since the topology is the fppf topology, it suffices to check it for affine opens $V$ of $T$. Now, $\ccF_V$ is the sheaf associated to $(U \times_T V, V, \delta \vert_V) \in \CR(Y/\bF_p)$, and by definition of the crystalline site, and the smoothness of $Y/\bF_p$, we have a  diagram 
    \begin{center}
    \begin{tikzcd}
        U \times_T V \arrow[r,"u"] \arrow[d] & S \arrow[d] \\
        V \arrow[r]  \arrow[ru,dotted,"v"]& \bF_p.
    \end{tikzcd}
    \end{center}
    In particular, we obtain a morphism $(u,v): (U \times_T V, V, \delta \vert_V) \to (Y,Y,0)$. However, since $\ccF$ and $\mathcal{G}$ are crystals, we obtain a commutative diagram
    
    \begin{center}
    \begin{tikzcd}
                v^{*} \ccF_Y \arrow[r, "{\rho_{(u,v)}}", "{\sim}"'] \arrow[d, "{v^*f_S}"'] & \ccF_{V} \arrow[d, "{f_V}"] \\
        v^* \mathcal{G}_Y \arrow[r,"{ \rho_{(u,v)}}"',"{\sim}"]  & \mathcal{G}_V,
    \end{tikzcd}
    \end{center}
    where the horizontal arrows are the canonical comparison maps that are isomorphisms since $\ccF$ and $\mathcal{G}$ are crystals. The result follows.
\end{proof}
Consider the Dieudonn\'{e} crystal $\bD = \bD(A[p])$. By additivity of the Dieudonn\'{e} functor, the left action of $\ccO_D$ on $A$ yields a right action of $\ccO_D$ on $\bD$. Since we are working over $\bF_p$, we have the usual decomposition 
\[\ccO_{X/\bF_p} \otimes_\bZ \ccO_E \simeq \bigoplus_{\tau \in \widehat{\Theta}_E} \ccO_{X/\bF_p}[u]/u^{e_\fp},\]
which yields the decomposition 
\[\mathbb{D} = \bigoplus_{\tau \in \widehat{\Theta}_E} \mathbb{D}_\tau.\]
By \cite[3.3.7.3]{berthelot1982theorie}, there is a canonical isomorphism $\bD_X \simeq \ccH^1_{\dR}(A/X)$, compatibly with the action of $\ccO_D$. We deduce then by lemma \ref{local check} that $\bD$ is locally free of rank two over $\ccO_{X/\bF_p} \otimes \ccO_D$.  

Furthermore, by \cite[4.3.7.1]{berthelot1982theorie}, the above isomorphism induces a canonical isomorphism $\bD^{(p)} \simeq \Pi^*(\bD_X) \simeq \Pi^*\ccH^1_{\dR}(A/X)$. \cite[4.3.10.2]{berthelot1982theorie} then shows that the pullback by $\Pi$ of the inclusion $\omega \hookrightarrow \ccH^1_{\dR}(A/X)$ induces a sequence:
\begin{center}
\begin{tikzcd}
    \Pi^* \omega \arrow[r] \arrow[rr, bend right, "0"']&  \bD^{(p)} \arrow[r,"\Phi"] & \mathbb{D} \arrow[r]  \arrow[rr, bend right, "V"'] &  \Pi^* \omega \arrow[r] & \bD^{(p)}, 
\end{tikzcd}
\end{center}
where $\Pi^* \omega \simeq \mathbb{D}(\ker F)$, whose base change to $X$ and restriction to $\tau$-components is given by 
\[
 \omega^{(p)}_{\phi^{-1}(\tau)} \to  \ccH^1_{\dR}(A/X)^{(p)}_{\phi^{-1}(\tau)} \xrightarrow{F} \ccH^1_{\dR}(A/X)_\tau \xrightarrow{V} \omega^{(p)}_{\phi^{-1}(\tau)} \to \ccH^1_{\dR}(A/X)^{(p)}_{\phi^{-1}(\tau)}.
\]

After fixing a suitable $\ccO_X \otimes_\bZ \ccO_D$ equivariant isomorphism $\omega \simeq \omega^0 \oplus \omega^0$, we can thus make sense of the crystals 
\[\Pi^*\omega_{\phi^{-1}(\tau)}(i) = \Pi^*(\omega^0_{\phi^{-1}(\tau)}(i) \oplus \omega^0_{\phi^{-1}(\tau)}(i)) \subset \bD^{(p)}_{\phi^{-1}(\tau)},\]
which are locally free over $\ccO_{X/\bF_p}$ of rank, $ 2 \cdot \textrm{rk}_{\ccO_X} \omega^0_{\phi^{-1}(\tau)}(i) $. It follows that we can we define morphisms 
\[V_{\es,\tau^1}: [\varpi_\fp]^{e_\fp-1}\mathbb{D}_\tau \to \Pi^* ([\varpi_\fp]^{-1} \omega_{\phi^{-1}(\tau)}(e_\fp-1) / \omega_{\phi^{-1}(\tau)}(e_\fp-1)) \]
which restrict over $X$ to the (direct sum of the) genuine $V_{\es,\tau^1}$: If $s_{\phi^{-1}(\tau)^{e_\fp}} \neq 0$, then $V_{\es, \tau^1}$ is just the usual (very abusively written) $V_{\es,\tau^1}([\varpi]^{e_\fp-1}s) = V(s)$. If $s_{\phi^{-1}(\tau)^{e_\fp}} = 0$ however, then we see that, at the level of crystals, $\ker \Phi_\tau = \Pi^* \omega \subset [\varpi_\fp]\bD^{(p)}_{\phi^{-1}(\tau)} $ since $\omega_\tau \subset [\varpi_\fp]\ccH^1_{\dR}(A/S)_\tau$, and since  both $\bD^{(p)}_{\phi^{-1}(\tau)}$ and $\bD_\tau$ are locally free of rank $4$ over $\ccO_{X/\bF_p}[u]/u^{e_\fp}$, then the image of $\Phi$ must actually contain $[\varpi_\fp]^{e_\fp-1} \bD_\tau$. We can then define $V_{\es,\tau^1}$ by $\Phi^{-1}$. By compatibility over $X$, we deduce by lemma \ref{local check}, that $V_{\es,\tau^1}$ is an isomorphism if $s_{\phi^{-1}(\tau)^{e_\fp}} \neq 1$.

We can now state and prove the main result of this section, we only state a very precise version, tailored to our future use. Consider the following setup: Let $\fp$ be a prime above $p$, $J \subset \widehat{\Theta}_{F,\fp}$, $(\ccL_{\tilde{\theta}},s_{\tilde{\theta}},t_{\tilde{\theta}})_{\theta \in J}$ a partial Raynaud datum for $\ccO_E/\fq$. Let $H^\prime$ be the associated group scheme and let $H = H \otimes_{\ccO_E} \ccO_E^2$ with its natural $\ccO_D$-action. Suppose that $\ccL_{\tilde{\theta}} = \omega^0_{\tilde{\theta}^{e_\fp}}$ if $s_{\tilde{\theta}^{e_\fp}}=1$, and we are given a collection of surjections of the form
\begin{equation}\label{very precise surjection}
\ccH^1_{\dR}(A/X)^0_{\phi \circ \tilde{\theta}} \xrightarrow{[\varpi_\fp]^{e_\fp-1}} \ccH^1_{(\phi \circ \tilde{\theta})^1} \xrightarrow[\sim]{V_{\es,(\phi \circ \tilde{\theta})^1}} \ccH^{1 \, (p)}_{\tilde{\theta}^{e_\fp}} \twoheadrightarrow \ccL^{(p)}_{\tilde{\theta}}
\end{equation}
otherwise. Write $\ccL = \bigoplus \ccL_{\tilde{\theta}}$, $s = \bigoplus s_{\tilde{\theta}}$, and $t = \bigoplus t_{\tilde{\theta}}$. Suppose finally that the above surjections are such that the diagrams
\begin{equation}\label{subgroup sesame diagram}
\begin{tikzcd}
    \ccH^1_{\dR}(A/X)^0 \arrow[r] \arrow[d,"V"'] & \Phi^* \ccL \arrow[d,"{\Phi^*(t)}"]      & & 
    \ccH^1_{\dR} (A^{(p)}/X)^0 \arrow[r] \arrow[d,"F"'] & \Phi^* \ccL^p \arrow[d,"{\Phi^*(s)}"]\\
    \ccH^1_{\dR}(A^{(p)}/X)^0 \arrow[r] & \Phi^* \ccL^p & &
    \ccH^1_{\dR}(A/X)^0 \arrow[r] & \Phi^* \ccL,
    \end{tikzcd}
\end{equation}
where $\ccH^1_{\dR}(A/X)^0_{\phi \circ \tilde{\theta}} \to \ccL^p_{\tilde{\theta}} = (\omega^0_{\tilde{\theta}^{e_\fp}})^p$ if $s_{\tilde{\theta}^{e_\fp}}=1$ is just Verschiebung, commutes.

\begin{lem}\label{subgroup sesame}
    In the above setup, there is a natural $\ccO_D$-linear embedding $H \hookrightarrow A[\fq]$. Furthermore, the same result is true when replacing $\tilde{\theta}$ for $\tilde{\theta}^c$ and $\fq $ for $\fq^c$.
\end{lem}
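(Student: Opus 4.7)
The first step is to reduce the problem to its reduced-part form via Morita equivalence. Since $\ccO_{D,p} \simeq M_2(\ccO_{E,p})$ and $H = H^\prime \otimes_{\ccO_E} \ccO_E^2$ with its induced $\ccO_D$-action, constructing an $\ccO_D$-linear embedding $H \hookrightarrow A[\fq]$ is equivalent to constructing an $\ccO_E$-linear embedding $H^\prime \hookrightarrow e_0 A[\fq]$, on which I now focus. Because $X$ is smooth over $\bF_p$, the full faithfulness of the contravariant Dieudonn\'e crystal functor in this setting (\cite[Thm 4.1.1]{BerthelotMessing}) further reduces the problem to producing an $\ccO_E$-linear surjection of Dieudonn\'e crystals
\[
\bD(e_0 A[\fq]) \twoheadrightarrow \bD(H^\prime)
\]
on $\CR(X/\bF_p)$ that is compatible with $\Phi$ and $V$. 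By Lemma \ref{Raynaud crystal}, $\bD(H^\prime) \simeq \Pi^*\ccL$ with $\Phi = \Pi^*(s)$ and $V = \Pi^*(t)$; evaluated on the trivial thickening $(X,X,0)$, this becomes $\ccL^{(p)} = \Phi^*\ccL$.

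Next, I construct the required morphism of crystals. Summing the surjections of (\ref{very precise surjection}) over the $\tilde{\theta}$-components, and extending by zero outside $J$, produces a surjection $\ccH^1_{\dR}(A/X)^0 \twoheadrightarrow \Phi^*\ccL$ at the level of $X$-evaluations. To promote this to a morphism of crystals, I exploit the Frobenius-descent nature of $\Pi^*\ccL$: for any Dieudonn\'e crystal $\mathcal{G}$ on $\CR(X/\bF_p)$, a morphism $\mathcal{G} \to \Pi^*\ccL$ is determined by its $X$-evaluation $\mathcal{G}_X \to \Phi^*\ccL$ together with its compatibility with $\Phi$, reflecting the universal property of the Frobenius pullback on smooth bases. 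The right-hand square of (\ref{subgroup sesame diagram}) records precisely this $\Phi$-compatibility, so the $X$-level surjection extends uniquely to a morphism $\bD(e_0 A[\fq]) \to \Pi^*\ccL$ of crystals; the $V$-compatibility then follows from the left-hand square of (\ref{subgroup sesame diagram}) together with the relations $\Phi V = V \Phi = 0$ on crystals of groups killed by $p$.

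To verify surjectivity, it suffices to check on the Dieudonn\'e modules at each geometric point $\overline{x}$ of $X$: by Corollary \ref{raynaud dieudonne dimension}, $\bD(H^\prime_{\overline{x}})_{\phi \circ \tilde{\theta}}$ is one-dimensional over $\overline{\bF}_p$ when $\theta \in J$ and vanishes otherwise, and the induced map is nonzero in each such component by the surjectivity of (\ref{very precise surjection}). Applying the inverse of Berthelot-Messing yields the desired closed $\ccO_E$-linear embedding $H^\prime \hookrightarrow e_0 A[\fq]$, and hence the $\ccO_D$-linear embedding $H \hookrightarrow A[\fq]$. The case with $\tilde{\theta}$ replaced by $\tilde{\theta}^c$ and $\fq$ by $\fq^c$ follows by the same argument applied to the dual Raynaud datum $(\ccL^\vee_{\tilde{\theta}^c}, t^\vee_{\tilde{\theta}^c}, s^\vee_{\tilde{\theta}^c})$. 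The main technical subtlety is the upgrade of the $X$-level surjection to a morphism of crystals; this hinges on the special structure of $\Pi^*\ccL$ as a Frobenius pullback, together with the careful packaging in (\ref{subgroup sesame diagram}) of exactly the Frobenius and Verschiebung compatibilities needed for this descent.
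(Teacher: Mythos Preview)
Your overall strategy matches the paper's: reduce via Morita equivalence to the reduced part $H'$, invoke full faithfulness of the crystalline Dieudonn\'e functor over the smooth base $X$, and produce a surjection of Dieudonn\'e crystals compatible with $\Phi$ and $V$. The pointwise surjectivity check and the treatment of the conjugate case are also in line with the paper.

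The gap is in the step where you ``promote'' the $X$-level surjection to a morphism of crystals. You assert that ``a morphism $\mathcal{G} \to \Pi^*\ccL$ is determined by its $X$-evaluation together with its compatibility with $\Phi$,'' but this is not a standard universal property and you do not justify it. Over a smooth $X$, a morphism of crystals is equivalent to a \emph{horizontal} morphism of the associated modules with integrable connection; while such a morphism is indeed determined by its underlying $\ccO_X$-linear map, not every $\ccO_X$-linear map $\mathcal{G}_X \to \Phi^*\ccL$ extends to a morphism of crystals, and compatibility with $\Phi$ at the $X$-level is not the same condition as horizontality. In the paper, the squares (\ref{subgroup sesame diagram}) are used---via Lemma~\ref{local check}---to verify properties of an already-constructed crystal morphism, not to manufacture one.

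The paper instead constructs the morphism of crystals directly, and this is exactly the purpose of the discussion immediately preceding the lemma. That discussion shows that $V_{\es,\tau^1}$ lifts to a morphism
\[
[\varpi_\fp]^{e_\fp-1}\bD_\tau \longrightarrow \Pi^*\bigl([\varpi_\fp]^{-1}\omega_{\phi^{-1}(\tau)}(e_\fp-1)/\omega_{\phi^{-1}(\tau)}(e_\fp-1)\bigr)
\]
at the crystal level, built from the crystal-level Verschiebung $V:\bD \to \Pi^*\omega$ (or from $\Phi^{-1}$ in the case $s_{\phi^{-1}(\tau)^{e_\fp}}=0$). The final quotient $\ccH^{1}_{\tilde\theta^{e_\fp}} \twoheadrightarrow \ccL_{\tilde\theta}$ is a map of sheaves on $X$, so applying $\Pi^*$ yields a morphism of $\Pi^*$-crystals. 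The composite is then a genuine morphism of crystals $\bD \to (\Pi^*\ccL)^{\oplus 2}$ by construction; only after this does one invoke Lemma~\ref{local check} to check surjectivity and the compatibilities (\ref{subgroup sesame diagram}) on $X$. Your argument bypasses this construction, and that is where it fails.
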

\begin{proof}
By lemma \ref{Raynaud crystal}, the Dieudonn\'{e} crystal of $H$ is given $(\Pi^*\ccL)^{\oplus 2}$ with $\Phi$ and $V$ given by $(\Pi^* s)^{\oplus 2}$ and $(\Pi^* s)^{\oplus 2}$. By the discussion above, we can thus define a morphism of crystals $\bD \to (\Pi^*\ccL)^{\oplus 2}$ whose base change to $X$ is given on components by (\ref{very precise surjection}), or just Verschiebung. It follows by lemma \ref{local check} that this is a surjection. Furthermore, it also follows by lemma \ref{local check} and the commutativity of the diagrams \ref{subgroup sesame diagram}, that this surjection is compatible with $\Phi$ and $V$. We therefore have a surjection of Dieudonn\'{e} crystals which yields a closed immersion $H \hookrightarrow A[p]$ by the full faithfulness of the Dieudonn\'{e} functor since $X$ is smooth. It is straightforward to see that $H$ lands in $A[\fq]$ and that this morphism is $\ccO_D$-equivariant. The conjugate case is entirely analogous. 
\end{proof}
\subsection{The Essential Frobenius isogeny at \texorpdfstring{$\fp$}{fp} and the relation between the Iwahori stratification and the Goren-Oort stratification}
Recall that in the case of Hilbert modular varieties for $p$ unramified in $F$, we have a natural section of the forgetful morphism $\pi_1 : \overline{Y}_0(p) \to \overline{Y}$, that sends $A$ to $(A,H)$ where $H$ is given by the kernel of Frobenius. In particular, this restricts to an isomorphism $\overline{Y}_0(p)_{\Theta_F, \emptyset} \to \overline{Y}$ which also induces isomorphisms $\overline{Y}_0(p)_{\Theta_F,T} \xrightarrow{\sim} \overline{Y}_T$.

In this section, we generalize this result to our setting by constructing over $\overline{Y}$ a canonical isogeny which we dub Essential Frobenius at $\fp$. We call it so because in some sense (made more precise in proposition \ref{essential name justification}) it induces  the Essential Frobenius morphisms defined in section \ref{Fesves}. We note that this construction, in the case of Hilbert modular varieties, was given in \cite[$\mathsection$ 6.1]{Diamond_2023}

\subsubsection{The Essential Frobenius isogeny at \texorpdfstring{$\fp$}{fp}}\label{partial frobenius}

Let $\Sigma$, $U^\prime \subset G^\prime_\Sigma(\bA_f)$ be as usual and write $S^\prime = \widetilde{Y}_{U^\prime}(G^\prime_\Sigma)_\bF$. In this section we define, for each prime $\fp$, endomorphisms $\Phi_{\fp}: S^\prime \to S^\prime$ which we will call the Essential Frobenius morphism at $\fp$. Of course, these morphisms depend on the underlying variety $S^\prime$. We stress that despite the name, $\Phi_\fp$ does not factor the genuine Frobenius isogeny for a general $\Sigma$.
Let $A/S$ be the universal abelian scheme. We fix a prime $\fp \vert p$ of $F$. Our strategy is to use lemma \ref{subgroup sesame}. Since this can be seen as a special instance of the splitting construction from section \ref{splitting}, we will simply state the relevant results and defer their proofs to section \ref{constructing A2} and \ref{filtering A2}. In particular, this corresponds to the case $I = \Theta_{F,\fp} \setminus \Sigma_{\infty,\fp}$ and $J = \emptyset$. In particular $T = R = \emptyset$.

We construct a full Raynaud datum as follows: Let $\theta \in \widehat{\Theta}_{F,\fp}$ and write $\tau = \tilde{\theta}$. If $\theta^{e_\fp} \notin \Sigma_\infty$, set $\mathcal{L}_\theta = \omega^0_{\tau^{e_\fp}}$. Otherwise, if $\theta^{e_\fp} \in \Sigma_\infty$ let $\beta = (\phi^\prime)^{-1}(\theta^{e_\fp})$, and set $\mathcal{L}_\theta = \ccH^1_{\tau^{e_\fp}}/ F_{\textrm{es},\tilde{\beta}}^{\tau^{e_\fp}}((\omega^0 _{\tilde{\beta}})^{p^{(n)}})$ for the appropriate $n$, where we recall that  $\phi^\prime$ is the cycle structure with respect to $\Theta_F \setminus \Sigma_\infty$ and $F_{\textrm{es},\tilde{\beta}}^{\tau^{e_\fp}} : \ccH^{1 \, (p^n)}_{\tilde{\beta}} \to \ccH^1_{\tau^{e_\fp}}$, defined in section \ref{Fesves}, is an isomorphism. These line bundles come with a natural action of $\ccO_E$ which factors through $\ccO_E/\fq$.
If $\theta^{e_\fp} \notin \Sigma_\infty$, we have the surjection
\[\ccH^1_{\textrm{dR}}(A/S)^0_{\phi \circ \tau} \xrightarrow{[\varpi_\fp]^{e_\fp-1}} \ccH^1_{(\phi \circ \tau)^1} \xrightarrow{V_{\textrm{es},(\phi \circ \tau)^1}} (\omega^0_{\tau^{e_\fp}})^{(p)}. \]
If $\theta^{e_\fp} \in \Sigma_\infty$, we have the surjection
\[\ccH^1_{\textrm{dR}}(A/S)^0_{\phi \circ \tau} \xrightarrow{[\varpi_\fp]^{e_\fp-1}} \ccH^1_{(\phi \circ \tau)^1} \xrightarrow[\sim]{V_{\textrm{es},(\phi \circ \tau)^1}} \ccH^{1 \, (p)}_{\tau^{e_\fp}} \twoheadrightarrow  (\ccH^1_{\tau^{e_\fp}}/ F_{\textrm{es},\tilde{\beta}}^{\tau^{e_\fp}}((\omega^0 _{\tilde{\beta}})^{p^{(n)}}))^{(p)}. \]

\noindent Setting $\ccL= \bigoplus_{\theta \in \widehat{\Theta}_{F,\fp}} \ccL_\theta$, we obtain a surjection of $\ccO_S \otimes \ccO_E$-modules \[\ccH^1_{\textrm{dR}}(A/S)^0 \twoheadrightarrow \Phi^*(\ccL),\]
and we define $s_\theta: \ccL_\theta^{p} \to \ccL_{\phi \circ \theta}$, $s = \bigoplus s_\theta$ and $t_\theta: \ccL_{\phi \circ \theta} \to \ccL^p_{\theta}$, $t = \bigoplus t_\theta$ to be the morphisms  such that the diagrams 

\begin{center}
\begin{tikzcd}
\ccH^1_{\textrm{dR}}(A/S)^0  \arrow[r] & \Phi^*(\ccL ) & \ccH^1_{\textrm{dR}}(A/S)^{0 \, (p)}  \arrow[r] & \Phi^*(\ccL)^{(p)} \\

\ccH^1_{\textrm{dR}}(A/S)^{0 \, (p)} \arrow[r] \arrow[u,"F"] & \Phi^*(\ccL ) \arrow[u,"\Phi^*(s  )"'] & \ccH^1_{\textrm{dR}}(A/S)^0 \arrow[r] \arrow[u,"V"] & \Phi^*(\ccL ) \arrow[u,"\Phi^*(t )"']
\end{tikzcd}
\end{center}
commute. Again, we defer the proof that $s$ and $t$ are well defined to section \ref{constructing A2}. We are now in the setup of lemma \ref{subgroup sesame}. Let $H^{\prime \prime}$ be the group scheme associated to the Raynaud datum $(\ccL_\theta,s_\theta,t_\theta)$ and let $H^{\prime} = H^{\prime \prime} \otimes \ccO_E^2$. By lemma \ref{subgroup sesame}, we have an $\ccO_D$-linear embedding $H^\prime \hookrightarrow A[\fq]$. We now set
\[H = H^\prime \oplus \lambda^{-1}((A[\fq]/H^\prime)^\vee) \subset A[\fq] \oplus A[\fq^c] = A[\fp].\] 

\noindent $H$ is an $\ccO_D$-stable subgroup of order $p^{4f_\fp}$ which is totally isotropic with respect to the Weil pairing induced by $\lambda$. We, suggestively, write $F_{\es,\fp}: A \to A^\prime = A / H$. Then $A^\prime$ inherits a natural action of $\ccO_D$ and we define a prime to $p$ quasi-polarization $\lambda^\prime$, for each connected component and $n$ such that $n\lambda$ is a genuine prime to $p$ polarization, via the diagram:
\[
\begin{tikzcd}
    0 \arrow[r] & H \arrow[r] \arrow[d,"n\lambda"', "\sim" {anchor = north, rotate = 90}] & A \arrow[r] \arrow[d,"n\lambda \otimes \varpi_\fp"] & A^\prime \arrow[d,"\lambda^\prime"] \arrow[r] & 0 \\
    0 \arrow[r] & (A[\fp]/H)^\vee \arrow[r] & A^\vee \otimes \fp \arrow[r] & (A^\prime)^\vee \arrow[r] & 0.
\end{tikzcd}
\]
We also define a level structure for $A^\prime$ by setting for each connected component $S_i$ of $S$, $\eta^\prime_i = F_{\es, \fp} \circ \eta_i$ and $\epsilon^\prime_i = \varpi_\fp \epsilon_i$.

Equation (\ref{fixed a2 de rham relations}) will show that at every geometric point $\overline{s}$, the Dieudonn\'{e} module $\Delta^\prime = \mathbb{D}(A^\prime_{\overline{s}}[p^\infty])$ inside $\Delta = \mathbb{D}(A_{\overline{s}}[p^\infty])$ is given on reduced components by 
\[F^*_{\es,\fp}\Delta^{\prime \, 0}_{\tilde{\theta}} = [\varpi_{\fp}]^{1-e_\fp} F_{\es,\tilde{\beta}}^{\tau^1}(\ccH^1_{\tilde{\beta}}),\]
where by abuse of notation, this denotes the lift of the same subspace inside $\Delta^0_{\tilde{\theta}}/p\Delta^0_{\tilde{\theta}} = H^1_{\dR}(A_{\overline{s}})^0_{\tilde{\theta}}$. From this, we deduce that $\dim \omega^0_{A^\prime,\tilde{\theta}} = \dim \omega^0_{A,\tilde{\theta}}$ for all $\theta \in \widehat{\Theta}_{F,\fp}.$ For $\theta \notin \widehat{\Theta}_{F,\fp}$, $F_{\es,\fp}: \omega^0_{A^\prime,\tilde{\theta}} \to \dim \omega^0_{A,\tilde{\theta}} $ is an isomorphism. We thus define the filtration on $\omega^0_{A^\prime,\tilde{\theta}}$ by pulling back the one on $\omega^0_{A,\tilde{\theta}}$. If $\theta \in \widehat{\Theta}_{F,\fp}$ however, we will see in (\ref{filtering a2 definition}) that we can define a filtration on $\omega^0_{A^\prime,\tilde{\theta}}$ of type $\widetilde{\Sigma}_\infty$, uniquely determined by 
\[
\omega^0_{A^\prime,\tilde{\theta}}(i) = (F^*_{\es,\fp})^{-1}(\omega^0_{A,\tilde{\theta}}(i-1))
\]
for all $i$ such that $s_{\tilde{\theta}^i}=1$.

We now have a tuple $\underline{A^{(\fp)}} = \underline{A^\prime}$ over $S$, which defines a morphism 
\[\Phi_{\fp}: S \to S\]
which we call the Essential Frobenius morphism at $\fp$. 
Consider two distinct primes $\fp_1,\fp_2 \vert p$ of $F$. It is straightforward to check, for example by looking at geometric points, that the isogenies 
\[A \xrightarrow{F_{\es,\fp_1}} A^{(\fp_1)} \xrightarrow{\Phi_{\fp_1}^*(F_{\es,\fp_2})} (A^{(\fp_1)})^{(\fp_2)} \, \textrm{ and } \, A \xrightarrow{F_{\es,\fp_2}} A^{(\fp_2)} \xrightarrow{\Phi_{\fp_2}^*(F_{\es,\fp_1})} (A^{(\fp_2)})^{(\fp_1)}\]
are the same. That is, that there is a canonical isomorphism $(A^{(\fp_1)})^{(\fp_2)} \simeq (A^{(\fp_2)})^{(\fp_1)}$ such that $\Phi_{\fp_1}^*(F_{\es,\fp_2}) = \Phi_{\fp_2}^*(F_{\es,\fp_1})$. For $\Fp = \fp_1 \cdots \fp_n$ a product of distinct primes over $p$, we thus write $F_{\es,\Fp}: A \to A^{(\Fp)}$ for the composite, in any order of the $F_{\es,\fp_i}$, which we call the essential Frobenius isogeny at $\Fp$.

We similarly define $V_{\es,\Fp}: A^{(\Fp)} \to A \otimes \Fp^{-1}$ to be the unique isogeny such that $A \xrightarrow{F_{\es,\Fp}} A^{(\Fp)} \xrightarrow{V_{\es,\Fp}} A \otimes \Fp^{-1}$ is the canonical isogeny $\iota_\Fp$ induced by the inclusion $\ccO_E \hookrightarrow \Fp^{-1}$. We call $V_{\es,\Fp}$, the Essential Verschiebung at $\Fp$.

\noindent The following proposition justifies the names Essential Frobenius and Essential Verschiebung at $\fp$:

\begin{prop}\label{essential name justification}
There is a necessarily unique system of isomorphisms \[\alpha_\beta : \ccH^{1 \, (p^\delta)}_{A,\phi^{-1}(\beta)} \xrightarrow{\sim} \ccH^1_{A^{(\fp)},\beta},\]
where $\delta =1$ if $\beta = \tau^i$ with $i=1$ and 0 otherwise, and $\beta \in \Theta_{E,\fp}$, such that: If $s_{\phi^{-1}(\beta)} \neq 1$, then $\alpha_{\phi(\beta)} = F_{\es,\beta} \circ \alpha_\beta$. Otherwise, if $s_{\phi^{-1}(\beta)}=1$ then the following diagram
\[
\begin{tikzcd}
&  \ccH^1_{A^{(\fp)},\beta} \arrow[rd, "F_{\es,\fp}^*"]& \\
\ccH^{1 \, (p^\delta)}_{A,\phi^{-1}(\beta)} \arrow[ru,"\alpha_\beta"] \arrow[rr,"F_{\es, \beta}"]& & \ccH^1_{A,\beta}
\end{tikzcd}
\]
commutes. In particular, after fixing the isomorphism $\ccH^1_{A \otimes \fp^{-1},\beta} \simeq \ccH^1_{A ,\beta} \otimes \fp \simeq \ccH^1_{A ,\beta}$ afforded by $\varpi_\fp$, if $s_{\phi^{-1}(\beta)} =1$, then the following diagram commutes:
\[
\begin{tikzcd}
&  \ccH^1_{A^{(\fp)},\beta} \arrow[rd,"{\alpha^{-1}_{\beta}}"]& \\
 \ccH^1_{A,\beta}  \arrow[rr,"V_{\es, \beta}"'] \arrow[ru, "{V_{\es,\fp}^*}"]& & \ccH^{1 \, (p^\delta)}_{A,\phi^{-1}(\beta)}. 
\end{tikzcd}
\]
\end{prop}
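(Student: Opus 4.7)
The plan is to construct the isomorphisms $\alpha_\beta$ at geometric points using crystalline Dieudonn\'{e} theory, and then to globalize them via the crystallization lemma \ref{crystallization}. Uniqueness follows quickly from the two compatibility conditions: starting from any $\beta \in \Theta_{E,\fp}$ with $s_{\phi^{-1}(\beta)} = 1$, the second condition determines $\alpha_\beta$ as the unique factorization of $F^A_{\es,\beta}$ through $F^*_{\es,\fp}$ -- available because $F^*_{\es,\fp}: \ccH^1_{A^{(\fp)},\beta} \hookrightarrow \ccH^1_{A,\beta}$ is injective and, by the construction of the Raynaud subgroup $H$, its image contains that of $F^A_{\es,\beta}$. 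The first condition then propagates $\alpha_\beta$ around the $\phi$-cycle in $\widehat{\Theta}_{E,\fp}$, using that $F_{\es,\bullet}$ is an isomorphism whenever $s_{\phi^{-1}(\bullet)} \neq 1$.

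For existence, the key input is the explicit Dieudonn\'{e}-theoretic description of $F^*_{\es,\fp}$, namely the identity
$$F^*_{\es,\fp}\Delta^{\prime 0}_{\tilde{\theta}} = [\varpi_{\fp}]^{1-e_\fp} F_{\es,\tilde{\beta}}^{\tau^1}(\ccH^1_{\tilde{\beta}}),$$
which was recorded during the construction of $F_{\es,\fp}$, together with the formula for the Pappas-Rapoport filtration on $\omega^0_{A^{(\fp)},\tilde{\theta}}$ from the preceding discussion. These combine, at each geometric point, to produce an explicit identification of each reduced component $\ccH^1_{A^{(\fp)},\beta}$ with a suitable $p$-twist of $\ccH^1_{A,\phi^{-1}(\beta)}$. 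Defining $\alpha_\beta$ pointwise by the recipe dictated by the uniqueness analysis, one checks that both compatibility conditions hold at the Dieudonn\'{e}-module level essentially tautologically.

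To upgrade these pointwise candidates to a morphism of coherent sheaves, we apply lemma \ref{crystallization} to $A$ and $A^{(\fp)}$, taking as the two $\ccO_E$-linear isogenies $F_{\es,\fp}$ and an appropriate iterated essential-Frobenius-type isogeny on $A$. The pointwise equality of Dieudonn\'{e} submodules required by the lemma is exactly what the displayed identity supplies, so one obtains the $\alpha_\beta$ as honest isomorphisms of sheaves; smoothness of the Pappas-Rapoport model is crucial here.

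The main obstacle is the cyclic compatibility check: propagating $\alpha_\beta$ once around a $\phi$-cycle via the first condition and returning to a component with $s_{\phi^{-1}(\beta)} = 1$, the resulting map must agree with the $\alpha_\beta$ imposed by the second condition. This reduces, by unwinding the essential Verschiebung composites, to precisely the Dieudonn\'{e} identity above together with the defining property of the iterated maps $F^{\tau^i}_{\es,\tilde{\beta}}$. Once the first diagram is established, the second diagram (with essential Verschiebung) follows formally by dualization, using $V_{\es,\fp} \circ F_{\es,\fp} = \iota_\fp$ and the fact that $F_{\es,\bullet}$ and $V_{\es,\bullet}$ are mutually inverse up to multiplication by $[\varpi_\fp]$ on the relevant components.
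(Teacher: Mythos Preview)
Your proposal has the right general shape but glosses over precisely the step where the paper does the real work. The paper treats the two cases $i>1$ and $i=1$ quite differently, and the second case requires a concrete geometric construction that your sketch does not supply.

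For $\beta=\tau^i$ with $i>1$ and $s_{\tau^{i-1}}=1$, the paper's argument is simpler than yours: the filtration formula $\omega^0_{A^{(\fp)},\tau}(j)=(F^*_{\es,\fp})^{-1}\omega^0_{A,\tau}(j-1)$ (for $s_{\tau^j}=1$) shows directly that $F^*_{\es,\fp}$ induces an \emph{isomorphism} $\ccH^1_{A^{(\fp)},\tau^i}\xrightarrow{\sim}\ccH^1_{A,\tau^{i-1}}$, and one simply sets $\alpha_{\tau^i}=(F^*_{\es,\fp})^{-1}$. No crystallization lemma and no factorization through an injection is needed here; the commutativity of the triangle is immediate.

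The difficulty, and the place where your sketch is too vague, is $i=1$. Here you need an isomorphism $\ccH^{1\,(p)}_{A,(\phi^{-1}\circ\tau)^{e_\fp}}\xrightarrow{\sim}\ccH^1_{A^{(\fp)},\tau^1}$, but these are subquotients of $\ccH^1_{\dR}(A^{(p)}/S)_{\phi^{-1}\circ\tau}$ and $\ccH^1_{\dR}(A^{(\fp)}/S)_\tau$ respectively, and there is no direct map between them. Your proposal to ``apply lemma~\ref{crystallization} to $A$ and $A^{(\fp)}$, taking as the two isogenies $F_{\es,\fp}$ and an appropriate iterated essential-Frobenius-type isogeny'' does not work as stated: the obvious candidates (the genuine Frobenius $F:A\to A^{(p)}$ and $F_{\es,\fp}:A\to A^{(\fp)}$) give Dieudonn\'e submodules of $\bD(A)_\tau$ of different colengths ($s_{\phi^{-1}\circ\tau}(e_\fp)$ versus $1$), so the hypothesis of the lemma fails. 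The paper resolves this by introducing an auxiliary abelian scheme $B=A/(\ker F\cap H)$ where $H=\ker F_{\es,\fp}$: the quotient map $q:A\to B$ factors both $F$ and $F_{\es,\fp}$, and a Dieudonn\'e-module computation at geometric points shows that $A^{(\fp)}\to B$ induces an isomorphism on $\tau$-components while $A^{(p)}\to B$ has kernel exactly $\omega^{(p)}_{(\phi^{-1}\circ\tau)}(e_\fp-1)$ on $\tau$-components. This produces the desired $\alpha_{\tau^1}$ via $B$ as an intermediary, with smoothness of $S$ used to pass from Dieudonn\'e modules to de Rham cohomology. This $B$ is the missing ingredient in your sketch.

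Your final remark on the Verschiebung diagram is correct and matches the paper: once the Frobenius triangle is established, the Verschiebung one follows formally from $V_{\es,\fp}\circ F_{\es,\fp}=[\varpi_\fp]$.
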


\begin{proof}
It is clear that we only need to construct $\alpha_\beta$ for $\beta \in \Theta_E$ such that $s_{\phi^{-1}(\beta)}=1$. Write $\beta = \tau^i$; if $i > 1$, then $\omega^0_{A^{(\fp)}}(i) = (F^*_\fp)^{-1} \omega^0_{A,\tau}(i-1)$ by construction. We thus take $\alpha_\beta: \ccH^1_{A,\tau^{i-1}} \to \ccH^1_{A^{(\fp)},\tau^{i}}$ to be $(F_{\es,\fp}^*)^{-1}$.

If $i = 1$, consider $q: A \to (A / \ker F \cap H) =:B$ where $H$ is the kernel of $F_{\es,\fp}$. $q$ factors both $F_{\es,\fp}$ and the genuine Frobenius. 

Let $\overline{s}$ be any geometric point of $S$. Since $(\phi^{-1} \circ \theta)^{e_\fp} \notin \Sigma_\infty$ by assumption, we know by above that 
\[F_{\es,\fp}^* \bD(A_{\overline{s}}^{(\fp)}[p^\infty])_\tau = p^{-1}\Phi (W_{(\phi^{-1} \circ \tau)}(e_\fp-1)),\]
where we write $W_{(\phi^{-1} \circ \tau)}(e_\fp-1)$ for the lift of  $\omega_{(\phi^{-1} \circ \tau)}(e_\fp-1) \subset \bD(A_{\overline{s}}[p^\infty])_{\phi^{-1} \circ \tau} / p$.
The Dieudonn\'{e} module $\bD(H \cap \ker F)$ is given by the pushout $\mathbb{D}(H) \oplus_{\bD(A_{\overline{s}}[p^\infty])} \bD(\ker F)$. Furthermore, we may take this pushout in the category of Dieudonn\'{e} modules with an action of $\ccO_D$ so that we may compute it componentwise. In particular, since the kernel of $\bD(A_{\overline{s}}[p^\infty])_\tau \to \mathbb{D}(H)_\tau $ contains $\Phi\bD(A^{(p)}_{\overline{s}}[p^\infty])_{\tau}$ by above, we then conclude that $\bD(H \cap \ker F)_\tau = \mathbb{D}(H)_\tau$. Therefore, $\mathbb{D}(A_{\overline{s}}^{(\fp)}[p^\infty])_{\phi \circ \tau} \to \mathbb{D}(B_{\overline{s}}[p^\infty])_{\phi \circ \tau}$ is an isomorphism and that $\ker \bD(A^{(p)}[p])_\tau \to \bD(B[p])_\tau$ is $\omega_{(\phi^{-1} \circ \tau)}(e_\fp-1)$. Since $S$ is smooth we thus obtain the same results at the level of de Rham cohomology over all of $S$. We thus have the following commutative diagram:

\begin{center}
\begin{tikzcd}
    \ccH^1_{\dR}(A^{(\fp)}/S)_{\tau} \arrow[r,"\sim"] \arrow[rd, "{F^*_{\es,\fp}}"'] & \ccH^1_{\dR}(B/S)_{ \tau} \arrow[d] & \ccH^1_{\dR}(A/S)^{(p)}_{\phi^{-1} \circ\tau} \arrow[l] \arrow[ld, "F"] \\
    & \ccH^1_{\dR}(A/S)_{\tau}. &
\end{tikzcd}
\end{center}
We thus have a morphism $\alpha_\beta:\ccH^1_{\dR}(A/S)^{(p)}_{\tau} \to \ccH^1_{\dR}(A^{(\fp)}/S)_{\phi \circ \tau} $ whose kernel is $\omega^0_{(\phi^{-1} \circ \tau)}(e_\fp-1))$. It follows that $\alpha_\beta$ yields the desired isomorphism $\ccH^{1 \, (p)}_{A,(\phi^{-1} \circ \tau)^{e_\fp}} \to \ccH^1_{A^{(\Fp)},\tau^1}$.

The compatibility with essential Verschiebung at $\fp$ follow from above and the relation (after fixing the suitable isomorphism) $V_{\es,\fp} F_{\es,\fp} =[\varpi_\fp]$, $F_{\es,\fp} V_{\es,\fp} =[\varpi_\fp]$.
\end{proof}

\begin{cor}
The Goren-Oort stratum $S_T$ can be described as the closed subscheme given by the vanishing 
\[V( \{ V^*_{\fP,\tilde{\beta}} \, \vert  \, \beta \in T \}).\] 
\end{cor}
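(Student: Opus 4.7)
The Goren--Oort stratum $S_T=\widetilde{Y}'_T$ is by definition the vanishing locus of the sections $h_{\tilde{\beta}}\colon\omega^0_{\tilde{\beta}}\to(\omega^0_{(\phi')^{-1}(\tilde{\beta})})^{p^{m_\beta}}$ for $\beta\in T$. Interpreting $V^*_{\fP,\tilde{\beta}}$ as the map $\omega^0_{\tilde{\beta}}\to\omega^0_{A^{(\fP)},\tilde{\beta}}$ obtained by restricting the $\tilde{\beta}$-component of $V^*_{\es,\fP}$ (after the identification $\ccH^1_{A\otimes\fP^{-1},\tilde{\beta}}\simeq\ccH^1_{A,\tilde{\beta}}$ afforded by a uniformizer), the goal is to prove, for each fixed $\beta\in T$, that the sections $h_{\tilde{\beta}}$ and $V^*_{\fP,\tilde{\beta}}$ have the same vanishing locus. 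Then both closed subschemes are cut out by the same collection of line-bundle sections and hence coincide.

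First I would reduce from $\fP$ to the single prime $\fp$ below $\tilde{\beta}$. Since $V_{\es,\fP}$ is the composite in any order of the isogenies $V_{\es,\fq}$ for $\fq\vert p$, and since $V_{\es,\fq}$ for $\fq\neq\fp$ induces an isomorphism on $\tilde{\beta}$-components of de Rham cohomology (being \'etale away from $\fp$), the pullback $V^*_{\es,\fP,\tilde{\beta}}$ agrees with $V^*_{\es,\fp,\tilde{\beta}}$ up to an isomorphism of line bundles. In particular their restrictions to $\omega^0_{\tilde{\beta}}$ have the same vanishing locus.

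Next I would invoke Proposition \ref{essential name justification}. Write $\gamma=(\phi')^{-1}(\tilde{\beta})$, so by definition $\gamma\notin\Sigma_\infty$ while every intermediate component $\phi(\gamma),\phi^2(\gamma),\ldots,\phi^{-1}(\tilde{\beta})$ lies in $\Sigma_\infty$. The Hasse invariant factors as
\[
h_{\tilde{\beta}}\;=\;V_{\es,\phi(\gamma)}\circ V^{\phi(\gamma)}_{\es,\tilde{\beta}}\big|_{\omega^0_{\tilde{\beta}}},
\]
where $V^{\phi(\gamma)}_{\es,\tilde{\beta}}\big|_{\omega^0_{\tilde{\beta}}}$ is an isomorphism onto $(\omega^0_{\phi(\gamma)})^{(p^{\bullet})}$, being a composite of essential Verschiebungs at components where $s\neq 1$. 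Iterating the recursion $\alpha_{\phi(\beta')}=F_{\es,\phi(\beta')}\circ\alpha_{\beta'}$ of the proposition down the string of components with $s\neq 1$ (across which $F_{\es}$ is an isomorphism), and then applying the commutative triangle of the proposition at the terminal step $\phi(\gamma)$ (which is valid precisely because $s_{\phi^{-1}(\phi(\gamma))}=s_\gamma=1$), one identifies $V^*_{\es,\fp,\tilde{\beta}}\big|_{\omega^0_{\tilde{\beta}}}$ with $\alpha_{\tilde{\beta}}\circ h_{\tilde{\beta}}\circ(\text{iso})$ where the outer factors are isomorphisms of line bundles. Since pre- and post-composition with isomorphisms preserves the vanishing locus of a section, this gives the equality $V(V^*_{\fP,\tilde{\beta}})=V(h_{\tilde{\beta}})$ as closed subschemes of $S'$, and the corollary follows by intersecting over $\beta\in T$.

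\emph{Expected main obstacle.} The non-trivial bookkeeping is tracking the chain of $\alpha$-isomorphisms and Frobenius twists across the components of $\Sigma_\infty$ separating $\tilde{\beta}$ from $\gamma=(\phi')^{-1}(\tilde{\beta})$: the proposition provides the identification recursively, one step at a time, and one must verify that iterating it produces precisely the composite defining $h_{\tilde{\beta}}$ (with the correct Frobenius twists) rather than some other composite. Once this verification is carried out carefully, the compatibility of the $\omega^0$-subbundles with the isomorphisms $\alpha$ along the chain (which holds by the construction of the filtration on $A^{(\fp)}$ in section \ref{partial frobenius}) makes the equality of vanishing loci automatic.
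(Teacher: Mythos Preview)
Your proposal is correct and is essentially the argument the paper has in mind; the paper states the corollary without proof, treating it as an immediate consequence of Proposition~\ref{essential name justification}. Your expansion---reducing from $\fP$ to the relevant prime $\fp$, then using the second commutative triangle of the proposition (together with the recursive compatibility $\alpha_{\phi(\beta')}=F_{\es,\beta'}\circ\alpha_{\beta'}$ to pass through the components in $\Sigma_\infty$)---is exactly the unwinding one needs to see why the vanishing of $V^*_{\fP,\tilde\beta}$ on $\omega^0_{\tilde\beta}$ coincides with the vanishing of the Hasse invariant $h_{\tilde\beta}$.

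One small remark on your ``expected main obstacle'': the bookkeeping you flag is genuine, and the cleanest way to organize it is to observe that the system of isomorphisms $\alpha_\bullet$ identifies $\omega^0_{A^{(\fp)},\tilde\beta}$, via $\alpha_{\tilde\beta}^{-1}$, with the line $F_{\es,(\phi')^{-1}(\tilde\beta)}^{\phi^{-1}(\tilde\beta)}(\omega^0_{(\phi')^{-1}(\tilde\beta)})$ inside $\ccH^{1\,(p^\delta)}_{A,\phi^{-1}(\tilde\beta)}$; this follows from the definition of the filtration on $A^{(\fp)}$ given by $\omega^0_{A^{(\fp)},\tau}(i)=(F^*_{\es,\fp})^{-1}\omega^0_{A,\tau}(i-1)$ together with the first triangle. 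Once you have that, the map $V^*_{\fP,\tilde\beta}\colon\omega^0_{A,\tilde\beta}\to\omega^0_{A^{(\fp)},\tilde\beta}$ becomes, under $\alpha_{\tilde\beta}^{-1}$, precisely the composite defining $h_{\tilde\beta}$ (up to the isomorphism $V^{\phi(\gamma)}_{\es,\tilde\beta}$ on the source), and the equality of vanishing loci is immediate.
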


\subsubsection{The Relation to the Goren-Oort stratification}\label{Iwahori goren relation section}
In this section we make explicit the link between the Goren-Oort stratification on $Y^\prime$ and the stratification upstairs, on $Y^\prime_0(\Fp)$, where $\Fp$ is a product of distinct primes above $p$, as defined in the previous section.

Keep the notation from section \ref{partial frobenius} so $S = \widetilde{Y}^\prime_{U^\prime}(G^\prime_\Sigma)_\bF$. Consider the tuple $(\underline{A},\underline{A^{(\Fp)}},F_{\es,\Fp})$ over $S$ constructed in the previous section, and let 
\[\widetilde{\psi}_\Fp: S \to \widetilde{Y}^\prime_0(\Fp) = \widetilde{Y}^\prime_{U^\prime_0(\Fp)}(G^\prime_\Sigma)_\bF\]
be the induced morphism. It follows from the construction of $F_{\es,\Fp}$ that this morphism factors through $S_{\emptyset} = S_{\Theta_F \setminus \Sigma_\infty,\emptyset}$ and that it is a section of the projection $\widetilde{\pi}_1: \widetilde{Y}_0(\Fp) \to S$, $(\underline{A_1},\underline{A_2},f) \mapsto \underline{A_1}$. Furthermore, it follows from construction that $\widetilde{\psi}_\Fp$ is equivariant with respect to the action of $\ccO_{F,(p),+}^\times$. Namely, the construction of the kernel $A \to A^{(\Fp)}$ depends only on the $\ccO_{F,(p),+}^\times$-orbit of the polarization $\lambda$ of $A$. Therefore, $\widetilde{\psi}_\Fp$ descends to a morphism $\psi_\Fp: \overline{Y}^\prime \to \overline{Y}^\prime_0(\Fp)$.

\begin{prop}\label{Iwahori goren relation}
Let $\pi_1: \overline{Y}^\prime_0(\Fp) \to \overline{Y}^\prime$ be the map induced by the projection $(\underline{A_1},\underline{A_2},f) \mapsto \underline{A_1}$. Then for any $I,J$ such that $I \cup J = \Theta_{F,\Fp} \setminus \Sigma_{\infty,\Fp}$, \[ \pi_1\left(\overline{Y}^\prime_0(\Fp)_{\phi^\prime(I),J}\right) \subset \overline{Y}^\prime_{\phi^\prime(I) \cap J}.\] In particular for every $J \subset \Theta_{F,\Fp} \setminus \Sigma_{\infty,\Fp}$, $\pi_1: \overline{Y}^\prime_0(\Fp)_{(\Theta_{F,\Fp} \setminus \Sigma_{\infty,\Fp}),J} \xrightarrow{\sim} \overline{Y}^\prime_J$ is an isomorphism with inverse $\psi_\Fp$.
\end{prop}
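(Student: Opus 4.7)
The statement splits into two assertions which I would address separately: the containment of images, and the isomorphism onto $\overline{Y}^\prime_J$ with inverse $\psi_\Fp$.

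For the containment, both sides are closed in $\overline{Y}^\prime$, so it suffices to work on $\overline{\bF}_p$-points. Given $(\underline{A_1}, \underline{A_2}, f, g) \in \overline{Y}^\prime_0(\Fp)_{\phi^\prime(I),J}$ and $\beta \in \phi^\prime(I) \cap J$ with $\gamma = (\phi^\prime)^{-1}(\beta)$, the stratum conditions translate to $f^*\bigl(\omega^0_{2,\tilde{\theta}}(k)\bigr) \subseteq \omega^0_{1,\tilde{\theta}}(k-1)$ where $\gamma = \theta^k$, together with the analogous inclusion for $g^*$ at $\beta$. Since $f$ and $g$ are $\ccO_D$-linear, their pullbacks commute with $[\varpi_\fp]$, Frobenius and Verschiebung, and hence with the essential Verschiebung operators. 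Combined with the identity $f^* \circ g^* = \iota_{\Fp^{-1}}^*$ coming from $g \circ f = \iota_{\Fp^{-1}}$, I would chase the iterated essential Verschiebung $V^{\tilde{\gamma}}_{\es,1,\tilde{\beta}}$ defining $h_{1,\tilde{\beta}}$ along the $\phi^\prime$-chain from $\tilde{\beta}$ down to $\tilde{\gamma}$ (all intermediate steps being isomorphisms because they lie at $\Sigma_\infty$-points) and conclude that its restriction to $\omega^0_{1,\tilde{\beta}}$ vanishes, i.e.\ $h_{1,\tilde{\beta}}(\underline{A_1}) = 0$.

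For the isomorphism, I would use the morphism $\psi_\Fp$ constructed in Section \ref{partial frobenius}. By construction, $\pi_1 \circ \psi_\Fp = \id_{\overline{Y}^\prime}$, and the image of $\psi_\Fp$ is contained in $\overline{Y}^\prime_0(\Fp)_{\Theta_{F,\Fp}\setminus\Sigma_{\infty,\Fp},\emptyset}$. Proposition \ref{essential name justification} identifies $V^*_{\es,\Fp,\tilde{\beta}}$ on the graded piece $\omega^0_{\tilde{\beta}}$ with the essential Verschiebung $V_{\es,\tilde{\beta}}$ on $\omega^0_{1,\tilde{\beta}}$, whose vanishing locus is precisely $\overline{Y}^\prime_{\tilde{\beta}}$. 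Hence the defining condition for $J$ at Iwahori level (vanishing of $g^*_{\tilde{\beta}}$) corresponds exactly to the Goren--Oort condition downstairs (vanishing of $h_{1,\tilde{\beta}}$), so $\psi_\Fp$ sends $\overline{Y}^\prime_J$ into $\overline{Y}^\prime_0(\Fp)_{\Theta_{F,\Fp}\setminus\Sigma_{\infty,\Fp},J}$.

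It remains to verify $\psi_\Fp \circ \pi_1 = \id$ on this stratum. I would argue pointwise: for $(\underline{A_1}, \underline{A_2}, f, g)$ in the stratum, the condition $I = \Theta_{F,\Fp}\setminus\Sigma_{\infty,\Fp}$ means $f^*$ drops every graded piece $\omega^0_{2,\tilde{\beta}}$ with $s_{\tilde{\beta}}=1$. Translating via crystalline Dieudonn\'e theory as in Section \ref{iwahori unitary local}, this pins down $f^*\Delta_{A_2}$ as a Dieudonn\'e submodule of $\Delta_{A_1}$ coinciding with the image of the partial Raynaud subgroup built in Section \ref{partial frobenius}, i.e.\ $f^*\Delta_{A_2} = F_{\es,\Fp}^*\Delta_{A_1^{(\Fp)}}$. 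Hence $(A_2,f) \cong (A_1^{(\Fp)}, F_{\es,\Fp})$, and $g$ is uniquely determined by $g \circ f = \iota_{\Fp^{-1}}$ to equal $V_{\es,\Fp}$. The main obstacle is precisely this last uniqueness step: reconciling the Raynaud-theoretic construction of $\ker F_{\es,\Fp}$ with the subgroup forced by the stratum conditions, and then upgrading the pointwise identification to a scheme-theoretic one using Lemma \ref{crystallization} and the smoothness of the target stratum.
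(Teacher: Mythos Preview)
Your approach is essentially the paper's. For the containment, you chase essential Verschiebung along the $\phi'$-chain; the paper packages the same commutation as ``functoriality of the Hasse invariants'' in a single commutative square and reads off $h_{1,\beta}(\omega^0_{1,\tilde\beta}) = f^*_{\tilde\gamma}\bigl((\omega^0_{2,\tilde\gamma})^{p^m}\bigr) = 0$ directly --- same computation, different bookkeeping.

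For the isomorphism, the paper's logic is cleaner than what you outline. It does not verify $\psi_\Fp \circ \pi_1 = \id$ directly, nor does it invoke the crystallization lemma. Instead: since $\pi_1 \circ \psi_\Fp = \id$ and the source stratum $\overline{Y}'_0(\Fp)_{\Theta_{F,\Fp}\setminus\Sigma_{\infty,\Fp},\emptyset}$ is smooth (hence reduced), it suffices to show $\pi_1$ is \emph{injective on geometric points}. The Dieudonn\'e calculation you sketch is exactly what the paper uses to establish this injectivity --- showing that $f^*\Delta^0_{2,\phi\circ\tau}$ and the filtration on $\omega^0_{2,\tau}$ are determined by $\underline{A_1}$ alone --- but the paper never identifies $A_2$ with $A_1^{(\Fp)}$ explicitly, and never needs to upgrade a pointwise identity to a scheme-theoretic one. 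The obstacle you flag is therefore illusory: once you have a section plus injectivity on geometric points, reducedness of the source forces $\psi_\Fp \circ \pi_1 = \id$ as morphisms. The passage to general $J$ then follows from the Corollary after Proposition~\ref{essential name justification} (this is the paper's ``by above''), which is precisely the identification you extract from that proposition.
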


\begin{proof}
It suffices to prove the analogous results upstairs. The first part is standard: Consider the diagram 
\begin{center}
\begin{tikzcd}
{\ccH^1_{1,\beta}\otimes \fp} \arrow[d, "h_\beta"] \arrow[r, "g"] & {\ccH^1_{2,\beta}} \arrow[d, "h_\beta"] \arrow[r, "f"] & {\ccH^1_{1,\beta}} \arrow[d, "h_\beta"] \\
\left(\ccH^1_{1,{\phi'}^{-1}(\beta)}\right)^{(p^{n_\beta})} \otimes \fp \arrow[r, "g"]        & 
\left(\ccH^1_{2,{\phi'}^{-1}(\beta)}\right)^{(p^{n_\beta})} \arrow[r, "f"]        & 
\left(\ccH^1_{1,{\phi'}^{-1}(\beta)}\right)^{(p^{n_\beta})}       
\end{tikzcd}
\end{center}

which is commutative by functoriality of the Hasse invariants. We immediately see that \[ \ker h_\beta = \textrm{Im}(f) = \ker (g \otimes \fp^{-1}) = \omega_{1,\beta}. \] 

To prove the second result, it suffices by above to show that $\widetilde{\pi}_1: \widetilde{Y}^\prime_0(\Fp)_{(\Theta_{F,\Fp} \setminus \Sigma_{\infty,\Fp}),\emptyset} \to S$ has inverse $ \widetilde{\psi}_\Fp$. Note that $\widetilde{\pi}_1 \circ \widetilde{\psi}_\Fp$ is the identity; it thus suffices to show that $\widetilde{\pi}_1$ is injective on geometric points.\smallskip

Consider a geometric point $\overline{s}$ of $\widetilde{Y}^\prime_0(\Fp)_{(\Theta_{F,\Fp} \setminus \Sigma_{\infty,\Fp}),\emptyset}$ corresponding to a tuple $(\underline{A_1}, \underline{A_2},f)$ and let $\Delta_i = \mathbb{D}(A_i[p^\infty])$ be the Dieudonn\'{e} modules of $A_i$. We determine the image of $f^*\Delta_2 \subset \Delta_1$ component by component: Since $\ker f \subset A_1[\Fp]$, it follows that for any $\tau \notin \widehat{\Theta}_{E,\Fp}$ we have $f^*\Delta_{2,\tau} = \Delta_{1,\tau}$. If $\tau \in \widehat{\Theta}_{E,\Fp}$, then the compatibility of $f$ with the polarizations shows that $f^*\Delta_{2,\tau^c}$ is uniquely determined by  $f^*\Delta_{2,\tau}$. More precisely, for any lattice $\Lambda_\tau \subset \Delta_{1,\tau} \otimes \bQ$, write $\Lambda_{\tau}^c \subset \Delta_{\tau^c}$ for its dual lattice with respect to the pairing induced by $\lambda_1$. Then 
\[f^*\Delta_{2,\tau^c} = [\varpi_\fp]\left( f^*\Delta_{2,\tau} \right)^c.\]

In particular, to determine $f^*\Delta_2$, it suffices to determine the images $f^*\Delta_{2,\tilde{\theta}}$ for $\theta \in \widehat{\Theta}_{F,\fp}$, and to do that it suffices to determine the images $f^* \Delta^0_{2,\tau} \subset \Delta_{1,\tau}^0$. Let $\tau = \tilde{\theta}$ be such that there exists $i \geq 1$ such that $s_{\tau^i}=1$ and let $i \leq e_\fp$ be the maximal such $i$. Since $(\underline{A_1}, \underline{A_2},f)$ lies in $S_{ \emptyset}$, then $f^*\omega^0_{2,\tau}(i) = \omega^0_{1,\tau}(i-1)$. By definition of $i$, we obtain 
\[f^*\ccH^1_{2,(\phi \circ \tau)^1} = F_{\textrm{es},\tau^i}^{(\phi \circ \tau)^1}(\ccH^1_{1,\tau^i}).\]

\noindent Furthermore, this yields $f^*H^1_{\textrm{dR}}(A_2)^0_{\phi \circ \tau} = [\varpi_\fp]^{1-e_\fp}F_{\textrm{es},\tau^i}^{(\phi \circ \tau)^1}(\ccH^1_{1,\tau^i})$ and 
\[f^*\Delta^0_{2,\phi \circ \tau} = [\varpi_\fp]^{1-e_\fp}F_{\textrm{es},\tau^i}^{(\phi \circ \tau)^1}(\ccH^1_{1,\tau^i}),\]
via the  isomorphisms $\Delta_i / p \Delta_i \xrightarrow{\sim} H^1_{\textrm{dR}}(A_i)$, and the right hand side, is by abuse of notation, the lift of the corresponding space in $H^1_{\textrm{dR}}(A_1)^0_{\phi \circ \tau}$ to $\Delta_{1,\phi \circ \tau}$.

Suppose now that $\theta^i \in \Sigma_\infty$ for all $1 \leq i \leq e_\fp$ and write $\beta = (\phi^\prime)^{-1}((\phi \circ \theta)^1)$. Then 
\[f^* \ccH^1_{2,(\phi \circ \tau)^1} = f^*(F_{\textrm{es},\phi(\tilde{\beta})}^{(\phi \circ \tau)^1}(\ccH^1_{2,\phi(\tilde{\beta})})) = F_{\textrm{es},\phi(\tilde{\beta})}^{(\phi \circ \tau)^1}(f^*(\ccH^1_{2,\phi(\tilde{\beta})})) = F_{\textrm{es},\tilde{\beta}}^{(\phi \circ \tau)^1}(\ccH^1_{2,\tilde{\beta}}).\]
We conclude as before that 
\[f^*\Delta^0_{2,\phi \circ \tau} = [\varpi_\fp]^{1-e_\fp}F_{\textrm{es},\tilde{\beta}}^{(\phi \circ \tau)^1}(\ccH^1_{1,\tilde{\beta}}).\]
Hence $f^*\Delta_2$ is uniquely determined by $\Delta_1$ and the filtration on $\omega^0_2$ is also uniquely determined by the filtration on $\omega^0_1$, for any $s_{\tau^i}=1$:
\[
\omega^0_{2,\tau}(i) = (f^*)^{-1}\omega^0_{1,\tau}(i-1).
\]
In conclusion, after unravelling the definitions of the extra structures, $\underline{A_2}$ is uniquely determined by $\underline{A_1}$ and so $\widetilde{\pi}_1$ is injective on geometric points. This concludes the proof.

\end{proof}

\begin{cor}\label{iwahori strata non empty}
The strata $\overline{Y}^\prime_0(\Fp)_{\phi^\prime(I),J}$ are non empty.
\end{cor}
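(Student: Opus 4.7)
The plan is to deduce non-emptiness of an arbitrary stratum by exhibiting a non-empty closed substratum inside it, leveraging both the stratum inclusion structure and the identification in Proposition \ref{Iwahori goren relation}.

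First I would observe that the closed stratum $\overline{Y}^\prime_0(\Fp)_{\phi^\prime(I),J}$ is cut out by the vanishing of the sections $\{f^*_{\tilde\beta}\}_{\beta \in I}$ and $\{g^*_{\tilde\beta}\}_{\beta \in J}$. Adding more vanishing conditions can only shrink the closed subscheme, so whenever $I \subset I'$ and $J \subset J'$ (both inside $\Theta_{F,\Fp}\setminus\Sigma_{\infty,\Fp}$) we have an inclusion
\[
\overline{Y}^\prime_0(\Fp)_{\phi^\prime(I'),J'} \subset \overline{Y}^\prime_0(\Fp)_{\phi^\prime(I),J}.
\]
Thus it suffices to exhibit a non-empty closed substratum.

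The natural candidate is to enlarge $I$ all the way up to $I' = \Theta_{F,\Fp}\setminus\Sigma_{\infty,\Fp}$ while keeping $J' = J$. This lands in the setting of Proposition \ref{Iwahori goren relation}, which furnishes the isomorphism
\[
\pi_1 \colon \overline{Y}^\prime_0(\Fp)_{(\Theta_{F,\Fp}\setminus\Sigma_{\infty,\Fp}),J} \xrightarrow{\sim} \overline{Y}^\prime_J,
\]
identifying the maximal-$I$ stratum at Iwahori level with the Goren--Oort stratum downstairs. Since Corollary \ref{non empty goren strata} ensures that $\overline{Y}^\prime_J$ is non-empty (and in fact meets every component of $\overline{Y}^\prime$), the chain of inclusions yields $\overline{Y}^\prime_0(\Fp)_{\phi^\prime(I),J} \neq \emptyset$.

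There is no real obstacle here, as the work has already been done: Proposition \ref{Iwahori goren relation} constructs the section $\widetilde{\psi}_\Fp$ via the essential Frobenius isogeny, and Corollary \ref{non empty goren strata} guarantees non-emptiness of Goren--Oort strata. The only mild care needed is to confirm that the stratum containments respect the indexing convention (in particular that $J$ is automatically a subset of $\Theta_{F,\Fp}\setminus\Sigma_{\infty,\Fp}$, so no auxiliary enlargement is required on the $g^*$ side), after which the proof is a one-line deduction from the two cited results.
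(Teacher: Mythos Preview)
Your proof is correct and follows essentially the same approach as the paper: both use the stratum containment structure together with Proposition~\ref{Iwahori goren relation} and Corollary~\ref{non empty goren strata}. The only cosmetic difference is that the paper enlarges both $I$ and $J$ to the full set $\Theta_{F,\Fp}\setminus\Sigma_{\infty,\Fp}$ (reducing to the single bottom stratum, which is then identified with the bottom Goren--Oort stratum $\overline{Y}^\prime_{\Theta_{F,\Fp}\setminus\Sigma_{\infty,\Fp}}$), whereas you enlarge only $I$ and keep $J$ fixed; both reductions land on a Goren--Oort stratum already known to be non-empty.
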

\begin{proof}
The stratum $\overline{Y}^\prime_0(\Fp)_{(\Theta_{F,\Fp} \setminus \Sigma_{\infty,\Fp}),(\Theta_{F,\Fp} \setminus \Sigma_{\infty,\Fp})}$ lies in every stratum $\overline{Y}^\prime_0(\Fp)_{\phi^\prime(I),J}$ and The Goren-Oort stratum $\overline{Y}_{\Theta_{F,\Fp} \setminus \Sigma_{\infty,\Fp}}$ is also non-empty by corollary \ref{non empty goren strata}. Therefore the stratum $\overline{Y}^\prime_0(\Fp)_{(\Theta_{F,\Fp} \setminus \Sigma_{\infty,\Fp}),(\Theta_{F,\Fp} \setminus \Sigma_{\infty,\Fp})}$ is non-empty and thus so is every stratum $\overline{Y}^\prime_0(\Fp)_{\phi^\prime(I),J}$.
\end{proof}

\begin{cor}
The Jacquet-Langlands relation at Iwahori level implies a relation for the Goren-Oort strata.
\end{cor}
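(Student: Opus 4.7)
The plan is to reduce this statement to a direct combination of the main Iwahori-level Jacquet–Langlands isomorphism together with the identification established in Proposition \ref{Iwahori goren relation}. The corollary is essentially formal once both inputs are in place, and the main content is to pick the right stratum upstairs.

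First, I would specialize the main theorem at Iwahori level to the case $I = \Theta_F \setminus \Sigma_\infty$ and $J = T$, which automatically satisfies $I \cup J = \Theta_F \setminus \Sigma_\infty$, so the stratum $\overline{Y}_0(\Fp)_{\phi^\prime(I),J}$ is smooth by corollary \ref{corollary strata dim smooth}. Provided $T$ does not contain $\Theta_{F,\fp} \setminus \Sigma_{\infty,\fp}$ for any $\fp \mid p$ (the assumption imposed in the statement of the theorem), the Jacquet–Langlands relation at Iwahori level yields a Hecke-equivariant isomorphism
\[
\overline{Y}_0(\Fp)_{(\Theta_F \setminus \Sigma_\infty),\, T} \xrightarrow{\sim} \prod_{\beta \in R} \mathbb{P}^1_{\overline{Y}_{IJ}}(\ccH^1_\beta),
\]
with $\Sigma_{IJ}$ and $R$ determined by the recipe described in the introduction and specializing, in this case, to $\Sigma_{IJ} = \Sigma_T$ and $R$ the corresponding "evening out" of $T$.

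Next, I would invoke Proposition \ref{Iwahori goren relation} (equivalently Theorem \ref{goren oort implication}), which supplies a Hecke-equivariant isomorphism
\[
\pi_1 : \overline{Y}_0(\Fp)_{(\Theta_F \setminus \Sigma_\infty),\, T} \xrightarrow{\sim} \overline{Y}_T,
\]
with inverse realized explicitly by the essential Frobenius section $\psi_\Fp$ of section \ref{partial frobenius}. Composing the two isomorphisms produces the desired Hecke-equivariant identification
\[
\overline{Y}_T \xrightarrow{\sim} \prod_{\beta \in R} \mathbb{P}^1_{\overline{Y}_{\Sigma_T}}(\ccH^1_\beta).
\]

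There is no real obstacle here, since all the substantive work has already been carried out: the Iwahori-level Jacquet–Langlands relation is the main theorem whose proof occupies Chapter 6, and the compatibility between the Iwahori stratification and the Goren–Oort stratification downstairs is precisely Proposition \ref{Iwahori goren relation}. The only minor points to verify are that the automorphic vector bundles $\ccH^1_\beta$ on $\overline{Y}_{\Sigma_T}$ used in the two formulations agree (which follows from the definitions in section \ref{quaternion bundles} being pulled back along the Unitary–Quaternionic embedding $i$), and that the Hecke equivariance is preserved under the composition, which is automatic from the Hecke equivariance of both constituent isomorphisms.
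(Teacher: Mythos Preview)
Your proposal is correct and matches the paper's approach exactly: the paper does not supply a separate proof for this corollary, treating it as an immediate consequence of Proposition~\ref{Iwahori goren relation}, and your argument spells out precisely why --- specialize the Iwahori-level theorem to $I = \Theta_{F,\Fp} \setminus \Sigma_{\infty,\Fp}$, $J = T$, and compose with the isomorphism $\pi_1$ (or equivalently its inverse $\psi_\Fp$). The paper's subsequent remark confirms that this recovers the target of \cite[Theorem~5.8]{tian_xiao_2016} in the unramified case.
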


\begin{rem}
The content of the above corollary in the case that $p$ is unramified is equivalent to \cite[Theorem 5.8]{tian_xiao_2016}. In fact we will obtain the same target as in loc cit.
\end{rem}

It follows from the construction of the Hecke actions that the projection $\pi_1: \overline{Y}^\prime_0(\Fp) \to \overline{Y}^\prime$ is Hecke equivariant. In particular, we deduce that $\psi_\Fp: \overline{Y}^\prime_J \xrightarrow{\sim} \overline{Y}^\prime_0(\Fp)_{(\Theta_{F,\Fp} \setminus \Sigma_{\infty,\Fp}),J} $ is Hecke equivariant in the sense that, for any sufficiently small open compact subgroups $U^\prime_1,U^\prime_2 \subset G^\prime_\Sigma(\bA_f^{(p)})$ and $g$ such that $g^{-1}U^\prime_1g \subset U^\prime_2$, the following diagram commutes:

\begin{equation}\label{Hecke equivariance psi}
    \begin{tikzcd}
        \overline{Y}^\prime_{1,J} \arrow[r,"{\psi_{1,\Fp}}"] \arrow[d,"{\rho_g}"'] & \overline{Y}^\prime_{1,0}(\Fp)_{(\Theta_{F,\Fp} \setminus \Sigma_{\infty,\Fp}),J} \arrow[d,"{\rho_g}"] \\
        \overline{Y}^\prime_{2,J} \arrow[r,"{\psi_{2,\Fp}}"] & \overline{Y}^\prime_{2,0}(\Fp)_{(\Theta_{F,\Fp} \setminus \Sigma_{\infty,\Fp}),J}.
    \end{tikzcd}
\end{equation}

Let $\Sigma, \Fp, I,J$ be as above and write $\overline{Y} = Y_{U}(G_\Sigma)_{\overline{\bF}_p}$, $\overline{Y}_J$, $\overline{Y}_0(\Fp) = Y_{U_0(\Fp)}(G_\Sigma)_{\overline{\bF}_p}$, and $\overline{Y}_0(\Fp)_{\phi^\prime(I),J}$ for the quaternionic models and their strata as defined in sections \ref{quaternionic models}, \ref{quaternionic goren oort}, and \ref{quaternionic iwahori}. It follows from lemma \ref{compatibility components unitary quaternionic}, which compatibly identifies each of the above schemes as an open and closed subscheme of its unitary counterpart, that we obtain an isomorphism $\psi_{\Fp} : \overline{Y}_J \xrightarrow{\sim} \overline{Y}_0(\Fp)_{(\Theta_{F,\Fp} \setminus \Sigma_{\infty,\Fp}),J} $ which is Hecke equivariant in the sense that the analogous diagram to (\ref{Hecke equivariance psi}) commutes.

Finally, we recall that a similar morphism to $\psi_\Fp$ was constructed in the Hilbert setting \cite[$\mathsection$ 6.1]{Diamond_2023}. One can check that this construction is compatible with $\psi_\Fp$ under the isomorphisms $Y_U(G) \xrightarrow{\sim} Y_{U_\emptyset}(G_\emptyset)$ and $Y_{U_0(\Fp)}(G) \xrightarrow{\sim} Y_{U_{\emptyset,0}(\Fp)}(G_\emptyset)$ from section \ref{model comparison} and proposition \ref{quaternion hilb iwahori}. We end this chapter by giving a brief discussion of Iwahori level $U_0(\fp)$ for a prime $\fp$ such that $\Theta_{F,\fp} \subset \Sigma_\infty$. 

It is straightforward to check, say by Grothendieck-Messing, that $\widetilde{Y}_0(\fp)$ is \'{e}tale over $Y$. Pick a geometric point $x \in Y(\overline{\bF}_p)$ corresponding to $\underline{A}$ with reduced Dieudonn\'{e} module $\Delta$. The data of a point of in $Y_0(\fp)$ above $x$ then corresponds to a line in $\Delta_\tau/[\varpi_\fp]_\tau\Delta_{\tau}$, for any $\tau \in \widehat{\Theta}_{E,\fp}$ which is stable under the total essential Frobenius $F_{\es}: \Delta_\tau/[\varpi_\fp]_\tau\Delta_{\tau} \to \Delta_\tau/[\varpi_\fp]_\tau\Delta_{\tau}$, composition of every single essential Frobenius. It is an isomorphism. In particular, because $\Delta_\tau/[\varpi_\fp]_\tau\Delta_{\tau}$ is a two dimensional vector space over $\overline{\bF}_p$ and $F_{\es}$ is $\phi^{f_\fp}$-semilinear, we may pick a basis $(e_1,e_2)$ such that $F_{\es}(e_i)=e_i$. Paying attention to the fact that $F_{\es}$ is not linear but $\phi^{f_\fp}$-semilinear, the only possible lines are the ones generated by $e_1$, $e_2$ and $e_1+\mu e_2$ for $\mu \in \bF_{p^{f_\fp}}^\times$. Now, any two subgroups of $A[\fp]$ defined by these lines can be mapped isomorphically into each other by some automorphism of $A[\fp]$, however, since $\End(A)\otimes \bQ =D$, there is no such automorphism of $A$. Therefore $Y_0(\fp)$ is finite \'{e}tale over $Y$ of degree $p^{f_\fp}+1$.

\section{Geometric Jacquet-Langlands Relations mod \texorpdfstring{$p$}{p}}
In this section, we state and prove the main result of the thesis, that is that the strata $\overline{Y}_0(\Fp)_{\phi^\prime(I),J}$ are isomorphic to products of $\mathbb{P}^1$-bundles over auxiliary Quaternionic Shimrua varieties. To do so, we expand the technique of "splicing" introduced by \cite{2020arXiv200100530D} to this more general setting. In fact, using the techniques developed in the previous section, we introduce an explicit construction of the inverse operation that we call splitting. We keep the notation from the previous sections. In order to to lighten notation, we will however only describe the case $\Fp = \fp$. The case for a general $\Fp = \fp_1 \cdots \fp_n$, is dealt with by applying the upcoming recipes independently at each prime $\fp_i \vert \Fp$. See remark \ref{how to do general case} for a more detailed explanation.

\subsection{Some auxiliary sets} \label{sets}
We start by defining some sets that we will need from now on. Recall from section \ref{unitary hasse} that we have a shift operation $\phi$ on the set $\Theta_F$ given as follows: For any $\theta \in \widehat{\Theta}_{F,\fp}$ and embedding $\theta^i$, set $\phi(\theta^i)=\theta^{i+1}$ if $i < e_\fp$ and $\phi(\theta^{e_\fp}) =(\phi \circ \theta)^1$ where $\phi \circ \theta$ is given by the genuine Frobenius. This induces a shift $\phi'$ on $\Theta_F \setminus \Sigma_\infty$ by setting $\phi'(\beta) := \phi^n(\beta)$ where $n \geq 1$ is the smallest integer such that $\phi^n(\beta) \notin \Sigma_\infty$. We will often extend $\phi^\prime$ to all of $\Theta_F$.

Let $I,J \subset \Theta_{F,\fp} \setminus \Sigma_{\infty,\fp}$ be subsets such that $I \cup J = \Theta_{F,\fp} \setminus \Sigma_{\infty,\fp}$. For such a pair, under the added condition that $(I,J) \neq (\Theta_{F,\fp} \setminus \Sigma_{\infty,\fp},\Theta_{F,\fp} \setminus \Sigma_{\infty,\fp})$ we define an auxiliary set $T$ as follows:\\
Start by setting $T_0 =I^c \subset J$. Now, write $I \cap J$ as a union of maximal disjoint chains with respect to the cycle structure $\phi'$ on $\Theta_{F,\fp} \setminus \Sigma_{\infty,\fp}$. To define $T$, we append to $T_0$, for each such chain $C= \{{\phi'}^{-n}(\beta), \cdots, \beta \}$, the elements
\[\{ {\phi'}^{-2k}(\beta) \, \vert \, 0 \leq k \leq \lfloor n/2 \rfloor \} \textrm{ if } {\phi'}(\beta) \notin J \]
and
\[\{ {\phi'}^{-2k-1}(\beta) \, \vert \, 0 \leq k \leq \lfloor (n-1)/2 \rfloor \} \textrm{ if } {\phi'}(\beta) \notin I. \]
It follows from construction that $I^c \subset T \subset J$.

For example, consider the case that $[F:\bQ]=12$ and there is only one prime $\fp$ above $p$ (once we have fixed an ordering on ramified embeddings, our formula is ramification agnostic), $\Sigma = \emptyset$ and $J=\{1,2,4,5,6,7,9,10,11,12\}$, $I= \{1,2,3,5,6,8,9,10,12\}$ where we identify $i$ with the embedding $\theta^i$. Then $I \cap J = \{ 5,6 \} \sqcup \{9,10\} \sqcup \{12,1,2\}$ and $T = \{2,4,5,7,9,11,12\}$. 

Note that $T$ is a subset of $\Theta_{F,\fp} \setminus \Sigma_{\infty, \fp}$ only, we extend $T$ to a full subset $T^\prime \subset \Theta_{F,\fp}$ as follows: Write $\Sigma_{\infty,\fp}$ as a union of maximal disjoint chains 
\[\Sigma_{\infty,\fp} = \bigsqcup_i C_i \sqcup \bigsqcup_j C^\prime_j, \]

\noindent where each $C_i$ is of the form $C_i= \{\phi^{-n}(\beta), \cdots, \beta \} \subset \Sigma_{\infty,\fp}$ with $\phi(\beta) \in T$ and each $C^\prime_j$ is of the form $C^\prime_j= \{\phi^{-n}(\beta), \cdots, \beta \} \subset \Sigma_{\infty,\fp}$ with $\phi(\beta) \notin T$. Define $T' \subset \Theta_F$ \label{T'} to be the set obtained by appending to $T$ the chains $C_i$. That is 
 \[T^\prime = T \sqcup \bigsqcup_i C_i.\]
 
 Finally we let $T^1 \subset \widehat{\Theta}_{F,\fp}$ denote the set \[T^1 := \{ \theta \in \widehat{\Theta}_{F,\fp} \, \vert \, \theta^1 \in T' \}. \]

Consider the following subset of $\Sigma^+_{IJ,\infty} \subset \Theta_{F} \setminus \Sigma_\infty$ given by:
\[\Sigma^+_{IJ,\infty} = \left\{\beta \notin T  \, \vert \,  \phi'(\beta) \in T \right\} \cup \left\{\beta \in T  \, \vert \,  \phi'(\beta) \notin T \right\}. \]
\noindent It is straightforward to check that the cardinality of $\Sigma^+_{IJ,\infty}$ is even and that it contains $I \cap J$ by the alternating construction of $T$. We then set \[ \Sigma_{IJ} = \Sigma \sqcup \Sigma^+_{IJ,\infty} \quad \textrm{and} \quad \widetilde{\Sigma}_{IJ,\infty} = \widetilde{\Sigma}_\infty \sqcup \widetilde{\Sigma}^+_{IJ,\infty},\]
\noindent where $\widetilde{\Sigma}^+_{IJ,\infty}$ is given by \[\widetilde{\Sigma}^+_{IJ,\infty} = \{ \tilde{\beta} \, \vert \, \beta \notin T \} \cup \{ \tilde{\beta}^c \, \vert \, \beta \in T \}. \]
Finally, we set $R = \Sigma^+_{IJ,\infty} \setminus I \cap J$.
\begin{rem}
Note that, by construction, we have $I \cap J \subset \Sigma_{IJ}^+$ and $\widetilde{\Sigma}_\infty \subset \widetilde{\Sigma}_{IJ}$.
\end{rem}

\subsection{The idea behind splicing}\label{motivating splices}
Before we move on, let us motivate the combinatorics behind the definitions from the previous section and give the main idea behind the upcoming constructions. 

Consider first the case that $p$ is unramified in $F$, $\Sigma = \emptyset$ and let $S^\prime = Y_{U_0^\prime(p)}(G^\prime_\emptyset)_\bF$ be the associated Unitary Shimura variety. To define a morphism from a stratum $S^\prime_{\phi(I),J}$ to an auxiliary Unitary Shimura variety, we need to define a new abelian scheme $A_{IJ}$ which is (quasi-)isogeneous to the $A_i$. By lemma \ref{C_T}, we can construct a subgroup $C_{T} \subset A_1[\fq]$ for any $I^c \subset T \subset J$ (Note here that $T =T^1$) and thus an $A_{IJ}$. Having fixed a choice of $T$ and thus after some more work, an $A_{IJ}$, proposition \ref{aij dieudonne calculations} will show that the Dieudonn\'{e} module $\mathbb{D}_{IJ}$ of $A_{IJ}$, at any geometric point $\overline{s}$ of $S^\prime_{\phi(I),J}$, is given on $\tilde{\theta}$ components by 
\begin{equation}\label{explanation AIJ components}
\mathbb{D}_{IJ,\tilde{\theta}} = 
\begin{cases}
\mathbb{D}_{1,\tilde{\theta}} & \textrm{ if } \theta \notin T,\\
f^*\mathbb{D}_{2,\tilde{\theta}} & \textrm{ if } \theta \in T,
\end{cases}
\end{equation}
where the equality is taken inside $\mathbb{D}_1 \otimes \bQ$.

In particular, after fixing the appropriate structures, the Unitary Shimura variety this abelian scheme comes from has underlying quaternion algebra given by $\Sigma_{IJ} = \Sigma_T = \{ \theta \in T \, \vert \, \phi \circ \theta \notin T \} \cup \{ \theta \notin T \, \vert \, \phi \circ \theta \in T \}$.
Our strategy now is to maximize the size of $\Sigma_T$, and thus minimize the dimension of the target, by choosing the appropriate $T$. This desire immediately leads to our specific choice of $T$ from section \ref{sets}, and its alternating construction. 

In order to obtain an isomorphism, we need to be able to reconstruct the $A_i$ from this new abelian scheme $A_{IJ}$ and potentially the data from some projective bundles. To do so, at least pointwise, it suffices to reconstruct their Dieudonn\'{e} modules $\bD_1$ and $\bD_2$ from $\mathbb{D}_{IJ}$; in particular it suffices to do so for each $\tilde{\theta}$ component. By formula \ref{explanation AIJ components}, we are halfway there. At the level of Dieudonn\'{e} modules, the conditions to lie in the stratum $S^\prime_{\phi(I),J}$ translate into the equalities inside $\mathbb{D}_1 \otimes \bQ$
\begin{equation}\label{explanation relations}
\begin{cases}
f^*\mathbb{D}_{2,\phi \circ \tilde{\theta}} = \Phi \mathbb{D}_{1, \tilde{\theta}} & \textrm{ if } \theta \in I,\\
\mathbb{D}_{1,\phi \circ \tilde{\theta}} = p^{-1}\Phi f^*\mathbb{D}_{2, \tilde{\theta}} & \textrm{ if } \theta \in J.
\end{cases}
\end{equation}
Therefore we can retrieve, say $\mathbb{D}_{2,\phi \circ \tilde{\theta}}$, provided that $\theta \in I$ and we already have $\Phi \mathbb{D}_{1, \tilde{\theta}}$. This is the case, for example, by formula \ref{explanation AIJ components}, if both $\theta, \phi \circ \theta \notin T$. 
To visualize the situation, one can consider the following type of diagram, as applied to the previous example of section \ref{sets}: Let $p\ccO_F = \fp$ and write $\Theta_F = \Theta_{F,\fp} = \{\theta_1,\cdots,\theta_{12}\}$, where $\phi(\theta_i)=\theta_{i+1}$, and identify $\theta_i$ with $i$. Take $J=\{1,2,4,5,6,7,9,10,11,12\}$, $I= \{1,2,3,5,6,8,9,10,12\}$ and $T =\{2,4,5,7,9,11,12\}$: 
\[\adjustbox{scale = 0.75, center}{
\begin{tikzcd}
& 1        & 2        &        & 4      & 5        & 6        & 7      &   & 9 & 10     & 11      & 12       \\
{} &\bigcirc &         & \times &        &        & \bigcirc  &       & \times & & \bigcirc&        &        \\
   &     & \bigcirc &      & \times  & \bigcirc &        & \times  &      & \bigcirc&  & \times  & \bigcirc  & {} \\
& 1        & 2        & 3      &        & 5        & 6        &        & 8     & 9 & 10 &        & 12      
\end{tikzcd}
}\]
The numbers on the top row represent the elements of $J$ and the numbers on the bottom row the elements of $I$; we add them for easier bookkeeping. The first inner row corresponds to $\mathbb{D}_1$ and the second to $\mathbb{D}_2$. We write the entries in coordinates $(1,i)$ and $(2,i)$. Filling in the the entry $(j,i)$ corresponds to knowing $\bD_{j,\tilde{\theta}_i}$ from $\bD_{IJ}$. Via \eqref{explanation AIJ components}, we can immediately fill in $(1,i)$ for every $\theta_i \notin T$ and $(2,i)$ for every $\theta_i \in T$. The crosses represent the entries which are forced by the condition $I^c \subset T \subset J$ and the circles our specific choice of $T$. 

Our goal now is to complete the diagram using the relations from \eqref{explanation relations}. 
This translates to re-obtaining all of $\mathbb{D}_1$ and $\mathbb{D}_2$ from $\mathbb{D}_{IJ}$. 
We visualize this as an arrow: Suppose we want to fill in the entry $(1,5)$, i.e. obtain $\mathbb{D}_{1,\tilde{\theta}_5}$.
We know that $4$ is in the top row, i.e. $\theta_4 \in J$, and $(2,4)$ is filled in, i.e. we already know $\mathbb{D}_{2,\tilde{\theta}_4}$.
Then \eqref{explanation relations} gives us $\mathbb{D}_{1,\tilde{\theta}_5} = p^{-1}\Phi f^*\mathbb{D}_{2, \tilde{\theta}_4}$ and we can add a square to $(1,5)$ and an arrow from $(2,4)$ to $(1,5)$. Now that we have filled in $(1,5)$, since $\theta_5 \in I$ we can use the same logic as above and fill in the entry $(2,6)$ from the relation $ f^*\mathbb{D}_{2, \tilde{\theta}_6} = \Phi\mathbb{D}_{1,\tilde{\theta}_5}$. Repeating this process to fill in as many missing entries as possible yields

\[\adjustbox{scale = 0.75, center}{
\begin{tikzcd}
& 1        & 2        &        & 4      & 5        & 6        & 7      &   & 9 & 10     & 11      & 12       \\
{} \arrow[rd] &\bigcirc &     \Box \arrow[rd]     & \times &        &     \Box  \arrow[rd]   & \bigcirc  &   \Box     & \times & & \bigcirc&        &      \Box  \arrow[rd]  \\
   & \Box  \arrow[ru]    & \bigcirc &   \Box     & \times \arrow[ru] & \bigcirc &  \Box \arrow[ru]       & \times  &      & \bigcirc&  & \times \arrow[ru] & \bigcirc  & {} \\
& 1        & 2        & 3      &        & 5        & 6        &        & 8     & 9 & 10 &        & 12      
\end{tikzcd}
}\]

Note the zig-zags which appear in the chains of $I \cap J$. We are now missing certain entries that cannot be derived from our rules. In terms of the morphism we will construct, these then correspond to the set $R = \Sigma_T \setminus I \cap J$ of projective bundles. In particular, we will be adding back the data of the Hodge filtration $0 \to \omega^0_{j,\tilde{\theta}_i} \subset \ccH^1_{j,\tilde{\theta}_i} \to v^0_{j,\tilde{\theta}_i} \to 0$, where $j$ depends on whether $\theta_i$ is in $T$ or not. At the level of Dieudonn\'{e} modules, this corresponds to $V\mathbb{D}_{j,\tilde{\theta}_{i+1}} \subset \bD_{IJ,\tilde{\theta}_i}$ and thus gives us back $\mathbb{D}_{j,\tilde{\theta}_{i+1}}$.   
At the level of diagrams, we now add for each $\theta_i \in R$, a $+$ sign in the entry $(j,i+1)$. Continuing the zig-zag process we thus obtain a completed table: 

\[\adjustbox{scale = 0.75, center}{
\begin{tikzcd}
& 1        & 2        &        & 4      & 5        & 6        & 7      &   & 9 & 10     & 11      & 12       \\
{} \arrow[rd] &\bigcirc &     \Box \arrow[rd]     & \times &    +    &     \Box  \arrow[rd]   & \bigcirc  &   \Box     & \times & + \arrow[rd] & \bigcirc&    \Box    &      \Box  \arrow[rd]  \\
   & \Box  \arrow[ru]    & \bigcirc &   \Box     & \times \arrow[ru] & \bigcirc &  \Box \arrow[ru]       & \times  &  +    & \bigcirc& \Box \arrow[ru] & \times \arrow[ru] & \bigcirc  & {} \\
& 1        & 2        & 3      &        & 5        & 6        &        & 8     & 9 & 10 &        & 12      
\end{tikzcd}
}\]

We encourage the reader who wants to gain intuition behind the choices in our construction to play with these diagrams for various $I$ and $J$, even trying different choices of $I^c \subset T \subset J$ and $R \subset \Sigma_T$ to see why our specific choices work. As a non-example, consider for example $p$ inert, $f_\fp=4$, $I = \{2,3,4\}$, $J = \{1,2,3\}$ and $T = \{1,2\}$ (the correct choice of $T$ is $\{1,3\})$. Then $\Sigma_T = \{2,4\}$ and the splice $A_{IJ}$ lives over a two dimensional Unitary Shimura variety $\overline{Y}^\prime_{\Sigma_T}$, the same dimension as $S^\prime_{\phi(I),J}$. The induced morphism $S^\prime_{\phi(I),J} \to \overline{Y}^\prime_{\Sigma_T}$ would then have to be an isomorphism. However, this cannot be since we cannot complete the diagram:

\[\adjustbox{scale = 0.75, center}{
\begin{tikzcd}
   1    & 2        &    3    &    {}   \\
{} &    \Box   \arrow[rd]   & \bigcirc  \arrow[rd] &    \times           \\
 \times \arrow[ur]    & \bigcirc &    \Box    & \Box  \\
 {}      & 2        & 3      &   4        
\end{tikzcd}
}\]

On the opposite end, suppose now that $p$ is totally ramified in $F$ and again $\Sigma = \emptyset$. In this situation $A_{IJ}$ can only be either $A_1$ or $A_2$ (up to a twist by an ideal of $\ccO_E$) therefore our only recourse is to modify the filtrations, essentially by forgetting parts of it. More specifically, suppose that we know that $[\varpi_\fp]\omega^0_\tau(i+1) = \omega^0_\tau(i-1)$ for some $1 \leq i < e_\fp$. Then we can define a new Pappas-Rapoport type filtration $\underline{\tilde{\omega}^0_\tau}$ on $\omega^0_\tau$ such that $\tilde{\omega}^0_\tau(j) = \omega^0_\tau(j)$ for $j \neq i$ and $\tilde{\omega}^0_\tau(i) = \omega^0_\tau(i+1)$.

Recall that the map $f^* \circ g^*: \omega^0_{1,\tau} \otimes \fp \to \omega^0_{1,\tau}$ is given by multiplication. Therefore, if $\theta^i \in I$ and $\theta^{i+1} \in J$ for some $i < e_\fp$ then 
\[[\varpi_\fp]\omega^0_{1,\tau}(i+1) = f^*\circ g^* (\omega^0_{1,\tau}(i+1)\otimes \fp) = f^*\omega^0_{2,\tau}(i) = \omega^0_{1,\tau}(i-1).\] Similarly, if $\theta^i \in J$ and $\theta^{i+1} \in I$, then $[\varpi_\fp]\omega^0_{2,\tau}(i+1) = \omega^0_{2,\tau}(i-1).$ This suggests picking an $A_{IJ}=A_i$ and a subset $I^c \subset T \subset J$ such that we forget the filtered pieces $\omega^0_{1,\tau}(j)$ for $j$ such that $\theta^j \notin T$ and $\theta^{j+1} \in T$ if $i=1$, and $\omega^0_{2,\tau}(j)$ for $j$ such that $\theta^j \in T$ and $\theta^{j+1} \notin T$ if $i=2$. However, this breaks down precisely at $j = e_\fp$ where the above formula, $[\varpi_\fp]\omega^0_{i,\tau}(e_\fp+1) = \omega^0_{i,\tau}(e_\fp-1)$, does not make sense. In order to avoid this situation we need to ensure, provided that we have picked a suitable $T$, that we can take $A_{IJ} = A_1$ if $\theta^1 \notin T$ and $A_{IJ} =A_2$ if $\theta^1 \in T$. This is precisely the content of lemma \ref{C_T} and the motivation behind the definition of the set $T^1$ in general.

The combinatorics behind the specific choice of $T$ then follows from the same diagram reasoning as above, this time using the relations between both filtrations as dictated by the stratum. Once we have retrieved both $A_1$ and $A_2$ from $A_{IJ}$, we re-obtain the missing pieces of the filtrations by taking images and preimages via $f^*$ and $g^*$.

\subsection{Splicing}\label{splicing}
In this section, we describe one of the main constructions in this thesis, that is for $I,J \subset \Theta_{F,\fp} \setminus \Sigma_{\infty,\fp}$ such that $I \cup J = \Theta_{F,\fp} \setminus \Sigma_{\infty,\fp}$, the so called "splice" of the universal pair $(\underline{A_1},\underline{A_2},f)$ over the stratum $S^\prime_{\phi^\prime(I),J}$. This splice is a new abelian scheme whose Dieudonn\'{e} module at each geometric point is obtained, in some sense, by splicing together the Dieudonn\'{e} modules of $A_1$ and $A_2$. This generalizes the construction of the same name introduced in \cite[section 5.1]{2020arXiv200100530D} in the case of $p$ unramified in $F$, $\Sigma = \emptyset$ and maximal strata $I = J^c$. Our construction will be somewhat more direct and will of course, yield the same abelian variety obtained in \cite{2020arXiv200100530D} when restricted to those specific cases.
\subsubsection{Constructing universal splices}

Fix $\Sigma$, a prime $\fp$ of $F$, and write $S^\prime = Y_{U^\prime_0(\fp)}(G^\prime_\Sigma)_\bF$ for $U^\prime \subset G^\prime_\Sigma(\bA_f)$ as usual. Fix $I,J \subset \Theta_{F,\fp} \setminus \Sigma_{\infty,\fp}$ such that $I \cap J \neq I \cup J = \Theta_{F,\fp} \setminus \Sigma_{\infty,\fp}$ and consider the stratum $S = S^\prime_{\phi^\prime(I),J}$. Let $T$ and $T^1$ be as defined in section \ref{sets}, where we recall that $T$ is a certain set $I^c \subset T \subset J$ determined by $I$ and $J$, and $T^1 \subset \widehat{\Theta}_{F,\fp}$ is determined by $T$.

We first start by mentioning with a construction: Let $A$ be an abelian scheme with an action of $\ccO_D$ and a prime to $p$ quasi-polarization $\lambda: A \to A^\vee$ whose induced Rosati involution is compatible with the anti-involution on $\ccO_D$. For a choice of prime $\fp$ and the chosen uniformizer $\varpi_\fp$, we construct a prime to $p$ quasi polarization $\lambda^\prime : A \otimes \fp \to (A \otimes \fp)^\vee \simeq A^\vee \otimes \fp^{-1}$, satisfying the same properties as $\lambda$, as follows: It suffices to do so on each connected component so let $n$ be an integer such that $n\lambda$ is an honest polarization. Write $\varpi_\fp^{e_\fp} = p \alpha$ for $\alpha \in \ccO_{F,(p),+}$. Let $m$ be the smallest positive integer such that $m \alpha^{-1} \in \ccO_F$. Then $m \varpi_\fp^{-1} \in \fp^{-1}$ and $m \varpi_\fp^{-2}\fp \subset \fp^{-1}$. We define $nm\lambda^\prime$ as the map
\begin{equation}\label{polarize tensor}
    A \otimes \fp \ni x \otimes \beta \mapsto n\lambda(x) \otimes m \varpi_\fp^{-2}\beta \in A^\vee \otimes \fp^{-1} \simeq (A \otimes \fp)^\vee.
\end{equation}

By lemma \ref{C_T} for $A_2 \otimes \fp$, we have the $\ccO_E$-stable subgroup $C_{T^c} \subset e_0 \ker (g \otimes \fp) [\fq^c]$. Write $C^\prime_{T^c} = C_{T^c} \otimes_{\ccO_E} \ccO_{E}^2 \subset (A_2 \otimes \fp)[\fq^c]$.
Let $\lambda^\prime_2$ be the polarization of $A_2 \otimes \fp$ constructed above. We now set 
\[H^\prime_{T^c} =  (\lambda^\prime_2)^{-1}(((A_2 \otimes \fp)[\fq^c]/C'_{T^c})^\vee) \oplus C'_{T^c} \subset (A_2 \otimes \fp)[\fq] \oplus (A_2 \otimes \fp)[\fq^c] = (A_2 \otimes \fp)[\fp].\]
We thus obtain the abelian variety $A_{IJ} = A_2 \otimes \fp / H^\prime_{T^c}$; it inherits an action of $\ccO_D$ since $H^\prime_{T^c}$ is stable under the action of $\ccO_D$. We write $\pi_2: A_2 \otimes \fp \to A_{IJ}$ and $\psi_2: A_{IJ} \to A_2$ for the unique isogeny such that $\psi_2 \circ \pi_2=m_{2,\fp}$ is given by multiplication by $\fp$. These are $\ccO_D$-linear isogenies of degree $p^{4f_\fp}$. Furthermore, we obtain a prime to $p$ polarization $\lambda_{IJ} : A_{IJ} \to A^\vee_{IJ}$ defined via the commutative diagram
\[
\begin{tikzcd}
A_2 \otimes \fp \arrow[r,"\pi_2"] \arrow[d,"\lambda_2^\prime \otimes \varpi_\fp"'] & A_{IJ} \arrow[d,"\lambda_{IJ}"] \\
A^\vee_2 \arrow[r, "\psi^\vee_2"] & A^\vee_{IJ},
\end{tikzcd}
\]
where, by construction of $\lambda^\prime_2$, the isogeny $\lambda^\prime_2 \otimes \varpi_\fp$ is given by $x \otimes \beta \mapsto (\varpi_\fp^{-1}\beta)\lambda_2(x)$. It follows that the Rosati involution induced by $\lambda_{IJ}$ is compatible with the anti-involution on $\ccO_D$.

We can construct a similar abelian scheme $A^\prime_{IJ}$ using $A_1$. By lemma \ref{C_T}, we have the $\ccO_E$-stable subgroup $C_{T} \subset e_0 \ker (f) [\fq]$; we let $C^\prime_{T} = C_{T} \otimes_{\ccO_E} \ccO_{E}^2 \subset A_1 [\fq]$. We now set \[H^\prime_T = C'_T \oplus \lambda_1^{-1}((A_1[\fq]/C'_T)^\vee) \subset A_1[\fq] \oplus A_1[\fq^c] = A_1[\fp],\]
and $A^\prime_{IJ} = A_1 / H^\prime_T$. As before, $A^\prime_{IJ}$ comes with an action of $\ccO_D$ since $H^\prime_{T}$ is stable under the action of $\ccO_D$ and we write $\pi_1: A_1 \to A^\prime_{IJ}$ and $\psi_1: A^\prime_{IJ} \to A_1 \otimes \fp^{-1}$ for the unique isogeny such that $\psi_1 \circ \pi_1 = \iota_{1,\fp}$, the isogeny induced by the inclusion $\ccO_E \hookrightarrow \fp^{-1}\ccO_E$. These are $\ccO_D$-linear isogenies of degree $p^{4f_\fp}$. Furthermore, we obtain a prime to $p$ polarization $\lambda^\prime_{IJ} : A^\prime_{IJ} \to A^{\prime \vee}_{IJ}$ defined via the commutative diagram
\[
\begin{tikzcd}
A_1 \arrow[r,"\pi_1"] \arrow[d,"\lambda_1 \otimes \varpi_\fp"'] & A^\prime_{IJ} \arrow[d,"\lambda^\prime_{IJ}"] \\
A^\vee_1  \otimes \fp \arrow[r, "\psi^\vee_1"] & A^{\prime \vee}_{IJ}.
\end{tikzcd}
\]
It follows that the Rosati involution induced by $\lambda^\prime_{IJ}$ is compatible with the anti-involution on $\ccO_D$.

$A_{IJ}$ and $A^\prime_{IJ}$ are related as follows: Since $C'_{T^c} \subset \ker(g \otimes \fp)$, then $(g \otimes \fp) (C'_{T^c})=0$. Also, $G = e_0(g \otimes \fp) (\lambda_2^{-1}(((A_2 \otimes \fp)[\fq^c]/C'_{T^c})^\vee) \subset \ker f$ has degree $p^{2f_\fp - (f_\fp - \vert T^1 \vert) -f_\fp} = p^{\vert T^1 \vert} = \deg C_T$. Furthermore, by looking at crystals say, for any $\tau = \tilde{\theta} \in \widehat{\Theta}_{E,\fp}$ such that $\omega^0_{i,\tau} \neq 0$, then $\omega^0_{G,\tau}$ is a line bundle. It follows that $G = C_T$. Therefore $ (g \otimes \fp)(H^\prime_{T^c}) \subset H^\prime_T$ and the isogeny $\pi_1 \circ (g \otimes \fp)$ factors through $\pi_2$. Writing $q$ for the $\ccO_D$-linear isogeny $q:A_{IJ} \to A^\prime_{IJ}$ such that $\pi_1 \circ (g \otimes \fp) = q \circ \pi_2$, it is clear that this isogeny has degree $p^{\deg( g \otimes \fp) + \deg \, \pi_1 - \deg \, \pi_2} = p^{4 f_\fp} $ and has kernel contained in $A_{IJ}[\fp]$. In fact, by comparing degrees, we have that $\ker (\pi_1 \circ (g \otimes \fp))[\fq] = H^\prime_{T^c}[\fq]$ so that $\ker q = A_{IJ}[\fq^c]$.  

We may thus identify $A^\prime_{IJ}$ with $A_{IJ} \otimes (\fq^c)^{-1}$ such that we obtain a commutative diagram 
\begin{equation}\label{splice diagram}
\begin{tikzcd}
A_2 \otimes \fp \arrow[r,"\pi_2"] \arrow[d, "g \otimes \fp"]& A_{IJ} \arrow[r,"\psi_2"] \arrow[d,"\iota_{IJ,\fq^c}"] & A_2 \arrow[d,"g"]\\
A_1 \arrow[r, "\pi_1"]  & A_{IJ} \otimes (\fq^c)^{-1} \arrow[r,"\psi_1"]  & A_1  \otimes \fp^{-1} 
\end{tikzcd}
\end{equation}
where the middle arrow is the natural isogeny induced by inclusion. Furthermore, under the isomorphism $\iota_{IJ,\fq^c}:A_{IJ}[\mathfrak{Q}^\infty] \xrightarrow{\sim} A_{IJ}[\mathfrak{Q}^\infty]\otimes \fq^c$, where $\mathfrak{Q} = \prod_{\fp \vert p} \fq$, the composite 
\[A_1[\mathfrak{Q}^\infty] \xrightarrow{\pi_1} A_{IJ}[\mathfrak{Q}^\infty] \xrightarrow{\psi_2} A_2[\mathfrak{Q}^\infty]\] is simply given by $f$, by the commutativity of the above diagram. From now on, in order to lighten notation, we will always make the isomorphism $\iota_{\fq^c}$ implicit when appealing to the above compatibility.

\begin{rem}
In the case that $(I,J) = (\Theta_{F,\fp} \setminus \Sigma_{\infty,\fp}, \emptyset) $, we have $T = \emptyset$ hence $C'_{T^c} = \ker (g \otimes \fp)[\fq^c]$ and thus $H'_{T^c} = \ker (g \otimes \fp)$ since it is totally isotropic with respect to the weil product. Hence $A_{IJ} \simeq A_1$ compatibly with the other structures. On the other hand, if $(I,J) = (\emptyset, \Theta_{F,\fp} \setminus \Sigma_{\infty,\fp})$ we have $T = \Theta_{F,\fp} \setminus \Sigma_{\infty,\fp}$ so that $C_{T^c} = 0$ and $H'_{T^c} = (A_2 \otimes \fp)[\fq]$. Hence $A_{IJ} \simeq A_2 \otimes \fq^c$ compatibly with the other structures.

Furthermore, one can check that this construction agrees with the construction of \cite[5.1.2]{2020arXiv200100530D} in the case of the maximal strata of Iwahori level Hilbert modular varieties for $p$ unramified in $F$. In fact, in this situation, after fixing the isomorphism $A_2 \otimes p\ccO_F \simeq A_2$, the isogeny $\pi_2: A_2 \otimes p\ccO_F \to  A_{J^c J} \simeq A_J$ is the same as the composite $ A_2 \xrightarrow{} A^\prime_J \xrightarrow{\psi} A_J$ shown in diagram \cite[(10)]{2020arXiv200100530D}.
\end{rem}

Note that we have defined a prime to $p$ polarization on $A^\prime_{IJ} \simeq A_{IJ} \otimes (\fq^c)^{-1}$ but we have a natural prime to $p$ polarization on $A_{IJ} \otimes (\fq^c)^{-1}$ defined by the commutative diagram 
\[
\begin{tikzcd}
A_{IJ} \arrow[r,"\iota_{\fq^c}"] \arrow[d,"\lambda_{IJ} \otimes \varpi_\fp"'] & A_{IJ} \otimes (\fq^c)^{-1} \arrow[d,"\lambda_{IJ,(\fq^c)^{-1}}"] \\
A^\vee_{IJ} \otimes \fp \arrow[r, "\iota^\vee_\fq"] & A^\vee_{IJ} \otimes \fq^c.
\end{tikzcd}
\]
Chasing through the definitions and using the compatibility between $\lambda^\prime_2$ and $\lambda_1$, we see that $\lambda^\prime_{IJ}$ and $\lambda_{IJ,(\fq^c)^{-1}}$ are equal under the isomorphism $A^\prime_{IJ} \simeq A_{IJ} \otimes (\fq^c)^{-1}$.

\subsubsection{Filtering the Splice}

Fix a geometric point $\overline{s}$ of $S^\prime_{\phi^\prime(I),J}$ and consider the diagram of Dieudonn\'{e} modules induced by the diagram \ref{splice diagram} over $\overline{s}$.

\begin{center}
    \begin{tikzcd}
{\mathbb{D}((A_2 \otimes \fp)[p^\infty])}                       & {\mathbb{D}(A_{IJ} [p^\infty])} \arrow[l, "\pi_2^*"']                              & {\mathbb{D}(A_2 [p^\infty])} \arrow[l, "\psi_2^*"']                                  \\
{\mathbb{D}(A_1 [p^\infty])} \arrow[u, "(g\otimes \fp)^*"] & {\mathbb{D}(A_{IJ} \otimes (\fq^c)^{-1}[p^\infty])} \arrow[u,"\iota_{\fq^c}^*"] \arrow[l, "\pi_1^*"] & {\mathbb{D}(A_1 \otimes \fp^{-1}[p^\infty])} \arrow[l, "\psi_1^*"] \arrow[u, "g^*"']
\end{tikzcd}
\end{center}
We now describe the image of the lattice $\pi_2^*\bD(A_{IJ}[p^\infty])$ in $ \bD((A_2 \otimes \fp)[p^\infty])$.

\begin{prop}\label{aij dieudonne calculations}
Let $\theta \in \widehat{\Theta}_{F,\fp}$, then
    \[\pi_2^* \bD(A_{IJ}[p^\infty])_{\tilde{\theta}}= 
    \begin{cases}
        m_{\fp}^*\bD(A_2[p^\infty])_{\tilde{\theta}} & \textrm{ if } \theta \in T^1, \\
        (g \otimes \fp)^* \bD(A_1[p^\infty])_{\tilde{\theta}} &  \textrm{ if } \theta \notin T^1.
    \end{cases}\]
    \[\pi_2^* \bD(A_{IJ}[p^\infty])_{\tilde{\theta}^c}= 
    \begin{cases}
        \bD((A_2 \otimes \fp)[p^\infty])_{\tilde{\theta}^c} & \textrm{ if } \theta \in T^1, \\
        (g \otimes \fp)^* \bD(A_1[p^\infty])_{\tilde{\theta}^c} &  \textrm{ if } \theta \notin T^1.
    \end{cases}\]
    \[
    \dim \omega_{IJ,\tilde{\theta}}= 
    \begin{cases}
    2s_{\tilde{\theta}}(e_\fp) + 2 & \textrm{ if } \theta \in T^1, \phi \circ \theta \notin T^1,\\
    2s_{\tilde{\theta}}(e_\fp) - 2 & \textrm{ if } \theta \notin T^1, \phi \circ \theta \in T^1,\\
    s_{\tilde{\theta}}(e_\fp) & \textrm{ otherwise.}
    \end{cases}
    \]
    \[\dim \omega_{IJ,\tilde{\theta}^c}= 4e_\fp - \dim \omega_{IJ,\tilde{\theta}}.   \]
\end{prop}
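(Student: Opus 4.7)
The plan is to compute $\pi_2^*\bD(A_{IJ})$ as a sublattice of $\bD((A_2 \otimes \fp)[p^\infty])$ at the geometric point $\overline{s}$, componentwise, using the basic fact that any $p$-isogeny $\alpha: X \to Y$ induces an injection $\alpha^*: \bD(Y) \hookrightarrow \bD(X)$ whose cokernel is canonically $\bD(\ker \alpha)$. Since $\ker \pi_2 = H'_{T^c} \subset (A_2 \otimes \fp)[\fp] = \ker m_{2,\fp}$ and $m_{2,\fp} = \psi_2 \circ \pi_2$, I obtain the chain of lattices
\[ m_{\fp}^* \bD(A_2[p^\infty]) \subset \pi_2^* \bD(A_{IJ}[p^\infty]) \subset \bD((A_2 \otimes \fp)[p^\infty]), \]
with successive cokernels $\bD(H'_{T^c})$ and $\bD((A_2 \otimes \fp)[\fp]/H'_{T^c})$. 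Similarly, $(g \otimes \fp)^* \bD(A_1[p^\infty]) \subset \bD((A_2 \otimes \fp)[p^\infty])$ has cokernel $\bD(\ker(g \otimes \fp))$. The proposition amounts to identifying, in each $\tilde\theta$ and $\tilde\theta^c$ component, to which of these intermediate lattices $\pi_2^* \bD(A_{IJ}[p^\infty])$ coincides.

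First I would compute on $\tilde\theta^c$ components. Because $H'_{T^c} = (\lambda'_2)^{-1}(\ldots) \oplus C'_{T^c}$ with the first summand inside $(A_2 \otimes \fp)[\fq]$ and $C'_{T^c} \subset (A_2 \otimes \fp)[\fq^c]$, we have $\bD(H'_{T^c})_{\tilde\theta^c} = \bD(C'_{T^c})_{\tilde\theta^c}$. By lemma \ref{C_T}, $C_{T^c}$ is the partial Raynaud scheme for the dual datum indexed by $\phi^{-1}((T^1)^c)$; combining corollary \ref{raynaud dieudonne dimension} with Morita equivalence for $C'_{T^c} = C_{T^c} \otimes_{\ccO_E} \ccO_E^2$ then gives $\dim \bD(C'_{T^c})_{\tilde\theta^c} = 2$ if $\theta \notin T^1$ and $0$ if $\theta \in T^1$. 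In the first case this dimension matches the cokernel of $(g \otimes \fp)^*\bD(A_1[p^\infty])_{\tilde\theta^c} \hookrightarrow \bD((A_2 \otimes \fp)[p^\infty])_{\tilde\theta^c}$, and an explicit componentwise check via the inclusion $C'_{T^c} \subset \ker(g \otimes \fp)[\fq^c]$ and a degree count identify the two lattices; in the second case the cokernel vanishes and $\pi_2^* \bD(A_{IJ}[p^\infty])_{\tilde\theta^c}$ fills $\bD((A_2 \otimes \fp)[p^\infty])_{\tilde\theta^c}$.

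Next, for the $\tilde\theta$ components, I exploit the fact that $H'_{T^c}$ is totally isotropic under the Weil pairing induced by $\lambda'_2$, so $H'_{T^c}[\fq]$ is the annihilator of $C'_{T^c}$. Cartier duality interchanges $\tilde\theta$ and $\tilde\theta^c$ components of Dieudonn\'{e} modules of finite flat subgroups, so the $\tilde\theta$-dimension of $\bD(H'_{T^c})$ is dual to the complementary quantity on the $\tilde\theta^c$ side. A direct check shows the resulting dimensions match $m_\fp^*\bD(A_2[p^\infty])_{\tilde\theta}$ when $\theta \in T^1$ and $(g \otimes \fp)^*\bD(A_1[p^\infty])_{\tilde\theta}$ when $\theta \notin T^1$. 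The dimensions of $\omega_{IJ,\tilde\theta}$ are then extracted by pulling the Hodge filtration $\omega_{IJ} = \ker(\Phi : \bD(A_{IJ}[p^\infty]) \to \bD(A_{IJ}[p^\infty])^{(p)}) \pmod{p}$ (equivalently $V\bD/p\bD$) along $\pi_2^*$ into $\bD((A_2 \otimes \fp)[p^\infty])$ and comparing with the corresponding filtration for $A_2 \otimes \fp$; the $\pm 2$ correction arises precisely when exactly one of $\theta$, $\phi \circ \theta$ lies in $T^1$, because $\Phi$ maps the $\phi \circ \tilde\theta$ component into the $\tilde\theta$ component and each modification of the lattice contributes dimension $2$.

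The main obstacle is not the lattice calculations themselves, which are structurally routine once the components are identified, but the careful bookkeeping of the various shifts: the Raynaud datum on $C_{T^c}$ is indexed by $\phi^{-1}((T^1)^c)$, the Dieudonn\'{e} module of a Raynaud scheme is nonzero on the $\phi$-shifted positions per corollary \ref{raynaud dieudonne dimension}, and Cartier duality introduces a further swap between $\tilde\theta$ and $\tilde\theta^c$. Ensuring that each membership condition $\theta \in T^1$ or $\theta \notin T^1$ surfaces on the correct side after all these transfers, uniformly across the four cases of the proposition, is the delicate part of the argument.
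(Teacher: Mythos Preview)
Your approach is structurally the same as the paper's: compute the cokernel $\bD(H'_{T^c})$ componentwise from the Raynaud data via corollary~\ref{raynaud dieudonne dimension}, and then identify the resulting lattice inside $\bD((A_2\otimes\fp)[p^\infty])$. The one substantive difference is how you handle the $\tilde\theta$-component when $\theta\notin T^1$. The paper does not stay on the $A_2\otimes\fp$ side here; instead it invokes the commutative diagram (\ref{splice diagram}) and the companion subgroup $H'_T\subset A_1[\fp]$: since $\iota_{\fq^c}^*$ is an isomorphism on $\fq$-components and $\pi_1^*$ is an isomorphism on $\tilde\theta$-components (because $\dim\bD(C'_T)_{\tilde\theta}=0$ when $\theta\notin T^1$), the relation $\pi_1\circ(g\otimes\fp)=\iota_{\fq^c}\circ\pi_2$ immediately gives $\pi_2^*\bD(A_{IJ})_{\tilde\theta}=(g\otimes\fp)^*\bD(A_1)_{\tilde\theta}$.

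Your duality route is viable but has a gap as written. Once you show $\dim\bD(H'_{T^c})_{\tilde\theta}=2$ for $\theta\notin T^1$, you only know that $\pi_2^*\bD(A_{IJ})_{\tilde\theta}$ and $(g\otimes\fp)^*\bD(A_1)_{\tilde\theta}$ have the same index $2$ inside $\bD((A_2\otimes\fp))_{\tilde\theta}$; this does \emph{not} force them to coincide. (Contrast the case $\theta\in T^1$, where index $4$ together with the chain containing $m_\fp^*\bD(A_2)_{\tilde\theta}$ does force equality.) To close this you must also dualize the containment: $\ker(g\otimes\fp)$ is itself totally isotropic for $\lambda'_2$ (this follows from the moduli relation $f^\vee\lambda_2 f=\lambda_1\iota(\varpi_\fp)$), so taking annihilators in $C'_{T^c}\subset\ker(g\otimes\fp)[\fq^c]$ yields $\ker(g\otimes\fp)[\fq]\subset H'_{T^c}[\fq]$, whence $\pi_2^*\bD(A_{IJ})_{\tilde\theta}\subset(g\otimes\fp)^*\bD(A_1)_{\tilde\theta}$, and then the dimension count finishes. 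With that one line added your argument is complete; the $\dim\omega_{IJ}$ computation via $V\Delta_{\phi\circ\tau}/p\Delta_\tau$ and the $\pm 2$ correction is exactly what the paper does.
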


\begin{proof}
    Recall that the composition $\psi_2 \circ \pi_2:A_2 \otimes \fp \to A_2$ is given by the natural map induced by multiplication and that the composition $\psi_1 \circ \pi_1:A_1 \to A_1 \otimes \fp^{-1}$ is the natural map induced by the inclusion $\ccO_E \hookrightarrow \fp^{-1}\ccO_E$. Therefore $\pi_2^* \circ \psi_2^*$ has cokernel isomorphic to $\mathbb{D}((A_2 \otimes \fp)[\fp])$ and $\pi_1^* \circ \psi_1^*$ has cokernel isomorphic to $\mathbb{D}(A_1[\fp])$. Furthermore, recall that $\pi_1^* \circ (q^*)^{-1} \circ \psi_2^* = f^*$. Finally, recall that, by lemma \ref{raynaud dieudonne dimension} that 
    \[ \dim \mathbb{D}(C^\prime_T)_{\tilde{\theta}} = 
    \begin{cases}
    2 & \textrm{ if } \theta \in T^1, \\
    0 & \textrm{ if } \theta \notin T^1,
    \end{cases}
    \]
    and it follows, by construction, that 
    \[ \dim \mathbb{D}(H^\prime_T)_{\tilde{\theta}^c} = 
    \begin{cases}
    2 & \textrm{ if } \theta \in T^1, \\
    4 & \textrm{ if } \theta \notin T^1.
    \end{cases}
    \]
    Similarly, we have
     \[ \dim \mathbb{D}(C^\prime_{T^c})_{\tilde{\theta}^c} = 
    \begin{cases}
    2 & \textrm{ if } \theta \notin T^1, \\
    0 & \textrm{ if } \theta \in T^1.
    \end{cases}
    \]
    and
    \[ \dim \mathbb{D}(H^\prime_{T^c})_{\tilde{\theta}^c} = 
    \begin{cases}
    2 & \textrm{ if } \theta \notin T^1, \\
    4 & \textrm{ if } \theta \in T^1.
    \end{cases}
    \]
    where $C'_T \subset A_1[\fq]$ and $C'_{T^c} \subset (A_2 \otimes \fp)[\fq^c]$.

Let $\tau = \tilde{\theta}$ for $\theta \in T^1$. 
Then $\pi_2^*\bD(A_{IJ}[p^\infty])_\tau = [\varpi_\fp] \bD((A_2 \otimes \fp)[p^\infty])_\tau = m_\fp^*\bD(A_2[p^\infty])_\tau$ since it contains
$[\varpi_\fp] \bD((A_2 \otimes \fp)[p^\infty])_\tau$. 
If $\tau = \tilde{\theta}^c$ for $\theta \in T^1$, 
then $\pi_2^*$ is an isomorphism and 
$\pi_2^*\bD(A_{IJ}[p^\infty])_\tau = \bD((A_2 \otimes \fp)[p^\infty])_\tau$.

Let $\tau = \tilde{\theta}$ for $\theta \notin T^1$. Then $\iota_{\fq^c}^* \bD((A_{IJ}\otimes (\fq^c)^{-1})[p^\infty])_\tau = \bD(A_{IJ}[p^\infty])_\tau$ and $\pi_1^*\bD((A_{IJ}\otimes (\fq^c)^{-1})[p^\infty])_\tau = \bD(A_1[p^\infty])_\tau$. Hence $\pi_2^*\bD(A_{IJ}[p^\infty])_\tau = (g \otimes \fp)^*\bD(A_1[p^\infty])_\tau$. If $\tau = \tilde{\theta}^c$ for $\theta \notin T^1$, then $ (\iota_{\fq^c})^{-1}\bD((A_{IJ})[p^\infty])_\tau = [\varpi_\fp]^{-1} \bD((A_{IJ}\otimes (\fq^c)^{-1})[p^\infty])_\tau$ and $\pi_1^*\bD((A_{IJ}\otimes (\fq^c)^{-1})[p^\infty])_\tau = [\varpi_\fp]\bD(A_1[p^\infty])_\tau$. Hence $\pi_2^*\bD(A_{IJ}[p^\infty])_\tau = (g \otimes \fp)^*\bD(A_1[p^\infty])_\tau$.

For any $\tau \in \widehat{\Theta}_E$ and $A=A_1,A_2$ or $A_{IJ}$, we have 
$$\dim H^0 \left(A,\Omega^1_{A/\overline{\mathbb{F}}_p} \right)_\tau =  \dim V\mathbb{D}(A[p^\infty])_{\phi \circ \tau} / p \mathbb{D}(A[p^\infty])_{\tau}.$$

Then, writing $\Delta = \bD((A_2 \otimes \fp)[p^\infty])$ and $\Delta_{IJ} = \bD(A_{IJ}[p^\infty])$: 
\[
\begin{split}
\dim H^0 \left(A,\Omega^1_{A_{IJ}/\overline{\mathbb{F}}_p} \right)_\tau & =
\dim V \pi_2^*\Delta_{IJ,\phi \circ \tau} / p \pi_2^*\Delta_{IJ,\tau} \\
& = \dim V \Delta_{\phi \circ \tau} / p \Delta_{IJ, \tau} + \dim p\Delta_\tau / p \pi_2^*\Delta_{IJ,\tau} - \dim V \Delta_{\phi \circ \tau} / V \pi_2^*\Delta_{IJ,\phi \circ \tau} \\
& = \dim H^0(A_2 \otimes \fp , \Omega^1_{A_2 \otimes \overline{\bF}_p})  + \dim  \Delta_\tau /  \pi_2^*\Delta_{IJ,\tau} - \dim \Delta_{\phi \circ \tau} / \pi_2^*\Delta_{IJ, \phi \circ \tau}.
\end{split}
\]

\noindent The result follows.
\end{proof}

\begin{rem}
We note that the dimension formulae always make sense. Indeed, suppose for example that $\omega_{A_i,\tilde{\theta}}=0$, then we cannot have $\theta \notin T^1$ but $\phi \circ \theta \in T^1$ for which the formula would yield $\dim H^0(A_{IJ},\Omega^1_{A_{IJ}/\overline{\bF}_p})_{\tilde{\theta}} = -2$. To see this, simply note that $\omega_{A_i,\tilde{\theta}}=0$ implies that $\theta^1,\cdots, \theta^e \in \Sigma_\infty$, therefore by construction of $T'$, either all of $\theta^1,\cdots, \theta^e, (\phi \circ \theta)^1 \in T'$ or none are so that both $\theta$ and $\phi \circ \theta \in T^1$ or neither are. The case that $\dim \omega_{A_i,\tilde{\theta}} = 4e_\fp$ follows similarly.
\end{rem}

\begin{cor}\label{sheaves iso}
   We have the following isomorphisms:
    \begin{itemize}
        \item If $\tau \in \widehat{\Theta}_E$ is such that $\tau \vert_F = \theta \notin \widehat{\Theta}_{F,\fp}$ then 
        \[\ccH^1_{\dR}(A_2/S)^0_\tau \xrightarrow[\sim]{\psi_2^*} \ccH^1_{\dR}(A_{IJ}/S)^0_\tau \xrightarrow[\sim]{\pi_1^*} \ccH^1_{\dR}(A_1/S)^0_\tau.\] 
        \item If $\theta \in T^1$, then 
        \[ \ccH^1_{\dR}(A_2/S)^0_{\tilde{\theta}} \xrightarrow[\sim]{\psi_2^*} \ccH^1_{\dR}(A_{IJ}/S)^0_{\tilde{\theta}}
       \, \textrm{ and }\, 
        \ccH^1_{\dR}(A_{IJ}/S)^0_{\tilde{\theta}^c} \xrightarrow[\sim]{\pi_2^*} \ccH^1_{\dR}(A_2\otimes \fp/S)^0_{\tilde{\theta^c}},\]
        and 
        \[(\psi_2^*)^{-1} \omega^0_{IJ,\tilde{\theta}} = 
        \begin{cases}
            \omega^0_{2,\tilde{\theta}} & \textrm{ if } \phi \circ \theta \in T^1,\\
            (f^*)^{-1} \omega^0_{1,\tilde{\theta}} & \textrm{ if } \phi \circ \theta \notin T^1. 
        \end{cases}\]
        \item If $\theta \notin T^1$, then 
        \[ \ccH^1_{\dR}(A_{IJ}/S)^0_{\tilde{\theta}} \xrightarrow[\sim]{\pi_1^*} \ccH^1_{\dR}(A_1/S)^0_{\tilde{\theta}} \, \textrm{ and }\, \ccH^1_{\dR}(A_{IJ}/S)^0_{\tilde{\theta}^c} \xrightarrow{\sim} \ccH^1_{\dR}(A_1/S)^0_{\tilde{\theta}^c},\]
        and 
        \[\pi_1^* \omega^0_{IJ,\tilde{\theta}} = 
        \begin{cases}
            \omega^0_{1,\tilde{\theta}} & \textrm{ if } \phi \circ \theta \notin T^1,\\
            f^* \omega^0_{1,\tilde{\theta}} & \textrm{ if } \phi \circ \theta \in T^1. 
        \end{cases}\]
        
    \end{itemize}
\end{cor}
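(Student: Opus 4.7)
The plan is to reduce every claim to a pointwise statement at geometric points. Each of the sheaves $\ccH^1_{\dR}(A_j/S)^0_\tau$ and $\ccH^1_{\dR}(A_{IJ}/S)^0_\tau$ is locally free of rank computable either from the Pappas--Rapoport condition (for $A_1$, $A_2$) or from Proposition~\ref{aij dieudonne calculations} (for $A_{IJ}$); consequently, by Nakayama's lemma together with a rank count, it suffices to verify surjectivity of the relevant pullback maps $\psi_2^*$, $\pi_2^*$, $\pi_1^*$ at each geometric point $\overline{s}$ of $S = S^\prime_{\phi^\prime(I),J}$.

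The first case, $\tau\vert_F = \theta \notin \widehat{\Theta}_{F,\fp}$, is immediate: the isogenies $\pi_i$, $\psi_i$ and $\iota_{\fq^c}$ all have kernels contained in the $\fp$-torsion of their sources, so they induce isomorphisms on $p$-divisible groups at embeddings not lying above $\fp$, and hence on the corresponding components of $\ccH^1_{\dR}$. For $\theta \in T^1$, Proposition~\ref{aij dieudonne calculations} gives the pointwise equality $\pi_2^* \bD(A_{IJ}[p^\infty])_{\tilde{\theta}} = m_\fp^* \bD(A_2[p^\infty])_{\tilde{\theta}}$. Since $m_\fp = \psi_2 \circ \pi_2$ and $\pi_2^*$ is injective on Dieudonn\'e modules, this yields $\psi_2^* \bD(A_2[p^\infty])_{\tilde{\theta}} = \bD(A_{IJ}[p^\infty])_{\tilde{\theta}}$ inside $\bD(A_{IJ}[p^\infty]) \otimes \bQ$; reducing modulo $p$ gives surjectivity of $\psi_2^*$ on $\tilde{\theta}$-components at $\overline{s}$. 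The conjugate case is immediate from the equality $\pi_2^* \bD(A_{IJ}[p^\infty])_{\tilde{\theta}^c} = \bD((A_2 \otimes \fp)[p^\infty])_{\tilde{\theta}^c}$ of the same proposition, and the symmetric argument, using the factorisation $\pi_1 \circ (g \otimes \fp) = q \circ \pi_2$ from diagram~\eqref{splice diagram}, handles the case $\theta \notin T^1$ via $\pi_1^*$.

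For the $\omega^0$ descriptions, functoriality of the Hodge filtration gives an inclusion $\psi_2^*\omega^0_{A_2,\tilde{\theta}} \hookrightarrow \omega^0_{IJ,\tilde{\theta}}$, and Proposition~\ref{aij dieudonne calculations} provides the rank of the target. When $\phi \circ \theta \in T^1$, the two ranks coincide and equality follows from saturation inside the locally free sheaf $\ccH^1_{\dR}(A_{IJ}/S)^0_{\tilde{\theta}}$. When $\phi \circ \theta \notin T^1$, the rank of $\omega^0_{IJ,\tilde{\theta}}$ exceeds that of $\psi_2^*\omega^0_{A_2,\tilde{\theta}}$ by two, and I would locate the extra dimension via the commutative diagram~\eqref{splice diagram}: the parallel factorisation of $\psi_2^*$ and $\pi_1^*$ through $\pi_2^*$ and $\psi_1^*$ identifies $(\psi_2^*)^{-1}\omega^0_{IJ,\tilde{\theta}}$ with the pullback along the $(\fq^c)^{-1}$-twist of the image under $\pi_1^*$ of the corresponding piece on $A^\prime_{IJ} = A_{IJ} \otimes (\fq^c)^{-1}$, and a diagram chase using the induced map $g^*$ between them pins this image down as $(f^*)^{-1}\omega^0_{1,\tilde{\theta}}$. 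The case $\theta \notin T^1$, $\phi \circ \theta \in T^1$ is symmetric, with $A_1,A_2$ swapped and $f^*\omega^0_{1,\tilde{\theta}}$ replacing $(f^*)^{-1}\omega^0_{1,\tilde{\theta}}$. The main obstacle is precisely this last step: one must verify by a careful case analysis that the defining vanishings of the stratum $S^\prime_{\phi^\prime(I),J}$ combined with the alternating combinatorial definition of $T$ on chains of $I \cap J$ guarantee that the two-dimensional "extra piece" is supplied exactly by the $f^*$-preimage (or image) required, and that this extra piece lies in the image of the relevant pullback map on the full $\ccH^1_{\dR}$.
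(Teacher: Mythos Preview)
Your approach to the de Rham isomorphisms is essentially the paper's, with one gap. For $\theta \notin T^1$, the isomorphism $\ccH^1_{\dR}(A_{IJ}/S)^0_{\tilde{\theta}^c} \xrightarrow{\sim} \ccH^1_{\dR}(A_1/S)^0_{\tilde{\theta}^c}$ is \emph{not} induced by any single isogeny in diagram~\eqref{splice diagram}: the map $\pi_1$ lands in $A_{IJ}\otimes(\fq^c)^{-1}$, and the twist by $(\fq^c)^{-1}$ is invisible on $\fq$-components but not on $\fq^c$-components. The paper invokes the crystallization lemma (Lemma~\ref{crystallization}) applied to the pair $A_1 \xleftarrow{g\otimes\fp} A_2\otimes\fp \xrightarrow{\pi_2} A_{IJ}$, using the pointwise equality $\pi_2^*\bD(A_{IJ})_{\tilde{\theta}^c} = (g\otimes\fp)^*\bD(A_1)_{\tilde{\theta}^c}$ from Proposition~\ref{aij dieudonne calculations}. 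Your Nakayama-plus-rank-count strategy does not produce a map here, so this step needs that lemma.

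For the Hodge bundles, the paper's argument is considerably simpler than your proposed diagram chase and does not use any combinatorics of $T$, $R$, or the stratum conditions. The point is that $\omega^{0(p)}_\tau$ is exactly the image of Verschiebung $V:\ccH^1_{\dR}(\cdot)^0_{\phi\circ\tau}\to\ccH^1_{\dR}(\cdot)^{0(p)}_\tau$, and $V$ is functorial. In the mixed case $\theta\in T^1$, $\phi\circ\theta\notin T^1$, one writes the commutative square with $\pi_1^*$ (an isomorphism on the $\phi\circ\tau$-row) at the top and $(\psi_2^*)^{-1}$ (an isomorphism on the $\tau$-row) at the bottom, connected by $V$; since $\pi_1^*\circ\psi_2^*=f^*$ on $\fq$-components, chasing $V$ around gives $(\psi_2^*)^{-1}\omega^0_{IJ,\tau}=(f^*)^{-1}\omega^0_{1,\tau}$ directly, with one extra observation that $\ker f^*_\tau\subset\omega^0_{2,\tau}$ (which holds because $\theta\in T^1$ forces $\theta^1\in T'$, hence the vanishing condition at the first non-$\Sigma_\infty$ embedding). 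The other mixed case is symmetric. Your rank discrepancy is also off by a factor of two: on reduced sheaves the jump is $1$, not $2$.
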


\begin{proof}
    The isomorphisms on de Rham bundles follow from proposition \ref{aij dieudonne calculations}, where the isomorphism $\ccH^1_{\dR}(A_{IJ}/S)^0_{\tilde{\theta}^c} \xrightarrow{\sim} \ccH^1_{\dR}(A_1/S)^0_{\tilde{\theta}^c}$ follows from the crystallization lemma \ref{crystallization} applied to $A_1 \xleftarrow{g\otimes \fp} A_2 \otimes \fp \xrightarrow{\pi_2} A_{IJ}$. 
    
    The relations on Hodge bundles are clear if $\theta,\phi \circ \theta \in T^1$ or $\theta,\phi \circ \theta \in T^1$. Suppose that $\theta \notin T^1, \phi \circ \theta \in T^1$ and let $\tau = \tilde{\theta}$. By compatibility of $f$, $\psi_2$ and $\pi_1$, we have the following commutative diagram:
    \[
    \begin{tikzcd}
        \ccH^1_{\dR}(A_2/S)^0_{\phi \circ \tau} \arrow[rr,"\psi_2^*" ,"\sim"'] \arrow[d,"V"'] && \ccH^1_{\dR}(A_{IJ}/S)^0_{\phi \circ \tau} \arrow[d,"V"]\\
        \ccH^1_{\dR}(A_2/S))^{0 (p)}_\tau \arrow[r,"f^*"] & \ccH^1_{\dR}(A_1/S))^{0 (p)}_\tau & \ccH^1_{\dR}(A_{IJ}/S))^{0 (p)}_\tau. \arrow[l, "\pi_1^*"',"\sim"]
    \end{tikzcd}
    \]
    from which we obtain $\pi^*_1 \omega_{IJ,\tau} = f^*\omega^0_{2,\tau}$. If $\theta \in T^1, \phi \circ \theta \notin T^1$, writing again $\tau = \tilde{\theta}$, we have the commutative diagram:
     \[
    \begin{tikzcd}
        \ccH^1_{\dR}(A_1/S)^0_{\phi \circ \tau}  \arrow[d,"V"'] && \ccH^1_{\dR}(A_{IJ}/S)^0_{\phi \circ \tau} \arrow[ll,"\pi_1^*"' ,"\sim"] \arrow[d,"V"]\\
        \ccH^1_{\dR}(A_1/S))^{0 (p)}_\tau  & \ccH^1_{\dR}(A_2/S))^{0 (p)}_\tau  \arrow[l,"f^*"']& \ccH^1_{\dR}(A_{IJ}/S))^{0 (p)}_\tau. \arrow[l, "(\psi_2^*)^{-1}"',"\sim"]
    \end{tikzcd}
    \]
    We deduce that, since $\theta \in T^1$, $\phi \circ \theta \notin T^1$, $\ker f^* \subset \omega^0_{2,\tau}$ and so $(\psi_2^*)^{-1}\omega^0_{IJ,\tau} = (f^*)^{-1}\omega^0_{1,\tau}$.
\end{proof}

We now define a Pappas-Rapoport filtration of type $\widetilde{\Sigma}_{IJ,\infty}$ on $\omega^0_{IJ,\tau}$ for each $\theta \in \widehat{\Theta}_F$, that is a Pappas-Rapoport filtration where the dimension of each graded piece is dictated by the number $s^\prime_\beta$ taken with respect to $\widetilde{\Sigma}_{IJ,\infty}$, defined in section \ref{sets}, as follows:

\noindent In the following, write $\tau = \tilde{\theta}$ for $\theta \in \widehat{\Theta}_F$ and fix the identification $\omega^0_{IJ,\tau} := \omega^0_{A_{IJ},\tau} = \omega^0_{A_{IJ} \otimes (\fq^c)^{-1},\tau}$ given by $\iota_{\fq^c}$. If $\theta \notin \widehat{\Theta}_{F,\fp}$, then we have isomorphisms \[\omega^0_{2,\tau} \xrightarrow{\psi_2^*} \omega^0_{IJ,\tau} \xrightarrow{\pi_1^*} \omega^0_{1,\tau}\]
where, by above, the composite $\pi_1^* \circ \psi_2^*$ is given by $f^*$ and thus compatible with the filtrations on the left and right hand side by definition of the moduli problem for $S^\prime_{\phi^\prime(I),J}$. We thus set for each $i$, 
\[\omega^0_{IJ,\tau}(i) =  \psi_2^* (\omega^0_{2,\tau}(i)) = (\pi_1^*)^{-1} (\omega^0_{1,\tau}(i)).\]

\noindent If $\theta \in \widehat{\Theta}_{F,\fp}$, we set for each $1 \leq i \leq e_\fp$
\begin{equation}\label{Aij filtering}
\omega^0_{IJ,\tau} (i) =
\begin{cases}
    \psi_2^* (\omega^0_{2,\tau}(i)) & \textrm{ if } \phi^\prime(\theta^i) \in T, \\
    (\pi_1^*)^{-1} (\omega^0_{1,\tau}(i)) & \textrm{ if } \phi^\prime(\theta^i) \notin T.
\end{cases}
\end{equation}

We now show that this yields a well defined filtration:

\noindent If $\theta \in T^1$, then $\psi_2^*: \ccH^1_{\dR}(A_2/S)^0_\tau \xrightarrow{\sim} \ccH^1_{\dR}(A_{IJ}/S)^0_\tau$ is an isomorphism by corollary \ref{sheaves iso} and we can thus identify $\pi_1^*$ with $f^*$. 
Therefore, the subsheaves $\psi_2^* (\omega^0_{2,\tau}(i))$ are given, in $\ccH^1_{\dR}(A_2/S)^0_\tau$, by $\omega^0_{2,\tau}(i)$, which are locally of dimension $s_\tau(i) = s^\prime_\tau(i)$, where $s^\prime_\tau$ is the dimension function attached to $\widetilde{\Sigma}_{IJ,\infty}$.

The subsheaves $(\pi_1^*)^{-1} (\omega^0_{1,\tau}(i))$ are given, in $\ccH^1_{\dR}(A_2/S)^0_\tau$, by $(f^*)^{-1}(\omega^0_{1,\tau}(i))$, which are locally free of dimension $s_\tau(i)+1 = s'_\tau(i)$. Consider an $i$ such that $\phi^\prime(\theta^i) \notin T$. If $\phi^\prime(\theta^i)=\theta^{j}$ for $j \geq i+1$, then this means that $(f^*)^{-1}(\omega^0_{1,\tau}(i)) \subset \omega^0_{2,\tau}(j) \subset \omega^0_{IJ,\tau}$. If however $\phi^\prime(\theta^i) \neq \theta^j$ for any $j \leq e_\fp$, this means that $\phi \circ \theta \notin T^1$ and $\omega^0_{IJ,\tau} = (f^*)^{-1}(\omega^0_{1,\tau}) $ by corollary \ref{sheaves iso}. Therefore the subsheaves $(f^*)^{-1}(\omega^0_{1,\tau}(i))$ are actually subsheaves of $\omega^0_{IJ,\tau}$. If $e_\fp = 1$, we are now done.

\noindent Suppose now that $e_\fp > 1$ and $1 \leq i < e_\fp $ is such that $\phi^\prime(\theta^i) \in T$ and $\phi^\prime(\theta^{i+1}) \notin T$. Then in fact $\theta^{i+1} \in J$ and 
\[[\varpi_\fp] \omega^0_{IJ,\tau}(i+1) = [\varpi_\fp] (f^*)^{-1}(\omega^0_{1,\tau})(i+1) = g^*(\omega^0_{A_1\otimes \fp^{-1},\tau}(i+1)) = \omega^0_{2,\tau}(i) = \omega^0_{IJ,\tau}(i).\]

If however $i$ is such that $\phi^\prime(\theta^i) \notin T$ and $\phi^\prime(\theta^{i+1}) \in T$. Then in fact $\theta^{i+1} \in I$ and
\[\omega^0_{IJ,\tau}(i+1) = \omega^0_{2,\tau}(i+1) = (f^*)^{-1}(\omega^0_{1,\tau})(i)=\omega^0_{IJ,\tau}(i).\]

The other inclusions are clear and it follows that the above construction yields a well defined filtration. The case that $\theta \notin T^1$ is entirely analogous.

By definition of the filtrations we have:
\begin{cor}\label{local de rham iso}
Let $\beta = \theta^i \in \widehat{\Theta}_{F,\fp}$. Let $\ccH^1_{IJ,\tilde{\beta}} = [\varpi_\fp]^{-1}\omega^0_{IJ,\tau}(i-1) / \omega^0_{IJ,\tau}(i-1)$ be the de Rham cohomology sheaf of $A_{IJ}$ at $\tilde{\beta}$ with respect to $\widetilde{\Sigma}_{IJ,\infty}$. If $\beta \in T^\prime$, then we have an isomorphism
\[\ccH^1_{2,\tilde{\beta}} \xrightarrow[\sim]{\psi_2^*} \ccH^1_{IJ,\tilde{\beta}}.\]
If $\beta \notin T^\prime$, then we have an isomorphism
\[\ccH^1_{IJ,\tilde{\beta}} \xrightarrow[\sim]{\pi_1^*} \ccH^1_{1,\tilde{\beta}}.\]
\end{cor}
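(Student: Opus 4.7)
The plan is to deduce the corollary directly from the definition of the filtration \eqref{Aij filtering}, combined with Corollary \ref{sheaves iso}. Since both $\ccH^1_{2,\tilde{\beta}}$ and $\ccH^1_{IJ,\tilde{\beta}}$ are locally free of rank $2$ (the former by Lemma \ref{filtration torsion} applied to $A_2$, the latter by the same lemma applied to $A_{IJ}$ with its Pappas–Rapoport filtration of type $\widetilde{\Sigma}_{IJ,\infty}$), it suffices, for each $\beta = \theta^i$, to show that the relevant map is well-defined and injective on geometric points.

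The first step is a combinatorial compatibility: I claim that $\theta^i \in T'$ forces $\phi'(\theta^{i-1}) \in T$, with the convention $\omega^0_{IJ,\tau}(0)=0$ (and analogously for the complementary case). Indeed, if $\theta^i \in T$ then $\theta^i \notin \Sigma_\infty$ and $\phi'(\theta^{i-1}) = \theta^i \in T$; if $\theta^i \in T' \setminus T$, then $\theta^i$ belongs to a chain $C \subset \Sigma_{\infty,\fp}$ appended to $T$ precisely because $\phi(\max C) \in T$, and $\phi'(\theta^{i-1})$ is exactly this element. The converse statement for $\theta^i \notin T'$ follows by the same bookkeeping. Substituting into \eqref{Aij filtering} then yields
\[
\omega^0_{IJ,\tau}(i-1) \;=\;
\begin{cases} \psi_2^*\bigl(\omega^0_{2,\tau}(i-1)\bigr) & \text{if } \theta^i \in T',\\ (\pi_1^*)^{-1}\bigl(\omega^0_{1,\tau}(i-1)\bigr) & \text{if } \theta^i \notin T',\end{cases}
\]
so by $\ccO_E$-linearity $\psi_2^*$ (resp.\ $\pi_1^*$) induces a well-defined map on the appropriate subquotient.

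The remaining task is injectivity of this induced map at each geometric point. If $\theta \in T^1$ (resp.\ $\theta \notin T^1$), this is immediate from Corollary \ref{sheaves iso}, which already says $\psi_2^*$ (resp.\ $\pi_1^*$) is an isomorphism on the whole $\tau$-component. The case $\theta^i \in T'$ but $\theta \notin T^1$ (and the dual situation) is the only subtle one, and I would handle it by tracing through Proposition \ref{aij dieudonne calculations}: the explicit description of $\pi_2^*\mathbb{D}(A_{IJ})_\tau$ and $\pi_2^*\mathbb{D}(A_{IJ})_{\tau^c}$ identifies $\ker \psi_2^*$ on each $\tau$-component, and from the precise form of $\omega^0_{IJ,\tau}(i-1)$ one checks that this kernel intersects $[\varpi_\fp]^{-1}\omega^0_{2,\tau}(i-1)$ only inside $\omega^0_{2,\tau}(i-1)$.

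The main obstacle is this last injectivity verification in the mixed case $\theta^i \in T'$, $\theta \notin T^1$: the global map $\psi_2^*$ on $\ccH^1_{\dR}(A_2/S)^0_\tau$ has a non-trivial kernel there, but one must show this kernel does not intrude on the subquotient $\ccH^1_{2,\tilde{\beta}}$. All the combinatorial setup of $T$ and $T'$ in Section \ref{sets} was arranged precisely so that this transition between the $\psi_2^*$-formula and the $(\pi_1^*)^{-1}$-formula happens at indices where such a clash cannot occur; the verification should be a short case-check at the Dieudonné level once the correct index bookkeeping from the first step is in hand.
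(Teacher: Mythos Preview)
Your approach is correct and is essentially what the paper intends: the paper's entire proof is the phrase ``By definition of the filtrations we have:'', so you have actually supplied more detail than is written there. Your combinatorial equivalence $\theta^i \in T' \Leftrightarrow \phi'(\theta^{i-1}) \in T$ (for $i>1$) is exactly right and is the link between \eqref{Aij filtering} and the statement.

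For the deferred mixed case $\theta^i \in T'$, $\theta \notin T^1$, the check is indeed short, and you can avoid Proposition~\ref{aij dieudonne calculations} entirely. Since $\theta^1 \notin T'$, the smallest $j$ with $\theta^j \notin \Sigma_\infty$ satisfies $\theta^j \notin T$ (else the $\Sigma_\infty$-chain ending at $\theta^{j-1}$ would be appended to $T'$), hence $\theta^j \in T^c \subset I$. If some $s_{\tau^k}=2$ for $k<j$ then $\omega^0_{2,\tau}(k) \supset \ccH^1_{2,\tau^1} \supset \ker f^*_\tau$; otherwise $\omega^0_{2,\tau}(j-1)=0$, and $\theta^j \in I$ gives $f^*\omega^0_{2,\tau}(j)=0$, so the one-dimensional $\ker f^*_\tau$ equals $\omega^0_{2,\tau}(j)$. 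In either case $\ker \psi_2^* \subset \omega^0_{2,\tau}(j)$, and $j \leq i-1$ since $\theta^j \notin T'$ while $\theta^i \in T'$. The dual mixed case $\theta^i \notin T'$, $\theta \in T^1$ is not symmetric but easier: there $\omega^0_{IJ,\tau}(i-1) = (\pi_1^*)^{-1}\omega^0_{1,\tau}(i-1)$ by definition, so injectivity of $\pi_1^*$ on the subquotient is tautological.
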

We recall that $\beta \in T^\prime$ means either $\beta \in T$ or $\beta \in \Sigma_\infty$ and $\phi^\prime(\beta) \in T$.

\subsubsection{The morphism to a projective bundle}\label{the splice morphism}
We now define, as in \cite[section 5.2.1]{2020arXiv200100530D} the extra algebraic data needed to define the Shimura variety we will relate $S^\prime_{\phi^\prime(I),J}$ to and the morphism to it.

Let $\Sigma_{IJ} = \Sigma \sqcup \Sigma_{IJ}^+$ and $\widetilde{\Sigma}_{IJ,\infty} = \widetilde{\Sigma}_\infty \sqcup \widetilde{\Sigma}_{IJ}^+$ where 
\[\widetilde{\Sigma}_{IJ}^+ = \{ \tilde{\beta}^c \, \vert \, \beta \in T, \phi^\prime(\beta) \notin T \} \cup \{ \tilde{\beta} \, \vert \, \beta \notin T, \phi^\prime(\beta) \in T \}.\]

Write $B_{IJ} = B_{\Sigma_{IJ}}$ for the quaternion algebra over $F$ ramified exactly at the places in $\Sigma_{IJ}$, and let $G_{IJ}= G_{\Sigma_{IJ}}/\bQ$ be the algebraic group given by $G_{IJ}(R) = (B_{IJ} \otimes_\bQ R)^\times$, $G^\prime_{IJ} = G^\prime_{\Sigma_{IJ}} = (G_{IJ} \times T_E)/ T_F$, and $D_{IJ} = D_{\Sigma_{IJ}} = B_{IJ} \otimes_F E$.

Choose an $\mathbb{A}_{F,f}$-algebra isomorphism 
\[ \xi: B_{IJ} \otimes \widehat{\bZ} \xrightarrow{\sim} B \otimes \widehat{\bZ} \supset \ccO_B,\]
where $\ccO_B$ is the maximal order defined in section \ref{quaternionic models}, and set $\ccO_{B_{IJ}}$ to be the corresponding maximal order in $B_{IJ}$. We thus set $\ccO_{D_{IJ}} = \ccO_{B_{IJ}} \otimes_{\ccO_F} \ccO_E$, which is a maximal order in $D_{IJ}$, and fix an $\ccO_E$-algebra isomorphism 
\[ \vartheta: \ccO_{D_{IJ}} \xrightarrow{\sim} \ccO_D.\]
By lemma 5.4 of \cite{tian_xiao_2016}, we may choose an element $\delta_{IJ} \in D^\times_{IJ}$ satisfying conditions of \ref{unitary moduli} for $D_{IJ}$ and choice of lift $\widetilde{\Sigma}_{IJ,\infty}$, such that the resulting anti-involution $*$ on $D_{IJ}$ given by $\delta^{-1}_{IJ} \overline{\vartheta(\alpha)}\delta_{IJ}$ is compatible with $\vartheta$. The isomorphisms $\xi$ and $\vartheta$ induce two $\widehat{\ccO}_E$-algebra isomorphisms
\[ \xi_E, \hat{\vartheta}: \widehat{\ccO}_{D_{IJ}} \xrightarrow{\sim} \widehat{\ccO}_D,\]
so that there exists $h \in \widehat{\ccO}_D^\times$ such that $h \xi_E h^{-1} = \hat{\vartheta}$ and given that $\xi_E(\overline{u})  = \overline{\xi_E(u)}$ for all $u$, one can check that
\[\varepsilon := \overline{h} \delta h \xi_E(\delta_{IJ}^{-1}) \in \textrm{GL}_2(\mathbb{A}_{E,f})\]
is central and satisfies $\overline{\varepsilon} = \varepsilon$. Therefore $\varepsilon \in \mathbb{A}^\times_{F,f}$ and $\varepsilon_p \in \ccO_{F,p}^\times$.

For an open compact subgroup $U^\prime \subset G^\prime_{\Sigma}(\mathbb{A}_f)$, we let $U^\prime_{IJ}$ be the corresponding open compact subgroup of $G^\prime_{IJ}(\mathbb{A}_f)$ under the isomorphism induced by $\xi$. We suppose that $U^\prime$ is sufficiently small such that $U^\prime$ and $U^\prime_{IJ}$ are sufficiently small in the usual sense. Since $U^\prime$ is of level prime to $p$, so is $U^\prime_{IJ}$.

Consider the universal $IJ$-splice $A_{IJ}$ defined over the stratum $S^\prime_{\phi^\prime(I),J}$ of $\widetilde{Y}^\prime_{U^\prime_0(\fp)}(G^\prime_\Sigma)$. We now endow it with the extra structures such that we obtain a morphism \[\widetilde{\psi}^\prime_{IJ}: S^\prime_{\phi(I),J} \to \widetilde{Y}^\prime_{U^\prime_{IJ}}(G^\prime_{IJ}).\]
Let $\iota_{IJ}$ be the action of $\ccO_{D_{IJ}}$ obtained by composing the action of $\ccO_D$ on $A_{IJ}$ with $\vartheta$. Recall that we fixed an isomorphism $\alpha:\ccO_D \otimes \mathbb{F}_p \xrightarrow{\sim} M_2(\ccO_E/p\ccO_E)$ and considered the element $e_0 = \left(\begin{smallmatrix}1 & 0 \\0 &0 \end{smallmatrix}\right) \in \ccO_D \otimes \mathbb{F}_p$. We may extend this via $\xi_E \otimes \bF_p$ to an isomorphism $\ccO_{D_{IJ}} \otimes \mathbb{F}_p = M_2(\ccO_E/p\ccO_E)$. Then we can consider the reduced de Rham cohomology sheaf $\ccH^1_{\dR}(A_{IJ}/S^{\prime}_{\phi^\prime(I),J})^0$ of $A_{IJ}$ with respect to $\Sigma$ or with respect to $\Sigma_{IJ}$. Recall that, in the case of $\Sigma$, this is defined as $\ccH^1_{\dR}(A_{IJ}/S^{\prime}_{\phi^\prime(I),J}) \cdot \alpha^{-1}(e_0)$. Similarly, with respect to $\Sigma_{IJ}$, this is given via the above isomorphisms: 
\[\begin{split}
\ccH^1_{\dR}(A_{IJ}/S^{\prime}_{\phi^\prime(I),J})\cdot \xi^{-1}(\alpha^{-1}(e_0)) & = \ccH^1_{\dR}(A_{IJ}/S^{\prime}_{\phi^\prime(I),J}) \cdot \vartheta(\xi^{-1}(\alpha^{-1}(e_0)))\\
& = \ccH^1_{\dR}(A_{IJ}/S^{\prime}_{\phi^\prime(I),J}) \cdot h_p\alpha^{-1}(e_0)h^{-1}_p \\
& = \ccH^1_{\dR}(A_{IJ}/S^{\prime}_{\phi^\prime(I),J}) \cdot \alpha^{-1}(e_0)h^{-1}_p.
\end{split}\]
In particular, we obtain $\ccO_E/p\ccO_E$-linear isomorphisms $\cdot h_p^{-1}$
\[ \omega^0_{IJ, \Sigma,\tau} \xrightarrow{\sim} \omega^0_{IJ, \Sigma_{IJ},\tau},\]
where the ramification sets are added to make clear which reduced sheaf we're taking. Then we give $\omega^0_{IJ}$ the Pappas-Rapoport filtration of type $\widetilde{\Sigma}_{IJ,\infty}$, constructed after corollary \ref{sheaves iso}, under the above isomorphism. The Rosati involution induced by $\lambda_{IJ}$ is compatible with the anti-involution $*$ of $\ccO_{D_{IJ}}$ since $\vartheta$ is compatible with the anti-involutions. Finally, we define $\eta_{IJ}$ as the composite 
\[ \widehat{\ccO}^{(p)}_{D_{IJ}} \xrightarrow{\hat{\xi}_E^{(p)}} \widehat{\ccO}^{(p)}_D \xrightarrow{h^{(p)} \cdot} \widehat{\ccO}^{(p)}_D \xrightarrow{\eta_2, \overline{s}_i} \widehat{T}^{(p)}(A_{2,\overline{s}_i}) \xrightarrow{\sim} \widehat{T}^{(p)}(A_{2,\overline{s}_i} \otimes \fp) \xrightarrow{\pi_2} \widehat{T}^{(p)}(A_{IJ,\overline{s}_i})\]
for each $\overline{s}_i$ and set $\epsilon_{IJ} = \varepsilon^{(p)}\epsilon_2$. The fact that $(\eta_{IJ}, \epsilon_{IJ})$ is a $U^\prime_{IJ}$-level structure on $(A_{IJ},\iota_{IJ},\lambda_{IJ})$ follows from the relations between $\vartheta,\xi,h$ and $\varepsilon$. Therefore the tuple $(A_{IJ},\iota_{IJ},\lambda_{IJ}, (\eta_{IJ},\epsilon_{IJ}), \underline{\omega^0_{IJ}})$ defines a morphism $\widetilde{\psi}^\prime_{IJ}: S^\prime_{\phi(I),J} \to \widetilde{Y}_{U^\prime_{IJ}}(G^\prime_{IJ})_{\bF}$.

Finally, writing $S^\prime_{IJ} = \widetilde{Y}_{U^\prime_{IJ}}(G^\prime_{IJ})_\bF$, We define for each $\beta \in R$, morphisms $S^{\prime}_{\phi^\prime(I),J} \to \mathbb{P}^1_{S^\prime_{IJ}}(\mathcal{V}_{\tilde{\beta}})$ where the rank two vector bundles $\mathcal{V}_{\tilde{\beta}}$ are given by $\ccH^1_{A,\tilde{\beta}}$, where $\underline{A}$ is the universal abelian variety on $S^\prime_{IJ}$. Defining such a morphism is equivalent to giving a surjection from $\widetilde{\psi}^{  \prime *}_{IJ} \ccH^1_{\tilde{\beta}}$ to a line bundle on $S^\prime_{\phi^\prime(I),J}$. However this sheaf is canonically isomorphic to  $\ccH^1_{IJ,\tilde{\beta}}$. 

By corollary \ref{local de rham iso}, if $\beta \in R \cap T$, we have the isomorphism $\psi_2^* \circ h^{-1}_p: \ccH^1_{2,\tilde{\beta}} \xrightarrow{\sim} \ccH^1_{IJ,\tilde{\beta}}$. Under this isomorphism, we consider the morphism $S^{\prime}_{\phi^\prime(I),J} \to \mathbb{P}^1_{S^\prime_{IJ}}(\mathcal{V}_{\tilde{\beta}})$ given by the short exact sequence:
\[0 \to \omega^0_{2,\tilde{\beta}} \to \ccH^1_{2,\tilde{\beta}} \to v^0_{2,\tilde{\beta}} \to 0.\]

If $\beta \in R$, $\beta \notin T$, we have the isomorphism $\pi_1^* \circ h^{-1}_p: \ccH^1_{IJ,\tilde{\beta}} \xrightarrow{\sim} \ccH^1_{1,\tilde{\beta}} $ and so we consider, under this isomorphism, the morphism $S^{\prime}_{\phi^\prime(I),J} \to \mathbb{P}^1_{S^\prime_{IJ}}(\mathcal{V}_{\tilde{\beta}})$ given by the short exact sequence:
\[0 \to \omega^0_{1,\tilde{\beta}} \to \ccH^1_{1,\tilde{\beta}} \to v^0_{1,\tilde{\beta}} \to 0.\]

Taking the fiber product over $S^\prime_{IJ}$ of all these morphisms we just defined, we obtain a morphism
\begin{equation}\label{Morphism Psi} \widetilde{\Psi}^\prime_{IJ}: S^\prime_{\phi^\prime(I),J} \to \prod_{\beta \in R} \mathbb{P}^1_{S^\prime_{IJ}}(\mathcal{V}_{\tilde{\beta}}).\end{equation}

Let us finally give an example: Let $[F:\bQ]=12$ and $p$ be a prime such that $p\ccO_F = \fp^4$ (hence $f_\fp=3,e_\fp=4$). Write $\{\theta_1,\theta_2,\theta_3 \} = \widehat{\Theta}_F$, $\tau_i = \tilde{\theta}_i$ and let $\Sigma$ be an even set of places such that $\widetilde{\Sigma}_\infty = \{(\tau_1^1)^c,(\tau_2^2)^c,\tau_3^3\} $. Consider the subset $J= \{ \theta_1^2, \theta^1_2, \theta^4_2, \theta_3^4\}$ of $\Theta_F \setminus \Sigma_\infty$ and put $I= J^c$.

This gives us $T= I^c =J$ and $T' = T \sqcup \{\theta_1^1, \theta_3^3 \}$ so $T^1 = \{\theta_1,\theta_2\}$. We also have $\widetilde{\Sigma}^+_{IJ} = \{(\tau_1^2)^c,\tau_1^4, (\tau_2^1)^c, \tau_2^3 ,(\tau_2^4)^c,\tau_3^2 \}$. The full filtrations of $\omega^0_{IJ}$ are then given by 
\begin{align*}
0 \subset \omega^0_{2,\tau_1}(1) \subset (f^*)^{-1}\omega^0_{1,\tau_1}(2) \subset (f^*)^{-1}\omega^0_{1,\tau_1}(3) \subset \omega^0_{2,\tau_1}(4) = \omega^0_{IJ,\tau_1}, \\
0 \subset (f^*)^{-1}\omega^0_{1,\tau_1}(1) \subset (f^*)^{-1}\omega^0_{1,\tau_1}(2) \subset \omega^0_{2,\tau_2}(3) \subset (f^*)^{-1}\omega^0_{1,\tau_2} = \omega^0_{IJ,\tau_2},\\
0 \subset \omega^0_{1,\tau_3}(1) \subset f^*\omega^0_{2,\tau_3}(2) \subset f^*\omega^0_{2,\tau_3}(3) \subset f^*\omega^0_{2,\tau_3}(4) = \omega^0_{IJ,\tau_3}. 
\end{align*}

\subsection{Splitting}\label{splitting}
In this section, we expand on the technique applied in section \ref{partial frobenius}, to construct the Essential Frobenius isogeny, to construct an explicit inverse to the morphism $\widetilde{\Psi}^\prime_{IJ}$ (\ref{Morphism Psi}). We split this into two parts, we first construct an abelian scheme $A_2$, then an abelian scheme $A_1$, and we will show that there is a natural isogeny $f: A_1 \to A_2$. The details of the construction of $A_1$ will be extremely similar to that of $A_2$; we will therefore be more brief in describing its construction. We will make heavy use of essential Frobenius and essential Verschiebung, we thus refer to sections \ref{Fesves} and \ref{unitary hasse} for notation, particularly for the morphisms $F_{\es, \tilde{\beta}}^{\phi^{n}(\tilde{\beta})}$ and $V_{\es,\tilde{\beta}}^{\phi^{-n}(\tilde{\beta})}$; from now, when writing a power $(p^n)$ on the source or target of these morphisms, $n$ will always be the only choice that makes sense.

\subsubsection{Constructing $A_2$}\label{constructing A2}
Keep the notation from the previous section. Let $Y = \widetilde{Y}_{U'}(G'_{\Sigma_{IJ}})_\bF$ and $X= \prod_{\beta \in R} \mathbb{P}^1_Y(\ccH^1_{\tilde{\beta}})$. For any $\beta \in R$ and $n \in \bZ$, we have the natural projection $\pi_\beta: X \to \mathbb{P}^1_Y(\ccH^1_{\tilde{\beta}})$ and we write $\ccO(n)_\beta := \pi_\beta^*\ccO(n)$. For any $\beta \in R$, write $L_{\tilde{\beta}}$ for the kernel of the projection $\ccH^1_{\tilde{\beta}} \to \ccO(1)_\beta$. 

We define $\phi^{\prime \prime}$ to be the shift structure on $\Theta_{F,\fp}$ induced by $\Sigma_{\infty} \sqcup I \cap J$. That is, for any $\beta \in \Theta_{F,\fp}$, $\phi^{\prime \prime}(\beta) = \phi^n(\beta)$, where $n $ is chosen smallest such that $\phi^n(\beta) \notin \Sigma_\infty \sqcup I \cap J$. We define $(\phi^{\prime \prime})^{-1}$ similarly.

Let $A$ be the universal abelian variety over $X$ with its $\ccO_{D_{IJ}}$-action and from now on, for any $\theta \in \widehat{\Theta}_F$, write $\tau = \tilde{\theta}$ and $s^\prime_\beta$ for the counting function with respect to $\widetilde{\Sigma}_\infty$. We now construct a subgroup $M$ of $A[\fq]$. We start by defining a partial Raynaud datum; for each $\theta \in \widehat{\Theta}_{F,\fp}$ we define a sheaf $\mathcal{M}_\theta$, locally free over $X$, as follows: 
\begin{itemize}

\item If $\theta$ is such that $\phi \circ \theta \in T^1$, then set $\mathcal{M}_\theta = 0$.
 
\item Suppose now that $\phi \circ \theta \notin T^1$ and $\beta := (\phi^{ \prime \prime })^{-1}((\phi \circ \theta)^{1}) \notin R$. 
If $\beta = \theta^{e_\fp}$, set $\mathcal{M}_\theta = \omega^0_{\tau^{e_\fp}}$. 

\item Otherwise, if $\beta := (\phi^{ \prime \prime })^{-1}((\phi \circ \theta)^{1}) \notin R$ and $\beta \neq \theta^{e_\fp}$, we have the line bundle $ F_{\textrm{es}, \tilde{\beta}}^{\tau^{e_\fp}}(\ccH^{1 \, \pn }_{\tilde{ \beta}}) \subset \ccH^1_{\tau^{e_\fp}}$. We then set $\mathcal{M}_\theta = \ccH^1_{\tau^{e_\fp}} / F_{\textrm{es}, \tilde{\beta}}^{\tau^{e_\fp}}(\ccH^{1 \, \pn}_{\tilde{ \beta}})$ which is also a line bundle. 

\item If $\beta = (\phi^{ \prime \prime })^{-1}((\phi \circ \theta)^{1}) \in R$, by our abuse of notation described above, we have the short exact sequence 
\[
    0 \to L_{\tilde{\beta}} \to \ccH^1_{\tilde{\beta}} \to \ccO(1)_\beta \to 0
\]

\noindent and the isomorphism $F_{\textrm{es},\tilde{\beta}}^{\tau^{e_\fp}}: \ccH^{1 \, \pn }_{\tilde{\beta}} \xrightarrow{\sim} \ccH^1_{\tau^{e_\fp}}$. We thus set 
\[\mathcal{M}_\theta = F_{\textrm{es},\tilde{\beta}}^{\tau^{e_\fp}} (\ccO(p^n)_\beta) = \ccH^1_{\tau^{e_\fp}} / F_{\textrm{es},\tilde{\beta}}^{\tau^{e_\fp}}(L^{p^n}_{\tilde{\beta}}).\]
\end{itemize}
For each $\theta \in \widehat{\Theta}_{F,\fp}$, define maps $s_\theta: \mathcal{M}_\theta^p \to \mathcal{M}_{\phi \circ \theta}$ as follows: 

\begin{itemize}
\item If either $\phi \circ \theta \in T^1$ or $\phi^2 \circ \theta \in T^1$, then either $\mathcal{M}_\theta = 0$ or $\mathcal{M}_{\phi \circ \theta} = 0$ respectively. We thus set $s_\theta = 0$. 

\item Suppose now that $\phi \circ \theta, \phi^2 \circ \theta \notin T^1$. If $\omega^0_{\phi \circ \tau} \neq 0$, we set $s_\theta =0$. 

\item If $\omega^0_{\phi \circ \tau} = 0$, then $(\phi \circ \theta)^1, \cdots , (\phi \circ \theta)^{e_\fp} \in \Sigma_\infty$ and $\mathcal{M}_{\phi \circ \theta} = \ccH^1_{(\phi \circ \tau)^1} /F_{\es,\tilde{\beta}}^{(\phi \circ \tau)^1}(L^{\prime \, p^{n+1}}_\beta)$ where $\beta = (\phi^{\prime \prime})^{-1}((\phi^2 \circ \theta)^1) = (\phi^{\prime \prime})^{-1}((\phi \circ \theta)^1) $ and $L^\prime_\beta$ depends on whether $\beta \in R$ or not. $F_{\es,(\phi \circ \tau)^1}^{(\phi \circ \tau)^{e_\fp}}: \ccH^1_{(\phi \circ \tau)^1} \to \ccH^1_{(\phi \circ \tau)^{e_\fp}} = \ccH^1_{(\phi \circ \tau)^1}$   is the identity.

\noindent If $\beta = \theta^{e_\fp} \notin R$, then $\mathcal{M}_\theta^{(p)} = \omega^{0 (p)}_{\tau^{e_\fp}} \simeq \ccH^{1}_{(\phi \circ \tau)^1}/ F_{\textrm{es}, (\phi \circ \tau)^1}(\ccH^{1 \, (p)}_{\tau^{e_\fp}}) = \mathcal{M}_{\phi \circ \theta}$  via the short exact sequence 
\[0 \to F_{\textrm{es}, (\phi \circ \tau)^1}(\ccH^{1 \, (p)}_{\tau^{e_\fp}}) \to \ccH^{1}_{(\phi \circ \tau)^1} \xrightarrow{V_{\textrm{es}, (\phi \circ \tau)^1}} \omega^{0 (p)}_{\tau^{e_\fp}} \to 0.\]
We thus set $s_\theta$ to be the inverse of this isomorphism.

\item Otherwise, 
\[\mathcal{M}_{\phi \circ \theta} = 
\ccH^1_{\phi \circ \tau^{e_\fp}} / F_{\textrm{es},\tilde{\beta}}^{(\phi \circ \tau)^{e_\fp}}(L^{ (p^{n+1})}_{\tilde{\beta}}) = 
F_{\textrm{es}, \tau^{e_\fp}}^{(\phi \circ \tau)^{e_\fp}}(\ccH^{1 (p)}_{\tau^{e_\fp}}/F_{\textrm{es},\tilde{\beta}}^{\tau^{e_\fp}}(L^{(p^n)}_{\tilde{\beta}})) = 
F_{\textrm{es}, \tau^{e_\fp}}^{(\phi \circ \tau)^{e_\fp}} (\mathcal{M}_\theta^{(p)}),\]
We thus set $s_\theta = F_{\textrm{es}, \tau^{e_\fp}}^{(\phi \circ \tau)^{e_\fp}}$.
\end{itemize}

\noindent For each $\theta \in \widehat{\Theta}_{F,\fp}$, define maps $t_\theta: \mathcal{M}_{\phi \circ \theta} \to \mathcal{M}_\theta^p$ as follows:

\begin{itemize}
\item  If either $\phi \circ \theta \in T^1$ or $\phi^2 \circ \theta \in T^1$, then either $\mathcal{M}_\theta = 0$ or $\mathcal{M}_{\phi \circ \theta} = 0$. We thus set $t_\theta = 0$. 

\item Suppose now that $\phi \circ \theta, \phi^2 \circ \theta \notin T^1$. If there exists an $i$ such that $s'_{(\phi \circ \tau)^i}=0$, set $t_\theta = 0 $. 

\item Otherwise, set $t_\theta$ to be the morphism induced by the composition $V_{\textrm{es},(\phi \circ \tau)^{e_\fp}}^{\tau^{e_\fp}}: \ccH^1_{(\phi \circ \tau)^{e_\fp}} \to \ccH^{1 (p)}_{\tau^{e_\fp}}$.
\end{itemize}
Note that $V_{\textrm{es},(\phi \circ \tau)^{e_\fp}}^{\tau^{e_\fp}}$, is given by the genuine Verschiebung (followed by the quotient map) if $s'_{\tau^{e_\fp}} \neq 0$, and the composition $\ccH^1_{(\phi \circ \tau)^{e_\fp}} \xrightarrow{\cdot [\varpi_\fp]^{e_\fp-1}} \ccH^1_{(\phi \circ \tau)^1} \xrightarrow{F^{-1}} \ccH^1_{\tau^{e_\fp}}$ otherwise. If $(\phi^{\prime \prime})^{-1}((\phi^2 \circ \theta)^1) = (\phi^{\prime \prime})^{-1}((\phi \circ \theta)^1) =\beta $, then similarly as before $\mathcal{M}_{\phi \circ \theta} = F_{\textrm{es},\tau^{e_\fp}}^{(\phi \circ \tau)^{e_\fp}}(\mathcal{M}_\theta^{(p)})$ so that $V_{\textrm{es},(\phi \circ \tau)^{e_\fp}}^{\tau^{e_\fp}} = (F_{\textrm{es},\tau^{e_\fp}}^{(\phi \circ \tau)^{e_\fp}})^{-1}: \mathcal{M}_{\phi \circ \theta} \to \mathcal{M}_\theta^{(p)}$ is well defined, and an isomorphism. Otherwise, $\mathcal{M}_{\phi \circ \theta} = \omega^0_{\phi \circ \tau} / L'_{\beta^\prime}$ for some $L'_{\beta^\prime} \subset [\varpi_\fp] \ccH^1_{\textrm{dR}}(A/X)^0_{\phi \circ \tau}$. It follows that $V_{\textrm{es},(\phi \circ \tau)^{e_\fp}}^{\tau^{e_\fp}}$ kills $L^\prime_{\beta^\prime}$ and so $t_\theta$ is well defined.

The sheaves $\mathcal{M}_\theta$ come with a natural action of $\ccO_E/\fq$ and it follows by the above constructions that $(\mathcal{M}_\theta,s_\theta,t_\theta)$ is a partial Raynaud datum for $\phi^{-1}(T^1)^c$. We can thus form the group scheme $M^{\prime \prime}/X$ attached to it, as defined in section \ref{partial raynaud}. We recall that $M^{\prime \prime} = \underline{\textrm{Spec}}_S (( \textrm{Sym}_{\ccO_S} \mathcal{M}) / \mathcal{I})$ where $\mathcal{M}= \bigoplus \mathcal{M}_\theta$, $\mathcal{I}$ is the ideal generated by $(s_\theta-1)\mathcal{M}_\theta^p$ and that, by lemma \ref{Raynaud crystal}, there is a canonical isomorphism of crystals $\mathbb{D}(M^{\prime \prime}) \xrightarrow{\sim} \Pi^*\mathcal{M}^{\prime \prime}$ where Frobenius is given by $\Pi^*(s)$ and Verschiebung by $\Pi^*(t)$.

We now show that we can realize $M^\prime = M^{\prime \prime} \otimes_{\ccO_E} \ccO_E^2$ as a subgroup of $A[\fq]$. Since $X$ is smooth, we need to show that we are in the situation of lemma \ref{subgroup sesame}.

Let $\theta \notin T^1$ and $\beta$ be as above. If $\beta$ is such that either $\beta \neq \theta^{e_\fp}$ or $\beta = \theta^{e_\fp} \in R$, then we have the surjection 
\[ \ccH^1_{\textrm{dR}} (A/X)^0_{\phi \circ \tau} \xrightarrow{[\varpi_\fp]^{e_\fp-1}} \ccH^1_{(\phi \circ \tau)^1} \xrightarrow[\sim]{V_{\textrm{es},(\phi \circ \tau)^1}} \ccH^{1 (p)}_{\tau^{e_\fp}} \to (\ccH^1_{\tau^{e_\fp}} / F_{\textrm{es},\tilde{\beta}}^{\tau^{e_\fp}}(L_\beta^{\prime (p^n)}))^{(p)} = \Phi^* \mathcal{M}_\theta, \]
where $L^\prime_\beta$ is $F_{\textrm{es},\phi \circ \tilde{\beta}}(\ccH^1_{\tilde{\beta}})$ if $\beta \notin R$ or $L_{\tilde{\beta}}$ if $\beta \in R$. Since the inverse of $V_{\textrm{es},(\phi \circ \tau)^1}$ is $F_{\textrm{es},(\phi \circ \tau)^1}$, we see that the kernel of this morphism is equal to $[\varpi_\fp]^{1-e_\fp} F_{\textrm{es},\tilde{\beta}}^{(\phi \circ \tau)^1}(L^{\prime (p^{n+1})}_\beta)$.
If $\beta = \theta^{e_\fp} \notin R$ however, then $\mathcal{M}_\theta = \omega^0_{\tau^{e_\fp}} = \omega^0_\tau / \omega^0_\tau(e_\fp-1)$ and we have the surjection 
\[ \ccH^1_{\textrm{dR}} (A/X)^0_{\phi \circ \tau} \xrightarrow{V} (\omega^0_\tau)^{(p)} \twoheadrightarrow (\omega^{0}_{\tau^{e_\fp}})^p = \Phi^*\mathcal{M}_\theta. \]
The kernel of this morphism is $V^{-1}(\omega^0_\tau(e_\fp-1))$, however since \[[\varpi_\fp]^{e_\fp-1}V(V^{-1}(\omega^0_\tau(e_\fp-1))) = [\varpi_\fp]^{e_\fp-1}\omega^0_\tau(e_\fp-1) = 0,\]
we have $V^{-1}(\omega^0_\tau(e_\fp-1)) \subset [\varpi_\fp]^{1-e_\fp}\ker V_{\textrm{es},(\phi \circ \tau)^1} = [\varpi_\fp]^{1-e_\fp} F_{\textrm{es},(\phi \circ \tau)^1}(\ccH^{1 \, (p)}_{\tau^{e_\fp}})$ and this inclusion is an equality by dimension count.

Therefore we obtain a short exact sequence of $\ccO_{D_{IJ}} \otimes \ccO_S$ modules,
\[0 \to \ker \to \ccH^1_{\textrm{dR}}(A/X)^0 \to \Phi^*\mathcal{M} \to 0.\]

\noindent Furthermore, on isotypic components, we have:

\begin{equation}\label{A_2 de rham components}
   \ker_{\phi \circ \tau} =
    \begin{cases}
    \ccH^1_{\textrm{dR}}(A/X)^0_{\phi \circ \tau} & \textrm{if } \phi \circ \tau \in T^1 \textrm{ or } \phi \circ \tau \notin \widehat{\Theta}_{E,\fp},\\
    \ker_{\phi \circ \tau} = [\varpi_\fp]^{1-e_\fp} F_{\textrm{es},\tilde{\beta}}^{(\phi \circ \tau)^1}(\ccH^1_{\tilde{\beta}}) & \textrm{if } \phi \circ \theta \notin T^1, \beta \notin R,\\
    \ker_{\phi \circ \tau} = [\varpi_\fp]^{1-e_\fp} F_{\textrm{es},\tilde{\beta}}^{(\phi \circ \tau)^1}(L_{\tilde{\beta}}) & \textrm{if } \phi \circ \theta \notin T^1, \beta \in R.
    \end{cases}
\end{equation}

We now show that this short exact sequence is compatible with the respective operators $\Phi$ and $V$. If either $\phi \circ \theta\in T^1$ or $\phi^2 \circ \theta \in T^1$, then $\mathcal{M}_\theta =0$ or $\mathcal{M}_{\phi \circ \theta}= 0$ and there is nothing to do. Let $\tau = \tilde{\theta} \in \widehat{\Theta}_{E,\fq}$ be such that $\theta, \phi \circ \theta \notin T^1$, consider the following diagrams:

\begin{center}
\begin{tikzcd}[column sep = large, row sep = large]
\ccH^{1 (p)}_{(\phi \circ \tau)^1} \arrow[r, "{ V_{\textrm{es},(\phi \circ \tau)^1}}"] \arrow[d, "F"'] & \Phi^* (\mathcal{M}^{(p)}_\theta) \arrow[d, "\Phi^*(s_\theta)"] &  & \ccH^{1}_{(\phi^2 \circ \tau)^1} \arrow[d, "V"'] \arrow[r, "{ V_{\textrm{es},(\phi^2 \circ \tau)^1}}"] & \Phi^* \mathcal{M}_{\phi \circ \theta} \arrow[d, "\Phi^*(t_\theta)"] \\
\ccH^{1}_{(\phi^2 \circ \tau)^1} \arrow[r, "{ V_{\textrm{es},(\phi^2 \circ \tau)^1}}"']       & \Phi^* \mathcal{M}_{\phi \circ \theta}                          &  & \ccH^{1 (p)}_{(\phi \circ \tau)^1} \arrow[r, "{ V_{\textrm{es},(\phi \circ \tau)^1}}"']                         & \Phi^* (\mathcal{M}^{(p)}_\theta).                                   
\end{tikzcd}
\end{center}

If $\omega^0_{\phi \circ \tau} \neq 0$, then either $\ccH^{1 (p)}_{(\phi \circ \tau)^1} \subset \omega^{0 (p)}_{\phi \circ \tau}$ so that $F$ kills it 
or there exists $i$, which we take maximal, such that $s'_{(\phi \circ \tau)^i} = 1$. If $i = e_\fp$, then $\mathcal{M}_{\phi \circ \theta} = \omega^0_{(\phi \circ \tau)^{e_\fp}}$, 
otherwise $\mathcal{M}_{\phi \circ \theta} = \ccH^1_{(\phi \circ \tau)^{e_\fp}} / F_{\textrm{es},(\phi \circ \tau)^i}^{(\phi \circ \tau)^{e_\fp}}(\ccH^1_{(\phi \circ \tau)^i})$. 
If $s'_{(\phi \circ \tau)^{e_\fp}} \neq 0$, the composition $\ccH^{1 (p)}_{(\phi \circ \tau)^1} \xrightarrow{F} \ccH^1_{(\phi^2 \circ \tau)^1} \xrightarrow{V_{\textrm{es},(\phi^2 \circ \tau)^1}} \ccH^1_{(\phi \circ \tau)^{e_\fp}} $ is zero. 
Otherwise, by our assumption on $\omega^0_{\phi \circ \tau}$, we may locally take an $\ccO_U[x]/x^{e_\fp}$-linear trivialization of $\ccH^1_{\dR}(A/U)^0_{\phi \circ \tau}$ such that $\omega^0_{\phi \circ \tau} = x^a e_1$ for the appropriate $a$. In this case
\[\begin{split}
V_{\es,(\phi^2 \circ \tau)^1}\circ F(\ccH^1_{(\phi \circ \tau)^1}) & = F^{-1}(F(x^{e_\fp-1}e_2)) / \omega^0_{\phi \circ \tau}\\
& = x^{e_\fp-1}e_2 + \omega^0_{\phi \circ \tau} / \omega^0_{\phi \circ \tau} \\
& = F_{\es,(\phi \circ \tau)^i}^{(\phi \circ \tau)^{e_\fp}}(\ccH^1_{(\phi \circ \tau)^i}).
\end{split}\]
Therefore, we conclude that if $\omega^0_{\phi \circ \tau} \neq 0$, the composition $\ccH^{1 (p)}_{(\phi \circ \tau)^1} \to \Phi^*\mathcal{M}_{\phi \circ \theta}$ is zero and the first diagram commutes, since we set $s_\theta = 0$. 
If $\omega^0_{\phi \circ \tau} = 0$ however, it is straightforward to see that the first diagram commutes by construction of the $s_\theta$, since both $F$ and $V_{\textrm{es},(\phi^2 \circ \tau)^1} = F^{-1}$ are isomorphisms.

If there exists an $i$ such that $s'_{(\phi \circ \tau)^i}=0$, then $\omega^0_{\phi \circ \tau} \subset [\varpi_\fp]\ccH^1_{\textrm{dR}}(A/X)^0_{\phi \circ \tau}$ and so $V(\ccH^1_{(\phi^2 \circ \tau)^1}) = 0$. Therefore, the second diagram commutes since we set $t_\theta = 0$. Suppose now that $s'_{(\phi \circ \tau)^i} \neq 0$ for all $1 \leq i \leq e_\fp$ and consider the diagram:

\begin{center}
\begin{tikzcd}[row sep = large, column sep = large]
    \ccH^{1}_{(\phi^2 \circ \tau)^1} \arrow[d, "V"'] \arrow[r, "{ V_{\textrm{es},(\phi^2 \circ \tau)^1}}"] & \ccH^1_{(\phi \circ \tau)^{e_\fp}} \arrow[r] \arrow[d, "V_{\textrm{es}, (\phi \circ \tau)^{e_\fp}}^{\tau^{e_\fp}}"'] & \mathcal{M}_{\phi \circ \theta} \arrow[d,"t_\theta"] \\
    \ccH^{1 (p)}_{(\phi \circ \tau)^1} \arrow[r, "{ V_{\textrm{es},(\phi \circ \tau)^1}}"'] & \ccH^{1 (p)}_{\tau^{e_\fp}} \arrow[r] & \mathcal{M}^{(p)}_\theta. 
\end{tikzcd}
\end{center}

The second square commutes by definition of $t_\theta$ and the first square commutes since $V_{\textrm{es},(\phi^2 \circ \tau)^1}^{(\phi \circ \tau)^1} = V$ by assumption. We are now precisely in the situation of lemma \ref{subgroup sesame} and we obtain an $\ccO_{D_{IJ}}$-linear embedding $M^\prime \hookrightarrow A[\fq]$.

We now define $A^\prime_2/X$ to be the splice of $A$ by $M^\prime$. That is, we define the $\ccO_{D_{IJ}}$-stable subgroup $M = M^\prime \oplus \lambda^{-1}((A[\fq]/M^\prime)^\vee) \subset A[\fp]$ and 
\[A_2 = A / M, \]
and write $q_2^\prime: A \to A_2$ for the canonical isogeny. Note that, by construction, the kernel $\ker q^\prime_2 \subset A[\fp]$ of $q_2^\prime$ is stable under the action of $\ccO_{D_{IJ}}$, therefore $A_2$ comes equipped with an action of $\ccO_{D_{IJ}}$ such that $q^\prime_2$ is compatible with the actions of $\ccO_{D_{IJ}}$ on both sides. We can therefore set $q_2: A_2 \otimes \fp \to A$ to be the unique isogeny such that $q^\prime_2 \circ q_2 = m_{2,\fp}$, the isogeny induced by multiplication. We now define a prime to $p$ quasi-polarization $\lambda_2 : A_2 \to A_2
^\vee$ via the following commutative diagram:
\begin{center}
\begin{tikzcd}
  0 \arrow[r] & M \oplus \lambda^{-1}((A[\fq]/M)^\vee) \arrow[r] \arrow[d, "n\lambda", "\rotatebox{90}{\(\sim\)}"'] & A \arrow[rr, "q^\prime_2"] \arrow[d,"n\lambda \otimes \varpi_\fp",] && A_2 \arrow[r] \arrow[d,"n\lambda_2"] & 0 \\
  0 \arrow[r] & (M \oplus \lambda^{-1}((A[\fq]/M)^\vee)))^\vee \arrow[r] & (A \otimes \fp^{-1})^\vee \arrow[rr, "(q_2 \otimes \fp^{-1})^\vee"'] && A^\vee_2 \arrow[r] & 0,
\end{tikzcd}
\end{center}
where $n$, prime to $p$ is chosen such that $n\lambda$ is a genuine polarization. It follows that the Rosati involution associated to $\lambda_2$ is compatible with the anti-involution on $\ccO_{D_{IJ}}$.

For any $\theta \in T^1$ we obtain isomorphisms 
\begin{equation}\label{A_2 A splitting de Rham at T1}
\ccH^1_{\textrm{dR}}(A_2/X)_{\tilde{\theta}} \xrightarrow[\sim]{q_2^*} \ccH^1_{\textrm{dR}}(A/X)_{\tilde{\theta}} \, \textrm{ and } \, \ccH^1_{\textrm{dR}}(A_2 \otimes \fp/X)_{\tilde{\theta}^c} \xleftarrow[\sim]{(q_2^\prime)^*} \ccH^1_{\textrm{dR}}(A/X)_{\tilde{\theta}}. 
\end{equation}  If $\theta \notin T^1$, then we have short exact sequences 
\begin{equation}\label{A_2 A splitting away from T1} 0 \to q_2^*\ccH^1_{\textrm{dR}}(A_2/X)_{\tilde{\theta}}^0 \to \ccH^1_{\textrm{dR}}(A/X)_{\tilde{\theta}}^0 \to \mathcal{M}_{\phi^{-1} \circ \tilde{\theta}}^{(p)} \to 0\end{equation}
and \[ 0 \to (q_2^\prime)^*\ccH^1_{\textrm{dR}}(A/X)_{\tilde{\theta}^c}^0 \to \ccH^1_{\textrm{dR}}(A_2 \otimes \fp/X)_{\tilde{\theta}^c}^0 \to \mathcal{M}_{\phi^{-1} \circ \tilde{\theta}^c}^{\vee (p)} \to 0.\]
so that, by (\ref{A_2 de rham components}), we have: 
\begin{equation}\label{fixed a2 de rham relations}
q_2^*\ccH^1_{\textrm{dR}}(A_2/X)_{\tilde{\theta}}^0 =
\begin{cases}
     \ccH^1_{\textrm{dR}}(A/X)_{\tilde{\theta}} & \textrm{ if } \theta \notin \widehat{\Theta}_{F,\fp} \textrm{ or } \theta \in T^1, \\
    [\varpi_\fp]^{1-e_\fp} F_{\textrm{es},\tilde{\beta}}^{\tau^1}(\ccH^1_\beta) & \textrm{ if }  \tau \in \widehat{\Theta}_{E,\fp},  \theta \notin T^1 \textrm{ and } \beta \in R,\\
    [\varpi_\fp]^{1-e_\fp} F_{\textrm{es},\tilde{\beta}}^{\tau^1}(L_{\tilde{\beta}}) & \textrm{ if }  \tau \in \widehat{\Theta}_{E,\fp}, \theta \notin T^1 \textrm{ and } \beta \in R.
\end{cases}
\end{equation}
\subsubsection{Filtering $A_2$}\label{filtering A2}
Fix a geometric point $\overline{s}$ of $S$ and consider the Dieudonn\'{e} modules $\Delta = \mathbb{D}(A_{\overline{s}}[p^\infty])$ and $\Delta_2 = \mathbb{D}(A_{2,\overline{s}}[p^\infty])$. Then, by above, we have for each $\tau = \widetilde{\theta} \in \widehat{\Theta}_{E}$ 
\[q_2^* \Delta^0_{2,\tau} = 
\begin{cases}
    \Delta^0_\tau & \textrm{ if } \theta \notin \widehat{\Theta}_{F,\fp} \textrm{ or } \theta \in T^1, \\
    [\varpi_\fp]^{1-e_\fp} F_{\textrm{es},\tilde{\beta}}^{\tau^1}(\ccH^1_\beta) & \textrm{ if }  \tau \in \widehat{\Theta}_{E,\fp},  \theta \notin T^1 \textrm{ and } \beta \in R,\\
    [\varpi_\fp]^{1-e_\fp} F_{\textrm{es},\tilde{\beta}}^{\tau^1}(L_{\tilde{\beta}}) & \textrm{ if }  \tau \in \widehat{\Theta}_{E,\fp}, \theta \notin T^1 \textrm{ and } \beta \in R.
\end{cases}
\]
Here, by abuse of notation, the two bottom subspaces denote the lift of the subspace of the same name in $H^1_{\textrm{dR}}(A_{\overline{s}})^0_\tau = \Delta^0_\tau / p\Delta^0_\tau$ to $\Delta^0_\tau$.

Via the isomorphism $\omega^{0 \, (p)}_{A_{2,\overline{s}},\tau} \simeq V\Delta_{2,\phi \circ \tau}^0 / p \Delta^0_{2,\tau}$ we have:
\[\begin{split}
\dim \omega^0_{A_{2,\overline{s},\tau}}  & = \dim V q_2^*\Delta^0_{2,\phi \circ \tau} / pq^*_2\Delta^0_{2,\tau}\\
& = \dim V \Delta^{0}_{\phi \circ \tau} / p\Delta^{0}_{\tau} + \dim p \Delta_{\tau} / pq^*_2\Delta^0_{2,\tau} - \dim V \Delta_{\phi \circ \tau} / Vq_2^*\Delta^0_{2,\phi \circ \tau} \\
& = \dim \omega^0_{A_{\overline{s},\tau}} + \dim \Delta_{\tau} / q^*_2\Delta^0_{2,\tau} - \dim \Delta_{\phi \circ \tau} / q_2^*\Delta^0_{2,\phi \circ \tau}
\end{split}\] 

Therefore 
\[\dim \omega^0_{A_{2,\overline{s},\tau}} = 
\begin{cases}
    \dim \omega^0_{A_{\overline{s},\tau}} + 1 & \textrm{ if } \tau \notin T^1, \phi \circ \tau \in T^1\\
    \dim \omega^0_{A_{\overline{s},\tau}} - 1 & \textrm{ if } \tau \in T^1, \phi \circ \tau \notin T^1,\\
    \dim \omega^0_{A_{\overline{s},\tau}} & \textrm{ otherwise}.
\end{cases}\]
Note that, by construction of $\widetilde{\Sigma}_{IJ}$, this is well defined as in the remark after proposition \ref{aij dieudonne calculations}. We deduce that if $\phi \circ \theta \in T^1$, then $q_2^*\omega^0_{2,\tau} = \omega^0_{A,\tau}$ and so, by dimension count, $\omega^0_{2,\tau} = (q_2^*)^{-1}\omega^0_{A,\tau}$. 
If $\phi \circ \theta \notin T^1$, suppose further that $\omega^0_{A,\tau} \neq 0$ ($\omega^0_{A,\tau} \neq 0$ and $\phi \circ \theta \notin T^1$ implies that $\omega^0_{2,\tau} = 0$).Let $i$ be the maximal index such that $s^\prime_{\tau^i} \neq 0$. 
Then in fact, the morphism 
\[\ccH^1_{\textrm{dR}}(A/X)^0_{\phi \circ \tau} \to \ccH^{1 \, (p)}_{\tau^{e_\fp}} \xrightarrow{\sim} \ccH^{1 \, (p)}_{\tau^i},\]
where the last isomorphism is given by the identity if $i = e_\fp$ and $V_{\textrm{es},\tau^{e_\fp}}^{\tau^i}$ otherwise, is in fact given by Verschiebung. 
Suppose that $s^\prime_{\tau^i} = 1$, then set $L^\prime_\tau = \omega^0_{A,\tau}(i-1)$, locally free of rank $s^\prime_\tau(i-1) = s^\prime_\tau(e_\fp)-1$. 
Otherwise, set $L^\prime_\tau$ to be the lift of $F_{\es,\tilde{\beta}}^{\tau^i}(\ccH^1_{\tilde{\beta}}) \subset \omega^0_{A,\tau}/[\varpi_{\fp}]\omega^0_{A,\tau}$ if $\beta \notin R$, 
and $F_{\es,\tilde{\beta}}^{\tau^i}(L_{\tilde{\beta}}) \subset \omega^0_{A,\tau}/[\varpi_{\fp}]\omega^0_{A,\tau}$ 
if $\beta \in R$, where $\beta$ and $L_{\tilde{\beta}}$ are as in section \ref{constructing A2}. 
It is in any case locally free of rank $s^\prime_\tau(i) -1 = s^\prime_{\tau}(e_\fp)-1$. 

Then, by above, ignoring $p$-twists, we have $q_2^* \ccH^1_{\dR}(A_2/X)^0_{\phi \circ \tau} = V^{-1}(L^\prime_\tau)$. 
Furthermore, a case by case analysis shows that $V(L^\prime_\tau) \subset L^\prime_{\phi^{-1}\circ \tau}$. For example, if $L^\prime_\tau = \omega^0_{A,\tau}(e_\fp-1)$ and $L^\prime_{\phi^{-1}\circ \tau} =\omega^0_{A,\phi^{-1}\circ \tau}(e_\fp-1)$, this follows from the fact that that $\omega^0_{A,\tau}(e_\fp-1) \subset [\varpi_\fp]\ccH^1_{\dR}(A/X)^0_{A,\tau}$, hence $V\omega^0_{A,\tau}(e_\fp-1) \subset [\varpi_\fp]\omega^0_{A,\phi^{-1}\circ \tau} \subset \omega^0_{A,\phi^{-1}\circ \tau}(e_\fp-1)$. In particular, we conclude that $(q_2^*)^{-1}(L^\prime_\tau) \subset \ccH^1_{\dR}(A_2/X)^0_{\tau}$ is a subbundle of the same rank as $\omega^0_{2,\tau}$. Since $q_2^* \ccH^1_{\dR}(A_2/X)^0_{\phi \circ \tau} = V^{-1}(L^\prime_\tau)$, it thus follows by dimension count that $q_2^*\omega^0_{2,\tau} = L^\prime_\tau$.

We can now define a filtration on $\omega^0_{2,\tau}$ for each  $\theta \in \widehat{\Theta}_F$ as follows: If $\tau \notin \widehat{\Theta}_{E,\fp}$, then $q_2^*: \omega^0_{2,\tau} \xrightarrow{\sim} \omega^0_\tau$. For each $1 \leq i \leq e_\fp$, we then set 
\[\omega^0_{2,\tau}(i) = (q_2^*)^{-1} \omega^0_\tau(i).\]

Suppose that $s_{\tau^i} \neq 1$ for all $1 \leq i \leq e_\fp$. If $\omega^0_{2,\tau} = 0$, then we are done. Otherwise, $\omega^0_{A,\tau} = [\varpi_\fp]^{e_\fp-d} \ccH^1_{\dR}(A/X)^0_\tau$ for some $d > 0$. Then 
\[q^*_2([\varpi_\fp]^{e_\fp-d} \ccH^1_{\dR}(A/X)^0_\tau) = [\varpi_\fp]^{1-d} F_{\es,\tilde{\beta}}^{\tau^1}(\ccH^{1 \, (p^n)}_{\tilde{\beta}}) = L^\prime_\tau, \]
from which we deduce that $\omega^0_{2,\tau} = [\varpi_\fp]^{e_\fp-d} \ccH^1_{\dR}(A/X)^0_\tau)$. The filtration is forced, and more importantly, well-defined.

Consider now $\tau = \tilde{\theta} \in \widehat{\Theta}_{E,\fp}$ such that $s_{\tau^i}=1$ for some $i$. For any such $i$, set
\begin{equation}\label{filtering a2 definition} \omega^0_{2,\tau}(i) = 
\begin{cases}
    (q_2^*)^{-1}\omega^0_\tau(i) & \textrm{ if } \phi^\prime(\theta^i) \in T, \\
    (q_2^*)^{-1}\omega^0_\tau(i-1) & \textrm{ if } \theta^i \notin T^\prime.
\end{cases}
\end{equation}

First note that $\omega^0_{2,\tau}(i) \subset \omega^0_{2,\tau}$ by the discussion above, and in particular $\omega^0_{2,\tau}(e_\fp) = \omega^0_{2,\tau}$ if $s_{\tau^{e_\fp}}=1$. Secondly, note that if $\theta \in T^1$, then $q_2^*$ is injective and $\omega^0_{2,\tau}(i)$ is locally free of dimension $s^\prime_\tau(i) = s_\tau(i)$ in the first case and dimension $s^\prime_\tau(i-1) = s_\tau(i)$ in the second. Similarly, if $\theta \notin T^1$, then $q_2^*$ has locally free kernel of dimension one so, as before, $\omega^0_\tau(i)$ is locally free of dimension $s_\tau(i)$. Thirdly, if $\theta^i$ is such that $\theta^i \notin T$ and $\phi^\prime(\theta^i) \in T$, then $s^\prime_{\tau^i} = 0$ and $\omega^0_{\tau}(i-1) = \omega^0_\tau(i)$ so that $\omega^0_{2,\tau}(i)$ is well defined.

\noindent Consider now the $\theta^i$ which are not covered by the above definition: That is $\theta^i \in T^\prime$ but $\phi^\prime(\theta^i) \notin T$. In this case, we have $s^\prime_{\tau^i}=2$ and by definition $\omega^0_{2,\tau}(i-1) = (q_2^*)^{-1}\omega^0_\tau(i-1)$. If $\theta^i \in R$ and we have the line $L_{\tau^i} \subset \ccH^1_{\tau^i}$ whose lift to $\omega^0_\tau$ we, by abuse of notation, we also denote $L_{\tau^i}$. We then set
\[\omega^0_{2,\tau}(i) = (q_2^*)^{-1}L^0_{\tau^i}.\]  
If $\theta^i \in \Sigma_\infty$, then define $\omega^0_{2,\tau}(i)$ appropriately in terms of $\omega^0_{2,\tau}(i-1)$. If $\theta^i \in I \cap J$ we inductively define 
\[\omega^0_{2,\tau}(i) = [\varpi_\fp]^{-1}\omega^0_{2,\tau}(i-2).\]

We now show that this yields a well defined filtration. Let $i$ be minimal such that $s_{\tau^i} = 1$. If $\theta^i \in T$, then $\theta \in T^1$, $q^*_2$ is an injection and $\omega^0_{2,\tau}(i-1) = (q_2^*)^{-1}\omega_{A,\tau}(i-1) = [\varpi_\fp]^{e_\fp-d}\ccH^1_{\dR}(A_2/X)^0_\tau$ for the appropriate $d$. The filtration is thus well defined for $j \leq i$ and we see that in any case $\omega^0_{2,\tau}(i-1) \subset \omega^0_{2,\tau}(i)$ with quotient a line bundle. Similarly, if $\theta^i \notin T$, then $q_2^*$ has one dimensional kernel and  $[\varpi_\fp]^{e_\fp-d}\ccH^1_{\dR}(A_2/X)^0_\tau \subset (q_2^*)^{-1}\omega_{A,\tau}(i-1) = \omega^0_{2,\tau}(i)$ with quotient a line bundle, the filtration is well defined for $j \leq i$. A similar analysis shows that the filtration is well defined for $j \geq i$ where we now take $i$ to be maximal such that $s_{\tau^i}=1$. 

Suppose now that $i$ is such that $s_{\tau^i}=1$, $1 \leq j < i$ is the largest such that $s_{\tau^j}=1$ and the filtration is well defined for $k \leq j$. If both $\phi^\prime(\theta^j) = \theta^i$ and $\phi^\prime(\theta^i)$ are in $T$, then it is clear that the filtration is well defined for $j \leq k \leq i$. If $\theta^i \in T$ but $\phi^\prime(\theta^i) \notin T$, then $\omega^0_{2,\tau}(j) = (q_2^*)^{-1}\omega^0_{A,\tau}(j)$. If $\theta^i \in R$, then 
\[\omega^0_{2,\tau}(i) = (q^*_2)^{-1}L_{\tau^i} \subset (q^*_2)^{-1}\omega^0_{A,\tau}(i - 1) = [\varpi_\fp]^{-d}\omega^0_{2,\tau}(j), \]
for the appropriate $d$. We conclude as before. If $\theta^i \in I \cap J$ however, then $\omega_{2,\tau} = [\varpi_\fp]^{-d}\omega^0_{2,\tau}(j-1)$ and we conclude as before. If $\theta^i \notin T$ but $\theta^j \in T$, then 
\[ (q^*_2)^{-1}\omega^0_{A,\tau}(i - 1) = \omega^0_{\tau}(j-1) \subset \omega^0_{2,\tau}(j) \subset [\varpi_\fp]^{-1}\omega^0_{\tau}(j-1) \subset [\varpi_\fp]^{-d}\omega^0_{\tau}(j-1) = \omega^0_{\tau}(i).
\]
We conclude as before, Finally, if both $\theta^i,\phi^\prime(\theta^i) \notin T$, then 
\[q_2^*: \omega^0_{2,\tau}(i)/\omega^0_{2,\tau}(j) \xrightarrow{\sim} \omega^0_{2,\tau}(i-1)/\omega^0_{2,\tau}(j-1).\]
If we write $\omega^0_{A,\tau}(i-1) = [\varpi_\fp]^{-d}\omega^0_{A,\tau}(j)$, then $\omega^0_{A,\tau}(i-1)/[\varpi_\fp]^{-d}\omega^0_{A,\tau}(j-1)$ is also line bundle. We conclude that the filtrations are well defined.
\subsubsection{Constructing $A_1$}

 We now construct a subgroup $N \subset A$ similarly to how we constructed $M$. The construction will essentially be the (anti-) dual construction of $M$. From now on, for $\theta \in \widehat{\Theta}_{F,\fp}$, write $\tau = \tilde{\theta}^c$. We stress that, unlike usual, we pick the conjugate lift. 
 
 If $\theta$ is such that $\phi \circ \theta \notin T^1$, set $\mathcal{N}_\theta = 0$. Suppose then that $\phi \circ \tau \in T^1$ and let $\beta : = (\phi^{\prime \prime})^{-1}((\phi \circ \theta)^1)$. Suppose that $\beta \notin R$. If $\beta = \theta^{e_\fp}$, set $\mathcal{N}_\theta = \omega^0_{\tau^{e_\fp}}$. Otherwise, set $\mathcal{N}_\theta = \ccH^1_{\tau^{e_\fp}} / F_{\textrm{es},\tilde{\beta}^c}^{\tau^{e_\fp}}(\ccH^{1 (p^n)}_{\tilde{\beta}^c})$, where $n$ is suitably chosen. Suppose now that $\beta \in R$. Then as before we have the line bundle $L_{\tilde{\beta}} \subset \ccH^1_{\tilde{\beta}}$ and we set $L_{\tilde{\beta}^c} \subset \ccH^1_{\tilde{\beta}^c}$ to be the orthogonal complement of $L_{\tilde{\beta}}$ under the perfect pairing induced by $\lambda$. Set
 \[\mathcal{N}_\theta = \ccH^1_{\tau^{e_\fp}}/F_{\textrm{es},\tilde{\beta}^c}^{\tau^{e_\fp}}(L_{\tilde{\beta}^c}^{p^n}),\]

 \noindent where again $n$ is suitably chosen. 

 We can then define morphisms $s_\theta: \mathcal{N}^{(p)}_\theta \to \mathcal{N}_{\phi \circ \theta}$ and $t_\theta: \mathcal{N}_{\phi} \to \mathcal{N}^{(p)}_\theta$ entirely analogously to the case of $M$ and thus obtain a partial Raynaud datum $(\mathcal{N}_\theta,s_\theta,t_\theta)$. We thus let $N^{\prime \prime}$ be the associated group scheme and set $\mathcal{N} = \bigoplus_\theta \mathcal{N}_\theta$ and obtain a surjection
 \[0 \to \ker \to \ccH^1_{\dR}(A/S)^0 \to \Phi^*\mathcal{N} \to 0\]
 which is compatible with the $\ccO_{D_{IJ}}$-action and Frobenius and Verschiebung. We conclude again by lemma \ref{subgroup sesame} that we have an $\ccO_{D_{IJ}}$-linear embedding $N^\prime = N^{\prime \prime} \otimes \ccO_E^2 \hookrightarrow A[\fq^c]$. We thus set
 \[N = \lambda^{-1}((A[\fq^c]/N^\prime)^\vee)) \oplus [\varpi_{\fq^c}]^{-1}N^\prime \subset A[\fq] \oplus A[(\fq^c)^2].\]

 Note that $N$ does not depend on the choice of uniformizer $\varpi_{\fq^c}$. It is clear that $M[\fq^c] \subset A[\fq^c] \subset N[\fq^c]$, where $M$ denotes the kernel of $q_2: A \to A_2$, and at the level of crystals $\mathbb{D}(N^\prime)_{\tilde{\theta}^c} = 0$ if $\theta \notin T^1$ and $\mathbb{D}(M[\fq])_{\tilde{\theta}} = 0$ if $\theta \in T^1$ so that we have a surjection of crystals 
 \[\mathbb{D}(N[\fq]) \twoheadrightarrow \mathbb{D}(M[\fq])\]
 and thus an inclusion $M \subset N$. Writing $ A_1^\prime := A/N$ we obtain the commutative diagram:

\begin{center}
\begin{tikzcd}
 A  \arrow[r,"q_2"] \arrow[d,"\iota_{\fq^c}"] & A_2 \arrow[d,"g"] \\
 A \otimes (\fq^c)^{-1} \arrow[r,"q_1^\prime"] & A_1^\prime.
\end{tikzcd}
\end{center}

Every isogeny in this diagram has kernel killed by $[\varpi_\fp]$ and 
\[\textrm{rk} (\ker g )[\fq] = \textrm{rk} (\ker g)[\fq^c] = p^{2f_\fp}. \]

\noindent We thus set $A_1 = A^\prime_1 \otimes \fp$, so that $A^\prime_1 = A_1 \otimes \fp^{-1}$ and we let $f:A_1 \to A_2$ be the unique isogeny such that $g \circ f = \iota_{1, \fp}: A_1 \to A_1 \otimes \fp^{-1}.$ Similarly, we write $q_1: A_1 \to A \otimes (\fq^c)^{-1}$ for the unique isogeny such that $q^\prime_1 \circ q_1 = \iota_{1,\fp}$. We thus obtain the following commutative diagram:

\begin{equation}\label{splitting commutative diagram}
\begin{tikzcd}
  A_2 \otimes \fp \arrow[r, "q^\prime_2"] \arrow[d, "g \otimes \fp"'] & A \arrow[r, "q_2"] \arrow[d,"\iota_{\fq^c}"] & A_2 \arrow[d,"g"] \\
  A_1 \arrow[r, "q_1" ] & A \otimes (\fq^c)^{-1} \arrow[r,"q^\prime_1"] & A_1 \otimes \fp^{-1}.
\end{tikzcd}
\end{equation}

\noindent We note that under the isomorphism $\iota_{\fq^c}:A[\mathfrak{Q}^\infty] \xrightarrow{\sim} A[\mathfrak{Q}^\infty]\otimes \fq^c$, where $\mathfrak{Q} = \prod_{\fp \vert p} \fq$, the composite 
\[A_1[\mathfrak{Q}^\infty] \xrightarrow{q_1} A[\mathfrak{Q}^\infty] \xrightarrow{q_2} A_2[\mathfrak{Q}^\infty]\] is simply given by $f$, by the commutativity of the above diagram. 

Similarly, we have the commutative diagram:
\[\begin{tikzcd}
  A_1 \arrow[r,"f"] \arrow[rd,"\iota_{1,\fp}"'] & A_2 \arrow[d,"g"] \arrow[r,"q^\prime_2 \otimes \fp^{-1}"] & A \otimes \fp^{-1}, \\
& A_1 \otimes \fp^{-1} \arrow[ru,"q_1 \otimes \fp^{-1}"'] & 
\end{tikzcd}\]

from which we deduce that $A_1[\fp] \subset \ker \, ((q^\prime_2 \otimes \fp^{-1}) \circ f)$. Therefore 
\[f^\vee \lambda_2 ((\ker g)[\fq]) = f^\vee \lambda_2 ( q_2 (M[\fq])) = f^\vee (q^\prime_2 \otimes \fp^{-1})^\vee ((A[\fq^c]/M^\prime)^\vee \otimes \fp) = 0,\]
where the second equality comes from the compatibility of polarizations $\lambda$ and $\lambda_2$ and the definition of $M$, and the last equality comes from the above diagram. In particular, $\ker g$ is totally isotropic with respect to the Weil pairing and we can define a prime to $p$ quasi-polarization $\lambda^\prime_1$ on $A_1 \otimes \fp^{-1}$ via the diagram:

\[
\begin{tikzcd}
    0 \arrow[r] & \ker g \arrow[r] \arrow[d, "n\lambda_2 \otimes \varpi_\fp", "\sim"'{anchor = south, rotate =90}] & A_2 \arrow[r,"g"] \arrow[d, "n\lambda_2 \otimes \varpi_\fp"]& A_1 \otimes \fp^{-1} \arrow[r] \arrow[d,"n\lambda^\prime_1"]& 0 \\
    0 \arrow[r] & (\ker f)^\vee \otimes \fp \arrow[r] & A_2^\vee \otimes \fp  \arrow[r, "(f \otimes \fp^{-1})^\vee"] & A^\vee_1 \otimes \fp \arrow[r] & 0,
\end{tikzcd}
\]
where $n$ is a prime to $p$ integer such that $n\lambda_2$ is a genuine polarization. To obtain a polarization on $A_1$, write $\varpi_\fp^{e_\fp} = p \alpha$ for $\alpha \in \ccO_F$ and let $m$ be the minimal positive integer such that $m \alpha^{-1} \in \ccO_F$. Then $m$ is prime to $p$, $m\varpi_\fp^{-1} \in \fp^{-1}$ and $m\varpi_\fp^{-2}\fp \subset \fp^{-1}$. We define the polarization $mn\lambda_1$ via 
\[A_1 = A^\prime_1 \otimes \fp \xrightarrow{n\lambda^\prime_1 \otimes m\varpi_\fp^{-2} \cdot} (A^{\prime \vee}_1 \otimes_{\ccO_E} \fp^{-1}) \xrightarrow{\sim} A^\vee_1,\]
where $m\varpi_\fp^{-2} \cdot$ denotes the multiplication by $m\varpi_\fp^{-2}$ map. Since the middle map has degree prime to $p$ and $m$ and $n$ are also prime to $p$, then $\lambda_1$ is a prime to $p$ $\ccO_D$-antilinear quasi-polarization. Consider now the the diagram

\begin{center}
\begin{tikzcd}[row sep = large]
A^\prime_1 \otimes \fp \arrow[r,"\sim"] & A_1 \arrow[r,"f"] \arrow[d,"mn \lambda_1 \otimes \varpi_\fp"] & A_2 \arrow[r, "g"] \arrow[d, "mn\lambda_2"] & A_1^\prime \arrow[d, "n \lambda^\prime_1 \otimes m\varpi_\fp^{-1}"] \\
A^{\prime \vee}_1 \arrow[r,"\sim"] & A_1^\vee \otimes \fp \arrow[r, "g^\vee"] & A^\vee_2 \arrow[r, "f^\vee"] & A_1^{\prime \vee} \otimes \fp^{-1}.
\end{tikzcd}
\end{center}
Then the right hand square commutes by construction of $\lambda_1^\prime$ and the outside rectangle commutes by construction of $\lambda_1$. Therefore the left hand square commutes.

Finally, arguing as in section \ref{constructing A2} and using the compatibility of Essential Frobenius and Essential Verschiebung under duality, we have for each $\tau = \tilde{\theta}$: 

\begin{equation}\label{fixed A1 inside A}
\iota_{\fq^c}^* q_1^{\prime *}\ccH^1_{\dR}(A_1/X)^0_\tau = 
\begin{cases}
    [\varpi_\fp]\ccH^1_{\dR}(A/X)^0_\tau & \textrm{ if } \theta \notin \widehat{\Theta}_{F,\fp} \textrm{ or } \theta \notin T^1, \\
    [\varpi_\fp]^{1-e_\fp} F_{\textrm{es},\tilde{\beta}}^{\tau^1}(\ccH^1_{\tilde{\beta}}) & \textrm{ if }  \theta \in T^1 \textrm{ and } \beta \notin R,\\
    [\varpi_\fp]^{1-e_\fp} F_{\textrm{es},\tilde{\beta}}^{\tau^1}(L^{p^n}_{\tilde{\beta}}) & \textrm{ if }  \theta \in T^1 \textrm{ and } \beta \in R.
\end{cases}
\end{equation}

\subsubsection{Filtering $A_1$}\label{filtering A1}
Fix a geometric point $\overline{s}$ of $S$ and consider the Dieudonn\'{e} modules $\Delta = \mathbb{D}(A_{\overline{s}}[p^\infty])$ and $\Delta^\prime_1 = \mathbb{D}(A^\prime_{1,\overline{s}}[p^\infty])$. For each $\tau =\tilde{\theta}$, we obtain by above: 
\begin{equation}\label{splitting A1 inside A}
\iota_{\fq^c}^* q_1^{\prime *} \Delta^{\prime 0}_{1,\tau} = 
\begin{cases}
    [\varpi_\fp]\Delta^0_\tau & \textrm{ if } \theta \notin \widehat{\Theta}_{F,\fp} \textrm{ or } \theta \notin T^1, \\
    [\varpi_\fp]^{1-e_\fp} F_{\textrm{es},\tilde{\beta}}^{\tau^1}(\ccH^1_{\tilde{\beta}}) & \textrm{ if }  \theta \in T^1 \textrm{ and } \beta \notin R,\\
    [\varpi_\fp]^{1-e_\fp} F_{\textrm{es},\tilde{\beta}}^{\tau^1}(L^{p^n}_{\tilde{\beta}}) & \textrm{ if }  \theta \in T^1 \textrm{ and } \beta \in R.
\end{cases}
\end{equation}
Here, by abuse of notation, the last two subspaces denote the lift of the subspace of the same name in $H^1_{\textrm{dR}}(A_{\overline{s}})^0_\tau = \Delta^0_\tau / p\Delta^0_\tau$ to $\Delta^0_\tau$.

Proceeding as in the case of $A_2$, we obtain 
\[ \dim \omega^0_{1,\tau} = \dim \omega^0_{A^\prime_1,\tau} = \dim \omega^0_{2,\tau}\]
for each $\tau \in \widehat{\Theta}_{E}$.

A case by case analysis now shows that for each  $\tau = \tilde{\theta} \in \widehat{\Theta}_{E,\fp}$ and $\beta = (\phi^\prime)^{-1}((\phi \circ \theta)^1)$, 
\begin{equation}\label{A1 A2 spliting de rham compatibilities}
g^*\ccH^1_\dR(A'_1/X)^0_{\phi \circ \tau} =
\begin{cases}
   [\varpi_\fp]^{1-e_\fp} F_{\textrm{es},\tilde{\beta}}^{(\phi \circ \tau)^1}(\ccH^1_{2,\tilde{\beta}}) & \textrm{ if } \beta \in J,\\
    [\varpi_\fp]^{1-e_\fp} \ker q_2^* \vert \, \ccH^1_{2,(\phi \circ \tau)^1} & \textrm{ if } \beta \in I, \beta \notin R, \\
    [\varpi_\fp]^{1-e_\fp} F_{\textrm{es},\tilde{\beta}}^{(\phi \circ \tau)^1}(L^{p^n}_{\tilde{\beta}})  & \textrm{ if } \beta \in I, \beta \in R,
\end{cases}
\end{equation}

\noindent where $L_{\tilde{\beta}} \subset \ccH^1_{2,\tilde{\beta}}$ via the isomorphism $q_2^*: \ccH^1_{2,\tilde{\beta}} \xrightarrow{\sim} \ccH^1_{\tilde{\beta}}$.

\noindent We now filter $\omega^0_{1,\tau}$.

Let $\tau = \tilde{\theta}$ for $\theta \in \widehat{\Theta}_{F,\fp}$ and suppose that $\omega^0_{1,\tau} \neq 0$ (so $\omega^0_{2,\tau} \neq 0$). If there exists an $i$ such that $s_\tau^i = 2$, then $\ccH^1_{2,\tau^1} \subset \omega^0_{2,\tau}(j)$ for any $j \geq i$ and both $\ker f^* \subset \omega^0_{2,\tau}(j)$ and $\ker g^* \subset \omega^0_{A_1 \otimes \fp^{-1},\tau}$. Therefore, if $\theta^1, \cdots \theta^{e_\fp} \in \Sigma_\infty$ and $\omega^0_{2,\tau} = [\varpi_\fp]^{e_\fp-d} \ccH^1_{\dR}(A/S)^0_\tau$, we have, removing the powers of $\fp$ for clarity, 
\[ [\varpi_\fp]^d \omega^0_{1,\tau} = f^*([\varpi_\fp]^{d-1}g^*\omega^0_{1,\tau}) = f^* \ker f^* = 0.
\]
Suppose that there is an $i$ such that $\theta^i \in I$ and pick it to be minimal. If $\theta \notin T^1$, then $\ker f^* = \ker q^*_2 \subset \omega^0_{2,\tau}(i)$ by construction; it is a line bundle. Suppose then that $\theta \in T^1$ and, by above, suppose that also that $s_{\tau^j} \neq 2$ for all $j \leq i$. If there is no $j < i $ with $s_{\tau^j} = 1$ then, by construction, we have $\omega^0_{2,\tau}(i) = \ker f^* = g^*\ccH^1_{A_1 \otimes \fp^{-1},\tau^1}$. Suppose then that there exists a maximal $j <i$ with $s_{\tau^j} = 1$, then $\theta^j \in T$. If $\theta^i \notin T$, then $\ccH^1_{2,\tau^1} \subset \omega^0_{2,\tau}(i)$ and we conclude. So finally suppose that $\theta^i \in T$, and so $\theta^i \in I \cap J$ and $\theta^{i-1} \in R$ so $\ccH^1_{2,\tau^1} \subset \omega^0_{2,\tau}(i)$ again by construction.

In conclusion, we see that if $\theta^i \in I$, $f^* \omega^0_{2,\tau}(i)$ has dimension $s_{\tau}(i) - 1 = s_\tau(i-1)$.

Suppose that there is an $i$ such that $\theta^i \in J$. Then $\omega^0_{A_1 \otimes \fp^{-1},\tau} \neq 0$ and $\omega^0_{2,\tau} \neq 0$. Furthermore, using the description of $g^*\ccH^1_{\textrm{dR}}(A_1 \otimes \fp^{-1}/X)^0_\tau$, we see that $g^*\omega^0_{A_1 \otimes \fp^{-1},\tau}$ has codimension one in $\omega^0_{2,\tau}$. In particular, if $\theta^i \in J$ where $i$ is maximal among the $j$ such that $s_{\tau^j} = 1$, then $g^*\omega^0_{A_1 \otimes \fp^{-1},\tau} = [\varpi_\fp]^{-d}\omega^0_{2,\tau}(i-1)$ for the appropriate $d$.

We now proceed to filter $\omega^0_{A_1}$. If $\tau = \widetilde{\theta}$ for $\theta \notin \widehat{\Theta}_{F,\fp}$, then $q_1^*$ is an isomorphism and we set 
\[ \omega^0_{1,\tau}(i) = q_1^*\omega^0_\tau(i) = q_1^*q_2^*\omega^0_{2,\tau}(i) = f^*\omega^0_{2,\tau}(i).\]

If $\tau = \widetilde{\theta}$ for $\theta \in \widehat{\Theta}_{F,\fp}$ and $i$ such that $s_{\tau^i} = 1$, set :
\[\omega^0_{1,\tau}(i) = 
\begin{cases}
f^*\omega^0_{2,\tau}(i+1) & \textrm{ if } i < e_\fp, \theta^{i+1} \in I, \\
(g^*)^{-1}\omega^0_{2,\tau}(i-1) & \textrm{ if }  \theta^{i} \in J.
\end{cases}
\]
By the discussion above, these subsheaves are locally free of rank $s_{\tau}(i)$. Suppose that $\omega^0_{1,\tau}(i)$ is defined twice. That is, $\theta^i \in J$ and $\theta^{i+1} \in I$. Then these definitions are equal if and only if \[g^*f^*\omega^0_{2,\tau}(i+1) = [\varpi_\fp]\omega^0_{2,\tau}(i+1) = \omega^0_{2,\tau}(i-1).\]
A case by case analysis, on the inclusions of $\theta^i$ and $\theta^{i+1}$ in $T$ shows that this is the case. Therefore $\omega^0_{1,\tau}(i)$ is well defined. 

Consider now the $i$ such that $s_{\tau^i} = 1$ but for which $\omega^0_{1,\tau}(i)$ has not been defined. The first class is the case that $i$ is maximal such that $s_{\tau^j} = 1$ but $\theta^i \notin J$. Then we set, by abuse of notation, 
\[\omega^0_{1,\tau}(i) = V_{\es,\tau^{e_\fp}}^{\tau^i}( \omega^0_{1,\tau}).\] 
The second class is the $i$ such that $\theta^i \notin J$, $\theta^{i+1} \in \Sigma_\infty$ but $\phi^\prime(\theta^i) = \theta^j \in I$. We set, again by abuse of notation, 
\[ \omega^0_{1,\tau}(i) = V_{\es,\tau^{j}}^{\tau^i} (f^*\omega^0_{2,\tau}(j)).\]
The third and last case are the $i$ such that $\theta^i \notin J$ and $\phi^\prime(\theta^i) = \theta^j \notin I$. Then such a $\theta^i$ satisfies $\theta^i \in R \cap I$ and we have the line bundle 
\[ L_{\tau^i} \subset \ccH^1_{\tau^i} \xleftarrow[\sim]{q_2^*} \ccH^1_{2,\tau^{i+1}}.\]
We thus set 
\[\omega^0_{1,\tau}(i) = f^*( (q^*_2)^{-1} L_{\tau^i}). \]

One can check that this extends to a full filtration of type $\widetilde{\Sigma}_\infty$ on $\omega^0_1$ and that, by construction $f$ respects the filtrations, hence so does $g$ by lemma \ref{respect filtrations}, and 
\[ f^* \omega^0_{2,\tau^i} = 0 \textrm{ if } \theta^i \in I \quad \textrm{and} \quad g^* \omega^0_{A_1 \otimes \fp^{-1},\tau^i} = 0 \textrm{ if } \theta^i \in J. \]
\subsubsection{The morphism to Iwahori level}
Recall the notation from section \ref{the splice morphism} For an open compact subgroup $U^\prime_{IJ} \subset G^\prime_{IJ}(\mathbb{A}_f)$, we let $U^\prime$ be the corresponding open compact subgroup of $G^\prime_\Sigma(\mathbb{A}_f)$ under the isomorphism induced by $\xi^{-1}$. We suppose that $U^\prime_{IJ}$ is sufficiently small such that $U^\prime_{IJ}$ and $U^\prime$ are sufficiently small in the usual sense. Since $U^\prime_{IJ}$ is of level prime to $p$, so is $U^\prime$.

We now define the extra algebraic structures such that the $\underline{A_i}$ define points of $\widetilde{Y}^\prime_{U^\prime}(G^\prime_\Sigma)_\bF$. Let $\iota_i$ be the action of $\ccO_D$ on $A_i$ obtained by composing the action of $\ccO_{D_{IJ}}$ with $\vartheta^{-1}$. Recall that we had set the isomorphism $\ccO_{D_{IJ}} \otimes \bF_p \xrightarrow{\sim} M_2(\ccO_E/p\ccO_E)$ to be $\xi \circ \alpha$, where $\alpha: \ccO_D \otimes \bF_p \xrightarrow{\sim} M_2(\ccO_E/p\ccO_E)$. Similarly as before, we thus obtain $\ccO_E/p\ccO_E$-linear isomorphisms, by composing with $\xi^{-1}(h_p)$,
\[\omega^0_{i, \Sigma_{IJ}} \xrightarrow{\sim} \omega^0_{i, \Sigma},\]
where we recall that the sheaf on the left hand side is the reduction obtained considering the $\ccO_{D_{IJ}}$-action and the right hand side the reduction obtained by considering the $\ccO_D$-action. By sections \ref{filtering A2} and \ref{filtering A1},
we can thus equip $\omega^0_i$ with a Pappas-Rapoport filtration of type $\widetilde{\Sigma}_\infty$. It also follows, via the compatibility between $\vartheta^{-1}$ and the anti-involutions on $\ccO_{D}$ and $\ccO_{D_{IJ}}$, that the Rosati involution induced by $\lambda_i$ is compatible with the anti-involution $*$ on $\ccO_D$. If $i = 2$, we define $\eta_{2,j}$ as the composition 
\[ \widehat{\ccO}^{(p)}_D \xrightarrow{(h^{(p)} \cdot)^{-1}} \widehat{\ccO}^{(p)}_D \xrightarrow{(\hat{\xi}_E^{(p)})^{-1}} \widehat{\ccO}^{(p)}_{D_{IJ}}  \xrightarrow{\eta_2, \overline{s}_i} \widehat{T}^{(p)}(A_{\overline{s}_j}) \xrightarrow{q_2} \widehat{T}^{(p)}(A_{2,\overline{s}_j})\]
for each $\overline{s}_j$ and set $\epsilon_2 = (\varepsilon^{(p)})^{-1}\epsilon$. The fact that $(\eta_2, \epsilon_2)$ is a $U^\prime$-level structure on $(A_2,\iota_2,\lambda_2)$ follows from the relations between $\vartheta,\xi,h$ and $\varepsilon$.
Similarly, we set the level structure on $A_1$ by taking $\eta_{1,i}$ to be the composite $f^{-1} \circ \eta_{2,i}$ and $\epsilon_{1,i} = \varpi_\fp^{-1}\epsilon_{2,i}$. 

In conclusion, the tuples $(A_i, \iota_i, \lambda_i, (\eta_i,\epsilon_i), \underline{\omega^0_i})$ define points of $\widetilde{Y}^\prime_{U^\prime}(G^\prime_\Sigma)$ and the tuple $(\underline{A_1},\underline{A_2},f)$ defines a point of $\widetilde{Y}^\prime_{U^\prime_0(\fp)}(G^\prime_\Sigma)$. The compatibility of $f$ with the filtrations follows from the compatibility on $\omega^0_{i, \Sigma_{IJ}}$ and in particular, we find that $(\underline{A_1},\underline{A_2},f)$ defines a point of the stratum $S^\prime_{\phi^\prime(I),J}$. Writing $S^\prime_{IJ} = \widetilde{Y}^\prime_{U^\prime_{IJ}}(G^\prime_{IJ})_\bF$ and $\mathcal{V}_{\tilde{\beta}} = \ccH^1_{\tilde{\beta}}$, we thus obtain a morphism 
\[\widetilde{\Xi}^\prime_{IJ} : \prod_{\beta \in R} \mathbb{P}_{S^\prime_{IJ}}(\mathcal{V}_{\tilde{\beta}}) \to S^{\prime}_{\phi^\prime(I),J}.\]

\subsection{Isomorphisms}

We now state and prove the main result of this thesis. Keep the notation from sections \ref{splicing} and \ref{splitting}.

\subsubsection{The Unitary case}
\begin{thm}\label{unitary iwahori unquotiented thm}
$\widetilde{\Psi}^\prime_{IJ}$ and $\widetilde{\Xi}^\prime_{IJ}$ are inverses of each other and are therefore isomorphisms. 
\end{thm}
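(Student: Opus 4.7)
The strategy is to verify that both compositions $\widetilde{\Xi}^\prime_{IJ} \circ \widetilde{\Psi}^\prime_{IJ}$ and $\widetilde{\Psi}^\prime_{IJ} \circ \widetilde{\Xi}^\prime_{IJ}$ agree with the identity on universal objects, working throughout at the level of Dieudonn\'{e} crystals. Since both source and target are smooth --- the stratum $S^\prime_{\phi^\prime(I),J}$ by corollary \ref{corollary strata dim smooth}, and the $\mathbb{P}^1$-bundles tautologically --- I can invoke the full faithfulness of the Dieudonn\'{e} functor on smooth bases, together with the crystallization lemma \ref{crystallization} and the local-check lemma \ref{local check}, in order to upgrade pointwise identifications of subcrystals to identifications of coherent sheaves and of (sub)abelian schemes.

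For $\widetilde{\Xi}^\prime_{IJ} \circ \widetilde{\Psi}^\prime_{IJ} = \id$, I would start with the universal $(\underline{A_1}, \underline{A_2}, f)$ over $S^\prime_{\phi^\prime(I),J}$. Splicing produces $A_{IJ}$ together with line bundles $L_{\tilde{\beta}} \subset \ccH^1_{IJ,\tilde{\beta}}$ for $\beta \in R$, which by corollary \ref{local de rham iso} correspond to $\omega^0_{2,\tilde{\beta}}$ when $\beta \in T$ and to $\omega^0_{1,\tilde{\beta}}$ when $\beta \notin T$. Feeding $(A_{IJ}, \{L_{\tilde{\beta}}\})$ into the splitting construction yields a partial Raynaud subgroup $M^\prime \subset A_{IJ}[\fq]$. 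The crux is to verify $M^\prime = \ker(\psi_2)[\fq]$; by lemma \ref{local check} this reduces to comparing the corresponding Dieudonn\'{e} crystals on $S^\prime_{\phi^\prime(I),J}$ itself, which follows by matching proposition \ref{aij dieudonne calculations} against the description (\ref{fixed a2 de rham relations}) component by component. The conjugate piece $M \cap A_{IJ}[\fq^c]$ is then forced by isotropy under the polarization, which is consistent on both sides by construction. Hence $A_{IJ}/M \cong A_2$, and the analogous argument on $A[\fq^c]$ recovers $A_1$; the isogeny $f$ is then recovered via the commutativity of the diagrams (\ref{splice diagram}) and (\ref{splitting commutative diagram}).

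For the reverse composition $\widetilde{\Psi}^\prime_{IJ} \circ \widetilde{\Xi}^\prime_{IJ} = \id$, I would start with the universal $A$ over $X = \prod_{\beta \in R} \mathbb{P}^1_{S^\prime_{IJ}}(\mathcal{V}_{\tilde{\beta}})$ together with its lines $L_{\tilde{\beta}}$. Splitting produces $(A_1, A_2, f)$, and splicing then builds a candidate $A_{IJ}^{\prime\prime}$. The crystallization lemma \ref{crystallization} applied to the chain $A_2 \otimes \fp \xleftarrow{q_2^\prime} A \xrightarrow{q_2} A_2$ (and similarly for $A_1$) then identifies $A_{IJ}^{\prime\prime}$ with $A$ canonically and $\ccO_D$-linearly; this identification is essentially built into the splitting recipe, since $A_2$ was defined as $A/M$ precisely so that the natural projection $A \to A_2$ factors through any candidate splice. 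Once this identification is fixed, direct inspection against the defining surjections of section \ref{constructing A2} shows that the line bundles recovered by $\widetilde{\Psi}^\prime_{IJ}$ are exactly the $L_{\tilde{\beta}}$ we started with.

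The main obstacle will be the careful bookkeeping of all auxiliary structures --- the polarizations, level structures, the algebraic isomorphisms $\xi, \vartheta, h$, and most delicately the Pappas--Rapoport filtrations on $\omega^0_i$ and $\omega^0_{IJ}$ --- through both compositions. The filtrations of sections \ref{filtering A2} and \ref{filtering A1} are defined by a case analysis according as indices lie in $T$, $I \cap J$, $R$, or $\Sigma_\infty$, with several indices pinned down only inductively via $[\varpi_\fp]$. Verifying that these recipes undo each other requires repeatedly using the stratum relation $f^* \circ g^* = [\varpi_\fp]$ together with the duality formula (\ref{dual formula}) on conjugate components, and systematic use of the essential Frobenius and Verschiebung isomorphisms of section \ref{Fesves} to move between components. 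The polarizations and level structures, while numerous, are then essentially formal to track since both constructions are manifestly compatible with them by design.
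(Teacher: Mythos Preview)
Your overall strategy---compare Dieudonn\'{e} modules component by component, invoke full faithfulness on smooth bases, then track the auxiliary structures---is correct and matches the paper's approach. However, you have the two directions reversed in difficulty, and you underestimate the harder one.

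The paper does $\widetilde{\Psi}^\prime_{IJ} \circ \widetilde{\Xi}^\prime_{IJ} = \id$ on $Y$ first; this is the more direct direction, and your sketch for \emph{that} composition (your second paragraph) is roughly right, though the relevant comparison is between the images of $q_2^{\prime *}$ and $\pi_2^*$ inside $\ccH^1_{\dR}(A_2 \otimes \fp/Y)^0$, not quite via the crystallization chain you wrote.

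The genuine gap is in $\widetilde{\Xi}^\prime_{IJ} \circ \widetilde{\Psi}^\prime_{IJ} = \id$ on $X$, which the paper calls ``slightly trickier.'' You propose to verify $M^\prime = \ker(\psi_2)[\fq]$ by matching proposition \ref{aij dieudonne calculations} against (\ref{fixed a2 de rham relations}) componentwise. But these two descriptions live in different languages: proposition \ref{aij dieudonne calculations} expresses $\bD(A_{IJ})$ in terms of $\bD(A_1), \bD(A_2)$, whereas (\ref{fixed a2 de rham relations}) expresses $q_2^* \ccH^1_{\dR}(A_2^\prime)$ in terms of the essential Frobenius operators $F_{\es,\tilde{\beta}}^{\tau^1}$ \emph{of $A_{IJ}$}, which are defined using the $\widetilde{\Sigma}_{IJ}$-type filtration, not the $\widetilde{\Sigma}$-type filtrations of $A_1, A_2$. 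Translating $F_{\es}$ on $A_{IJ}$-sheaves back into the $f^*, g^*$ language requires a nontrivial intermediate step: the paper establishes what it effectively calls ``communication lemmas'' (equations (\ref{communication lemma 1}) and (\ref{communication lemma 2})), which show, for $\theta^i$ at a boundary of $T$, that $F_{\es,\phi(\tau^i)}$ carries $\psi_2^* g^* \ccH^1_{A_1 \otimes \fp^{-1}, \tau^i}$ to $(\pi_1^*)^{-1} f^* \ccH^1_{2,\phi(\tau^i)}$ and conversely. Proving these involves a separate application of the crystallization lemma to the pair $g^*$ and $F_{\es,\fp}^*$, plus a careful analysis at $i = e_\fp$. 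Without this translation layer, your ``component by component'' matching cannot be carried out, and the same lemmas are needed again when checking compatibility of filtrations in the $\theta^i \in I \cap J$ case.
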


\begin{proof}
From now on, write $X= S^\prime_{\phi^\prime(I),J}$ and $Y = \prod_{\beta \in R} \mathbb{P}^1_{S_{IJ}}(\mathcal{V}_{\tilde{\beta}})$. We start by showing that $\widetilde{\Psi}^\prime_{IJ} \circ \widetilde{\Xi}^\prime_{IJ}$ is the identity. Let $(\underline{A},\{ L_{\tilde{\beta}} \}_{\beta \in R})$ be the universal tuple over $Y$ and $(\underline{A_1}, \underline{A_2},f)$ denote its splitting. Finally, write $(\underline{A^\prime},\{ \ccL_\beta \}_{\beta \in R})$ for the splice of $(\underline{A_1}, \underline{A_2},f)$. We recall that $L_{\tilde{\beta}}$ denotes the kernel of the projection to $\ccO(1)_\beta$ and $\ccL_{\tilde{\beta}} \subset \ccH^1_{A^\prime,\tilde{\beta}} \simeq \ccH^1_{j,\tilde{\beta}}$, for the appropriate $j$, is given by $\omega^0_{j,\tilde{\beta}}$. We want to show that $(\underline{A},\{ L_{\tilde{\beta}} \}_{\beta \in R})$ and $(\underline{A^\prime},\{ \ccL_\beta \}_{\beta \in R})$ are isomorphic.  Recall from diagrams \ref{splice diagram} and \ref{splitting commutative diagram} that we have the commutative diagrams 
\begin{equation}\label{big proof diagram 1}
\begin{tikzcd}
    A_2 \otimes \fp \arrow[r,"q^\prime_2"] \arrow[d,"g\otimes \fp"'] & A \arrow[r,"q_2"] \arrow[d,"\iota_{\fq^c}"] & A_2 \arrow[d,"g"] \\
    A_1 \arrow[r,"q_1"] & A \otimes (\fq^c)^{-1} \arrow[r,"q^\prime_1"] & A_1 \otimes \fp^{-1}
\end{tikzcd}
\end{equation}

\noindent and 
\begin{equation}\label{big proof diagram 2}
\begin{tikzcd}
    A_2 \otimes \fp \arrow[r,"\pi_2"] \arrow[d, "g \otimes \fp"'] & A^\prime \arrow[r,"\psi_2"] \arrow[d,"\iota_{\fq^c}"] & A_2 \arrow[d,"g"] \\
    A_1 \arrow[r,"\pi_1"] & A^\prime \otimes (\fq^c)^{-1} \arrow[r,"\psi_1"] & A_1 \otimes \fp^{-1},
\end{tikzcd}
\end{equation}
where the first diagram is a diagram of abelian schemes with $\ccO_{D_{IJ}}$-action and the second of abelian schemes with $\ccO_D$-action. The action of $\ccO_D$ on $A_i$ is given by composing the natural $\ccO_{D_{IJ}}$-action on $A_i$ induced by $A$ with the isomorphism $\vartheta^{-1}$, and similarly the $\ccO_{D_{IJ}}$-action on $A^\prime$ is given by composing the natural $\ccO_D$-action induced by $A_2 \otimes \fp$ with the isomorphism $\vartheta$. Therefore the quasi-isogeny \[A \xleftarrow{q^\prime_2} A_2 \otimes \fp \xrightarrow{ \pi_2} A^\prime\]
is compatible with the $\ccO_{D_{IJ}}$-actions. Furthermore, considering the reductions of the de Rham cohomology of $A_2 \otimes \fp$ and $A^\prime$ with respect to both the $\ccO_D$-module structure and $\ccO_{D_{IJ}}$-module structure, we have the composition:
\[
\ccH^1_{\dR}(A^\prime / Y)^{0_{\Sigma_{IJ}}} \xleftarrow[\sim]{\cdot h_p^{-1}} \ccH^1_{\dR}(A^\prime / Y)^{0_{\Sigma}} \xrightarrow{\pi_2^*} \ccH^1_{\dR}(A_2 \otimes \fp / Y)^{0_\Sigma} \xleftarrow[\sim]{\xi^{-1}(h_p)}  \ccH^1_{\dR}(A_2 \otimes \fp / Y)^{0_{\Sigma_{IJ}}}.
\]
Therefore, we can restrict the above morphisms to reduced components (with respect to $\ccO_{D_{IJ}}$)
\[ \ccH^1_{\dR}(A / Y)^0 \xrightarrow{q^{\prime *}_2} \ccH^1_{\dR}(A_2 \otimes \fp / Y)^{0} \xleftarrow{\pi_2^*} \ccH^1_{\dR}(A^\prime / Y)^0, \]
compatibly with the filtrations. The case for $A_1$ is similar and we thus from now on consider all our abelian schemes as coming with an $\ccO_{D_{IJ}}$-action.

We first start by showing that $\underline{A}$ and $\underline{A^\prime}$ are isomorphic. Since both $A$ and $A^\prime$ admit isogenies, $q^\prime_2$ and $\pi_2$ respectively, from $A_2 \otimes \fp$, they are thus isomorphic if we can show that $\ker q^\prime_2 = \ker \pi_2$. Since both are contained in $A[\fp]$ and are totally isotropic with respect to the Weil pairing induced by $\lambda_2^\prime$, it suffices to show that we have we have equality on $\fq$-components. Furthermore, since $Y$ is smooth, this is equivalent to, by the full faithfulness of the crystalline Dieudonn\'{e} functor, showing that the morphisms of crystals 
\[\mathbb{D}(A[\fq]) \xrightarrow{q^{\prime *}_2} \mathbb{D}((A_2 \otimes \fp)[\fq]) \xleftarrow{ \pi^*_2} \mathbb{D}(A^\prime[\fq])\]
have the same image. In particular, it suffices to show that this holds over $Y$. Recall that for any abelian scheme $B/Y$, we have a canonical isomorphism $\mathbb{D}(B[p])_Y = \ccH^1_{\dR}(B/Y)$, compatibly with endomorphisms and polarizations. Therefore, we are reduced to showing that for each $\tau = \tilde{\theta} \in \widehat{\Theta}_{E,\fp}$, the morphisms 
\[ \ccH^1_{\dR}(A/Y)^0_\tau \xrightarrow{q^{\prime *}_2} \ccH^1_{\dR}(A_2 \otimes \fp/Y)^0_\tau \xleftarrow{ \pi^*_2} \ccH^1_{\dR}(A^\prime/Y)^0_\tau\]
have the same image.

If $\theta \in T^1$, then by corollary \ref{sheaves iso} and by \ref{fixed a2 de rham relations}, both of the above morphisms have image $[\varpi_\fp]\ccH^1_{\dR}(A_2 \otimes \fp/X)^0_\tau$. 

If $\theta \notin T^1$ however, then $\pi_2^*$ has, again by corollary \ref{sheaves iso}, image $(g \otimes \fp)^*\ccH^1_{\dR}(A_1/Y)^0_\tau$. Similarly, by equation \ref{fixed A1 inside A}, $\iota_{\fq^c}^* q^{\prime *}_1\ccH^1_{\dR}(A_1 \otimes \fp^{-1}/Y)^0_\tau = [\varpi_\fp]\ccH^1_{\dR}(A/Y)^0_\tau$; the commutativity of diagram \ref{big proof diagram 1} then shows that $q^{\prime *}_2$ also has image $(g \otimes \fp)^*\ccH^1_{\dR}(A_1/Y)^0_\tau$. Therefore, we obtain an isomorphism  
\[q : A \to A^\prime,\]
which, after unravelling definitions, we see is compatible with polarizations and level structures. $q$ is also compatible with the diagrams. We now show that $\omega^0_{A}$ and $\omega^0_{A^\prime}$ have the same filtrations.

Fix $\theta \in \widehat{\Theta}_F$ and let $\tau = \tilde{\theta}$. If $\theta \notin \widehat{\Theta}_{F,\fp}$, then $q^*$, along with with every other isogeny in the above diagrams, induces an isomorphism on the $\tau$-components of the Hodge bundles and $q^*$ respects the filtrations by definition. Suppose now that $\theta \in \widehat{\Theta}_{F,\fp}$. If $\theta^1, \cdots, \theta^{e_\fp} \in \Sigma_{IJ,\infty}$, we have nothing to do. Therefore, let $i$ be such that $\theta^i \notin \Sigma_{IJ,\infty}$. If $\theta^i \in T$, then, by construction of $\Sigma_{IJ}$, we must have $\phi^\prime(\theta^i) \in T$ and by definitions \ref{Aij filtering} and \ref{filtering a2 definition}:
\[q^*\omega^0_{A^\prime,\tau}(i) = q^*(\psi_2^*\omega^0_{2,\tau}(i)) = q_2^*\omega^0_{2,\tau}(i)= q_2^*((q_2^*)^{-1}\omega^0_{A,\tau}(i))=\omega^0_{A,\tau}(i).\]
Similarly, if $\theta^i \notin T$, then $\phi^\prime(\theta^i) \notin T$ and 
\[q^*\omega^0_{A^\prime,\tau}(i) = q^*(\pi_1^*)^{-1}\omega^0_{1,\tau}(i)) = q^*(\pi_1^*)^{-1}f^*\omega^0_{2,\tau}(i+1) = q_2^*\omega^0_{2,\tau}(i+1) = \omega^0_{A,\tau}(i).\]

Finally, we then see that for $\theta^i \in R$, $q^*: \ccH^1_{A^\prime,\tau^i} \xrightarrow{\sim} \ccH^1_{A,\tau^i}$ and 
\[q^* \ccL_{\tau^i} = q^*\psi_2^*\omega^0_{2,\tau^i} = q_2^* \omega^0_{2,\tau^i} = L_{\tau^i},\]
\noindent if $\theta^i \in T$, and 
\[q^* \ccL_{\tau^i} = q^*(\pi_1^*)^{-1}\omega^0_{1,\tau^i} = (q_1^*)^{-1} \omega^0_{1,\tau^i} = L_{\tau^i},\]
\noindent if $\theta^i \notin T$. Therefore the tuples $(\underline{A},\{ L_{\tilde{\beta}} \}_{\beta \in R})$ and $(\underline{A^\prime},\{ \ccL_{\tilde{\beta}} \}_{\beta \in R})$ are isomorphic and $\widetilde{\Psi}^\prime_{IJ} \circ \widetilde{\Xi}^\prime_{IJ}$ is the identity.

We now show that $ \widetilde{\Xi}^\prime_{IJ} \circ \widetilde{\Psi}^\prime_{IJ}$ is the identity; this direction is slightly trickier. Write $(A_1,A_2,f,g)$ for the universal tuple over $X$, $(\underline{A},\{ \ccL_\beta \}_{\beta \in R})$ its splice and $(A^\prime_1,A^\prime_2,f^\prime,g^\prime)$ its splitting. Again, we want to show that the tuples $(A_1,A_2,f,g)$ and $(A^\prime_1,A^\prime_2,f^\prime,g^\prime)$ are isomorphic. We have this time the commutative diagrams
\begin{equation}\label{final diagram 1}
\begin{tikzcd}
    A_2 \otimes \fp \arrow[r,"\pi_2"] \arrow[d, "g \otimes \fp"'] & A \arrow[r,"\psi_2"] \arrow[d,"\iota_{\fq^c}"] & A_2 \arrow[d,"g"] \\
    A_1 \arrow[r,"\pi_1"] & A \otimes (\fq^c)^{-1} \arrow[r,"\psi_1"] & A_1 \otimes \fp^{-1}
\end{tikzcd}
\end{equation}
\noindent and 
\begin{equation}\label{final diagram 2}
\begin{tikzcd}
    A^\prime_2 \otimes \fp \arrow[r,"q^\prime_2"] \arrow[d,"g^\prime\otimes \fp"'] & A \arrow[r,"q_2"] \arrow[d,"\iota_{\fq^c}"] & A^\prime_2 \arrow[d,"g^\prime"] \\
    A^\prime_1 \arrow[r,"q_1"] & A \otimes (\fq^c)^{-1} \arrow[r,"q^\prime_1"] & A^\prime_1 \otimes \fp^{-1}.
\end{tikzcd}
\end{equation}

\noindent As before, the first diagram is a diagram of abelian schemes with $\ccO_D$-action and the second is a diagram of abelian schemes with $\ccO_{D_{IJ}}$-action. The compatibility between $\ccO_{D_{IJ}}$ and $\ccO_D$-action is dealt with as above; we thus consider every abelian scheme here as coming with an $\ccO_D$-action.

Before we continue let us discuss certain compatibilities between vector bundles \label{essential communication}:

Suppose that $\theta^i \in T$ and $\phi^\prime(\theta^i) \notin T$. Then $s^\prime_{\tau^i}=2$ and we have isomorphisms:
\[\ccH^1_{2,\tau^i} \xrightarrow[\sim]{\psi_2^*} \ccH^1_{\tau^i} \xrightarrow[\sim]{F_{\textrm{es},\phi(\tau^i)}} \ccH^1_{\phi(\tau^i)} \xrightarrow[\sim]{\pi_1^*} \ccH^1_{1,\phi(\tau^i)}.\]
Suppose further that $i < e_\fp$, then $\phi(\tau^i) = \tau^{i+1}$ and $F_{\textrm{es},\phi(\tau^i)}$ is given by $[\varpi_\fp]^{-1}$. Since $\tau^{i+1} \in I$ and $\tau^i \in J$, $[\varpi_\fp] \omega^0_{2,\tau}(i+1) = \omega^0_{2,\tau}(i-1)$ and we have an isomorphism $[\varpi_\fp]^{-1}: \ccH^1_{2,\tau^i} \xrightarrow{\sim} \ccH^1_{2,\tau^{i+2}}$. We note that if $i = e_\fp-1$ we set $\ccH^1_{2,\tau^{e_\fp+1}}:= [\varpi_\fp]^{-1}\omega^0_{2,\tau} / \omega^0_{2,\tau}$. Indeed, in this situation, $\omega^0_{2,\tau}$ is killed by $[\varpi_\fp]^{e_\fp-1}$ and we can apply lemma \ref{filtration torsion} so that $\ccH^1_{2,\tau^{e_\fp+1}}$ is locally free of rank 2. Recalling that $\pi_1^* \circ \psi_2^* = f^*$, we thus obtain the following commutative diagram:

\begin{center}
\begin{tikzcd}
{\ccH^1_{IJ,\tau^i}} \arrow[rr, "{[\varpi_\fp]^{-1}}","\sim"']  &                                           & {\ccH^1_{IJ,\tau^{i+1}}} \arrow[d, "\pi_1^*","\sim"' {rotate=90, anchor=south}] \\
{\ccH^1_{2,\tau^i}} \arrow[dr, "{[\varpi_\fp]^{-1}}"',"\sim"{rotate=-45, anchor=south}] \arrow[u, "\psi_2^*","\sim"' {rotate=90, anchor=north}]                        & & {\ccH^1_{1,\tau^{i+1}}} \\
& {\ccH^1_{2,\tau^{i+2}}} \arrow[ur, "f^*"',"\sim" {rotate=45, anchor=south}] 
\end{tikzcd}
\end{center}

If $i = e_\fp$, then $\phi(\tau^{e_\fp}) = (\phi \circ \tau)^1$ and $F_{\textrm{es},\phi(\tau^i)}$ is given by $(V')^{-1}$ as defined in section \ref{Fesves}. Since $\theta^{e_\fp} \in J$, $g^*\omega^0_{A_1\otimes \fp^{-1},\tau^{e_\fp}}=0$ and $g^*\ccH^1_{A_1\otimes \fp^{-1},(\phi \circ \tau)^1} = F_{\textrm{es},(\phi \circ \tau)^1}(\ccH^{1 \, (p)}_{2,\tau^{e_\fp}})$ so that $g^* \ccH^1_{\dR}(A_1 \otimes \fp^{-1} /X)_{\phi \circ \tau} = F_\fp^* \ccH^1_{\dR}(A_2^{(\fp)} \otimes \fp^{-1} /X)_{\phi \circ \tau}$ where we recall that $F_{\es,\fp} : A_2 \to A^{(\fp)}_2$ is the partial Frobenius isogeny at $\fp$ defined in section \ref{partial frobenius}. We can thus apply the crystallization lemma \ref{crystallization} to obtain a commutative diagram 
\[ 
\begin{tikzcd}
{\ccH^{1 \, (p)}_{2,\tau^{e_\fp}}}  \arrow[rd, "F"'] \arrow[rr, "\sim"] &   & {\ccH^1_{1,(\phi \circ \tau)^1}} \arrow[ld, "g^*"]     \\
& {\ccH^1_{2,(\phi \circ \tau)^1}}. &
\end{tikzcd}
\]
\noindent Note that we have identified $\ccH^1_{1,(\phi \circ \tau)^1}$ with $\ccH^1_{A_1 \otimes \fp^{-1},(\phi \circ \tau)^1}$ in the diagram. Consider now the extension of the previous diagram:
\begin{center}
\begin{tikzcd}
{\ccH^{1 \, (p)}_{IJ,\tau^{e_\fp}}} \arrow[rr, "(V')^{-1}"]    &   & {\ccH^1_{IJ,(\phi \circ \tau)^1}} \arrow[d, "\pi_1^*"] \\
{\ccH^{1 \, (p)}_{2,\tau^{e_\fp}}} \arrow[u, "\psi_2^{* (p)}"] \arrow[rd, "F"'] \arrow[rr, "\sim"] &   & {\ccH^1_{1,(\phi \circ \tau)^1}} \arrow[ld, "g^*"]     \\
& {\ccH^1_{2,(\phi \circ \tau)^1}}. &                                                       
\end{tikzcd}
\end{center}

\noindent The composition $ F \circ (\psi_2^*)^{-1} \circ V^\prime \circ (\pi_1^*)^{-1}$  has the same image and kernel as $g^*$, that is $F(\ccH^{1 \, (p)}_{2,\tau^{e_\fp}})$ and $f^*\ccH^1_{2,(\phi \circ \tau)^1}$ respectively, so we conclude that the outer diagram commutes (up to isomorphism) and thus so does the top rectangle.

Finally, suppose that $\theta^i$ is as above and we are now given 
\[\psi_2^*g^* \ccH^1_{A_1 \otimes \fp^{-1},\tau^i} \subset  \ccH^1_{IJ,\tau^i}  \xleftarrow[\sim]{\psi_2^*} \ccH^1_{2,\tau^i}.\]
We then conclude from the commutativity the above diagrams that 
\begin{equation}\label{communication lemma 1}
F_{\es,\phi(\tau^i)}(\psi_2^*g^* \ccH^{1\, (p^\delta)}_{A_1 \otimes \fp^{-1},\tau^i}) = (\pi_1^*)^{-1}(f^*\ccH^1_{2,\phi(\tau^i)}) \subset \ccH^1_{IJ, \phi(\tau^i)} \xrightarrow[\sim]{\pi_1^*} \ccH^1_{1, \phi(\tau^i)},
\end{equation}
\noindent where $\delta = 1$ if $i = e_\fp$ and 0 otherwise. Note that $F_{\es,\phi(\tau^i)}$ here is taken with respect to $\widetilde{\Sigma}_{IJ,\infty}$, not $\widetilde{\Sigma}_{\infty}$.

A similar analysis shows that if $\theta^i \notin T$, $\phi^\prime(\theta^i) \in T$ and we are now given 
\[
(\pi_1^*)^{-1}(f^* \ccH^1_{2,\tau^i}) \subset \ccH^1_{IJ,\tau^i} \xrightarrow[\sim]{\pi_1^*} \ccH^1_{1,\tau^i},\]
We then have
\begin{equation}\label{communication lemma 2}
F_{\es,\phi(\tau^i)}(\pi_1^*)^{-1}(f^* \ccH^{1\, (p^\delta)}_{2,\tau^i})) = \psi_2^* g^* \ccH^1_{A_1 \otimes \fp^{-1},\phi(\tau^i)} \subset \ccH^1_{IJ, \phi(\tau^i)} \xleftarrow[\sim]{\psi_2^*} \ccH^1_{1, \phi(\tau^i)},
\end{equation}
\noindent where $\delta = 1$ if $i = e_\fp$ and 0 otherwise.

Back to the proof now, we start by showing that $A_2$ and $A^\prime_2$ are isomorphic. To show this, we reduce, as before, to showing that for each $\tau = \tilde{\theta} \in \widehat{\Theta}_{E,\fp}$, the morphisms 
\[\ccH^1_{\dR}(A_2/X)^0_\tau \xrightarrow{\psi^*_2} \ccH^1_{\dR}(A/X)^0_\tau \xleftarrow{q^*_2} \ccH^1_{\dR}(A^\prime_2/X)^0_\tau\]
have the same image.

\noindent If $\theta \in T^1$, then by corollary \ref{sheaves iso} and \ref{A_2 A splitting de Rham at T1}, both maps are isomorphisms so have the same image. Suppose now that $\theta \notin T^1$. Then, again by corollary \ref{sheaves iso}, 

\[\pi_1^* \circ (\iota_{\fq^c}^*)^{-1}: \ccH^1_{\dR}(A/X)^0_\tau \xrightarrow{\sim} \ccH^1_{\dR}(A_1/X)^0_\tau\]

is an isomorphism, and under this isomorphism:
\begin{equation}\label{image of A2 in A} 
\psi_2^* \ccH^1_{\dR}(A_2/X)^0_\tau  = (\pi_1^*)^{-1}(f^*\ccH^1_{\dR}(A_2/X)^0_\tau).
\end{equation}

\noindent Furthermore, setting $\beta = (\phi^{\prime \prime})^{-1}(\theta^1)$, we also have, by equation \ref{fixed a2 de rham relations}:
\begin{equation}\label{image of A'_2 in A}
q_2^* \ccH^1_{\dR}(A^\prime_2/X)^0_\tau = 
\begin{cases}
[\varpi_\fp]^{1-e_\fp} F_{\es,\tilde{\beta}}^{\tau^1} (\ccH^1_{\tilde{\beta}}) & \textrm{ if } \beta \notin R \\
[\varpi_\fp]^{1-e_\fp} F_{\es,\tilde{\beta}}^{\tau^1} (\ccL_{\tilde{\beta}}) & \textrm{ if } \beta \in R.
\end{cases}
\end{equation}
where we have removed the powers of $p$-twists for legibility. Let us proceed case by case and again ignore writing down powers of $p$-twists: Suppose that $\beta \notin R$ and $\beta \in T$. Then by corollary \ref{sheaves iso}, we have isomorphisms $\psi_2^* : \ccH^1_{2,\tilde{\beta}} \xrightarrow{\sim} \ccH^1_{\tilde{\beta}}$ and $\psi_2^* : \ccH^1_{2,\phi(\tilde{\beta})} \xrightarrow{\sim} \ccH^1_{\phi(\tilde{\beta})}$, so that 
\[F_{\es,\phi(\tilde{\beta})}(\ccH^1_{\tilde{\beta}}) = \psi_2^*( F_{\es,\phi(\tilde{\beta})}(\ccH^1_{2,\tilde{\beta}})) = \psi_2^* g^* \ccH^1_{A_1 \otimes \fp^{-1} , \phi(\tilde{\beta})}, \]
since $\beta \in T$ implies $\beta \in J$. Similarly, if $\beta \notin R$ and $\beta \notin T$, then 
\[F_{\es,\phi(\tilde{\beta})}(\ccH^1_{\tilde{\beta}}) = (\pi_1^*)^{-1}( F_{\es,\phi(\tilde{\beta})}(\ccH^1_{1,\tilde{\beta}})) = (\pi_1^*)^{-1} (f^* \ccH^1_{2 , \phi(\tilde{\beta})}). \]
If $\beta \in R$, $\beta \in T$, then $\phi^\prime(\beta) \notin T$ and we have the line bundle 
\[\ccL_\beta = \psi_2^* \omega^0_{2,\tilde{\beta}} = \psi_2^* \ker f^* = \psi_2^* g^* \ccH^1_{A_1 \otimes \fp^{-1},\tilde{\beta}}.\]
By the previous discussion 
\[F_{\es,\phi(\tilde{\beta})}(\ccL_{\tilde{\beta}}) = (\pi_1^*)^{-1} (f^* \ccH^1_{2 , \phi(\tilde{\beta})}). \]
Similarly, if $\beta \in R$, $\beta \notin T$, then 
\[F_{\es,\phi(\tilde{\beta})}(\ccL_{\tilde{\beta}}) = \psi_2^* g^* \ccH^1_{A_1 \otimes \fp^{-1} , \phi(\tilde{\beta})}.\] 
If $\phi(\beta) =\theta^1$, then we deduce that from the equations \ref{image of A2 in A} and \ref{image of A'_2 in A} and the above that $\psi_2^*$ and $q_2^*$ have the same image.

If, however, $\phi(\beta) \in \Sigma_\infty$, then $F_{\es,\phi^2(\tilde{\beta})}$, defined for all three abelian varieties, is an isomorphism compatible with the isomorphisms $q_2^*$ and $\pi_1^*$ respectively. Applying it, we obtain the same formulae as above replacing $\phi(\beta)$ by $\phi^2(\beta)$. Finally, if $\phi(\beta) \in I \cap J$ so that $\phi ( \beta ) \in \Sigma_{IJ,\infty}$, by the equations \ref{communication lemma 1} and \ref{communication lemma 2} from the above discussion, we can similarly push the formulae to $\phi^2(\tilde{\beta})$ components. Repeatedly applying these operations until we reach $\theta^1$, we deduce that, writing $\ccF_\beta = \ccH^1_{\tilde{\beta}}$ if $\beta \notin R$, or $\ccF_\beta = \ccL_{\tilde{\beta}}$ if $\beta \in R$:
\[ q_2^* \ccH^1_{A_2^\prime,\tau^1} = F_{\es,\tilde{\beta}}^{\tau^1} (\ccF_\beta) = (\pi_1^*)^{-1} (f^*\ccH^1_{2,\tau^1}).\]
We thus obtain the desired equality of images: 
\[q_2^* \ccH^1_{\dR}(A^\prime_2/X)^0 = \psi_2^*\ccH^1_{\dR}(A_2/X)^0,\]
and there is an isomorphism $\alpha_2 : A_2 \to A^\prime_2$ which is compatible with $\psi_2$ and $q_2$. Furthermore, unravelling the definitions, we see that it is compatible with the polarizations and level structures. 

Before we show that $\alpha_2$ is compatible with the filtrations, we first show that $A_1 \otimes \fp^{-1}$ and $A^\prime_1 \otimes \fp^{-1}$ are isomorphic so that $A_1$ and $A^\prime_1$ are isomorphic as well. The argument for this is virtually identical to the argument for $A_2$. Using the descriptions of $\psi_1^*\ccH^1_{\dR}(A_1 \otimes \fp^{-1} / X )^0 $ given in corollary \ref{sheaves iso} and $q_1^{\prime *}\ccH^1_{\dR}(A^\prime_1 \otimes \fp^{-1} / X )^0 $ given in equation \ref{fixed A1 inside A}, we determine that
\[
\ccH^1_{\dR}(A_1 \otimes \fp^{-1} / X )^0 \xrightarrow{\psi_1^*} \ccH^1_{\dR}(A/ X )^0  \xleftarrow{q^{\prime *}_1} \ccH^1_{\dR}(A^\prime_1 \otimes \fp^{-1} / X )^0
\]
have the same image. Therefore we have an isomorphism $\alpha_1: A_1 \otimes \fp^{-1} \xrightarrow{\sim} A^\prime_1 \otimes \fp^{-1} $ which is compatible with $\psi_1$ and $q^\prime_1$. In particular $\alpha_1 \otimes \fp$ is compatible with $\pi_1$ and $q_1$, and $g = \alpha_1^{-1} \circ g^\prime \circ \alpha_2$, $f = \alpha_2^{-1} \circ f^\prime \circ (\alpha_1 \otimes \fp)$. We thus deduce that the diagrams \ref{final diagram 1} and \ref{final diagram 2} are actually isomorphic. In particular, $\alpha_1 \otimes \fp$ preserves the polarizations and the level structures.

We now show that $\alpha_2$ respects the filtrations:

\noindent Let $\tau = \tilde{\theta} \in \widehat{\Theta}_{E}$. If $\tau \notin \widehat{\Theta}_{E,\fp}$, then $q_2^*$ and $\psi_2^*$ are isomorphisms and, for any $i$:
\[
\alpha_2^*\omega^0_{A^\prime_2,\tau}(i)= \alpha_2^*(q_2^*)^{-1}\omega_\tau(i) = (\psi^*_2)^{-1}(\psi_2^* \omega^0_{2,\tau}(i)) = \omega^0_{2,\tau}(i).
\]
Suppose then that $\theta \in \widehat{\Theta}_{F,\fp}$. If $\theta^1, \cdots, \theta^{e_\fp} \in \Sigma_\infty$, we have nothing to do. Let $i$ be such that $\theta^i \notin \Sigma_{\infty}$. If $\phi^\prime(\theta^i) \in T$, then by definition \ref{Aij filtering} and definition \ref{filtering a2 definition}:
\[
\alpha^*_2 \omega^0_{A^\prime_2,\tau}(i) = \alpha^*_2 (q^*_2)^{-1} \omega^0_\tau(i) = (\psi^*_2)^{-1} \psi_2^* \omega^0_{2,\tau}(i) = \omega^0_{2,\tau}(i).\]
Similarly, if $\theta^i, \phi^\prime(\theta^i) \notin T$:
\[
\begin{split}
\alpha^*_2 \omega^0_{A^\prime_2,\tau}(i)  & =  \alpha^*_2 (q^*_2)^{-1} \omega^0_\tau(i-1)\\
& = \alpha^*_2 (q^*_2)^{-1} (\pi_1^*)^{-1} \omega^0_{1,\tau}(i-1) \\
& = (f^*)^{-1}\omega^0_{1,\tau}(i-1)\\
& = \omega^0_{2,\tau}(i),
\end{split}
\]
since $\theta^i \in I$. Suppose then that $\theta^i \in T$ but $\phi^\prime(\theta^i) \notin T$. If $\theta^i \notin I \cap J$, then $\theta^i \in R$ and we have the line bundle $\ccL_{\theta^i} = \psi_2^* \omega^0_{2,\tau^i} \subset \ccH^1_{\tau^i} \xleftarrow[\sim]{\psi_2^*}\ccH^1_{2,\tau^i}$ and 
\[\alpha^*_2 \omega^0_{A^\prime_2,\tau^i} = \alpha^*_2 (q_2^*)^{-1}(\psi_2^* \omega^0_{2,\tau^i}) = (\psi_2^*)^{-1}(\psi_2^* \omega^0_{2,\tau^i}) = \omega^0_{2,\tau^i}.\]
Finally, $\theta^i \in I \cap J$ and $\phi^\prime(\theta^i) \notin T$, write $\beta = (\phi^{\prime \prime})^{-1}(\theta^i)$. Then, by the above and the equations \ref{communication lemma 1} and \ref{communication lemma 2}, we find that 
\[\omega^0_{A^\prime_2,\tau^i} = (q_2^*)^{-1} (\psi_2^*g^* \ccH^1_{A_1 \otimes \fp^{-1},\tau^i}) = (q_2^*)^{-1}(\psi_2^*\omega^0_{2,\tau^i}),\]
since $\theta^i \in I$. Therefore 
\[\alpha^*_2\omega^0_{A^\prime_2,\tau^i} = \alpha_2^*(q_2^*)^{-1}(\psi_2^*\omega^0_{2,\tau^i}) = \omega^0_{2,\tau^i}.\]
We thus conclude $\alpha_2$ respects the filtrations and so extends to an isomorphism $\alpha_2: \underline{A_2} \xrightarrow{\sim} \underline{A^\prime_2}$.

We now show that $\alpha_1$ respects the filtrations: Let $\tau = \tilde{\theta} \in \widehat{\Theta}_{E}$. If $\tau \notin \widehat{\Theta}_{E,\fp}$, then $f^*$ and $f^{\prime *}$ are isomorphisms that respect filtrations and, for any $i$:
\[
(\alpha_1 \otimes \fp)^*\omega^0_{A^\prime_1,\tau}(i)= (\alpha_1 \otimes \fp)^*f^{\prime *}\omega_{A^\prime_2,\tau}(i) = f^* \omega^0_{2,\tau}(i) = \omega^0_{1,\tau}(i).
\]
Suppose then that $\theta \in \widehat{\Theta}_{F,\fp}$. If $\theta^1, \cdots, \theta^{e_\fp} \in \Sigma_\infty$, we have nothing to do. Let $i$ be such that $\theta^i \notin \Sigma_{\infty}$. If $i < e_\fp$ and $\theta^{i+1} \in I$ then, by definition, 
\[
(\alpha_1 \otimes \fp) \omega^0_{A^\prime_1,\tau}(i) = (\alpha_1 \otimes \fp) f^{\prime *} \omega^0_{A_2^\prime,\tau}(i+1) = f^{*} \omega^0_{2,\tau}(i+1) = \omega^0_{1,\tau}(i).
\]
Similarly, if $\theta^i \in J$, then 
\[
\alpha_1^* \omega^0_{A^\prime_1 \otimes \fp^{-1},\tau}(i) = \alpha_1^* (g^{\prime *})^{-1} \omega^0_{A_2^\prime,\tau}(i-1) = (g^{*})^{-1} \omega^0_{2,\tau}(i-1) = \omega^0_{A_1 \otimes \fp^{-1},\tau}(i).\]

If $i$ is maximal such that $s_{\tau^i}=1$, then $\omega^0_{1,\tau}(i) $ and $\omega^0_{A^\prime_1,\tau}(i)$ are uniquely determined so $\alpha^*\omega^0_{A^\prime_1,\tau}(i) = \omega^0_{1,\tau}(i)$. Finally, if $i$ is such that $\theta^i \notin J$ and $\phi^{\prime}(\theta^i) \notin I$, then $\theta^i \in R$ and we have the line bundle $\ccL_{\theta^i} = (\pi_1^*)^{-1} \omega^0_{1,\tau^i} \subset \ccH^1_{\tau^i}$. Furthermore, as established above, we have the isomorphism $\alpha_1^*: \ccH^1_{A^\prime_1,{\tau^i}} \xrightarrow{\sim} \ccH^1_{1,{\tau^i}}$ so that
\[\begin{split}
\alpha_1^*\omega^0_{A_1^\prime,\tau^i} & = 
\alpha_1^* f^{\prime *}((q_2^*)^{-1}(\ccL_{\theta^i}) \\
& =\alpha_1^* f^{\prime *}((q_2^*)^{-1}((\pi_1^*)^{-1} \omega^0_{1,\tau^i}))\\
& = f^* ((f^*)^{-1}\omega^0_{1,\tau^i}) \\
& = \omega^0_{1,\tau^i},
\end{split}\]
where we consider $f^*: \ccH^1_{2,\tau^{i+1}} \xrightarrow{\sim} \ccH^1_{1,\tau^{i}}$.

Therefore $\alpha_1 \otimes \fp$ preserves the filtrations and defines an isomorphism $\alpha_1 \otimes \fp: \underline{A_1} \xrightarrow{\sim} \underline{A^\prime_1}$. It follows that 
\[(\alpha_1 \otimes \fp, \alpha_2) :(A_1,A_2,f,g) \xrightarrow{\sim} (A^\prime_1,A^\prime_2,f^\prime,g^\prime) \]
is an isomorphism and $\widetilde{\Xi}^\prime_{IJ} \circ \widetilde{\Psi}^\prime_{IJ}$ is the identity. This finishes the proof.
\end{proof}

\begin{rem}\label{how to do general case}
    Recall that we had only considered the case $\Fp = \fp$ for notational simplicity. For an arbitrary product of distinct primes over $p$, $\Fp = \fp_1 \cdots \fp_n$, and $I,J \subset \Theta_{F,\Fp} \setminus \Sigma_{\infty,\Fp}$ such that $(I_{\fp_i},J_{\fp_i}) \neq (\Theta_{F,\fp_i} \setminus \Sigma_{\infty,\fp_i},\Theta_{F,\fp_i} \setminus \Sigma_{\infty,\fp_i})$, we proceed by simultaneously performing the constructions of sections \ref{sets}, \ref{splicing}, and \ref{splitting} independently at each prime $\fp_i$. That is, we set $T = \bigsqcup T_{\fp_i}$, $R = \bigsqcup R_{\fp_i}$, and $H^\prime = \bigoplus H^\prime_{\fp_i}$, where $H$ denotes any of the three subgroups required to define the splice or the splitting, where the subscript $\fp_i$ indicates the object obtained by applying the procedure for the pair $(I_{\fp_i},J_{\fp_i})$. We also filter $\omega^0[\fp_i]$ independently at each prime $\fp_i$. Arguing as in the previous sections, we thus obtain isomorphisms $ \widetilde{\Xi}^\prime_{IJ}$ and $\widetilde{\Psi}^\prime_{IJ}$.
\end{rem}
  
 In view of the above remark, we now write $\Fp = \fp_1 \cdots \fp_n$ for a non-empty product of distinct primes above $p$ and let $I,J \subset \Theta_{F,\Fp} \setminus \Sigma_{\infty,\Fp}$ be such that $(I_{\fp},J_{\fp}) \neq (\Theta_{F,\fp} \setminus \Sigma_{\infty,\fp},\Theta_{F,\fp} \setminus \Sigma_{\infty,\fp})$ for all $\fp \vert \Fp$. Recall from sections \ref{PR unitary} and \ref{iwahori unitary def} that the varieties $\overline{Y}^\prime_0(\Fp)_{\phi^\prime(I),J}$ and $\overline{Y}^\prime_{IJ}$ were defined as the quotients of $\widetilde{Y}_{U^\prime_0(\Fp)}(G^\prime_\Sigma)_{\phi^\prime(I),J,\bF}$ and $\widetilde{Y}_{U^\prime_{IJ}}(G^\prime_{IJ})_{\bF}$ by the action of $\ccO_{F,(p),+}^\times$ on quasi-polarizations $\lambda$ and multipliers $\epsilon$. 
 Furthermore, for sufficiently small $U^\prime_{IJ}$, the vector bundles $\ccH^1_{\tilde{\beta}}$ on $\widetilde{Y}_{U^\prime_{IJ}}(G^\prime_{IJ})_{\overline{\bF}}$ descend to vector bundles of the same name on the quotient $\overline{Y}^\prime_{IJ}$. Since the morphisms $\widetilde{\Psi}^\prime_{IJ}$ and $\widetilde{\Xi}^\prime_{IJ}$ are compatible with the action of $\ccO_{F,(p),+}^\times$ on both sides, they descend to isomorphisms:
 \[\Psi^\prime_{IJ}: \overline{Y}^\prime_0(\Fp)_{\phi^\prime(I),J} \xrightarrow{\sim} \prod_{\beta \in R} \mathbb{P}^1_{\overline{Y}^\prime_{IJ}}(\ccH^1_{\tilde{\beta}}) \, \textrm{ and } \, \Xi^\prime_{IJ}:  \prod_{\beta \in R} \mathbb{P}^1_{\overline{Y}^\prime_{IJ}}(\ccH^1_{\tilde{\beta}}) \xrightarrow{\sim} \overline{Y}^\prime_0(\Fp)_{\phi^\prime(I),J}.\]

 Furthermore, these isomorphisms, varying over $U^\prime$, are compatible with the Hecke action in the following sense: Recall that for two sufficiently small open compact subgroups $U^\prime_1,U^\prime_2 \subset G^\prime_{\Sigma}(\mathbb{A}_f)$ of level prime to $p$ and $g \in G^\prime_{\Sigma}(\mathbb{A}_f)$ such that $g^{-1} U^\prime_1 g \subset U^\prime_2$, 
 we defined finite \'{e}tale morphisms $\tilde{\rho}_g: \widetilde{Y}^\prime_{1,0}(\Fp) \to \widetilde{Y}^\prime_{2,0}(\Fp)$ which restrict to morphisms on the closed subschemes $S^\prime_{1,\phi^\prime(I),J} \to S^\prime_{2,\phi^\prime(I),J}$. 
 Setting $g_{IJ} = \xi_E^{-1}(g)$, we have $g_{IJ}^{-1} U^\prime_{1,IJ} g_{IJ} \subset U^\prime_{2,IJ}$ and so morphisms 
 \[ \tilde{\rho}_{g_{IJ}}: \widetilde{Y}_{U^\prime_{1,IJ}} \to \widetilde{Y}_{U^\prime_{2,IJ}} \, \textrm{ and } \, \pi^*_{g_{IJ}}: \tilde{\rho}_{g_{IJ}}^* \ccH^1_{2,\tilde{\beta}} \xrightarrow{\sim} \ccH^1_{1,\tilde{\beta}},\]
 where $\ccH^1_{i,\tilde{\beta}}$ on $S^\prime_i = \widetilde{Y}_{U^\prime_{i,IJ}}(G^\prime_{IJ})_{\bF}$, for $i =1,2$, is the respective de Rham cohomology sheaf at $\tilde{\beta}$. These morphisms induce a morphism:
 \[ \prod_{\beta \in R} \mathbb{P}^1_{S^\prime_1}(\ccH^1_{1,\tilde{\beta}}) \to \prod_{\beta \in R} \mathbb{P}^1_{S^\prime_2}(\ccH^1_{2,\tilde{\beta}}),\]
 which we also denote by $\tilde{\rho}_{g_{IJ}}$. One can check that the diagrams 
 \[
 \begin{tikzcd}
 S^\prime_{1,\phi^\prime(I),J} \arrow[d,"\tilde{\rho}_g"]  \arrow[r, " \widetilde{\Psi}^\prime_{1,IJ}"] & \prod_{\beta \in R} \mathbb{P}^1_{S^\prime_1}(\ccH^1_{1,\tilde{\beta}}) \arrow[d,"\tilde{\rho}_{g_{IJ}}"] &
 \prod_{\beta \in R} \mathbb{P}^1_{S^\prime_1}(\ccH^1_{1,\tilde{\beta}}) \arrow[r, " \widetilde{\Xi}^\prime_{1,IJ}"] \arrow[d,"\tilde{\rho}_{g_{IJ}}"] & S^\prime_{1,\phi^\prime(I),J} \arrow[d,"\tilde{\rho}_g"]
 \\
S^\prime_{2,\phi^\prime(I),J} \arrow[r, " \widetilde{\Psi}^\prime_{2,IJ}"] & \prod_{\beta \in R} \mathbb{P}^1_{S^\prime_2}(\ccH^1_{2,\tilde{\beta}}) &
 \prod_{\beta \in R} \mathbb{P}^1_{S^\prime_2}(\ccH^1_{2,\tilde{\beta}}) \arrow[r, " \widetilde{\Xi}^\prime_{2,IJ}"] & S^\prime_{2,\phi^\prime(I),J},
 \end{tikzcd}
 \]
 commute. Taking quotients by the action of $\ccO_{F,(p),+}^\times$ yields commutative diagrams:
 \begin{equation}\label{Isomorphism hecke compatibility unitary}
 \begin{tikzcd}
 \overline{Y}^\prime_{1,0}(\fp)_{\phi^\prime(I),J} \arrow[d,"\rho_g"]  \arrow[r, " \Psi^\prime_{1,IJ}"] & \prod_{\beta \in R} \mathbb{P}^1_{\overline{Y}^\prime_{1,IJ}}(\ccH^1_{1,\tilde{\beta}}) \arrow[d,"\rho_{g_{IJ}}"] &
 \prod_{\beta \in R} \mathbb{P}^1_{\overline{Y}^\prime_{1,IJ}}(\ccH^1_{1,\tilde{\beta}}) \arrow[r, " \Xi^\prime_{1,IJ}"] \arrow[d,"\rho_{g_{IJ}}"] & \overline{Y}^\prime_{1,0}(\fp)_{\phi^\prime(I),J} \arrow[d,"\rho_g"]
 \\
\overline{Y}^\prime_{2,0}(\fp)_{\phi^\prime(I),J} \arrow[r, " \Psi^\prime_{2,IJ}"] & \prod_{\beta \in R} \mathbb{P}^1_{\overline{Y}^\prime_{2,IJ}}(\ccH^1_{2,\tilde{\beta}}) &
 \prod_{\beta \in R} \mathbb{P}^1_{\overline{Y}^\prime_{2,IJ}}(\ccH^1_{2,\tilde{\beta}}) \arrow[r, " \Xi^\prime_{2,IJ}"] & \overline{Y}^\prime_{2,0}(\fp)_{\phi^\prime(I),J}.
 \end{tikzcd}
 \end{equation}
 
Writing $\mathcal{V}_{\tilde{\beta}}$ for the descent of $\ccH^1_{\tilde{\beta}}$ to $\overline{Y}^\prime_{IJ}$, we now have the analogue of the main theorem at the level of Unitary Shimura varieties:

\begin{thm}\label{unitary thm}
For each sufficiently small open compact subgroup $U^\prime \subset G_\Sigma^\prime(\mathbb{A}_f)$ of level prime to $p$ and $I,J \subset \Theta_{F,\Fp} \setminus \Sigma_{\infty,\Fp}$ such that $(I_{\fp},J_{\fp}) \neq (\Theta_{F,\fp} \setminus \Sigma_{\infty,\fp},\Theta_{F,\fp} \setminus \Sigma_{\infty,\fp})$ for all $\fp \vert \Fp$, there is an isomorphim 
 \[\Psi^\prime_{IJ}: \overline{Y}^\prime_0(\Fp)_{\phi^\prime(I),J} \xrightarrow{\sim} \prod_{\beta \in R} \mathbb{P}^1_{\overline{Y}^\prime_{IJ}}(\mathcal{V}_{\tilde{\beta}}).\]
 Furthermore, these isomorphisms are compatible with the Hecke action in the sense that if $g \in G^\prime_{\Sigma}(\mathbb{A}_f)$ is such that $g^{-1} U^\prime_1 g \subset U^\prime_2$, then the diagrams \ref{Isomorphism hecke compatibility unitary} commute.
\end{thm}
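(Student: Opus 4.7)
The plan is to reduce the theorem to the unquotiented statement of Theorem \ref{unitary iwahori unquotiented thm} (extended to general $\Fp$ via the recipe in Remark \ref{how to do general case}), and then verify that both the isomorphism and the Hecke compatibility descend to the respective quotients by $\ccO_{F,(p),+}^\times$. First I would record that, by Remark \ref{how to do general case}, performing the constructions of Sections \ref{sets}, \ref{splicing}, \ref{splitting} independently at each $\fp_i \mid \Fp$ yields mutually inverse morphisms
\[
\widetilde{\Psi}^\prime_{IJ}: \widetilde{Y}_{U^\prime_0(\Fp)}(G^\prime_\Sigma)_{\phi^\prime(I),J,\bF} \xrightarrow{\sim} \prod_{\beta \in R} \mathbb{P}^1_{\widetilde{Y}_{U^\prime_{IJ}}(G^\prime_{IJ})_\bF}(\ccH^1_{\tilde{\beta}}),
\qquad
\widetilde{\Xi}^\prime_{IJ}
\]
which are isomorphisms over $\bF$, so base-changing to $\overline{\bF}_p$ poses no issue.

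Next, I would check that these isomorphisms are $\ccO_{F,(p),+}^\times$-equivariant. For $\mu \in \ccO_{F,(p),+}^\times$, the automorphism $\theta_\mu$ on the source simultaneously rescales the polarizations $\lambda_i$ and multipliers $\epsilon_i$ on both $A_1$ and $A_2$, and this action preserves the full splicing data: the subgroups $H'_T \subset A_1[\fp]$ and $H'_{T^c} \subset (A_2 \otimes \fp)[\fp]$ are defined using the reduced kernels $e_0 \ker f$, $e_0 \ker(g \otimes \fp)$ and the ensuing Raynaud data $(\ccL_\tau, s_\tau, t_\tau)$, none of which sees the polarization directly, while the induced polarization $\lambda_{IJ}$ and the shift of $\epsilon_{IJ}$ by $\varepsilon^{(p)}$ scale accordingly. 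An analogous check shows $\widetilde{\Xi}^\prime_{IJ}$ is equivariant: the subgroups $M, N \subset A$ constructed in Sections \ref{constructing A2}--\ref{filtering A1} depend only on the essential Frobenius/Verschiebung operators and the line bundles $L_{\tilde\beta}$, and not on the specific choice of polarization within its $\ccO_{F,(p),+}^\times$-orbit. Taking quotients by the free action of $\ccO_{F,(p),+}^\times/(\ccO_F \cap U^\prime)^2$ on both sides therefore produces the claimed isomorphism $\Psi^\prime_{IJ}$ with inverse $\Xi^\prime_{IJ}$.

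For Hecke equivariance, given $g \in G^\prime_\Sigma(\mathbb{A}_f^{(p)})$ with $g^{-1} U^\prime_1 g \subset U^\prime_2$, I would first verify commutativity of the analogous diagrams upstairs on the $\widetilde{Y}$-level. This amounts to observing that the quasi-isogeny $\pi_g: A_1 \to \tilde\rho_g^* A_2$ intertwines the splicing constructions: the kernels $H'_T, H'_{T^c}$ are preserved under the isomorphisms of de Rham cohomology $\pi_g^*$ (since they are defined by the Raynaud data attached to $\ker f$, which transports naturally under prime-to-$p$ isogenies), hence $\pi_g$ descends to a quasi-isogeny $A_{IJ,1} \to \tilde\rho_{g_{IJ}}^* A_{IJ,2}$ compatible with all the PEL data; the analogous statement for the projective bundle factor is immediate since the isomorphisms $\pi_g^*: \tilde\rho_g^* \ccH^1_{2,\tilde\beta} \to \ccH^1_{1,\tilde\beta}$ were already defined in Section \ref{automorphic bundles unitary}. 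The reverse direction for $\widetilde{\Xi}^\prime_{IJ}$ follows by the same token, since $\pi_g$ commutes with Essential Frobenius and Verschiebung and so preserves the partial Raynaud data defining the subgroups $M$ and $N$. Quotienting by the $\ccO_{F,(p),+}^\times$-action then yields the commutative diagrams \eqref{Isomorphism hecke compatibility unitary}.

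I expect the main technical obstacle, such as it is, to be book-keeping rather than novelty: verifying that the compatibility of the splice/splitting construction with $\pi_g$ and with $\theta_\mu$ can be read off componentwise from the definitions, but tracking the translation between the $\ccO_D$-action (downstairs) and the $\ccO_{D_{IJ}}$-action (upstairs) via $\vartheta$, $\xi_E$, and the element $\varepsilon$ requires care, particularly for the level structure components $(\eta_{IJ}, \epsilon_{IJ})$. Once that is handled, no further geometric input beyond Theorem \ref{unitary iwahori unquotiented thm} is needed.
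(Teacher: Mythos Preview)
Your proposal is correct and follows essentially the same approach as the paper: the theorem is stated as a consequence of the preceding discussion, which establishes the unquotiented isomorphism (Theorem \ref{unitary iwahori unquotiented thm}, extended to general $\Fp$ via Remark \ref{how to do general case}), checks $\ccO_{F,(p),+}^\times$-equivariance and Hecke compatibility upstairs, and then descends. Your outline of why the splicing and splitting constructions are equivariant (namely that the relevant subgroups and Raynaud data depend only on the $\ccO_{F,(p),+}^\times$-orbit of the polarization) matches the paper's justification precisely.
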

 
 \subsubsection{The Quaternionic and Hilbert case}

We now show how to prove the main theorem from theorem \ref{unitary thm}. Recall that for a quaternion algebra $B$ ramified at a set of places $\Sigma$, we had defined the the quaternionic Shimura variety $Y_{U_\Sigma}(G_\Sigma)$ as the fiber product $Y_{U_\Sigma}(G_\Sigma) = Y_{U^\prime_\Sigma}(G^\prime_\Sigma) \times_{C^\prime} C$, where $C = C_{\det U_\Sigma}$ indexes the set of components of $\overline{Y}_\Sigma = Y_{U_\Sigma}(G_\Sigma)_{\overline{\bF}_p}$ and $C^\prime = C^\prime_{\nu^\prime(U^\prime_\Sigma)}$ indexes the components of $\overline{Y}^\prime_\Sigma = Y_{U^\prime_\Sigma}(G^\prime_\Sigma)_{\overline{\bF}_p}$. Similarly, we defined $Y_{U_{\Sigma,0}(\fp)}(G_\Sigma) = Y_{U^\prime_{\Sigma,0}(\fp)}(G^\prime_\Sigma) \times_{C^\prime} C$

Keep the notation from Theorem \ref{unitary thm}. Instead of trying to show that the isomorphism of Theorem \ref{unitary thm} is compatible with the natural projections, we opt to use the strategy of \cite[$\mathsection$5.3]{2020arXiv200100530D}. Consider the diagram
\[
\begin{tikzcd}
{\overline{Y}^\prime_0(\Fp)_{\phi^\prime(I),J}} \arrow[d, "\Psi^\prime_{IJ}"'] \arrow[r]     & \overline{Y}^\prime_0(\fp) \arrow[r] & \overline{Y}^\prime \arrow[r]      & C^\prime           \\
\prod_{\beta \in R}\mathbb{P}^1_{\overline{Y}^\prime_{IJ}}(\ccH^1_{\tilde{\beta}}) \arrow[rr] &                                      & \overline{Y}^\prime_{IJ} \arrow[r] & C^\prime, \arrow[u]
\end{tikzcd}
\]

where the rightmost arrow is an automorphism of $C^\prime$ that makes the diagram commute. The same arguments as in \cite{2020arXiv200100530D} show that there exists an element $t \in T^\prime(\mathbb{A}_f^{(p)})$, not unique but independent of $U$, such that we can pick the rightmost arrow to be multiplication $t^{-1}$. Recall that $T^\prime = (T_F \times T_E)/T_F$ where $T_F$ is embedded via $x \mapsto (x^2,x^{-1})$. We may thus choose an element $u \in T_E(\mathbb{A}^{(p)}_f)$ such that $(1,u)t^{-1}$ is in the image of the embedding $T_F \to T^\prime$ given by $x \mapsto (x,1)$. Therefore we obtain the commutative diagram
\begin{equation}\label{deriving the quaternionic diagram}
\begin{tikzcd}
\overline{Y}^\prime_0(\fp)_{\phi^\prime(I),J} \arrow[r,"\rho_u"] \arrow[d] & \overline{Y}^\prime_0(\fp)_{\phi^\prime(I),J} \arrow[d] \arrow[r,"{\Psi^\prime_{IJ}}"] & \prod_{\beta \in R}\mathbb{P}^1_{\overline{Y}^\prime_{IJ}}(\ccH^1_{\tilde{\beta}}) \arrow[d] \\
C^\prime \arrow[r,"{\cdot (1,u)}"] & C^\prime \arrow[r,"\cdot t^{-1}"] & C^\prime,
\end{tikzcd}
\end{equation}
where the downwards arrows are the natural ones given by the rows of the previous diagram. 

Let $U \subset G_\Sigma(\mathbb{A}_f)$ be a sufficiently small open compact with level prime to $p$ and write $U_{IJ} = \xi^{-1}(U) \subset G_{IJ}(\mathbb{A}_f)$, which is also a sufficiently small open compact of level prime to $p$. Let $V_E \subset \mathbb{A}_{E,f}^\times$ be an open compact subgroup, of level prime to $p$, which is sufficiently small relative to $U$ and so sufficiently small relative to $U_{IJ}$. Note that if we set $U^\prime$ to be the image of $U \times V_E$ in $G_\Sigma^\prime(\mathbb{A}_f)$, then $U^\prime_{IJ} = \xi^{-1}(U^\prime) \subset G^\prime_{IJ}(\mathbb{A}_f)$ is also the image of $U_{IJ} \times V_E$, and $U^\prime$ and $U^\prime_{IJ}$ are sufficiently small. Recall that we had over $\overline{\bF}_p$, for $\overline{Y}_0(\Fp)_{\phi^\prime(I),J} = Y_{U_0(\Fp)}(G_\Sigma)_{\phi^\prime(I),J,\overline{\bF}_p}$, the Cartesian diagram \ref{compatibility components unitary quaternionic} 
\[
\begin{tikzcd}
   \overline{Y}_0(\Fp)_{\phi^\prime(I),J} \arrow[r,"i"] \arrow[d] & \overline{Y}^\prime_0(\Fp)_{\phi^\prime(I),J} \arrow[d] \\
   C_{\det(U)} \arrow[r] & C_{\nu^\prime(U^\prime)}.
\end{tikzcd}
\]
Similarly, for $\overline{Y}_{IJ} = Y_{U_{IJ}}(G_{IJ})_{\overline{\bF}_p}$, we defined in section \ref{quaternion bundles}, the automorphic vector bundles $\mathcal{V}_\beta = \ccH^1_\beta = i_{IJ}^* \ccH^1_{\tilde{\beta}}$ so we obtain a Cartesian diagram 
\[
\begin{tikzcd}
   \prod_{\beta \in R} \mathbb{P}_{\overline{Y}_{IJ}}(\mathcal{V}_\beta) \arrow[r,"i_{IJ}"] \arrow[d] &
   \prod_{\beta \in R} \mathbb{P}_{\overline{Y}^\prime_{IJ}}(\ccH^1_{\tilde{\beta}}) \arrow[d] \\
   C_{\det(U)} \arrow[r] & C_{\nu^\prime(U^\prime)}.
\end{tikzcd}
\]

The bottom arrow of diagram \ref{deriving the quaternionic diagram}, restricted to $C_{\det(U)}$ is given by multiplication by $v$ for some $v \in (\mathbb{A}^{(p)}_{F,f})^\times$. Taking the fiber product of the morphisms $\Psi^\prime_{IJ} \circ \rho_u: \overline{Y}^\prime_0(\Fp)_{\phi^\prime(I),J} \to \prod_{\beta \in R} \mathbb{P}_{\overline{Y}^\prime_{IJ}}(\ccH^1_{\tilde{\beta}})$ and $C_{\det(U)} \xrightarrow{\cdot v} C_{\det(U)}$ over $C_{\nu^\prime(U^\prime)} \xrightarrow{\cdot v} C_{\nu^\prime(U^\prime)}$ yields an isomorphism 
\[\Psi_{IJ}: \overline{Y}_0(\Fp)_{\phi^\prime(I),J} \to \prod_{\beta \in R} \mathbb{P}_{\overline{Y}_{IJ}}(\mathcal{V}_\beta).\]

Furthermore, if $U_1$ and $U_2$ are two sufficiently small open compact subgroups of $G(\mathbb{A}_f)$ of level prime to $p$, and $g \in G(\mathbb{A}_f^{(p)})$ is such that $g^{-1} U_1 g \subset U_2$, writing $g_{IJ} = \xi^{-1}(g) \subset G_{IJ}(\mathbb{A}_f^{(p)})$ and assuming that $V_{E,1} \subset V_{E,2}$, the morphisms
\[ \overline{Y}_{1,0}(\Fp)_{\phi^\prime(I),J} \xrightarrow{\rho_g} \overline{Y}_{2,0}(\Fp)_{\phi^\prime(I),J} \quad \textrm{and} \quad \prod_{\beta \in R} \mathbb{P}_{\overline{Y}_{1,IJ}}(\mathcal{V}_{1,\beta}) \xrightarrow{g_{IJ}} \prod_{\beta \in R} \mathbb{P}_{\overline{Y}_{2,IJ}}(\mathcal{V}_{2,\beta})\]
are the restrictions of the ones obtained by the images of $g $ in $G^\prime(\mathbb{A}_f)$ and of $g_{IJ}$ in $G^\prime_{IJ}(\mathbb{A}_f)$. Since $u$ is central in $g $ in $G^\prime(\mathbb{A}^{(p)}_f)$, the morphism $\rho_u$ commutes with $\rho_g$ and we obtain the commutative diagram:
\begin{equation}\label{finally}
\begin{tikzcd}
\overline{Y}_{1,0}(\Fp)_{\phi^\prime(I),J} \arrow[r,"\Psi_{1,IJ}"] \arrow[d,"\rho_g"] & \prod_{\beta \in R} \mathbb{P}_{\overline{Y}_{1,IJ}}(\mathcal{V}_{1,\beta}) \arrow[d,"\rho_{g_{IJ}}"]\\
 \overline{Y}_{2,0}(\Fp)_{\phi^\prime(I),J}  \arrow[r,"\Psi_{2,IJ}"] & \prod_{\beta \in R} \mathbb{P}_{\overline{Y}_{2,IJ}}(\mathcal{V}_{2,\beta}).
\end{tikzcd}
\end{equation}
In particular, setting $U_1=U_2$ and $g=1$, we see that $\Psi_{IJ}$ is independent of the choice $V_E$. $\Psi_{IJ}$ does however depend on all of the other auxiliary data we chose, including the choice of models. We obtain for any $I,J \subset \Theta_{F,\Fp} \setminus \Sigma_{\infty,\Fp}$ such that $I \cup J = \Theta_{F,\Fp} \setminus \Sigma_{\infty,\Fp}$ and $(I_\fp,J_\fp) \neq (\Theta_{F,\fp} \setminus \Sigma_{\infty,\fp},\Theta_{F,\fp} \setminus \Sigma_{\infty,\fp})$ for all primes $\fp \vert \Fp$:

 \begin{thm}\label{main theorem quat}
For every sufficiently small open compact subgroup $U \subset G_\Sigma(\mathbb{A}_f)$ of level prime to $p$, there is an isomorphism 
\[\Psi_{IJ}: \overline{Y_0}(\Fp)_{\phi^\prime(I),J} \xrightarrow{\sim} \prod_{\beta \in R} \mathbb{P}^1_{\overline{Y}_{IJ}}(\mathcal{V}_\beta).\]
Furthermore, these isomorphisms, ranging over $U$, are compatible with the Hecke action on both sides in the sense that if $g \in G_{\Sigma}(\mathbb{A}_f)$ is such that $g^{-1} U_1 g \subset U_2$, then diagram \ref{finally} commutes.
\end{thm}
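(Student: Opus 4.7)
The strategy is to descend the Unitary isomorphism of Theorem \ref{unitary thm} to the Quaternionic setting using the Cartesian diagram of Lemma \ref{compatibility components unitary quaternionic}. That lemma realizes $\overline{Y}_0(\Fp)_{\phi^\prime(I),J}$ (and analogously $\prod_{\beta \in R}\mathbb{P}^1_{\overline{Y}_{IJ}}(\mathcal{V}_\beta)$) as the fibre product of its Unitary counterpart with the finite set $C = C_{\det(U)}$ over $C^\prime = C_{\nu^\prime(U^\prime)}$. So the first step is to arrange a version of $\Psi^\prime_{IJ}$ which is compatible with the two natural maps to $C^\prime$, after which I can take fibre products with $C \to C^\prime$ to obtain $\Psi_{IJ}$.

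The issue is that $\Psi^\prime_{IJ}$ need not intertwine the two projections to $C^\prime$ on the nose; instead they differ by an automorphism of $C^\prime$. The key observation, imported from \cite[$\mathsection$5.3]{2020arXiv200100530D}, is that this automorphism comes from multiplication by some $t \in T^\prime(\mathbb{A}_f^{(p)})$ which is independent of $U$. Since $T^\prime = (T_F \times T_E)/T_F$, I can choose $u \in T_E(\mathbb{A}_f^{(p)})$ so that $(1,u)t^{-1}$ lifts to $T_F \hookrightarrow T^\prime$, $x \mapsto (x,1)$; this then yields the commutative square \eqref{deriving the quaternionic diagram} in which $\Psi^\prime_{IJ} \circ \rho_u$ covers multiplication by some $v \in (\mathbb{A}^{(p)}_{F,f})^\times$ on $C$. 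Taking the fibre product of $\Psi^\prime_{IJ} \circ \rho_u$ with $\cdot v: C \to C$ over $C^\prime \to C^\prime$ then produces the desired isomorphism $\Psi_{IJ}$, and by construction it is independent of the auxiliary choice of $V_E$.

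For Hecke equivariance, given $g \in G_\Sigma(\mathbb{A}_f^{(p)})$ with $g^{-1}U_1g \subset U_2$, I set $g_{IJ} = \xi^{-1}(g)$ and choose $V_{E,1} \subset V_{E,2}$. The Hecke compatibility at the Unitary level \eqref{Isomorphism hecke compatibility unitary} restricts to a commutative square of maps between the relevant strata, and since $u$ lies in the centre of $G^\prime_\Sigma(\mathbb{A}_f^{(p)})$, the operator $\rho_u$ commutes with $\rho_g$. Passing to fibre products with $C$'s then gives the commutative square \eqref{finally}.

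The bulk of the work, namely the construction of the splice and splitting and proof that they are mutually inverse, has already been carried out at the Unitary level; the descent step is essentially formal, granted the combinatorial input about $t$. The most delicate point in this plan is verifying that the fibre product construction really does yield isomorphisms on both the left and right side of \eqref{finally} — this rests on Lemma \ref{compatibility components unitary quaternionic} together with the fact that the vector bundles $\mathcal{V}_\beta$ on $\overline{Y}_{IJ}$ were defined precisely as pullbacks along $i_{IJ}$ of $\ccH^1_{\tilde{\beta}}$. Once the Cartesian nature of both columns is in hand, the universal property of fibre products immediately gives $\Psi_{IJ}$ as a pointwise isomorphism, and its Hecke compatibility is automatic from the functoriality of fibre products applied to the commuting diagrams above.
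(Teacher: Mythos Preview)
Your proposal is correct and follows essentially the same approach as the paper: the argument there is precisely to compose $\Psi^\prime_{IJ}$ with $\rho_u$ for a central $u \in T_E(\mathbb{A}_f^{(p)})$ chosen so that the resulting map covers multiplication by an element of $T_F$ on $C^\prime$, then take the fibre product with $C \to C^\prime$ and use centrality of $u$ for Hecke equivariance. The setup you describe (the diagram \eqref{deriving the quaternionic diagram}, the choice of $u$, and the passage to \eqref{finally}) is exactly what the paper does in the paragraphs preceding the theorem statement.
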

 
Recall that by proposition \ref{Iwahori goren relation} and the following discussion, that there is a morphism $\psi_\Fp: \overline{Y} \to \overline{Y}_0(\Fp)$ which restricts to a Hecke equivariant isomorphism $\psi_\Fp: \overline{Y}_J \to \overline{Y}_{(\Theta_{F,\Fp} \setminus \Sigma_{\infty,\Fp}),J}$ for every $J \subset \Theta_{F,\Fp} \setminus \Sigma_{\infty,\Fp}$. For every $J$ such that $J_\fp \neq \Theta_{F,\fp} \setminus \Sigma_{\infty,\fp}$ at all primes $\fp \vert \Fp$, we write $T = T(J)$ and $R(J)$ for the sets obtained from the pair $(\Theta_{F,\Fp} \setminus \Sigma_{\infty,\Fp},J)$, and $\overline{Y}_{\Sigma_{T(J)}} = \overline{Y}_{IJ}$, not to be confused with a Goren-Oort stratum of $\overline{Y}$. We thus obtain a morphism $\Psi_J = \Psi_{IJ} \circ \psi_\Fp: \overline{Y}_J \to \prod_{\beta \in R(J)} \mathbb{P}^1_{\overline{Y}_{\Sigma_{T(J)}}}(\mathcal{V}_\beta)$ which is Hecke equivariant in the sense that for $g$, $U_1$ and $U_2$ as above, the following diagram commutes:
\begin{equation}\label{hecke tame level}
\begin{tikzcd}
    \overline{Y}_{1,J} \arrow[d,"{\rho_g}"] \arrow[r,"{\Psi_{1,J}}"] & \prod_{\beta \in R(J)} \mathbb{P}^1_{\overline{Y}_{1,\Sigma_{T(J)}}}(\mathcal{V}_\beta) \arrow[d,"{\rho_g}"] \\
    \overline{Y}_{2,J} \arrow[r,"{\Psi_{2,J}}"] & \prod_{\beta \in R(J)} \mathbb{P}^1_{\overline{Y}_{2,\Sigma_{T(J)}}}(\mathcal{V}_\beta).
\end{tikzcd}
\end{equation}

We now have our main theorem at tame level:

\begin{thm}\label{main thm quat tame}
For every sufficiently small open compact subgroup $U \subset G_\Sigma(\mathbb{A}_f)$ of level prime to $p$, there is an isomorphism 
\[\Psi_J: \overline{Y}_J \xrightarrow{\sim} \prod_{\beta \in R(J)} \mathbb{P}^1_{\overline{Y}_{\Sigma_{T(J)}}}(\mathcal{V}_\beta).\]
Furthermore, these isomorphisms, ranging over $U$, are compatible with the Hecke action on both sides in the sense that if $g \in G_{\Sigma}(\mathbb{A}^{(p)}_f)$ is such that $g^{-1} U_1 g \subset U_2$, then diagram \ref{hecke tame level} commutes.
\end{thm}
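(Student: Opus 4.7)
The plan is to derive Theorem \ref{main thm quat tame} as a formal consequence of Theorem \ref{main theorem quat} together with Proposition \ref{Iwahori goren relation} (as extended to the Quaternionic setting in Section \ref{quaternionic iwahori}). Indeed, everything needed has essentially been set up in the paragraph preceding the statement: the map $\Psi_J$ is defined to be the composite $\Psi_{IJ} \circ \psi_{\Fp}$, with $I = \Theta_{F,\Fp} \setminus \Sigma_{\infty,\Fp}$, so the content reduces to checking that each factor is an isomorphism with the right target and that the composite is Hecke equivariant.

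First I would invoke Proposition \ref{Iwahori goren relation} in its Quaternionic form: the section $\psi_\Fp: \overline{Y} \to \overline{Y}_0(\Fp)$ of $\pi_1$, obtained by equipping $\underline{A}$ with the essential Frobenius isogeny $F_{\es,\Fp}: A \to A^{(\Fp)}$ of Section \ref{partial frobenius}, restricts on closed subschemes to an isomorphism $\psi_\Fp: \overline{Y}_J \xrightarrow{\sim} \overline{Y}_0(\Fp)_{(\Theta_{F,\Fp}\setminus\Sigma_{\infty,\Fp}),J}$. The hypothesis that $J_\fp \neq \Theta_{F,\fp}\setminus\Sigma_{\infty,\fp}$ at every prime $\fp \vert \Fp$ is precisely what is needed so that the pair $(I,J)=(\Theta_{F,\Fp}\setminus\Sigma_{\infty,\Fp}, J)$ falls in the regime of Theorem \ref{main theorem quat}. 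Composing then yields an isomorphism
\[
\Psi_J := \Psi_{IJ}\circ \psi_\Fp : \overline{Y}_J \xrightarrow{\sim} \prod_{\beta\in R(J)} \mathbb{P}^1_{\overline{Y}_{\Sigma_{T(J)}}}(\mathcal{V}_\beta),
\]
where by construction $R = R(J)$ and $\overline{Y}_{IJ}=\overline{Y}_{\Sigma_{T(J)}}$ for this choice of $I$.

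It remains to verify the Hecke equivariance, i.e.\ the commutativity of diagram \eqref{hecke tame level}. For $g\in G_\Sigma(\mathbb{A}_f^{(p)})$ with $g^{-1}U_1 g\subset U_2$, this is obtained by pasting two squares: the square expressing Hecke equivariance of $\psi_\Fp$ (diagram \eqref{Hecke equivariance psi}, together with the discussion after it that descends the equivariance from the Unitary to the Quaternionic level through lemma \ref{compatibility components unitary quaternionic}), and the square \eqref{finally} expressing Hecke equivariance of $\Psi_{IJ}$. Both squares involve the same morphism $\rho_g$ on the left, and the right-hand vertical map of the first is the top horizontal map of the second evaluated through the shared middle term $\overline{Y}_{1,0}(\Fp)_{(\Theta_{F,\Fp}\setminus\Sigma_{\infty,\Fp}),J}$, so the pasted outer rectangle is exactly \eqref{hecke tame level}. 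The element $g_{IJ}=\xi^{-1}(g)\in G_{\Sigma_{IJ}}(\mathbb{A}_f^{(p)})$ on the right side is unchanged under this composition because the construction of $\Psi_{IJ}$ and the definition of $\Psi_J$ both use the same isomorphism $\xi$.

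There is no real obstacle here: all the substantive work has been carried out in the proofs of Theorem \ref{main theorem quat} (via Theorem \ref{unitary iwahori unquotiented thm}) and of Proposition \ref{Iwahori goren relation}. The only minor bookkeeping point to be careful about is that the sets $\Sigma_{T(J)}$ and $R(J)$ coincide with $\Sigma_{IJ}$ and $R$ for the chosen $(I,J)$; this is a direct consequence of the recipe in Section \ref{sets}, since $I^c=\emptyset$ forces $T=T(J)$ with the alternating pattern starting from $J$, and then $\Sigma_{IJ}^+$ and $R = \Sigma_{IJ}^+\setminus (I\cap J) = \Sigma_{IJ}^+\setminus J$ agree with the sets defined from $J$ alone.
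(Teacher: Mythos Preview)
Your proposal is correct and follows exactly the paper's approach: the theorem is stated immediately after the paragraph that defines $\Psi_J = \Psi_{IJ}\circ\psi_\Fp$ (with $I=\Theta_{F,\Fp}\setminus\Sigma_{\infty,\Fp}$) and records its Hecke equivariance, so the paper gives no separate proof beyond that setup. Your breakdown into Proposition~\ref{Iwahori goren relation} (descended to the Quaternionic level) plus Theorem~\ref{main theorem quat}, together with the pasting of diagrams \eqref{Hecke equivariance psi} and \eqref{finally}, is precisely what is being invoked.
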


Finally, combining the above theorems with the compatibilities established in sections \ref{model comparison}, \ref{quaternion bundles}, \ref{quaternionic goren oort} and \ref{quaternion hilb iwahori}, we obtain the analogous result for Hilbert modular varieties: Let $G = \textrm{Res}_{F/\bQ} \, \textrm{GL}_2$ and $U \subset G(\mathbb{A}_f)$ be a sufficiently small open compact subgroup of level prime to $p$. Write $\overline{Y} = Y_U(G)_{\overline{\bF}_p}$, and $\overline{Y}_0(\Fp) = Y_{U_0(\Fp)}(G)_{\overline{\bF}_p}$. For any $I,J \subset \Theta_{F,\Fp}$ such that $I \cup J = \Theta_{F,\Fp} $ and $(I,J) \neq (\Theta_{F,\fp},\Theta_{F,\fp})$ at all primes $\fp \vert \Fp$, we have:

 \begin{thm}\label{hilb thm iwahori}
For every sufficiently small open compact subgroup $U \subset G(\mathbb{A}_f)$ of level prime to $p$, there is an isomorphism 
\[\Psi_{IJ}: \overline{Y_0}(\Fp)_{\phi(I),J} \xrightarrow{\sim} \prod_{\beta \in R} \mathbb{P}^1_{\overline{Y}_{IJ}}(\mathcal{V}_\beta).\]
Furthermore, these isomorphisms, ranging over $U$, are compatible with the Hecke action on both sides in the sense that if $g \in G_{\Sigma}(\mathbb{A}^{(p)}
_f)$ is such that $g^{-1} U_1 g \subset U_2$, the analogue of diagram \ref{finally} commutes.
\end{thm}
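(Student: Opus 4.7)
The plan is to deduce this directly from the quaternionic result of Theorem \ref{main theorem quat} with $\Sigma = \emptyset$, using the comparison morphisms established in Sections \ref{model comparison}, \ref{quaternion bundles}, \ref{quaternionic goren oort} and \ref{quaternion hilb iwahori}. Concretely, for a sufficiently small $U \subset G(\mathbb{A}_f)$ of level prime to $p$, I would pick a $U^\prime \subset G^\prime_\emptyset(\mathbb{A}_f)$ and an auxiliary $V_E$ such that the image of $U \times V_E$ lies in $U^\prime$ and satisfies the smallness assumptions of Section \ref{quaternionic models}. The isomorphism $Y_U(G) \xrightarrow{\sim} Y_{U_\emptyset}(G_\emptyset)$ from Section \ref{model comparison} then gives, after base change to $\overline{\bF}_p$, an identification $\overline{Y} = Y_U(G)_{\overline{\bF}_p} \xrightarrow{\sim} Y_{U_\emptyset}(G_\emptyset)_{\overline{\bF}_p}$, and Proposition \ref{iwahori quat hilb} extends this to an identification $\overline{Y}_0(\Fp) \xrightarrow{\sim} Y_{U_{\emptyset,0}(\Fp)}(G_\emptyset)_{\overline{\bF}_p}$ at Iwahori level.

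Next I would show that, under these identifications, the closed subscheme $\overline{Y}_0(\Fp)_{\phi(I),J}$ on the Hilbert side matches $\overline{Y}_{\emptyset,0}(\Fp)_{\phi^\prime(I),J}$ on the quaternionic side. This is already spelled out after Proposition \ref{iwahori quat hilb}: the morphism $\tilde{i}$ sends $A$ to $A \otimes_{\ccO_F} \ccO_E^2$, and the canonical isomorphisms $\ccH^1_{A_i,\theta^j} \xrightarrow{\sim} \ccH^1_{A^\prime_i,\tau^j}$ carry $f^*$, $f^{\prime *}$ on the Hilbert side to $f^*$, $g^*$ on the quaternionic side, so the vanishing loci defining the strata agree. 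Since the stratification and the shift $\phi$ on $\Theta_F$ agree with $\phi^\prime$ on $\Theta_F \setminus \Sigma_\infty = \Theta_F$ in the $\Sigma = \emptyset$ case, we get $\overline{Y}_0(\Fp)_{\phi(I),J} \xrightarrow{\sim} \overline{Y}_{\emptyset,0}(\Fp)_{\phi^\prime(I),J}$.

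The automorphic bundles on the right-hand side also match: Section \ref{quaternion bundles} establishes that the sheaves $\mathcal{V}_\beta = \ccH^1_\beta$ defined via the Hilbert model and via the $\Sigma=\emptyset$ quaternionic model are canonically isomorphic, compatibly with Hecke morphisms. Consequently, $\prod_{\beta \in R} \mathbb{P}^1_{\overline{Y}_{IJ}}(\mathcal{V}_\beta)$ is the same projective bundle on either side (noting that the target auxiliary quaternion algebra $B_{IJ}$ and its set $R$ depend only on $I$, $J$ and the combinatorics of Section \ref{sets}, which are identical in the Hilbert and $\Sigma = \emptyset$ quaternionic pictures). I would then define $\Psi_{IJ}$ on the Hilbert side as the composite of the identification above with the quaternionic isomorphism of Theorem \ref{main theorem quat}.

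The only remaining point is Hecke equivariance. The identifications used are all Hecke equivariant: Section \ref{model comparison} gives this for the underlying models, Proposition \ref{iwahori quat hilb} lifts it to Iwahori level, and Section \ref{quaternion bundles} confirms it for the bundles $\mathcal{V}_\beta$. Since Theorem \ref{main theorem quat} is Hecke equivariant in the sense of diagram \eqref{finally}, the pull-back of that diagram along the Hilbert-to-quaternionic comparison is the analogous commutative diagram on the Hilbert side. I do not anticipate a genuine obstacle here: the entire proof is a compatibility chase, and the only point requiring care is verifying that the natural isomorphism of automorphic bundles from Section \ref{quaternion bundles} is compatible with the stratum definitions in a way that commutes with the Hecke morphisms $\rho_g$, but this was set up precisely so that such compatibilities hold on the nose.
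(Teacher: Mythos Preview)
Your proposal is correct and follows exactly the paper's approach: the paper derives this theorem in a single sentence by combining Theorem~\ref{main theorem quat} (the quaternionic case with $\Sigma=\emptyset$) with the compatibilities of Sections~\ref{model comparison}, \ref{quaternion bundles}, \ref{quaternionic goren oort}, and \ref{quaternion hilb iwahori}, and you have simply spelled out that compatibility chase in detail.
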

Similarly, for every subset $J \subset \Theta_{F,\Fp}$ such that $J_\fp \neq \Theta_{F,\fp}$ at all primes $\fp \vert \Fp$, we have:
\begin{thm}\label{hilb thm tame}
For every sufficiently small open compact subgroup $U \subset G(\mathbb{A}_f)$ of level prime to $p$, there is an isomorphism 
\[\Psi_J: \overline{Y}_J \xrightarrow{\sim} \prod_{\beta \in R(J)} \mathbb{P}^1_{\overline{Y}_{\Sigma_{T(J)}}}(\mathcal{V}_\beta).\]
Furthermore, these isomorphisms, ranging over $U$, are compatible with the Hecke action on both sides in the sense that if $g \in G_{\Sigma}(\mathbb{A}^{(p)}_f)$ is such that $g^{-1} U_1 g \subset U_2$, then the analogue of diagram \ref{hecke tame level} commutes.
\end{thm}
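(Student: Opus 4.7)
The plan is to deduce Theorem \ref{hilb thm tame} directly from the corresponding Quaternionic Theorem \ref{main thm quat tame} in the case $\Sigma = \emptyset$, by systematically invoking the compatibilities established throughout the paper between the Hilbert and Quaternionic settings. Since all the analytic and geometric work has already been done at the Quaternionic level, this is essentially a bookkeeping exercise to transport the statement across the comparison isomorphisms.

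First I would observe that in section \ref{model comparison} we constructed, for $\Sigma = \emptyset$, a Hecke equivariant isomorphism $Y_U(G) \xrightarrow{\sim} Y_U(G_\emptyset)$ (granted the auxiliary choice of CM extension $E/F$ for which the image of the trace is principal, which can always be arranged by enlarging $E$). Base-changing to $\overline{\bF}_p$ gives an identification of our Hilbert model $\overline{Y}$ with the geometric special fiber of the Quaternionic model $Y_U(G_\emptyset)$. Next, by section \ref{quaternionic goren oort}, this identification respects the Goren-Oort stratifications on both sides, so $\overline{Y}_J$ on the Hilbert side maps isomorphically onto the corresponding stratum on the Quaternionic side. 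Moreover, by section \ref{quaternion bundles}, the automorphic vector bundles $\mathcal{V}_\beta = \ccH^1_\beta$ defined via the Hilbert Pappas-Rapoport model and via the Quaternionic model (for $\Sigma = \emptyset$) are canonically isomorphic, compatibly with the Hecke actions.

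Having established these identifications, I would then apply Theorem \ref{main thm quat tame} to the Quaternionic Shimura variety $Y_U(G_\emptyset)$ with $\Sigma = \emptyset$. This yields an isomorphism from $\overline{Y}_J$ (viewed on the Quaternionic side) to $\prod_{\beta \in R(J)} \mathbb{P}^1_{\overline{Y}_{\Sigma_{T(J)}}}(\mathcal{V}_\beta)$, where the target is already a Quaternionic Shimura variety, so nothing needs to be transported on the target side. Composing with the identifications of the previous paragraph produces the desired $\Psi_J$. The Hecke equivariance follows immediately from the Hecke equivariance of the Hilbert–Quaternionic comparison (section \ref{model comparison}), of the vector bundle identifications (section \ref{quaternion bundles}), and of the Quaternionic Theorem \ref{main thm quat tame} itself; explicitly, the analogue of diagram \eqref{hecke tame level} commutes because each of its two faces, after inserting the Quaternionic intermediate, decomposes into two Hecke-equivariant squares.

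The main obstacle, if there is one, is not mathematical but notational: one must check that the specific choices made in the Quaternionic construction (the CM field $E$, the order $\ccO_D$, the element $\delta$, the isomorphisms $\xi, \vartheta$, and the resulting extra structures attached to the splice) are compatible with the Hilbert comparison morphism $\tilde{i}: \underline{A} \mapsto \underline{A \otimes_{\ccO_F} \ccO_E^2}$ recalled at the start of section \ref{quaternion hilb iwahori}. This was already addressed for Iwahori level in that section, and the tame level statement follows from the Iwahori one together with Proposition \ref{Iwahori goren relation} (which relates the Goren-Oort stratum $\overline{Y}_J$ to $\overline{Y}_0(\Fp)_{\Theta_{F,\Fp} \setminus \Sigma_{\infty,\Fp}, J}$ via the essential Frobenius isogeny $\psi_\Fp$). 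Since $\psi_\Fp$ was shown in the remark following that proposition to be compatible with the Hilbert variant of \cite[$\mathsection$6.1]{Diamond_2023} under the Hilbert–Quaternionic comparison, all intermediate isomorphisms patch together, and the proof is complete.
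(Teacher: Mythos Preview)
Your proposal is correct and follows essentially the same approach as the paper: the paper deduces Theorem \ref{hilb thm tame} directly from Theorem \ref{main thm quat tame} by invoking the Hilbert--Quaternionic compatibilities established in sections \ref{model comparison}, \ref{quaternion bundles}, \ref{quaternionic goren oort}, and \ref{quaternion hilb iwahori}, exactly as you outline. Your discussion of how the comparison via $\tilde{i}$ and the essential Frobenius $\psi_\Fp$ patch together is more detailed than the paper's one-sentence justification, but the argument is the same.
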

 
\subsubsection{The bottom strata}\label{bottom strata}
Consider now, $\Fp = \fp$ for simplicity, and the case $I \cap J = \Theta_{F,\fp} \setminus \Sigma_{\infty,\fp}$ of even cardinality $2k$. Arbitrarily pick a $\beta \notin \Sigma_{\infty,\fp}$ and set $T=\{\phi^{\prime 2n-1}(\beta) \, \vert \, 1 \leq n \leq \vert k \setminus \Sigma_{\infty,\fp}\}$. Then we can, as usual, form the $T$-splice $A_{1,IJ}$ of the tuple $(\underline{A_1},\underline{A_2},f,g)$. However, we can also define the $T$-splice $A_{2,IJ}$ of the tuple $(\underline{A_2},\underline{A_1 \otimes \fp^{-1}},g,f\otimes \fp^{-1})$, since $I = J = \Theta_{F,\fp} \setminus \Sigma_{\infty,\fp}$. Now, crucially, since $\Theta_{F,\fp} \setminus \Sigma_{\infty,\fp}$ is even, $\Sigma_{IJ} = \Sigma \sqcup \Theta_{F,\fp} \setminus \Sigma_{\infty,\fp}$ is of even cardinality, so we may speak of the quaternion algebra attached to it; this fails if the cardinality of $\Theta_{F,\fp} \setminus \Sigma_{\infty,\fp}$ is odd.
 $A_{1,IJ}$ and $A_{2,IJ}$ sit in the following commutative diagram:
 \[
 \begin{tikzcd}
     A_2 \otimes \fp \arrow[d,"g \otimes \fp"'] \arrow[r] & A_{1,IJ} \arrow[d, "f_{IJ}"] \arrow[r] & A_2 \arrow[d,"g"]\\
     A_1  \arrow[d,"f"'] \arrow[r] & A_{2,IJ} \arrow[d,"g_{IJ}"] \arrow[r] & A_1 \otimes \fp^{-1} \arrow[d,"f \otimes \fp^{-1}"]\\
     A_2 \arrow[r] & A_{1,IJ} \otimes \fp^{-1} \arrow[r] & A_2 \otimes \fp^{-1},
 \end{tikzcd}
 \]
\noindent We can thus define a morphism $\widetilde{Y}^\prime_0(\fp)_{\phi^\prime(I),J} \to \widetilde{Y}_{U^\prime_{IJ,0}(\fp)}(G^\prime_{IJ})_{\bF}$, where $G^\prime_{IJ}$ is the group attached to $\Sigma_{IJ}$, by mapping $(\underline{A_1},\underline{A_2},f,g)$ to $(\underline{A_{IJ,1}},\underline{A_{2,IJ}},f_{IJ},g_{IJ})$. One may check that this is an isomorphism by showing that it is a bijection between geometric points and injective on tangent spaces.
 
For the final case, $I \cap J = \Sigma_{\infty,\fp} \setminus \Sigma_\infty$ with $\Theta_{F,\fp} \setminus \Sigma_\infty$ of odd cardinality, we need to change the definition of our PEL datum. We take $E/F$ as usual, except that we require it to be inert above $\fp$ instead of split, with unique prime $\fp \ccO_E = \fq$ above it. In this situation, we define our Unitary Shimura variety similarly by taking our lattice $\ccO_D$ to satisfy $\ccO_{D,\fp} = M_2(\ccO_{E_\fq})$; there are no issues defining the Pappas-Rapoport filtrations, requiring it to be self dual as defined in section \ref{dual filtration}, and there is no issue defining the Iwahori level model, the only difference being that $H[\fp] = H[\fq]$ is totally isotropic of rank $p^{2f_\fq}=p^{4f_\fp}$. In this situation, we may also consider the quaternion algebra given by $\Sigma_{IJ} = \Sigma \sqcup  \Theta_{F,\fp} \setminus \Sigma_{\infty,\fp} \sqcup \{\fp\}$ which, by assumption, is even. To define the tame level Unitary Shimura variety attached to it, we must however consider, as in \cite{tian_xiao_2016}, the following modification: We take everything as usual, except we require $\lambda: A \to A^\vee$ to have nontrivial kernel of rank $p^{2f_\fp} \subset A[\fp]$, and that the cokernel $\lambda_*: \ccH^1_{\dR}(A^\vee/S) \to  \ccH^1_{\dR}(A/S)$ be locally free of rank 2 over $(\ccO_E/\fq) \otimes \ccO_S$. Note that there are no issues with defining a suitable duality condition for the Pappas-Rapoport filtrations at $\fp$ because $\Sigma_{IJ,\infty,\fp} = \Theta_{F,\fp}$ by assumption. Taking $A_{IJ} = A_1/C^\prime_T$ for a maximally spaced subset $T \subset \widehat{\Theta}_{E,\fq}$ with the polarization $\lambda_{IJ} = \psi_2^\vee \circ \lambda_2 \circ \psi_2$ where $\psi_2$ is the natural map $A_{IJ} \to A_2$ yields the desired isomorphism.

 \subsection{Comparing Automorphic Vector Bundles}
 We finish by relating the Raynaud bundles on $\overline{Y}_0(\Fp)_{\phi^\prime(I),J}$ to automorphic vector bundles and tautological line bundles on $\prod_{\beta \in R} \mathbb{P}_{\overline{Y}_{IJ}}(\ccH^1_{\beta})$
 under the isomorphism $\Psi_{IJ}$. We also provide similar isomorphisms at the tame level. In particular, we work over geometric special fibers.

 \subsubsection{The Iwahori level Unitary setting}\label{Iwahori unitary bundles}
 We start by showing the analogous relations at the Unitary level. Write $\Fp=\fp_1 \cdots \fp_k$ for a non-empty prod of distinct primes above $p$. We start at Iwahori level; fix subsets $I,J \subset \Theta_F \setminus \Sigma_\infty$ such that $I \cup J = \Theta_F \setminus \Sigma_\infty$ and $(I_\fp,J_\fp) \neq (\Theta_{F,\fp} \setminus \Sigma_{\infty,\fp}, \Theta_{F,\fp} \setminus \Sigma_{\infty,\fp})$ for all primes $\fp$ over $p$, and write $S^\prime_{IJ} = \widetilde{Y}^\prime_0(\Fp)_{\phi^\prime(I),J,\bF}$ and $S = \widetilde{Y}^\prime_{IJ,\bF}$. 
 
 By theorem \ref{unitary iwahori unquotiented thm}, we have an isomorphism $\widetilde{\Psi}^\prime_{IJ}: S^\prime_{IJ} \xrightarrow{\sim }\prod_{\beta \in R} \mathbb{P}_S(\ccH^1_\beta)$ which we recall fits in the following diagram:

\begin{center}
    \begin{tikzcd}
        S^\prime_{IJ} \arrow[r, "\widetilde{\Psi}^\prime_{IJ}"]  \arrow[rd, "\widetilde{\psi}^\prime_{IJ}"'] & \prod_{\beta \in R} \mathbb{P}_S(\ccH^1_\beta) \arrow[d] \\
        & S.
    \end{tikzcd}
\end{center}

 On $S^\prime_{IJ}$, we have for all $\beta \in \Theta_F$ and $j=1,2$ the rank two locally free sheaves $\ccH^1_{j,\tilde{\beta}}$, whose determinant we write $\delta_{j,\tilde{\beta}}$. If furthermore $\beta \notin \Sigma_\infty$, we have the line bundles $\omega^0_{j,\tilde{\beta}}$ and $v^0_{j,\tilde{\beta}}$ and a short exact sequence $0 \to \omega^0_{j,\tilde{\beta}} \to \ccH^1_{j,\tilde{\beta}} \to v^0_{j,\tilde{\beta}} \to 0$. Over $S$, we have similar bundles $\ccH^1_{\tilde{\beta}}$ and $\delta_{\tilde{\beta}}$, and line bundles $\omega^0_{\tilde{\beta}}$ and $v^0_{\tilde{\beta}}$ for all $\beta \notin \Sigma_{IJ,\infty} = \Sigma_{\infty} \sqcup R \sqcup (I \cap J)$. We use the same notation to denote their pullbacks to $\prod_{\beta \in R} \mathbb{P}_S(\ccH^1_\beta)$. Finally, over $\prod_{\beta \in R} \mathbb{P}_S(\ccH^1_\beta)$, we have for every $\beta$ in $R$, the line bundle $\ccO(1)_\beta = \pi_\beta^* \ccO(1)$ where $\pi_\beta$ denotes the projection to the $\beta$-component of the fiber product.

By corollary \ref{local de rham iso} and the definition of $\widetilde{\psi}^\prime_{IJ}$, we have isomorphisms $\ccH^1_{2,\tilde{\beta}} \xrightarrow{\sim} \widetilde{\psi}^{\prime *}_{IJ}\ccH^1_{\tilde{\beta}}$ for $\beta \in T^\prime$ and $\ccH^1_{1,\tilde{\beta}} \xrightarrow{\sim} \widetilde{\psi}^{\prime *}_{IJ}\ccH^1_{\tilde{\beta}}$ for $\beta \notin T^\prime$, where we recall that $\beta \in T^\prime $ if and only if $\beta \in T$ or $\beta \in \Sigma_\infty$ and $\phi^\prime(\beta) \in T$. Furthermore, this isomorphism is compatible with the Hodge filtration if $\beta \notin \Sigma_{IJ}$. We thus obtain for $\beta \notin \Sigma_{IJ}$ and the appropriate $j$,
\begin{equation}\label{comparison not in R}
    \omega^0_{j,\tilde{\beta}} \simeq \widetilde{\psi}^{\prime *}_{IJ} \omega^0_{\tilde{\beta}} \, \textrm{ and } \, v^0_{j,\tilde{\beta}} \simeq \widetilde{\psi}^{\prime *}_{IJ} v^0_{\tilde{\beta}}  \simeq \widetilde{\psi}^{\prime *}_{IJ} \delta_{\tilde{\beta}} \otimes \widetilde{\psi}^{\prime *}_{IJ} (\omega^0_{\tilde{\beta}})^{-1}.
\end{equation}

 If $\beta \in R$ however, the morphism $S^\prime_{IJ} \to \mathbb{P}_S(\ccH^1_{\beta})$ was defined in terms of the Hodge filtration of $\ccH^1_{j,\tilde{\beta}}$ for the appropriate $j$ and so induces isomorphisms
 \begin{equation}\label{comparison in R} \omega^0_{j,\tilde{\beta}} \simeq \widetilde{\psi}^{\prime *}_{IJ} \delta_{\tilde{\beta}} \otimes \widetilde{\Psi}^{\prime *}_{IJ} \ccO(-1)_\beta \, \textrm{ and } \, v^0_{j,\tilde{\beta}} \simeq \widetilde{\Psi}^{\prime *}_{IJ} \ccO(1)_\beta.
 \end{equation}

Let $H$ denote the reduced universal kernel restricted to $S^\prime_{IJ}$, recall that the Raynaud bundles $\ccL_\tau$, for $\tau \in \widehat{\Theta}_{E,\Fp}$ are given by the identification 
\[H = \underline{\textrm{Spec}}_{S^\prime_{IJ}} (  \textrm{Sym}_{\ccO_{S^\prime_{IJ}}}(\bigoplus_{\tau \in \widehat{\Theta}_{E,\Fp}} \ccL_\tau) \langle (s_\tau-1 )\ccL_\tau \rangle_{\tau \in \widehat{\Theta}_{E,\Fp}}). \]
Furthermore, we have $\omega_{H,\tau} = \ccL_{\tau}/ s_{\phi^{-1} \circ \tau}\ccL^p_{\phi^{-1}(\tau)}$ and $\omega_{H,\tau}$ is canonically isomorphic to $\omega^0_{1,\tau}/f^*\omega^0_{2,\tau}$. Let $\theta \in \widehat{\Theta}_F$ and write $\tau = \tilde{\theta}$. 
Suppose that there exists $i$, which we take maximal, such that $s_{\tau^i}=1$.
If $\theta^i \in I$, we deduce as in the proof of lemma \ref{raynaud vanishing} that 
\[\ccL_\tau \simeq \omega_{H,\tau} \simeq \omega^0_{1,\tau}/f^*\omega^0_{2,\tau} \simeq \omega_{1,\tau^i}.\]
If $\tau^i \in J$ however, recall that $\lambda_1$ induces an $\ccO_E$-antilinear isomorphism $H \xrightarrow{\sim} (\ker g)^\vee$,
where $e_0\ker g$ and its dual are also Raynaud $\ccO_E/\Fp$-module schemes, given by the Raynaud data $(\mathcal{N}_\tau,s^\prime_\tau,t^\prime_\tau)$
and $(\mathcal{N}^\vee_\tau,t^{\prime \vee}_\tau,s^{\prime \vee}_\tau)$ respectively.
We deduce as above that $\mathcal{N}_{\tau^c} \simeq \omega^0_{2,(\tau^c)^i}$ and we have an isomorphism 
\[ \ccL_\tau \xrightarrow[\sim]{\lambda^{-1}_1} \mathcal{N}^\vee_{\tau^c} \simeq (\omega^0_{2,(\tau^c)^i})^\vee \xrightarrow[\sim]{\lambda^{\vee}_2} v^0_{2,\tau^i}\]
which does not depend on the specific choice of polarizations $\lambda_1$ and $\lambda_2$ under the action of $\ccO_{F,(p),+}^\times$.

Suppose now that $\theta$ is such that $\theta^i \in \Sigma_\infty$ for all $1 \leq i \leq e_\fp$.
If $\omega^0_{i,\tau} \neq 0$, 
then $\omega^0_{i,\tau} = [\varpi_\fp]^d\ccH^1_{\dR}(A_i/S^\prime_{IJ})^0_\tau$ and we deduce that 
\[\ccL_\tau \simeq \omega^0_{1,\tau}/f^*\omega^0_{2,\tau}
\xrightarrow[\sim]{V_{\es}} \ccH^1_{1,\tau^1}/f^*\ccH^1_{2,\tau^1}.\]
Finally, if $\omega^0_{i,\tau}=0$, we similarly have $\mathcal{N}^\vee_{\tau^c} \simeq \ccH^1_{2,(\tau^c)^1} /g^*\ccH^1_{A_1 \otimes \Fp^{-1},\tau^c} \simeq f^*\ccH^1_{2,(\tau^c)^1}$ and an isomorphism
\[ \ccL_\tau \xrightarrow[\sim]{\lambda^{-1}_1} \mathcal{N}^\vee_{\tau^c} \simeq (f^*\ccH^1_{2,(\tau^c)^1})^\vee \xrightarrow[\sim]{\lambda^{\vee}_1} \ccH^1_{1,\tau^1}/f^*\ccH^1_{2,\tau^1}\]
which does not depend on the specific choice of polarizations. 
In either case, we obtain the isomorphism $\ccL_\tau \xrightarrow{\sim} \ccH^1_{1,\tau^1}/f^*\ccH^1_{2,\tau^1}$. 
Write  $\beta^\prime = (\phi^{\prime})^{-1}(\theta^1)$. Suppose that $\beta^\prime \in I$. 
Then $f^*\ccH^1_{2,\phi(\tilde{\beta}^\prime)} = F_{\es,\phi(\tilde{\beta}^\prime)}(\ccH^{1 \, (p^\delta)}_{1,\tilde{\beta}^\prime})$
and $V_{\es,\tau^1}^{\tilde{\beta}^\prime}$ induces an isomorphism 
$\ccH^1_{1,\tau^1}/f^*\ccH^1_{2,\tau^1} \simeq (\omega^0_{1,\tilde{\beta}^\prime})^{p^{n^\prime_\theta}}$ 
for the appropriate $n^\prime_\theta$. If $\beta^\prime \in J$, then 
\[\ccH^1_{1,\phi(\tilde{\beta}^\prime)}/f^*\ccH^1_{2,\phi(\tilde{\beta}^\prime)} \simeq 
(g\otimes \Fp)^*\ccH^1_{1,\phi(\tilde{\beta}^\prime)} = F_{\es,\phi(\tilde{\beta}^\prime)}(\ccH^{1 \, (p^\delta)}_{A_2 \otimes \Fp,\tilde{\beta}^\prime}) \simeq 
(v^0_{2,\phi(\tilde{\beta}^\prime)})^{p^\delta}\]
and thus $F_{\es,\tau^1}^{\tilde{\beta}^\prime}$ induces an isomorphism 
$\ccH^1_{1,\tau^1}/f^*\ccH^1_{2,\tau^1} \simeq (v^0_{2,\tilde{\beta}^\prime})^{p^{n^\prime_\theta}}$ 
for the appropriate $n^\prime_\theta$.

In summary, for $\tau = \tilde{\theta} \in \widehat{\Theta}_E$ and $\beta^\prime = (\phi^\prime)^{-1}((\phi \circ \theta)^1) = (\phi^{-n^\prime_\theta}\circ \theta)^i$, we obtain the isomorphism 
\begin{equation}\label{Raynaud bundle I J}
    \ccL_\tau \simeq 
    \begin{cases}
        (\omega^0_{1,\tilde{\beta^\prime}})^{p^{n^\prime_\theta}} & \textrm{ if } \beta^\prime \in I,\\
        (v^0_{2,\tilde{\beta^\prime}})^{p^{n^\prime_\theta}} & \textrm{ if } \beta^\prime \in J.
    \end{cases}
\end{equation}
Note that this implies that $\omega^0_{1,\tilde{\beta^\prime}} \simeq v^0_{2,\tilde{\beta^\prime}}$ if $\theta^i \in I \cap J$. This follows from the relations
\[v^0_{2,\tilde{\beta^\prime}} = \ccH^1_{2,\tilde{\beta^\prime}} / \omega^0_{2,\tilde{\beta^\prime}} \simeq f^* \ccH^1_{2,\tilde{\beta^\prime}} = \ker (g \otimes \Fp)^*_{\tilde{\beta^\prime}} = \omega^0_{1,\tilde{\beta^\prime}}.\]
In fact, we record the following isomorphism of line bundles for such $\beta \in I \cap J$:
Recall from the proof of proposition \ref{Iwahori goren relation} that if $\beta \in \Theta_F$ is such that $ \beta \in J$ and $(\phi^\prime)^{-1}(\beta) \in I$, then $V_{\es, \tilde{\beta}}^{(\phi^\prime)^{-1}(\tilde{\beta})}(\omega^0_{1,\tilde{\beta}})=0$. In particular, since the image of $V_{\es, \tilde{\beta}}^{(\phi^\prime)^{-1}(\tilde{\beta})}$ is exactly $(\omega^0_{1,(\phi^\prime)^{-1}(\tilde{\beta})})^{p^n}$ (for the appropriate $n$), we obtain the isomorphism \[v^0_{1,\tilde{\beta}} \simeq \ccH^1_{1,\tilde{\beta}}/ \omega^0_{1,\tilde{\beta}} \simeq (\omega^0_{1,(\phi^\prime)^{-1}(\tilde{\beta})})^{p^n}.\]
Similarly, $F_{\es, \tilde{\beta}}^{(\phi^\prime)^{-1}(\tilde{\beta})}$ then induces an isomorphism 
\[ \omega^0_{1,\tilde{\beta}} \simeq (v^0_{1,(\phi^\prime)^{-1}(\tilde{\beta})})^{p^n}.\]
The same argument shows that if $\beta \in \Theta_F$ is such that $ \beta  \in I$ and $(\phi^\prime)^{-1}(\beta)  \in J$, then we have isomorphisms 
\[v^0_{2,\tilde{\beta}} \simeq (\omega^0_{2,(\phi^\prime)^{-1}(\tilde{\beta})})^{p^n} \quad \textrm{and} \quad \omega^0_{2,\tilde{\beta}} \simeq (v^0_{2,(\phi^\prime)^{-1}(\tilde{\beta})})^{p^n}.\]

It follows if $\beta \in \Theta_F$ and $k \geq 1$ are such that $\beta, (\phi^\prime)^{-1}(\beta), \cdots, (\phi^\prime)^{-(k-1)}(\beta) \in I \cap J$ and $(\phi^\prime)^{-k}(\beta) \in I$, we can chain these isomorphisms and obtain, for the appropriate $n$ depending on $\beta$ and $k$,

\begin{equation}\label{IJ 1}
    \omega^0_{1,\tilde{\beta}} \simeq 
    \begin{cases}
        (\omega^0_{1,(\phi)^{-k}(\tilde{\beta})})^{p^n} & k \textrm{ even}, \\
        (v^0_{1,(\phi)^{-k}(\tilde{\beta})})^{p^n} & k \textrm{ odd}.
    \end{cases}
\end{equation}

Similarly, if $\beta \in \Theta_F$ and $k \geq 1$ are such that $\beta, (\phi^\prime)^{-1}(\beta), \cdots, (\phi^\prime)^{-(k-1)}(\beta) \in I \cap J$ and $(\phi^\prime)^{-k}(\beta) \in J$, we obtain, for the appropriate $n$ depending on $\beta$ and $k$,
\begin{equation}\label{IJ 2}
    v^0_{2,\tilde{\beta}} \simeq 
    \begin{cases}
        (v^0_{2,(\phi)^{-k}(\tilde{\beta})})^{p^n} & k \textrm{ even}, \\
        (\omega^0_{2,(\phi)^{-k}(\tilde{\beta})})^{p^n} & k \textrm{ odd}.
    \end{cases}
\end{equation}

We can now relate the Raynaud bundles $\ccL_\tau$ to line bundles on $\prod_{\beta \in R} \mathbb{P}_S(\ccH^1_\beta)$. Let $\theta$, $\tau$ and $\beta^\prime$ be as in \eqref{Raynaud bundle I J} and set $\beta = (\phi^{\prime \prime})^{-1}((\phi \circ \theta)^1)$ where we recall that $\phi^{\prime \prime}$ denotes the cycle structure with respect to $\Sigma_\infty \sqcup I \cap J$. Write $0 \leq n_\theta <e_\fp$ for the unique integer such that $\beta = (\phi^{-n_\theta} \circ \tau)^i$, for some $i$.

We consider four different cases corresponding to the membership of $\beta$ in $R$ and $\phi^\prime(\beta^\prime)$ in $T$. 
Consider the case that $\theta$ is such that both $\beta \notin R$ and $\phi^\prime(\beta^\prime) \notin T$. 
If $\beta^\prime \notin I \cap J$, then $\beta = \beta^\prime$ and $\beta \notin R$ implies that $\beta \notin T$ since $\phi^\prime(\beta^\prime) = \phi^\prime(\beta) \notin T$. In particular, $\beta \in I$ since $I^c \subset T$, and $\beta \notin \Sigma_{IJ}$. Combining the isomorphisms of \ref{Raynaud bundle I J} and \ref{comparison not in R}, we obtain the isomorphism $\ccL_\tau \simeq \widetilde{\Psi}^{\prime *}_{IJ}(\omega^0_{\tilde{\beta}})^{p^{n_\theta}}$ for the appropriate $n_\theta$.

If $\beta^\prime \in I \cap J$ however, write $\beta = (\phi^\prime)^{-k}(\beta^\prime)$. Recall from the construction of $T$ in section \ref{sets} that for any $\theta^i \in I \cap J$, we have $\theta^i \in T $ if and only if $\phi^\prime(\theta^i) \notin T$. In particular, since $\phi^\prime(\beta^\prime) \notin T$ it follows that $\beta^\prime \in T$. If we suppose furthermore $k$ is odd, then $(\phi^\prime)^{-(k-1)}(\beta^\prime) \in T$ and so $\beta \in T$ since $\beta \notin R$. 
By \ref{Raynaud bundle I J}, we have $\ccL_\tau \simeq (v^0_{2,\tilde{\beta}^\prime})^{p^m}$ (for the appropriate $m$) and, since $(\phi^\prime)^{-(k-1)}(\beta^\prime), \cdots, \beta^\prime \in I \cap J$ and $\beta \in T \subset J$, we also have by \ref{IJ 2} the isomorphism $(v^0_{2,\tilde{\beta^\prime}})^{p^m} \simeq (\omega^0_{2,\tilde{\beta}})^{p^{n_\theta}}$. By \ref{comparison not in R}, we finally obtain the isomorphism $\ccL_\tau \simeq \widetilde{\Psi}^{\prime *}_{IJ}(\omega^0_{\tilde{\beta}})^{p^{n_\theta}}$.
If $k$ is even however, we deduce as above that $\beta \notin T$ and by the isomorphisms \ref{Raynaud bundle I J} and \ref{IJ 1} that $\ccL_\tau \simeq (\omega^0_{1,\tilde{\beta}})^{p^{n_\theta}}$. Therefore by \ref{comparison not in R}, we obtain the isomorphism $\ccL_\tau \simeq \widetilde{\Psi}^{\prime *}_{IJ}(\omega^0_{\tilde{\beta}})^{p^{n_\theta}}$.

In conclusion, if $\theta$ is such that both $\beta \notin R$ and $\phi^\prime(\beta^\prime) \notin T$ then we have the isomorphism $\ccL_\tau \simeq \widetilde{\Psi}^{\prime *}_{IJ}(\omega^0_{\tilde{\beta}})^{p^{n_\theta}}$. The other cases are dealt with similarly. Considering that $\phi^\prime(\beta^\prime) \notin T$ is equivalent to $(\phi \circ \tau)^1 \notin T$ which, by definition, is equivalent to $\phi \circ \tau \notin T^1$, we obtain for any $\theta \in \widehat{\Theta}_F$ the isomorphism

\begin{equation}\label{Raynaud comparison unitary}
    \ccL_\tau \simeq \widetilde{\Psi}^{\prime *}_{IJ}\mathcal{M}_{\tau}^{p^{n_\theta}}, \textrm{ where } \, 
    \mathcal{M}_\tau =
    \begin{cases}
        \omega^0_{\tilde{\beta}} & \textrm{if } \beta \notin R, \phi \circ \theta \notin T^1,\\
        \delta_{\tilde{\beta}}(\omega^0_{\tilde{\beta}})^{-1} & \textrm{if } \beta \notin R, \phi \circ \theta \in T^1,\\
        \delta_{\tilde{\beta}}(-1)_\beta & \textrm{if } \beta \in R, \phi \circ \theta \in T^1,\\
        \ccO(1)_\beta & \textrm{if } \beta \in R, \phi \circ \theta \notin T^1.
    \end{cases}
\end{equation}

\begin{rem}
    Consider the simplest case: $p$ unramified in $F$, $\Sigma = \emptyset$ and $I \cap J = \emptyset$. We thus have $\widehat{\Theta}_F = \Theta_F$, $T = J$, $R = \Sigma_{IJ} = \Sigma_J$ and for each $\theta \in \Theta_F$, $\theta = \beta = \beta^\prime$, and $n_\theta = 0$. The conditions then become, in order, $\theta \notin J$ and $\theta \notin \Sigma_J$, $\theta \in J$ and $\theta \notin \Sigma_J$, $\theta \notin J$ and $\theta \in \Sigma_J$, and $\theta \in J$ and $\theta \in \Sigma_J$. We thus reobtain the isomorphisms of \cite[(17)]{2020arXiv200100530D}.
\end{rem}

 The above isomorphisms are Hecke equivariant in the following sense: Suppose that $U^\prime_1,U^\prime_2 \subset G^\prime_\Sigma(\mathbb{A}_f^{(p)})$ are sufficiently small open compact subgroups and $g \in G^\prime_\Sigma(\mathbb{A}_f^{(p)})$ is such that $g^{-1}U^\prime_1g \subset U^\prime_2$. Write $\tilde{\rho}_g: \widetilde{Y}^\prime_{1,0}(\Fp) \to \widetilde{Y}^\prime_{2,0}(\Fp)$ for the Hecke action and write $H_i$ for the respective reduced universal kernel. Then the canonical quasi-isogeny $\pi_g$ induces an isomorphism $H_1 \to \tilde{\rho}^*_g H_2$ and in particular induces an isomorphism $\tilde{\rho}^*_g \ccL_{2,\tau} \to \ccL_{1,\tau}$ by functoriality of the Raynaud construction. We thus obtain the following commutative diagram:
 \begin{equation}\label{Hecke equivariance raynaud}
 \begin{tikzcd}
     \tilde{\rho}_g^* \ccL_{2,\tau} \arrow[rr, "\tilde{\rho}_g^*(\sigma_2)"] \arrow[d,"\pi_g^*"]&& \tilde{\rho}_g^*  \widetilde{\Psi}_{2,IJ}^{\prime*} \mathcal{M}_{2,\tau}^{p^{n_\theta}} = \widetilde{\Psi}_{1,IJ}^{\prime*} \tilde{\rho}_{g_{IJ}}^* \mathcal{M}_{2,\tau}^{p^{n_\theta}} \arrow[d," \widetilde{\Psi}_{1,IJ}^{\prime *} (\pi^*_{g_{IJ}})"]\\
     \ccL_{1,\tau} \arrow[rr,"\sigma_1"] && \widetilde{\Psi}_{1,IJ}^{\prime*}\mathcal{M}_{1,\tau}^{p^{n_\theta}},
 \end{tikzcd}
 \end{equation}
 where $\sigma_i: \ccL_{i,\tau} \xrightarrow{\sim} \widetilde{\Psi}_{i,IJ}^* \mathcal{M}_{i,\tau}^{p^{n_\theta}}$ is the isomorphism given above and $\pi_{g_{IJ}}$ is the quasi-isogeny that defines the Hecke action.

 If we assume in addition that $U^\prime$ is $\Fp$-neat, 
 i.e. that $\alpha-1 \in \Fp$ for all $\alpha \in U^\prime \cap \ccO_F^\times$, 
 then the action of $( U^\prime \cap \ccO_F^\times)^2$ 
 is trivial on $H$ so that both $H$ and its associated Raynaud data descend to the quotient 
 $\overline{Y}^\prime_{0}(\Fp)_{\phi^\prime(I),J}$. Since the isomorphism $\Psi^\prime_{IJ}: \overline{Y}^\prime_0(\Fp)_{\phi^\prime(I),J} \to \prod_{\beta \in R} \mathbb{P}_{\overline{Y}^\prime_{IJ}}(\ccH^1_\beta)$ 
 from theorem \ref{unitary thm} is obtained by taking the quotient of $\widetilde{\Psi}^\prime_{IJ}$ with respect to the action of $\ccO_{F,(p),+}^\times$ on both sides and none of the isomorphisms above depend on polarizations, we thus obtain isomorphisms 
 $\sigma: \ccL_\tau \xrightarrow{\sim} \Psi^{\prime *}_{IJ}\mathcal{M}_{\tau}^{p^n}$ on $\overline{Y}^\prime_0(\Fp)_{\phi^\prime(I),J}$, 
 where $\mathcal{M}_{\tau}^{p^n}$ is defined using the same formulae as in \ref{Raynaud comparison unitary} but using the quotients, with respect to the action of $\ccO_{F,(p),+}^\times$, of the relevant bundles upstairs. Furthermore, these isomorphisms are Hecke equivariant in the sense that the analogous diagram to \ref{Hecke equivariance raynaud} commutes.

\subsubsection{The Tame level Unitary setting}\label{tame unitary bundles}
We now describe isomorphisms between automorphic vector bundles at Tame level.  Fix a subset $J \subset \Theta_F \setminus \Sigma_\infty$ such that $J_\fp \neq \Theta_{F,\fp} \setminus \Sigma_{\infty,\fp}$ for all primes $\fp$, and write $I = \Theta_F \setminus \Sigma_\infty$. 
Write $S^\prime_J = \widetilde{Y}^\prime_{J,\overline{\mathbb{F}}_p}$ for the geometric fiber of the Goren-Oort stratum as defined in section \ref{unitary hasse}, and let
$S^\prime_{IJ}$ and $S$ be as in the previous section.

By proposition \ref{Iwahori goren relation} and theorem \ref{unitary iwahori unquotiented thm}, we have an isomorphism $\widetilde{\Psi}^\prime_J: S^\prime_J \xrightarrow{\sim} \prod_{\beta \in R} \mathbb{P}_S(\ccH^1_\beta)$ given as the composite of isomorphisms

\begin{center}
    \begin{tikzcd}
        S^\prime_J \arrow[r,"\widetilde{\Psi}^\prime_J"] \arrow[d, "\widetilde{\psi}_\Fp"']  &\prod_{\beta \in R} \mathbb{P}_S(\ccH^1_\beta) \\
        S^\prime_{IJ} \arrow[ru,"\widetilde{\Psi}^\prime_{IJ}"'],
    \end{tikzcd}
\end{center}
where $\widetilde{\psi}_\Fp$, defined in section \ref{Iwahori goren relation section}, is the inverse of the natural projection $\pi_1: S^\prime_{IJ} \to S^\prime_J$ given explicitly by $\underline{A} \mapsto (\underline{A},\underline{A^{(\Fp)}},F_{\es,\Fp})$ as defined in section \ref{partial frobenius}. 

As in the previous section, we have the bundles $\omega^0_{\star,\tilde{\beta}} = \omega^0_{A_\star,\tilde{\beta}}$, $\ccH^1_{\star,\tilde{\beta}} = \ccH^1_{A_\star,\tilde{\beta}}$, $v^0_{\star,\tilde{\beta}} = v^0_{A_\star,\tilde{\beta}}$, and $\delta_{\star,\tilde{\beta}} = \delta_{A_\star,\tilde{\beta}}$ for $\star \in \{ \, \, , 1,2,J\}$, which live over $S^\prime_J$ for $\star =$ blank, $S^\prime_{IJ}$ for $\star = 1,2$ and $ \prod_{\beta \in R} \mathbb{P}_S(\ccH^1_\beta)$ for $\star = J$.

Recall from proposition \ref{essential name justification} that for each $\beta = \theta^i \in \Theta_F$ we have a canonical isomorphism $\ccH^1_{A^{(\Fp)},\tilde{\beta}} \simeq \ccH^{ 1 \, p^\delta}_{A, \phi^{-1}(\tilde{\beta})}$ where $\delta = 1$ if $i =1$ and 0 otherwise. It then follows that we have $\widetilde{\psi}_\Fp^*\ccH^1_{1,\tilde{\beta}} \simeq \ccH^1_{\tilde{\beta}}$ and $\widetilde{\psi}_\Fp^*\ccH^1_{2,\tilde{\beta}} \simeq \ccH^{1 \, p^\delta}_{\phi^{-1}(\tilde{\beta})}$ and we determine, as in the previous section, that we have the isomorphisms 
\begin{equation}
    \widetilde{\Psi}^{\prime *}_J \delta_{J,\tilde{\beta}} \simeq
    \begin{cases}
        \delta_{\tilde{\beta}} & \textrm{ if } \beta \notin T^\prime,\\
        \delta^{p^\delta}_{\phi^{-1}(\tilde{\beta})} & \textrm{ if } \beta \in T^\prime.
    \end{cases}
\end{equation}

We now describe each of the line bundles $\widetilde{\Psi}^*_{IJ,*} \omega^0_{\tilde{\beta}}$ for $\beta \notin \Sigma_\infty$. Write $\phi^{\prime \prime}$ for the cycle structure with respect to $\Sigma_\infty \sqcup I \cap J = \Sigma_\infty \sqcup J$, since $I = \Theta_F \setminus \Sigma_\infty$, and write $\beta^\prime = (\phi^{\prime \prime})^{-1}(\phi^\prime(\beta))$. In other words, $\beta^\prime = (\phi^\prime)^{-k}(\beta)$ where $k \geq 0$ is the smallest $n$ such that $(\phi^\prime)^{-n}(\beta) \notin J$. We apologize for the clash in notation with the previous section. We thus have, by \ref{IJ 1}, for the appropriate $n_\beta$, the isomorphisms
\[  
\widetilde{\psi}_{\Fp,*}\omega^0_{\tilde{\beta}} \simeq \omega^0_{1,\tilde{\beta}} \simeq 
\begin{cases}
    (\omega^0_{1,\tilde{\beta}^\prime})^{p^{n_\beta}} & \textrm{ if } k \textrm{ is even},\\
    (v^0_{1,\tilde{\beta}^\prime})^{p^{n_\beta}} & \textrm{ if } k \textrm{ is odd}.
\end{cases}
\]

Since $\widetilde{\Psi}^\prime_{J,*}\omega^0_{\tilde{\beta}} = 
\widetilde{\Psi}^\prime_{IJ,*} \widetilde{\psi}_{\Fp,*}\omega^0_{\tilde{\beta}}$, a case by case analysis as in the previous section and the isomorphisms \ref{comparison not in R} and \ref{comparison in R} then yield the isomorphisms

\begin{equation}\label{tame bundle comparison}  
\widetilde{\Psi}^\prime_{J,*}\omega^0_{\tilde{\beta}} \simeq
\mathcal{M}_{\tilde{\beta}}^{p^{n_\beta}}, \textrm{ where } 
\mathcal{M}_{\tilde{\beta}} =
\begin{cases}
        \omega^0_{J,\tilde{\beta}^\prime} & \textrm{if } \beta^\prime \notin R, \phi^\prime(\beta) \notin T,\\
        \delta_{J,\tilde{\beta}^\prime}(\omega^0_{J,\tilde{\beta}^\prime})^{-1} & \textrm{if } \beta^\prime \notin R, \phi^\prime(\beta) \in T,\\
        \delta_{J,\tilde{\beta}^\prime}(-1)_{\beta^\prime} & \textrm{if } \beta^\prime \in R, \phi^\prime(\beta) \in T,\\
        \ccO(1)_{\beta^\prime} & \textrm{if } \beta^\prime \in R, \phi^\prime(\beta) \notin T.
    \end{cases}
\end{equation}
\begin{rem}
    In the case that $J = \{\beta \}$, we obtain the isomorphisms
    $\widetilde{\Psi}^\prime_{J,*}\omega^0_{\tilde{\beta}^\prime} \simeq \omega^0_{J,\tilde{\beta}^\prime}$
    for $\beta^\prime \neq (\phi^\prime)^{-1}(\beta),\beta$,
    $\widetilde{\Psi}^\prime_{J,*}\omega^0_{(\phi^\prime)^{-1}(\tilde{\beta})} \simeq \delta_{J,(\phi^\prime)^{-1}(\tilde{\beta})}(-1)_{(\phi^\prime)^{-1}(\beta)}$,
    and $\widetilde{\Psi}^\prime_{J,*}\omega^0_{\tilde{\beta}} \simeq \ccO(p^{n_\beta})_{(\phi^\prime)^{-1}(\beta)}$.
\end{rem}

The above isomorphisms are Hecke equivariant in the sense that if, as usual, $U^\prime_1,U^\prime_2 \subset G^\prime_\Sigma(\mathbb{A}_f^{(p)})$ are sufficiently small open compact subgroups and $g \in G^\prime_\Sigma(\mathbb{A}_f^{(p)})$ is such that $g^{-1}U^\prime_1g \subset U^\prime_2$, then the following diagram commutes:

 \begin{equation}\label{Hecke equivariance bundles tame unitary}
 \begin{tikzcd}
     \tilde{\rho}_{g_{IJ}}^*\widetilde{\Psi}^\prime_{2,J,*} \omega^0_{2,\tilde{\beta}} = \widetilde{\Psi}^\prime_{1,J,*} \tilde{\rho}_{g}^*\omega^0_{2,\tilde{\beta}} \arrow[rr, "\tilde{\rho}^*_{g_{IJ}}(\sigma_2)"] \arrow[d,"\widetilde{\Psi}^\prime_{1,J,*}(\pi_g^*)"']&& \mathcal{M}_{2,\tilde{\beta}}^{p^{n_\beta}} \arrow[d," \pi^*_{g_{IJ}}"]\\
     \widetilde{\Psi}^\prime_{1,J,*} \omega^0_{1,\tilde{\beta}} \arrow[rr,"\sigma_1"] && \mathcal{M}_{1,\tilde{\beta}}^{p^{n_\beta}},
 \end{tikzcd}
 \end{equation}
 where $\sigma_i: \widetilde{\Psi}^\prime_{i,J,*}\omega^0_{i,\tilde{\beta}} \xrightarrow{\sim} \mathcal{M}_{i,\tilde{\beta}}^{p^{n_\beta}}$ is the isomorphism given above and $\pi_g$ is the quasi-isogeny that defines the Hecke action, similarly for $\pi_{g_{IJ}}$.

These isomorphisms are compatible with the action of $\ccO_{F,(p),+}^\times$, and thus descend (for sufficiently small $U^\prime$) to isomorphisms $\sigma: \Psi^\prime_{J,*}\omega^0_{\tilde{\beta}} \xrightarrow{\sim}
\mathcal{M}_\beta^{p^{n_\beta}}$, where both line bundles $\omega^0_{\tilde{\beta}}$ and $\mathcal{M}_\beta$ denote their quotient on $\overline{Y}^\prime_J$ and $\prod_{\beta \in R} \mathbb{P}^1_{\overline{Y}^\prime_{IJ}}(\ccH^1_{\tilde{\beta}})$ respectively, of the corresponding bundle upstairs. Furthermore, these isomorphisms are Hecke equivariant in the sense that the analogous diagram to \ref{Hecke equivariance bundles tame unitary} commutes.

 \subsubsection{The Quaternionic and Hilbert setting}
Let $\Sigma$, $\Fp$, $I$, and $J$ be as in section \ref{Iwahori unitary bundles}. 
Write $\overline{Y}_0(\Fp) = Y_{U_0(\Fp)}(G_\Sigma)_{\overline{\bF}_p}$ 
for a sufficiently small open compact subgroup 
$U \subset G_\Sigma(\mathbb{A}_f)$ of level prime to $p$ which we assume is $\Fp$-neat.
Write $\overline{Y}_{IJ} = Y_{U_{IJ}}(G_{IJ})_{\overline{\bF}_p}$.
Then we have have embeddings $i: \overline{Y}_0(\Fp) \to \overline{Y}^\prime_0(\Fp)$ and $i_{IJ}: \overline{Y}_{IJ} \to \overline{Y}^\prime_{IJ}$ which identify the former varieties as closed and open subschemes of the latter. As in section \ref{quaternion bundles}, we thus obtain the bundles $\ccH^1_\beta = i^*_{IJ}\ccH^1_{\tilde{\beta}}$ for all $\beta \in \Theta_F$, $\delta_\beta = \det \ccH^1_\beta$, and the line bundles $\omega_\beta = i^*_{IJ}\omega^0_{\tilde{\beta}}$ for all $\beta \notin \Sigma_\infty$. We also define for all $\theta \in \widehat{\Theta}_{F,\Fp}$, the line bundles $\ccL_\theta = i^* \ccL_{\tilde{\theta}}$ on $\overline{Y}_0(\Fp)$. We will also write $\delta_\beta$ and $\omega_\beta$ for the pullback of the corresponding bundles on $\prod_{\beta \in R} \mathbb{P}_{\overline{Y}_{IJ}}(\ccH^1_\beta)$, and $\ccO(1)_\beta$ for the pullback of the tautological bundle with respect to the projection to the $\beta$-component.

 Recall that the isomorphism $\Psi_{IJ}: \overline{Y}_0(\Fp)_{\phi^\prime(I),J} \to \prod_{\beta \in R} \mathbb{P}_{\overline{Y}_{IJ}}(\ccH^1_\beta)$ of theorem \ref{main theorem quat} is obtained as the restriction of the composite $\Psi_{IJ}^\prime \circ \rho_u$ for a suitably chosen $u \in (\mathbb{A}_{F,f}^{(p)})^\times$, which is independent of $U$ and $U^\prime$. The isomorphisms from (\ref{Raynaud comparison unitary}) then yield isomorphisms 

 \begin{equation}\label{Raynaud comparison quat}
    \ccL_\theta \simeq \Psi^*_{IJ}\mathcal{M}_{\theta}^{p^{n_\theta}}, \textrm{ where } \, 
    \mathcal{M}_\theta =
    \begin{cases}
        \omega_\beta & \textrm{if } \beta \notin R, \phi \circ \theta \notin T^1,\\
        \delta_\beta(\omega_\beta)^{-1} & \textrm{if } \beta \notin R, \phi \circ \theta \in T^1,\\
        \delta_\beta(-1)_\beta & \textrm{if } \beta \in R, \phi \circ \theta \in T^1,\\
        \ccO(1)_\beta & \textrm{if } \beta \in R, \phi \circ \theta \notin T^1,
    \end{cases}
\end{equation}
where we recall that $\beta = (\phi^{\prime \prime})^{-1}((\phi\circ \theta)^1)$ and $\phi^{\prime \prime}$ denotes the cycle structure with respect to $\Sigma_\infty \sqcup I \cap J$, by the restriction of the composite of isomorphisms 
\[\ccL_\tau \xrightarrow{(\pi_u^*)^{-1}} \rho^*_u \ccL_\tau \xrightarrow{\rho_u^* \sigma} \rho^*_u\Psi^{\prime *}_{IJ} \mathcal{M}_\tau^{p^{n_\theta}}.\]

Furthermore, if $U_1,U_2 \subset G_\Sigma(\mathbb{A}_f)$ are two sufficiently small open compact subgroups and $g \in G_{\Sigma}(\mathbb{A}^{(p)}_f)$ is such that $g^{-1}U_1 g \subset g$, then the following diagram

 \begin{equation}\label{Hecke equivariance raynaud quat}
 \begin{tikzcd}
     \rho_g^* \ccL_{2,\theta} \arrow[rr, "\rho_g^*(\sigma_2)"] \arrow[d,"\pi_g^*"]&& \rho_g^*  \Psi_{2,IJ}^{*} \mathcal{M}_{2,\theta}^{p^{n_\theta}} = \Psi_{1,IJ}^{*} \rho_{g_{IJ}}^* \mathcal{M}_{2,\theta}^{p^{n_\theta}} \arrow[d," \Psi_{1,IJ}^* (\pi^*_{g_{IJ}})"]\\
     \ccL_{1,\theta} \arrow[rr,"\sigma_1"] && \Psi_{1,IJ}^{*}\mathcal{M}_{1,\theta}^{p^{n_\theta}},
 \end{tikzcd}
 \end{equation}
commutes.

Write $\overline{Y} = Y_U(G_\Sigma)_{\overline{\bF}_p}$, let $J \subset \Theta_{F,\Fp} \setminus \Sigma_{\infty,\Fp}$ be as in section \ref{tame bundle comparison}, and write $I = \Theta_{F,\Fp} \setminus \Sigma_{\infty,\Fp}$. For each $\beta \in \Theta_{F} \setminus \Sigma_\infty$ and line bundle $\ccL_{\tilde{\beta}}$ from section \ref{tame bundle comparison}, let $\ccL_\beta$ denote its restriction to the corresponding quaternionic Shimura variety.  Similarly as above, we deduce from (\ref{tame bundle comparison}) that the isomorphism $\Psi_J: \overline{Y}_J \to \prod_{\beta \in R} \mathbb{P}_{\overline{Y}_{IJ}}(\ccH^1_\beta)$ of theorem \ref{main thm quat tame} induces isomorphisms

\begin{equation}\label{tame bundle comparison quat}  
\Psi_{J,*}\omega_{\beta} \simeq
\mathcal{M}_{\beta}^{p^{n_\beta}}, \textrm{ where } 
\mathcal{M}_\beta =
\begin{cases}
        \omega_{J,\beta^\prime} & \textrm{if } \beta^\prime \notin R, \phi^\prime(\beta) \notin T,\\
        \delta_{J,\beta^\prime}(\omega_{J,\beta^\prime})^{-1} & \textrm{if } \beta^\prime \notin R, \phi^\prime(\beta) \in T,\\
        \delta_{J,\beta^\prime}(-1)_{\beta^\prime} & \textrm{if } \beta^\prime \in R, \phi^\prime(\beta) \in T,\\
        \ccO(1)_{\beta^\prime} & \textrm{if } \beta^\prime \in R, \phi^\prime(\beta) \notin T,
    \end{cases}
\end{equation}
 where we recall that $\beta^\prime = (\phi^{\prime \prime})^{-1}(\phi^\prime(\beta))$.

Furthermore, if $U_1,U_2 \subset G_\Sigma(\mathbb{A}_f)$ are two sufficiently small open compact subgroups and $g \in G_{\Sigma}(\mathbb{A}^{(p)}_f)$ is such that $g^{-1}U_1 g \subset g$, then the following diagram
 \begin{equation}\label{Hecke equivariance bundles tame}
 \begin{tikzcd}
     \rho_{g_{IJ}}^*\Psi_{2,J,*} \omega^0_{2,\beta} = \Psi_{1,J,*} \rho_{g}^*\omega^0_{2,\beta} \arrow[rr, "\rho^*_{g_{IJ}}(\sigma_2)"] \arrow[d,"\Psi_{1,J,*}(\pi_g^*)"']&& \mathcal{M}_{2,\beta}^{p^{n_\beta}} \arrow[d," \pi^*_{g_{IJ}}"]\\
     \Psi_{1,J,*} \omega^0_{1,\beta} \arrow[rr,"\sigma_1"] && \mathcal{M}_{1,\beta}^{p^{n_\beta}},
 \end{tikzcd}
 \end{equation}
commutes.

Finally, recall that for the Hilbert modular variety $\overline{Y} = Y_U(G)_{\overline{\bF}_p}$, we had the isomorphism $i:\overline{Y} \xrightarrow{\sim} \overline{Y}_\emptyset$ from section \ref{model comparison} which respects stratifications and identifies the bundles $\omega_\beta$ and $\delta_\beta$ on either side. Furthermore, by proposition \ref{iwahori quat hilb}, this isomorphism extended to an isomorphism $i: \overline{Y}_0(\Fp) \xrightarrow{\sim} \overline{Y}_{\emptyset,0}(\Fp)$ which respects stratifications. Furthermore, one can see that this isomorphism identifies Raynaud bundles on both sides. 

We thus deduce, for $I,J$ as usual, that the isomorphism $\Psi_{IJ}: \overline{Y}_0(\Fp)_{\phi^\prime(I),J} \to \prod_{\beta \in R} \mathbb{P}_{\overline{Y}_{IJ}}(\ccH^1_\beta)$ of theorem \ref{hilb thm iwahori} induces isomorphisms analogous to those of (\ref{Raynaud comparison quat}) which are Hecke equivariant in the sense that the analogous diagram to diagram (\ref{Hecke equivariance raynaud quat}) commutes. At Tame level, we deduce that the isomorphism $\Psi_{J}: \overline{Y}_J \to \prod_{\beta \in R} \mathbb{P}_{\overline{Y}_{IJ}}(\ccH^1_\beta)$ of theorem \ref{hilb thm tame} induces isomorphisms analogous to those of (\ref{tame bundle comparison quat}) which are Hecke equivariant in the sense that the analogous diagram to diagram (\ref{Hecke equivariance bundles tame}) commutes.

\bibliography{bibliography}{}
\bibliographystyle{alpha}

\end{document}